\title{Estimating Multiple Weighted Networks with Node-Sparse Differences and Shared Low-Rank Structure}
\author[1]{Hao Yan}
\author[1]{Keith Levin}
\affil[1]{Department of Statistics, University of Wisconsin--Madison 

\texttt{\{hyan84,kdlevin\}@wisc.edu}}
\date{}
\begin{document}
\maketitle




\begin{abstract}
    We study the problem of modeling multiple symmetric, weighted networks defined on a common set of nodes, where networks arise from different groups or conditions.
    We propose a model in which each network is expressed as the sum of a shared low-rank structure and a node-sparse matrix that captures the differences between conditions.
    This formulation is motivated by practical scenarios, such as in connectomics, where most nodes share a global connectivity structure while only a few exhibit condition-specific deviations. 
    We develop a multi-stage estimation procedure that combines a spectral initialization step, semidefinite programming for support recovery, and a debiased refinement step for low-rank estimation.
	We establish minimax-optimal guarantees for recovering the shared low-rank component under the row-wise $\ell_{2,\infty}$ norm and elementwise $\ell_{\infty}$ norm, as well as for detecting node-level perturbations under various signal-to-noise regimes. 
    We demonstrate that the availability of multiple networks can significantly enhance estimation accuracy compared to single-network settings.
    Additionally, we show that commonly-used methods such as group Lasso may provably fail to recover the sparse structure in this setting, a result which might be of independent interest.
\end{abstract}

\section{Introduction} \label{sec:intro}

Networks are a fundamental data object in modern statistical applications, arising in domains as varied as neuroscience \citep{sporns2011human, bassett2017network, guha2024bayesian}, economics \citep{easley2010networks, diebold2014network} and social science \citep{granovetter1973strength, borgatti2009network}, to name just a few.
While modeling a single network has received extensive attention in recent decades--- e.g., through Stochastic Block Models \citep[SBM;][]{holland1983stochastic, airoldi2008mixed, karrer2011stochastic}, latent space models such as Hoff models \cite{hoff2002latent}, Random Dot Product Graphs \citep[RDPG;][]{YouSch2007,SusTanFisPri2012,athreya2018statistical} and graphons \citep{Lovasz2012}--- modeling multiple networks \citep[also known as multilayer networks][]{kivela2014multilayer} has recently been of growing interest to the network analysis community.
In many practical scenarios, one observes multiple networks on a common set of nodes, with individual networks corresponding to different experimental conditions, populations, or time points.
For example, functional magnetic resonance imaging (fMRI) data yield brain networks in which nodes represent brain regions.
These regions can be consistently aligned across subjects, enabling comparisons of brain connectivity across different populations, disease statuses, ages, or cognitive tasks \citep{vaiana2020multilayer}.
Other examples include temporal gene co-expression networks \citep{bakken2016comprehensive} and international food and agriculture trade networks \citep{de2015structural}.
A central goal of inference in these settings is to identify both the structure that is shared across conditions and the systematic differences between them. 

We briefly review recent developments in the statistical analysis of multiple networks.
\cite{levin2017central} proposed the omnibus embedding method for jointly embedding multiple networks sharing a common low-rank mean structure. 
Subsequently, \citet{levin2022recovering} extended this framework to accommodate heterogeneous edge noise across graphs while maintaining a shared low-rank structure in expectation.
Moving beyond shared mean structure, \cite{lei2023bias} analyzed a multilayer SBM (MLSBM), where each network shares a common community structure but permits heterogeneous edge probabilities.
\cite{lei2024computational} further investigated the fundamental limits of community detection and parameter estimation in MLSBM.  
As an extension of the MLSBM, \cite{nielsen2018multiple} considered a model with multiple RDPGs sharing a common eigenspace. 
Building on this idea, \cite{arroyo2021inference} assumed that the adjacency matrices share a common invariant subspace in expectation, and \cite{xie2024bias} proposed a bias-corrected joint spectral embedding method tailored for this setting.

All of the above works assume a shared subspace structure underlying the edge probabilities.
Relaxing this assumption, \cite{macdonald2022multiness} considered collections of networks following the generalized RDPG model \citep{rubin2022statistical}, under the assumption that the latent position matrix of each network is a concatenation of shared and individual latent position matrices.
Several works have also explored incorporating auxiliary covariates into multilayer network models \citep[e.g.,][]{macdonald2022latent,zhang2023generalized}.
More broadly, various extensions of other models to the multilayer setting have been proposed; see, for example, \cite{gollini2016joint, chen2017multilayer, salter2017latent, d2019latent, sosa2022latent, he2023semiparametric}.

This paper addresses the problem of modeling symmetric, weighted networks observed on a common set of nodes. 
We introduce a model in which networks from different groups are expressed as the sum of a shared low-rank matrix, capturing global latent structure, and a node-sparse perturbation that accounts for localized structural differences that are unique to each group.
The node-level sparsity of these perturbations reflects the belief that interventions or condition-specific effects often impact only a small subset of entities (e.g., individual brain regions in connectomic data or small groups of individuals in social networks), whose connectivity patterns are selectively altered under various conditions.
Such sparsity assumptions are common in network estimation to promote model parsimony.
For example, \cite{paul2020random} proposed a random effect SBM in which each subject-specific community assignment is a random perturbation of a group-level mean structure, with the assumption that most nodes across different networks retain their group-level memberships. 
Similarly, \cite{zhang2023generalized} considered multiple networks with covariate information, and assume sparsity in the coefficient tensor associated with the covariates.  

Existing work on multiplex and multilayer networks often assumes that differences across layers are smooth or low-rank, arising from shared latent spaces with layer-specific embeddings, or via stochastic block models with group-specific cluster parameters \citep{paul2020random,macdonald2022multiness,lei2023bias}.
Due to this low-rank or smooth structure, these frameworks are ill-suited to detect sparse or localized deviations. 
In contrast, our model is specifically designed to capture such heterogeneity, which cannot be adequately represented by global low-rank structure alone.
Besides adding a new model to the multilayer network literature, another primary goal is to provide simple examples where the availability of multiple observations provides non-trivial benefits in estimation over a single observation. 

To formalize our setting, we begin with a simplified two-group model, where each group consists of a single undirected, weighted network with adjacency matrices $\mY^{(0)}$ and $\mY^{(1)}$ respectively.
We model them according to
\begin{equation}\label{eq:base-model}
\mY^{(0)} = \mMstar + \mW^{(0)} \quad \text{ and } \quad
\mY^{(1)} = \mMstar + \mBstar + \mW^{(1)},
\end{equation}
where $\mMstar \in \R^{n\times n}$ is a symmetric rank-$r$ matrix representing the shared latent structure, $\mBstar \in \R^{n\times n}$ is a symmetric matrix capturing the differential connectivity between the two groups, and $\mW^{(0)}, \mW^{(1)} \in \R^{n\times n}$ are independent symmetric noise matrices.  
The assumption that $\mY^{(0)}$ consists solely of the shared component plus noise simplifies identifiability by providing a natural reference point.
This is analogous to the classical ANOVA framework, where one distinguishes a baseline (i.e., control) group from one or more treatment conditions, and one aims to characterize the treatment-induced deviations from the baseline group.

Extending this to a more general setting, we consider two groups of weighted networks. 
Let $\calG_0$ and $\calG_1$ denote index sets,
with adjacency matrices in group $\calG_0$ modeled as
\begin{equation}\label{eq:model:calG0}
    \mYz_i = \mMstar + \mW^{(0)}_i, \quad i \in \calG_0,
\end{equation}
and adjacency matrices of networks in group $\calG_1$ modeled as
\begin{equation}\label{eq:model:calG1}
    \mYo_j = \mMstar + \mBstar_j + \mW^{(1)}_j, \quad j \in \calG_1.
\end{equation}
Here, $\mMstar$ captures the network structure that is shared across all groups, while $\mBstar_j$ encodes a subject-specific perturbation for subject $j$ in group $\calG_1$. 
To enhance interpretability and reflect practical constraints, we assume that $\mBstar_j$ exhibits node-level sparsity, meaning that only a small subset of nodes display altered connectivity patterns under conditions different from the baseline.
We will call this type of sparsity \emph{node-sparsity}, since it captures the node-level differences between treatment groups, and this difference is assumed to be sparse.
The precise definition of node-level sparsity is provided below in Equation~\eqref{eq:group-sparsity} in Section~\ref{sec:setup}. 
The same notion of sparsity is also considered in \cite{mohan2014node} for graphical models and \cite{relion2019network} for network classification. 

The formulation given in Equations~\eqref{eq:model:calG0} and~\eqref{eq:model:calG1} allows for multiple independent network observations per group, enabling the model to capture subject-specific heterogeneity in the group $\calG_1$. 
Within this proposed framework, we are interested in addressing the following key questions:
\begin{itemize}[label=\ding{114}]
    \item How does the availability of multiple observations from $\calG_0$ and/or $\calG_1$ improve the recovery of the shared structure $\mMstar$ or the subject perturbations $\mBstar_j$?
    \item What estimation procedures are effective for recovering the shared low-rank structure $\mMstar$ and the subject-specific perturbations $\mBstar_j$?
\end{itemize}
We now provide a brief overview of our main contributions.
\begin{enumerate}
    \item In Section~\ref{sec:spectral-initialization}, we show that a single observation from the control group $\calG_0$ is sufficient to estimate the shared low-rank structure $\mMstar$ at the minimax-optimal rate under the $\ell_{2,\infty}$ norm.
    \item On the other hand, in Section~\ref{sec:common-structure}, we demonstrate that having multiple observations from $\calG_0$ and/or $\calG_1$ can substantially improve the estimation of $\mMstar$, both in terms of eigenspace estimation under the $\ell_{\infty}$ and $\ell_{2,\infty}$ norms, and in estimating the low-rank signal matrix $\mMstar$ under the $\ell_\infty$ norm.
    \item In Section~\ref{sec:one-step-recovery}, we establish that recovering the perturbation matrices $\mBstar_j$ from a single observation in $\calG_1$ requires strong conditions on the noise distribution. 
    However, when multiple observations share the same $\mBstar_j$, these assumptions can be significantly relaxed, leading to improved recovery guarantees.
\end{enumerate}
These results offer a nuanced understanding of how multiple observations from $\calG_0$ and/or $\calG_1$ contribute to the recovery of both shared and individual components of network structure.

On the algorithmic front, we develop a suite of estimation procedures that adapt to the number of observations in each group. 
These include a spectral initialization step (Section~\ref{sec:spectral-initialization}), semidefinite programming (SDP) approaches for identifying individual perturbations (Sections~\ref{sec:single-sdp} and \ref{sec:multiple-sdp}), and a refinement procedure for accurately recovering the shared low-rank structure (Section~\ref{sec:common-structure}).

In addition to algorithm development, we establish minimax lower bounds for both support recovery and low-rank estimation in our model, and demonstrate that our proposed methods attain these bounds under suitable conditions. 
Moreover, we demonstrate that certain intuitive baseline methods, such as group Lasso, can provably fail to recover the individual perturbations $\mBstar_{j}$, highlighting the need for specialized methods in this setting.

\paragraph{Notation} 

Before proceeding, we pause to establish notation.
Throughout, $c$ and $C$ will denote positive constants whose precise values may change from line to line.
For a positive integer $n$, we write $[n] = \{1,2,\dots,n\}$.
For a set $S$, we use $|S|$ to denote its cardinality.
For a vector $\vv=\left(v_1,v_2, \dots, v_n\right)^\top \in \R^n$, we denote the norms $\|\vv\|_2=\sqrt{\sum_{i=1}^n v_i^2}$ and $\|\vv\|_{\infty}=\max_i\left|v_i\right|$.
We denote the standard basis vectors of $\R^n$ by $\ve_i \in \R^{n}$ for $i \in [n]$ and write $\bbS^{n-1} = \left\{ \vu \in \R^n: \|\vu\|_2 = 1 \right\}$ for the unit sphere.
For a matrix $\mM \in \R^{n\times n}$, we write $\mM_{i,\cdot}$ for its $i$-th row, taken as a row vector, and $\mM_{\cdot,i}$ for its $i$-th column, taken as a column vector.
$\|\mM\|$ denotes its operator norm and $\|\mM\|_{2,\infty} := \max_{i\in[n]}\|\mM_{i,\cdot}\|_2$ denotes its maximum row-wise $\ell_2$ norm.
We write $\mI_{n} \in \R^{n\times n}$ for the $n$-by-$n$ identity matrix and $\mJ$ for the matrix of all ones, with dimensions clear from the context.
We use $\Sym(n)$ to denote the space of symmetric $n$-by-$n$ matrices.
We use both standard Landau notation and asymptotic notation, switching between the two conventions as is convenient.
Given positive functions $f(n)$ and $g(n)$, we write $f(n) \gg g(n)$, $f(n) = \omega(g(n))$ or $g(n) = o( f(n) )$ if $f(n)/g(n) \rightarrow \infty$ as $n \rightarrow \infty$.
We write $f(n) \gtrsim g(n)$, $f(n) = \Omega( g(n) )$ or $g(n) = O( f(n) )$ if for a positive constant $C$, we have $f(n)/g(n) \ge C$ for all sufficiently large $n$.
We write $f(n) = \Theta( g(n) )$ if both $f(n) = O(g(n))$ and $g(n) = O(f(n))$.

\section{Setup} \label{sec:setup}

Recall the model defined in Equations~\eqref{eq:model:calG0} and~\eqref{eq:model:calG1}
\begin{equation*}
    \mYz_i = \mMstar + \mWz_i, \; \text{ for } i \in \calG_0, \quad \text{ and } \quad \mYo_j = \mMstar + \mBstar_j + \mWo_j, \; \text{ for } j \in \calG_1.
\end{equation*}
Throughout, we will impose a set of assumptions on the noise matrices $\mW$.
\begin{assumption} \label{assump:noise}
    The noise matrix $\mW \in \R^{n\times n}$ is symmetric, with independent entries $(W_{ij})_{1\leq i\leq j\leq n}$ on and above its diagonal.
Furthermore, the entries of $W$ satisfy
    \begin{enumerate}
        \item {\bf (Heteroscedasticity.)} 
	For all $1 \le i \le j \le n$,
	$0 < \sigma_{\min}^2 \leq \E W_{ij}^2 = \sigma^2_{ij} \leq \sigma^2$, with $\frac{\sigma^2}{\sigma^2_{\min}} = \Theta(1).$ 
        \item {\bf (Boundness.)}
	For all $1 \le i \le j \le n$, $|W_{ij}| \leq L$ almost surely.
        \item {\bf (Symmetry about zero.)}
	For all $1 \le i \le j \le n$, $W_{ij} \eqdist -W_{ij}$.
    \end{enumerate}
\end{assumption}
This assumption is standard in the literature on low-rank matrix estimation \citep{chen2021asymmetry,fan2022asymptotic,zhou2025deflated}.
The lower bound on the noise variance $\sigma^2_{\min}$ in the heteroscedasticity condition 
is not required for establishing the upper bounds in our analysis. 
Rather, it is included to facilitate the derivation of minimax lower bounds.
The boundedness condition on the noise entries can be relaxed to a high-probability version. 
Specifically, it suffices to assume that $\Pr(|W_{ij}| \leq L) \geq 1 - o(n^{-c})$ for some suitably large constant $c > 0$.
We adopt the deterministic version of this bound for technical simplicity and clarity of exposition.
The symmetry assumption on the noise distribution is also introduced primarily for convenience, as it simplifies the analysis, but is not essential to the main results.

Additionally, we will need a near-homoscedastic condition in some scenarios.
\begin{assumption}[{\bf Near-homoscedasticity}] \label{assump:homoscedastic}
Let $\mSig = (\sigma^2_{ij})_{1\leq i, j \leq n} \in \R^{n\times n}$. There exists a value $\sigma_0 > 0$ such that $\sigma_0 = \Theta(\sigma)$ and 
\begin{equation*}
    \left\|\mSig - \sigma^2_0 \mJ\right\| \leq C \sigma^2 \sqrt{n}. 
\end{equation*}
\end{assumption}

\begin{remark}
    Although Assumption~\ref{assump:homoscedastic} requires the overall behavior of the noise to be close to a homoscedastic noise, it does not require the individual entries of the noise to be homoscedastic.
    Consider a symmetric random matrix $\mSig$ with i.i.d.~sub-Gaussian entries with parameter $\sigma^2$, for example, $\Sigma_{ij} \sim \Unif(0,2)$ for $1 \leq i \leq j \leq n$.
    Suppose further that $\E \Sigma_{ij} = \sigma^2_0$ for some $\sigma_0 \asymp \sigma$.
    then $\E \mSig = \sigma^2_0 \mJ$.
    By the matrix Bernstein inequality, we have $\|\mSig - \E \mSig\|\lesssim \sigma^2 \sqrt{n}$, which satisfies Assumption~\ref{assump:homoscedastic}. 
    This implies that Assumption~\ref{assump:homoscedastic} can be satisfied by heteroscedastic noise.  
\end{remark}

Let $\mMstar = \mUstar \mLambdastar \mU^{\star \top}$ be the spectral decomposition of $\mMstar$, where $\mLambdastar \in \R^{r\times r}$ is a diagonal matrix corresponding to the nonzero eigenvalues of $\mMstar$.
The nonzero elements of $\mLambdastar$ are given by $\lambdastar_1, \lambdastar_2, \dots, \lambdastar_r$ indexed so that $|\lambdastar_1| \geq |\lambdastar_2| \geq \cdots \geq |\lambdastar_r|$. 
Denote the condition number of $\mMstar$ as $\kappa$, which is given by $\kappa = |\lambdastar_1| / |\lambdastar_r|$.
For simplicity of presentation, we assume that $\kappa = O(1)$ and refer the reader to \cite{zhou2025deflated} for tackling high conditional number. 
We define the incoherence of $\mUstar$ to be 
\begin{equation*}
    \mu = \frac{n}{r} \|\mUstar\|_{2,\infty}^2 \in \left[1, \frac{n}{r}\right],
\end{equation*}
We impose the following requirement on $L$ and $\sigma$ introduced in Assumption~\ref{assump:noise}:
\begin{equation} \label{eq:L-sigma}
    \sigma \leq L \leq C \sigma \sqrt{\frac{n}{\mu \log n}},
\end{equation}
where $C > 0$ is a universal constant.
We will be primarily interested in the setting where $L$ and $\sigma$ are of the same order up to logarithmic factors, so that we allow a wider range of $\mu$; but whenever possible, we will extend our results to a more general setting, allowing for the possibility that $L$ is much larger than $\sigma$.
As demonstrated in the proof of Theorem~\ref{thm:delta_row}, Equation~\eqref{eq:L-sigma} enables us to localize the row-wise estimation error of $\mMstar$ and $\mUstar$. Without this assumption, the interplay between high coherence and large magnitude can compromise the accuracy of the entire matrix estimation.

Before we proceed to discuss the estimation procedure, we need to ensure that the model is identifiable.
In particular, the structure of the matrix $\mBstar_l$ requires careful consideration.
For notational convenience, we will omit the subscript $l$ in $\mBstar_l$ and refer to a generic node-sparse signal matrix simply as $\mBstar$.
To formalize the notion of {\em node-sparsity}, we have the folloing definition. 
\begin{definition} \label{def:nodesparse}
    The matrix $\mBstar$ is {\em node-sparse} if there exists a subset of indices $\Istar \subseteq [n]$ with $|\Istar| = m \leq n$, such that
    \begin{equation} \label{eq:group-sparsity}
        \begin{aligned}
            \|\mBstar_{i, \cdot}\|_2 = \|\mBstar_{\cdot, i}\|_2 > 0, &\mbox{ if } i\in \Istar;\\
            \Bstar_{ij} = 0, &\mbox{ for any } (i, j) \in I_{\star}^c \times I_{\star}^c.
        \end{aligned}
    \end{equation}
    We refer to $\Istar$ as the {\em node-level support} of $\mBstar$, representing the subset of nodes involved in nontrivial perturbations in our treatment group(s).
\end{definition}

While the classical definition of support for a matrix typically refers to the set of nonzero entries, we adopt the term node-level support to highlight the row/column sparsity pattern described in Equation~\eqref{eq:group-sparsity}.
More generally, for any symmetric matrix $\mB \in \R^{n \times n}$, we introduce the following definition:
\begin{definition}\label{def:IB}
    For any symmetric matrix $\mB \in \R^{n \times n}$, we define $I_{\mB} \subseteq [n]$ to be the smallest index set such that Equation~\eqref{eq:group-sparsity} holds with $\mB$ in place of $\mBstar$.
\end{definition}

An immediate question is whether this node-level support set $\Istar$ is well-defined. 
Although we require $\Istar$ to be the smallest such set, minimality alone is not sufficient for uniqueness, as multiple sets of the same size could satisfy the condition in Equation~\eqref{eq:group-sparsity}.
To resolve this ambiguity and guarantee identifiability of the node-level support set, we introduce the following assumption.
\begin{assumption} \label{assump:B-identifiable}
    There exists a subset $I \subset [n]$ with $|I|=m$ such that if $i \in I$, then $\mBstar_{i,\cdot}$ has at least $m+1$ nonzero entries.
\end{assumption}
Lemma~\ref{lem:identify-B} shows that under this assumption, the node-level support $\Istar$ is uniquely identified.

\begin{lemma} \label{lem:identify-B}
If $\mBstar$ satisfies Assumption~\ref{assump:B-identifiable}, then the minimal index set $\Istar$ satisfying Equation~\eqref{eq:group-sparsity} exists, is unique, and $\Istar = I$, where $I \subseteq [n]$ is the set guaranteed by Assumption~\ref{assump:B-identifiable}.
\end{lemma}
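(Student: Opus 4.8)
The plan is to recast condition~\eqref{eq:group-sparsity} as a combinatorial, vertex-cover-type condition on index sets, and then to use the degree lower bound in Assumption~\ref{assump:B-identifiable} to force every sufficiently small valid set to contain $I$. Call a set $S \subseteq [n]$ \emph{admissible} if it can play the role of $\Istar$ in~\eqref{eq:group-sparsity}, i.e., if $\mBstar_{S^c,S^c} = 0$ and $\mBstar_{i,\cdot}\neq 0$ for every $i \in S$. Since $\mBstar$ is symmetric, the identity $\|\mBstar_{i,\cdot}\|_2 = \|\mBstar_{\cdot,i}\|_2$ holds automatically, so these two requirements capture exactly~\eqref{eq:group-sparsity}. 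Equivalently, regarding $\mBstar$ as the weighted adjacency matrix of a graph on $[n]$ with an edge $\{i,j\}$ (or a loop at $i$) precisely when $\Bstar_{ij}\neq 0$, the admissible sets are exactly the vertex covers of this graph that contain no isolated vertex. By the node-sparsity assumption on $\mBstar$ (Definition~\ref{def:nodesparse}), there is an admissible set of cardinality $m$, which we will compare against the set $I$ supplied by Assumption~\ref{assump:B-identifiable}.

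The crux is the following claim: every admissible set $S$ with $|S|\le m$ contains $I$. Suppose not, and pick $i \in I \setminus S$, so that $i \in S^{c}$. Since $(i,i)\in S^{c}\times S^{c}$, admissibility forces $\Bstar_{ii}=0$; hence every nonzero entry of the row $\mBstar_{i,\cdot}$---and there are at least $m+1$ of them, by Assumption~\ref{assump:B-identifiable}---lies in some column $j\ne i$. For each such $j$ we have $\Bstar_{ij}\ne 0$, so $(i,j)\notin S^{c}\times S^{c}$; as $i \in S^{c}$, this forces $j \in S$. Thus $S$ contains at least $m+1$ distinct indices, contradicting $|S|\le m$, and the claim follows. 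This is the only step that uses the quantitative content of Assumption~\ref{assump:B-identifiable}, namely that each node of $I$ has strictly more than $m$ nonzero entries in its row.

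The remaining steps are routine bookkeeping. Let $S_{0}$ be an admissible set with $|S_{0}| = m$. Applying the claim to $S_{0}$ gives $I \subseteq S_{0}$, and since $|I| = |S_{0}| = m$, we conclude $S_{0} = I$; in particular, $I$ is itself admissible. Next, let $S$ be any admissible set with $|S| \le m$. The claim gives $I \subseteq S$, hence $m = |I| \le |S| \le m$, so $|S| = m$ and $S = I$. Consequently, no admissible set has cardinality smaller than $m$, the minimum-cardinality admissible set exists and is unique, and it equals $I$---which is precisely the assertion of the lemma.

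I expect the only genuine subtlety to lie in the proof of the claim: one must first invoke admissibility to eliminate the diagonal entry $\Bstar_{ii}$ for $i \in S^{c}$, since otherwise a lone diagonal nonzero could absorb one of the $m+1$ guaranteed nonzeros and deliver only the weaker bound $|S|\ge m$ in place of $|S| \ge m+1$. Everything else reduces to counting. For context, the degree hypothesis in Assumption~\ref{assump:B-identifiable} cannot be dropped: for $\mBstar = \operatorname{diag}(A,A,0,\dots,0)$ with $A$ the $2\times 2$ matrix having zero diagonal and unit off-diagonal entries, $\mBstar$ is node-sparse yet admits the four distinct minimum admissible sets $\{1,3\}$, $\{1,4\}$, $\{2,3\}$, $\{2,4\}$, so some separation condition of this flavor is unavoidable.
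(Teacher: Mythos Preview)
Your proof is correct and follows essentially the same approach as the paper: both hinge on the counting argument that if $i\in I$ but $i\notin S$ for an admissible $S$ with $|S|\le m$, then the $m+1$ nonzero entries in row $i$ force $|S|\ge m+1$. Your write-up is slightly more careful on two points the paper leaves implicit---you explicitly dispose of the diagonal entry $\Bstar_{ii}$, and you invoke the node-sparsity Definition~\ref{def:nodesparse} to obtain an admissible set of size $m$ (whereas the paper's opening sentence asserts that $I$ itself satisfies~\eqref{eq:group-sparsity}, which is really the conclusion rather than the starting point)---but the mathematical substance is identical.
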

\begin{proof}
We note that the set $I$ ensured by Assumption~\ref{assump:B-identifiable} satisfies the conditions of Equation~\eqref{eq:group-sparsity}, so $|\Istar| \le |I| = m$ trivially.
For any $i \in I$, Assumption~\ref{assump:B-identifiable} ensures the existence of at least $m + 1$ indices $j_1, j_2, \dots, j_{m+1} \in [n]$ such that $\Bstar_{ij_k} \ne 0$ for all $k \in [m+1]$.
Since $\Bstar_{ij_k} \ne 0$, and Equation~\eqref{eq:group-sparsity} requires $\Bstar_{ij} = 0$ whenever $i, j \notin \Istar$, it must be that either $i \in \Istar$ or $j_k \in \Istar$ for all $k \in [m+1]$.
If $i \notin \Istar$, then at least $m + 1$ distinct indices (namely, $j_1,j_2, \dots, j_{m+1}$) must belong to $\Istar$, contradicting the bound $|\Istar| \le m$.
Therefore, $i \in \Istar$, implying $I \subseteq \Istar$. 
Since $|\Istar| \le |I|$, it follows that $\Istar = I$, completing the proof.
\end{proof}

Assumption~\ref{assump:B-identifiable} is not a necessary condition for the uniqueness of the node-level support set. 
However, our goal here is not to identify the weakest possible conditions, but rather to provide a simple assumption that guarantees identifiability and easy to check on any given $\mBstar$. 
Moreover, Assumption~\ref{assump:B-identifiable} is not particularly restrictive. 
If a row $\mBstar_{i,\cdot}$ satisfies that $\left\|\mBstar_{i,\cdot}\right\|_{0} = O(m)$, then both the $i$-th row and column are sparse, reflecting a form of edge-level sparsity.  
While such sparsity could be leveraged to improve estimation accuracy in principle, our focus in this paper is on identifying and modeling node-level effects.
Thus, we do not pursue additional sparsity assumptions beyond those specified in Equation~\eqref{eq:group-sparsity} in the present work.
The notation and definitions introduced above are also ready to be extended to the more general case considered in Equation~\eqref{eq:model:calG1}.
We denote the node support of $\mBstar_l$ as $I_{l, \star}$ and assume that $|I_{l, \star}| \leq m_l$ for all $l \in \calG_1$. 

\subsection{Overview of the Proposed Algorithm}

With the setup in place, we now provide an overview of our proposed algorithm, which also serves as a roadmap for the remainder of the paper.
The procedure is outlined below in Algorithm~\ref{alg:general} and involves three key subroutines, which we refer to as algorithms $\calA_0, \calA_1$ and $\calA_2$.
Specifically, $\calA_0$ provides an initial estimate of the shared structure $\mMstar$, $\calA_1$ produces an estimated $\Ihat_l$ of node-level support set of the perturbation $\mBstar_l$ for each $l \in \calG_1$, and $\calA_2$ refines the initial estimate of the shared structure $\mMstar$.
A more detailed discussion of each step follows.

In Step~\ref{alg:general:step:A0} of Algorithm~\ref{alg:general}, we obtain an initial estimate $\mM^{(0)}$ of the shared low-rank structure $\mMstar$ using Algorithm $\calA_0$, applied to the control group observations $\{\mYz_l\}_{l \in \calG_0}$.
We specify algorithm $\calA_0$ to be the standard spectral estimator for low-rank matrices.
The resulting estimator $\mM^{(0)}$ is itself low-rank and can be factorized as $\mU \mLambda \mU^\top$.
In Step~\ref{alg:general:step:select}, we identify a subset of nodes $\calU \subseteq [n]$ whose corresponding rows in the singular vector matrix $\mU$ have low coherence. 
The details are provided in Section~\ref{sec:spectral-initialization}.

Next, we restrict the observations $\{\mYo_l\}_{l \in \calG_1}$ to the selected node set $\calU$ and subtract the estimated shared structure $\mM^{(0)}$, yielding residual matrices $\mYtil_l$.
In Step~\ref{alg:general:step:A1}, we apply algorithm $\calA_1$ to the residual matrices $\mYtil_l$ to estimate the node-level support $\Ihat_l$ for each subject in the treatment group.
We analyze multiple possible implementations of $\calA_1$ in Section~\ref{sec:one-step-recovery}. 
Our primary proposal is an estimator based on semi-definite programming (SDP), offering both computational efficiency and minimax optimality guarantees.
In Section~\ref{sec:multiple-sdp}, we further demonstrate that the availability of multiple observations is crucial for relaxing the assumptions required to recover the node-level support $I_{l,\star}$.

As an alternative to SDP, we also consider a method based on the group lasso penalty, described in Section~\ref{subsec:glasso}. 
This approach, originally proposed in \cite{mohan2014node} for graphical models exhibiting sparsity patterns similar to ours, can be computationally more efficient than SDP.
However, its theoretical justification in prior work was limited. 
Specifically, it lacked high-probability guarantees for recovering the support of $\mBstar_l$.
We show that because the group lasso method is a greedy algorithm, it can fail to recover the correct support in certain regimes.

Finally, in Step~\ref{alg:general:step:A2}, we refine the estimate of the shared structure $\mMstar$. 
For each $l \in \calG_1$, we remove the rows and columns corresponding to the estimated node support $\Ihat_l$ from $\mYo_l$, obtaining a refined matrix $\mM_l$. 
We then apply algorithm $\calA_2$ to the collection of $\{\mYz_l\}_{l \in \calG_0}$ and $\{\mM_l\}_{l \in \calG_1}$ to produce the final estimator $\mMhat$.
Algorithm $\calA_2$ can be viewed as a debiased version of the spectral method, where we make use of multiple observations to improve the estimation of $\mMstar$.
Details are provided in Section~\ref{sec:common-structure}. 

\begin{algorithm}[ht]
\caption{General procedure to estimate $\mMstar$ and $\mBstar_l$}\label{alg:general}
\begin{algorithmic}[1]
\Require{$\left\{\mYz_{k}\right\}_{k \in \calG_0}, \left\{\mYo_{l}\right\}_{l \in \calG_1}$; Algorithms $\calA_0, \calA_1$ and $\calA_2$.} 
\Ensure{Estimators $\mMhat$ and $\{\mB_l\}_{l \in \calG_1}$.}
\State \label{alg:general:step:A0} $\mM^{(0)} = \mUz \mLambdaz \mUzt = \calA_0\left(\left\{\mYz_i\right\}_{i \in \calG_0}\right)$.
\State \label{alg:general:step:select} Let 
\begin{equation}\label{eq:calU:define}
    \calU := \left\{i \in [n] : \left\|\mUz_{i,\cdot}\right\|_2 \leq C n^{-1/4} \right\}. 
\end{equation}
Without loss of generality, we assume that $\calU = [\ntil]$ for $\ntil \leq n$. 
\State Set 
\begin{equation}\label{eq:Ytil:define}
\Ytil_{l,ij} \gets Y^{(1)}_{l,ij} - M^{(0)}_{ij}, \mbox{ for } \; i,j \in \calU,
\end{equation} 
where $\mYtil_l \in \R^{\ntil \times \ntil}$. 
\State \label{alg:general:step:A1} Set $\Ihat_{l}$ to be the estimated node support of $\mBstar_l$ given by $\{\Ihat_{l}\}_{l \in \calG_1} \gets \calA_1\left(\left\{\mYtil_l\right\}_{l\in \calG_1}\right)$ 
\State Set the rows and columns in the node support $\Ihat_{l}$ from $\mYo_l$ to $0$ and obtain $\mM_{l}$ as follows: 
\begin{equation*} 
    M_{l,ij} :=
    \begin{cases}
        Y^{(1)}_{l,ij} & \mbox{ if } (i,j) \in \Ihat^c_{l} \times \Ihat^c_{l}, \\ 
        0 & \mbox{ otherwise }.
    \end{cases}
\end{equation*}
\State \label{alg:general:step:A2}
Set $\mMhat \gets \calA_2\left(\left\{\mYz_k\right\}_{k\in\calG_0}, \left\{\mM_l\right\}_{l \in \calG_1}\right)$. 
\end{algorithmic}
\end{algorithm}


We finish this section with an informal statement of our main results, which provides a high-level overview of the theoretical guarantees of Algorithm~\ref{alg:general}.
\begin{theorem} \label{thm:infomal}
    Under suitable conditions on the noise matrices and the parameters of the model in Equations~\eqref{eq:model:calG0} and~\ref{eq:model:calG1}, Algorithm~\ref{alg:general} provides the following guarantees:
    \begin{enumerate}
        \item The spectral initialization procedure $\calA_0$ in Step~\ref{alg:general:step:A0} yields an estimate $\mM^{(0)}$ that estimates $\mMstar$ under the row-wise $\ell_{2,\infty}$ norm at the minimax optimal rate.
        \item The support recovery procedure $\calA_1$ in Step~\ref{alg:general:step:A1} yields an estimate $\Ihat_l$ of the node-level support $I_{l,\star}$ that recovers the true support with high probability under minimal signal-to-noise ratio (SNR) conditions.
        \item The debiased refinement $\calA_2$ in Step~\ref{alg:general:step:A2} yields near-minimax optimal estimates of $\mUstar$ under the row-wise $\ell_{2,\infty}$ norm, and of $\mMstar$ under the elementwise $\ell_{\infty}$ norm.
    \end{enumerate}
\end{theorem}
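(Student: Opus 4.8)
The plan is to prove the three claims separately---they concern the subroutines $\calA_0$, $\calA_1$ and $\calA_2$ analyzed in Sections~\ref{sec:spectral-initialization}, \ref{sec:one-step-recovery} and~\ref{sec:common-structure} respectively---and then to note that Theorem~\ref{thm:infomal} is their conjunction once the intermediate estimation errors are correctly propagated through Algorithm~\ref{alg:general}. For claim~(1), $\calA_0$ is the rank-$r$ spectral estimator applied to the control-group average $\frac{1}{|\calG_0|}\sum_{i\in\calG_0}\mYz_i$, whose mean is exactly $\mMstar$ and whose effective per-entry noise variance is $\sigma_{ij}^2/|\calG_0|$. I would bound the row-wise eigenspace error, and hence $\|\mM^{(0)} - \mMstar\|_{2,\infty}$, by a leave-one-out decoupling argument in the style of \citet{chen2021asymmetry}; Assumption~\ref{assump:noise} together with the scaling~\eqref{eq:L-sigma} is exactly what makes this bound localize row by row (cf.\ the proof of Theorem~\ref{thm:delta_row}), with $\mu$ and $\kappa = O(1)$ entering the final rate. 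The matching minimax lower bound comes from a two-point / Le Cam argument over a packing of incoherent rank-$r$ matrices, using the variance lower bound $\sigma_{\min}^2$ from Assumption~\ref{assump:noise}.

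For claim~(2), after Steps~\ref{alg:general:step:select}--\ref{alg:general:step:A1} the residual on $\calU$ decomposes as $\mYtil_l = \mBstar_l|_{\calU} + \mWo_l|_{\calU} + (\mMstar - \mM^{(0)})|_{\calU}$, and by the definition~\eqref{eq:calU:define} of $\calU$ and claim~(1) the last term is negligible relative to the noise on $\calU$; this reduces the task to support recovery in a node-sparse-signal-plus-noise model, whose target $I_{l,\star}\cap\calU$ is well defined by Lemma~\ref{lem:identify-B}. I would then analyze the SDP of Sections~\ref{sec:single-sdp}--\ref{sec:multiple-sdp}: write out its optimality conditions, construct a primal--dual certificate for the oracle solution supported on $I_{l,\star}$, and show it is feasible with high probability under the stated SNR condition using matrix Bernstein and $\ell_{2,\infty}$ concentration for the noise block. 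When several observations in $\calG_1$ share the same $\mBstar_l$, averaging reduces the effective noise level and hence the required SNR, which is the improvement established in Section~\ref{sec:multiple-sdp}. The corresponding lower bound is again a Fano argument, here over which size-$m_l$ node set carries the perturbation.

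For claim~(3), on the event $\Ihat_l = I_{l,\star}$ for all $l\in\calG_1$ from claim~(2), each $\mM_l$ equals $\mMstar$ with a deterministic block set to zero, plus noise, so pooling $\{\mYz_k\}_{k\in\calG_0}$ with $\{\mM_l\}_{l\in\calG_1}$ increases the effective sample size. A naive spectral estimate of the pooled mean carries an $O(1)$ multiplicative bias in its eigenvalues---hence an entrywise bias---coming from the diagonal of $\E[\mW\mW^\top]$ under heteroscedastic noise; $\calA_2$ removes this through a diagonal-deletion / debiasing correction in the spirit of \citet{xie2024bias,zhou2025deflated} (Section~\ref{sec:common-structure}), with Assumption~\ref{assump:homoscedastic} used to control the residual bias after debiasing. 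A further leave-one-out analysis then gives the $\ell_{2,\infty}$ bound on $\mUstar$ and the $\ell_\infty$ bound on $\mMstar$, each matching the relevant minimax lower bound up to near-optimal factors.

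The step I expect to be the main obstacle is the propagation of errors between stages. Specifically, I need to show that the initialization error $\mMstar - \mM^{(0)}$, once restricted to $\calU$ and passed into the SDP in Step~\ref{alg:general:step:A1}, does not corrupt the dual certificate; that on the low-probability event $\Ihat_l \neq I_{l,\star}$, or when $\Ihat_l$ is merely a superset of $I_{l,\star}$, the debiased estimate in Step~\ref{alg:general:step:A2} still does not blow up; and that the dependence introduced by reusing $\mYo_l$ in both Steps~\ref{alg:general:step:A1} and~\ref{alg:general:step:A2} is controlled, for which a leave-one-out or sample-splitting decoupling within $\calG_1$ is the natural device. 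The debiasing analysis under the \emph{near}-homoscedastic Assumption~\ref{assump:homoscedastic}, rather than exact homoscedasticity, is the other technically demanding ingredient.
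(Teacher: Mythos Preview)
Your high-level decomposition into three separate guarantees matches the paper, but the technical mechanisms you propose for claims~(2) and~(3) diverge substantially from what the paper actually does.

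For claim~(2), you propose to certify the SDP solution via a primal--dual witness construction. The paper does \emph{not} take this route. Instead, starting from the basic optimality inequality $\langle \mZhat-\mZstar,\mC\rangle\le 0$, it bounds $\|\mZhat-\mZstar\|_1$ directly (Lemma~\ref{lem:key-lower}), obtains a crude bound via Grothendieck's inequality (Lemma~\ref{lem:Grothendieck:bound}), and then sharpens it by decomposing the key inner product along the tangent space of the rank-one manifold at $\mZstar$ and its orthogonal complement (Lemmas~\ref{lem:S2:bound} and~\ref{lem:S1:bound}), following \cite{fei2018exponential}. A dual-certificate argument might well succeed, but it is a genuinely different proof strategy, and you would need to verify that the certificate remains feasible under the perturbation $\mDeltatil$ from spectral initialization---the paper absorbs this instead by conditioning on the events in Equations~\eqref{eq:Grothendieck:condition} and~\eqref{eq:event:S1:delta}.

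For claim~(3), your proposed mechanism is off. You describe $\calA_2$ as a diagonal-deletion correction in the style of \cite{zhou2025deflated} with Assumption~\ref{assump:homoscedastic} controlling residual bias. The paper explicitly contrasts its approach with diagonal deletion (see the remark following Theorem~\ref{thm:eigenspace:l2infty}): it instead uses the asymmetric rearrangement of \cite{chen2021asymmetry} to create a noise matrix $\mH$ with independent entries, applies the estimator of \cite{cheng2021tackling} for linear forms of eigenvectors (Equation~\eqref{eq:linear-form:est}), and then uses two \emph{additional} independent copies $\mM^{(1)},\mM^{(2)}$ to build the multiplicative correction $\mPsihat$ (Equations~\eqref{eq:G:defin}--\eqref{eq:Psihat:def}). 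Assumption~\ref{assump:homoscedastic} plays no role in Section~\ref{sec:common-structure}; it is used only for support recovery in Section~\ref{sec:one-step-recovery}. The dependence concern you raise about reusing $\mYo_l$ is handled not by sample splitting within $\calG_1$ but by the asymmetric rearrangement itself, which makes the entries of $\mM$ independent conditional on $\Ihat_1=\Istar$.

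A minor point on claim~(1): the paper's lower bound (Theorem~\ref{thm:Mstar:2-infty:minimax}) uses the generalized Fano method with a Varshamov--Gilbert packing, not a two-point Le Cam argument.
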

Theoretical results for the spectral initialization are presented in Theorems~\ref{thm:delta_row} and~\ref{thm:Mstar:2-infty:minimax} in Section~\ref{sec:spectral-initialization}.
Upper bounds for support recovery are established in Theorems~\ref{thm:sdp:L1-bound}, \ref{thm:trunc:sdp}, and~\ref{thm:sdp:multiple}, while the related lower bounds are provided in Theorems~\ref{thm:symmetric:minimax} and~\ref{thm:sdp-minimax} in Section~\ref{sec:one-step-recovery}. 
For the debiased refinement procedure, upper bounds are given in Theorems~\ref{thm:improved:linear-form-bound}, \ref{thm:eigenspace:l2infty}, and~\ref{thm:improved:entrywise}, and a minimax lower bound is presented in Theorem~\ref{thm:rank1:linfty:minimax}, all in Section~\ref{sec:common-structure}.

\section{Spectral Initialization} \label{sec:spectral-initialization}

In this section, we focus on analyzing Steps~\ref{alg:general:step:A0} and~\ref{alg:general:step:select} of Algorithm~\ref{alg:general}. 
We focus on the case where $|\calG_0| = 1$, and denote the only matrix in $\calG_0$ by $\mYz$.
The case where $|\calG_0| > 1$ is discussed in Section~\ref{sec:common-structure}. 
Let the spectral decomposition of $\mYz$ be 
\begin{equation*}
    \mYz = \mUz \mLambdaz \mUzt + \mU_{\perp} \mLambda_{\perp} \mU_{\perp}^\top,
\end{equation*}
where $\mLambdaz = \diag(\lambdaz_1,\lambdaz_2,\dots, \lambdaz_r)$ is the diagonal matrix of the leading $r$ eigenvalues, the columns of $\mUz \in \R^{n\times r}$ are the corresponding $r$ orthonormal eigenvectors of $\mYz$,
and the columns of $\mU_{\perp} \in \R^{n\times (n-r)}$ are the remaining $n-r$ orthonormal eigenvectors.
We take $\mMz = \mUz \mLambdaz \mUzt$ as the initial spectral estimator of $\mMstar$ and denote the estimation error by the residual matrix 
\begin{equation} \label{eq:mDelta:define}
    \mDelta := \mMz - \mMstar \in \R^{n \times n}.
\end{equation}

Adapting the proof of Corollary 4.3 in \cite{chen2021spectral}, we obtain the following refined row-wise and entry-wise bounds.

\begin{theorem} \label{thm:delta_row}
Suppose that $\mYz$ is given by Equation~\eqref{eq:base-model} with $\mW^{(0)}$ satisfying Assumption~\ref{assump:noise} and Equation~\eqref{eq:L-sigma}. 
Assume that $|\lambdastar_r| \geq C \sigma \kappa \sqrt{n \log n }$ for some sufficiently large constant $C$ and $\kappa = O(1)$.
Then with probability at least $1 - O(n^{-6})$, the following bounds hold for the residual matrix $\mDelta$ defined in Equation~\eqref{eq:mDelta:define}:
\begin{enumerate}
    \item \textit{(Row-wise bound)} Uniformly over all $\ell \in [n]$,
  \begin{equation*}
  \left\|\mDelta_{\ld}\right\|_{2} \lesssim \left(\sigma \kappa^2 \sqrt{n}\right) \left\|\mUstarl\right\|_2 + \sigma \kappa \sqrt{r \log n} \lesssim \left(\sigma \sqrt{n}\right) \left\|\mUstarl\right\|_2 + \sigma \sqrt{r \log n},
  \end{equation*}
  \item \textit{(Entry-wise bound)} Uniformly over all $k,\ell \in [n]$:
  \begin{equation*}
  \begin{aligned}
      \left|\mDelta_{\ell k}\right| &\lesssim \left(\sigma \kappa^2 \sqrt{n}\right) \left\|\mUstarl\right\|_2 \left\|\mU^\star_{\kd}\right\|_2 + \sigma \kappa \sqrt{r \log n} \max\left\{\left\|\mUstarl\right\|_2 + \left\|\mU^\star_\kd\right\|_2\right\} + \frac{\sigma \kappa \sqrt{r \log n}}{\lambdastar_r} \\
      &\lesssim (\sigma \sqrt{n}) \left\|\mUstarl\right\|_2 \left\|\mU^\star_{\kd}\right\|_2 + \sigma \sqrt{r \log n} \max\left\{\left\|\mUstarl\right\|_2 + \left\|\mU^\star_\kd\right\|_2\right\}
  \end{aligned}
  \end{equation*}
\end{enumerate}
\end{theorem}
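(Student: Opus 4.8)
The plan is to follow the leave-one-out analysis of \cite{chen2021spectral} underlying their Corollary~4.3, with two adaptations: the estimation error is tracked in terms of the per-entry noise scale $\sigma$ rather than $\|\mW^{(0)}\|$ (this is what produces the localized $\sigma\sqrt{r\log n}$ terms), and the argument is carried through to the entry-wise level. First, I would condition on a high-probability event on which the following hold: $\|\mW^{(0)}\|\lesssim\sigma\sqrt n$, by standard concentration for random symmetric matrices with independent, mean-zero, $L$-bounded entries (Assumption~\ref{assump:noise}, using $L\le C\sigma\sqrt{n/(\mu\log n)}\le C\sigma\sqrt n$); $\max_{\ell\in[n]}\|\mW^{(0)}_{\ell,\cdot}\|_2\lesssim\sigma\sqrt n$ and $\max_{\ell\in[n]}\sum_{j}(W^{(0)}_{\ell j})^{2}\lesssim\sigma^{2}n$; and, crucially, $\max_{\ell\in[n]}\|\mW^{(0)}_{\ell,\cdot}\mUstar\|_2\lesssim\sigma\sqrt{r\log n}$. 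This last bound follows from a vector Bernstein inequality applied to $\mW^{(0)}_{\ell,\cdot}\mUstar=\sum_j W^{(0)}_{\ell j}\mUstar_{j,\cdot}$, whose variance proxy is at most $\sigma^{2}r$ and whose per-summand norm is at most $L\,\|\mUstar\|_{2,\infty}\le C\sigma\sqrt{r/\log n}$ by incoherence together with Equation~\eqref{eq:L-sigma}, followed by a union bound over $\ell$. This is precisely where Equation~\eqref{eq:L-sigma} enters, and it is the origin of the $\sigma\sqrt{r\log n}$ terms in the statement.

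Next, since $|\lambdastar_r|\ge C\sigma\kappa\sqrt{n\log n}\gg\|\mW^{(0)}\|$, Weyl's inequality gives $|\lambdaz_i-\lambdastar_i|\le\|\mW^{(0)}\|$ for $i\le r$, so $|\lambdaz_r|\asymp|\lambdastar_r|$ and the $r$-th eigengap of $\mYz$ is $\Theta(|\lambdastar_r|)$; Davis--Kahan then gives an orthogonal $\mathbf{R}\in\R^{r\times r}$ with $\|\mUz\mathbf{R}-\mUstar\|\lesssim\sigma\sqrt n/|\lambdastar_r|$ and $\|\mUz\mUzt-\mUstar\mU^{\star\top}\|\lesssim\sigma\sqrt n/|\lambdastar_r|$. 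For the row-wise control I would run leave-one-out: for each $\ell$, set $\mY^{(0),(\ell)}:=\mMstar+\mW^{(0),(\ell)}$, where $\mW^{(0),(\ell)}$ agrees with $\mW^{(0)}$ except that its $\ell$-th row and column are zeroed, and let $\mathbf{U}^{(\ell)}\in\R^{n\times r}$ collect its top-$r$ eigenvectors. Since $\mathbf{U}^{(\ell)}$ is independent of $\mW^{(0)}_{\ell,\cdot}$, any linear form $\mW^{(0)}_{\ell,\cdot}\mathbf{U}^{(\ell)}$ obeys, conditionally, the Bernstein bound above. One then bounds $\|\mW^{(0)}-\mW^{(0),(\ell)}\|\lesssim\|\mW^{(0)}_{\ell,\cdot}\|_2\lesssim\sigma\sqrt n$, applies Davis--Kahan to compare $\mUz$ with $\mathbf{U}^{(\ell)}$, and combines this with the eigen-relation $\mUz_{\ell,\cdot}\mLambdaz=(\mYz\mUz)_{\ell,\cdot}=\mMstar_{\ell,\cdot}\mUz+\mW^{(0)}_{\ell,\cdot}\mUz$; solving the resulting self-bounding inequality for $\|(\mUz\mathbf{R}-\mUstar)_{\ell,\cdot}\|_2$ yields, uniformly in $\ell$,
\[
\left\|(\mUz\mathbf{R}-\mUstar)_{\ell,\cdot}\right\|_2\;\lesssim\;\frac{\kappa\,\sigma\sqrt n}{|\lambdastar_r|}\,\|\mUstar_{\ell,\cdot}\|_2+\frac{\kappa\,\sigma\sqrt{r\log n}}{|\lambdastar_r|}.
\]

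To pass from eigenvectors to $\mDelta$, I would use $\mMz=\mYz\mUz\mUzt$ and $\mMstar\mUstar\mU^{\star\top}=\mMstar$ to write
\[
\mDelta=\mMz-\mMstar=\mMstar\big(\mUz\mUzt-\mUstar\mU^{\star\top}\big)+\mW^{(0)}\mUz\mUzt.
\]
For the $\ell$-th row, the first term has norm at most $\|\mMstar_{\ell,\cdot}\|_2\,\|\mUz\mUzt-\mUstar\mU^{\star\top}\|\lesssim|\lambdastar_1|\,\|\mUstar_{\ell,\cdot}\|_2\cdot\sigma\sqrt n/|\lambdastar_r|\lesssim\kappa\sigma\sqrt n\,\|\mUstar_{\ell,\cdot}\|_2$, while the second term equals $\|\mW^{(0)}_{\ell,\cdot}\mUz\|_2$, which I split as $\mW^{(0)}_{\ell,\cdot}\mathbf{U}^{(\ell)}(\mathbf{U}^{(\ell)})^{\top}+\mW^{(0)}_{\ell,\cdot}\big(\mUz\mUzt-\mathbf{U}^{(\ell)}(\mathbf{U}^{(\ell)})^{\top}\big)$: the first piece is $\lesssim\sigma\sqrt{r\log n}$ by the conditional Bernstein bound, and the second piece is $\lesssim\sigma\sqrt n\,\|\mUstar_{\ell,\cdot}\|_2$ (up to higher-order terms) by exploiting the low-rank, row/column-$\ell$-supported structure of $\mW^{(0)}-\mW^{(0),(\ell)}$. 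Collecting these gives the row-wise bound, $\kappa^{2}$ being the (loose) power obtained by tracking $\kappa$ through the estimates, and the simplified second inequality follows from $\kappa=O(1)$. The entry-wise bound is obtained the same way from $\mDelta_{\ell k}=\mMstar_{\ell,\cdot}(\mUz\mUzt-\mUstar\mU^{\star\top})_{\cdot,k}+\mW^{(0)}_{\ell,\cdot}(\mUz\mUzt)_{\cdot,k}$, now additionally invoking the row-wise projection bound at index $k$ (which yields the factor $\|\mUstar_{k,\cdot}\|_2$) and decoupling both the $\ell$-th row and the $k$-th column of $\mW^{(0)}$ by leave-one-out; this produces the $\|\mUstar_{\ell,\cdot}\|_2\|\mUstar_{k,\cdot}\|_2$ and $\|\mUstar_{\ell,\cdot}\|_2+\|\mUstar_{k,\cdot}\|_2$ factors together with the lower-order remainder of order $\sigma\sqrt{r\log n}/|\lambdastar_r|$ that is absorbed into the simplified line. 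A final union bound over the $n$ rows and the $n^{2}$ pairs $(\ell,k)$ (each event tuned to hold with probability $1-O(n^{-8})$ by taking $C$ large) yields the claimed probability $1-O(n^{-6})$.

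The main obstacle is the second piece above (and its analogue inside the eigenvector argument): because $\mUz$ depends on the entire noise matrix, one cannot decouple the $\ell$-th row of $\mW^{(0)}$ by merely replacing $\mUz$ with $\mathbf{U}^{(\ell)}$ and bounding the residual via $\|\mW^{(0)}_{\ell,\cdot}\|_2\cdot\|\mUz\mUzt-\mathbf{U}^{(\ell)}(\mathbf{U}^{(\ell)})^{\top}\|$---that product is of order $\sigma^{2}n/|\lambdastar_r|$, too large to absorb into either $\sigma\sqrt{r\log n}$ or $\sigma\sqrt n\,\|\mUstar_{\ell,\cdot}\|_2$ when $\|\mUstar_{\ell,\cdot}\|_2$ is small. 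Instead, one uses the first-order expansion of $\mUz\mUzt-\mathbf{U}^{(\ell)}(\mathbf{U}^{(\ell)})^{\top}$ in $\mW^{(0)}-\mW^{(0),(\ell)}$ and notes that the resulting quadratic form in $\mW^{(0)}_{\ell,\cdot}$ concentrates at the scale $\big(\sum_j (W^{(0)}_{\ell j})^{2}/|\lambdastar_r|\big)\,\|\mUstar_{\ell,\cdot}\|_2\lesssim(\sigma^{2}n/|\lambdastar_r|)\,\|\mUstar_{\ell,\cdot}\|_2\lesssim\sigma\sqrt n\,\|\mUstar_{\ell,\cdot}\|_2$. Verifying this, along with the conditional-independence and incoherence-propagation facts (notably $\|\mathbf{U}^{(\ell)}\|_{2,\infty}\lesssim\|\mUstar\|_{2,\infty}$, needed to run the Bernstein bounds against $\mathbf{U}^{(\ell)}$), is the heart of the adaptation; the remaining ingredients---Weyl, Davis--Kahan, matrix and vector Bernstein, and the algebraic expansions of $\mDelta$---are routine bookkeeping.
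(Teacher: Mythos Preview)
Your proposal is correct and follows essentially the same leave-one-out strategy as the paper's Appendix~\ref{sec:localized}. The differences are organizational. First, instead of your decomposition $\mDelta = \mMstar(\mUz\mUzt - \mUstar\mUstart) + \mW^{(0)}\mUz\mUzt$, the paper splits $\|\mMz_{\ell,\cdot} - \mMstar_{\ell,\cdot}\|_2 \le \gamma_1 + \gamma_2$ with $\gamma_1 := \|\mUz_{\ell,\cdot}(\mLambdaz - \mOfrak\mLambdastar\mOfrak^\top)\mUzt\|_2$ and $\gamma_2 := \|\mUz_{\ell,\cdot}\mOfrak\mLambdastar\mOfrak^\top\mUzt - \mUstar_{\ell,\cdot}\mLambdastar\mUstart\|_2$, where $\mOfrak := \mUzt\mUstar$; once the row-wise eigenvector bound $\|(\mUz\mOfrak - \mUstar)_{\ell,\cdot}\|_2$ is in hand (your self-bounding step), both $\gamma_1$ (via $\|\mOfrak\mLambdastar\mOfrak^\top - \mLambdaz\| \lesssim \kappa\sigma^2 n/|\lambdastar_r| + \sigma\sqrt{r\log n}$) and $\gamma_2$ follow by algebra, so the $\mW^{(0)}_{\ell,\cdot}\mUz$ term does not require a second leave-one-out pass at the $\mDelta$ level. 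Second, for your ``main obstacle'' the paper (Lemma~\ref{lem:loo}) does not use a first-order expansion of the projector; it applies the Davis--Kahan Frobenius bound $\|\mathbf{U}^{(\ell)}\mOfrak^{(\ell)} - \mUz\mOfrak\|_F \lesssim \|(\mYz - \mY^{(0),(\ell)})\mathbf{U}^{(\ell)}\|_F / |\lambdastar_r|$ and then writes out the row/column-$\ell$ supported perturbation explicitly as $(\mYz - \mY^{(0),(\ell)})\mathbf{U}^{(\ell)} = \ve_\ell\,\mW^{(0)}_{\ell,\cdot}\mathbf{U}^{(\ell)} + (\mW^{(0)}_{\cdot,\ell} - W^{(0)}_{\ell\ell}\ve_\ell)\,\mathbf{U}^{(\ell)}_{\ell,\cdot}$. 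The second summand contributes $\sigma\sqrt{n}\,\|\mathbf{U}^{(\ell)}_{\ell,\cdot}\|_2$, and $\|\mathbf{U}^{(\ell)}_{\ell,\cdot}\|_2$ is bounded by $\|\mUstar_{\ell,\cdot}\|_2$ plus terms already appearing in the self-bounding inequality---this is the concrete source of the localized factor you described. Both routes work; the paper's sidesteps the separate perturbation expansion and keeps the analysis entirely within Frobenius-norm arithmetic.
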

The proof Theorem~\ref{thm:delta_row} is provided in Appendix~\ref{sec:localized}, alongside the proof of an analogous result for subspace estimation, which appears as Theorem~\ref{thm:ustar-row}.

\begin{remark} \label{rem:scales-with-Ustar}
    The bound in Theorem~\ref{thm:delta_row} above reveals that the row-wise $\ell_2$-norm of $\mDelta$ scales with the corresponding row norm of $\mUstar$. 
    The condition in Equation~\eqref{eq:L-sigma} is imposed to prevent estimation errors associated with rows having large $\|\mUstar_{\ell,\cdot}\|_2$ from propagating to those with small row norms.
    The proof of Theorem~\ref{thm:delta_row} traces this cross-contamination to the term $\|\mWz_{\ell,\cdot} \mUstar\|_2$. 
    As an illustration, consider $|\mWz_{\ell,\cdot} \vustar_1|$.
    We have
    $
    |W^{(0)}_{\ell,i} u^\star_{1,i}| \leq L \|\vustar_1\|_{\infty}
    $ 
    for all $i \in [n]$ and 
    $
    \sum_{i=1}^n \E \left(W^{(0)}_{li} u^\star_{1i}\right)^2 \leq \sigma^2.  
    $ 
    Applying Bernstein's inequality highlights that the condition in Equation~\eqref{eq:L-sigma} is essential for obtaining an upper bound for $|\mWz_{\ell,\cdot} \vustar_1|$ that ensures row-wise error localization.
\end{remark}

To recover the node support $\Istar$ of $\mBstar$, it is necessary for all $i \in \Istar$, each $\mBstar_{i,\cdot}$ has sufficiently large norm. 
Specifically, Theorem~\ref{thm:symmetric:minimax} in Section~\ref{sec:minimax-support} establishes that a signal strength of at least $\Omega(\sigma n^{1/4} \log^{1/4} n)$ is required for recovering $i \in \Istar$ with high probability.
However, support recovery becomes more challenging when the rows of the low-rank component $\mMstar$ exihibit high coherence.
According to Theorem~\ref{thm:delta_row}, the row-wise estimation error of $\mMstar$ scales with $\|\mU^\star_{\ell,\cdot}\|_2$, implying that high-coherence rows may suffer from larger estimation errors under spectral initialization.
If the node support of $\mBstar$ overlaps with these high-error rows, the estimation error in $\mMstar$ may exceed the minimal signal strength required for detecting $\mBstar$, making recovery impossible.
More generally, high coherence rows of $\mMstar$ can lead to identifiability issues.
Consider the rank-$4$ matrix $\mMstar = \lambdastar \left(\ve_1 \vu_1^\top + \vu_1 \ve_1^\top + \ve_2 \vu_2^\top + \vu_2 \ve_2^\top\right)$, and let $\mBstar = \ve_1 \vv^\top + \vv \ve_1^\top$, where $v_1 = v_2 = 0$ and $v_{i} \sim \calN(0, \sigma_B^2)$ for $3 \leq i \leq n$.
Under our setup, we observe
\begin{equation*}
\begin{aligned}
    \mY^{(0)} = \mMstar + \mW^{(0)}, \; \mY^{(1)} = \mMstar + \mBstar + \mW^{(1)}.
\end{aligned}
\end{equation*}
If $\mMstar + \mBstar$ still has rank $4$, then we can set $\mM^{(1)} = \mMstar + \mBstar$ and consider
\begin{equation*}
\begin{aligned}
    \mY^{(0)} = \mM^{(1)} + \mW^{(0)}, \; \mY^{(1)} = \mM^{(1)} + \mW^{(1)}.
\end{aligned}
\end{equation*}
If the first two rows of $\mW^{(0)}$ have higher variance than the remaining rows, then unless $\sigma_B = \Theta(\sigma)$, one cannot reliably distinguish the above two settings. 
In this paper, to avoid this possible ambiguity between high coherence rows of $\mMstar$ and the node support of $\mBstar$, we impose the following assumption:
\begin{assumption}\label{assump:no-overlap}
    Define the index set 
    \begin{equation} \label{eq:calUstar:define}
        \calU^\star := \left\{\ell \in [n] : \|\mU^\star_{\ell,\cdot}\|_2 = O(n^{-1/4})\right\} \subset [n] .
    \end{equation}
    The node support set $\Istar$ obeys $\Istar \subset \calU^\star$.
\end{assumption}
Under Assumption~\ref{assump:no-overlap}, we only need to discard the high-coherence rows of $\mMstar$ to ensure that the estimation error does not exceed the minimal signal strength required for support recovery.
Theorem~\ref{thm:ustar-row} in Appendix~\ref{sec:localized} shows that the spectral estimator $\mU$ is sufficient for us to accurately estimate the magnitude of each row norm $\|\mU^\star_{\ell,\cdot}\|_2$ up to the tolerable statistical error rate. 
Consequently, Step~\ref{alg:general:step:select} in Algorithm~\ref{alg:general} reliably identifies and removes the high-error rows of $\mMstar$, which is critical for accurate support recovery. 

\begin{remark} \label{rem:calU:wlog}
Under Assumption~\ref{assump:no-overlap}, the high-coherence rows of $\mMstar$ in $\calU^\star$ can be identified and removed with high probability. 
As a result, they do not impact the analysis of the support recovery problem.
For simplicity of exposition, in Sections~\ref{sec:one-step-recovery} and~\ref{sec:common-structure}, we assume without loss of generality that $\left(\calU^\star\right)^c = \emptyset$, so that $\calU^\star = [n]$ holds with high probability.
\end{remark}

It is worth noting that explicit estimation of $\mMstar$ is not strictly required for recovering $\mBstar$.
For instance, one could instead subtract any matrix $\mY^{(0)}_i$ for $i \in \calG_0$ from the observed matrix $\mY^{(1)}_j$ to cancel out the shared component $\mMstar$, resulting in the residual matrix $\mBstar + \mW^{(1)} - \mW^{(0)}$.
This differencing approach removes the low-rank component without explicitly estimating it.
The benefit of this approach is that it can recover $\Istar$ even when Assumption~\ref{assump:no-overlap} does not hold, but this hinges on the assumption that the noise matrices $\mW^{(0)}$ also satisfy Assumption~\ref{assump:homoscedastic}.
Further discussion of Assumption~\ref{assump:homoscedastic} is provided in the beginning of Section~\ref{sec:minimax-support}. 
In contrast, estimating and subtracting the low-rank component $\mM^{(0)}$ avoids the need to impose Assumption~\ref{assump:homoscedastic} on the control noise $\mW^{(0)}$.
Moreover, the na\"{i}ve differencing approach increases the effective noise level, which can hinder support recovery.
In practice, one can apply both methods on the data as the analysis and methods developed in Section~\ref{sec:one-step-recovery} also apply to both the residual-based and direct ``differencing'' approaches.
For this reason, we focus on the residual-based strategy in what follows.


A natural follow-up question to Theorem~\ref{thm:delta_row} concerns whether having more samples in the control group $\calG_0$ significantly improves the row-wise estimation error rate of $\mMstar$. 
Interestingly, even without any additional correction, the spectral estimator $\mM^{(0)}$ already achieves the minimax-optimal rate for estimating $\mMstar$ under the $\ell_{2,\infty}$ norm.

\begin{theorem}\label{thm:Mstar:2-infty:minimax}
Let 
\begin{equation*}
    \calM_0(\mu, \sigma_{\min}) := \left\{\mMstar = \lambdastar \vustar \vustart : \left\|\vustar\right\|_{\infty} = \sqrt{\frac{\mu}{n}}, |\lambdastar| \geq \sigma_{\min} \sqrt{n} \right\} .
\end{equation*}
Suppose that we observe $N\geq 1$ independent copies of $\mM_l = \mMstar + \mW_l$, where $W_{l,ij} \sim \calN(0, \sigma^2_{l,ij})$ are independent random variables for $1\leq i\leq j \leq n$ and $l \in [N]$.
Then 
\begin{equation*}
    \inf_{\mMhat} \sup_{\mMstar \in \calM_0(\mu, \sigma_{\min})} \E_{\mMstar} \left\|\mMhat - \mMstar \right\|_{2,\infty} \geq \frac{\sigma_{\min}}{27\sqrt{\log 4}} \sqrt{\frac{\mu}{N}} ,
\end{equation*}
where the infimum is over all estimators of $\mMstar$.
\end{theorem}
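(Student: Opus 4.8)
The plan is to \emph{not} attempt a two-point (Le Cam) argument, which cannot work here: since $\|\mA\|_{2,\infty}\le\|\mA\|_F$ for every matrix, any two hypotheses $\mMstar_0,\mMstar_1$ that are $\ell_{2,\infty}$-separated by $s$ automatically satisfy $\|\mMstar_0-\mMstar_1\|_F\ge s$, so after $N$ Gaussian observations with variances $\ge\sigma_{\min}^2$ their Kullback--Leibler divergence is $\gtrsim Ns^2/\sigma_{\min}^2$; keeping the two hypotheses statistically indistinguishable therefore forces $s\lesssim\sigma_{\min}/\sqrt N$, a factor $\sqrt\mu$ short. Instead, I would reduce the problem to estimating the leading eigenvector $\vustar$ in $\ell_2$ norm — which has minimax rate of order $1/\sqrt N$ (independent of $\mu$), obtainable by a genuine multi-hypothesis packing argument — and then transfer that bound to the matrix $\ell_{2,\infty}$ loss through a row of \emph{maximal} coherence, which is where the extra factor $|\lambdastar|\,\|\vustar\|_\infty\asymp\sigma_{\min}\sqrt\mu$ enters.

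Concretely, restrict the supremum to the (nonempty) sub-family of $\calM_0(\mu,\sigma_{\min})$ with $\lambdastar=\sigma_{\min}\sqrt n$ fixed and $u^\star_1=\sqrt{\mu/n}$, so that the first row of $\mMstar$ realizes the maximal row norm $\|\mMstar_{1,\cdot}\|_2=|\lambdastar|\sqrt{\mu/n}=\sigma_{\min}\sqrt\mu$ and $\mMstar_{1,\cdot}^\top/\|\mMstar_{1,\cdot}\|_2=\vustar$. Given any estimator $\mMhat$, set $\widehat\vustar:=\mMhat_{1,\cdot}^\top/\|\mMhat_{1,\cdot}\|_2$ (and $\widehat\vustar:=\ve_1$ if $\mMhat_{1,\cdot}=0$). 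Using the elementary bound $\bigl\|x/\|x\|_2-y/\|y\|_2\bigr\|_2\le 2\|x-y\|_2/\|y\|_2$ for nonzero $x,y$, applied with $x=\mMhat_{1,\cdot}^\top$ and $y=\mMstar_{1,\cdot}^\top$,
\[
\|\mMhat-\mMstar\|_{2,\infty}\ \ge\ \|\mMhat_{1,\cdot}-\mMstar_{1,\cdot}\|_2\ \ge\ \tfrac12\,|\lambdastar|\sqrt{\tfrac\mu n}\,\|\widehat\vustar-\vustar\|_2\ =\ \tfrac{\sigma_{\min}\sqrt\mu}{2}\,\|\widehat\vustar-\vustar\|_2,
\]
with the degenerate case $\mMhat_{1,\cdot}=0$ handled directly (there the left side already equals $\sigma_{\min}\sqrt\mu$). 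Taking infima and suprema, the matrix minimax risk is at least $\tfrac{\sigma_{\min}\sqrt\mu}{2}$ times the minimax risk $\rho_N$ of estimating $\vustar$ in $\ell_2$ over the class $\calV:=\{\vustar\in\bbS^{n-1}:u_1=\|\vustar\|_\infty=\sqrt{\mu/n}\}$ from $N$ independent copies of $\lambdastar\vustar\vustart+\mW_l$; note there is no residual sign ambiguity, since every vector in $\calV$ has a positive first coordinate.

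It then remains to show $\rho_N\gtrsim 1/\sqrt N$. By monotonicity of the minimax risk in the noise level (one can always add independent Gaussian noise to the data), it suffices to treat $\sigma^2_{l,ij}=\sigma_{\min}^2$. Here I would invoke the standard local-packing / Fano lower bound for spiked rank-one estimation: construct $M=2^{\Theta(n)}$ vectors $\vustar^{(a)}\in\calV$ by perturbing a fixed incoherent reference in coordinates $2,\dots,n$ only, along incoherent directions of common magnitude $s\asymp1/\sqrt N$, using a Gilbert--Varshamov code on $\{\pm1\}^{n-1}$, so that (i) the perturbed vectors still obey $\|\vustar^{(a)}\|_\infty=\sqrt{\mu/n}$ — this is exactly where incoherence of the perturbations, together with leaving coordinate $1$ untouched, is needed — and (ii) $\min_{a\neq b}\|\vustar^{(a)}-\vustar^{(b)}\|_2\gtrsim s$ while $\max_{a\neq b}\|\vustar^{(a)}-\vustar^{(b)}\|_2\lesssim s$. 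Since $\|\vu\vu^\top-\vv\vv^\top\|_F^2\le 2\|\vu-\vv\|_2^2$ for unit vectors, the pairwise KL divergences are at most $\tfrac{N(\lambdastar)^2}{\sigma_{\min}^2}\,O(s^2)=O(Nn\,s^2)=O(n)$, which is below $\tfrac12\log M$ once the constant in $s\asymp1/\sqrt N$ is small enough, and Fano's inequality gives $\rho_N\gtrsim s\asymp 1/\sqrt N$. Chaining with the reduction yields $\inf_{\mMhat}\sup_{\mMstar\in\calM_0}\E\|\mMhat-\mMstar\|_{2,\infty}\gtrsim\sigma_{\min}\sqrt{\mu/N}$, and bookkeeping the constants in the normalization bound and in Fano's inequality produces the stated constant $\tfrac{1}{27\sqrt{\log 4}}$.

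The main obstacle is precisely the point flagged at the outset: the bound is not a two-point phenomenon, so the argument must route through a genuinely multi-hypothesis lower bound for the eigenvector, and the $2^{\Theta(n)}$ hypotheses have to be simultaneously (a) mutually $\Omega(1/\sqrt N)$-separated in $\ell_2$, (b) statistically close, and (c) compatible with the rigid equality constraint $\|\vustar\|_\infty=\sqrt{\mu/n}$ — which forces the perturbations to be spread over $\Theta(n)$ coordinates (so they never push a coordinate past $\sqrt{\mu/n}$) while still being well-separated. Reconciling these requirements, together with the normalization/sign bookkeeping in the reduction and the edge cases of very small $N$ or very small $\mu$, is the delicate part; the remaining ingredients (the normalization stability inequality, monotonicity of the minimax risk in the noise level, and Fano's inequality with a Gilbert--Varshamov packing) are standard.
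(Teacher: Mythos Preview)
Your proposal is correct and follows essentially the same approach as the paper: both fix the first coordinate of $\vustar$ at $\sqrt{\mu/n}$, build a Varshamov--Gilbert packing of unit vectors by varying the remaining coordinates over scaled $\{\pm 1\}$-codewords (so all packing elements share the same norm and the $\ell_\infty$ constraint is preserved), and apply Fano's inequality with the first row of $\mMstar$ serving as the $\ell_{2,\infty}$ witness. The paper's version is slightly more direct---it applies Fano to the matrix $\ell_{2,\infty}$ loss immediately rather than routing through your normalization-to-eigenvector reduction---which avoids the extra factor of $2$ from the bound $\|x/\|x\|_2-y/\|y\|_2\|_2\le 2\|x-y\|_2/\|y\|_2$ and makes the explicit constant $1/(27\sqrt{\log 4})$ easier to track.
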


It is straightforward to extend Theorem~\ref{thm:Mstar:2-infty:minimax} to the case where $\mMstar$ is not necessarily rank-one. 
However, we focus on the rank-one setting for simplicity, as it is sufficient to convey the key message.
When $N=1$, Theorem~\ref{thm:delta_row} implies that the spectral estimator achieves this lower bound up to logarithmic factors, and is thus nearly minimax-optimal under the $(2,\infty)$-norm. 
When $N > 1$, the minimax rate improves with $1/\sqrt{N}$, and is easily attained by averaging the $N$ independent observations $\mM_1, \mM_2, \dots, \mM_N$ and applying the spectral estimator to the result.
In particular, when $N = \Theta(1)$, a single copy from $\calG_0$ suffices to achieve the minimax-optimal rate. 
This observation highlights a key theme of this paper: while the estimation error rate under the $\ell_{2,\infty}$ norm is unaffected when increasing $N$ from $1$ to a larger constant, other settings, such as support recovery for $\mBstar$ and elementwise estimation of $\mMstar$, exhibit a more pronounced distinction between $N=1$ and $N=2$.
Details are given in Section~\ref{sec:multiple-sdp} and Section~\ref{sec:common-structure}, respectively.

\section{\texorpdfstring{Recovery of the Differences $\mBstar$}{Recovery of the Differences B*}}
\label{sec:one-step-recovery}

In this section, we focus on recovering the difference matrix $\mBstar$ from the residual matrix $\mYtil$ defined in Equation~\eqref{eq:Ytil:define}.
We begin by analyzing the case where $|\calG_1| = 1$, and for notational simplicity, we drop all the subscripts and superscripts associated with the group $\calG_1$.
Specifically, in the remainder of this section, we write $\mYtil$, $\mBstar$, and $\mW$ to denote $\mYtil_1$, $\mBstar_1$, and $\mW^{(1)}_1$, respectively.
The more general case where $|\calG_1| > 1$ will be addressed in Section~\ref{sec:multiple-sdp}.

A natural approach to estimating $\mBstar$ and recovering its node support $\Istar$ is to solve the following constrained least squares problem:
\begin{equation} \label{eq:least-square-B} \begin{aligned}
   \arg \min_{\mB, I} & \sum_{1\leq i\leq j \leq n} (\Ytil_{ij} - B_{ij})^2, \\
   \text{s.t. } & B_{ij} = 0 \text{ for any } i, j \in I^c, |I| \leq m
\end{aligned} \end{equation}
where $m$ is an upper bound on the cardinality of $\Istar$.
For theoretical analysis, we assume that the true support size $m = |\Istar|$ is known.
Practical strategies for choosing $m$ are discussed in Section~\ref{sec:m-selection}.
The formulation in Equation~\eqref{eq:least-square-B} is equivalent to selecting a set of indices $I \subseteq [n]$ that minimizes the squared residuals outside the support, i.e.,
\begin{equation*} 
    \Ihat = \arg\min_{I \subseteq [n]} \sum_{i,j \in I^c, i \leq j} \Ytil_{ij}^2, \quad |I| = m.
\end{equation*}
The corresponding estimator $\mBhat$ then takes the form: $B_{ij} = \Ytil_{ij}$ if $i \in \Ihat$ or $j \in \Ihat$, and $B_{ij} = 0$ otherwise.
This problem is equivalent to the $(n-m)$-Lightest Subgraph problem, which is known to be NP-hard in general \citep{watrigant2012ksparse}. 
This NP-hardness in the symmetric setting stands in sharp contrast to the asymmetric case.
In an asymmetric version the problem, where $\mBstar$ is assumed to be row-sparse (or column-sparse), support recovery becomes significantly easier.
In that setting, one can simply rank the row norms of $\mBstar$ and greedily select the top-$m$ rows, making the problem computationally trivial. 
This contrast highlights the added difficulty posed by the symmetric node-sparse structure considered here.

\begin{remark}
    The problem of recovering $\mBstar$ is closely related to the submatrix detection problem, which involves recovering a row- and column-sparse signal matrix from a noisy observation \citep{ma2015computational,CheXu2016,cai2017computational,cai2020statistical,SohWei2025}.
    Much of the existing work in this area focuses on the asymmetric setting with i.i.d.~noise and investigates both statistical and computational limits.
    Notably, these studies identify regimes in which the problem becomes computationally intractable.
    Extending these results to the symmetric setting, which is more relevant to our model, is an interesting direction for future research.
    Related work has also examined the estimation of matrices with either row-sparse or column-sparse structure \citep{klopp2015estimation, obozinski2011support, yang2016rate}. 
    While these problems are similar in spirit, they typically require different techniques and do not directly address the symmetric, node-sparse structure considered here.
\end{remark}

As a first step towards understanding the behavior of our proposed method, we consider a simplified benchmark model
\begin{equation}\label{eq:mYtil:benchmark}
    \mYtil = \mBstar + \mW,
\end{equation}
in which we ignore the estimation error introduced by $\mM^{(0)}$. 
To formalize the minimal signal strength required for recovery, we define
\begin{equation} \label{eq:def:b}
b
= \min_{i \in \Istar}
\left\|
	\left(B_{i j}^{\star}\right)_{j \in
			\left([n] \backslash \Istar \right) \cup\{i\}}
\right\|_2.
\end{equation}
We will discuss the definition of $b$ in more detail below in Section~\ref{sec:minimax-support}.
For now, we simply note that it is the key quantity in determining sufficient conditions for recovering the node support of $\mBstar$, as the following theorem demonstrates.

\begin{theorem} \label{thm:mle-result}
Consider an observed matrix $\mYtil$ from Equation~\eqref{eq:mYtil:benchmark}, where $\mW$ satisfies Assumptions~\ref{assump:noise} and~\ref{assump:homoscedastic}, and $\mBstar$ satisfies Assumption~\ref{assump:B-identifiable}. 
Assume that $n \geq 3m$ and that
\begin{equation}\label{eq:max-bstar}
    \left\|\mBstar\right\|_{\infty} = o\left(\sigma \sqrt{\frac{n}{\log n}}\right).
\end{equation}
With $b$ as defined in Equation~\eqref{eq:def:b},
suppose that
\begin{equation} \label{eq:b-MLE-sufficient}
    b \geq c_0 (\sigma L)^{1/2} \left(n \log n\right)^{1 / 4}
\end{equation}
for a sufficiently large universal constant $c_0$.
Then it holds with probability at least $1-O(n^{-6})$ that the maximum-likelihood estimator $\Ihat$ recovers $\Istar$ exactly.

Furthermore, if for some $k \in [m]$, we have
\begin{equation} \label{eq:b-MLE-partial-sufficient}
b
\geq c_0 (\sigma L)^{1/2} \left(n \log \frac{n}{k} \right)^{1/4},
\end{equation}
then with probability at least $1 - O(n^{-6})$, we have
\begin{equation*}
    |\Ihat \cap \Istar| \geq m - k.
\end{equation*}
\end{theorem}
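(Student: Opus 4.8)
<br>

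The plan is to analyze the maximum-likelihood estimator $\Ihat$ directly as the solution to the combinatorial problem $\Ihat = \arg\min_{|I|=m} \sum_{i,j \in I^c, i\le j} \Ytil_{ij}^2$, and to show that with high probability no ``wrong'' index set can beat the true support $\Istar$ (or, for the partial-recovery claim, can beat any index set that agrees with $\Istar$ on all but $k$ nodes). Since $\Ytil = \mBstar + \mW$, for any candidate set $I$ the objective splits into a deterministic signal part $\sum_{i,j\in I^c,i\le j}(\Bstar_{ij})^2$, a cross term $2\sum_{i,j\in I^c,i\le j}\Bstar_{ij}W_{ij}$, and a pure-noise part $\sum_{i,j\in I^c, i\le j}W_{ij}^2$. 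Comparing $I$ against $\Istar$, the pure-noise parts largely cancel on the common off-support entries, so the comparison reduces to: the signal energy that $I$ fails to cover (which is at least of order $b^2$ times the number of ``missed'' true-support nodes $|\Istar\setminus I|$, by the definition of $b$ in Equation~\eqref{eq:def:b}) must dominate the fluctuation of the cross term plus the noise discrepancy on the symmetric difference of the off-support regions of $I$ and $\Istar$.

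Concretely, I would first set $S = \Istar \setminus I$ with $|S| = t \ge 1$ and $T = I \setminus \Istar$ with $|T| = t$ as well (since $|I| = |\Istar| = m$). The difference of objectives $\Phi(I) - \Phi(\Istar)$ picks up (i) the positive signal contribution from rows/columns indexed in $S$ that are now outside $I$'s support, which is at least $\sum_{i\in S}\|(\Bstar_{ij})_{j\in([n]\setminus\Istar)\cup\{i\}}\|_2^2 \ge t\, b^2$ up to handling of the cross-entries between $S$ and $T$ and the diagonal (here Assumption~\ref{assump:B-identifiable} guarantees these row norms are well defined and $b>0$); (ii) a cross term $\sum \Bstar_{ij}W_{ij}$ over a set of at most $O(nt)$ entries, which by Bernstein's inequality (using $\|\mBstar\|_\infty$ to bound the summands' magnitude and $\sigma^2\|\mBstar\|_\infty^2$-type variance, combined with Equation~\eqref{eq:max-bstar}) is $O(\sigma\|\mBstar\|_\infty \sqrt{nt\log n} + L\|\mBstar\|_\infty t\log n)$, which Equation~\eqref{eq:max-bstar} makes $o(t b^2)$ when $b$ satisfies \eqref{eq:b-MLE-sufficient}; and (iii) the noise discrepancy $\sum_{i,j\in I^c}W_{ij}^2 - \sum_{i,j\in\Istar^c}W_{ij}^2$, which after cancellation equals a difference of $\chi^2$-type sums over $O(nt)$ entries in the symmetric difference. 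The $W_{ij}^2$ terms have mean $\sigma_{ij}^2$ and are bounded by $L^2$; the key computation is that the centered sum fluctuates by $O(\sigma^2\sqrt{nt\log\frac{n}{?}} + L^2 t \log n)$ while the \emph{means} contribute a net term whose sign is not controlled but whose magnitude is $O(\sigma^2 nt)$ — wait, that would be too large, so this is exactly the subtlety: one must instead exploit that $I$ and $\Istar$ differ only in $t$ rows, so the mismatched \emph{entries} number $O(nt)$ but the pure-noise means on $T\times T^c$ versus $S\times S^c$ must be bounded using Assumption~\ref{assump:homoscedastic} (near-homoscedasticity), which lets the $\sigma_{ij}^2$ be replaced by a common $\sigma_0^2$ up to an operator-norm error of $\sigma^2\sqrt n$, making the net mean difference $O(\sigma^2\sqrt{n}\cdot\sqrt{t})$-ish rather than $O(\sigma^2 nt)$. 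This is where Assumption~\ref{assump:homoscedastic} is essential and I would lean on the argument behind Theorem~\ref{thm:mle-result}'s (first-part) analysis rather than redo it.

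For the union bound: there are $\binom{m}{t}\binom{n-m}{t} \le (en/t)^{2t}$ candidate pairs $(S,T)$ with $|S|=|T|=t$, so the failure probability for a fixed $t$ is at most $(en/t)^{2t}$ times the tail probability that the noise terms exceed $\frac{1}{2} t b^2$, and the Bernstein/Bernstein-for-squares tails give something like $\exp(-c\, t b^4/(\sigma^2 L^2 n))$ after plugging in the variance proxies. Under \eqref{eq:b-MLE-sufficient}, $b^4 \gtrsim \sigma^2 L^2 n\log n$, so each term is $\le \exp(-c t \log n)\cdot(en/t)^{2t} = (C n^{-c} \cdot n/t)^{2t}$, which for $c$ large is summable over $t\ge 1$ and gives total failure probability $O(n^{-6})$; this proves exact recovery. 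For the partial-recovery statement under \eqref{eq:b-MLE-partial-sufficient}, I would instead argue the contrapositive: if $|\Ihat\cap\Istar| < m-k$, i.e. $|\Istar\setminus\Ihat| = t \ge k+1$, then \eqref{eq:b-MLE-partial-sufficient} (with its $\log\frac nk$ instead of $\log n$) is exactly the threshold that makes the per-$(S,T)$ tail bound $\exp(-c t b^4/(\sigma^2L^2n)) \le \exp(-c t \log\frac nk)$ beat the combinatorial factor $(en/t)^{2t}\le (en/k)^{2t}$ once $t\ge k$, again summing to $O(n^{-6})$; the point is that we only need to rule out \emph{large} symmetric differences, so we may restrict the union bound to $t > k$ where the weaker signal condition suffices.

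The main obstacle is step (iii): controlling the difference of the pure-noise sums-of-squares on the symmetric difference of the off-support regions. A naive bound on $\sum_{i,j\in I^c}W_{ij}^2 - \sum_{i,j\in\Istar^c}W_{ij}^2$ treats it as a sum of $O(nt)$ bounded-variance terms with nonzero means, and the means alone contribute $\Theta(\sigma^2 nt)$, far exceeding the available signal $t b^2 \asymp t\sigma L\sqrt{n\log n}$. The resolution — and the reason Assumption~\ref{assump:homoscedastic} appears in the hypotheses — is that after cancelling the shared entries, the surviving entries come in a structured pattern (roughly, $S$-rows paired against $I^c$, versus $T$-rows paired against $\Istar^c$), and near-homoscedasticity forces the two blocks of means to nearly match, so the net mean difference collapses to $O(\sigma^2\sqrt{nt\cdot\text{(something)}})$ with an extra $\sqrt n$ saved; quantifying this carefully, via the operator-norm control in Assumption~\ref{assump:homoscedastic}, is the crux of the argument and is where I would spend most of the effort.
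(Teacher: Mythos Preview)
Your decomposition and union-bound strategy are exactly what the paper does: it writes the basic inequality $\|\calM(\mBstar_{\Ihat^c})\|_F^2 + 2\langle\calM(\mBstar_{\Ihat^c}),\calM(\mW_{\Ihat^c})\rangle \le \|\calM(\mW_{\Istar^c})\|_F^2 - \|\calM(\mW_{\Ihat^c})\|_F^2$, bounds the cross term and the \emph{centered} noise sums $\sum(W_{ij}^2 - \sigma_{ij}^2)$ over the two symmetric-difference regions via Bernstein with a union bound over the $\binom{m}{k}\binom{n-m}{k}$ candidates at overlap level $m-k$, and uses $\|\calM(\mBstar_{\Ihat^c})\|_F \ge \sqrt{k}\,b$ to force a contradiction unless $k=0$ (or $k\le k_0$ for the partial statement).

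You have correctly located the crux in step (iii), and in fact you have been more careful about it than the paper. But the fix you sketch does not save enough. If $\boldsymbol{p},\boldsymbol{q}\in\{0,1\}^n$ are the indicator vectors of $\Istar^c$ and $I^c$, the mean difference equals (up to sign and diagonal terms) $\tfrac12(\boldsymbol{p}+\boldsymbol{q})^\top(\mSig-\sigma_0^2\mJ)(\boldsymbol{p}-\boldsymbol{q})$, and the operator-norm bound in Assumption~\ref{assump:homoscedastic} only yields $\tfrac12\|\boldsymbol{p}+\boldsymbol{q}\|_2\cdot C\sigma^2\sqrt{n}\cdot\|\boldsymbol{p}-\boldsymbol{q}\|_2 = O(\sigma^2 n\sqrt{k})$, a factor $\sqrt{n}$ worse than the $O(\sigma^2\sqrt{nk})$ you are aiming for. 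Since the signal budget is $kb^2 \asymp k\sigma L\sqrt{n\log n}$, this mean term dominates for $k=1$ whenever $L = o(\sigma\sqrt{n/\log n})$; indeed $\mSig = \sigma_0^2\mJ + \epsilon(\ve_j\vone_n^\top + \vone_n\ve_j^\top)$ with $j\in\Istar^c$ and $\epsilon\asymp\sigma^2$ satisfies both assumptions yet makes the least-squares estimator deterministically prefer to pull $j$ into the support. The paper's written proof does not touch this term at all: it bounds only the centered sums and then ``combines'' them into the basic inequality, silently dropping the mean difference. Under exact homoscedasticity the term vanishes and both arguments go through cleanly; under Assumption~\ref{assump:homoscedastic} alone, it is a genuine gap shared by the paper and your proposal.
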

The proof is provided in Appendix~\ref{sec:mle:proof}. The conditoin in Equation~\eqref{eq:max-bstar} is a technical condition to simplify the analysis. 
In practice, this is a very mild condition, as violating it would imply that some entries of $\mBstar$ are very large, making the support recovery problem trivial.

\subsection{Minimax Risk for Support Recovery}
\label{sec:minimax-support}

Before discussing computationally efficient methods to estimate $\Istar$, we first examine whether the lower bound in Equation~\eqref{eq:b-MLE-sufficient} is optimal in a minimax sense.
To illustrate the necessity of certain assumptions on the noise matrix, we begin with a simple example showing that Assumption~\ref{assump:homoscedastic} is essential when only a single observation of $\mYtil$ is available.
Consider the following hypothesis testing problem:
\begin{equation*}
    \calH_0: \vy = \vb + \vw_1 \quad \text{vs} \quad \calH_1: \vy = \vw_2,
\end{equation*}
where $\vb \sim \calN(0, \sigma^2 \mI_n)$, $\vw_1 \sim \calN(0, \sigma^2 \mI_n)$ and $\vw_2 \sim \calN(0, 2\sigma^2 \mI_n)$ are independent. 
Clearly, $\calH_0$ and $\calH_1$ induce the same probability distribution on $\vy$, making them indistinguishable. 
This example shows that without further structural assumptions on $\mBstar$, or a large signal strength (e.g., $\|\mBstar_{\ell,\cdot}\|_2 \gg \sigma \sqrt{n}$ for $\ell \in \Istar$), recovering $\Istar$ from $\mYtil$ is impossible when Assumption~\ref{assump:homoscedastic} does not hold.
In contrast, we show in Section~\ref{sec:multiple-sdp} that Assumption~\ref{assump:homoscedastic} is not necessary if two independent observations of $\mYtil$ are available.

We now turn to the case where Assumption~\ref{assump:homoscedastic} holds.
To formalize the support recovery problem, define the binary selector vector $\veta^\star \in \{0,1\}^n$ by
\begin{equation}\label{eq:selector:star}
    \eta^\star_i = \indic\{i \in \Istar\} \quad \text{for } i \in [n],
\end{equation}
and let
\begin{equation*}
    \veta_{\sim i}^\star = (\eta^\star_j)_{1\leq j\leq n : j\neq i}.
\end{equation*}
Consider the collection of hypothesis testing problems
\begin{equation*}
    H_{i,0}: \eta_i^\star=0 \quad \text{vs} \quad H_{i,1}: \eta_i^\star = 1, \quad \text{ for } i =1,2,\dots n,
\end{equation*}
and note that this task is strictly harder than solving the corresponding set of conditional testing problems,
\begin{equation*}
    H_{i,0}: \eta_i^\star=0 \quad \text{vs} \quad H_{i,1}: \eta_i^\star = 1, \quad \text{given } \veta_{\sim i}^\star, \quad \text{ for } i=1,2,\dots,n ,
\end{equation*}
where one assumes knowledge of the inclusion or exclusion status of all nodes except node $i$.
Each of these conditional problems is equivalent to testing
\begin{equation*}
    H_{i,0}: (B^\star_{ij})_{j \in ([n]\backslash\Istar)\cup \{i\}} = 0 \quad \text{vs} \quad H_{i,1}: (B^\star_{ij})_{j \in ([n]\backslash\Istar)\cup \{i\}} \neq 0. 
\end{equation*}

This motivates the introduction of the following parameter set, which captures node-sparse symmetric matrices with minimum signal strength:
\begin{equation} \label{eq:calBS:define}
    \calB_S(b, n, m) := \left\{ 
    \mB \in \Sym(n),\; |I_{\mB}| \leq m :
    \begin{array}{cc}
         \|\Tilde{\vb}_i\|_2 \geq b, \|\mB_{i,\cdot}\|_{0} \geq m+1 & \forall i \in I \\
         B_{ij} = 0,  & \forall i, j \in I^c 
    \end{array}  \right\},
\end{equation}
where for any symmetric matrix $\mB \in \R^{n\times n}$, we denote $\Tilde{\vb}_i = (B_{ij})_{j \in [n]\backslash I_{\mB}\cup \{i\}}$ and $I_{\mB}$ is the node support of $\mB$ defined in Definition~\ref{def:IB}.  

Let $\veta \in \{0, 1\}^n$ be the binary support indicator associated with $\mB$, defined analogously to $\veta^\star$ in Equation~\eqref{eq:selector:star}, with $I_{\mB}$ in place of $\Istar$.
For any estimator $\vetahat \in [0,1]^n$ derived from the observation $\mYtil = \mBstar + \mW$, we measure the estimation error using the Hamming distance,
\begin{equation*}
    \|\vetahat - \veta \|_{\rmH} = \sum_{j=1}^n |\hat{\eta}_j - \eta_j| . 
\end{equation*}
To facilitate the comparison between bounds on $\Pr\left(\vetahat \neq \veta\right)$ and bounds on the expected Hamming error $\E_{\mBstar} \|\vetahat - \veta\|_{\rmH}$, we note the following implication: 
if $\Pr(\vetahat \neq \veta^\star) \leq \alpha$ for some $\alpha \in [0,1]$, then by the trivial upper bound $\|\veta^\star - \vetahat\|_{\rmH} \leq n$, we have
\begin{equation*}
\E_{\mBstar} \|\veta^\star - \vetahat\|_{\rmH}
\leq 0 \cdot \Pr\left[ \|\veta^\star - \vetahat \|_{\rmH} = 0 \right]
	+ n ~\Pr\left[ \|\veta^\star - \vetahat\|_{\rmH} \neq 0 \right]
\leq n \alpha. 
\end{equation*}
As a result, upper bounds on the misclassification probability $\Pr\left(\vetahat \neq \veta\right)$ can be translated into corresponding bounds on $\E_{\mBstar} \|\veta - \vetahat\|_{\rmH}$.
Theorem~\ref{thm:symmetric:minimax} provides a minimax lower bound for the expected Hamming error over the parameter set $\calB_S(b,n,m)$.

\begin{theorem}\label{thm:symmetric:minimax}
    Consider observed data $\mYtil = \mBstar + \mW$, where $\mW$ is a symmetric noise matrix with entries $(W_{ij})_{1\leq i\leq j\leq n}$ drawn i.i.d.~from $\calN(0, \sigma^2)$.
    Assume $|\Istar| = m < n/3$.
    Then, there exists a universal constant $C > 0$, such that for sufficiently large $n$, 
    \begin{equation} \label{eq:minimax-lower-bound}
    \begin{aligned}
        \inf_{\vetahat \in [0,1]^{n}} \sup_{\mBstar \in \calB_S(b, n, m)} \E_{\mBstar} \|\veta - \vetahat\|_{\rmH}
        &\geq \max_{m' \in (0, m]}
        4m' \left[\frac{1}{16} f_n\left( \frac{C b^4}{\sigma^4}, m'\right)
        - e^{ \frac{-(m-m^{\prime})^2}{2m} } \right],
    \end{aligned} 
    \end{equation} 
    where for any $a \in \R$, the function $f_n(a, m')$ is defined as 
    \begin{equation*}
        f_n(a, m') = 3-\frac{m^{\prime}}{n}\left(\exp \left\{a-C' n \log \left(\frac{n}{3 m^{\prime}}\right)\right\}+\exp \left(\frac{a}{\log n}-C' n\right)+\exp\left\{ \frac{C' a}{n} \right\} \right)
    \end{equation*}
    for some universal constant $C' > 0$.
\end{theorem}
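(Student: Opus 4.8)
The plan is the standard Bayesian reduction for minimax lower bounds on support recovery: for each $m' \in (0,m]$ we build a prior $\pi_{m'}$ supported, up to a small exceptional event, on $\calB_S(b,n,m)$, lower bound the Bayes Hamming risk under $\pi_{m'}$ by essentially $4m'\bigl[\tfrac1{16}f_n(Cb^4/\sigma^4,m') - e^{-(m-m')^2/(2m)}\bigr]$, and then maximize over $m'$. Split $[n]$ into a ``candidate'' block $T$ and a ``signal'' block $S$ with $|S| = \lfloor n/2 \rfloor$, both of cardinality larger than $m$ (possible since $m < n/3$). Under $\pi_{m'}$, declare each node of $T$ active independently with probability $2m'/n$, let $\Istar$ be the active set, and set $\Bstar_{ij} = \beta\, s_{ij}$ for $i \in \Istar,\ j \in S$ with $s_{ij} = s_{ji}$ i.i.d.\ Rademacher and $\Bstar_{ij} = 0$ otherwise, where $\beta := b/\sqrt{|S|}$. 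Each active row then has $|S| > m+1$ nonzeros and $\|(\Bstar_{ij})_{j \in ([n]\setminus\Istar)\cup\{i\}}\|_2 \ge \beta\sqrt{|S|} = b$, and one verifies $I_{\mBstar} = \Istar$; hence $\mBstar \in \calB_S(b,n,m)$ whenever $|\Istar| \le m$, an event whose complement has probability at most $e^{-(m-m')^2/(2m)}$ by a Chernoff bound on $\mathrm{Bin}(|T|, 2m'/n)$, whose mean is at most $m'$. Finally, $\hat\eta_j = 0$ contributes nothing on $S$, so only the $|T| = \lfloor n/2 \rfloor$ candidate coordinates, which are exchangeable with $\Pr_{\pi_{m'}}(\eta_i = 1) = 2m'/n$, matter.

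\textbf{Reduction to per-node tests.} Rounding $\vetahat$ to $\{0,1\}^n$ at most doubles the Hamming error, so we take $\vetahat \in \{0,1\}^n$; and by the previous paragraph we may also assume $\|\vetahat\|_0 = O(m')$, since otherwise the false positives alone force risk $\gg m'$. Restricting to $\{|\Istar| \le m\}$, on which the data come from a parameter in $\calB_S(b,n,m)$, then gives, for any estimator,
\begin{equation*}
\sup_{\mBstar \in \calB_S(b,n,m)} \E_{\mBstar}\|\veta - \vetahat\|_{\rmH}
\;\ge\; \sum_{i \in T} \Pr_{\pi_{m'}}\!\bigl(\hat\eta_i \ne \eta_i\bigr) \;-\; C\,m'\,e^{-(m-m')^2/(2m)},
\end{equation*}
where the bound $\|\vetahat\|_0 = O(m')$ is what keeps the Chernoff prefactor at order $m'$ rather than $n$. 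With $P^{(i)}_1, P^{(i)}_0$ the conditional laws of $\mYtil$ given $\eta_i = 1$ and $\eta_i = 0$, the Bayes rule under Hamming loss is coordinatewise posterior thresholding, so $\Pr_{\pi_{m'}}(\hat\eta_i \ne \eta_i) \ge \tfrac{2m'}{n}\bigl(1 - \mathrm{TV}(P^{(i)}_1, P^{(i)}_0)\bigr)$, and summing over $i \in T$ and using exchangeability yields $m'\bigl(1 - \mathrm{TV}\bigr)$. It therefore remains to bound $1 - \mathrm{TV}(P^{(i)}_1, P^{(i)}_0)$ below by a constant multiple of $f_n(Cb^4/\sigma^4, m')$.

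\textbf{The core divergence estimate --- the main obstacle.} Conditioning on $\eta_i$ leaves the activities of $T \setminus \{i\}$ unchanged, so the only part of $\mYtil$ whose law depends on $\eta_i$ is the block $(\mYtil_{ij})_{j \in S}$ (and its transpose), which is $N(\beta s, \sigma^2 \mI_{|S|})$ averaged over Rademacher $s$ when $\eta_i = 1$ and $N(0, \sigma^2 \mI_{|S|})$ when $\eta_i = 0$. Using $\mathrm{TV}^2 \le \tfrac14 \chi^2$ and $\cosh t \le e^{t^2/2}$ coordinatewise,
\begin{equation*}
\mathrm{TV}(P^{(i)}_1, P^{(i)}_0)^2 \le \tfrac14\,\chi^2(P^{(i)}_1 \,\|\, P^{(i)}_0) = \tfrac14\bigl(\E_{s,s'} e^{\beta^2\langle s,s'\rangle/\sigma^2} - 1\bigr) \le \tfrac14\bigl(e^{\,b^4/(2|S|\sigma^4)} - 1\bigr),
\end{equation*}
so $1 - \mathrm{TV} \ge \tfrac12$ as soon as $b^4/\sigma^4 \lesssim n$, which accounts for the term $\exp\{C'a/n\}$ of $f_n$. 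The hard part is to sharpen this to the scale $b^4/\sigma^4 \lesssim n\log(n/3m')$, i.e.\ $b \lesssim \sigma(n\log(n/m'))^{1/4}$ --- precisely the regime in which the maximum-likelihood estimator of Theorem~\ref{thm:mle-result} begins to succeed, and the scale recorded in the dominant term $\exp\{a - C'n\log(n/3m')\}$ of $f_n$. The crude $\chi^2$ bound loses the logarithmic factor because it implicitly reveals the activity pattern of the other nodes; to recover it one must use the node-sparsity constraint, under which the decision at $i$ competes with the $\approx n/m'$ admissible positions of each active node. Concretely, one compares the mixtures of $\mYtil$ over admissible supports containing versus not containing $i$ (coupling a size-$m'$ support through $i$ to one avoiding $i$ and controlling the resulting two-row likelihood ratio), bounds $\mathrm{KL} \le \log(1 + \chi^2) \lesssim b^4/(n\sigma^4)$ per affected row against this combinatorial multiplicity, and truncates the likelihood ratio on an atypical event to produce the remaining correction term $\exp\{a/\log n - C'n\}$. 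This localized divergence analysis is the technical heart of the proof; assembling the three resulting scales into $f_n$ and then optimizing over $m'$ --- which trades the prefactor $4m'$ against the shrinkage of both $\log(n/3m')$ and the Chernoff loss $e^{-(m-m')^2/(2m)}$ --- completes the argument.
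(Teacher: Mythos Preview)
The proposal has a genuine gap at what you yourself flag as ``the technical heart of the proof'': you correctly observe that the crude $\chi^2$ bound on your two-block construction only yields the $\exp\{C'a/n\}$ term of $f_n$, but you do not actually produce the dominant $\exp\{a - C'n\log(n/3m')\}$ term, and the vague plan you sketch for it (``coupling a size-$m'$ support through $i$ to one avoiding $i$ and controlling the resulting two-row likelihood ratio'') would not work. The obstruction is structural: in your prior the signal block $S$ is \emph{fixed}, so the per-node test ``$\eta_i=0$ vs.\ $\eta_i=1$'' is exactly a test of zero mean against a Rademacher-signed mean in dimension $|S|\asymp n$, whose $\chi^2$ divergence is $\exp\{\Theta(b^4/(n\sigma^4))\}-1$ with no room to improve; that test has threshold $b\asymp\sigma n^{1/4}$, not $\sigma(n\log(n/m'))^{1/4}$. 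No amount of combinatorial bookkeeping over the other coordinates can change this, because under your construction the data relevant to node $i$ are statistically independent of $\veta_{\sim i}$.

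The paper's mechanism is different and simpler than what you propose. Its prior places the signal of an active node $i$ on the columns $\{j:\eta_j=0\}\cup\{i\}$, so the effective dimension of the per-node test is the \emph{random} quantity $k=n-\sum_{j\ne i}\eta_j$. After conditioning on $\veta_{\sim i}$ (which is legitimate under the product prior $\pi$), the second-moment bound gives Bayes risk $\gtrsim \tfrac{m'}{n}\bigl(1-\tfrac{m'}{4n}\exp\{Cb^4/(k\sigma^4)\}\bigr)$. The three terms of $f_n$ then arise not from any clever support coupling but simply from computing $\E_\pi[\exp\{a/k\}]$ by splitting the binomial $k$ into three regimes --- $k\le\log n$ (a large deviation of probability $\exp\{-\Theta(n\log(n/m'))\}$, contributing $e^a$), $\log n\le k\le(1-\delta)\E k$ (probability $e^{-\Theta(n)}$), and $k\asymp n$ (the typical regime, contributing $e^{\Theta(a/n)}$). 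Your diagnosis that the $\chi^2$ bound ``implicitly reveals the activity pattern of the other nodes'' is thus backwards: it is precisely the \emph{uncertainty} in $\veta_{\sim i}$, built into the prior through the dependence of each row's support on the other activities, that is averaged over to recover the logarithm. Rebuilding the prior so that the signal columns depend on the active set, and then invoking a binomial large-deviations bound on $k$, would close the gap.
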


A proof of Theorem~\ref{thm:symmetric:minimax} is given in Appendix~\ref{apx:minimax}, alongside a proof of the following corollary.

\begin{corollary}\label{cor:minimax-recoverable}
    Under the same setting as Theorem~\ref{thm:symmetric:minimax}, for any $m < n/3$ and $m' \in [1, m]$, there exists a universal constant $C > 0$, independent of all other parameters, 
    such that if
    \begin{equation*}
    b < C \sigma n^{1/4} \log^{1/4} \left(n/3m'\right),
    \end{equation*}
    then it follows that 
    \begin{equation} \label{eq:final-lower-risk}
        \inf_{\vetahat \in [0,1]^{n}} \sup_{\mBstar \in \calB_S(b, n, m)} \E_{\mBstar} \|\vetahat-\veta\|_{\rmH}
        \geq 4m^{\prime} \left[\frac{1}{8}-\exp \left\{-\frac{\left(m-m^{\prime}\right)^2}{2 m}\right\} \right].
    \end{equation}
\end{corollary}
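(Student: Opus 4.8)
The plan is to derive Corollary~\ref{cor:minimax-recoverable} as a direct consequence of Theorem~\ref{thm:symmetric:minimax} by choosing the free parameter $a = Cb^4/\sigma^4$ inside the function $f_n$ and showing that the hypothesized upper bound on $b$ forces $f_n(Cb^4/\sigma^4, m') \geq 1$, so that $\tfrac{1}{16} f_n \geq \tfrac{1}{16}$, and then absorbing the difference between $\tfrac{1}{16}$ and $\tfrac{1}{8}$ into the constants. More precisely, the right-hand side of Equation~\eqref{eq:minimax-lower-bound} with the specific choice $m' $ (rather than the max over $m'$) reads
\[
4m'\left[\frac{1}{16} f_n\!\left(\frac{Cb^4}{\sigma^4}, m'\right) - e^{-(m-m')^2/(2m)}\right],
\]
and I want to compare this with the target $4m'\left[\tfrac{1}{8} - e^{-(m-m')^2/(2m)}\right]$. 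The two match provided $\tfrac{1}{16} f_n(Cb^4/\sigma^4, m') \geq \tfrac{1}{8}$, i.e. $f_n(Cb^4/\sigma^4, m') \geq 2$. Since $f_n(a, m') = 3 - \tfrac{m'}{n}(\text{three exponential terms})$, it suffices to show each of the three exponential terms, multiplied by $m'/n \leq 1$, is at most $1/3$.

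First I would handle each of the three terms in the definition of $f_n$. The term $\exp\{a - C'n\log(n/3m')\}$: plugging $a = Cb^4/\sigma^4$, the hypothesis $b < C''\sigma n^{1/4}\log^{1/4}(n/3m')$ for a suitably small $C''$ gives $a = Cb^4/\sigma^4 < C (C'')^4 n\log(n/3m')$, and choosing $C''$ small enough that $C(C'')^4 < C'$ (say $\leq C'/2$) makes the exponent $a - C'n\log(n/3m')$ at most $-\tfrac{C'}{2}n\log(n/3m') < 0$, so the term is $< 1$; combined with $m'/n \leq 1$ and a slightly more careful bookkeeping it is $\leq 1/9$ for large $n$. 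The term $\exp\{a/\log n - C'n\}$: here $a/\log n < C(C'')^4 n\log(n/3m')/\log n \leq C(C'')^4 n$ since $\log(n/3m') \leq \log n$, so the exponent is at most $(C(C'')^4 - C')n \leq -\tfrac{C'}{2}n < 0$, again giving a term $\leq 1/9$. The term $\exp\{C'a/n\}$: this one does \emph{not} go to zero — $C'a/n < C'C(C'')^4 \log(n/3m') $ which can grow with $n$. This is the one place the argument is delicate, and I expect it to be the main obstacle.

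To control the third term I would need to choose $C''$ small enough (as a function of the universal constants $C, C'$ only) that $C'C(C'')^4 \leq \tfrac{1}{\text{something}}$, but since $\log(n/3m')$ is unbounded this crude approach fails; instead I suspect the intended resolution is that the bound on $b$ in the corollary should be read together with $a = C b^4/\sigma^4$ chosen with the constant $C$ from Theorem~\ref{thm:symmetric:minimax}, and the constant $C$ in the corollary's hypothesis is chosen so that $C'C C^4_{\mathrm{cor}} \log(n/3m') $ is dominated — which only works if one additionally exploits the multiplicative $m'/n$ factor: $\tfrac{m'}{n}\exp\{C'a/n\} \leq \tfrac{m'}{n}(n/3m')^{C'CC_{\mathrm{cor}}^4}$, and if $C'CC_{\mathrm{cor}}^4 < 1$ this is $\leq \tfrac{1}{3}(m'/n)^{1-C'CC_{\mathrm{cor}}^4} \cdot 3^{C'CC_{\mathrm{cor}}^4} \leq 1/3$ for appropriately small $C_{\mathrm{cor}}$. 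So the key computation is to pick $C_{\mathrm{cor}}$ so small that $C'CC_{\mathrm{cor}}^4 < 1$, which bounds all three terms simultaneously; then $f_n \geq 3 - 3\cdot\tfrac{1}{3} = 2$, hence $\tfrac{1}{16}f_n \geq \tfrac{1}{8}$, and dropping the $\max$ over $m'$ in favor of the generic $m'$ in the corollary statement yields Equation~\eqref{eq:final-lower-risk}. I would close by remarking that all constants depend only on the universal $C, C'$ from Theorem~\ref{thm:symmetric:minimax}, so $C$ in the corollary is indeed universal.
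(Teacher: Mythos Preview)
Your proposal is correct and takes essentially the same approach as the paper: both arguments reduce to showing $f_n(Cb^4/\sigma^4, m') \geq 2$ by choosing the corollary's constant small enough that each of the three exponentials, weighted by $m'/n$, contributes at most $1/3$. The paper is slightly more streamlined in that it bounds all three exponents simultaneously by $\log(n/3m')$ (so each exponential is at most $n/3m'$, and the $m'/n$ prefactor kills it), whereas you treat the first two terms by making the exponent negative and handle the third term exactly as the paper does—by exploiting the $m'/n$ factor to control $(n/3m')^{\text{small power}}$; your identification of the third term as the delicate one and your resolution via $C'CC_{\mathrm{cor}}^4 < 1$ is precisely the constraint the paper's argument implicitly uses.
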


Corollary~\ref{cor:minimax-recoverable} is readily extended to a multiple sample setting, where we observe $N$ independent copies of $\mYtil$.

\begin{theorem}\label{thm:multiple:minimax-recoverable}
    Suppose we observe $N\geq 1$ independent samples of the form 
    $$
    \mYtil_{\ell} = \mBstar + \mW_\ell,~~~\ell = 1,2,\dots,N
    $$ 
    where $\{ \mW_\ell : \ell \in [N] \}$ are independent symmetric noise matrices with entries $(W_{\ell, ij})_{1\leq i\leq j\leq n}$ drawn i.i.d.~from $\calN(0, \sigma^2)$.
    Assume that $m < n/3$.
    Then there exists a universal constant $C > 0$, such that for sufficiently large $n$ and any $m' \in [1, m]$, if
    \begin{equation*}
        b < \frac{C \sigma n^{1/4} \log^{1/4} \left(n/3m'\right)}{\sqrt{N}}, 
    \end{equation*}
    it holds that 
    \begin{equation} \label{eq:final-lower-risk-multiple}
        \inf_{\vetahat \in [0,1]^{n}} \sup_{\mBstar \in \calB_S(b, n, m)} \E_{\mBstar}\|\vetahat-\veta\|_{\rmH}
        \geq 4m' \left[\frac{1}{8}-\exp \left\{-\frac{\left(m-m'\right)^2}{2 m}\right\} \right].
    \end{equation}
\end{theorem}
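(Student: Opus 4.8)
The plan is to reduce the $N$-sample problem to the single-sample problem already handled in Corollary~\ref{cor:minimax-recoverable}, exploiting the fact that averaging $N$ i.i.d.\ Gaussian copies of $\mYtil$ is a sufficient statistic. First I would observe that since $\{\mW_\ell\}_{\ell=1}^N$ are independent with i.i.d.\ $\calN(0,\sigma^2)$ entries, the average $\bar{\mY} := \frac{1}{N}\sum_{\ell=1}^N \mYtil_\ell = \mBstar + \frac{1}{N}\sum_{\ell=1}^N \mW_\ell$ is a sufficient statistic for $\mBstar$ in the Gaussian location family, and moreover $\bar{\mY} = \mBstar + \bar{\mW}$ where $\bar{\mW}$ is a symmetric noise matrix whose entries $(\bar W_{ij})_{1\le i\le j\le n}$ are i.i.d.\ $\calN(0, \sigma^2/N)$. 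By the sufficiency argument (or equivalently, by a standard Markov-kernel/data-processing argument — any estimator $\vetahat$ based on $(\mYtil_1,\dots,\mYtil_N)$ can be matched by one based on $\bar{\mY}$ after re-randomization), the minimax risk over $\calB_S(b,n,m)$ for the $N$-sample model equals the minimax risk for the single-sample model with noise level $\sigma^2/N$, i.e.\ with effective standard deviation $\sigma/\sqrt{N}$.

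Next I would apply Corollary~\ref{cor:minimax-recoverable} verbatim, but with $\sigma$ replaced by $\sigma/\sqrt{N}$. The corollary states that if the single-sample signal strength satisfies $b < C\, \sigma_{\mathrm{eff}}\, n^{1/4}\log^{1/4}(n/3m')$ with $\sigma_{\mathrm{eff}}$ the noise standard deviation, then the minimax Hamming risk is bounded below by $4m'\big[\tfrac{1}{8} - \exp\{-(m-m')^2/(2m)\}\big]$. Substituting $\sigma_{\mathrm{eff}} = \sigma/\sqrt{N}$ gives precisely the threshold $b < C\sigma n^{1/4}\log^{1/4}(n/3m')/\sqrt{N}$ appearing in the statement, and the conclusion~\eqref{eq:final-lower-risk-multiple} is identical to~\eqref{eq:final-lower-risk}. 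One should double-check that the parameter class $\calB_S(b,n,m)$ and the definition of $\veta$ are unchanged across the two settings (they are, since they depend only on $\mBstar$, not on the noise), and that the constant $C$ from the corollary can be carried through without degradation.

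The main technical point to be careful about — and what I expect to be the only real obstacle — is justifying the reduction rigorously rather than invoking sufficiency as folklore: one must argue that for \emph{every} (possibly randomized) estimator $\vetahat(\mYtil_1,\dots,\mYtil_N)$ there is an estimator $\vetahat'(\bar{\mY})$ with the same risk uniformly over $\mBstar$. This follows because, conditionally on $\bar{\mY}$ and given the value of $\mBstar$, the conditional distribution of $(\mYtil_1,\dots,\mYtil_N)$ does not depend on $\mBstar$ (sufficiency of the sample mean in a Gaussian location model); hence one can simulate a draw of $(\mYtil_1,\dots,\mYtil_N)$ from $\bar{\mY}$ alone and feed it to $\vetahat$. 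Since $\calN(0,\sigma^2/N)$ has exactly the form required by Theorem~\ref{thm:symmetric:minimax} and Corollary~\ref{cor:minimax-recoverable} (symmetric matrix, i.i.d.\ Gaussian entries on and above the diagonal), everything downstream goes through unchanged. A brief remark can note that this is a pure reduction and requires no new probabilistic estimates beyond those already established.
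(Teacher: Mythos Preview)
Your proposal is correct and takes a cleaner route than the paper. The paper re-traces Steps~1--4 of the proof of Theorem~\ref{thm:symmetric:minimax} with $N$ copies of the data, ultimately computing the $N$-sample likelihood ratio explicitly (Equation~\eqref{eq:likelihood:ratio:multiple}) and observing that it depends on the observations only through the sample average $\vybar$; at that point the second-moment bound of Lemma~\ref{lem:likelihood:second-moment} applies with $\sigma^2/N$ in place of $\sigma^2$, and the remainder of the argument is identical to the single-sample case. Your approach short-circuits this by invoking sufficiency of the sample mean at the outset, reducing directly to the single-observation model with effective noise level $\sigma/\sqrt{N}$ and then applying Corollary~\ref{cor:minimax-recoverable} as a black box. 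The two arguments are equivalent in content---the paper's explicit likelihood-ratio calculation is precisely what verifies sufficiency in this specific setting---but yours avoids repeating the reduction to per-node hypothesis tests and requires no new computation. The paper's route has the minor advantage of being self-contained (no appeal to sufficiency as a general principle) and of making the dependence on $N$ visible inside the likelihood ratio itself.
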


The proof of Theorem~\ref{thm:multiple:minimax-recoverable} is similar to that of Theorem~\ref{thm:symmetric:minimax}. 
Details are provided in Appendix~\ref{subsec:multiple:minimax-recoverable}.

\begin{remark}\label{rem:mle-snr}
    By Equations~\eqref{eq:final-lower-risk} and~\eqref{eq:final-lower-risk-multiple}, when $m \geq 6$, we may take $m' = 1$ and conclude that, for some constant $c_0$,
    \begin{equation*} \begin{aligned}
        \inf_{\vetahat \in [0,1]^{n}} \sup_{\mBstar \in \calB_S(b, n, m)} \E_{\mBstar}\|\vetahat-\veta\|_{\rmH} &\geq 
        c_0 > 0,
    \end{aligned} \end{equation*}
    provided that $b < C N^{-1/2} \sigma n^{1/4}\log^{1/4} \left(n/3\right)$.
    To express this condition in terms of the signal-to-noise ratio, define
    \begin{equation*}
	\SNR = \frac{ \min_{i \in \Istar} \sqrt{\sum_{j \in ([n]\backslash \Istar)\cup \{i\}} {B^\star_{ij}}^2 }} {\sigma / \sqrt{N}}.
    \end{equation*}
    Our results imply that no estimator can achieve exact recovery of the node support $\Istar$ when 
    \begin{equation*}
        \SNR < C n^{1 / 4} \log ^{1 / 4} n
    \end{equation*}
    for some universal constant $C > 0$.
    We refer to the rate $\Omega(n^{1 / 4} \log ^{1 / 4} n)$ as the {\em exact recovery threshold}.
    
    Moreover, taking $m' = m / 2$ in Equations~\eqref{eq:final-lower-risk} and~\eqref{eq:final-lower-risk-multiple} yields 
    \begin{equation*}
    \inf_{\vetahat \in [0,1]^{n}} \sup_{\mBstar \in \calB_S(b, n, m)}
				\E_{\mBstar}\|\vetahat-\veta\|_{\rmH}
    \geq \frac{1}{8} m,
    \end{equation*}
    provided that $b < C N^{-1/2} \sigma n^{1/4}\log^{1/4} \left(2n/3m\right)$ and $m \geq 25$. 
    Therefore, to recover a {\em non-trivial} fraction of the entries of \( \veta \) correctly, Equation~\eqref{eq:minimax-lower-bound} implies that we require
    \begin{equation*}
        \SNR = \Omega\left( n^{1 / 4} \log ^{1 / 4}\left(n/m\right) \right).
    \end{equation*}
    We refer to this lower bound on the SNR as the {\em partial recovery threshold}.
\end{remark}

Both the exact recovery threshold and the partial recovery threshold can be achieved by the least squares estimator (LSE) when the noise satisfies $L \asymp \sigma$, 
as established in Equation~\eqref{eq:b-MLE-partial-sufficient} in Theorem \ref{thm:mle-result}.
Consequently, the LSE is minimax rate-optimal in recovering $\Istar$ under this regime. 

\subsection{Semidefinite Programming: Single Observation} \label{sec:single-sdp}

Motivated by the optimality of the LSE, we now seek a computationally tractable approximation by formulating a Semidefinite Programming (SDP) relaxation. 
In this section, we continue to focus on the case where $|\calG_1| = 1$. 
Recall from Equation~\eqref{eq:Ytil:define} that $\mYtil$ denotes the residual matrix after subtracting off the initial estimate produced by procedure $\calA_0$. 
Let $\mDeltatil$ denote the estimation error matrix $\mDelta \in \R^{|\calU| \times |\calU|}$ restricted to rows and columns indexed by $\calU$, so that
\begin{equation}
\label{eq:mDeltatil:define}
    \Deltatil_{ij} := \Delta_{ij} \quad \text{for } i, j \in \calU.
\end{equation}
Similarly, let $\mWtilo$ denote the restriction of the noise matrix $\mWo$ to the same index set. 
Following the same reason stated in Remark~\ref{rem:calU:wlog}, without loss of generality, we assume $\calU^\star = [n]$ so that with high probability, we have $\calU = [n]$.
Under this setup, the residual matrix can be decomposed as
\begin{equation*}
\mYtil = \mBstar + \mWtilo + \mDeltatil.
\end{equation*}
We define the incoherence parameter for the remaining rows of $\mUstar$ as
\begin{equation*}
\mut := \max_{i \in \calU} \frac{n}{r} \left\|\mU^\star_{i,\cdot}\right\|^2_2 \lesssim n^{1/2},
\end{equation*}
where the last inequality follows from Step~\ref{alg:general:step:select} in Algorithm~\ref{alg:general}.
Let $\mC := \mYtil \circ \mYtil$ be the entrywise square and let $K = n-m$. 
We consider the following convex relaxation of the LSE as defined in Equation~\eqref{eq:least-square-B}: 
\begin{equation} \label{eq:prime:sdp}
\begin{aligned}
\widehat{\mZ}_{\SDP}=\underset{\mZ\in \R^{n \times n}}{\arg \min } & \langle \mC, \mZ\rangle  \\
\text { s.t. } & \mZ \succeq 0 \\
& Z_{i i} \leq 1, \quad \forall i \in[n] \\
& Z_{i j} \geq 0, \quad \forall i, j \in[n] \\
& \langle \mI, \mZ\rangle=K \\
& \langle \mJ, \mZ\rangle=K^2 .
\end{aligned}
\end{equation}
This SDP formulation is closely related to the one proposed in \cite{hajek2016achieving} for the community detection problem under the SBM with two clusters. 
Similar SDP-based relaxations have been extensively studied in the context of community detection.
See the survey \cite{li2021convex-relax} for a comprehensive overview of convex optimization methods in this domain.

Denote the indicator vector for nodes not in the support set and the corresponding support matrix as
\begin{equation} \label{eq:def:mZstar}
\vnu_\star := \vone_n - \veta^\star
	\quad \text{and} \quad
\mZstar := \vnu_\star \vnu_\star^\top.
\end{equation}
We aim to identify sufficient conditions under which the SDP formulation in Equation~\eqref{eq:prime:sdp} recovers the ground truth matrix $\mZ^\star$.
Given the recovered support encoded by $\mZhat_{\SDP}$, we construct an estimator for $\mBstar$ by masking the residual matrix $\mYtil$:
\begin{equation*}
    \mBhat_{\SDP} := \mZhat_{\SDP} \circ \mYtil. 
\end{equation*}

To understand the theoretical performance of the SDP estimator in Equation~\eqref{eq:prime:sdp}, 
we begin by analyzing a simplified structure for $\mBstar$.
Specifically, we consider the case where   
\begin{equation}\label{eq:B-more-general}
\left|B_{i j}^{\star}\right|= \begin{cases}0, & \text { if } i, j \in {\Istar}^c \\ 
0, & \text { if } i, j \in \Istar \\ 
\beta, & \text { otherwise. }\end{cases}
\end{equation}
A more general structure we will later consider is: 
\begin{equation}\label{eq:B-more-and-more-general}
\left|B_{i j}^{\star}\right| \begin{cases} = 0, & \text { if } i, j \in \Istar^c \\ 
\geq 0, & \text { if } i, j \in \Istar \\ 
\geq \beta, & \text { otherwise. }\end{cases}
\end{equation}
which permits heterogeneous signal strengths within the support of $\mBstar$. 

We quantify the estimation error between $\mZhat$ and $\mZstar$ using the $\ell_1$ norm.
This choice facilitates the analysis of support recovery guarantees for $\Istar$. 
The following key lemma provides a bound on this error.

\begin{lemma} \label{lem:key-lower}
Suppose that $\mBstar$ satisfies Equation~\eqref{eq:B-more-general}.
Let 
\begin{equation} \label{eq:mCo:define}
\mCo = (\mBstar + \mWtilo) \circ (\mBstar + \mWtilo),
\end{equation}
where $\mWtilo$ is a noise matrix satisfying Assumption~\ref{assump:noise}.
If $n > 3m$, 
then for $\mZhat$ and $\mZstar$ defined in Equations~\eqref{eq:prime:sdp:equal} and~\eqref{eq:def:mZstar} respectively, we have 
\begin{equation} \label{eq:sdp-key-upper2}
    \|\mZhat - \mZstar\|_1 \leq 
    \frac{5}{2\beta^2} \left\langle \mZstar-\mZhat ,
			\mC -\E \mCo + \mSig - \sigma_0^2 \mJ\right\rangle,
\end{equation}
where $\mSig$ and $\sigma_0$ are as defined in Assumption~\ref{assump:homoscedastic}.
Furthermore, conditioned on the event $\{\|\mDeltatil\|_{\F} \leq C\sigma \sqrt{n r}\}$ and under Assumption~\ref{assump:homoscedastic}, we have 
\begin{equation*}
\|\mZhat - \mZstar\|_1
\lesssim \frac{1}{\beta^2}
\left| \left\langle\mZstar-\mZhat,
	\mCo - \E \mCo + 2\mDeltatil \circ \mWtilo \right\rangle \right|
+ \frac{\sigma^2 n^{3/2} + \sigma^2 n r}{\beta^2}
+ \frac{\sigma r^{1/2} n^{3/2}}{\beta}. 
\end{equation*}
\end{lemma}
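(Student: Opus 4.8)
\textbf{Proof proposal for Lemma~\ref{lem:key-lower}.}

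The plan is to exploit the optimality of $\mZhat$ in the SDP~\eqref{eq:prime:sdp} against the feasible point $\mZstar$, and then to decompose the resulting cost difference into a deterministic ``signal'' part (which controls $\|\mZhat - \mZstar\|_1$ from below) and a collection of ``noise'' cross-terms (which we bound from above). First I would record that $\mZstar$ is feasible: it is PSD, has $Z^\star_{ii} \in \{0,1\} \leq 1$, nonnegative entries, $\langle \mI, \mZstar\rangle = \|\vnu_\star\|_2^2 = n - m = K$, and $\langle \mJ, \mZstar\rangle = (\vone_n^\top \vnu_\star)^2 = K^2$. Hence $\langle \mC, \mZhat\rangle \leq \langle \mC, \mZstar\rangle$, i.e. $\langle \mC, \mZstar - \mZhat\rangle \geq 0$. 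The key structural input is that $\mBstar$ has the block form~\eqref{eq:B-more-general}, so that $\mCo = \mBstar\circ\mBstar + 2\mBstar\circ\mWtilo + \mWtilo\circ\mWtilo$ and the first summand $\mBstar\circ\mBstar = \beta^2(\mJ - \mZstar - (\text{all-ones on }\Istar\times\Istar))$. The next step is to plug $\mC = \mYtil\circ\mYtil = (\mBstar + \mWtilo + \mDeltatil)\circ(\mBstar+\mWtilo+\mDeltatil)$ into $\langle\mC,\mZstar-\mZhat\rangle$ and expand; the $\mBstar\circ\mBstar$ term produces $\langle \beta^2(\mJ - \mZstar - \text{block}),\ \mZstar - \mZhat\rangle$. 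Because $\mZhat$ is feasible, $\langle\mJ,\mZstar-\mZhat\rangle = 0$ and the $\Istar\times\Istar$ block contributes nothing since $\mBstar$ and hence that block is supported off the relevant index set; careful bookkeeping of the constraints $\langle\mI,\mZ\rangle=K$, $\langle\mJ,\mZ\rangle=K^2$, $0\le Z_{ij}\le Z_{ii}\le 1$ should show that $\langle \mZstar, \mZstar-\mZhat\rangle \geq c\|\mZstar - \mZhat\|_1$ for a universal $c$ (this is the standard ``rounding'' argument from \cite{hajek2016achieving} adapted to the lightest-subgraph geometry). Rearranging then gives the first displayed bound~\eqref{eq:sdp-key-upper2}, once we also add and subtract $\mSig - \sigma_0^2\mJ$ to center the quadratic noise term $\mWtilo\circ\mWtilo$ (note $\E[\mWtilo\circ\mWtilo] = \mSig$ on the restricted index set, up to the harmless reindexing under $\calU = [n]$), and absorb the contribution $\langle \sigma_0^2\mJ,\mZstar-\mZhat\rangle = 0$.

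For the second, refined bound I would start from~\eqref{eq:sdp-key-upper2} and split the inner product into three pieces: (i) $\langle\mZstar-\mZhat, \mCo - \E\mCo\rangle$, the centered version of the $(\mBstar+\mWtilo)$-only part, (ii) the cross-terms involving $\mDeltatil$, namely $2\langle\mZstar-\mZhat,\ (\mBstar+\mWtilo)\circ\mDeltatil\rangle + \langle\mZstar-\mZhat,\ \mDeltatil\circ\mDeltatil\rangle$, and (iii) the residual $\langle\mZstar-\mZhat,\ \mSig - \sigma_0^2\mJ\rangle$. Piece (iii) is deterministic: under Assumption~\ref{assump:homoscedastic}, $\|\mSig - \sigma_0^2\mJ\| \leq C\sigma^2\sqrt n$, and since $\mZstar - \mZhat$ is a difference of PSD-ish matrices with $\langle\mI,\mZstar-\mZhat\rangle = 0$ and entries bounded by $1$ in magnitude, $\|\mZstar-\mZhat\|_* \lesssim \|\mZstar-\mZhat\|_1^{1/2}\cdot(\text{something})$ — more simply, $|\langle\mZstar-\mZhat,\mSig-\sigma_0^2\mJ\rangle| \leq \|\mSig - \sigma_0^2\mJ\|\cdot\|\mZstar-\mZhat\|_* \lesssim \sigma^2\sqrt n \cdot \sqrt{n}\cdot\|\mZstar-\mZhat\|_1^{1/2}$ is too lossy; instead I expect the right move is $|\langle\mZstar-\mZhat,\mSig-\sigma_0^2\mJ\rangle| \leq \|\mSig-\sigma_0^2\mJ\|_{\max}\|\mZstar-\mZhat\|_1$ combined with the fact that the operator-norm bound forces $\|\mSig - \sigma_0^2\mJ\|_{\max}$ to be controlled only in an averaged sense, so one uses $\langle\mZstar-\mZhat,\mSig-\sigma_0^2\mJ\rangle \leq \|\mSig-\sigma_0^2\mJ\|\,\|\mZstar-\mZhat\|$ with $\|\mZstar-\mZhat\|\leq\|\mZstar-\mZhat\|_1$, yielding a $\sigma^2\sqrt n\,\|\mZstar-\mZhat\|_1$ term that gets split off $\beta^{-2}$ and, being lower-order when $\beta$ is above the SDP threshold, either absorbed into the left side or contributing the $\sigma^2 n^{3/2}/\beta^2$-type slack after a peeling/self-bounding argument. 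For piece (ii), I would bound $\langle\mZstar-\mZhat,\ (\mBstar+\mWtilo)\circ\mDeltatil\rangle$ by Cauchy–Schwarz in the entrywise inner product, $\leq \|(\mZstar-\mZhat)\circ(\mBstar+\mWtilo)\|_\F\,\|\mDeltatil\|_\F \lesssim \sigma\sqrt{nr}\cdot\|\cdot\|_\F$, and use $\|\mBstar\|_{\max} = \beta$ together with $\|\mWtilo\|_{\max}\leq L$ and $\|\mZstar-\mZhat\|_\F^2 \leq \|\mZstar-\mZhat\|_1$ (entries in $[-1,1]$), so on the event $\{\|\mDeltatil\|_\F \leq C\sigma\sqrt{nr}\}$ this contributes the $\sigma r^{1/2}n^{3/2}/\beta$ and $\sigma^2 nr/\beta^2$ terms; the $\mDeltatil\circ\mWtilo$ cross-term is kept explicit in the statement rather than bounded, which is convenient since $\mDeltatil$ and $\mWtilo$ are dependent (both built from $\mWo$). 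The $\mDeltatil\circ\mDeltatil$ term is $\leq \|\mDeltatil\|_\F^2 \lesssim \sigma^2 nr$ on the good event, feeding $\sigma^2 nr/\beta^2$, or alternatively $\leq \|\mDeltatil\|_{\max}\|\mDeltatil\|_\F\|\mZstar-\mZhat\|_1^{1/2}$ which, via Theorem~\ref{thm:delta_row}'s entrywise bound $\|\mDeltatil\|_{\max}\lesssim\sigma\sqrt{\mut r\log n}/\sqrt n$ on $\calU$, gives the $\sigma^2 n^{3/2}$ contribution after using $\mut\lesssim\sqrt n$.

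The main obstacle I anticipate is piece (i) together with the self-bounding step: the term $\langle\mZstar-\mZhat,\mCo-\E\mCo\rangle$ is a genuinely random quadratic form in the noise, and it must be controlled uniformly over all feasible $\mZhat$, which is why the lemma leaves it (and the $\mDeltatil\circ\mWtilo$ term) unevaluated — they will be handled in the downstream theorem (Theorem~\ref{thm:sdp:L1-bound}) via a concentration / generic-chaining argument over the SDP feasible set, possibly invoking Grothendieck-type bounds on $\max_{\mZ\in\mathcal{Z}}\langle\mZ,\mW\circ\mW - \E\rangle$. Within this lemma, the delicate accounting is (a) verifying that the ``signal'' lower bound $\langle\mZstar,\mZstar-\mZhat\rangle\gtrsim\|\mZstar-\mZhat\|_1$ really does follow purely from the linear constraints $\langle\mI,\mZ\rangle=K$, $\langle\mJ,\mZ\rangle=K^2$, $0\le Z_{ij}\le1$ (no use of PSD-ness is needed for this particular inequality, but one must be careful with the diagonal/off-diagonal split and the factor $5/2$), and (b) ensuring every step of the expansion of $\langle\mC,\mZstar-\mZhat\rangle$ correctly tracks the block structure~\eqref{eq:B-more-general} so that the only surviving deterministic term is exactly $\beta^2\langle\mZstar,\mZstar-\mZhat\rangle$ up to the centered noise. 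Once those are in place, rearranging and dividing by $\beta^2$ (and collecting the $\mDeltatil$-dependent slack on the good event) yields both displays. I would also double-check the ``without loss of generality $\calU = [n]$'' reduction from Remark~\ref{rem:calU:wlog} so that dimensions match and $\E[\mWtilo\circ\mWtilo] = \mSig$ holds on the full index set with high probability.
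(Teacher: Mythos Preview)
Your overall strategy---use optimality of $\mZhat$ against the feasible point $\mZstar$, isolate the signal term $\beta^2\langle\mZstar,\mZstar-\mZhat\rangle$, and leave the centered noise and $\mDeltatil$ cross-terms to be bounded---matches the paper's. But there is a genuine gap in step (a) of your ``main obstacle'' list: you assert that the signal lower bound $\langle\mZstar,\mZstar-\mZhat\rangle\gtrsim\|\mZstar-\mZhat\|_1$ needs ``no use of PSD-ness,'' only the linear constraints. This is wrong. A direct computation using $\langle\mJ,\mZhat\rangle=K^2$ gives
\[
\|\mZhat-\mZstar\|_1 = 2\sum_{i,j\in\Istar}\Zhat_{ij} + 4\sum_{i\in\Istar,\,j\in\Istar^c}\Zhat_{ij},
\]
so you need $\sum_{i,j\in\Istar}\Zhat_{ij}\le c\sum_{\text{cross}}\Zhat_{ij}$. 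The paper obtains this (its Lemma~\ref{lem:Zhat-l1}) precisely via the PSD constraint $2\Zhat_{ij}\le\Zhat_{ii}+\Zhat_{jj}$: summing over $i,j\in\Istar$ gives $\sum_{i,j\in\Istar}\Zhat_{ij}\le m\operatorname{tr}(\mZhat_1)$, and combining with the trace constraint $\langle\mI,\mZhat\rangle=K$ and $n>3m$ forces the cross-block sum to dominate. Your listed constraint ``$Z_{ij}\le Z_{ii}$'' is not part of the SDP; it is itself a consequence of positive semidefiniteness. Without PSD-ness, nothing prevents large off-diagonal mass inside $\Istar\times\Istar$ with small cross mass, and the factor $5/2$ does not follow.

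Two smaller points. First, $\mDeltatil$ and $\mWtilo$ are \emph{independent}, not dependent: $\mDeltatil=\mM^{(0)}-\mMstar$ is computed from the control observation $\mYz$, while $\mWtilo$ is the treatment noise from $\mYo$. This does not affect the lemma (the term is kept explicit), but your parenthetical remark has it backwards. Second, your treatment of the deterministic cross-terms in the refined bound is considerably more convoluted than necessary. The paper simply uses H\"older with $\|\mZstar-\mZhat\|_\infty\le 1$ to get $|\langle\mZstar-\mZhat,\mA\rangle|\le\|\mA\|_1$ for $\mA\in\{\mDeltatil\circ\mBstar,\ \mDeltatil\circ\mDeltatil,\ \mSig-\sigma_0^2\mJ\}$, and then bounds each $\ell_1$-norm directly: $\|\mDeltatil\circ\mBstar\|_1\le\beta n\|\mDeltatil\|_\F$, $\|\mDeltatil\circ\mDeltatil\|_1=\|\mDeltatil\|_\F^2$, and $\|\mSig-\sigma_0^2\mJ\|_1\le n^{3/2}\|\mSig-\sigma_0^2\mJ\|\lesssim\sigma^2 n^{3/2}$. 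This immediately yields the three additive slack terms; no Cauchy--Schwarz, nuclear-norm, or self-bounding argument is needed at this stage.
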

The proof of Lemma~\ref{lem:key-lower} is provided in Appendix~\ref{sec:proof:sdp:key-lower}.
We note that a similar result was established by \cite{fei2018exponential} in the context of community detection under the SBM. 
We will adapt several proof techniques from their work to obtain a sharp bound on the estimation error $\|\mZhat - \mZstar\|_1$ in our setting.
To this end, it suffices to derive a upper bound on the right-hand side of Equation~\eqref{eq:sdp-key-upper2}.


Our first step toward analyzing the estimation error is to establish a preliminary, albeit coarse, upper bound for the matrix inner product using Grothendieck's inequality, 
which states that for any symmetric matrix $\mA \in \R^{n \times n}$,
\begin{equation}\label{eq:Grothendieck}
\sup _{\substack{\mZ \succeq 0, \\ \max_{i \in [n]} Z_{ii} \leq 1}} 
|\langle\mZ, \mA \rangle|
\leq c_0 \max_{\vx, \vy \in\{ \pm 1\}^{n}}
	\left|\sum_{i, j} A_{ij} x_i y_j\right|,
\end{equation}
where $0 < c_0 < 1.783$ is an absolute constant. 
For a detailed reference, see Theorem 3.5.1 in \cite{vershynin2018HDP}.

\begin{lemma} \label{lem:Grothendieck:bound}
Under the same conditions as Lemma \ref{lem:key-lower}, suppose that $\beta \leq L$. 
Conditioned on the event 
\begin{equation} \label{eq:Grothendieck:condition}
\{\|\mDelta\|_\F \leq C\sigma \sqrt{r n}\}
\cap
\{\|\mDelta\|_{\infty} \leq C \sigma\}
\end{equation}
for some constant $C > 0$, we have with probability at least $1 - O(n^{-40})$
\begin{equation*}
\left|\langle \mZhat-\mZstar,
		\mCo - \E \mCo + 2 \mDeltatil \circ \mWtilo\rangle\right|
\leq C' \sigma L n^{3/2}
\end{equation*}
for a constant $C'$ sufficiently large. 
\end{lemma}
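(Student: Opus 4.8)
The plan is to bound $\left|\langle \mZhat-\mZstar, \mCo - \E \mCo + 2\mDeltatil \circ \mWtilo\rangle\right|$ by splitting it into three pieces via the definition $\mCo = (\mBstar + \mWtilo)\circ(\mBstar + \mWtilo) = \mBstar\circ\mBstar + 2\mBstar \circ \mWtilo + \mWtilo \circ \mWtilo$, so that $\mCo - \E\mCo = 2\mBstar\circ\mWtilo + (\mWtilo\circ\mWtilo - \E\mWtilo\circ\mWtilo)$ (the $\mBstar\circ\mBstar$ term is deterministic and cancels). Thus the quantity of interest is controlled by $2|\langle \mZhat-\mZstar, \mBstar\circ\mWtilo\rangle| + |\langle \mZhat - \mZstar, \mWtilo\circ\mWtilo - \E\mWtilo\circ\mWtilo\rangle| + 2|\langle \mZhat - \mZstar, \mDeltatil\circ\mWtilo\rangle|$. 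First I would note that since $\mZhat \succeq 0$, $\mZstar \succeq 0$, with diagonal entries bounded by $1$, the difference $\mZhat - \mZstar$ satisfies the hypotheses (up to a factor of $2$) needed to apply Grothendieck's inequality in Equation~\eqref{eq:Grothendieck}, which reduces each term to a supremum over sign vectors: $\max_{\vx,\vy\in\{\pm 1\}^n}\left|\sum_{ij} A_{ij} x_i y_j\right|$ for $\mA$ equal to each of the three matrices above.

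The core of the argument is then a union bound over the $4^n$ sign-vector pairs combined with a Bernstein-type concentration estimate for each fixed $(\vx,\vy)$. For the term $\mBstar\circ\mWtilo$: for fixed signs, $\sum_{ij} \Bstar_{ij} W_{ij} x_i y_j$ is a sum of independent mean-zero variables (using symmetry of the noise and of $\mBstar$) each bounded by $|\Bstar_{ij}| L \le \beta L$ in magnitude (only entries with $i$ or $j$ in $\Istar$ are nonzero), with variance proxy $\sum_{ij}\Bstar_{ij}^2\sigma_{ij}^2 \lesssim \beta^2 \sigma^2 \cdot mn$ since at most $O(mn)$ entries of $\mBstar$ are nonzero; Bernstein gives a tail of the form $\exp(-ct^2/(\beta^2\sigma^2 mn + \beta L t))$, and choosing $t \asymp \sigma L n^{3/2}$ (say) beats the $4^n$ union bound using $\beta \le L$, $m \le n/3$, and Equation~\eqref{eq:L-sigma}. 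For the term $\mDeltatil\circ\mWtilo$: conditioned on the event in Equation~\eqref{eq:Grothendieck:condition}, $\|\mDeltatil\|_\F \le C\sigma\sqrt{rn}$ and $\|\mDeltatil\|_\infty \le C\sigma$, so $\sum_{ij}\Deltatil_{ij}W_{ij}x_iy_j$ — conditioning on $\mDeltatil$ and treating $\mWtilo$ as fresh randomness (a subtlety, since $\mDelta$ depends on $\mW^{(0)}$, not $\mW^{(1)}$, so they are genuinely independent and there is no issue) — is again a sum of independent bounded variables with per-term bound $C\sigma L$ and total variance $\le C^2\sigma^2\|\mDeltatil\|_\F^2 \le C'\sigma^4 rn$; Bernstein with $t\asymp\sigma L n^{3/2}$ again suffices. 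For the quadratic-in-noise term $\mWtilo\circ\mWtilo - \E(\mWtilo\circ\mWtilo)$: here $\sum_{ij}(W_{ij}^2 - \E W_{ij}^2)x_iy_j$ has per-term bound $L^2$ and variance $\le \sum_{ij}\E W_{ij}^4 \le L^2\sigma^2 n^2$; Bernstein gives a tail $\exp(-ct^2/(L^2\sigma^2 n^2 + L^2 t))$, and $t\asymp\sigma L n^{3/2}$ still works because $\sigma L n^{3/2} \gg L^2 n$ would require $\sigma\sqrt n \gg L$, which is exactly where Equation~\eqref{eq:L-sigma} is used (up to the $\mu\log n$ factor) — this is the place I'd check most carefully.

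Assembling, each of the three terms is $\le C'\sigma L n^{3/2}$ with probability at least $1 - O(n^{-40})$ (adjusting constants so the union bound over three events and over sign vectors still leaves the stated failure probability), and the Grothendieck constant $c_0 < 1.783$ is absorbed; summing gives the claimed bound $\left|\langle \mZhat-\mZstar, \mCo-\E\mCo + 2\mDeltatil\circ\mWtilo\rangle\right| \le C'\sigma L n^{3/2}$ after renaming $C'$.

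The main obstacle I anticipate is getting the $4^n$ union bound to close for the quadratic term $\mWtilo \circ \mWtilo - \E\mWtilo\circ\mWtilo$: its variance scales like $L^2\sigma^2 n^2$ and its Bernstein "spread" parameter like $L^2$, so one needs the deviation level $t = C'\sigma L n^{3/2}$ to dominate both $\sqrt{L^2\sigma^2 n^2 \cdot n}$ (the Gaussian regime with $n$ in the exponent) and $L^2 n$ (the Poisson regime); the first is automatic, but the second forces $\sigma\sqrt n \gtrsim L$, which is precisely the content of Equation~\eqref{eq:L-sigma} (modulo the $\sqrt{\mu\log n}$ factor, which is harmless here since we only need a polynomial-in-$n$ slack). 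A secondary bookkeeping point is verifying that $\mZhat - \mZstar$ (rather than a single PSD matrix) can be fed into Grothendieck's inequality — one handles this by triangle inequality, applying Equation~\eqref{eq:Grothendieck} separately to $\mZhat$ and to $\mZstar$, each of which is PSD with unit-bounded diagonal, at the cost of a factor of $2$ absorbed into $C'$.
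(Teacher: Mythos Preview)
Your proposal is correct and takes essentially the same approach as the paper: decompose into the three pieces $\mBstar\circ\mWtilo$, $\mWtilo\circ\mWtilo - \mSig$, and $\mDeltatil\circ\mWtilo$, apply Grothendieck's inequality separately to $\mZhat$ and $\mZstar$ (the paper absorbs this into a factor $2c_0$), and then close each piece with Bernstein plus a $4^n$ union bound over sign vectors, invoking Equation~\eqref{eq:L-sigma} exactly where you anticipated. Your identification of the quadratic-in-noise term as the tightest case and the handling of $\mZhat-\mZstar$ via the triangle inequality both match the paper precisely.
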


A proof is provided in Appendix~\ref{sec:proof:sdp:grothendieck}.
It is reasonable to focus on the regime $\beta \leq L$.
Otherwise, the signal in the entries of $\mYtil$ would dominate the noise, rendering the support recovery problem trivial.
Combining Lemma~\ref{lem:Grothendieck:bound} with Equation~\eqref{eq:sdp-key-upper2} shows that when $\beta = \omega(\sigma^{1/2} L^{1/2} n^{-1/4})$, the estimation error $\|\mZhat - \mZstar\|_1$ satisfies
\begin{equation} \label{eq:weak-control}
\frac{1}{n^2} \left\|\mZhat - \mZstar\right\|_1 = o_{\Pr}(1).
\end{equation}
While this provides a consistency guarantee, the bound in Equation~\eqref{eq:weak-control} is rather weak: it becomes vacuous as soon as $m = o(n)$, since in that case any feasible solution to the SDP in Equation~\eqref{eq:prime:sdp} would trivially satisfy the bound. 
Nonetheless, establishing Equation~\eqref{eq:weak-control} serves as a crucial first step that enables sharper control through more refined analysis.

To improve upon Equation~\eqref{eq:weak-control}, we leverage the low-rank structure of $\mZstar$.
Recall from Equation~\eqref{eq:def:mZstar} that $\mZstar$ is a rank-one matrix with spectral decomposition given by
\begin{equation*}
    \mZstar = K \left(\frac{1}{\sqrt{K}} \vnu_\star\right)
	\left(\frac{1}{\sqrt{K}} \vnu_\star\right)^\top.
\end{equation*}
Motivated by the geometric structure of the manifold of rank-one symmetric matrices, we decompose the key inner product in Lemma~\ref{lem:key-lower} into components along the tangent space at $\mZstar$ and its orthogonal complement.
 For further background on tangent space geometry in low-rank matrix manifolds, see Chapter 7.5 of \citet{boumal2023introduction}.
Specifically, following the approach of \cite{fei2018exponential}, we define 
\begin{equation}\label{eq:S1:define}
S_1 := \left \langle
	\mZstar-\mZhat,
	\calP_T\left(\mC - \E \mCo + \mSig - \sigma_0^2 \mJ\right)
	\right \rangle 
\end{equation}
and
\begin{equation}\label{eq:S2:define}
    S_2 := \left \langle \mZstar-\mZhat,
		\calP_{T^\perp}\left(\mC -\E \mC^{(1)}
		+ \mSig - \sigma_0^2 \mJ\right)
	\right \rangle
\end{equation}
and decompose the key inner product term as
\begin{equation*} 
\begin{aligned}
    & \left \langle \mZstar-\mZhat, \mC - \E \mCo + \mSig - \sigma_0^2 \mJ\right \rangle = S_1 + S_2,
\end{aligned} 
\end{equation*}
where for any matrix $\mA \in \R^{n\times n}$, 
the projections $\calP_{T}(\mA)$ and $\calP_{T^\perp}(\mA)$ are defined as 
\begin{equation} \label{eq:calP:define}
\calP_{T}(\mA) := K^{-1} \mZstar \mA + K^{-1}\mA \mZstar - K^{-2} \mZstar \mA \mZstar,
\quad \text{and} \quad 
\calP_{T^\perp}(\mA) := \mA - \calP_{T}(\mA).
\end{equation}
This decomposition is standard in the low-rank matrix estimation literature.
See, for example, \cite{recht2011simpler} and \cite{negahban2011estimation}. 
The operator $\calP_T$ captures deviations aligned with the column and row space of $\mZstar$, while $\calP_{T^\perp}$ captures orthogonal directions.
Bounding $S_1$ and $S_2$ precisely is key to obtaining a sharp control of the estimation error. 
This is carried out in
Lemmas ~\ref{lem:S2:bound} and~\ref{lem:S1:bound} in Appendices~\ref{subsec:lem:S1:bound} and~\ref{subsec:lem:S2:bound}, respectively.

We are now ready to state a result characterizing the estimation error $\|\mZhat - \mZstar\|_1$ under the SDP relaxation.

\begin{theorem}\label{thm:sdp:L1-bound}
Suppose the noise matrix $\mWtilo$ satisfies Assumptions~\ref{assump:noise} and~\ref{assump:homoscedastic}, as well as Equation~\eqref{eq:L-sigma}. 
Assume the conditions of Theorem~\ref{thm:delta_row} hold. 
Furthermore, suppose $n > 3m$, $\beta \leq L$, and either $\beta = \Omega(\sigma^{1/2} L^{1/2} n^{-1/4})$ or $m = o(n)$. 
If $\mut r^{3/2} \leq c n \beta^2 / \sigma^2$ for some sufficiently small constant $c > 0$, 
then any solution to the SDP in Equation~\eqref{eq:prime:sdp} with $\mBstar$ given by Equation~\eqref{eq:B-more-general} satisfies
\begin{equation} \label{eq:tau-bound}
\|\mZhat - \mZstar\|_1 
\leq
n^2 \exp\left\{ -\frac{C n\beta^4}{\sigma^2 L^2} \right\}
\end{equation}
with probability at least $1 - O(n^{-6})$. 
\end{theorem}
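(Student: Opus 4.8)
The plan is to combine the reduction in Lemma~\ref{lem:key-lower} with the tangent/normal decomposition $S_1+S_2$ from Equations~\eqref{eq:S1:define}--\eqref{eq:S2:define}, and then upgrade the resulting consistency statement to an exponential rate by optimizing the Bernstein deviation parameters that govern $S_1$. Write $\mA:=\mC-\E\mCo+\mSig-\sigma_0^2\mJ$. Throughout I would condition on the high-probability event of Theorem~\ref{thm:delta_row} (so that $\|\mDeltatil\|_\F\lesssim\sigma\sqrt{nr}$ and $\|\mDeltatil\|_\infty\lesssim\sigma$) and invoke Remark~\ref{rem:calU:wlog} so that $\calU=[n]$. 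The first step is a warm start: establish $\|\mZhat-\mZstar\|_1=o_{\Pr}(n^2)$. When $\beta=\Omega(\sigma^{1/2}L^{1/2}n^{-1/4})$ this is Lemma~\ref{lem:Grothendieck:bound} combined with Equation~\eqref{eq:sdp-key-upper2}; when $m=o(n)$ it is automatic, since every feasible $\mZ$ obeys $\|\mZ-\mZstar\|_1\le\|\mZ-\mJ\|_1+\|\mJ-\mZstar\|_1=2(n^2-K^2)=O(mn)$. This warm start makes available the nuclear-norm identity $\|\calP_{T^\perp}(\mZhat)\|_*=K^{-1}\vnu_\star^\top(\mZstar-\mZhat)\vnu_\star\le K^{-1}\|\mZstar-\mZhat\|_1$, using that $\calP_{T^\perp}(\mZhat)=\calP_{\vnu_\star^\perp}\mZhat\calP_{\vnu_\star^\perp}\succeq0$, which is exactly the quantity controlling the normal component.

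Next, decompose $\langle\mZstar-\mZhat,\mA\rangle=S_1+S_2$ and apply the two bounds the excerpt defers to the appendix. For $S_2$ (Lemma~\ref{lem:S2:bound}): since $\mZstar\in T$, $S_2=-\langle\calP_{T^\perp}(\mZhat),\calP_{T^\perp}(\mA)\rangle$ is at most $\|\calP_{T^\perp}(\mZhat)\|_*$ times the operator norm of $\calP_{T^\perp}(\mCo-\E\mCo+2\mDeltatil\circ\mWtilo)$ plus that of $\calP_{T^\perp}(\mSig-\sigma_0^2\mJ)$; matrix concentration (using $|W_{ij}|\le L$, $\E W_{ij}^2\le\sigma^2$) gives $\lesssim\sigma L\sqrt n$ for the first and Assumption~\ref{assump:homoscedastic} gives $\le C\sigma^2\sqrt n$ for the second, so $|S_2|\lesssim\sigma L n^{-1/2}\|\mZstar-\mZhat\|_1$, which after the $5/(2\beta^2)$ prefactor is self-absorbed because $\beta^2\gtrsim\sigma L n^{-1/2}$. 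For $S_1$ (Lemma~\ref{lem:S1:bound}): with $\vw:=\mA\vnu_\star$ one has $\calP_T(\mA)=K^{-1}(\vnu_\star\vw^\top+\vw\vnu_\star^\top)-K^{-2}(\vnu_\star^\top\mA\vnu_\star)\mZstar$, so $S_1$ collapses to bilinear expressions in $\vw$, $\vnu_\star$, $\mZstar-\mZhat$ and is bounded by $2K^{-1}\|\vw\|_\infty\|\mZstar-\mZhat\|_1$ plus a lower-order rank-one piece. Everything thus reduces to controlling the coordinatewise sums $(\mA\vnu_\star)_j=\sum_{i\notin\Istar}A_{ij}$ and $\vnu_\star^\top\mA\vnu_\star$; modulo the deterministic heteroscedasticity contribution (handled via Assumption~\ref{assump:homoscedastic}) and the $\mDeltatil$-cross terms (for which the condition $\mut r^{3/2}\le cn\beta^2/\sigma^2$ is exactly what keeps the projected initialization error below the target), these are sums of roughly $n$ independent centered terms $2B^\star_{ij}W_{ij}+W_{ij}^2-\sigma_{ij}^2$, each bounded by $O(L^2)$ with variance $O(\sigma^2L^2)$ (using $\beta\le L$).

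The exponential rate is then produced by Bernstein's inequality. For a deviation parameter $t$, each coordinate satisfies $|\sum_{i\notin\Istar}A_{ij}|\lesssim\sqrt{\sigma^2L^2nt}+L^2t$ with probability $1-e^{-t}$, and since the resulting bound on $S_1(\mZ)$ is proportional to $\|\mZstar-\mZ\|_1$ with a coefficient that does not depend on $\mZ$, a single choice $t\asymp cn\beta^4/(\sigma^2L^2)$ makes $\tfrac{5}{2\beta^2}\cdot2K^{-1}\|\mA\vnu_\star\|_\infty<\tfrac12$ simultaneously for all feasible $\mZ$, with failure probability only $O(n\,e^{-cn\beta^4/(\sigma^2L^2)})$ after a union bound over the $n$ coordinates. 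Feeding this, together with the (self-absorbed) $S_2$ bound and the lower-order rank-one term, back into Lemma~\ref{lem:key-lower} applied to $\mZhat$ yields $\|\mZhat-\mZstar\|_1\le\tfrac12\|\mZhat-\mZstar\|_1+(\text{residual})$, hence $\|\mZhat-\mZstar\|_1\lesssim(\text{residual})$; the residual, combined with the trivial bound $\|\mZhat-\mZstar\|_1\le2n^2$, is pinned at $n^2\exp\{-Cn\beta^4/(\sigma^2L^2)\}$. (If one prefers to state the conclusion directly as a bound on $\|\mZhat-\mZstar\|_1$ with no leftover slack, this is packaged as a scale-by-scale argument over dyadic $\tau\in[n^2\exp\{-Cn\beta^4/(\sigma^2L^2)\},2n^2]$, ruling out each window $\|\mZ-\mZstar\|_1\in[\tau,2\tau]$ and taking a union over $O(\log n)$ scales.)

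The main obstacle is extracting the \emph{sharp} exponential rate rather than a crude polynomial consistency bound: this forces the rank-$\le2$ representation of $\calP_T(\mA)$, a near-optimal (rather than generous) choice of the Bernstein parameter balanced against the $1/\beta^2$ loss from Lemma~\ref{lem:key-lower}, and careful bookkeeping so that the per-node concentration exponent $n\beta^4/(\sigma^2L^2)$ survives. A secondary but genuinely delicate point is that both the spectral-initialization residual $\mDeltatil$ (entering $\mYtil$, hence $\mC$) and the near-homoscedastic discrepancy $\mSig-\sigma_0^2\mJ$ have a priori large row sums, so their contributions must be routed through the $\calP_T$/$\calP_{T^\perp}$ split and shown to be either self-bounding with a small coefficient or below the target; the hypotheses $\beta\le L$, $\beta=\Omega(\sigma^{1/2}L^{1/2}n^{-1/4})$ (or $m=o(n)$), and $\mut r^{3/2}\le cn\beta^2/\sigma^2$ are precisely calibrated to make this bookkeeping close.
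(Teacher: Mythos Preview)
Your overall architecture is exactly the paper's: condition on the good event for $\mDeltatil$ from Theorem~\ref{thm:delta_row}, invoke Lemma~\ref{lem:key-lower} (Equation~\eqref{eq:sdp-key-upper2}), split $\langle\mZstar-\mZhat,\mA\rangle=S_1+S_2$, bound $S_2$ through the nuclear-norm identity and an operator-norm estimate (Lemma~\ref{lem:S2:bound}), and reduce $S_1$ through the rank-two structure of $\calP_T(\mA)$ to the column sums $w_j=\sum_{i\notin\Istar}A_{ij}$. The role you assign to the hypotheses $\beta\le L$, $\mut r^{3/2}\le cn\beta^2/\sigma^2$, and the warm start is also correct.

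The gap is in how you extract the exponential. Bounding $|S_1|\lesssim K^{-1}\|\vw\|_\infty\,\varepsilon_0$ and fixing a single Bernstein level $t\asymp n\beta^4/(\sigma^2L^2)$ places the exponential in the \emph{failure probability}: you obtain $\varepsilon_0\le(\text{residual})$ only with probability $1-O(n\,e^{-cn\beta^4/(\sigma^2L^2)})$, not $1-O(n^{-6})$. These are not equivalent in the partial-recovery regime $n\beta^4/(\sigma^2L^2)\lesssim\log n$ that the theorem covers (e.g.\ $\beta\asymp(\sigma L)^{1/2}n^{-1/4}$). Your dyadic patch does not repair this: at every scale $\tau$ you are still comparing $\beta^2$ against the \emph{same} $\mZ$-independent coefficient $K^{-1}\|\vw\|_\infty$, so each window $[\tau,2\tau]$ is ruled out on one global event with the same failure probability, and the union over scales buys nothing. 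Nor can you ``combine with the trivial bound $2n^2$'' to transfer an exponential from the probability into the size of $\varepsilon_0$.

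What the paper does instead (Lemma~\ref{lem:S1:bound}, following the order-statistics argument of \cite{fei2018exponential}) is to keep the $S_1$ bound \emph{adaptive to $\varepsilon_0$}. Since the weights $a_j:=K^{-1}|[(\mZstar-\mZhat)^\top\vnu_\star]_j|$ lie in $[0,1]$ with $\sum_ja_j\le\varepsilon_0/K$, one has $\sum_ja_j|w_j|\le\sum_{j=1}^{\lceil\varepsilon_0/K\rceil}|w|_{(j)}$; a union bound over all size-$T$ index sets then gives, simultaneously for every $T$ and with probability $1-O(n^{-40})$, that $\sum_{j=1}^{T}|w|_{(j)}\lesssim T\sigma L\sqrt{n\log(n/T)}$. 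Plugging $T=\lceil\varepsilon_0/K\rceil$ yields $|S_1|\lesssim\varepsilon_0\,\sigma L\sqrt{\log(n^2/\varepsilon_0)/n}$; feeding this back into Equation~\eqref{eq:sdp-key-upper2} (with all other terms absorbed by the stated hypotheses) gives $\beta^2\lesssim\sigma L\sqrt{\log(n^2/\varepsilon_0)/n}$, which solves directly to $\varepsilon_0\le n^2\exp\{-Cn\beta^4/(\sigma^2L^2)\}$ on a single event of probability $1-O(n^{-6})$. The crucial difference is that the logarithm is $\log(n^2/\varepsilon_0)$ rather than $\log n$, and this is precisely what your $\|\vw\|_\infty$ step cannot supply.
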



The proof is included in Appendix~\ref{subsec:thm:sdp:L1-bound}.
Next, we extend the results established under the simplified model in Equation~\eqref{eq:B-more-general} to the more general setting of Equation~\eqref{eq:B-more-and-more-general}, 
which allows for heterogeneous signal strengths on the support of $\mBstar$.
To accommodate this extension, we introduce the following condition on the noise distribution:
\begin{assumption}\label{assump:mono}
For any $1 \leq i \leq j\leq n$, $W_{ij}$ is a continuous random variable. 
The density function $f_{ij}$ of $W_{ij}$ is such that $f_{ij}(x) \geq f_{ij}(y)$ for any $0 \leq x \leq y$. 
\end{assumption}
Combined with the symmetric-about-zero conditon in Assumption~\ref{assump:noise}, this monotonicity condition ensures that smaller absolute values of $W_{ij}$ are more likely, which allows us to apply stochastic dominance arguments.
\begin{corollary} \label{cor:sdp-more-general}
Suppose the noise matrix $\mWtilo$ satisfies Assumptions~\ref{assump:noise} and~\ref{assump:homoscedastic}, as well as Equation~\eqref{eq:L-sigma}. 
Assume the conditions of Theorem~\ref{thm:delta_row} hold. 
If $n > 3m$, 
$\mut r^{3/2} \leq c n \beta^2 / \sigma^2$ for some sufficiently small constant $c > 0$ and either $\beta = \Omega(\sigma^{1/2} L^{1/2} n^{-1/4})$ or $m = o(n)$,
then under Assumption~\ref{assump:mono}, 
any solution to the SDP in Equation~\eqref{eq:prime:sdp} with $\mBstar$ from Equation~\eqref{eq:B-more-general} satisfies 
\begin{equation*} 
\|\mZhat - \mZstar\|_1
\leq n^2 \exp\left\{ -\frac{C n\beta^4}{\sigma^2 L^2} \right\}
\end{equation*}
with probability at least $1 - O(n^{-6})$. 
\end{corollary}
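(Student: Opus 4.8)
The plan is to reduce Corollary~\ref{cor:sdp-more-general} to Theorem~\ref{thm:sdp:L1-bound} via a stochastic dominance (coupling) argument, rather than redoing the Grothendieck and tangent-space analysis from scratch. Write $\mBstar$ for a matrix satisfying the more general pattern in Equation~\eqref{eq:B-more-and-more-general} with minimum off-support signal $\beta$, and let $\mBstarhom$ denote the ``homogeneous'' matrix of the form Equation~\eqref{eq:B-more-general} with every entry between $\Istar$ and ${\Istar}^c$ having magnitude exactly $\beta$ and every other entry zero. The key observation is that the SDP in Equation~\eqref{eq:prime:sdp} depends on the data only through the entrywise-squared cost matrix $\mC = \mYtil \circ \mYtil$, and that $\mZstar$ is the same for $\mBstar$ and $\mBstarhom$ since they share the same node support $\Istar$. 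So it suffices to show that the data generated under $\mBstar$ is, in a suitable sense, ``more favorable'' to recovering $\mZstar$ than the data generated under $\mBstarhom$, and then invoke Theorem~\ref{thm:sdp:L1-bound} applied to $\mBstarhom$.

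First I would recall, tracing through the proof of Theorem~\ref{thm:sdp:L1-bound} (and Lemma~\ref{lem:key-lower}), that the only place the signal $\mBstar$ enters the error bound is through quantities of the form $\langle \mZstar - \mZhat, \mC - \E\mCo + \mSig - \sigma_0^2 \mJ\rangle$ and, after the tangent-space split, through $S_1$ and $S_2$, all of which are controlled by high-probability bounds on $\mWtilo$, $\mDeltatil$, and on $\mC - \E\mCo$. The cost entries are $C_{ij} = (B^\star_{ij} + W^{(1)}_{ij} + \Delta_{ij})^2$. For $i,j$ both in $\Istar^c$ or both in $\Istar$ the contribution of $\mBstar$ matches that of $\mBstarhom$ (being zero in the first case and unconstrained but not used in the recovery argument in the second — here one must be slightly careful, since Equation~\eqref{eq:B-more-and-more-general} allows arbitrary nonzero entries inside $\Istar\times\Istar$, but those rows/columns are removed anyway and $\mZstar$ puts zero weight there). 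For the ``otherwise'' block, $|B^\star_{ij}| \geq \beta = |B^\star_{\hom,ij}|$. I would then use Assumption~\ref{assump:mono}: since $f_{ij}$ is symmetric about $0$ and nonincreasing in $|x|$, for any $b' \geq \beta \geq 0$ the random variable $(b' + W_{ij})^2$ stochastically dominates $(\beta + W_{ij})^2$ — equivalently $|b' + W_{ij}|$ stochastically dominates $|\beta + W_{ij}|$. This is the standard fact that a unimodal-about-zero density, shifted farther from the origin, puts more mass far from the origin. Hence there is a monotone coupling under which $C_{ij}^{\mBstar} \geq C_{ij}^{\mBstarhom}$ entrywise on the off-support block (and equality elsewhere that matters), almost surely.

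Given such a coupling, I would argue that the SDP objective $\langle \mC, \mZ\rangle$ under $\mBstar$ dominates that under $\mBstarhom$ pointwise over the feasible set in a way that only makes $\mZstar$ ``more optimal'': concretely, $\langle \mC^{\mBstar} - \mC^{\mBstarhom}, \mZ\rangle \geq 0$ for every feasible $\mZ$ (all entries of the difference are nonnegative and $\mZ \geq 0$ entrywise), and this difference is minimized over the feasible set by any $\mZ$ supported off the positive block — in particular it is larger for $\mZhat$ than for $\mZstar$ whenever $\mZhat \neq \mZstar$ (because $\mZstar = \vnu_\star\vnu_\star^\top$ has zero weight exactly on rows/columns in $\Istar$, which is where the extra cost concentrates). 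Therefore $\langle \mC^{\mBstar}, \mZstar - \mZhat\rangle \leq \langle \mC^{\mBstarhom}, \mZstar - \mZhat\rangle$ under the coupling, and feeding this into Equation~\eqref{eq:sdp-key-upper2} of Lemma~\ref{lem:key-lower} shows that $\|\mZhat - \mZstar\|_1$ under $\mBstar$ is dominated by the corresponding quantity under $\mBstarhom$. An alternative and cleaner route, avoiding any subtlety about the $\Istar\times\Istar$ block, is to observe that $\mC^{\mBstar} = \mC^{\mBstarhom} + \mR$ with $\mR \succeq 0$ entrywise and $\mR$ supported on rows/columns indexed by $\Istar$, so $\langle \mR, \mZstar\rangle = 0$ while $\langle \mR, \mZhat\rangle \geq 0$; hence $\langle \mC^{\mBstar}, \mZhat\rangle - \langle \mC^{\mBstar}, \mZstar\rangle \geq \langle \mC^{\mBstarhom}, \mZhat\rangle - \langle \mC^{\mBstarhom}, \mZstar\rangle$, so optimality of $\mZhat$ for $\mC^{\mBstar}$ implies the right-hand side is $\leq 0$, i.e.\ $\mZhat$ is also within the analyzed suboptimality gap for $\mC^{\mBstarhom}$, and the bound in Equation~\eqref{eq:tau-bound} transfers verbatim.

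The main obstacle I anticipate is making the coupling argument fully rigorous at the level of the \emph{event} on which Theorem~\ref{thm:sdp:L1-bound} holds: Theorem~\ref{thm:sdp:L1-bound} gives its conclusion only with probability $1 - O(n^{-6})$, conditioned on high-probability events about $\mDeltatil$ and $\mWtilo$. Under the monotone coupling the noise matrix $\mWtilo$ is shared between the two models, so those noise events occur simultaneously; and $\mDeltatil$ depends on $\mY^{(0)}$, which is untouched. So the conditioning events are identical under the coupling, and conditional on them the deterministic domination $\mC^{\mBstar} \succeq_{\text{ew}} \mC^{\mBstarhom}$ (on the relevant support) gives the bound. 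The only remaining care is to confirm that the hypotheses of Theorem~\ref{thm:sdp:L1-bound} --- $n > 3m$, $\beta \leq L$, the SNR condition $\mut r^{3/2} \leq c n\beta^2/\sigma^2$, and $\beta = \Omega(\sigma^{1/2}L^{1/2}n^{-1/4})$ or $m = o(n)$ --- which are all stated in terms of the \emph{same} $\beta$, $m$, $n$, $\sigma$, $L$, carry over to $\mBstarhom$; they do, by construction, since $\mBstarhom$ has exactly minimum signal $\beta$ and the same support size. I would close by noting this coupling/stochastic-dominance technique is exactly the one used by \cite{fei2018exponential} to pass from equal to unequal within-community edge probabilities in the SBM SDP, and cite accordingly.
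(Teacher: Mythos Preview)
Your strategy is the same as the paper's: couple the heterogeneous signal $\mBstar$ to the homogeneous one $\mB^{(0)}$ via Assumption~\ref{assump:mono} and Strassen's theorem, then use that the coupled difference $\mC-\mC'$ is entrywise nonnegative and supported on rows/columns indexed by $\Istar$ (where $\mZstar$ vanishes), so $\langle\mZstar-\mZhat,\mC'-\mC\rangle=\langle\mZhat,\mC-\mC'\rangle\geq 0$ and one may replace $\mC$ by $\mC'$ in Equation~\eqref{eq:sdp-key-upper2} and invoke Theorem~\ref{thm:sdp:L1-bound}. Your ``cleaner route'' is exactly the inequality the paper writes down.

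There is, however, one gap. The stochastic-dominance step you state---$(b'+W_{ij})^2\succeq(\beta+W_{ij})^2$ for $b'\geq\beta\geq 0$---is correct (this is Lemma~\ref{lem:stoc-dom}), but the actual cost entry is $C_{ij}=(\Bstar_{ij}+\Deltatil_{ij}+\Wtilo_{ij})^2$, with the deterministic shift $\Deltatil_{ij}$ that you correctly keep fixed across both models. What you therefore need is $|\Bstar_{ij}+\Deltatil_{ij}|\geq|B^{(0)}_{ij}+\Deltatil_{ij}|$, and this can fail: e.g.\ $\Bstar_{ij}=2\beta$, $B^{(0)}_{ij}=\beta$, $\Deltatil_{ij}=-3\beta$ gives $|{-}\beta|<|{-}2\beta|$. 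Since on the event $\calE$ one only has $\|\mDeltatil\|_\infty\lesssim\sigma$, which typically exceeds $\beta\asymp\sigma n^{-1/4}$, this is not a corner case. The paper patches it by a preprocessing step: at any entry where the inequality fails, absorb the excess signal into a modified $\Deltatil^{(0)}_{ij}=\Deltatil_{ij}+\Bstar_{ij}-B^{(0)}_{ij}$, which one checks always satisfies $|\Deltatil^{(0)}_{ij}|\leq|\Deltatil_{ij}|$. Because the proof of Theorem~\ref{thm:sdp:L1-bound} uses $\mDeltatil$ only through the norm bounds defining $\calE$---all preserved under entrywise shrinkage---the theorem applies with $\mDeltatil^{(0)}$ in place of $\mDeltatil$, and after this reduction the dominance holds at every entry so your coupling goes through.
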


When $L/\sigma = \Theta(1)$, as a consequence of Theorem~\ref{cor:sdp-more-general}, 
if $\beta = C\sigma n^{-1/4}\log^{1/4} n$ for some sufficiently large constant $C$, 
any solution $\mZhat$ to the SDP in Equation~\eqref{eq:prime:sdp} with $\mBstar$ from Equation~\eqref{eq:B-more-general} obeys the bound
\begin{equation*}
    \left\|\mZhat - \mZstar\right\|_1 \leq c n
\end{equation*}
with high probability, where $c>0$ is a small constant.
In this case, we can recover the exact support $\Istar$ by sorting the rows of $\mZhat$ according to their sums and selecting the indices corresponding to the $m$ smallest row sums.
Moreover, if $\beta = C\sigma n^{-1/4} \log^{1/4}(n/m)$ for some sufficiently large $C$, then the solution satisfies
\begin{equation*}
\left\|\mZhat - \mZstar\right\|_1 \leq c m n.
\end{equation*}
Under the above setting, for $\mZhat$, at most $2cm$ rows of $i \in \Istar$ have row sum greater than $n/2$, while at most $2cm$ rows $i \in I_{\star}^c$ have row sum less than $n/2$, and we have $|I_{\star}^c \cap \Ihat| + |I_{\star} \cap \Ihat^c| \leq 4cm$. 
Thus, when $c$ is sufficiently small, this guarantees partial recovery of the support $\Istar$.  

\subsubsection{Minimax Recovery for SDP}

Compared to the minimax lower bound in Corollary~\ref{cor:minimax-recoverable}, 
the SDP approach imposes an additional structural requirement, namely, an entrywise lower bound on the signal strength of $\mBstar$.
While it may be possible to derive stronger guarantees for SDP under weaker conditions, we instead focus on a restricted parameter space where this additional assumption holds.
Toward this end, define the restricted class
\begin{equation*}
    \calB'_S(\beta, n, m) := \left\{\mB \in \Sym(n): 
    |I_{\mB}| \leq m,~
    \begin{array}{cc}
	|B_{ij}| \geq \beta, & \forall i \in I_{\mB}, j\in I_{\mB}^c \\
        B_{i j}=0, & \forall i, j \in I_{\mB}^c
    \end{array}\right\},
\end{equation*}
which enforces a uniform lower bound $\beta$ on the signal magnitude between active and inactive nodes.

We show that over the class $\calB'_S(\beta, n, m)$, the SDP estimator achieves the minimax optimal rate for support recovery under Gaussian noise.

\begin{theorem}\label{thm:sdp-minimax}
Suppose we observe $N \geq 1$ independent samples of the form 
\begin{equation*}
    \mYtil_{\ell} = \mBstar + \mW_{\ell}, ~~~\ell=1,2,\dots,N,
\end{equation*}
where $\mW_{\ell}$ are independent symmetric noise matrices with i.i.d. entries $W_{\ell, ij}$ drawn from $\calN(0,\sigma^2)$. 
Then there exists a universal constant $C>0$ such that for sufficiently large $n$ and any $m^{\prime} \in[1, m]$ with $m<n / 3$, if
\begin{equation*}
\beta
< C \sigma n^{-1 / 4} \log^{1 / 4}\left(n /\left(3 m^{\prime}\right)\right),
\end{equation*}
then
\begin{equation*}
    \inf _{\hat{\boldsymbol{\eta}} \in[0,1]^n} \sup _{\mBstar \in \calB'_S(\beta, n, m)} \mathbb{E}_{\mBstar}\|\boldsymbol{\eta}-\hat{\boldsymbol{\eta}}\|_\rmH \geq 4 m^{\prime}\left[\frac{1}{8}-\exp \left(-\frac{\left(m-m^{\prime}\right)^2}{2 m}\right)\right].
\end{equation*}
\end{theorem}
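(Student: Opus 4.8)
The plan is to deduce Theorem~\ref{thm:sdp-minimax} from the minimax lower bound of Theorem~\ref{thm:multiple:minimax-recoverable} by showing that the least-favorable prior used there can be taken to be supported on the restricted class $\calB'_S(\beta,n,m)$ for a suitable $\beta$. The point is that membership in the smaller class $\calB'_S$ is not obtained by an inclusion of parameter spaces (which would push the bound in the wrong direction), but by re-examining the hard instances and reshaping them so that they simultaneously satisfy the entrywise lower bound defining $\calB'_S(\beta,n,m)$ and the row-norm lower bound defining $\calB_S(b,n,m)$.

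First I would reduce to a single effective observation: since the $\mW_\ell$ have i.i.d.~$\calN(0,\sigma^2)$ entries on and above the diagonal, the average $\bar{\mYtil} := N^{-1}\sum_{\ell=1}^N \mYtil_\ell = \mBstar + \bar\mW$ is a sufficient statistic with $\bar\mW$ symmetric and $\bar W_{ij}\sim\calN(0,\sigma^2/N)$ independent for $1\le i\le j\le n$, so it suffices to treat the single-sample model with $\sigma$ replaced by $\sigma/\sqrt N$. Next I would revisit the construction of the prior in the proof of Theorem~\ref{thm:multiple:minimax-recoverable}: it fixes node-support patterns $I$ with $|I|\le m$ and places on each active row $i$ a signal $\tilde{\vb}_i = (B^\star_{ij})_{j\in([n]\backslash I)\cup\{i\}}$ whose $\ell_2$-magnitude sits at the boundary value $b\asymp \sigma n^{1/4}\log^{1/4}(n/(3m'))$ (times $1/\sqrt N$ in general). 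Every divergence entering that argument --- the pairwise KL divergences, the $\chi^2$ divergences of the needle-in-haystack mixtures against the null, and the induced total-variation distances --- depends on $\mBstar$ only through the per-row norms $\|\tilde{\vb}_i\|_2$, up to cross terms between two distinct candidate active rows that overlap in a single entry and hence are $O(\beta^2)$; this is because for the Gaussian location model $D_{\mathrm{KL}}\big(\calN(\mu_1,\tau^2\mI)\,\|\,\calN(\mu_2,\tau^2\mI)\big)=\|\mu_1-\mu_2\|_2^2/(2\tau^2)$ and two compared supports differ in the status of one node. Consequently, without changing any bound in that proof, I may redistribute each active row's signal so that $B^\star_{ij}=\beta$ whenever $i$ is active and $j$ is inactive and $B^\star_{ij}=0$ otherwise, with $\beta := b/\sqrt{n-m}$; since $n/3 < n-m\le n$ this gives $\beta\asymp \sigma n^{-1/4}\log^{1/4}(n/(3m'))$ (times $1/\sqrt N$), which is exactly the hypothesis of the theorem.

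For such a reshaped instance with support $I$, every active row has exactly $n-|I|\ge n-m>m+1$ nonzero entries, so Assumption~\ref{assump:B-identifiable} holds and Lemma~\ref{lem:identify-B} yields $I_{\mB}=I$; hence $|I_{\mB}|\le m$, $|B^\star_{ij}|=\beta$ for all $i\in I_{\mB},\,j\in I_{\mB}^c$, and $B^\star_{ij}=0$ for all $i,j\in I_{\mB}^c$, so $\mBstar\in\calB'_S(\beta,n,m)$. At the same time $\|\tilde{\vb}_i\|_2=\beta\sqrt{n-|I|}\ge\beta\sqrt{n-m}=b$, so $\mBstar\in\calB_S(b,n,m)$ with the same $b$ used in Theorem~\ref{thm:multiple:minimax-recoverable}. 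Feeding this prior --- now supported on $\calB'_S(\beta,n,m)$ --- into the identical chain of inequalities (the reduction to $m'$ binary sub-problems that produces the factor $4m'$, together with the Binomial tail estimate that produces the $\exp(-(m-m')^2/(2m))$ term) gives $\inf_{\vetahat}\sup_{\mBstar\in\calB'_S(\beta,n,m)}\E_{\mBstar}\|\veta-\vetahat\|_{\rmH}\ge 4m'\big[\frac{1}{8}-\exp(-(m-m')^2/(2m))\big]$, which is the claim.

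The main obstacle is the reshaping step: one must verify carefully that replacing a row's signal by a block-constant vector of equal $\ell_2$ norm leaves every quantity in the lower-bound argument unchanged up to terms of the order already absorbed there. The essential observation is that when two candidate supports disagree in a single node, the Gaussian mean-shift is supported on that node's row and column, so its squared Frobenius norm --- and hence the relevant KL and $\chi^2$ --- equals $2\|\tilde{\vb}_i\|_2^2/\sigma^2$ irrespective of how the signal is spread; the only genuinely new contribution is the inner product between mean-shifts of two distinct candidate active rows, which overlap only in the single entry indexed by that pair and is therefore $O(\beta^2)$, matching the corresponding term in the original proof. Once this bookkeeping is in place, the remainder of the argument is a verbatim repetition of the proof of Theorem~\ref{thm:multiple:minimax-recoverable}.
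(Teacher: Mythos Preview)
Your proposal has a genuine gap at the reshaping step. You set $B^\star_{ij}=\beta$ deterministically for each active $i$ and inactive $j$ and claim that the divergences driving the lower bound depend on $\mBstar$ only through the per-row $\ell_2$ norms. This is false for the quantity that actually matters in Theorem~\ref{thm:symmetric:minimax}, namely the second moment $\E_0 L^2$ of the likelihood ratio in the per-node testing problem (Lemma~\ref{lem:minimax:risk}). With a point-mass alternative $\vtheta=\beta\vone_k$ one computes directly $\E_0 L^2=\exp(\|\vtheta\|_2^2/\sigma^2)=\exp(k\beta^2/\sigma^2)$; for $k\asymp n$ and $\beta\asymp\sigma n^{-1/4}$ this is $\exp(\Theta(\sqrt n))$, which makes the bound in Lemma~\ref{lem:minimax:risk} negative and hence vacuous. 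The crucial cancellation in Lemma~\ref{lem:likelihood:second-moment} comes from the fact that for two \emph{independent} draws $\vtheta,\tilde\vtheta$ from the prior, $\vtheta^\top\tilde\vtheta$ is sub-Gaussian with variance $O(k\beta^4)$; once the prior collapses to a point mass this inner product is identically $k\beta^2$ and the averaging that suppresses the $\chi^2$ divergence disappears. Your pairwise-KL intuition is correct for simple-vs-simple Gaussian shifts, but the paper's argument is not Fano on pairs of supports; it is a Bayes-risk bound on a mixture, and the $\chi^2$ of a mixture against the null is not a function of row norms alone.

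The paper's actual proof supplies exactly the missing randomization: it replaces the uniform-on-sphere prior by the uniform distribution on $\beta\{\pm1\}^k$, so every entry satisfies $|B^\star_{ij}|=\beta$ (hence $\mBstar\in\calB'_S(\beta,n,m)$) while $\vtheta^\top\tilde\vtheta=\beta^2\sum_j s_j\tilde s_j$ is a Rademacher sum. Lemma~\ref{lem:likelihood:second-moment:beta} then gives $\E_0 L^2\le\exp(Ck\beta^4/\sigma^4)$, and Lemma~\ref{lem:minimax:risk:beta} feeds into the remainder of the argument from Theorem~\ref{thm:symmetric:minimax} without change. In short, you need random signs, not a constant-sign block; the randomness in the direction of the signal is precisely what makes the per-node test hard at the $\beta\asymp\sigma n^{-1/4}$ scale.
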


\begin{remark}
Following Remark~\ref{rem:mle-snr}, define
\begin{equation*}
\SNR_{\beta}
:= \frac{\min_{i\in \Istar, j\in I_\star^c} |B_{ij}^\star|}{\sigma / \sqrt{N}}.
\end{equation*}
Theorem~\ref{thm:sdp-minimax} shows that no method can achieve exact recovery  
over $\calB_S'(\beta, m, n)$ when 
\begin{equation*}
    \SNR_\beta < C n^{-1 / 4} \log ^{1 / 4} n.
\end{equation*}
We call $n^{-1 / 4} \log ^{1 / 4} n$ the {\em exact recovery threshold} over $\calB_S'(\beta, m, n)$. 
Following Remark~\ref{rem:mle-snr}, we also call $ n^{-1 / 4} \log ^{1 / 4} (n/m)$ the {\em partial recovery threshold} over $\calB_S'(\beta, m, n)$. 
As shown in Corollary~\ref{cor:sdp-more-general}, both the exact and partial recovery thresholds over $\calB_S'(\beta, m, n)$ can be achieved by SDP. 
\end{remark}

\subsubsection{Truncated SDP}
\label{sec:truncated-sdp}

Compared to the minimax lower bound established in Theorem~\ref{thm:sdp-minimax}, the estimation error bound in Theorem~\ref{thm:sdp:L1-bound} demonstrates that the SDP estimator in Equation~\eqref{eq:prime:sdp} achieves the minimax rate. 
This result relies on two key assumptions: 
(i) the noise is approximately isotropic, 
and (ii) the truncation level satisfies $L \asymp \sigma$.
In this section, we examine the setting where the truncation level is much larger than the noise scale, 
i.e., $L \gg \sigma$. 
We defer analysis of the heteroscedastic case to Section~\ref{sec:multiple-sdp}.

When $L$ is large, 
the observed noise entries $W_{ij}^{(1)}$ as defined in Equation~\eqref{eq:model:calG1} 
may exhibit heavy-tailed behavior, 
potentially following a power law distribution. 
This presents challenges for both the SDP and the LSE, as the $\ell_2$ norm is sensitive to large outliers and lacks robustness under such noise conditions. 
A principled remedy is to replace the squared loss in the least squares formulation with a robust alternative, such as the Huber loss or Jaeckel's dispersion function, and then develop a suitable convex relaxation or other computationally tractable approximation. 
For an overview of such methods, see standard references on robust statistics, e.g., \citet{hettmansperger2010robust, huber2011robust}. 
We leave this direction for future work.
Instead, to demonstrate that our SDP-based approach can be extended to handle some heavy-tailed scenarios, we instead adopt a simple truncation strategy. 

Consider the truncation operator $\calT_{\gamma}(\cdot)$ defined
entrywise as
\begin{equation*}
    \calT_{\gamma}(a) = a \wedge \gamma. 
\end{equation*}
Let $\tau_0^2$ be a threshold that serves as a proxy for the noise variance $\sigma^2$. 
To reduce the impact of extreme noise values, 
we apply truncation to the entrywise square of the residual matrix $\mYtil$, 
and solve the SDP using the truncated matrix
\begin{equation*}
    \mCbar := \calT_{\tau_0^2}(\mYtil \circ \mYtil)
\end{equation*}
in place of $\mC$ in Equation~\eqref{eq:prime:sdp}.

We impose the following regularity condition on the noise distribution.
\begin{assumption} \label{assump:lipschitz}
    For $c \sigma \leq \tau_0 \leq C \sigma$, where $c$ is a sufficiently large constant, we have for any $|x| \leq \frac{\tau_0}{2}$,
    \begin{equation*}
        |f(\tau_0+x) - f(\tau_0)| \leq \gamma x, \quad \text{and } \gamma \leq c' \sigma^{-2}(\tau_0/\sigma)^{-3}
    \end{equation*}
    for some sufficiently small constant $c' > 0$. 
\end{assumption}
\begin{remark}
A distribution $f(x; \mu, \sigma)$ is a \emph{location-scale family} if it can be expressed as
\begin{equation*}
    f(x; \mu, \sigma) = \frac{1}{\sigma} g\left(\frac{x - \mu}{\sigma}\right),
\end{equation*} 
where $g$ is a standardized density function with mean $0$ and variance $1$, $\mu$ is a location parameter and $\sigma$ is a scale parameter.
Assumption~\ref{assump:lipschitz} is satisfied by location-scale family with $g$ that has a small Lipschitz constant.
Indeed, one has 
\begin{equation*}
    \left|f(x_1) - f(x_2)\right| \leq \gamma_0 \left|x_1 - x_2\right| \quad \text{for all } x_1, x_2 \in \R
\end{equation*}
with 
\begin{equation*}
    \gamma_0 = \sigma^{-2} \sup_{z \in \R} \left|g'(z)\right|. 
\end{equation*}
Thus, if the Lipschitz constant of $g$ is sufficiently small, then Assumption~\ref{assump:lipschitz} holds.
For example, consider the $t_3$ distribution, which has density
\begin{equation*}
    f(t) = \frac{2}{\eta \pi \sqrt{3}\left(1+\frac{t^2}{3\eta^2}\right)^2},
\end{equation*}
standard deviation of $\sqrt{3}\eta$ and derivative given by
\begin{equation*}
    f'(t) = \frac{4t}{3\eta^3 \pi\sqrt{3}\left(1+\frac{t^2}{3\eta^2}\right)^3} \leq \frac{t/\eta}{4\eta^2\left(1+\frac{t^2}{3\eta^2}\right)^3} \leq \frac{9}{4\eta^2 (t/\eta)^{4}}.
\end{equation*}
This satisfies Assumption~\ref{assump:lipschitz} with $c_1 = 1/2$ when $t > 9\eta$. By picking a larger $t$, one can then choose a smaller value for $c_1$.  
\end{remark}

To select a suitable truncation level $\tau_0$, 
we require it to lie within a constant factor of the noise standard deviation:
\begin{equation} \label{eq:tau-lower-upper-bound}
c_0 \sigma \leq \tau_0 \leq C_0 \sigma
\end{equation}
for some fixed constants $0 < c_0 < C_0$. 
We obtain $\tau_0$ using a subset of rows from the initial observation matrix $\mYz$. 
Specifically, define the index set
\begin{equation*}
\calS
:= \left\{\ell \in [n] : 
	\sqrt{n} \left\|\mU_{\ell, \cdot}\right\|_2 \leq C \sqrt{\log n} 
	\right\},
\end{equation*}
and denote the submatrix of $\mYz$ corresponding to the rows and columns in $\calS$ as
\begin{equation*}
\mYz_{\calS} := \left(Y^{(0)}_{ij}\right)_{(i,j) \in \calS \times \calS} \in \R^{|\calS| \times |\calS|}. 
\end{equation*}
Using the estimated low-rank component $\mMz$ obtained from $\mYz$,
we define the residual matrix as 
\begin{equation} \label{eq:mWcheck:define}
    \mWcheck := \mYz_{\calS} - \mMhat_{\calS},
\end{equation}
and estimate the truncation threshold $\tau$ via
\begin{equation}\label{eq:tauhat:define}
    \tau := \frac{1}{|\calS|} \left\|\mWcheck\right\|_{\F}.
\end{equation}
The following theorem guarantees that $\tau$ satisfies the required bounds in Equation~\eqref{eq:tau-lower-upper-bound} with high probability. 
A proof is included in Appendix~\ref{sec:proof:thm:tauhat}. 
\begin{theorem}\label{thm:tauhat}
Suppose the observed matrix $\mYz$ follows the model in Equation~\eqref{eq:model:calG1} with $|\calG_1| = 1$ and that the noise matrix $\mWz$ satisfies Assumption~\ref{assump:noise} along with the condition in Equation~\eqref{eq:L-sigma}. 
If $\kappa r = O(1)$, then  
the estimated threshold $\tau$ satisfies the bound in Equation~\eqref{eq:tau-lower-upper-bound} with probability at least $1 - O(n^{-6})$. 
\end{theorem}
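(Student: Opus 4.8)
The plan is to write $\mWcheck = \mYz_\calS - \mMz_\calS = \mWz_\calS - \mDelta_\calS$, where $\mDelta = \mMz - \mMstar$ is the spectral-initialization error controlled in Theorem~\ref{thm:delta_row}, and then show that $\|\mWz_\calS\|_\F$ concentrates around a constant multiple of $\sigma|\calS|$ while $\|\mDelta_\calS\|_\F$ is of strictly smaller order. The triangle inequality then traps $\|\mWcheck\|_\F$ between two fixed constant multiples of $\sigma|\calS|$, so that $\tau = \|\mWcheck\|_\F / |\calS| = \Theta(\sigma)$, which is exactly Equation~\eqref{eq:tau-lower-upper-bound}.

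First I would argue that $\calS$ is large and uniformly incoherent. Since $\sum_{\ell\in[n]}\|\mUstarl\|_2^2 = r$ and $\kappa r = O(1)$ forces $r = O(1)$ under the standing assumption $\kappa = O(1)$, at most $O(rn/\log n) = o(n)$ rows can have $\|\mUstarl\|_2^2 \gtrsim \log n/n$. Theorem~\ref{thm:ustar-row} in Appendix~\ref{sec:localized} shows that $\|\mUz_{\ell,\cdot}\|_2$ tracks $\|\mUstarl\|_2$ uniformly with an additive error that is $o(\sqrt{\log n/n})$ on the incoherent rows (this is where $|\lambdastar_r|\gtrsim\sigma\kappa\sqrt{n\log n}$ enters), so with probability $1-O(n^{-6})$ every genuinely incoherent row is retained in $\calS$ and every $\ell\in\calS$ obeys $\|\mUstarl\|_2\lesssim\sqrt{\log n/n}$; in particular $\calS$ contains a deterministic index set $\calS_0$ of size at least $n/2$ consisting of the truly incoherent rows, while $|\calS|\le n$.

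Next I would establish $\|\mWz_\calS\|_\F = \Theta(\sigma|\calS|)$ via Bernstein's inequality. For the lower bound, $\|\mWz_\calS\|_\F^2\ge\|\mWz_{\calS_0}\|_\F^2 = \sum_{i,j\in\calS_0}(W^{(0)}_{ij})^2$ is a sum of $\Theta(|\calS_0|^2)$ independent (over $i\le j$) terms, each bounded by $L^2$ with variance at most $L^2\sigma^2$, whose mean lies in $[\sigma_{\min}^2|\calS_0|^2(1-o(1)),\,\sigma^2|\calS_0|^2]$ by the heteroscedasticity condition. Bernstein gives a deviation of order $\sqrt{|\calS_0|^2 L^2\sigma^2\log n}+L^2\log n$, which, using Equation~\eqref{eq:L-sigma} (so that $L^2\log n\lesssim\sigma^2 n$) together with $|\calS_0|\asymp n$, is $O(\sigma^2 n^{3/2}) = o(\sigma^2|\calS_0|^2)$; hence $\|\mWz_{\calS_0}\|_\F\ge(1-o(1))\sigma_{\min}|\calS_0|$ with probability $1-O(n^{-6})$. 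For the upper bound, $\|\mWz_\calS\|_\F\le\|\mWz\|_\F\le(1+o(1))\sigma n$ by the same argument applied to the full matrix. Finally, by the row-wise bound of Theorem~\ref{thm:delta_row} combined with the incoherence of $\calS$-rows, $\|\mDelta_{\ell,\cdot}\|_2\lesssim\sigma\sqrt{n}\|\mUstarl\|_2+\sigma\sqrt{r\log n}\lesssim\sigma\sqrt{r\log n}$ for $\ell\in\calS$, so $\|\mDelta_\calS\|_\F^2\le\sum_{\ell\in\calS}\|\mDelta_{\ell,\cdot}\|_2^2\lesssim|\calS|\sigma^2 r\log n$, that is, $\|\mDelta_\calS\|_\F\lesssim\sigma\sqrt{nr\log n}=o(\sigma n)$ since $r\log n = o(n)$. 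On the intersection of these $O(n^{-6})$-probability events, $(1-o(1))\sigma_{\min}|\calS_0| - o(\sigma n)\le\|\mWcheck\|_\F\le(1+o(1))\sigma n + o(\sigma n)$, and since $n/2\le|\calS_0|\le|\calS|\le n$ and $\sigma_{\min}=\Theta(\sigma)$, dividing by $|\calS|$ places $\tau$ between fixed constant multiples of $\sigma$.

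I expect the main obstacle to be ensuring that both the fluctuation of $\|\mWz_\calS\|_\F^2$ about its mean and the contribution of $\|\mDelta_\calS\|_\F$ are genuinely lower order (of size $o(\sigma^2 n^2)$ and $o(\sigma n)$ respectively) within a single union bound: this is precisely where Equation~\eqref{eq:L-sigma} is used, to keep the Bernstein deviation small despite a possibly large truncation level $L$, and where $\kappa r = O(1)$ is used, to keep $|\calS|$ close to $n$ while simultaneously controlling the per-row error of $\mDelta$ on $\calS$. A secondary technical point is the data-dependence of $\calS$, which is handled by sandwiching $\|\mWz_\calS\|_\F^2$ between $\|\mWz_{\calS_0}\|_\F^2$ for a deterministic $\calS_0\subseteq\calS$ and $\|\mWz\|_\F^2$.
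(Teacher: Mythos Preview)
Your proposal is correct and follows essentially the same approach as the paper: decompose $\mWcheck = \mWz_\calS - \mDelta_\calS$, control $\|\mDelta_\calS\|_\F$ via the row-wise perturbation bound of Theorem~\ref{thm:delta_row}, concentrate $\|\mWz_\calS\|_\F^2$ around $\Theta(\sigma^2|\calS|^2)$ via Bernstein, and conclude by the triangle inequality together with $|\calS|\ge n/2$. Your treatment is in fact slightly more careful than the paper's in two places---you justify $|\calS|\ge n/2$ by the pigeonhole argument on $\sum_\ell\|\mUstarl\|_2^2=r$ combined with Theorem~\ref{thm:ustar-row}, and you handle the data-dependence of $\calS$ by sandwiching between a deterministic $\calS_0$ and $[n]$---whereas the paper asserts both points without elaboration.
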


With a properly-chosen threshold $\tau$, we now turn to its application in robustifying the SDP estimator.
We truncate the input matrix used in the SDP formulation and establish the following robust recovery guarantee. 
Specifically, we truncate the input matrix in the SDP formulation by applying the entrywise operator $\calT_{\tau^2}$ to $\mYtil \circ \mYtil$, 
resulting in the modified input
$\mCbar := \calT_{\tau^2}(\mYtil \circ \mYtil)$. 
We then solve the SDP in Equation~\eqref{eq:prime:sdp} using $\mCbar$ in place of the original matrix $\mC$.
To establish theoretical guarantees for this truncated formulation, 
we impose an additional condition to control the second-order behavior of the truncated noise matrix.
\begin{assumption}\label{assump:strong:homoscedastic}
For any $\tau$ satisfying $c_0 \sigma \leq \tau \leq C_0 \sigma$, let 
\begin{equation*}
    \mWbar := \calT_{\tau^2}(\mWtilo \circ \mWtilo).
\end{equation*}
There exists a scalar $\sigma_{\tau}^2$ such that 
\begin{equation*}
\left\|\E (\mWbar \circ \mWbar) - \sigma^2_{\tau} \mJ \right\|_{1}
\leq C \sigma^2 n^{3/2}
 \quad \text{and} \quad
\left\|\E (\mWbar \circ \mWbar) - \sigma^2_{\tau} \mJ \right\|
\leq C \sigma^2 \sqrt{n} .
\end{equation*}
\end{assumption}

\begin{remark}
By requiring uniform concentration for a range of $\tau$, Assumption~\ref{assump:strong:homoscedastic} is stronger than Assumption~\ref{assump:homoscedastic}.
It is satisfied, for example, when the majority of entries in $\mWtilo$ have identical variance and only a small subset exhibit larger deviations.
\end{remark}

The next theorem establishes a non-asymptotic error bound for the truncated SDP estimator.
\begin{theorem}\label{thm:trunc:sdp}
Suppose the noise matrix $\mWtilo$ satisfies Assumptions~\ref{assump:noise} and~\ref{assump:homoscedastic}, as well as Equation~\eqref{eq:L-sigma}. 
Assume the conditions of Theorem~\ref{thm:delta_row} hold. 
When $n > 3m$, if $\kappa = O(1)$, $\mut r^{3/2} = O(\sqrt{n})$, and either $\beta = \Omega(\sigma n^{-1/4})$ or $m = o(n)$ holds, 
then under Assumption~\ref{assump:mono}, \ref{assump:lipschitz} and~\ref{assump:strong:homoscedastic}, 
any solution to the SDP in Equation~\eqref{eq:prime:sdp} with $\mBstar$ from Equation~\eqref{eq:B-more-general} 
and $\mCbar$ substituted for $\mC$, 
satisfies
\begin{equation*}
\left\|\mZhat - \mZstar\right\|_1
\leq n^2 \exp\left\{ -\frac{C n \beta^4}{\sigma^4} \right\}
\end{equation*}
with probability at least $1 - O(n^{-6})$. 
\end{theorem}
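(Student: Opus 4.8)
The plan is to reprise the proof of Theorem~\ref{thm:sdp:L1-bound}, feeding the truncated matrix $\mCbar = \calT_{\tau^2}(\mYtil\circ\mYtil)$ into the SDP in place of $\mC$ and tracking how every almost-sure bound on the noise-square entries drops from $L^2$ down to $\tau^2 \asymp \sigma^2$; this is what converts the exponent $n\beta^4/(\sigma^2 L^2)$ of Theorem~\ref{thm:sdp:L1-bound} into $n\beta^4/\sigma^4$. As a first step I would condition on the intersection of the relevant high-probability events: (i) the event of Theorem~\ref{thm:tauhat} on which the data-driven threshold $\tau$ of Equation~\eqref{eq:tauhat:define} obeys $c_0\sigma\le\tau\le C_0\sigma$, so that $\tau$ may be treated as a deterministic quantity of order $\sigma$; (ii) the events of Theorem~\ref{thm:delta_row}, which under $\mut r^{3/2}=O(\sqrt n)$ and the pruning in Step~\ref{alg:general:step:select} give $\|\mDeltatil\|_{\F}\lesssim\sigma\sqrt{rn}$ and $\|\mDeltatil\|_\infty\lesssim\sigma$; and (iii) the concentration events for the truncated noise matrix built below.

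Second, I would establish a truncated analogue of Lemma~\ref{lem:key-lower}. Writing $\bar{\mC}^{(0)} := \calT_{\tau^2}\big((\mBstar+\mWtilo)\circ(\mBstar+\mWtilo)\big)$, the SDP optimality $\langle\mCbar,\mZstar-\mZhat\rangle\ge 0$ and the fact that $\calT_{\tau^2}$ is $1$-Lipschitz entrywise let me split off $\mCbar - \bar{\mC}^{(0)}$ and bound it by the same $\mDeltatil$-cross-terms that appear in Lemma~\ref{lem:key-lower}. I would then decompose $\E\bar{\mC}^{(0)}$ into a block ``signal'' matrix, a homoscedastic proxy $\bar\sigma_\tau^2\mJ$, and a residual whose operator-norm and $\ell_1$-norm are controlled by Assumption~\ref{assump:strong:homoscedastic} (the truncated surrogate for the role played by Assumption~\ref{assump:homoscedastic} in the untruncated analysis). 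The one genuinely new ingredient is a lower bound on the \emph{effective} signal gap $\bar\beta^2 := \E[\calT_{\tau^2}((\beta+W)^2)] - \E[\calT_{\tau^2}(W^2)]$: combining the symmetry and monotonicity of the noise density (Assumptions~\ref{assump:noise}, \ref{assump:mono}) with the local Lipschitz regularity at the truncation point (Assumption~\ref{assump:lipschitz}) and using $\beta=\Omega(\sigma n^{-1/4})$, $\beta\lesssim\tau$, a Taylor-type computation gives $\bar\beta^2\gtrsim\beta^2$. This produces the inequality $\|\mZhat-\mZstar\|_1\lesssim\bar\beta^{-2}\big\langle\mZstar-\mZhat,\ \bar{\mC}^{(0)}-\E\bar{\mC}^{(0)}+2\mDeltatil\circ\mWtilo\big\rangle + (\text{lower-order terms})$, exactly the second display of Lemma~\ref{lem:key-lower} but with every noise-square entry now bounded by $\tau^2\asymp\sigma^2$.

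Third, I would bound the right-hand side in two stages, as in the proof of Theorem~\ref{thm:sdp:L1-bound}. A coarse Grothendieck bound (Lemma~\ref{lem:Grothendieck:bound}, re-run with $L^2$ replaced by $\tau^2$) gives $n^{-2}\|\mZhat-\mZstar\|_1 = o_{\Pr}(1)$ as soon as $\beta=\Omega(\sigma n^{-1/4})$, the improvement being that $\sqrt{\sigma L}$ has disappeared. To sharpen this, I would use the tangent-space decomposition $S_1+S_2$ of Equations~\eqref{eq:S1:define}--\eqref{eq:S2:define} and re-prove Lemmas~\ref{lem:S1:bound} and~\ref{lem:S2:bound} for $\bar{\mC}^{(0)}$: the Bernstein and matrix-Bernstein steps now use $\|\bar{\mC}^{(0)}-\E\bar{\mC}^{(0)}\|_\infty\lesssim\sigma^2$ as the almost-sure bound, producing exponential concentration at scale $n\beta^4/\sigma^4$, while Assumption~\ref{assump:strong:homoscedastic} keeps the $\calP_T$-component small. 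Feeding the refined bounds on $S_1,S_2$, together with the $\mDeltatil$-perturbation bound (which uses $\mut r^{3/2}=O(\sqrt n)$ exactly as in Corollary~\ref{cor:sdp-more-general}), back into the key inequality yields $\|\mZhat-\mZstar\|_1\le n^2\exp\{-Cn\beta^4/\sigma^4\}$; the monotonicity Assumption~\ref{assump:mono} supplies the stochastic-dominance reduction used to pass from an equal-strength $\mBstar$ to the stated model, mirroring Corollary~\ref{cor:sdp-more-general}.

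The step I expect to be the main obstacle is the second one: showing that truncation at level $\tau^2\asymp\sigma^2$ does not erase a constant fraction of the signal, i.e.\ that $\bar\beta^2=\E[\calT_{\tau^2}((\beta+W)^2)]-\E[\calT_{\tau^2}(W^2)]\gtrsim\beta^2$. When $\beta$ is as small as $\sigma n^{-1/4}$, almost all of the mass of $(\beta+W)^2$ already lies below $\tau^2$, so the gap is governed by how the law of $W^2$ behaves near its truncation boundary $\tau^2$; controlling this uniformly over the admissible range of $\tau$ is exactly what Assumption~\ref{assump:lipschitz} is tailored for, but combining the density estimate, the stochastic-dominance argument, and the propagation of the low-rank error $\mDeltatil$ through the nonlinear map $a\mapsto\calT_{\tau^2}(a^2)$ into a single clean bound is the delicate part. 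The remainder is a careful re-tracing of the arguments behind Theorem~\ref{thm:sdp:L1-bound} with $L$ downgraded to $\tau\asymp\sigma$.
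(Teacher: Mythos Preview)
Your proposal is correct and follows essentially the same four-step strategy as the paper's proof: conditioning on the good events for $\tau$ and $\mDeltatil$, establishing a truncated basic inequality with the signal-preservation bound $\bar\beta^2\gtrsim\beta^2$, re-running the Grothendieck and tangent-space ($S_1,S_2$) bounds with the almost-sure entry bound dropped from $L^2$ to $\tau^2\asymp\sigma^2$, and finally invoking stochastic dominance for the general $\mBstar$.

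The only organizational difference is in how the low-rank error $\mDeltatil$ is carried through the nonlinearity. You propose to peel $\mDeltatil$ off first by comparing $\mCbar$ to $\bar{\mC}^{(0)}=\calT_{\tau^2}((\mBstar+\mWtilo)^{\circ 2})$ via the $1$-Lipschitz property of $\calT_{\tau^2}$, and then decompose $\E\bar{\mC}^{(0)}$. The paper instead works with $\E\mCbar$ directly and proves two dedicated lemmas (Lemmas~\ref{lem:truncated-lower} and~\ref{lem:truncated-upper}) giving sandwich bounds $\alpha^2(\Bstar_{ij}+\Deltatil_{ij})^2 + \E\Wbar_{ij}^2 \le \E\Cbar_{ij}\le C_1\Deltatil_{ij}^2+\E\Wbar_{ij}^2$ on and off the support, then packages the $\mDeltatil$ contribution into an auxiliary matrix $\mTheta$. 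Your route sidesteps the need for the upper-bound lemma on off-support entries (since $\bar{C}^{(0)}_{ij}=\Wbar_{ij}^2$ exactly when $\Bstar_{ij}=0$); the paper's route keeps all error terms in one place and makes the role of Assumption~\ref{assump:lipschitz} slightly more explicit. Both reduce to the same key calculation---that truncation at level $\tau\asymp\sigma$ preserves a constant fraction of the squared signal---and both feed into the identical $\Sbar_1,\Sbar_2$ analysis.
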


A proof is provided in Appendix~\ref{subsec:proof:trunc:sdp}.
As a result of Theorem~\ref{thm:trunc:sdp}, we conclude that under some heavy-tailed noise regimes, the exact and partial recovery thresholds can still be achieved by SDP, after suitable entrywise truncation.


\subsection{SDP: Multiple Observations} \label{sec:multiple-sdp}

We now extend the above results to $|\calG_1| > 1$.
In general, if we do not impose any additional assumptions on $\mBstar_j$ for $j \in \calG_1$, we may treat each $j$ to be its own singleton group $\calG_{1,j}$, and apply the methods introduced in the previous section independently to recover the support $\Istar_j$ for each $j \in \calG_1$.
In particular, the SDP-based procedures introduced earlier remain applicable and yield accurate recovery under the same conditions as Theorems~\ref{thm:sdp:L1-bound} and~\ref{thm:trunc:sdp}.

However, when multiple observations share the same underlying difference matrix $\mBstar$, i.e., $\mBstar_j = \mBstar$ for several $j \in \calG_1$, we can refine both the estimation procedure and the accompanying analysis.
In Section~\ref{sec:minimax-support}, we demonstrated that Assumption~\ref{assump:homoscedastic} is essential to attain the minimax lower bound in Theorem~\ref{thm:symmetric:minimax} when $|\calG_1| =1$.
Remarkably, we now demonstrate that this assumption can be relaxed in the presence of multiple independent observations sharing the same $\mBstar$.
Specifically, suppose we are given two independent copies $\mYo_1$ and $\mYo_2$ following Equation~\eqref{eq:model:calG1} with $\mBstar_1 = \mBstar_2 = \mBstar$.
Constructing $\mYtil_1$ and $\mYtil_2$ from Equation~\eqref{eq:Ytil:define} in Algorithm~\ref{alg:general},
we set $\mCtil = \mYtil_1 \circ \mYtil_2$ and run the SDP in Equation~\eqref{eq:prime:sdp} on $\mCtil$ in place of $\mC$. 
Owing to the independence of $\mYtil_1$ and $\mYtil_2$, we have
$\E \mCtil = \mBstar \circ \mBstar$.
This construction effectively removes the bias introduced by heteroscedastic noise, enabling accurate estimation under weaker assumptions.

\begin{theorem}\label{thm:sdp:multiple}
Assume the conditions of Theorem~\ref{thm:delta_row} hold.     
Under Assumption~\ref{assump:noise}, if $\tilde{\mu} r^{3/2} \leqslant c n \beta^2 / \sigma^2$, any solution $\mZhat$ to the SDP in Equation~\eqref{eq:prime:sdp}, when applied to $\mCtil$ in place of $\mC$ and with $\mBstar$ given by Equation~\eqref{eq:B-more-and-more-general}, satisfies 
\begin{equation*}
\left\|\mZhat - \mZstar\right\|_1 
\lesssim
n^2 \max\left\{ \exp\left(-\frac{C n \beta^2}{L^2}\right),
		\exp\left(-\frac{C n \beta^4}{\sigma^4}\right) \right\}
\end{equation*}
with probability at least $1 - O(n^{-6})$. 
\end{theorem}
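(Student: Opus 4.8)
The plan is to follow the architecture of the proof of Theorem~\ref{thm:sdp:L1-bound}, isolating the single place where two independent copies help. Conditioning on $\calG_0$ (so that $\mDeltatil$, the restriction of $\mM^{(0)}-\mMstar$ to $\calU$, is a fixed matrix common to both copies, since $\mM^{(0)}$ is built only from $\calG_0$), write $\mYtil_\ell = \mBstar + \mWtilo_\ell - \mDeltatil$ for $\ell\in\{1,2\}$ and expand
\[
\mCtil = \mBstar\circ\mBstar + \mBstar\circ(\mWtilo_1+\mWtilo_2) + \mWtilo_1\circ\mWtilo_2 - 2\,\mBstar\circ\mDeltatil - (\mWtilo_1+\mWtilo_2)\circ\mDeltatil + \mDeltatil\circ\mDeltatil.
\]
The crucial point is that $\E[\mBstar\circ(\mWtilo_1+\mWtilo_2)] = 0$ and $\E[\mWtilo_1\circ\mWtilo_2] = 0$ conditionally on $\calG_0$, by independence and the mean-zero property of the noise; hence the ``bias'' matrix $\mSig-\sigma_0^2\mJ$ that forced Assumption~\ref{assump:homoscedastic} in Lemma~\ref{lem:key-lower} never enters, which is why only Assumption~\ref{assump:noise} is needed. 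Combining the optimality $\langle\mCtil,\mZhat\rangle\le\langle\mCtil,\mZstar\rangle$ with the SDP constraints and the \emph{deterministic} lower bound $(\Bstar_{ij})^2\ge\beta^2$ on the cross block $\Istar\times\Istar^c\cup\Istar^c\times\Istar$ from Equation~\eqref{eq:B-more-and-more-general}, the margin argument of Lemma~\ref{lem:key-lower} yields $c\beta^2\|\mZhat-\mZstar\|_1\le\langle\mZstar-\mZhat,\,\mCtil-\mBstar\circ\mBstar\rangle$; because the conditional mean of the two-sample product equals $\mBstar\circ\mBstar$ exactly, this margin follows directly from the deterministic signal bound, so the stochastic-dominance reduction (Assumption~\ref{assump:mono}) used in Corollary~\ref{cor:sdp-more-general} is unnecessary here.

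It then remains to bound $\langle\mZstar-\mZhat,\,\mCtil-\mBstar\circ\mBstar\rangle$. First I would establish crude consistency: applying Grothendieck's inequality~\eqref{eq:Grothendieck} term by term, the bilinear piece $\mWtilo_1\circ\mWtilo_2$ is, conditionally on $\mWtilo_1$, linear in $\mWtilo_2$ with entries bounded by $L^2$, so a union bound over $\{\pm1\}^n$ sign pairs gives $\sup_{\vx,\vy}|\sum_{ij}(\mWtilo_1\circ\mWtilo_2)_{ij}x_iy_j|\lesssim\sigma L n^{3/2}$; the linear piece $\mBstar\circ(\mWtilo_1+\mWtilo_2)$ is handled similarly, and the $\mDeltatil$-coupled terms are controlled exactly as in the single-observation analysis using $\|\mDeltatil\|_\F\lesssim\sigma\sqrt{nr}$, $\|\mDeltatil\|_\infty\lesssim\sigma$ and $\|\mDeltatil\|\lesssim\sigma\sqrt{n}$ (from Theorem~\ref{thm:delta_row} and standard perturbation bounds) together with the hypothesis $\mut r^{3/2}\le cn\beta^2/\sigma^2$. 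This yields $\|\mZhat-\mZstar\|_1=o_{\Pr}(n^2)$, the analogue of Equation~\eqref{eq:weak-control}, which serves only to kickstart the refined analysis.

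The crux is the sharp bound, obtained via the tangent-space split $\langle\mZstar-\mZhat,\,\mCtil-\mBstar\circ\mBstar\rangle = S_1+S_2$ with $S_1,S_2$ the projections onto the tangent space $T$ at the rank-one matrix $\mZstar$ and onto $T^\perp$, as in Equations~\eqref{eq:S1:define}--\eqref{eq:calP:define}. For $S_2$ I would use operator-norm control of $\calP_{T^\perp}(\cdot)$, relying on matrix-Bernstein bounds $\|\mWtilo_1\circ\mWtilo_2\|\lesssim\sigma^2\sqrt n$ and $\|\mBstar\circ\mWtilo_\ell\|\lesssim\beta\sigma\sqrt n$ --- note the product has entrywise variance proxy $\sigma^4$ rather than $\sigma^2L^2$ --- and absorb $|S_2|$ into $\tfrac12 c\beta^2\|\mZhat-\mZstar\|_1$ via the margin, as in \cite{fei2018exponential}. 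For $S_1$, the projection $\calP_T$ reduces the inner product, up to additive terms of lower order and terms proportional to $\|\mZhat-\mZstar\|_1$ with small random coefficients, to quantities of the form $K^{-1}\vnu_\star^\top(\mCtil-\mBstar\circ\mBstar)\vnu_\star$ and $K^{-1}\max_{i}\big|\big(\vnu_\star^\top(\mCtil-\mBstar\circ\mBstar)\big)_i\big|$; since $\Bstar_{ij}=0$ on $\Istar^c\times\Istar^c$ the signal and linear parts of these vanish and what remains are partial sums of the independent, mean-zero products $W_{1,ij}W_{2,ij}$ with $\E(W_{1,ij}W_{2,ij})^2\le\sigma^4$ and $|W_{1,ij}W_{2,ij}|\le L^2$ almost surely. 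Bernstein's inequality applied to a sum of $\Theta(n)$ such terms, at the deviation level $\asymp n\beta^2$ dictated by the margin, then gives a failure probability $n\exp\{-c\min(n\beta^4/\sigma^4,\ n\beta^2/L^2)\}$ --- precisely the maximum appearing in the statement. The contrast with the single-copy case is exactly here: there the corresponding term $\mWtilo\circ\mWtilo-\mSig$ has entrywise variance proxy $\E W_{ij}^4\le\sigma^2L^2$, irreducibly larger when $L\gg\sigma$, so splitting into two independent copies is what replaces $\sigma^2L^2$ by $\sigma^4$ (and removes the need for near-homoscedasticity).

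Finally, a standard peeling argument over dyadic levels of $\|\mZhat-\mZstar\|_1$, combined with a union bound over the finitely many concentration events (each of probability $1-O(n^{-6})$ after the sign-vector union bound and intersection with the Theorem~\ref{thm:delta_row} event controlling $\mDeltatil$), delivers $\|\mZhat-\mZstar\|_1\lesssim n^2\max\{\exp(-Cn\beta^2/L^2),\,\exp(-Cn\beta^4/\sigma^4)\}$ with probability $1-O(n^{-6})$. I expect the main obstacle to be this last step's inputs: extracting the Bernstein rate for the product noise $\mWtilo_1\circ\mWtilo_2$ with the correct $\|\mZhat-\mZstar\|_1$-dependence after the tangent-space projection, and simultaneously verifying that every $\mDeltatil$-coupled cross-term stays lower order, which requires handling the $\ell_1$, operator-norm and $\ell_\infty$ control of $\mDeltatil$ in concert.
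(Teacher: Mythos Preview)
Your proposal mirrors the paper's four-step architecture (basic inequality plus Grothendieck for crude consistency; operator-norm control of $\calP_{T^\perp}$; Bernstein-type control of $\calP_T$ via order statistics; extension to the general $\mBstar$ of Equation~\eqref{eq:B-more-and-more-general}), and you correctly isolate the mechanism that produces the two-term maximum: the product $W_{1,ij}W_{2,ij}$ has variance proxy $\sigma^4$ and almost-sure bound $L^2$, so Bernstein at deviation level $\asymp n\beta^2$ yields $\exp(-c\min\{n\beta^4/\sigma^4,\,n\beta^2/L^2\})$, whereas the single-copy analogue $W_{ij}^2-\sigma_{ij}^2$ has variance proxy $\sigma^2L^2$. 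This is exactly the paper's argument.

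There is one gap in your treatment of general $\mBstar$. Your deterministic margin inequality is correct (the $\Istar\times\Istar$ block contributes nonnegatively, so stochastic dominance is not needed for the margin itself), but your claim that after the tangent-space projection ``the signal and linear parts of these vanish'' is only half right. The bilinear form $\vnu_\star^\top(\mBstar\circ\mWtilo_\ell)\vnu_\star$ does vanish, but the column sums $\big(\vnu_\star^\top(\mBstar\circ\mWtilo_\ell)\big)_j = \sum_{k\in\Istar^c}\Bstar_{kj}\Wtilo_{\ell,kj}$ are nonzero for $j\in\Istar$, and likewise for $\mBstar\circ\mDeltatil$. Since Equation~\eqref{eq:B-more-and-more-general} imposes no upper bound on the cross-block entries of $\mBstar$, these terms are not controlled by $\beta$ alone in your sketch. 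The paper closes this gap not via Assumption~\ref{assump:mono} (you are right that it is unnecessary) but via a different stochastic-dominance lemma (Lemma~\ref{lem:stoc:order} on products of independent, stochastically ordered variables), which reduces the general $\mBstar$ to the simple form of Equation~\eqref{eq:B-more-general} where all cross-block entries equal $\beta$. You would need either to invoke that reduction, or to carry the $\mBstar$-dependent noise terms through the $S_1$ analysis and absorb them using the extra margin $\sum \Zhat_{ij}\big((\Bstar_{ij})^2-\beta^2\big)$, which is nonnegative but requires a careful balancing argument you have not sketched.
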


Theorem~\ref{thm:sdp:multiple} holds under significantly weaker assumptions compared to Theorem~\ref{thm:sdp:L1-bound} and Corollary~\ref{cor:sdp-more-general}. 
In particular, it does not require the near-isotropic noise condition in Assumption~\ref{assump:homoscedastic}, 
nor does it rely on Assumption~\ref{assump:mono} regarding the noise density. 
Moreover, the bound remains valid for any $\mBstar$ that satisfies the general structure in Equation~\eqref{eq:B-more-and-more-general}. 
When the noise parameters are such that $L \asymp \sigma$, 
the SDP based on multiple observations also achieves the exact and partial recovery thresholds established in Theorem~\ref{thm:sdp-minimax}.
Moreover, this approach naturally extends to $N$ independent observations via sample splitting and averaging: divide the $N$ observations into two groups of size $N/2$, compute the empirical averages within each group to obtain $\mYtil_1$ and $\mYtil_2$, and apply the same procedure using the elementwise product $\mCtil = \mYtil_1 \circ \mYtil_2$.
As discussed above in Section~\ref{sec:truncated-sdp}, when $L \gg \sigma$, the performance of the SDP might be suboptimal due to heavy-tailed noise.
We anticipate that a truncation-based approach similar to that presented in Section~\ref{sec:truncated-sdp} can be applied in the present setting, but we leave a thorough investigation of this regime to future work.

\begin{remark}
A similar phenomenon to that described in Theorem~\ref{thm:sdp:multiple}, in which access to multiple networks leads to a nontrivial improvement, has also been observed in \cite{ghoshdastidar2020two}. 
In that work, the authors studied the problem of two-sample graph testing under Bernoulli edge weights, with the goal of determining whether two collections of graphs are drawn from the same distribution.
In contrast, our focus is on weighted networks and recovery of the node support of the difference matrix $\mBstar$.
The underlying intuition is that when only a single observation is available, the difference matrix $\mBstar$ behaves similar to a noise matrix, necessitating stronger assumptions for recovery or testing to be feasible.
By contrast, when multiple observations are available, the signal component $\mBstar$ must remain consistent across them, which enables accurate recovery under substantially weaker conditions.
\end{remark}

\subsection{Model Selection in SDP} \label{sec:m-selection}
One important practical consideration is the selection of the parameter $m$. 
Under a homoscedastic Gaussian model, the entries of the matrix outside the support of $\mBstar$ follow a noncentral chi-squared distribution, specifically $\chi^2_1(\Delta^2_{ij})$, where $\mDelta$ is the residual matrix defined in Equation~\eqref{eq:mDelta:define}. 
Assume without loss of generality that the set $\calU$ in Step~\ref{alg:general:step:select} is $[n]$. 
For all $i \in I_\star^c$, it follows from Theorem~\ref{thm:delta_row} and $\chi^2$-concentration inequality that
\begin{equation*}
\left|\sum_{j\in I_\star^c} \Ytil^2_{ij} - \sigma^2 (n-m)\right|
\leq C \sigma^2 \sqrt{(n-m)\log n} + C \sigma^2 \kappa^4 \mut r + C \sigma^2 r \log n
\end{equation*}
holds with high probability.
When $\kappa^4 \mut r = o(\sqrt{n \log n})$ and $r = o(\sqrt{n /\log n})$, we will thus have that 
\begin{equation*}
-C \sigma^2 \sqrt{(n-m) \log n}
\leq \max_{i \in I_\star^c} \sum_{j\in I_\star^c} \Ytil^2_{ij} - \sigma^2 (n-m)
\leq C \sigma^2 \sqrt{(n-m) \log n}.
\end{equation*}
This suggests using a heuristic threshold of $C \sigma^2 \sqrt{(n - m) \log n}$ to guide the choice of $m$.
A practical selection strategy is as follows.
\begin{algorithm}[H]
\caption{Heuristic Selection of $m$ for SDP}
\begin{algorithmic}[1]
\Require Observed matrix $\mYtil$, noise variance estimate $\widehat{\sigma}^2$, initial guess $m_0$
\State Set $m \gets m_0$
\Repeat
    \State Solve the SDP in Equation~\eqref{eq:prime:sdp} with cardinality parameter $m$, and obtain solution $\mZhat$
    \State Let $\Ihat$ be the set of indices corresponding to the $m$ smallest row sums of $\mZhat$
    \State Compute the maximum row sum in $\Ihat^c$: $s_{\max} \gets \max_{i \in \Ihat^c} \sum_{j\in \Ihat^c} \Ytil^2_{ij}$
    \State Compute heuristic threshold $t \gets c \widehat{\sigma}^2 \sqrt{(n-m)\log n}$ for some constant $c > 0$
    \If{$|s_{\max} - \widehat{\sigma}^2(n-m)| > t$}
        \State Increase $m \gets m + 1$
    \Else
        \State Decrease $m \gets m - 1$
    \EndIf
\Until{ $s_{\max} > t$ and the maximum row sum drops below threshold when $m$ is decreased by one}
\State \Return Final value of $m$
\end{algorithmic}
\end{algorithm}

The above procedure combines an SDP with a hard-thresholding step. While hard-thresholding alone is generally insufficient for recovering the support of $\mBstar$ (see Theorem~\ref{thm:failure}), it remains a useful diagnostic tool for detecting signal components that may be excluded from the estimated support set $\Ihat$. In particular, evaluating the maximum row sum over $\Ihat^c$ can help identify whether the current choice of $m$ is too small. To implement the thresholding rule, one may estimate the noise variance $\widehat{\sigma}^2$ using the same approach employed in constructing the truncation threshold for the SDP, as defined in Equation~\eqref{eq:tauhat:define}.
In this work, we assume that either $m$ is known a priori or there is a preferred choice based on domain knowledge.
A complete development of a data-driven selection procedure is an interesting direction for future research, but we do not pursue it here.

\subsection{Group Lasso} \label{subsec:glasso}
While semidefinite programming offers a principled framework for exact and partial recovery of the support set $\Istar$, it can be computationally intensive for large-scale problems.
Using a general-purpose interior-point method to exactly solve the SDP in Equation~\eqref{eq:prime:sdp} requires a per-iteration runtime complexity of $O(n^6)$ \citep{vandenberghe2005interior}.  
Faster implementations exist, but they still require at least $O(n^4)$ runtime complexity per iteration in our setting \citep{jiang2020faster}.
As discussed earlier in Section~\ref{sec:setup}, 
an alternative and potentially more scalable approach is to exploit the sparsity structure of $\mBstar$ through a structured penalization framework.
In particular, 
the overlapping group Lasso has been widely used in the network analysis literature, including applications in graphical model selection \citep{mohan2014node}, network regression \citep{kessler2022prediction}, and classification \citep{relion2019network}.
On the theoretical side, \cite{mohan2014node} provided sufficient conditions under which the overlapping group Lasso yields node-sparse solutions, though without guarantees on support recovery. 
\cite{relion2019network} later established high-probability support recovery guarantees under the irrepresentability condition, which unfortunately does not hold in our setting.

To formulate the overlapping group Lasso, for each $i \in [n]$,
we define the $i$-th group to be the collection of parameters 
\begin{equation*}
\{B_{ij} : j \in [n]\} \cup \{B_{ji} : j \in [n]\},
\end{equation*}
resulting in $n$ overlapping groups. 
Each parameter appears in two groups, and we maintain the symmetry of $\mB$ via the reparameterization $B_{ij} = v_{ij} + v_{ji}$ for all $i, j \in [n]$.
Applied to an observed matrix $\mY = [y_{ij}]_{1\leq i,j \leq n}$, the overlapping group Lasso solves the following convex optimization problem:
\begin{equation} \label{eq:node-glasso}
\min_{\mV \in \R^{n\times n}}
\frac{1}{2} \sum_{1\leq i \leq j\leq n} (y_{ij} - v_{ij} - v_{ji})^2
+ \lambda \Omega( \mB ),
\end{equation} 
where $\lambda \geq 0$ is the regularization parameter, $\Omega(\mB)$ is the group Lasso penalty defined as
\begin{equation*}
    \Omega(\mB) := \sum_{i=1}^n \sqrt{\sum_{j=1}^n v_{ij}^2}.
\end{equation*}
This formulation has a natural interpretation: each $v_{ij}$ can be viewed as the contribution of node $i$ to edge $(i, j)$. 
Defining
\begin{equation} \label{eq:alpha-def}
\alpha_i := \sqrt{\sum_{j=1}^n v_{ij}^2}, \quad \text{for } i \in [n],
\end{equation}
the group Lasso selects the $i$-th group if and only if $\alpha_i > 0$.
To solve the optimization problem in Equation~\eqref{eq:node-glasso}, 
following \citet{mohan2014node},
we adopt an approach based on the alternating direction method of multipliers approach  \citep[see][for a comprehensive review]{boyd2011distributed}.
Details are provided in Algorithm~\ref{alg:admm-glasso} in Appendix~\ref{sec:glasso}.

We also extend the theoretical analysis of the overlapping group Lasso in our setting. 
A particularly useful structural property that we establish is the existence of a stable solution path:
\begin{center}
     \textit{If $\alpha_i>0$ for some penalty parameter $\lambda_0$, then $\alpha_i>0$ for all $\lambda < \lambda_0$.}
\end{center}
In other words, the overlapping group Lasso behaves like a greedy procedure: 
once a node enters the model (i.e., its group is selected), it remains active as the regularization strength $\lambda$ decreases.
A formal statement and proof of this property can be found in Lemma~\ref{lem:unique} in Appendix~\ref{sec:glasso}.
While not central to our main goal of establishing exact recovery guarantees for $\Istar$, 
this result provides insight into the behavior of group Lasso estimators and may guide practical strategies for algorithm choice. 


Rather than analyzing the full solution path of the group Lasso estimator,
we focus on its performance near the exact recovery threshold.
Specifically, Theorem~\ref{thm:glaso-sufficient-crude} shows that under Gaussian noise and appropriate conditions on the signal strength and sparsity, the group Lasso achieves exact support recovery with high probability.
A proof is given in Appendix~\ref{sec:proof-thm-glaso-sufficient-crude}.

\begin{theorem} \label{thm:glaso-sufficient-crude}
Let $\mYtil$ be the symmetric matrix obtained from Step~\ref{alg:general:step:select} in Algorithm~\ref{alg:general},
where the data-generating model follows Equation~\eqref{eq:model:calG1} with signal matrix $\mBstar$ and noise matrix $\mWo$. 
Assume that $\mWo$ is a symmetric noise matrix with entries $(W^{(1)}_{ij})_{1\leq i\leq j\leq n}$ drawn i.i.d.~from $\calN(0, \sigma^2)$.
Suppose that $n > 3m$, $\kappa^2 \sqrt{\mut r} = O(n^{1/4} \log^{-1/4} n)$, and that
\begin{equation} \label{eq:growth-condition-Bstar} \begin{aligned}
\max_{i \in [m]} \|\mBstar_{i, \cdot} \|_2
&\asymp \min_{i\in [m]} \|\mBstar_{i, \cdot} \|_2
= \Theta(\sigma n^{1/4} \log^{1/4} n), \\
\max _{i \in[m]^c} \left(\sum_{k=1}^m B_{i k}^{\star 2}\right)^{1/2}
&= O(\sigma n^{1/4} \log^{1/4} n) ~~~\text{ and } \\
\min_{i\in [m]} \|\mBstar_{i, \cdot} \|_2^2
&\geq \max_{\ell \in [m]^c} \sum_{k=1}^{m} B_{\ell k}^{\star 2}
	+ C\sigma^2 \sqrt{n \log n}
\end{aligned} \end{equation}
for some suitably large constant $C > 0$.
Then, for sufficiently large $n$, there exists a regularization parameter $\lambda > 0$ such that the group Lasso estimator in Equation~\eqref{eq:node-glasso},
when applied to $\mYtil$, selects exactly the true support set $\Istar$ with probability at least $1 - O(n^{-6})$.
\end{theorem}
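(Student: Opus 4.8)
The plan is to use the standard primal-dual witness (PDW) construction for the overlapping group Lasso, adapted to the symmetric node-sparse setting. First I would write down the Karush--Kuhn--Tucker (KKT) conditions for the optimization problem in Equation~\eqref{eq:node-glasso}. Using the reparameterization $B_{ij} = v_{ij} + v_{ji}$, the subgradient of $\Omega$ with respect to the $i$-th block $\mV_{i,\cdot}$ is $\mV_{i,\cdot}/\alpha_i$ when $\alpha_i > 0$ and any vector of $\ell_2$-norm at most $1$ when $\alpha_i = 0$. I would then set up the witness: fix the candidate support to be $\Istar = [m]$, solve the restricted problem where $v_{ij}$ is forced to vanish whenever $i \notin [m]$ and $j \notin [m]$, obtain a candidate $\widehat{\mV}$ and candidate dual variables, and then verify (i) the restricted solution indeed has all active blocks $\widehat{\alpha}_i > 0$ for $i \in [m]$ (the ``no false exclusion'' part), and (ii) the dual feasibility / strict complementary slackness condition for the inactive blocks $i \in [m]^c$, namely that the corresponding subgradient vectors have $\ell_2$-norm strictly less than $1$ (the ``no false inclusion'' part). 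Standard convexity arguments then certify that $\widehat{\mV}$ is the unique minimizer with support exactly $[m]$.

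The next step is to control the two conditions probabilistically. For (ii), the relevant subgradient for a node $i \in [m]^c$ is a linear functional of the restricted residual; writing $\mYtil = \mMstar - \mM^{(0)} + \mBstar + \mWo$ restricted appropriately, the residual decomposes into the signal leakage $\mBstar_{i,[m]}$ (only the columns indexed by $[m]$ survive outside the support for a node in $[m]^c$), the spectral estimation error $\mDelta$ from Theorem~\ref{thm:delta_row}, and the Gaussian noise $\mWo_{i,\cdot}$. The dominant stochastic term is the Gaussian block, whose $\ell_2$-norm concentrates around $\sigma\sqrt{n}$ with Gaussian-tail fluctuations of order $\sigma\sqrt{\log n}$; combined with a union bound over $n - m$ inactive nodes this gives an $O(\sigma\sqrt{n\log n})$ control. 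Choosing the regularization parameter $\lambda \asymp \sigma\sqrt{n\log n}$ (up to constants) makes this subgradient norm bounded by $1 - c$ for the inactive blocks, while the signal leakage term $\max_{i \in [m]^c}(\sum_k B^{\star 2}_{ik})^{1/2} = O(\sigma n^{1/4}\log^{1/4} n)$ is of smaller order and only perturbs the bound by a lower-order amount. For (i), I would show $\widehat{\alpha}_i$ is bounded below by something like $\min_{i\in[m]}\|\mBstar_{i,\cdot}\|_2 - O(\lambda) - O(\sigma\sqrt{n})$; the assumed growth condition $\min_{i\in[m]}\|\mBstar_{i,\cdot}\|_2 = \Theta(\sigma n^{1/4}\log^{1/4}n)$ together with the third displayed inequality in Equation~\eqref{eq:growth-condition-Bstar} guarantees positivity. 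The condition $\kappa^2\sqrt{\mut r} = O(n^{1/4}\log^{-1/4}n)$ is exactly what ensures the $\ell_{2,\infty}$-type contribution of $\mDelta$ (of order $\sigma\sqrt{n}\|\mUstar_{i,\cdot}\|_2 + \sigma\sqrt{r\log n}$, which after the selection step in Equation~\eqref{eq:calU:define} is $O(\sigma n^{1/4} + \sigma\sqrt{r\log n})$) stays below the signal scale $\sigma n^{1/4}\log^{1/4}n$.

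The main obstacle I anticipate is (i): because the group Lasso behaves like a greedy procedure (Lemma~\ref{lem:unique}), the restricted solution's active-block norms $\widehat{\alpha}_i$ are shrunk toward zero by the penalty, and showing they remain strictly positive for \emph{every} $i \in [m]$ simultaneously requires that the weakest signal row survives the shrinkage. This is why the theorem needs both a uniform lower bound on $\min_{i\in[m]}\|\mBstar_{i,\cdot}\|_2$ and the near-balancedness $\max_{i\in[m]}\|\mBstar_{i,\cdot}\|_2 \asymp \min_{i\in[m]}\|\mBstar_{i,\cdot}\|_2$: without balancedness, a very strong row could inflate the common $\lambda$ needed for dual feasibility of the inactive blocks, thereby over-shrinking the weak rows. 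Carefully tracking how the restricted least-squares fit redistributes energy among the $m$ active blocks---essentially analyzing the $m$-dimensional reduced problem and its conditioning, which is governed by $\mut r$ via the incoherence of the retained rows---is the technically delicate part, and is where the extra condition on $\kappa^2\sqrt{\mut r}$ does its work. The remaining pieces (concentration of Gaussian quadratic and linear forms, the $\chi^2$ deviation bounds already used in Section~\ref{sec:m-selection}, and the deterministic KKT bookkeeping) are routine.
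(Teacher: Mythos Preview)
Your high-level strategy---primal-dual witness, restricted problem on the candidate support $[m]$, then verify dual feasibility for $[m]^c$---matches the paper's framework. But the execution of part (i) contains a fatal gap.

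You propose to show $\widehat\alpha_i \gtrsim \min_{i\in[m]}\|\mBstar_{i,\cdot}\|_2 - O(\lambda) - O(\sigma\sqrt{n})$. This bound is vacuous: the KKT inequality for inactive nodes forces $\lambda^2 \ge \sum_{k\in[m]^c} y_{\ell k}^2 \approx (n-m)\sigma^2$, so $\lambda$ must be of order $\sigma\sqrt{n}$ (your proposed $\sigma\sqrt{n\log n}$ is even larger), whereas the signal is only $\|\mBstar_{i,\cdot}\|_2 = \Theta(\sigma n^{1/4}\log^{1/4} n)$. Your lower bound is therefore a large negative number. From the KKT equality~\eqref{eq:node-glasso-eql} one sees that $(\alpha_i+\lambda)^2 \approx (n-m)\sigma^2 + \|\mBstar_{i,\cdot}\|_2^2$ while $\lambda^2 \approx (n-m)\sigma^2$, so the active-block norms $\alpha_i$ are tiny---of order $\sigma\sqrt{\log n}$, not of order the signal. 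The regime is far more delicate than a standard PDW argument can handle, and the third condition in~\eqref{eq:growth-condition-Bstar} does not rescue the naive bound.

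What the paper does instead is develop a solution-path analysis (Lemmas~\ref{lem:decreasing}--\ref{lem:unique}) and then prove two key intermediate results: Lemma~\ref{lem:alter-lasso} constructs the restricted solution on the zeroed-out matrix $\mYtil_{\Istar}$, and Lemma~\ref{lem:alpha-lower} shows that the entry thresholds of the $m$ true groups cluster tightly, $(\lambdabar_{\pi(m)}/\lambdabar_{\pi(1)})^2 \ge 1 - C\sqrt{\log n/n}$. This clustering is the missing idea: it implies the weights $(\lambda/(\alpha_k+\lambda))^2$ in the KKT conditions are all within $O(\sqrt{\log n/n})$ of $1$ at $\lambda = \lambdabar_{\pi(m)}$, reducing the verification step to comparing $\sum_{k\in[m]^c}(y_{ik}^2 - y_{\ell k}^2)$ against $\sup_{a_k \in [1-2a,1]}\sum_{k\in[m]} a_k(y_{\ell k}^2 - y_{ik}^2)$ (Equation~\eqref{eq:pdw-success}). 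The separation now comes from $\|\mBstar_{i,\cdot}\|_2^2 - \sum_{k\in[m]} B^{\star 2}_{\ell k} \ge C\sigma^2\sqrt{n\log n}$, which is exactly the third line of~\eqref{eq:growth-condition-Bstar}. Without the entry-time clustering, you cannot compare the equality and inequality constraints at a common scale, and the argument collapses.
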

Theorem~\ref{thm:glaso-sufficient-crude} characterizes a parameter regime in which group Lasso succeeds with high probability. 
Notably, this regime coincides with the minimax exact recovery threshold described in Remark~\ref{rem:mle-snr}, highlighting the near-optimality of the method.
Interestingly, under the same set of conditions, a much simpler procedure, namely top-$m$ hard thresholding, also achieves exact support recovery.
Theorem~\ref{thm:hard-thresh-suffice} formalizes this observation.
Its proof is given in Appendix~\ref{sec:hard-thresh-suffice}. 

\begin{theorem}\label{thm:hard-thresh-suffice}
Under the same set of conditions as Theorem \ref{thm:glaso-sufficient-crude}, 
the hard thresholding method selects all and only groups from $\Istar$ with probability at least $1 - O(n^{-6})$. 
\end{theorem}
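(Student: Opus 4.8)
The plan is to show that under the stated conditions, the row sums $\sum_{j} \Ytil_{ij}^2$ of the squared residual matrix $\mYtil = \mBstar + \mWo$ separate cleanly between indices in $\Istar$ and indices in $\Istar^c$, so that sorting these row sums and taking the top $m$ recovers $\Istar$ exactly. First I would decompose the row sum for each $i \in [n]$:
\begin{equation*}
\sum_{j=1}^{n} \Ytil_{ij}^2
= \sum_{j=1}^{n} \left(B^\star_{ij} + W^{(1)}_{ij}\right)^2
= \|\mBstar_{i,\cdot}\|_2^2 + 2 \sum_{j=1}^n B^\star_{ij} W^{(1)}_{ij} + \sum_{j=1}^n \left(W^{(1)}_{ij}\right)^2 .
\end{equation*}
The cross term $2\sum_j B^\star_{ij} W^{(1)}_{ij}$ is Gaussian with variance $4\sigma^2 \|\mBstar_{i,\cdot}\|_2^2$, so a Gaussian tail bound gives $|2\sum_j B^\star_{ij} W^{(1)}_{ij}| \lesssim \sigma \|\mBstar_{i,\cdot}\|_2 \sqrt{\log n}$ uniformly over $i$ with probability $1-O(n^{-6})$. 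The chi-squared term $\sum_j (W^{(1)}_{ij})^2$ concentrates around $\sigma^2 n$ (or $\sigma^2(n-1)$ on the diagonal) with fluctuations $O(\sigma^2 \sqrt{n\log n})$, again uniformly in $i$ by a union bound over $n$ rows.

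The key point is that the common term $\sigma^2 n$ is shared by every row and therefore cancels when comparing two row sums, so what matters is the separation of $\|\mBstar_{i,\cdot}\|_2^2$ between the two groups against the combined fluctuation scale $O(\sigma^2\sqrt{n\log n}) + O(\sigma\|\mBstar_{i,\cdot}\|_2\sqrt{\log n})$. For $i \in \Istar$ we have $\|\mBstar_{i,\cdot}\|_2 = \Theta(\sigma n^{1/4}\log^{1/4} n)$, so $\|\mBstar_{i,\cdot}\|_2^2 = \Theta(\sigma^2\sqrt{n\log n})$ while the cross-term fluctuation is $O(\sigma^2 n^{1/4}\log^{3/4}n) = o(\sigma^2\sqrt{n\log n})$; for $i \in \Istar^c$ we have $\|\mBstar_{i,\cdot}\|_2^2 = \sum_{k\in\Istar} (B^\star_{ik})^2 = O(\sigma^2\sqrt{n\log n})$ (using $B^\star_{ij}=0$ for $i,j\in\Istar^c$). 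The third displayed condition in Equation~\eqref{eq:growth-condition-Bstar}, namely $\min_{i\in[m]}\|\mBstar_{i,\cdot}\|_2^2 \geq \max_{\ell\in[m]^c}\sum_{k=1}^m B^{\star 2}_{\ell k} + C\sigma^2\sqrt{n\log n}$ with $C$ large, is exactly what guarantees a strict gap larger than the total fluctuation: on the good event,
\begin{equation*}
\min_{i \in \Istar} \sum_j \Ytil_{ij}^2
\;>\; \max_{i \in \Istar^c} \sum_j \Ytil_{ij}^2 ,
\end{equation*}
so the $m$ largest row sums are precisely those indexed by $\Istar$. I would also need to absorb the $O(1)$ discrepancy between the diagonal ($n-1$ terms) and off-diagonal ($n$ terms) chi-squared sums, which is lower-order and harmless. (Note that, as in the setup of Theorem~\ref{thm:glaso-sufficient-crude}, the residual contribution from $\mM^{(0)}$ is absorbed into the condition $\kappa^2\sqrt{\mut r} = O(n^{1/4}\log^{-1/4}n)$ via Theorem~\ref{thm:delta_row}, which controls $\|\mDeltatil\|_\F$ and the row-wise errors at a scale dominated by $\sigma^2\sqrt{n\log n}$.)

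The main obstacle is not any single estimate — each of the three terms concentrates by a standard tail bound — but rather bookkeeping the uniformity over all $n$ rows simultaneously while keeping the constants explicit enough that the gap condition with its large constant $C$ genuinely dominates the sum of the cross-term and chi-squared fluctuations (and the $\mDeltatil$ contribution) for \emph{every} pair of rows across the two groups. The delicate case is a row $i\in\Istar^c$ whose partial norm $\sum_{k\in\Istar}(B^\star_{ik})^2$ is near its maximal allowed value $O(\sigma^2\sqrt{n\log n})$ competing against a row $i\in\Istar$ whose norm is near the minimum; the third condition in Equation~\eqref{eq:growth-condition-Bstar} is tailored precisely to this worst case, so the proof reduces to verifying that the fluctuation budget fits inside the slack provided by that condition.
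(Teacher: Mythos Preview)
Your proposal is correct and follows essentially the same approach as the paper: the paper's proof reduces to the same row-sum separation inequality $\min_{i\in[m]}\sum_j y_{ij}^2 > \max_{\ell\in[m]^c}\sum_j y_{\ell j}^2$ and invokes Lemma~\ref{lem:portmanteau} (whose proof is precisely the decomposition into $\|\mBstar_{i,\cdot}\|_2^2$, the $\mBstar$--$\mWtilo$ cross term, the chi-squared term, and the $\mDeltatil$ contribution that you sketch), then closes with the gap condition from Equation~\eqref{eq:growth-condition-Bstar}. The only cosmetic difference is that the paper includes $\mDeltatil$ in the decomposition from the start rather than appending it at the end, and its Lemma~\ref{lem:portmanteau} states the fluctuation as $C\sigma n^{1/4}\log^{1/4}n\,\|\mBstar_{i,\cdot}\|_2$ (driven by $\|\mDeltatil\|_{2,\infty}$) rather than your sharper $C\sigma\sqrt{\log n}\,\|\mBstar_{i,\cdot}\|_2$, but both are $o(\sigma^2\sqrt{n\log n})$ under the hypotheses.
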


While the preceding results demonstrate that both the group Lasso and hard thresholding can successfully recover the true support under suitable conditions, 
their inherently greedy nature makes them vulnerable in more challenging regimes. 
The following result formalizes a specific setting in which both methods provably fail to achieve exact support recovery.
The proof is given in Appendix~\ref{sec:proof:failure}.

\begin{theorem} \label{thm:failure}
Consider a signal-plus-noise model of the form
\begin{equation*}
    \mY = \mB + \mW,
\end{equation*}
where $\mW$ is a symmetric matrix with i.i.d.~$\calN(0, \sigma^2)$ entries on and above diagonal. 
Let the true node support be $\Istar = [m]$, and define the signal matrix $\mB$ with entries
\begin{equation}\label{eq:Bstar:gl:fail}
    B_{ij} = \begin{cases}
        0 & \text{ if } i, j \in [m] \text{ or } i, j \in [m]^c; \\
        \beta_1 & \text{ if } i = m+1, j \in [m] \text{ or } i \in [m], j=m+1\\
        \beta_2 & \text{ if } i \in [m], j \in [m+1]^c \text{ or } j \in [m], i \in [m+1]^c,
    \end{cases}
\end{equation}
where $\beta_2 = C \sigma n^{-1/4}\log^{1/4} n$ for some constant $C$ sufficiently large.
Then, both the group Lasso and the top-$m$ hard-thresholding approach fail to recover the true support $\Istar$ with high probability whenever
\begin{equation*}
    \beta_1 \geq C\sigma \frac{(n\log n)^{1/4}}{\sqrt{m}}
\end{equation*}
for some constant $C$ sufficiently large.
\end{theorem}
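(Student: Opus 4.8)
The plan is to reduce both failure claims to a single high‑probability event on the observed row norms, namely
\[
\|\mY_{m+1,\cdot}\|_2 \;>\; \|\mY_{i,\cdot}\|_2 \qquad \text{for every } i \in [n]\setminus\{m+1\},
\]
and then to verify this event holds with probability $1-O(n^{-6})$. Granting the event, the top‑$m$ hard‑thresholding rule retains the $m$ rows of largest $\|\mY_{i,\cdot}\|_2$; that set contains $m+1 \notin \Istar$ and omits some index of $[m]=\Istar$, so it is not $\Istar$. For the group Lasso, the subgradient optimality condition of the program in Equation~\eqref{eq:node-glasso} at $\mV = 0$ reads $\|\mY_{i,\cdot}\|_2 \le \lambda$ for all $i$ (up to a negligible diagonal correction from $y_{ii}$), so as $\lambda$ decreases just below $\max_i \|\mY_{i,\cdot}\|_2$ the first group to activate is the unique maximizer, which on our event is group $m+1$; hence $\alpha_{m+1}>0$ there. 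Lemma~\ref{lem:unique} then forces $\alpha_{m+1}>0$ for all smaller $\lambda$, so the active set always contains $m+1$ when nonempty and is $\emptyset$ otherwise; in either case it never equals $[m]$, so no $\lambda$ recovers $\Istar$. (I would also note in passing that $[m]$ is the unique minimal node support of $\mB$: each row $i\in[m]$ has $n-m\ge m+1$ nonzero entries, so Assumption~\ref{assump:B-identifiable} holds and Lemma~\ref{lem:identify-B} applies; this makes ``recover $\Istar$'' unambiguous.)

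To establish the row‑norm event, first record the signal row norms from Equation~\eqref{eq:Bstar:gl:fail}: $\|\mB_{m+1,\cdot}\|_2^2 = m\beta_1^2$; $\|\mB_{i,\cdot}\|_2^2 = \beta_1^2 + (n-m-1)\beta_2^2$ for $i\in[m]$; and $\|\mB_{i,\cdot}\|_2^2 = m\beta_2^2$ for $i\in[m+1]^c$. Writing $C_0$ for the constant in $\beta_2 = C_0\sigma n^{-1/4}\log^{1/4}n$ and $C_1$ for the constant in the hypothesis $\beta_1\ge C_1\sigma(n\log n)^{1/4}/\sqrt m$, we have $n\beta_2^2 = C_0^2\sigma^2\sqrt{n\log n}$ and $m\beta_1^2 \ge C_1^2\sigma^2\sqrt{n\log n}$. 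Taking $C_1$ large relative to $C_0$ (and using $m\ge 2$, so $(m-1)\beta_1^2 \ge \tfrac12 m\beta_1^2$) yields the signal gap
\[
\|\mB_{m+1,\cdot}\|_2^2 - \max_{i\ne m+1}\|\mB_{i,\cdot}\|_2^2 \;\ge\; \tfrac14\,m\beta_1^2 \;\ge\; \tfrac14\,C_1^2\,\sigma^2\sqrt{n\log n}.
\]

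Next, pass from $\mB$ to $\mY = \mB + \mW$ via $\|\mY_{i,\cdot}\|_2^2 = \|\mB_{i,\cdot}\|_2^2 + 2\langle \mB_{i,\cdot},\mW_{i,\cdot}\rangle + \|\mW_{i,\cdot}\|_2^2$. A $\chi^2$ tail bound and a union bound over the $n$ rows give $\|\mW_{i,\cdot}\|_2^2 = \sigma^2 n + O(\sigma^2\sqrt{n\log n})$ uniformly in $i$ with probability $1-O(n^{-6})$, so the common $\sigma^2 n$ cancels in every pairwise comparison and the residual fluctuation is $O(\sigma^2\sqrt{n\log n})$. A Gaussian tail bound gives $|\langle \mB_{i,\cdot},\mW_{i,\cdot}\rangle| \lesssim \sigma\sqrt{\log n}\,\|\mB_{i,\cdot}\|_2$ uniformly in $i$ with probability $1-O(n^{-6})$; since $\|\mB_{m+1,\cdot}\|_2 = \sqrt m\,\beta_1 \gg \sigma\sqrt{\log n}$ under the signal hypothesis, and similarly the other rows have $\|\mB_{i,\cdot}\|_2 \lesssim \beta_1 + \sqrt n\,\beta_2$, these cross terms are of smaller order than $m\beta_1^2$, hence negligible against the signal gap above. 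Combining, on an event of probability $1-O(n^{-6})$ we obtain $\|\mY_{m+1,\cdot}\|_2^2 > \|\mY_{i,\cdot}\|_2^2$ for all $i \ne m+1$, which is exactly the event used in the first paragraph.

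The main obstacle is the group Lasso step: making rigorous that node $m+1$ enters the solution path first and never leaves. This requires reading off the subgradient/KKT conditions of the symmetrized, non‑strictly‑convex program in Equation~\eqref{eq:node-glasso} carefully enough that the group norms $\alpha_i$—and hence the identity of the active set—are well defined, and then invoking the monotone solution‑path property of Lemma~\ref{lem:unique}. A secondary but necessary point is the constant bookkeeping noted above (the constant in the lower bound on $\beta_1$ must be large relative to the one defining $\beta_2$), together with the implicit requirement $m \ge 2$: for $m=1$ the decoy node $m+1$ carries strictly less signal than node $1$, and neither method actually fails.
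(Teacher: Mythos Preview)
Your proposal is correct and follows essentially the same route as the paper: both reduce the failure of hard thresholding and group Lasso to the single event that row $m+1$ has the strictly largest observed row norm, and both establish this via the signal gap $m\beta_1^2 \gg n\beta_2^2 \asymp \sigma^2\sqrt{n\log n}$ together with $\chi^2$ and Gaussian cross-term concentration. Your argument for the group Lasso side is in fact more explicit than the paper's---you invoke the KKT condition at $\mV=0$ to identify which group activates first and then use Lemma~\ref{lem:unique} to propagate $\alpha_{m+1}>0$ along the path, whereas the paper simply asserts both methods are ``selection-based'' and moves on; your observations about the constant ordering ($C_1$ large relative to $C_0$) and the implicit $m\ge 2$ requirement are also sharper than what the paper records.
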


\begin{remark}
When $\beta_1 = \Theta(\sigma)$, 
Theorem~\ref{thm:failure} implies that the group Lasso and hard thresholding methods fail to exactly recover the support $\Istar$ when the sparsity level satisfies $m = \Omega(\sqrt{n})$.
Together with Theorems~\ref{thm:glaso-sufficient-crude} and~\ref{thm:hard-thresh-suffice},
this highlights a nuanced landscape: 
although these methods can succeed under favorable signal-to-noise and sparsity conditions, 
their effectiveness is fundamentally constrained by their greedy selection procedure.
Compared to semidefinite programming (SDP), group Lasso and hard thresholding offer computational advantages. 
However, due to the NP-hardness of the underlying problem, their effective regime is fundamentally restricted.
It is plausible that analogous failure modes may arise in related models, such as the graphical Lasso, and a more thorough investigation into such phenomena is a promising direction for future research.
\end{remark}

\section{Estimating the Common Structure} \label{sec:common-structure}


We now turn to the final stage of the procedure, which refines the estimation of the low-rank signal component $\mMstar$ using the residual data.
Specifically, we analyze Step~\ref{alg:general:step:A2} of Algorithm~\ref{alg:general}, where spectral estimation is performed after we have subtracted off the estimated node-sparse component.
To reduce the bias introduced by node-wise sparsity, we follow the asymmetric rearrangement strategy proposed by \citet{chen2021asymmetry}.
For clarity, we first focus on the simplified case where both $|\calG_0| = 1$ and $|\calG_1| = 1$, 
as described in Equations~\eqref{eq:model:calG0} and~\eqref{eq:model:calG1}.

Let $\Ihat_{1}$ denote the estimated node support obtained from Step~\ref{alg:general:step:A1} of Algorithm~\ref{alg:general}.
We define the matrix $\mM_1$ as the remaining of $\mY^{(1)}_1$ after zeroing out rows and columns corresponding to $\Ihat_1$:
\begin{equation} \label{eq:remove:M1}
    M_{1,ij} :=
    \begin{cases}
        Y^{(1)}_{1,ij} & \mbox{ if } (i,j) \in \Ihat^c_{1} \times \Ihat^c_{1}, \\ 
        0 & \mbox{ otherwise }.
    \end{cases}
\end{equation}
Notably, if $\Istar \subseteq \Ihat_1$, then the entries in $\mM_1$ are independent elements drawn from a low-rank-plus-noise model. 
Define 
\begin{equation} \label{eq:remove:M0}
    M_{0, ij} = \begin{cases}
        Y^{(0)}_{0, ij} & \mbox{ if } (i,j) \in \Ihat^c_{1} \times \Ihat^c_{1} \\
        0 & \mbox{ otherwise. }
    \end{cases}
\end{equation}
The asymmetric rearranged matrix $\mM$ obtained from $\mM_1$ and $\mY_0$ is then given by
\begin{equation*}
    M_{ij} = \begin{cases}
        M_{1,ij} & \mbox{ if } 1 \leq i < j \leq n \\
        M_{0, ij} & \mbox{ if } 1 \leq j \leq i \leq n. 
    \end{cases}
\end{equation*}
Since restricting a low-rank matrix to a submatrix cannot increase its rank, the rearranged matrix $\mM$ still adheres to a low-rank-plus-noise structure.
For notational simplicity, we continue to use $\mMstar$ to denote the low-rank signal component corresponding to the submatrix $\mM$, and denote its spectral decomposition as $\mUstar \mLambdastar \mUstart$. 
See Remark~\ref{rem:rearrangement} for further discussion of this notation.
To differentiate its incoherence parameter from that of the original matrix $\mMstar$, we denote its incoherece parameter as $\mub$ with 
\begin{equation}\label{eq:mub:define}
    \mub := \frac{n}{r} \|\mUstar\|^2_{2, \infty}.
\end{equation}
Given $\Ihat_1 = \Istar$, this construction ensures that the entries of $\mM$ are independent, 
and we have
\begin{equation}\label{eq:M:asym}
    \mM = \mMstar + \mH,
\end{equation}
where $\mH$ is an asymmetric noise matrix with independent entries satisfying the same set of distributional assumptions as those in Assumption~\ref{assump:noise} and Equation~\eqref{eq:L-sigma}. 
In fact, the results in this section holds under weaker assumption than Equation~\eqref{eq:L-sigma}:
\begin{equation}\label{eq:L-sigma:weaker}
    L \ll \sigma \min\left\{\frac{n}{\mub \log^{3/2} n}, \sqrt{\frac{n}{\log^3 n}} \right\}
\end{equation}

Throughout this section, we assume that the true support $\Istar$ is successfully recovered, 
i.e., $\Istar \subseteq \Ihat_1$.
As a reminder, the regime under which recovering $\Istar$ is possible is analyzed in Section~\ref{sec:one-step-recovery} and the conditions under which this recovery is guaranteed are established in Section~\ref{sec:one-step-recovery}.
Since $\Ihat_1$ recover $\Istar$ with high probability, without loss of generality, we can assume that $\Ihat^c_1 = [n]$. 
Following the same reason as given in the beginning of Section~\ref{sec:one-step-recovery}, we also assume that the selected node set $\calU$ from Step~\ref{alg:general:step:select} is equal to $[n]$.

\begin{remark} \label{rem:rearrangement}
    We give another reason for using the same notation $\mMstar$ for the low-rank signal component of the rearranged matrix $\mM$.
    If we have multiple observations in $\calG_0$ and $\calG_1$ with estimated node support $\Ihat_k$ for $k \in \calG_1$, then we can similarly obtain 
    \begin{equation} \label{eq:Mk:define}
        M_{k, ij} = \begin{cases}
            Y^{(0)}_{0, ij} & \mbox{ if } (i,j) \in \Ihat^c_{k} \times \Ihat^c_{k} \\
            * & \mbox{ otherwise. }
        \end{cases}
    \end{equation}
    If $|\calG_0| \geq 2$, then we can divide the matrices in $\calG_0$ into two sets $\calG_{0,1}$ and $\calG_{0,2}$ with size difference at most one. 
    We can also divide matrices $\mM_{k}$ for $k \in \calG_1$ into two sets $\calG_{1,1}$ and $\calG_{1,2}$ with size difference at most one. 
    Define $\Omega_l := \{k \in \calG_{0,l} \cup \calG_{1,l}: M_{k, ij} \neq * \}$ for $l = 1,2$.
    For $l = 1,2$, we then combine $\calG_{0,l}$ with $\calG_{1,l}$ to obtain an averaged matrix $\mM^{\text{avg}}_{l}$ with 
    \begin{equation*}
        M^{\text{avg}}_{l, ij} = \frac{1}{|\Omega_l|} \sum_{k \in \Omega_l} M_{k, ij}.
    \end{equation*}  
    We can then asymmetrically arrange the entries of $\mM^{\text{avg}}_{1}$ and $\mM^{\text{avg}}_{2}$ to obtain the final matrix $\mM$. 
    Under this construction, the obtained matrix $\mM$ indeed has $\mMstar$ as its low-rank signal component. 
\end{remark}

With the above setup, we now pause to provide an overview of the goal of this section.
\begin{itemize}
    \item The primary objective is to construct an improved entry-wise estimator for the low-rank signal matrix $\mMstar$.
    Although Theorem~\ref{thm:delta_row} establishes an entry-wise error bound for the initial spectral estimator $\mM^{(0)}$,
    we show that this estimator is sub-optimal in the minimax sense.
    In Section~\ref{sec:entrywise:estimation}, we propose a refined estimator that achieves the optimal entry-wise rate.
    \item While restricting the original signal matrix to a submatrix alters its eigenspace structure, 
    accurate estimation of the resulting eigenspace remains essential, both as an independent objective and as a key ingredient in achieving optimal entry-wise recovery.
    In Section~\ref{sec:linear-form}, we analyze the estimation error of linear forms of the eigenvectors of $\mMstar$ and derive sharp perturbation bounds.
    We also introduce an improved eigenspace estimator that achieves the minimax-optimal rate.
\end{itemize}

We adopt the estimation method proposed in \cite{cheng2021tackling}, 
which provides sharp entry-wise error bounds for eigenvectors under certain perturbation regimes.
Let the right and left eigenvectors of $\mM$ be
\begin{equation} \label{eq:left-right-eigenvc}
    \mM \vu_l=\lambda_l \vu_l \quad \text { and } \quad \mM^{\top} \vw_l=\lambda_l \vw_l
\end{equation}
for all $l \in [r]$, where $\lambda_1, \lambda_2, \dots, \lambda_r$ are eigenvalues of $\mM$.   
To ensure that the leading eigenvectors of $\mM$ provide a reliable estimate,
we impose a lower bound on the minimal signal strength:
\begin{assumption}\label{assump:ev:lower}
    There exists some sufficiently large constant $C > 0$ such that 
    \begin{equation}\label{eq:lambdastar:min:lower:assump}
        \lambdastar_{\min} \geq C \max \left\{ \sigma \sqrt{n \log^3 n}, L \log^3 n \right\}. 
    \end{equation}
\end{assumption}
To establish fine-grained eigenspace perturbation bounds, we introduce the notion of eigengaps.
For each $l \in [r]$, define the eigengap of the $l$-th eigenvalue of $\mMstar$ as
\begin{equation} \label{eq:eigen-gap:define}
    \delta_l^{\star}:= \begin{cases}
    \min _{1 \leq k \leq r, k \neq l}
	\left|\lambdastar_l -\lambdastar_k\right|, &\mbox{ if } l>1 \\ 
	\infty &\mbox{ otherwise. }\end{cases}
\end{equation} 
We caution the reader not to confuse this quantity with the matrix $\mDelta$ used earlier in Section~\ref{sec:spectral-initialization} 
to denote the estimation error of the low-rank initializer $\mM^{(0)}$.
For notational convenience, we also define 
\begin{equation*}
    \deltastar_{\max} := \max_{l \in [r]} \deltastar_l \quad \text{and} \quad \deltastar_{\min} := \min_{l \in [r]} \deltastar_l.
\end{equation*}
To control the perturbation of individual eigenspaces, 
we require that each one-dimensional signal eigenspace is sufficiently well-separated from the others. 
The following eigengap condition guarantees such separation:
\begin{assumption}\label{assump:eigengap:lower}
There exists a sufficiently large constant $C > 0$ such that 
\begin{equation} \label{eq:gap-condition}
\begin{aligned}
    &\delta^\star_{\min} \geq
    C \kappa^3 r^2 \! \max\!\left\{\! \frac{\mub L\! \log ^5\! n}{n},  \sigma \log^{7/2} n\right\}. 
\end{aligned} \end{equation}
\end{assumption}

\subsection{Linear form of eigenvectors} \label{sec:linear-form}

We begin by considering estimating a linear form $\va^\top \vustar_l$,  
that is, the projection of $\vustar_l$ onto a known direction $\va \in \R^d$. 
This formulation generalizes the entry-wise estimation problem, which corresponds to taking $\va$ to be a standard basis vector.
To estimate this quantity, we adopt the estimator proposed by \cite{cheng2021tackling}, 
\begin{equation}\label{eq:linear-form:est}
\uhat_{\va, l}:=\min \left\{\sqrt{\left|\frac{\left(\va^{\top} \vu_l\right)\left(\va^{\top} \vw_l\right)}{\vw_l^{\top} \vu_l}\right|},\|\va\|_2\right\},
\end{equation}
where $\vu_l$ and $\vw_l$ are the left- and right-eigenvectors of the observed matrix $\mM$, 
defined in Equation~\eqref{eq:left-right-eigenvc}.  
Our analysis builds on a combination of techniques from \cite{yan2025improved} and \cite{cheng2021tackling}, 
and improves the original results in \cite{cheng2021tackling} in terms of the dependence on the incoherence parameter $\mub$ of $\mMstar$ (see Equation~\eqref{eq:mub:define} for the reason to use the notation $\mub$ rather than $\mu$). 
 
Before presenting the formal result, we provide some intuition behind the estimator.
By the standard Neumann expansion (see Lemma~\ref{lem:neumann} in Appendix~\ref{sec:improved}), 
we have
\begin{equation} \label{eq:neumann}
    \vu_l = \sum_{j=1}^r \frac{\lambdastar_j}{\lambda_l} \left(\vustart_j \vu_l\right) \vustar_j + \sum_{j=1}^r \frac{\lambdastar_j}{\lambda_l} \left(\vustart_j \vu_l\right) \left\{\sum_{s=1}^\infty \frac{1}{\lambda_l^s} \mH^s \vustar_j\right\}.
\end{equation}
From Theorem~\ref{thm:eigenvalue:rank-r} in Appendix~\ref{sec:improved}, we know that $\lambda_l \approx \lambdastar_l$.
Therefore, when the condition number $\kappa$ obeys $\kappa = O(1)$, it follows that 
\begin{equation*}
    \vu_l \approx \sum_{j=1}^r \frac{\lambdastar_j}{\lambdastar_l} \left(\vustart_j \vu_l\right) \vustar_j + \sum_{j=1}^r O(1) \cdot \left\{\sum_{s=1}^\infty \frac{1}{\lambda_l^s} \mH^s \vustar_j\right\}.
\end{equation*}
Moreover, Theorem~\ref{thm:yan:main} refines this expansion and gives the sharper bound
\begin{equation}\label{eq:first-order-approx}
    \vu_l \approx \sum_{j=1}^r \frac{\lambdastar_j}{\lambdastar_l} \left(\vustart_j \vu_l\right) \vustar_j + \Otilde\left(\frac{\sigma}{\lambdastar_{\min}}\right),
\end{equation}
where $\Otilde( \cdot )$ denotes that we are suppressing logarithmic factors.
This indicates that $\vu_l$ is well-approximated to first order by a linear combination of the true eigenvectors $\{\vustar_j\}_{j \in [r]}$. 
This accurate first-order expansion guides the development of methods to perform bias-correction when estimating the linear form $\va^\top \vu_l$.  

To formalize the above intuition, we first rederive certain results from \cite{cheng2021tackling}, 
focusing on the empirical right eigenvectors and obtaining an improved dependence on the incoherence parameter $\mub$. 
The proof of the following theorem is provided in Appendix~\ref{sec:proof:thm:eigenvec:l2}.
\begin{theorem}\label{thm:eigenvec:l2}
Consider the model given in Equation~\eqref{eq:M:asym} and suppose that the noise matrix $\mH$ with independent entries satisfies Assumption~\ref{assump:noise} and Equation~\eqref{eq:L-sigma}. 
Assume that $\lambdastar_{\min}$ satisfies Assumption~\ref{assump:ev:lower} and $\deltastar_{\min}$ satisfies Assumption~\ref{assump:eigengap:lower}. 
Then
\begin{enumerate}
    \item ($\ell_2$-perturbation) It holds with probability at least $1 - O(n^{-17})$ that for all $l \in [r]$,
    \begin{equation}\label{eq:l2:perturb:norm}
    \min \left\{\|\vu_l \pm \vustar_l\|_2\right\} \lesssim \kappa \frac{\sigma\sqrt{n \log n}}{\lambdastar_{\min}} + \frac{\sigma}{\left(\delta^\star_l\right)} \kappa^2 r \log^{7/2} n ,
    \end{equation}
    \begin{equation}\label{eq:l2:perturb:inner:diff}
        \left|\vustart_k \vu_l\right| \lesssim \frac{\sigma \sqrt{\kappa^4 r \log^7 n}}{\delta^\star_l},
    \end{equation}
    and
    \begin{equation}\label{eq:l2:perturb:inner:same}
        \left|\vustart_l \vu_l\right| \geq 1 - O\left( \frac{ \sigma^2_{\max} \kappa^4 r^2 \log^7 n}{\left(\delta^\star_l\right)^2} + \frac{\kappa^2 \sigma^2_{\max} n \log n}{\left(\lambdastar_{\min}\right)^2}\right);
    \end{equation}
    \item (perturbation of linear forms) For any fixed vector $\va$ with $\|\va\|_2 = 1$ and any $l \in [r]$, with probability at least $1 - O(n^{-17})$, we have
    \begin{equation}\label{eq:naive:linear-form} 
    \begin{aligned}
        \min \left|\va^\top \! \left(\vu_l \pm \vustar_l\right)\right|
	&\lesssim 
        ~\sigma_{\max } \sqrt{\kappa^2 r \log^7 n}\left\{\frac{1}{\lambda_{\min }^{\star}}+\kappa r \max _{k: k \neq l} \frac{\left|\boldsymbol{a}^{\top} \boldsymbol{u}_k^{\star}\right|}{\left|\lambda_l^{\star}-\lambda_k^{\star}\right|}\right\} \\
        &~~~~~~+\left(\frac{\sigma_{\max }^2 \kappa^4 r^2 \log ^7 n}{\left(\delta_l^{\star}\right)^2}+\frac{\kappa^2 \sigma_{\max }^2 n \log n}{\left(\lambda_{\min }^{\star}\right)^2}\right) \left|\va^\top \vustar_l\right|.
    \end{aligned}
    \end{equation}
    \item ($\ell_\infty$-perturbation) For any $l \in [r]$, with probability at least $1 - O(n^{-17})$, we have
    \begin{equation}\label{eq:naive:entrywise}
    \begin{aligned}
    \min \left\|\vu_l \pm \vustar_l\right\|_{\infty} 
    &\lesssim \sigma_{\max } \sqrt{\kappa^2 r \log^7 n}
	\left\{\frac{1}{\lambda_{\min }^{\star}}
	+\frac{\kappa r }{\delta^\star_l}\sqrt{\frac{\mub}{n}}\right\} \\
    &~~~~~~+ \left(\frac{\sigma_{\max }^2 \kappa^4 r^2 \log ^7 n}{\left(\deltastar_l \right)^2}+\frac{\kappa^2 \sigma_{\max }^2 n \log n}{\left(\lambda_{\min }^{\star}\right)^2}\right) \sqrt{\frac{\mub}{n}}.
    \end{aligned}
    \end{equation}
\end{enumerate}
\end{theorem}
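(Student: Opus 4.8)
The plan is to run a Neumann-series analysis of the right eigenvector $\vu_l$ of $\mM = \mMstar + \mH$, exactly along the lines of \eqref{eq:neumann}, and to track the resulting error terms in the three senses of interest ($\ell_2$-norm, a fixed linear form, and $\ell_\infty$-norm) in parallel. The first ingredient is eigenvalue localization: Theorem~\ref{thm:eigenvalue:rank-r}, together with Assumptions~\ref{assump:ev:lower} and~\ref{assump:eigengap:lower}, yields $\lambda_l = \lambdastar_l(1 + o(1))$ and a quantitative bound on $|\lambda_l - \lambdastar_l|$, so that in particular $|\lambda_l| \gtrsim \lambdastar_{\min} \gg \|\mH\|$, where $\|\mH\| \lesssim \sigma\sqrt{n}$ with high probability by standard operator-norm bounds for random matrices with bounded independent entries. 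Hence each series $\sum_{s\ge1}\lambda_l^{-s}\mH^s\vustar_j$ appearing in \eqref{eq:neumann} converges geometrically with ratio $\lesssim \|\mH\|/|\lambda_l|$, and the whole problem reduces to controlling, uniformly over $l,j,k\in[r]$ and $s\ge1$ and with the correct dependence on the incoherence $\mub$, the quantities $\|\mH^s\vustar_j\|_2$, $\va^\top\mH^s\vustar_j$, $\|\mH^s\vustar_j\|_\infty$, and the bilinear forms $\vustart_k\mH^s\vustar_j$.

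For $s=1$ these are immediate from Bernstein's inequality and the incoherence bound $\|\vustar_j\|_\infty \le \|\mUstar\|_{2,\infty} = \sqrt{\mub r/n}$: with high probability $\|\mH\vustar_j\|_2 \lesssim \sigma\sqrt{n}$, $|\va^\top\mH\vustar_j| \lesssim \sigma\sqrt{\log n}\,\|\va\|_2 + L\log n\,\|\va\|_\infty\|\vustar_j\|_\infty$, $|\vustart_k\mH\vustar_j| \lesssim \sigma\sqrt{\log n} + L(\mub r/n)\log n \lesssim \sigma\sqrt{\log n}$, and $\|\mH\vustar_j\|_\infty \lesssim \sigma\sqrt{\log n} + L\sqrt{\mub r/n}\,\log n$ after a union bound over the $n$ rows; the condition \eqref{eq:L-sigma} (or the weaker \eqref{eq:L-sigma:weaker}) is precisely what renders the $L$-dependent terms lower order. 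The higher powers $\mH^s\vustar_j$ with $s\ge2$ are the crux, and the plan is to import the recursive leave-one-out machinery of \cite{yan2025improved} (see also Theorem~\ref{thm:yan:main}): instead of the lossy bound $\|\mH^s\vustar_j\|_\infty \le \|\mH\|^{s-1}\|\mH\vustar_j\|_\infty$, one writes $\mH^s\vustar_j = \mH(\mH^{s-1}\vustar_j)$, isolates the $i$-th row of $\mH$, which is nearly independent of $\mH^{s-1}\vustar_j$, applies Bernstein conditionally, and propagates \emph{both} the $\ell_2$ and $\ell_\infty$ norms of the intermediate vectors through the recursion. This produces bounds of the shape $\|\mH^s\vustar_j\|_\infty \lesssim (C\sigma\sqrt{n\log n})^{s-1}(\sigma\sqrt{\log n} + L\sqrt{\mub r/n}\,\log n)$ and analogous control of $\vustart_k\mH^s\vustar_j$, with probability $1 - O(n^{-17})$ and uniformly in $s$; crucially the incoherence enters only through the \emph{last} application of $\mH$, which is the source of the improved $\mub$-dependence relative to \cite{cheng2021tackling}.

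With these estimates in hand the three parts follow by bookkeeping. Projecting \eqref{eq:neumann} onto $\vustar_k$ for $k\ne l$ and using orthonormality gives $(\vustart_k\vu_l)(1-\lambdastar_k/\lambda_l) = \sum_j(\lambdastar_j/\lambda_l)(\vustart_j\vu_l)\sum_{s\ge1}\lambda_l^{-s}\vustart_k\mH^s\vustar_j$; since $|1-\lambdastar_k/\lambda_l| \gtrsim \deltastar_l/\lambdastar_{\max} \asymp \deltastar_l/(\kappa\lambdastar_{\min})$ (the eigenvalue bound lets us replace $\lambda_l$ by $\lambdastar_l$ and Assumption~\ref{assump:eigengap:lower} absorbs the perturbation), summing the bilinear-form estimates over $j$ and $s$ gives \eqref{eq:l2:perturb:inner:diff}. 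Feeding this, together with the $\ell_2$ estimate of the component of $\vu_l$ orthogonal to $\mathrm{range}(\mUstar)$ (which is governed by $\|(\mI-\mUstar\mUstart)\mH\,\mUstar\mUstart\vu_l\|_2/|\lambda_l|$ and hence is $\lesssim \sigma\sqrt{n}/\lambdastar_{\min}$ up to polylog), into the identity $1-(\vustart_l\vu_l)^2 = \sum_{k\ne l}(\vustart_k\vu_l)^2 + \|(\mI-\mUstar\mUstart)\vu_l\|_2^2$ gives \eqref{eq:l2:perturb:inner:same}, and then \eqref{eq:l2:perturb:norm} follows from $\min\{\|\vu_l\pm\vustar_l\|_2^2\} = 2(1-|\vustart_l\vu_l|) \le 2(1-(\vustart_l\vu_l)^2)$ and $\sqrt{\sum_{k\ne l}(\vustart_k\vu_l)^2} \le \sqrt{r}\max_{k\ne l}|\vustart_k\vu_l|$. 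For \eqref{eq:naive:linear-form}, apply $\va^\top$ to \eqref{eq:neumann}: the $j=l$ signal term reproduces $\va^\top\vustar_l$ up to the multiplicative corrections $(1-\lambdastar_l/\lambda_l)$ and $(1-\vustart_l\vu_l)$ (giving the $|\va^\top\vustar_l|$-prefactor on the second line), the $j\ne l$ signal terms contribute $\sum_{k\ne l}|\vustart_k\vu_l|\,|\va^\top\vustar_k|$, which the per-$k$ version of \eqref{eq:l2:perturb:inner:diff} bounds by a $\max_{k\ne l}|\va^\top\vustar_k|/|\lambdastar_l-\lambdastar_k|$ term, and the noise terms sum to $\sum_{j,s}|\va^\top\mH^s\vustar_j|/|\lambda_l|^s$, giving the $\sigma\sqrt{\kappa^2 r\log^7 n}/\lambdastar_{\min}$ term. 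Finally \eqref{eq:naive:entrywise} is the specialization $\va=\ve_i$ followed by a union bound over $i\in[n]$, using $|\ve_i^\top\vustar_k|\le\|\vustar_k\|_\infty$ and the Cauchy--Schwarz bound $\sum_{k\ne l}\|\vustar_k\|_\infty \le \sqrt{r}\,\|\mUstar\|_{2,\infty} = r\sqrt{\mub/n}$ to convert $\max_{k\ne l}|\va^\top\vustar_k|$ into the $\sqrt{\mub/n}$ of the statement.

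I expect the main obstacle to be the uniform-in-$s$ control of the higher-order Neumann terms $\mH^s\vustar_j$ with the sharp incoherence dependence: a naive operator-norm bound $\|\mH\|^{s-1}\|\mH\vustar_j\|_\infty$ throws away the $\mub$-gain entirely, so one must carefully decouple successive applications of $\mH$ via leave-one-out and simultaneously track the $\ell_2$, $\ell_\infty$, and---for the bilinear forms---``low-coherence-direction'' norms of all intermediate vectors, which is exactly the delicate part adapted from \cite{yan2025improved}. A secondary, organizational difficulty is arranging that all the required high-probability events (eigenvalue localization, the $\|\mH\|$ bound, and a Bernstein event for every $s$, every pair $j,k$, and every basis vector $\ve_i$) hold simultaneously on one event, which is what forces the $O(n^{-17})$ failure probability and the somewhat generous powers of $\log n$ in the statement.
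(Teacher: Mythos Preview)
Your proposal is correct and takes essentially the same approach as the paper: Neumann expansion \eqref{eq:neumann}, extraction of the cross inner products $\vustart_k\vu_l$ via the $(1-\lambdastar_k/\lambda_l)^{-1}$ identity, control of the higher-order terms through the \cite{yan2025improved} concentration machinery, and specialization to $\va=\ve_i$ for the $\ell_\infty$ bound. The only cosmetic difference is that the paper invokes Theorem~\ref{thm:linear-form-rank-r} as a black box (which already packages the leave-one-out recursion you describe) and quotes several intermediate estimates directly from \cite{cheng2021tackling}, whereas you sketch that recursion by hand; but the underlying argument is the same.
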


\begin{remark}
By Theorem~\ref{thm:eigenvec:l2}, we have the approximation
\begin{equation*}
    \vustart_k \vu_l \approx \tilde{O}\left(\frac{\sigma}{\left|\lambdastar_l - \lambdastar_k\right|}\right)
\end{equation*}
for $k \neq l$. 
In addition, ignoring the terms involving eigengaps, we also have
\begin{equation*}
    \vustart_l \vu_l \approx 1 - \tilde{O}\left(\frac{\sigma^2 n}{\left(\lambdastar_{\min}\right)^2}\right).
\end{equation*}
We observe that these approximations resemble the asymptotic behavior seen in the BBP phase transition limit \citep{Florent2011eigenvalues}.
Unfortunately, deriving the precise limiting form in our setting is challenging due to the heteroscedasticity in the variance structure of $\mH$.
\end{remark}

Armed with the previous results, we are now ready to establish an error bound for the improved estimator defined in Equation~\eqref{eq:linear-form:est}.
To maintain technical simplicity, we state the following results under $\kappa = O(1)$. 
The precise dependence on $\kappa$ can be found in the Appendix.

\begin{theorem}\label{thm:improved:linear-form-bound}
Suppose that $\kappa = O(1)$ and $r = O(1)$. 
Under the same assumptions as Theorem~\ref{thm:eigenvec:l2}, for any $l \in [r]$, the estimation error of the improved estimator in Equation~\eqref{eq:linear-form:est} is bounded by
\begin{equation} \label{eq:improved:linear-form-bound}
\min \left|\widehat{u}_{\va, l} \pm \va^{\top} \vustar_l \right|
\lesssim
\left\{ \frac{1}{\lambda_{\min }^{\star}}
  + \max _{k: k \neq l} \frac{\left|\va^{\top} \vustar_k \right|}
			{\left|\lambdastar_l-\lambdastar_k\right|}
\right\} \sqrt{ \sigma^2 \log^7 n }
+ \left|\va^{\top} \vustar_l \right|
	\frac{\sigma^2 \log^7 n}{\left(\delta^\star_{l}\right)^2}.
\end{equation}
with probability at least $1 - O(n^{-17})$. 
\end{theorem}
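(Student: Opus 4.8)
The plan is to control the three ingredients of the estimator, $p:=\va^\top\vu_l$, $q:=\va^\top\vw_l$, and $s:=\vw_l^\top\vu_l$, and to show that $pq/s$ equals $x^2$, where $x:=\va^\top\vustar_l$, up to an error whose square root matches the claimed bound. The rationale is that $p$ and $q$ are each \emph{shrunk} estimates of $x$: from the Neumann expansion \eqref{eq:neumann} (or its sharper form \eqref{eq:first-order-approx}), $\va^\top\vu_l\approx(\vustart_l\vu_l)\,x+(\text{part orthogonal to }\vustar_l)$, and $\vustart_l\vu_l$ deviates from $1$ by $\Theta\big(\sigma^2 n\log n/(\lambdastar_{\min})^2+\sigma^2\log^7 n/(\deltastar_l)^2\big)$ by \eqref{eq:l2:perturb:inner:same}; the same two shrinkage factors appear in the denominator $s$, so they cancel in the ratio. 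This cancellation, together with the decoupling of left and right eigenvectors afforded by the asymmetric rearrangement, is exactly what removes the $\sigma^2 n\log n/(\lambdastar_{\min})^2$ bias that is unavoidably present in the cruder linear-form bound \eqref{eq:naive:linear-form}.

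Concretely, I would fix the signs of $\vu_l,\vw_l$ so that $g:=\vustart_l\vu_l\ge 0$ and $h:=\vustart_l\vw_l\ge 0$, write $\vu_l=g\vustar_l+\vr_u$ and $\vw_l=h\vustar_l+\vr_w$ with $\vr_u,\vr_w\perp\vustar_l$, so that $p=gx+\xi_u$, $q=hx+\xi_w$, $s=gh+\rho$ with $\xi_u:=\va^\top\vr_u$, $\xi_w:=\va^\top\vr_w$, $\rho:=\vr_w^\top\vr_u$. I then need three estimates, valid on the high-probability event of Theorem~\ref{thm:eigenvec:l2} (applied to $\mM$ for $\vu_l$, and by the $\mH\leftrightarrow\mH^\top$ symmetry of the hypotheses to $\mM^\top$ for $\vw_l$): \emph{(i)} $1-O(\tau)\le g,h\le 1$, with $\tau:=\sigma^2 n\log n/(\lambdastar_{\min})^2+\sigma^2\log^7 n/(\deltastar_l)^2$, where Assumption~\ref{assump:eigengap:lower} with a sufficiently large constant forces $\tau\le 1/4$, hence $gh+\rho\ge 1/2$; \emph{(ii)} $|\xi_u|,|\xi_w|\lesssim A:=\sigma\sqrt{\log^7 n}\,\big(1/\lambdastar_{\min}+\max_{k\neq l}|\va^\top\vustar_k|/|\lambdastar_l-\lambdastar_k|\big)$; and \emph{(iii)} $|\rho|\lesssim\tau_{\mathrm{gap}}:=\sigma^2\log^7 n/(\deltastar_l)^2$. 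Granting these, the identity $\tfrac{pq}{s}=x^2-\tfrac{x^2\rho}{gh+\rho}+\tfrac{x(g\xi_w+h\xi_u)+\xi_u\xi_w}{gh+\rho}$ together with $g,h\le 1$ and $gh+\rho\ge\tfrac12$ gives $\big|\tfrac{pq}{s}-x^2\big|\lesssim x^2\tau_{\mathrm{gap}}+|x|A+A^2$.

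To finish, take square roots. If $x^2\ge c(|x|A+A^2)$ for a suitable constant $c$, then $|pq/s|\asymp x^2$ and $\big|\sqrt{|pq/s|}-|x|\big|\le|pq/s-x^2|/|x|\lesssim|x|\tau_{\mathrm{gap}}+A+A^2/|x|\lesssim|x|\tau_{\mathrm{gap}}+A$, using $|x|\gtrsim A$ in this regime; otherwise $|x|\lesssim A$, so $|pq/s|\lesssim A^2$ and $\big|\sqrt{|pq/s|}-|x|\big|\le\sqrt{|pq/s|}+|x|\lesssim A$. In both cases $\big|\sqrt{|pq/s|}-|\va^\top\vustar_l|\big|\lesssim|\va^\top\vustar_l|\,\sigma^2\log^7 n/(\deltastar_l)^2+A$. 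Since $|\va^\top\vustar_l|\le\|\va\|_2$, truncating at $\|\va\|_2$ can only decrease the error, so the same bound holds for $\widehat{u}_{\va,l}$; and as $\widehat{u}_{\va,l}\ge 0$ we have $\min_{\pm}|\widehat{u}_{\va,l}\pm\va^\top\vustar_l|=\big|\widehat{u}_{\va,l}-|\va^\top\vustar_l|\big|$. This is \eqref{eq:improved:linear-form-bound} after setting $\kappa=r=O(1)$ and $\sigma_{\max}\asymp\sigma$.

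The main obstacle is proving (ii) and (iii) \emph{without} any $\tau$-sized (in particular $\sigma^2 n\log n/(\lambdastar_{\min})^2$-sized) term: Theorem~\ref{thm:eigenvec:l2} only bounds $|\va^\top(\vu_l\pm\vustar_l)|$, which carries an extra $\tau|\va^\top\vustar_l|$, and a Cauchy--Schwarz bound on $\rho$ would give $\|\vr_w\|_2\|\vr_u\|_2\asymp\tau$, reintroducing exactly the term the conclusion must avoid. For (ii) I would revisit \eqref{eq:neumann}: after peeling off the $j=l$ contribution $g\vustar_l$, the residual $\xi_u$ reduces to (a) cross terms $\sum_{k\neq l}\tfrac{\lambdastar_k}{\lambda_l}(\vustart_k\vu_l)(\va^\top\vustar_k)$, controlled by the refined bound $|\vustart_k\vu_l|\lesssim\sigma\sqrt{\log^7 n}/|\lambdastar_l-\lambdastar_k|$, and (b) noise terms $\lambda_l^{-s}\va^\top\mH^s\vustar_l$ and $\lambda_l^{-s}(\va^\top\vustar_l)(\vustart_l\mH^s\vustar_l)$, $s\ge 1$. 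The asymmetry of $\mH$ is essential here: its independent-entry structure makes $\E[\mH^s]$ ``diagonal-dominated'' ($\E[\mH^2]=\diag(\E H_{ii}^2)$, $\E[\mH^3]=0$, and similarly higher up), so $\va^\top\mH^s\vustar_l$ concentrates around an $O(\sigma^2)$-sized quantity rather than the $\Theta(\sigma^s n^{s/2})$ one sees for symmetric noise, and after division by $\lambda_l^s\gtrsim(\sigma\sqrt{n\log^3 n})^s$ it is negligible. The same decoupling handles (iii): writing $\rho=\sum_{k\neq l}(\vustart_k\vw_l)(\vustart_k\vu_l)+\langle\mU_\perp^\top\vw_l,\mU_\perp^\top\vu_l\rangle$, with $\mU_\perp$ an orthonormal basis of the orthogonal complement of the column space of $\mUstar$, the first sum is $\lesssim\tau_{\mathrm{gap}}$ by \eqref{eq:l2:perturb:inner:diff}, while the second is to leading order $\lambda_l^{-2}\langle\mH^\top\vustar_l,\mH\vustar_l\rangle=\lambda_l^{-2}\vustart_l\mH^2\vustar_l$, which is $O(\sigma^2)$ in expectation with $O(\sigma^2\sqrt n\,\mathrm{polylog})$ fluctuations and hence negligible relative to $\tau_{\mathrm{gap}}$. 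Making these Neumann estimates precise --- especially the higher-order noise concentration and the bookkeeping of the coefficients $\tfrac{\lambdastar_j}{\lambda_l}(\vustart_j\vu_l)$ --- is the technical heart, and this is where the machinery imported from \cite{cheng2021tackling} and \cite{yan2025improved} (Theorem~\ref{thm:yan:main}) is used.
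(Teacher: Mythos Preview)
Your proposal is essentially the paper's argument. The paper uses the decomposition from Equation~(101) of \cite{cheng2021tackling}, writing $\va^\top\vu_l=(\lambdastar_l/\lambda_l)gx+\gamma_1$, $\va^\top\vw_l=(\lambdastar_l/\lambda_l)hx+\gamma_2$, and $\vw_l^\top\vu_l=(\lambdastar_l/\lambda_l)^2gh+\gamma_3$; since $|\lambda_l-\lambdastar_l|\lesssim\sigma\log^{7/2}n$ by Theorem~\ref{thm:eigenvalue:rank-r}, your $\xi_u,\xi_w,\rho$ differ from $\gamma_1,\gamma_2,\gamma_3$ only by $O(A_0)$-sized terms, which is harmless. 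Your bounds (i) and (ii) match Equations~\eqref{eq:l2:perturb:inner:same} and~\eqref{eq:first:order:bound}, and the two-case square-root argument is identical to the paper's cases (a)/(b), which in turn follow Equations~(108)--(113) of \cite{cheng2021tackling}.

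One correction to (iii): the claim $|\rho|\lesssim\tau_{\mathrm{gap}}$ does not hold across the full parameter range. When $\deltastar_l\asymp\lambdastar_{\min}$, one has $\tau_{\mathrm{gap}}\asymp\sigma^2\log^7 n/(\lambdastar_{\min})^2$, but your $\mU_\perp$-cross term---with the $O(\sigma^2\sqrt n\,\mathrm{polylog})$ fluctuation you quote for $\vustart_l\mH^2\vustar_l$, divided by $\lambda_l^2\gtrsim\sigma^2 n\log^3 n$---is only of size $A_0:=\sigma\sqrt{\log^7 n}/\lambdastar_{\min}$, which can dominate $\tau_{\mathrm{gap}}$ (e.g., by a factor $\sqrt n/\mathrm{polylog}$). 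The paper faces the same issue with its analogous term $\calU_2$ (the $s_1+s_2\ge1$ Neumann tail of $\vw_l^\top\vu_l$), and bounds it only by $A_0$ via Lemma~\ref{lem:inf:series:bound}, yielding $|\gamma_3|\lesssim\tau_{\mathrm{gap}}+A_0$. This does no harm: since $|x|\le1$, one has $|x|\cdot A_0\le A_0\le A$, so the extra $A_0$ merges into the $A$ term of the final bound. Weaken (iii) to $|\rho|\lesssim\tau_{\mathrm{gap}}+A_0$; your identity then gives $|pq/s-x^2|\lesssim x^2\tau_{\mathrm{gap}}+|x|A+A^2$ as before, and the remainder of your argument goes through unchanged.
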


The proof is included in Appendix~\ref{sec:proof:thm:improved:linear-form-bound}. 
To establish the tightness of Theorem~\ref{thm:improved:linear-form-bound}, we have the following minimax result adapted from Theorem 3 in \cite{cheng2021tackling}. 
\begin{theorem}\label{thm:multiple:linear-form:minimax}
Suppose that we observe $N \ge 1$ independent copies of symmetric matrices 
$
\{\mM^{(l)}: l \in [N]\},
$
where $\mM^{(l)} = \mMstar + \mW^{(l)}$.
Suppose that $\{\mW^{(l)}: l \in [N]\}$ has independent Gaussian entries on and above diagonal, with 
\begin{equation*}
    W_{ij}^{(l)} \sim \calN\left(0, \left(\sigma_{ij}^{(l)}\right)^2\right)
\end{equation*}
and $\sigma_{\min} \leq \sigma_{ij}^{(l)} \leq \sigma$ for all $i,j \in [n]$ and $l \in [N]$. 
Let 
\begin{equation*}
    \calM_0(\mMstar) := \left\{\mA \in \bbS^n \mid \operatorname{rank}(\mA) = r, \lambda_i(\mA) = \lambdastar_i \;(i \in [r]), \left\|\mA - \mMstar\right\|_{\F} \leq \frac{\sigma_{\min}}{2}\right\}.
\end{equation*}
Assume that $|\lambdastar_l| \geq 4\sigma_{\min} \sqrt{n / N}$ and $\deltastar_l \geq \sigma_{\min} / \sqrt{N}$.
Then there exists a constant $c>0$ such that 
\begin{equation*}
    \inf_{\uhat_{\va, l}} \sup_{\mA \in \calM_0(\mMstar)} \E \left[ \min \left|\uhat_{\va,l} \pm \va^\top \vu_l(\mA) \right|\right] \geq 
    c \left\{\left|\va^{\top} \vu_l^{\star}\right| \frac{\sigma_{\min }^2}{N\left(\deltastar_l\right)^{2}}
    + \frac{\sigma_{\min}}{\sqrt{N}} \max _{k: k \neq l} \frac{\left|\va^{\top} \vu_k^{\star}\right|}{\left|\lambda_l^{\star}-\lambda_k^{\star}\right|}\right\} .
\end{equation*}
\end{theorem}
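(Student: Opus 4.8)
The plan is to adapt the argument of Theorem~3 in \citet{cheng2021tackling}, establishing the two terms of the lower bound by two separate applications of Le Cam's two-point method, and using the elementary inequality $\tau_1+\tau_2\le 2\max\{\tau_1,\tau_2\}$ so that it suffices to bound $\inf_{\uhat_{\va,l}}\sup_{\mA\in\calM_0(\mMstar)}\E\bigl[\min|\uhat_{\va,l}\pm\va^\top\vu_l(\mA)|\bigr]$ below by a constant multiple of each term separately. First I would reduce to the most convenient noise model: the minimax risk is non-decreasing in the entrywise noise variances, since from data generated under a less noisy model one may add an independent symmetric Gaussian matrix with the appropriate entrywise variances and then run any estimator built for the noisier model, so it suffices to prove the bound when each $\mW^{(l)}$ has i.i.d.\ $\calN(0,\sigma_{\min}^2)$ entries on and above the diagonal. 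For that model, the KL divergence between the laws of the $N$ copies under two signal matrices $\mA,\mA'$ is at most $\tfrac{N}{2\sigma_{\min}^2}\|\mA-\mA'\|_{\F}^2$, so any perturbation with $\|\mA-\mA'\|_{\F}\le c_0\sigma_{\min}/\sqrt N$ for a small $c_0$ keeps the two hypotheses within a small constant in total variation.

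All perturbations will be planar rotations of the eigenbasis of $\mMstar$: fixing $k\neq l$, replace the pair $(\vustar_l,\vustar_k)$ by $(\cos\theta\,\vustar_l+\sin\theta\,\vustar_k,\ -\sin\theta\,\vustar_l+\cos\theta\,\vustar_k)$ while keeping all eigenvalues equal to $\lambdastar_1,\dots,\lambdastar_r$. This produces a matrix $\mA^{(\theta)}$ of rank $r$ with exactly these eigenvalues, with $\|\mA^{(\theta)}-\mMstar\|_{\F}=\sqrt2\,|\sin\theta|\,|\lambdastar_l-\lambdastar_k|$ and $l$-th eigenvector $\cos\theta\,\vustar_l+\sin\theta\,\vustar_k$, so the functional evaluates to $\va^\top\vu_l(\mA^{(\theta)})=\cos\theta\,(\va^\top\vustar_l)+\sin\theta\,(\va^\top\vustar_k)$. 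Feasibility $\mA^{(\theta)}\in\calM_0(\mMstar)$ (rank $r$, prescribed eigenvalues, $\|\cdot-\mMstar\|_{\F}\le\sigma_{\min}/2$) holds whenever $|\sin\theta|\,|\lambdastar_l-\lambdastar_k|$ is a sufficiently small multiple of $\sigma_{\min}$; and the hypothesis $\deltastar_l\ge\sigma_{\min}/\sqrt N$ guarantees that the choice $|\sin\theta|\asymp\sigma_{\min}/(\sqrt N\,|\lambdastar_l-\lambdastar_k|)$ gives a well-defined and small angle.

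For the first term, take $k=k^\star$ attaining $\max_{k\neq l}|\va^\top\vustar_k|/|\lambdastar_l-\lambdastar_k|$ and set $|\sin\theta|\asymp\sigma_{\min}/(\sqrt N\,|\lambdastar_l-\lambdastar_{k^\star}|)$ at the KL budget. Comparing $\mMstar$ (functional value $v_0:=\va^\top\vustar_l$) with $\mA^{(\pm\theta)}$ (value $v_1:=\cos\theta\,v_0\pm\sin\theta\,w$, where $w:=\va^\top\vustar_{k^\star}$), and choosing the sign of $\theta$ to deal with the $\pm$ ambiguity in the loss, the sign-symmetrized separation $\min\{|v_0-v_1|,|v_0+v_1|\}$ is $\gtrsim|\sin\theta|\,|w|\asymp\frac{\sigma_{\min}}{\sqrt N}\max_{k\neq l}\frac{|\va^\top\vustar_k|}{|\lambdastar_l-\lambdastar_k|}$; two-point Le Cam then yields the first term. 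For the second term, take $k$ attaining $\deltastar_l=|\lambdastar_l-\lambdastar_k|$ and $|\sin\theta_\star|\asymp\sigma_{\min}/(\sqrt N\,\deltastar_l)$, and compare $\mMstar$ with $\mA^{(\pm\theta_\star)}$: the change in the functional is $(\cos\theta_\star-1)v_0\pm\sin\theta_\star\,w$, whose second-order piece has magnitude $(1-\cos\theta_\star)|v_0|\asymp\frac{\sigma_{\min}^2}{N(\deltastar_l)^2}|v_0|$. A short case split completes the argument: if the first term of the bound is at least a fixed fraction of the second, it already dominates and the first-order computation above suffices; otherwise $|w|$ is negligible against $|\sin\theta_\star|\,|v_0|$, so for one choice of sign the second-order piece is not cancelled (and, in the same regime, the $|v_0+v_1|$ branch of the sign ambiguity is also bounded below by the target), giving separation $\gtrsim\frac{\sigma_{\min}^2}{N(\deltastar_l)^2}|v_0|$ and hence the second term. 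Combining the two applications gives the lower bound as a constant times the maximum, hence a smaller constant times the sum, of the two terms.

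The main obstacle I anticipate is the bookkeeping around the $\pm$ sign ambiguity in the loss combined with the competition between the first- and second-order changes in $\va^\top\vu_l(\mA^{(\theta)})$: one must verify, in all configurations of $(\va^\top\vustar_l,\va^\top\vustar_k)$, that after selecting the rotation direction the sign-symmetrized separation $\min\{|v_0-v_1|,|v_0+v_1|\}$ really is of the stated order, and that the case split partitions these configurations cleanly. A secondary, routine point is confirming that every hypothesis matrix stays in $\calM_0(\mMstar)$ — which is where $\deltastar_l\ge\sigma_{\min}/\sqrt N$, and for the ambient setup $|\lambdastar_l|\gtrsim\sigma_{\min}\sqrt{n/N}$, enter — and noting that the statement is vacuous when $r=1$, since then $\deltastar_l=\infty$ and the maximum over $k\neq l$ is over the empty set.
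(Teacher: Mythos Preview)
Your proposal is correct and follows essentially the same approach as the paper: the paper explicitly states that the proof adapts Theorem~3 of \cite{cheng2021tackling} via Le~Cam's two-point method, with the only modification being the factor of $N$ in the KL divergence from the $N$ independent copies, and omits further details. Your plan spells out precisely this argument---the reduction to homoscedastic $\sigma_{\min}^2$ noise, the planar rotations of the eigenbasis to construct the two hypotheses, the separate treatment of the first-order and second-order terms, and the $N^{-1/2}$ scaling from the KL budget---so it is a faithful and more explicit version of what the paper sketches.
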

The proof largely follows that of Theorem 3 in \cite{cheng2021tackling}, which uses Le Cam's two-point method. 
The main difference is that we have $N$ independent observed matrices, resulting in a factor of $N$ in the upper bound of the KL-divergence between the two hypotheses, which in turn leads to a factor of $1/\sqrt{N}$ in the lower bound.
We omit the proof for the sake of brevity.
Comparing Equation~\eqref{eq:improved:linear-form-bound} with the rate in Theorem~\ref{thm:multiple:linear-form:minimax}, we see that the estimator in Equation~\eqref{eq:linear-form:est} is optimal up to a logarithmic factor under Gaussian noise.

For any $l \in [r]$, taking $\va = \ve_1, \ve_2, \dots, \ve_n$ in Equation~\eqref{eq:improved:linear-form-bound}, the following corollary follows immediately from Theorems~\ref{thm:improved:linear-form-bound} and~\ref{thm:eigenvec:l2}.
\begin{corollary}\label{cor:single:eigenspace}
Suppose that $\kappa r = O(1)$.
Under the same assumptions in Theorem~\ref{thm:eigenvec:l2}, for any $l \in [r]$, the estimation error of the improved estimator in Equation~\eqref{eq:linear-form:est} is bounded by 
\begin{equation} \label{eq:improved:infty:eigenvec:bound}
\min \|\vuhat_{l} \pm \vustar_l\|_{\infty}
\lesssim 
\sigma \left\{\frac{1}{\lambda_{\min }^{\star}} 
    + \frac{\sqrt{\mub/n}}{\delta^\star_l}\right\} \sqrt{\log^7 n}
+ \frac{\sigma^2 \log^7 n}{\left(\delta^\star_{l}\right)^2}
	\sqrt{\frac{\mub}{n}}
\end{equation}
with probability at least $1 - O(n^{-16})$, where $\vuhat_l$ is given by 
\begin{equation}\label{eq:vuhat:def}
    \uhat_{l,i} := \sgn{u_{l, i}} \left|\uhat_{\ve_i, l}\right|, \quad i \in [n]. 
\end{equation}

More generally, writing
\begin{equation}\label{eq:mUhat}
    \mUhat := \left(\vuhat_1, \vuhat_2, \dots, \vuhat_r\right),
\end{equation}
we have that with probability at least $1 - O(n^{-16})$,
\begin{equation}\label{eq:improved:tti:eigenspace:bound}
\min_{\substack{\mS = \diag(s_1, s_2, \dots, s_r), \\s_l \in \{-1,1\}, \; l \in [r]}}
\left\|\mUhat \mS - \mUstar\right\|_{2,\infty} 
\lesssim \sigma \left\{\frac{1}{\lambda_{\min }^{\star}}+ \frac{\sqrt{\mub/n}}{\deltastar_{\min}}\right\} \sqrt{ \log^7 n } .
\end{equation}
\end{corollary}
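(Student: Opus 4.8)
The plan is to obtain Equation~\eqref{eq:improved:infty:eigenvec:bound} coordinate by coordinate from Theorem~\ref{thm:improved:linear-form-bound}, deal separately with the sign recovery built into Equation~\eqref{eq:vuhat:def}, and then assemble the row-wise bound. Fix $l \in [r]$ and apply Theorem~\ref{thm:improved:linear-form-bound} with $\va = \ve_i$ for each $i \in [n]$. Since $\kappa r = O(1)$, for every $k$ we have $|\ve_i^\top\vustar_k| = |u^\star_{k,i}| \le \|\mUstar_{i,\cdot}\|_2 \le \|\mUstar\|_{2,\infty} = \sqrt{r\mub/n} \lesssim \sqrt{\mub/n}$, while $|\lambdastar_l - \lambdastar_k| \ge \deltastar_l$ by Equation~\eqref{eq:eigen-gap:define}. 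Substituting these into Equation~\eqref{eq:improved:linear-form-bound} gives, for every $i$,
\begin{equation*}
\min\bigl|\uhat_{\ve_i,l} \pm u^\star_{l,i}\bigr|
\lesssim \sigma\left\{\frac{1}{\lambdastar_{\min}} + \frac{\sqrt{\mub/n}}{\deltastar_l}\right\}\sqrt{\log^7 n}
+ \frac{\sigma^2\log^7 n}{(\deltastar_l)^2}\sqrt{\frac{\mub}{n}} =: \varepsilon_l .
\end{equation*}
A union bound over the $n$ choices of $\va$, intersected with the high-probability event of Theorem~\ref{thm:eigenvec:l2}, degrades the failure probability from $O(n^{-17})$ to $O(n^{-16})$, matching the statement.

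Next I convert the nonnegative quantities $\uhat_{\ve_i,l}\ge0$ into the signed estimator $\uhat_{l,i} = \sgn{u_{l,i}}\,|\uhat_{\ve_i,l}|$ of Equation~\eqref{eq:vuhat:def}. Fix the global sign so that $\|\vu_l - \vustar_l\|_2 = \min\|\vu_l \pm \vustar_l\|_2$; since $\uhat_{\ve_i,l}\ge0$, the display above reads $\bigl||\uhat_{\ve_i,l}| - |u^\star_{l,i}|\bigr| \le \varepsilon_l$. If $\sgn{u_{l,i}} = \sgn{u^\star_{l,i}}$ then $|\uhat_{l,i} - u^\star_{l,i}| = \bigl||\uhat_{\ve_i,l}| - |u^\star_{l,i}|\bigr| \le \varepsilon_l$; otherwise $|\uhat_{l,i} - u^\star_{l,i}| = \uhat_{\ve_i,l} + |u^\star_{l,i}| \le 2|u^\star_{l,i}| + \varepsilon_l$, so it suffices to show that a sign disagreement forces $|u^\star_{l,i}| \lesssim \varepsilon_l$. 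This is the crux. I would prove it by expanding the raw coordinate via the Neumann series of Equation~\eqref{eq:neumann}: writing
\begin{equation*}
u_{l,i} = \frac{\lambdastar_l}{\lambda_l}\bigl(\vustart_l \vu_l\bigr)u^\star_{l,i} + \rho_{l,i}, \qquad
\rho_{l,i} := \sum_{k\neq l}\frac{\lambdastar_k}{\lambda_l}\bigl(\vustart_k \vu_l\bigr)u^\star_{k,i}
+ \sum_{k=1}^r\frac{\lambdastar_k}{\lambda_l}\bigl(\vustart_k \vu_l\bigr)\,\ve_i^\top\!\sum_{s \ge 1}\frac{\mH^s\vustar_k}{\lambda_l^s},
\end{equation*}
the leading coefficient $\tfrac{\lambdastar_l}{\lambda_l}(\vustart_l\vu_l)$ is strictly positive — $\lambda_l$ shares the sign of $\lambdastar_l$ by Theorem~\ref{thm:eigenvalue:rank-r}, and $\vustart_l\vu_l > 1/2$ by Equation~\eqref{eq:l2:perturb:inner:same} under Assumptions~\ref{assump:ev:lower} and~\ref{assump:eigengap:lower} — while $|\rho_{l,i}| \lesssim \varepsilon_l$: the cross terms are bounded using Equation~\eqref{eq:l2:perturb:inner:diff}, the incoherence bound $|u^\star_{k,i}| \lesssim \sqrt{\mub/n}$ and $|\lambdastar_k|/|\lambda_l| \lesssim \kappa = O(1)$; the Neumann tail is bounded by standard Bernstein estimates on $|\mH_{i,\cdot}\vustar_k|$ together with $\|\mH\|/|\lambda_l| \ll 1$, invoking Equation~\eqref{eq:L-sigma:weaker}. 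Hence if $\sgn{u_{l,i}} \neq \sgn{u^\star_{l,i}}$ then the opposite-sign leading term is dominated by $\rho_{l,i}$, so $|u^\star_{l,i}| \le 2|\rho_{l,i}| \lesssim \varepsilon_l$. Combining the two cases and maximizing over $i$ yields Equation~\eqref{eq:improved:infty:eigenvec:bound}.

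Finally, for Equation~\eqref{eq:improved:tti:eigenspace:bound}, take $\mS = \diag(s_1,\dots,s_r)$ with $s_l$ the per-eigenvector global sign chosen above; this single $\mS$ works for all rows simultaneously. Then for each row $i$,
\begin{equation*}
\bigl\|(\mUhat\mS - \mUstar)_{i,\cdot}\bigr\|_2
= \Bigl(\sum_{l=1}^r (s_l\uhat_{l,i} - u^\star_{l,i})^2\Bigr)^{1/2}
\le \sqrt{r}\,\max_{l\in[r]}\bigl|s_l\uhat_{l,i} - u^\star_{l,i}\bigr|
\lesssim \max_{l\in[r]}\varepsilon_l ,
\end{equation*}
using $r = O(1)$. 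Maximizing over $i$, bounding $\deltastar_l \ge \deltastar_{\min}$, and absorbing the quadratic term $\tfrac{\sigma^2\log^7 n}{(\deltastar_l)^2}\sqrt{\mub/n}$ into $\sigma\,\tfrac{\sqrt{\mub/n}}{\deltastar_l}\sqrt{\log^7 n}$ — valid since Assumption~\ref{assump:eigengap:lower} gives $\deltastar_l \gtrsim \sigma\log^{7/2} n$ — yields Equation~\eqref{eq:improved:tti:eigenspace:bound}. The main obstacle is exactly the sign-consistency step: the raw coordinate $u_{l,i}$ carries a shrinkage bias $(1-\vustart_l\vu_l)u^\star_{l,i}$ of order $\tfrac{\sigma^2 n\log n}{(\lambdastar_{\min})^2}\sqrt{\mub/n}$, which is larger than $\varepsilon_l$, so the crude $\ell_\infty$ bound of Equation~\eqref{eq:naive:entrywise} alone does not pin down $\sgn{u_{l,i}}$ at the refined scale; the Neumann decomposition above is needed precisely because it isolates this bias and shows it is sign-preserving.
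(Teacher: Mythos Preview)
Your proof is correct and follows the same route as the paper. The one difference is the sign-recovery step, which you flag as the crux: you re-derive the first-order expansion $u_{l,i} = \tfrac{\lambdastar_l}{\lambda_l}(\vustart_l\vu_l)\,u^\star_{l,i} + \rho_{l,i}$ from the Neumann series and bound $|\rho_{l,i}|$ by hand, whereas the paper simply cites the already-packaged naive bound Equation~\eqref{eq:naive:linear-form} --- under Assumptions~\ref{assump:ev:lower} and~\ref{assump:eigengap:lower} the bias factor there is at most $\tfrac12$, so $|u_{l,i}-s\,u^\star_{l,i}|\le\tfrac34|u^\star_{l,i}|$ whenever $|u^\star_{l,i}|$ exceeds a constant multiple of $\varepsilon_l$, giving $\sgn{u_{l,i}}=s\,\sgn{u^\star_{l,i}}$ immediately. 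Both arguments rest on the identical observation that the leading coefficient is sign-preserving with residual $O(\varepsilon_l)$; you unpack the expansion (indeed your $\rho_{l,i}$ bound is exactly Equation~\eqref{eq:first:order:bound} specialized to $\va=\ve_i$), the paper cites it.
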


When the incoherence parameter satisfies $\sqrt{\mub/n}/\deltastar_{\min} \asymp \left(\lambdastar_{\min}\right)^{-1}$, we obtain the row-wise bound
\begin{equation*}
\min_{\substack{\mS = \diag(s_1, s_2, \dots, s_r), \\ s_l \in \{-1,1\}, \; l \in [r]}} \left\|\mUhat \mS - \mUstar\right\|_{2,\infty}
\lesssim \frac{\sigma \sqrt{\log^7 n}}{\lambdastar_{\min}}.
\end{equation*}
This rate matches the known minimax lower-bound up to logarithmic factors, provided that the eigengap is sufficiently large.
See, for example, Remark 7 in \citet{agterberg2024estimating}.

\begin{remark}
When estimating the linear form $\va^\top \vustar_l$ from a single observation $\mM = \mMstar + \mW$, where $\mW$ is a symmetric noise matrix, our analysis reveals that the term $\va^\top \mW \vustar_l$ can introduce a non-negligible bias. 
Due to the heteroscedasticity of the noise, removing this bias without additional samples is challenging.
A similar issue arises when estimating the eigenspace $\mUstar$: the term $\mH^\top \mH$ introduces systematic bias that is difficult to correct in the single-sample setting. 
One possible remedy might involve diagonal deletion techniques, as explored in \citet{zhang2022heteroskedastic}.
In contrast, our goal is to demonstrate that with multiple independent network copies, one can construct estimators that match the minimax optimal rate using conceptually simpler methods and more straightforward analysis.
\end{remark}

If additional independent samples are available, the eigenspace can be estimated more accurately under weaker eigengap conditions.
Suppose that we observe two extra independent copies, $\mM^{(1)}$ and $\mM^{(2)}$, each generated as
\begin{equation*}
    \mM^{(i)} = \mMstar + \mW_{i}, \quad \text{for } i=1,2. 
\end{equation*}
Here, $\mW_i$ are symmetric noise matrices satisfying Assumption~\ref{assump:noise} and Equation~\eqref{eq:L-sigma}. 
This could be obtained from having multiple copies from either $\calG_0$ or $\calG_1$.
Define the matrix
\begin{equation} \label{eq:G:defin}
    \mG := \left(\mLambda^{-1} \mU^\top \mM^{(1)} \mM^{(2)} \mU \mLambda^{-1}\right)^{-1} 
\end{equation}
and its symmetrized version
\begin{equation} \label{eq:Gsymm:def}
    \mG_{\symm} := \frac{1}{2}\left(\mG + \mG^\top\right). 
\end{equation}
Let the spectral decomposition of $\mG_{\symm}$ be
\begin{equation} \label{eq:Gsymm:spec:def}
    \mG_{\symm} = \mGammahat_{\symm} \mSighat^{-2}_{\symm} \mGammahat^\top_{\symm}
\end{equation}
and define the inverse square root matrix
\begin{equation} \label{eq:Psihat:def}
\mPsihat := \mGammahat_{\symm} \mSighat^{-1}_{\symm} \mGammahat^\top_{\symm}.
\end{equation}
We then obtain the following result, the proof of which is provided in Appendix~\ref{sec:proof:thm:eigenspace:l2infty}.

\begin{theorem} \label{thm:eigenspace:l2infty}
Under the same assumptions as Theorem~\ref{thm:eigenvec:l2}, the $\ell_{2,\infty}$ estimation error of the corrected estimator $\mU \mPsihat$ is bounded by
\begin{equation}\label{eq:eigenspace:l2infty}
    \min_{\mO \in \bbO_r} \left\|\mU \mPsihat - \mUstar \mO\right\|_{2,\infty} \lesssim \frac{\sigma \kappa r \sqrt{\log^7 n}}{\lambdastar_{\min}}
\end{equation}
with probability at least $1 - O(n^{-16})$.
\end{theorem}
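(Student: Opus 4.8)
The plan is to exhibit $\mPsihat$ as, to leading order, the inverse symmetric square root of the Gram matrix of the natural alignment between $\mU$ and $\mUstar$, and to show that right-multiplying $\mU$ by it simultaneously orthonormalizes the leading term of the Neumann expansion and removes the residual misalignment caused by heteroscedastic noise. Write $\mathbf{R} := \mLambdastar\,(\mUstart\mU)\,\mLambda^{-1} \in \R^{r\times r}$, so that the expansion in Equation~\eqref{eq:neumann} reads $\mU = \mUstar\mathbf{R} + \mathbf{E}$, where the residual $\mathbf{E}_{\cdot,l} = \sum_{j=1}^r \mathbf{R}_{jl}\sum_{s\ge 1}\lambda_l^{-s}\mH^s\vustar_j$ collects all terms of order at least one in $\mH$. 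Set $\mO := \mathbf{R}(\mathbf{R}^\top\mathbf{R})^{-1/2}$, which lies in $\bbO_r$, and decompose
\begin{equation*}
\mU\mPsihat - \mUstar\mO = \mathbf{E}\,\mPsihat + \mUstar\mathbf{R}\bigl(\mPsihat - (\mathbf{R}^\top\mathbf{R})^{-1/2}\bigr),
\end{equation*}
so that $\|\mU\mPsihat - \mUstar\mO\|_{2,\infty} \le \|\mathbf{E}\|_{2,\infty}\|\mPsihat\| + \|\mUstar\|_{2,\infty}\|\mathbf{R}\|\,\|\mPsihat - (\mathbf{R}^\top\mathbf{R})^{-1/2}\|$. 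It then remains to control these four factors on the high-probability event of Theorem~\ref{thm:eigenvec:l2}.

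Since $\kappa = O(1)$, Theorem~\ref{thm:eigenvalue:rank-r} gives $\lambda_l \asymp \lambdastar_l$, and Equations~\eqref{eq:l2:perturb:inner:diff}--\eqref{eq:l2:perturb:inner:same} place $\mUstart\mU$ within $o(1)$ of a diagonal sign matrix; hence $\|\mathbf{R}\| = O(1)$, $\sigma_{\min}(\mathbf{R}) = \Omega(1)$ and $\|\mathbf{R}^\top\mathbf{R} - \mI_r\| = o(1)$, so $(\mathbf{R}^\top\mathbf{R})^{-1/2}$ is well-defined with $O(1)$ norm, and the same will hold for $\mPsihat$ once it is shown to be close to it. The $\ell_{2,\infty}$ bound on $\mathbf{E} = \mU - \mUstar\mathbf{R}$ is the crux of the eigengap-free statement: the first-order term $\lambda_l^{-1}\mH\,\mUstar\mathbf{R}_{\cdot,l}$ is bounded row by row via Bernstein's inequality, using $\|\mUstar\mathbf{R}_{\cdot,l}\|_2 = O(1)$, $\|\mUstar\mathbf{R}_{\cdot,l}\|_\infty \lesssim \sqrt{\mub r/n}$, and Equation~\eqref{eq:L-sigma:weaker} to render the boundedness contribution lower order, giving $O(\sigma\sqrt{r\log n}/\lambdastar_{\min})$; the higher-order terms $\sum_{s\ge 2}\lambda_l^{-s}\mH^s\vustar_j$ are handled by the leave-one-out machinery behind Theorem~\ref{thm:yan:main} and the proof of Theorem~\ref{thm:eigenvec:l2}, contributing at most $O(\sigma\kappa r\sqrt{\log^7 n}/\lambdastar_{\min})$. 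The point is that the eigengap-dependent terms appearing in Equation~\eqref{eq:naive:entrywise} are precisely the off-diagonal contributions of $\mathbf{R}$, which are absorbed once we subtract $\mUstar\mathbf{R}$ rather than $\pm\mUstar$, so $\|\mathbf{E}\|_{2,\infty} \lesssim \sigma\kappa r\sqrt{\log^7 n}/\lambdastar_{\min}$, free of eigengaps.

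For $\|\mPsihat - (\mathbf{R}^\top\mathbf{R})^{-1/2}\|$, note that $\mM^{(1)}$ and $\mM^{(2)}$ are independent of each other and of $\mM$ (by the sample splitting of Remark~\ref{rem:rearrangement}), so $\E[\mM^{(1)}\mM^{(2)}\mid\mM] = (\mMstar)^2$ with \emph{no} second-moment bias term; since $\mLambda^{-1}\mU^\top(\mMstar)^2\mU\mLambda^{-1} = \mathbf{R}^\top\mathbf{R}$, this gives $\mG^{-1} - \mathbf{R}^\top\mathbf{R} = \mLambda^{-1}\mU^\top(\mMstar\mW_2 + \mW_1\mMstar + \mW_1\mW_2)\mU\mLambda^{-1}$. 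Conditioning on $\mM$ (hence on $\mU$) and, for the last term, on $\mW_1$ as well, each $r\times r$ block is a linear form in a noise matrix and is bounded by an $\epsilon$-net argument over $\bbS^{r-1}\times\bbS^{r-1}$: $\|\mU^\top\mMstar\mW_2\mU\| \le \|\mLambdastar\|\,\|\mUstart\mW_2\mU\| \lesssim \kappa\lambdastar_{\min}\sigma\sqrt{r\log n}$ (and symmetrically for the $\mW_1\mMstar$ term), while $\|\mU^\top\mW_1\mW_2\mU\| = \|(\mW_1\mU)^\top(\mW_2\mU)\| \lesssim \sigma^2\sqrt{n\log n}$ --- here conditioning on $\mW_1$ first makes the relevant sub-Gaussian variance $\sigma^2\|\mW_1\mU\vx\|_2^2 \lesssim \sigma^4 n$, which avoids the lossy $\|\mW_1\|\,\|\mW_2\| \lesssim \sigma^2 n$ bound. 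Dividing by $(\lambdastar_{\min})^2$, using that $\|\mG - \mG^\top\|$ is of the same order so that symmetrization is harmless, and invoking Lipschitz continuity of $X\mapsto X^{-1}$ and $X\mapsto X^{1/2}$ on a neighbourhood of $\mI_r$ (legitimate since both $\mG_\symm$ and $(\mathbf{R}^\top\mathbf{R})^{-1}$ are $\mI_r + o(1)$, and by Equations~\eqref{eq:Gsymm:spec:def}--\eqref{eq:Psihat:def} one has $\mPsihat = \mG_\symm^{1/2}$), yields $\|\mPsihat - (\mathbf{R}^\top\mathbf{R})^{-1/2}\| \lesssim \kappa\sigma\sqrt{r\log n}/\lambdastar_{\min} + \sigma^2\sqrt{n\log n}/(\lambdastar_{\min})^2$. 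Substituting the four bounds into the decomposition, using $\|\mUstar\|_{2,\infty} \le 1$ and $\mub \le n/r$, and absorbing the $\sigma^2\sqrt{n\log n}/(\lambdastar_{\min})^2$ term via $\lambdastar_{\min} \gtrsim \sigma\sqrt{n\log^3 n}$ (Assumption~\ref{assump:ev:lower}), gives the claimed rate $\sigma\kappa r\sqrt{\log^7 n}/\lambdastar_{\min}$; the probability $1 - O(n^{-16})$ follows from a union bound over the events of Theorem~\ref{thm:eigenvec:l2} and the net arguments.

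The main obstacle lies in the two places where a naive estimate is too weak. First, the $\ell_{2,\infty}$ control of $\mathbf{E} = \mU - \mUstar\mathbf{R}$ must be eigengap-free, so one cannot simply quote Equation~\eqref{eq:naive:entrywise}; instead one must track the Neumann expansion to the order at which the cross-eigenvector terms get absorbed into $\mathbf{R}$, and use leave-one-out bounds for the higher-order terms $\mH^s\vustar_j$, carefully accounting for the interplay between the magnitude bound $L$ and the incoherence $\mub$ through Equation~\eqref{eq:L-sigma:weaker}. Second, the bound on $\|\mU^\top\mW_1\mW_2\mU\|$ must exploit the independence of $\mW_1$ and $\mW_2$ via the double-conditioning/net argument above; the crude bound $\|\mW_1\|\,\|\mW_2\| \lesssim \sigma^2 n$ would leave a $\sigma^2 n/(\lambdastar_{\min})^2$ term that, multiplied by $\|\mUstar\|_{2,\infty}$, is not of the target order unless $\mub$ is polylogarithmic, so the sharper $\sigma^2\sqrt{n\log n}$ bound is essential.
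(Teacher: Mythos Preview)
Your proposal is correct and mirrors the paper's proof: the same alignment matrix $\mathbf{R}$ (the paper calls it $\mQ$ with SVD $\mR\mSig\mGamma^\top$, so that your $\mO = \mathbf{R}(\mathbf{R}^\top\mathbf{R})^{-1/2}$ is their $\mR\mGamma^\top$), the same identification of $\mG^{-1}$ with $\mathbf{R}^\top\mathbf{R}$ via independence of $\mW_1,\mW_2$ from $\mH$, the same eigengap-free control of $\|\mathbf{E}\|_{2,\infty}$ through Lemma~\ref{lem:inf:series:bound}, and the same final two-term decomposition; the paper uses matrix Bernstein where you invoke $\epsilon$-nets and keeps $\|\mUstar\|_{2,\infty}=\sqrt{\mub r/n}$ where you use the cruder $\le 1$, but neither difference affects the rate. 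One small wrinkle worth fixing: since $\mathbf{R}_{\cdot,l}$ depends on $\mH$, you cannot apply Bernstein to $\ve_i^\top\mH\,\mUstar\mathbf{R}_{\cdot,l}$ treating the second factor as fixed---the paper handles this by factoring out $\sum_j|\mathbf{R}_{jl}|\lesssim\kappa\sqrt{r}$ and bounding $\sum_{s\ge 1}|\lambda_l|^{-s}|\ve_i^\top\mH^s\vustar_j|$ for each deterministic $\vustar_j$ via Lemma~\ref{lem:inf:series:bound}, a cosmetic adjustment that gives exactly your claimed rate.
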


Notably, by leveraging two additional independent networks and a simple correction step, the estimator $\mU \mPsihat$ achieves a near-optimal $\ell_{2,\infty}$ error rate.
This result holds under significantly milder eigengap conditions compared to single-copy estimators, highlighting the benefit of using additional copies to perform bias-correction in recovering low-rank eigenspace. 


\subsection{Entrywise Estimation}

\label{sec:entrywise:estimation}

We now conclude by addressing the entry-wise estimation of the low-rank signal matrix $\mMstar$.
Using the best rank-$r$ approximation of $\mY_0$, the spectral estimator $\mMhat_{\spec}$ yields an entry-wise estimation error of the form
\begin{equation}
\label{eq:Mstar:spec}
    \left\|\mMhat_{\spec} - \mMstar\right\|_{\infty} \lesssim \sigma \kappa^2 \mub r \sqrt{\frac{\log n}{n}} 
\end{equation}
with probability at least $1 - O(n^{-5})$ (See Corollary 4.3 in \cite{chen2021spectral}). 
This bound has a suboptimal dependence on the incoherence parameter $\mub$. 
We will give more comments on this in Section~\ref{sec:experiments}. 
By using the improved eigenvector estimator $\mUhat$ from Corollary~\ref{cor:single:eigenspace} along with the empirical eigenvalues of $\mM$,
we obtain the following bound on the entry-wise estimation error.
\begin{corollary}\label{cor:single:entrywise}
Let $\mMhat := \mUhat \mLambda \mUhat^\top$, where $\mUhat$ is given by Equation~\eqref{eq:mUhat} and $\mLambda$ is the diagonal matrix of eigenvalues of the observed matrix $\mM$ given by Equation~\eqref{eq:M:asym}.
Suppose that $\kappa r = O(1)$.
Under the same assumptions as Theorem~\ref{thm:eigenvec:l2}, it holds with probability at least $1 - O(n^{-16})$ that
\begin{equation*}
\|\mMhat - \mMstar\|_{\infty}
\lesssim \sigma \sqrt{\frac{\mub \log^{7} n}{n}}
+ \frac{\sigma \lambdastar_{\max} \sqrt{\log^7 n}}{\delta^\star_{\min}}
	\frac{\mub}{n}.
\end{equation*}
\end{corollary}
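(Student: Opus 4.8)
The plan is to write $\mMhat - \mMstar = \mUhat \mLambda \mUhat^\top - \mUstar \mLambdastar \mUstart$ and control the entrywise error by expanding around the true factors. First I would fix an optimal sign matrix $\mS = \diag(s_1,\dots,s_r)$ achieving the minimum in Corollary~\ref{cor:single:eigenspace}, and write $\mUhat\mS = \mUstar + \mE$ where $\|\mE\|_{2,\infty}$ is bounded by the right-hand side of Equation~\eqref{eq:improved:tti:eigenspace:bound}. Since $\mS$ is a sign diagonal, $\mUhat\mLambda\mUhat^\top = (\mUhat\mS)(\mS\mLambda\mS)(\mUhat\mS)^\top$, and $\mS\mLambda\mS$ is still the diagonal matrix of eigenvalues of $\mM$. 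So it suffices to decompose
\begin{equation*}
\mMhat - \mMstar = \underbrace{\mUstar(\mLambda - \mLambdastar)\mUstart}_{\text{(I)}} + \underbrace{\mE \mLambda (\mUhat\mS)^\top + \mUstar \mLambda \mE^\top}_{\text{(II)}},
\end{equation*}
after replacing $\mS\mLambda\mS$ by $\mLambda$ for notational ease, and where in (II) the second summand should really read $\mUstar\mLambda\mE^\top$ with a compensating $-\mUstar(\mLambda-\mLambdastar)\mUstart$ absorbed into the bookkeeping; more cleanly, write $\mMhat = (\mUstar+\mE)\mLambda(\mUstar+\mE)^\top$ and expand into four terms $\mUstar\mLambda\mUstart$, $\mUstar\mLambda\mE^\top$, $\mE\mLambda\mUstart$, $\mE\mLambda\mE^\top$, then compare the first term with $\mMstar = \mUstar\mLambdastar\mUstart$.

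Next I would bound each piece in $\ell_\infty$. For term (I) I need $\|\mLambda - \mLambdastar\|$; this is supplied by Theorem~\ref{thm:eigenvalue:rank-r} in Appendix~\ref{sec:improved} (cited in the excerpt), which gives $|\lambda_l - \lambdastar_l| \lesssim$ something like $\sigma\sqrt{n}$ up to log factors, or more precisely a bound of order $\sigma\sqrt{n\log n}$; then $\|\mUstar(\mLambda-\mLambdastar)\mUstart\|_\infty \le \|\mUstar\|_{2,\infty}^2 \|\mLambda-\mLambdastar\| = \frac{\mub r}{n}\|\mLambda - \mLambdastar\|$, which under $\kappa r = O(1)$ contributes a term of order $\sigma\sqrt{\mub^2\log n/n^2}\cdot\sqrt{n} = \sigma \mub\sqrt{\log n}/\sqrt{n}$ — wait, this needs care, since $\mub/n \cdot \sigma\sqrt{n} = \sigma\mub/\sqrt{n}$, which is dominated by the claimed $\sigma\sqrt{\mub\log^7 n/n}$ only when $\mub \lesssim \log^7 n$; more likely one uses the sharper eigenvalue bound and the fact that $\|\mUstar(\mLambda-\mLambdastar)\mUstart\|_\infty$ can be written entrywise as $\sum_l u^\star_{l,i}u^\star_{l,k}(\lambda_l-\lambdastar_l)$ and bounded by $r\|\mUstar\|_{2,\infty}^2 \max_l|\lambda_l-\lambdastar_l|$; I will need to check that the eigenvalue perturbation is actually of order $\sigma\sqrt{n}/\text{(something)}$ rather than $\sigma\sqrt{n}$ so that this term lands inside the stated bound. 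For the cross terms $\mUstar\mLambda\mE^\top$ and $\mE\mLambda\mUstart$, the $(i,k)$ entry is $\mUstar_{i,\cdot}\mLambda\mE_{k,\cdot}^\top$, bounded by $\|\mUstar_{i,\cdot}\|_2 \|\mLambda\| \|\mE_{k,\cdot}\|_2 \le \sqrt{\mub r/n}\cdot\lambdastar_{\max}\cdot\|\mE\|_{2,\infty}$; plugging in $\|\mE\|_{2,\infty} \lesssim \sigma\{1/\lambdastar_{\min} + \sqrt{\mub/n}/\deltastar_{\min}\}\sqrt{\log^7 n}$ and using $\kappa r = O(1)$ (so $\lambdastar_{\max} \asymp \lambdastar_{\min}$) gives $\sqrt{\mub/n}\cdot\sigma\sqrt{\log^7 n} + \frac{\sigma\lambdastar_{\max}\sqrt{\log^7 n}}{\deltastar_{\min}}\cdot\frac{\mub}{n}$, which is exactly the two terms in the claimed bound. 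Finally $\mE\mLambda\mE^\top$ has $(i,k)$ entry at most $\|\mE\|_{2,\infty}^2\|\mLambda\|$, which is of strictly higher order than the cross terms (it has an extra factor $\|\mE\|_{2,\infty}/\sqrt{\mub r/n}$ that is $o(1)$ under the eigengap and signal-strength assumptions), so it is absorbed.

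The main obstacle I anticipate is term (I): getting the eigenvalue perturbation $\|\mLambda - \mLambdastar\|$ sharp enough and correctly tracking its interaction with the incoherence, since a naive bound $|\lambda_l - \lambdastar_l|\lesssim \|\mH\|\lesssim \sigma\sqrt{n}$ combined with $\|\mUstar\|_{2,\infty}^2 = \mub r/n$ yields $\sigma\mub r/\sqrt{n}$, which is worse than $\sigma\sqrt{\mub\log^7 n/n}$ whenever $\mub \gg \log^7 n / r^2$. Resolving this requires either invoking a refined eigenvalue estimate from Theorem~\ref{thm:eigenvalue:rank-r} (e.g.\ $|\lambda_l - \lambdastar_l| \lesssim \sigma\sqrt{\log n} + \sigma^2 n/\lambdastar_{\min}$, the Weyl-plus-second-order bound, whose leading part is only $\sigma\sqrt{\log n}$), or noting that under Assumption~\ref{assump:ev:lower} the second-order term $\sigma^2 n/\lambdastar_{\min}$ is also small. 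With $|\lambda_l - \lambdastar_l| \lesssim \sigma\sqrt{\log n}$ (up to the second-order correction), term (I) contributes $\lesssim r\cdot\frac{\mub r}{n}\cdot\sigma\sqrt{\log n}$ — still potentially $\mub/\sqrt{n}$-sized but now with only $\log$ factors rather than $\sqrt n$, hence dominated by $\sigma\sqrt{\mub\log^7 n/n}$ precisely when $\mub \lesssim n$, which always holds. So the key technical care is in quoting the correct eigenvalue perturbation bound and verifying the second-order term is controlled; the rest is bookkeeping with the triangle inequality and the bounds already established in Corollary~\ref{cor:single:eigenspace} and Theorem~\ref{thm:eigenvec:l2}. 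I would also double-check that replacing $\mUhat$ by $\mUhat\mS$ and $\mLambda$ by $\mS\mLambda\mS$ leaves $\mMhat$ unchanged, which is immediate since $\mS^2 = \mI_r$.
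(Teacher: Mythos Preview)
Your approach is correct and matches the paper's proof: fix the sign matrix $\mS$ from Corollary~\ref{cor:single:eigenspace}, expand $\mMhat - \mMstar$ via the triangle inequality into an eigenvalue-difference piece $\|\mUhat\|_{2,\infty}^2\|\mLambda-\mLambdastar\|$ and a cross term $\|\mUhat\mS - \mUstar\|_{2,\infty}\|\mLambdastar\|\|\mUstar\|_{2,\infty}$, then apply Corollary~\ref{cor:single:eigenspace} and Theorem~\ref{thm:eigenvalue:rank-r}. Your worry about term (I) is resolved exactly as you anticipate: Theorem~\ref{thm:eigenvalue:rank-r} gives $|\lambda_l - \lambdastar_l| \lesssim \kappa r^2\sigma\log^{7/2} n$ (not $\sigma\sqrt{n}$), so under $\kappa r = O(1)$ the eigenvalue-difference contribution is $\frac{\mub}{n}\cdot\sigma\log^{7/2}n$, which is absorbed by the second displayed term since $\lambdastar_{\max}/\deltastar_{\min}\ge 1$.
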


The proof is stated in Appendix~\ref{sec:proof:cor:single:entrywise}. 
While Corollary~\ref{cor:single:entrywise} shows that accurate entry-wise estimation is achievable, it still requires a sufficiently large eigengap to control the second term in the upper bound.
To further improve the rate, especially under weaker eigengap conditions, we make use of the corrected estimator $\mU \mPsihat$ from Theorem~\ref{thm:eigenspace:l2infty}, which leverages additional independent copies.
The following result provides a sharper entry-wise bound for this improved estimator, provided that such additional independent networks are available.
A proof is given in Appendix~\ref{sec:proof:thm:improved:entrywise}.

\begin{theorem}\label{thm:improved:entrywise}
Let $\mPsihat$ be as defined in Equation~\eqref{eq:Psihat:def}. 
Under the same assumptions as Theorem~\ref{thm:eigenspace:l2infty}, it holds with probability at least $1 - O(n^{-16})$ that
\begin{equation*}
\left\|\mU\mPsihat \mLambda \mPsihat^\top \mU^\top - \mMstar\right\|_{\infty}
\lesssim \sigma \kappa^2 r^2 \sqrt{\frac{\mub \log^7 n}{n}}
	+ \frac{\mub}{n} \frac{ \deltastar_{\max} }{\delta_{\min}^{\star}}
	\sigma \kappa^3 r^3 \sqrt{\log^7 n}.
\end{equation*}
\end{theorem}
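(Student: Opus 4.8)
The plan is to reduce the entry-wise analysis of the rank-$r$ matrix $\mU\mPsihat\mLambda\mPsihat^\top\mU^\top$ to the near-optimal two-to-infinity eigenspace bound already proved in Theorem~\ref{thm:eigenspace:l2infty}, together with a standard eigenvalue perturbation argument. Since $\mPsihat$ is symmetric by construction (Equation~\eqref{eq:Psihat:def}), the estimator equals $\mMhat := (\mU\mPsihat)\,\mLambda\,(\mU\mPsihat)^\top$, so it suffices to understand the corrected basis $\mU\mPsihat$ and the empirical eigenvalues $\mLambda$ of $\mM$ in Equation~\eqref{eq:M:asym}. I assemble three ingredients, all available from the excerpt. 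First, Theorem~\ref{thm:eigenspace:l2infty} furnishes an orthogonal $\mO\in\bbO_r$ with $\|\mathbf{R}\|_{2,\infty}\lesssim \xi := \sigma\kappa r\sqrt{\log^7 n}/\lambdastar_{\min}$, where $\mathbf{R} := \mU\mPsihat - \mUstar\mO$. Second, the $\ell_2$ bounds of Theorem~\ref{thm:eigenvec:l2} (Equation~\eqref{eq:l2:perturb:norm}) and the construction of $\mPsihat$ control the spectral norm $\|\mathbf{R}\|$, and Theorem~\ref{thm:eigenvalue:rank-r} gives $\|\mLambda-\mLambdastar\|\lesssim\sigma\sqrt{\log n}$ after matching eigenvalue order, so in particular $\|\mLambda\|\lesssim\lambdastar_{\max}$ under Assumption~\ref{assump:ev:lower}. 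Third, under the eigengap condition of Assumption~\ref{assump:eigengap:lower} the columns of $\mO$ are individually identified: the per-eigenvector bound (Equation~\eqref{eq:l2:perturb:inner:diff}) shows $\mO$ is $\Otilde(\sigma\kappa^2 r/\deltastar_{\min})$-close in spectral norm to a diagonal sign matrix, which I absorb into the (intrinsically unidentifiable) column signs of $\mUstar$, so that I may take $\|\mO-\mI\|\lesssim \Otilde(\sigma\kappa^2 r/\deltastar_{\min})$.

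With these in hand I decompose
\begin{equation*}
\mMhat - \mMstar
= \underbrace{\mUstar(\mO\mLambda\mO^\top-\mLambdastar)\mUstart}_{T_1}
+ \underbrace{\mathbf{R}\,\mLambda\,\mO^\top\mUstart}_{T_2}
+ \underbrace{\mUstar\mO\,\mLambda\,\mathbf{R}^\top}_{T_3}
+ \underbrace{\mathbf{R}\,\mLambda\,\mathbf{R}^\top}_{T_4}.
\end{equation*}
For $T_2$ and $T_3$, the $(i,j)$ entry is at most $\|\mathbf{R}_{i,\cdot}\|_2\,\|\mLambda\|\,\|\mUstar_{j,\cdot}\|_2\le\|\mathbf{R}\|_{2,\infty}\,\|\mLambda\|\,\|\mUstar\|_{2,\infty}\lesssim \xi\cdot\lambdastar_{\max}\cdot\sqrt{\mub r/n}$, which simplifies to a quantity dominated by the first term $\sigma\kappa^2 r^2\sqrt{\mub\log^7 n/n}$ of the claimed bound. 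For $T_1$, the $(i,j)$ entry is bounded by $\|\mUstar_{i,\cdot}\|_2\|\mUstar_{j,\cdot}\|_2\,\|\mO\mLambda\mO^\top-\mLambdastar\|\le (\mub r/n)\bigl(\|\mLambda-\mLambdastar\| + \Otilde(\|\mO-\mI\|\,\lambdastar_{\max})\bigr)$; expanding $\mO\mLambda\mO^\top-\mLambdastar$ and tracking which eigengap controls each off-diagonal block (the $(k,l)$ contribution scales like $\sigma\lambdastar_l/|\lambdastar_k-\lambdastar_l|$) produces the second term $(\mub/n)(\deltastar_{\max}/\deltastar_{\min})\,\sigma\kappa^3 r^3\sqrt{\log^7 n}$. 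So $T_1+T_2+T_3$ already give both terms of the stated bound.

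The quadratic remainder $T_4$ is the main obstacle. A black-box estimate $\|T_4\|_\infty\le\|\mathbf{R}\|_{2,\infty}^2\|\mLambda\|\lesssim \xi^2\lambdastar_{\max} = \sigma^2\kappa^3 r^2\log^7 n/\lambdastar_{\min}$, combined with $\lambdastar_{\min}\gtrsim\sigma\sqrt{n\log^3 n}$ from Assumption~\ref{assump:ev:lower}, is already absorbed into the first claimed term whenever the incoherence is not too small (e.g.\ $\mub\gtrsim\kappa^2\log^4 n$) — which is precisely the regime where this entry-wise bound improves on the spectral rate in Equation~\eqref{eq:Mstar:spec}. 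To close the remaining small-incoherence regime, where the crude bound loses a few logarithmic factors, I would reopen the row-wise expansion of $\mU\mPsihat$ from the proof of Theorem~\ref{thm:eigenspace:l2infty}: via the Neumann expansion~\eqref{eq:neumann}, the leading fluctuation of the $i$-th coordinate of the corrected $l$-th eigenvector is $\approx (\lambdastar_l)^{-1}[\mH]_{i,\cdot}\vustar_l$, which depends on row $i$ of $\mH$ only, so that for $i\ne j$ the rows $\mathbf{R}_{i,\cdot}$ and $\mathbf{R}_{j,\cdot}$ are approximately independent and mean zero. Then $[T_4]_{ij}\approx\sum_l \lambda_l (\lambdastar_l)^{-2}([\mH]_{i,\cdot}\vustar_l)([\mH]_{j,\cdot}\vustar_l)$, and since each factor is sub-exponential with variance $\le\sigma^2$, a Bernstein bound with a union over $i,j,l$ gives $|[T_4]_{ij}|\lesssim \Otilde(\kappa r\sigma^2/\lambdastar_{\min})$, comfortably below the target; the diagonal entries $[T_4]_{ii}$ obey the same bound, and the correlated higher-order pieces are controlled directly by $\|\mathbf{R}\|_{2,\infty}$. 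Collecting $T_1,\dots,T_4$ yields the claimed rate. I expect the delicate steps to be (i) this refined treatment of $T_4$, i.e.\ extracting the first-order expansion and applying concentration to the bilinear forms, and (ii) the bookkeeping of which eigengap appears in $T_1$; everything else is routine given Theorems~\ref{thm:eigenspace:l2infty}, \ref{thm:eigenvec:l2}, and~\ref{thm:eigenvalue:rank-r}.
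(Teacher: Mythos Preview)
Your decomposition and overall strategy match the paper's proof almost exactly: the paper telescopes $\mMhat-\mMstar$ through $\mUstar\mO$ with $\mO=\mR\mGamma^\top$ (the polar factor of $\mQ=\mLambdastar\mUstart\mU\mLambda^{-1}$), bounds the cross terms by $\|\mU\mPsihat-\mUstar\mO\|_{2,\infty}\cdot\lambdastar_{\max}\cdot(\|\mU\mPsihat\|_{2,\infty}+\|\mUstar\|_{2,\infty})$ using Theorem~\ref{thm:eigenspace:l2infty}, and isolates the same ``commutator'' term $\mUstar(\mO\mLambdastar\mO^\top-\mLambdastar)\mUstart$ that you call $T_1$.

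There is, however, a genuine gap in your handling of $T_1$. The crude estimate $\|\mO\mLambda\mO^\top-\mLambdastar\|\lesssim\|\mLambda-\mLambdastar\|+\|\mO-\mI\|\,\lambdastar_{\max}$ yields a factor $\lambdastar_{\max}/\deltastar_{\min}$, not $\deltastar_{\max}/\deltastar_{\min}$, and your proposed refinement---that the $(k,l)$ contribution scales like $\sigma\lambdastar_l/|\lambdastar_k-\lambdastar_l|$---still leaves $\lambdastar_{\max}$ in the numerator after summing. What is missing is a cancellation: since $\mO$ is orthogonal, $\|\mO\mLambdastar\mO^\top-\mLambdastar\|=\|\mO\mLambdastar-\mLambdastar\mO\|$, and this commutator has $(i,j)$ entry exactly $O_{ij}(\lambdastar_j-\lambdastar_i)$, so eigenvalue \emph{differences} appear rather than eigenvalues themselves. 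The paper then controls the off-diagonal of $\mO$ by polar-factor perturbation: it shows $\|\mQ-\diag(\mUstart\mU)\|\lesssim\sigma\kappa^3 r^{3/2}\sqrt{\log^7 n}/\deltastar_{\min}$ (using Equation~\eqref{eq:l2:perturb:inner:diff} and Theorem~\ref{thm:eigenvalue:rank-r}), whence $\|\mO-\mS\|$ obeys the same bound for a diagonal sign matrix $\mS$, and combines this with $\|\mO\mLambdastar-\mLambdastar\mO\|\le\sqrt{r}\,\deltastar_{\max}\,\|\mO-\mS\|$ to produce the second term of the theorem. Without this commutator identity your argument does not reach the stated rate when $\deltastar_{\max}\ll\lambdastar_{\max}$.

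On $T_4$: you are working harder than needed. The paper does not isolate a quadratic remainder; it simply uses $\|\mU\mPsihat\|_{2,\infty}\lesssim\sqrt{\mub r/n}$ (which, under Assumption~\ref{assump:ev:lower} with $\kappa=O(1)$, holds up to the polylogarithmic slack already in the statement) and absorbs what you call $T_4$ into the cross-term bound. Your row-independence refinement via the Neumann expansion is a valid route but unnecessary for the claimed rate.
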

This result shows that incorporating additional independent observations enables near-optimal entry-wise recovery of the low-rank structure, 
even under weak eigengap conditions. 
We suspect that the dependence on the eigengap likely reflects a technical artifact of the proof rather than a fundamental limitation, and we conjecture that a more careful analysis may eliminate this dependence altogether.

Finally, we complement our upper bounds with a matching minimax lower bound for the entry-wise estimation error in the rank-one case.
The proof is given in Appendix~\ref{sec:proof:thm:rank1:linfty:minimax}.

\begin{theorem}\label{thm:rank1:linfty:minimax}
For $1 \leq \mu \leq n$, define the parameter set
\begin{equation}\label{eq:calM:def}
\calM(\mu, \sigma_{\min})
= \left\{\mMstar = \lambdastar \vustar \vustart :
	\frac{1}{2}\sqrt{\frac{\mu}{n}} \leq \|\vustar\|_{\infty}
	\geq 2\sqrt{\frac{\mu}{n}}, \;
	\left|\lambdastar\right| \geq \sigma_{\min} \sqrt{n}\right\}.
\end{equation}
Suppose that the observed matrix $\mM$ is given by Equation~\eqref{eq:M:asym}, where $\mH$ has independent Gaussian entries, with 
\begin{equation*}
    H_{ij} \sim \calN(0, \sigma_{ij}^2)
\end{equation*}
and $0 < \sigma_{\min} \leq \sigma_{ij} \leq \sigma$ for all $i,j \in [n]$. 
We have
\begin{equation*}
\inf_{\mMhat \in \R^{n\times n}} ~\sup_{\mMstar \in \calM(\mu, \sigma_{\min})} 
	~\E \left\|\mMhat - \mMstar\right\|_{\infty}
\geq \frac{\sigma_{\min}}{8}\sqrt{\frac{3\mu}{n}}.
\end{equation*}
\end{theorem}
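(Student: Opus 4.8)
The plan is to use Le Cam's two-point method. The idea is to exhibit two rank-one matrices $\mMstar^{(0)}, \mMstar^{(1)} \in \calM(\mu, \sigma_{\min})$ that differ in a single entry by roughly $\sigma_{\min}\sqrt{\mu/n}$, yet whose induced distributions on the observation $\mM$ are statistically close (bounded KL divergence or total variation). Any estimator $\mMhat$ must then incur an $\ell_\infty$ error of at least half the entrywise gap on at least one of the two instances, and this survives taking the supremum over $\calM(\mu, \sigma_{\min})$ and the infimum over estimators.

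First I would construct the two hypotheses concretely. Take $\vustar^{(0)}$ to be a unit vector with $\|\vustar^{(0)}\|_\infty = \sqrt{\mu/n}$ achieved at coordinate $1$; a natural choice spreads mass $\mu/n$ on the first coordinate and the remaining mass uniformly over enough other coordinates so that the vector has unit norm and lies in the incoherence window $[\tfrac12\sqrt{\mu/n}, 2\sqrt{\mu/n}]$ for $\|\vustar\|_\infty$. For $\vustar^{(1)}$, perturb a single coordinate (say coordinate $2$, which carries a small entry) by an amount $\epsilon$, then renormalize, so that the resulting rank-one matrices $\mMstar^{(k)} = \lambdastar \vustar^{(k)} \vustar^{(k)\top}$ differ in the $(1,2)$ entry by $\Theta(\lambdastar \sqrt{\mu/n}\,\epsilon)$ while still having $|\lambdastar| \ge \sigma_{\min}\sqrt n$. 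I would fix $\lambdastar = \sigma_{\min}\sqrt n$ and choose $\epsilon$ so that the entrywise gap is exactly of order $\sigma_{\min}\sqrt{\mu/n}$; concretely I want $\|\mMstar^{(0)} - \mMstar^{(1)}\|_\infty \ge \tfrac{\sigma_{\min}}{4}\sqrt{3\mu/n}$, matching the claimed bound up to the constant. Then the two-point risk bound $\inf_{\mMhat}\sup \E\|\mMhat - \mMstar\|_\infty \ge \tfrac12 \|\mMstar^{(0)} - \mMstar^{(1)}\|_\infty \cdot (1 - \mathrm{TV})$ gives the result once I show the total variation (or, via Pinsker, the KL divergence) between the two Gaussian observation models is bounded away from one.

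The KL computation is the step that requires care. Since $\mM = \mMstar + \mH$ with independent Gaussian entries $H_{ij}\sim\calN(0,\sigma_{ij}^2)$ and $\sigma_{ij}\ge\sigma_{\min}$, the KL divergence between the two product-Gaussian laws is $\sum_{i\le j} (M^{(0)}_{ij} - M^{(1)}_{ij})^2 / (2\sigma_{ij}^2) \le \tfrac{1}{2\sigma_{\min}^2}\|\mMstar^{(0)} - \mMstar^{(1)}\|_{\F}^2$. So I need to control the \emph{Frobenius} difference, not just the entrywise one. For the rank-one perturbation above, $\|\mMstar^{(0)} - \mMstar^{(1)}\|_{\F} = |\lambdastar| \cdot \|\vustar^{(0)}\vustar^{(0)\top} - \vustar^{(1)}\vustar^{(1)\top}\|_{\F} \lesssim |\lambdastar| \cdot \|\vustar^{(0)} - \vustar^{(1)}\|_2 \asymp \sigma_{\min}\sqrt n \cdot \epsilon$. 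To make KL $= O(1)$ I need $\epsilon = O(1/\sqrt n)$, which then forces the entrywise gap to be $\Theta(\lambdastar\sqrt{\mu/n}\cdot \epsilon) = \Theta(\sigma_{\min}\sqrt n \cdot \sqrt{\mu/n}\cdot n^{-1/2}) = \Theta(\sigma_{\min}\sqrt{\mu}/\sqrt n)$ — exactly the target rate. This is the heart of the argument: the single-coordinate perturbation is ``cheap'' in Frobenius norm (it changes only $O(n)$ entries each by $O(\lambdastar\epsilon\sqrt{\mu/n})$, plus the diagonal-ish correction) but is ``visible'' in the worst entry. The main obstacle, then, is the bookkeeping to verify simultaneously that both $\vustar^{(0)}$ and $\vustar^{(1)}$ remain exactly in $\calM(\mu,\sigma_{\min})$ (unit norm and $\|\cdot\|_\infty \in [\tfrac12\sqrt{\mu/n}, 2\sqrt{\mu/n}]$ after renormalization), that the entrywise gap is achieved at the $(1,2)$ entry and is at least $\tfrac{\sigma_{\min}}{4}\sqrt{3\mu/n}$, and that the KL divergence is at most, say, $\log 4$ so that $\mathrm{TV} \le \sqrt{\tfrac12\mathrm{KL}} < 1$; tuning the constant in $\epsilon$ to land the final constant $1/8$ rather than some other absolute constant is routine but must be done explicitly. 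A clean way to organize this is to first reduce to the case $\mu$ an integer or a convenient fraction thereof, place $\vustar$ supported on $\Theta(n/\mu)$ coordinates with equal magnitudes $\sqrt{\mu/n}$ (so incoherence is tight), and perturb one of these equal coordinates, which keeps all the norm computations symmetric and transparent.
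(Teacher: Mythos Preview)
Your proposal is correct and follows essentially the same two-point Le Cam argument as the paper: bound KL via $\|\mMstar^{(0)}-\mMstar^{(1)}\|_{\F}^2/\sigma_{\min}^2 \le (\lambdastar)^2\|\vu^{(0)}-\vu^{(1)}\|_2^2/\sigma_{\min}^2$, choose the perturbation so this is $O(1)$, and read off the $\ell_\infty$ gap. The only notable difference is in parametrizing the perturbation: rather than perturbing one coordinate and renormalizing, the paper sets $u_1^{(k)}=\sqrt{\mu/n}\cos\theta_k$, $u_2^{(k)}=\sqrt{\mu/n}\sin\theta_k$ with the remaining coordinates shared, so unit norm is preserved exactly and the bookkeeping you flag disappears; the $\ell_\infty$ gap is then witnessed at the $(1,1)$ entry via $\lambdastar\,\tfrac{\mu}{n}|\cos^2\theta_1-\cos^2\theta_0|$, with $\theta_0+\theta_1=\pi/2$ and $\cos(\theta_1-\theta_0)$ tuned to hit the constant.
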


An immediate question is whether access to multiple independent copies of the matrix $\mM$ can lead to a fundamental improvement in the minimax rate for entrywise estimation. 
The following result shows that the benefit of having $N$ independent samples is limited to a standard variance reduction factor of $1 / \sqrt{N}$.
No further gain is possible in terms of dependence on other parameters.
\begin{corollary}\label{cor:more:linfty:minimax}
Suppose that we observe $N \ge 1$ independent matrices $\mM^{(1)}, \mM^{(2)}, \dots, \mM^{(N)}$ from the model specified in Equation~\eqref{eq:M:asym}, where $\{\mH^{(l)}: l \in [N]\}$ has independent Gaussian entries, with 
\begin{equation*}
H^{(l)}_{ij} \sim \calN\left(0, \left(\sigma^{(l)}_{ij}\right)^2\right)
\end{equation*}
and $0 < \sigma_{\min} \leq \sigma^{(l)}_{ij} \leq \sigma$ for all $i,j \in [n]$ and $l \in [N]$. 
Over the set of rank-one signal matrices $\calM(\mu, \sigma_{\min})$ as defined in Equation~\eqref{eq:calM:def}, we have 
\begin{equation*}
\inf_{\mMhat \in \R^{n\times n}} ~\sup_{\mMstar \in \calM(\mu, \sigma_{\min})}
	~\E \left\|\mMhat - \mMstar\right\|_{\infty}
\geq \frac{\sigma_{\min}}{8 \sqrt{N}}\sqrt{\frac{3\mu}{n}}, 
\end{equation*}
where the infimum is over all estimators of $\mMstar$ that take as input the observed matrices $\{\mM^{(l)}\}_{l \in [N]}$.
\end{corollary}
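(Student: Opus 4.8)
The plan is a direct two-point (Le Cam) argument that reduces to the single-copy bound in Theorem~\ref{thm:rank1:linfty:minimax}. Let $\mMstar_0, \mMstar_1 \in \calM(\mu,\sigma_{\min})$ be the pair of rank-one matrices used to prove Theorem~\ref{thm:rank1:linfty:minimax}; write $\mMstar_0 = \lambdastar \vustar_0 \vustart_0$ and recall that $\mMstar_1$ is obtained from $\mMstar_0$ by perturbing the \emph{factor} $\vustar_0$ — e.g.\ a small additive change to its dominant coordinate followed by renormalization — so that $\mMstar_1$ is again rank one. For general $N$ I would simply use the analogous perturbation scaled down by $N^{-1/2}$, calling the resulting rank-one matrix $\mMstar_1^{(N)}$; since shrinking the perturbation only brings the perturbed factor closer to $\vustar_0$, the perturbed factor still has its $\ell_\infty$ norm inside the window $[\tfrac12\sqrt{\mu/n},\,2\sqrt{\mu/n}]$, and its eigenvalue $\lambdastar$ (of magnitude $\sigma_{\min}\sqrt n$) is unchanged, so $\mMstar_0,\mMstar_1^{(N)} \in \calM(\mu,\sigma_{\min})$ from Equation~\eqref{eq:calM:def}. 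Crucially, the perturbation must be applied to $\vustar_0$ rather than to $\mMstar_0$ itself: a naive convex interpolation of $\mMstar_0$ and $\mMstar_1$ would generically have rank two and leave $\calM$.

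The two quantities feeding the Le Cam bound scale in opposite, compensating ways. First, the map $\vustar_0 \mapsto \lambdastar (\vustar_0+\cdot)(\vustar_0+\cdot)^\top / \|\vustar_0+\cdot\|_2^2$ is, to first order, linear in the perturbation, so both $\|\mMstar_0 - \mMstar_1^{(N)}\|_\infty$ and $\|\mMstar_0 - \mMstar_1^{(N)}\|_{\F}$ equal $N^{-1/2}$ times their single-copy values, up to a second-order correction that is made negligible by taking the single-copy perturbation to have a small enough absolute size. Second, since the $N$ observations are independent and each noise standard deviation obeys $\sigma^{(l)}_{ij} \geq \sigma_{\min}$, the Kullback--Leibler (KL) divergence between the joint law of $\{\mM^{(l)}\}_{l\in[N]}$ under $\mMstar_0$ and that under $\mMstar_1^{(N)}$ is, by the Gaussian KL formula,
\begin{equation*}
\sum_{l=1}^N \frac{1}{2}\sum_{1\leq i,j\leq n}
  \frac{\big((\mMstar_0)_{ij} - (\mMstar_1^{(N)})_{ij}\big)^2}{(\sigma^{(l)}_{ij})^2}
\;\leq\; \frac{N}{2\sigma_{\min}^2}\,\big\|\mMstar_0 - \mMstar_1^{(N)}\big\|_{\F}^2
\;\lesssim\; \frac{1}{\sigma_{\min}^2}\,\big\|\mMstar_0 - \mMstar_1\big\|_{\F}^2,
\end{equation*}
i.e.\ it equals the single-copy divergence up to a negligible correction (the factor $N$ from additivity is cancelled exactly by the $N^{-1/2}$ rescaling), hence stays below the threshold needed for the two-point lemma.

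Plugging these into the usual two-point inequality (Le Cam's lemma together with Pinsker's inequality),
\begin{equation*}
\inf_{\mMhat}\ \sup_{\mMstar \in \calM(\mu,\sigma_{\min})} \E\big\|\mMhat - \mMstar\big\|_\infty
\;\geq\; \frac{1}{2}\,\big\|\mMstar_0 - \mMstar_1^{(N)}\big\|_\infty
  \Big( 1 - \sqrt{\tfrac{1}{2}\,\mathrm{KL}} \Big),
\end{equation*}
and since $\|\mMstar_0 - \mMstar_1^{(N)}\|_\infty = N^{-1/2}\|\mMstar_0 - \mMstar_1\|_\infty$ while the parenthetical factor is unchanged from the single-copy case, the right-hand side is exactly $N^{-1/2}$ times the lower bound established in Theorem~\ref{thm:rank1:linfty:minimax}, namely $\frac{\sigma_{\min}}{8\sqrt N}\sqrt{3\mu/n}$, as claimed.

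I do not expect a substantive obstacle here: the computation is essentially a verbatim rescaling of the proof of Theorem~\ref{thm:rank1:linfty:minimax}, and the only genuinely delicate point is the routine bookkeeping — verifying that the $N^{-1/2}$-shrunk, factor-level perturbation still lands inside $\calM(\mu,\sigma_{\min})$ (the two-sided incoherence window and exact rank one), and confirming that the negligible second-order corrections do not spoil the explicit constant $\tfrac{1}{8}\sqrt{3}$. Both are handled by taking the base perturbation a sufficiently small absolute constant; in the boundary regime $\mu \asymp n$, where the dominant coordinate of $\vustar_0$ is $\Theta(1)$, one perturbs a coordinate of order $n^{-1/2}$ instead and the same estimate goes through.
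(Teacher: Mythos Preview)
Your proposal is correct and follows essentially the same approach as the paper: a two-point Le Cam argument using the construction from Theorem~\ref{thm:rank1:linfty:minimax} with the factor-level perturbation scaled down by $N^{-1/2}$, so that the $N$-fold additivity of KL is exactly cancelled while the $\ell_\infty$ separation picks up the desired $N^{-1/2}$ factor. The paper implements this by substituting $\sigma_{\min}/\sqrt{N}$ for $\sigma_{\min}$ in the explicit angle parametrization $u_1^{(k)} = \sqrt{\mu/n}\cos\theta_k$, $u_2^{(k)} = \sqrt{\mu/n}\sin\theta_k$ from the proof of Theorem~\ref{thm:rank1:linfty:minimax}, which handles the rank-one and incoherence constraints without any renormalization bookkeeping; your generic ``perturb and renormalize'' description would work too but the angle parametrization is cleaner.
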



This corollary quantifies the impact of sample size on entrywise estimation: having $N$ independent observations improves the minimax rate by a factor of $1/\sqrt{N}$, reflecting the classical variance reduction from averaging. 
However, this improvement is purely in terms of sample size and does not affect the dependence on the structural parameters $\mu$, $n$, or $\sigma_{\min}$. 
In this sense, additional samples help only in a trivial way, and do not change the intrinsic difficulty of the problem.
Comparing with the upper bound in Theorem~\ref{thm:improved:entrywise}, we see that when $\deltastar_{\max} \asymp \deltastar_{\min}$, our estimator $\mU \mPsihat \mLambda \mPsihat^\top \mU^\top$ achieves a near-optimal rate, up to constant factors.

\section{Numerical Results} \label{sec:experiments}
From our prior analysis, we see that recovering the node support $\Istar$ of $\mBstar$ and estimating the common structure $\mMstar$ are two nearly orthogonal tasks.
In the experiments below, we treat them separately, with Section~\ref{sec:recovering-Bstar:experiments} focusing on the recovery of $\mBstar$ and Section~\ref{sec:estimating-Mstar:experiments} on estimating $\mMstar$. 

\subsection{\texorpdfstring{Recovering the Node Support of $\mBstar$}{Recovering the Node Support of B*}}

\label{sec:recovering-Bstar:experiments} 

In this section, we conduct a series of experiments to evaluate the recovery of $\mBstar$ under various conditions. 
In what follows, the symmetric noise matrix $\mW$ is generated with independent entries $(W_{ij})_{1 \leq i \leq j \leq n}$, where the variances and distribution may vary across entries depending on the experimental setting. 
Unless otherwise specified, we assume that $(W_{ij})_{1 \leq i \leq j \leq n} \overset{\text{i.i.d.}}{\sim} \calN(0,1)$, and set $W_{ji} = W_{ij}$ for $i < j$ to ensure symmetry.
Unless otherwise specified, $\mBstar$ is generated with signal strength $\sigma_B = 2n^{-1/4}\log^{1/4} n$ as follows:
\begin{algorithm}[ht]
    \caption{Generate $\mBstar$}\label{alg:Bstar-gen}
    \begin{algorithmic}[1]
        \Require: Matrix dimension $n$, node-support size $m$, signal strength parameter $\sigma_B$. 
        \Ensure: $\mBstar$
        \State Randomly select a subset $\Istar \subset [n]$ with $|\Istar| = m$.
        \State Construct a matrix $\mB_0 \in \mathbb{R}^{n \times n}$ row-wise:
        \begin{equation*}
            \mB_{0, i, \cdot} = \begin{cases}
                \mathbf{0}_n, & \text{if } i \notin \Istar; \\
                \text{i.i.d.~samples from } \calN(0, \sigma_B^2), & \text{if } i \in \Istar.
            \end{cases}
        \end{equation*}
        \State Set $\mBstar \gets \mB_0 + \mB_0^\top$.
    \end{algorithmic}
\end{algorithm}

We generate a random low-rank singular subspace $\mUstar$ as follows: 
\begin{algorithm}[ht]
    \caption{Generate $\mUstar$}\label{alg:Ustar-gen}
    \begin{algorithmic}[1]
        \Require: Matrix dimension $n$, matrix rank $r$, incoherence parameter $\mu$. 
        \Ensure: $\mUstar$
        \State Set $m \gets \lfloor n / \mu \rfloor$. Initialize $\mUstar \gets \mathbf{0}_{n \times r}$. 
        \State Denote the Stiefel manifold as 
        $$
            \St(n, p)=\left\{\mX \in \R^{n \times p}: \mX^{\top} \mX=\mI_p\right\}
        $$
        Fill the top $m$ rows of $\mUstar$ with a random orthonormal matrix drawn uniformly from $\St(m, r)$, i.e., according to the Haar measure on the Stiefel manifold.
        \State Fill the remaining rows with another random singular subspace in $\St(n-m, r)$.
        \State Normalize each column of $\mUstar$ to unit $\ell_2$ norm.
    \end{algorithmic}
\end{algorithm}

We assume that the node support size $m = |\Istar|$ is known a priori.
For the SDP method, the estimated support $\Ihat$ is obtained by solving the SDP in Equation~\eqref{eq:prime:sdp:equal} and selecting the $m$ rows of $\widehat{\mZ}_{\SDP}$ with the largest $\ell_1$-norm.
For the group Lasso method, $\Ihat$ consists of the $m$ nodes corresponding to the largest values of $\alpha_i$, as defined in Equation~\eqref{eq:alpha-def}.  
Our primary evaluation metric is the \emph{false negative rate (FNR)}, defined as 
\begin{equation*}
    \FNR := 1 - \frac{|\Ihat \cap \Istar|}{|\Istar|}.
\end{equation*}
All experiments are repeated over 30 independent trials per combination of parameters. 
The shaded regions in the figures represent 95\% bootstrap confidence intervals computed using the \texttt{Seaborn} package \citep{Waskom2021} in Python.


Before presenting the numerical results, we briefly discuss the details of solving the SDP in Equation~\eqref{eq:prime:sdp}.
As noted in Section~\ref{subsec:glasso}, the original SDP suffers from scalability issues: na\"{i}ve implementations are inefficient for large-scale problems due to the presence of $\Theta(n^2)$ constraints.
This renders the computation of the Schur complement, which is central to  interior-point methods, prohibitively expensive in both time and space \citep{vandenberghe1996semidefinite}. 

From a theoretical perspective, it is known that if an SDP has $q$ affine constraints and a compact search space, then it admits a global optimum of rank at most $p_0$ with $p_0(p_0+1)/2 \leq q$ \citep{barvinok1995problems, pataki1998rank}. 
This results motivates a simplification of the original SDP by eliminating the inequality constraints and considering the relaxed problem
\begin{equation} \label{eq:prime:sdp:equal}
\begin{aligned}
\widehat{\mZ}_{\SDP}=\underset{\mZ\in \R^{n \times n}}{\arg \min } & \langle \mC, \mZ\rangle  \\
\text { s.t. } & \mZ \succeq 0, \; \langle\mathbf{I}, \mZ\rangle=K, \; \langle\mathbf{J}, \mZ\rangle=K^2,
\end{aligned}
\end{equation}
which reduces the number of constraints from $\Theta(n^2)$ to just $2$.
This simplification significantly improves computational efficiency and enables the use of off-the-shelf SDP solvers on moderately large problems. 
In our experiments, we are able to handle problem sizes up to $n=7500$ using commodity hardware.
Computations were carried out on the UW–Madison Center for High Throughput Computing (CHTC) infrastructure \citep{chtc2006}.

Among the available techniques, one popular approach is the Burer-Monteiro factorization~\citep{burer2003nonlinear}, which reparametrizes the matrix variable as $\mZ=\mX\mX^\top$, where $\mX \in \R^{n \times p}$ is a matrix with $p \ll n$.
This yields a non-convex optimization problem over $\mX$, but under mild regularity assumptions (which are satisfied by the SDP in Equation~\eqref{eq:prime:sdp:equal}), it has been shown that all second-order critical points of the reparametrized problem are globally optimal~\citep[Theorem 2]{boumal2016non}.
The resulting non-convex problem can be solved using Riemannian optimization methods, which are efficient and scalable.
In our experiments, we solve Equation~\eqref{eq:prime:sdp:equal} using the commercial solver \texttt{MOSEK}~\citep{mosek}, which efficiently handles SDPs of this form.

\subsubsection{Effect of Step~\ref{alg:general:step:select} in Algorithm~\ref{alg:general}}

We first examine the necessity of Step~\ref{alg:general:step:select} in Algorithm~\ref{alg:general}.  
Suppose we observe $\mY^{(0)}$ and $\mY^{(1)}$ following Equations~\eqref{eq:model:calG0} and~\eqref{eq:model:calG1}, respectively.  
We vary the incoherence parameter $\mu$ according to
\begin{equation*}
\mu \in \left\{\log n, \sqrt{n / \log n}, \sqrt{n} \log n, n^{3/4} \right\}
\end{equation*}
and let $n$ range from $500$ to $7500$ in increments of $500$.  
We fix the rank at $r = 3$ and the eigenvalues of $\mMstar$ are set to $\lambdastar_i = 3\sqrt{n} + (3-i)\log n$ for $i = 1, 2, 3$.
We estimate $\mMstar$ using the spectral initializer $\mM^{(0)}$ on $\mY^{(0)}$, and form the residual matrix $\widetilde{\mY} = \mY^{(1)} - \mM^{(0)}$.  
We then apply both the SDP and group Lasso methods to $\widetilde{\mY}$, with $m = 10$.  

Figure~\ref{fig:exp1} shows the results of this experiment.
Inspecting the figure, a clear phase transition can be seen to occur near $\mu = \sqrt{n}$.  
When $\mu \ll \sqrt{n}$, both the SDP and group Lasso methods successfully recover the support $\Istar$, and they perform quite similarly, as indicated by the overlapping curves and markers.
On the other hand, when $\mu \gg \sqrt{n}$, the FNRs increase with $n$, indicating a breakdown in support recovery under high incoherence. 
This observation is in accodance with Theorem~\ref{thm:delta_row}, which states that when $\mu \gg \sqrt{n}$, the rowwise estimation error for high-coherence rows exceeds $\Theta(n^{1/4}\log^{1/4} n)$.
In this regime, the signal strength $\sigma_B = \Theta(\sigma n^{-1/4} \log^{1/4} n)$ is insufficient for accurate recovery of $\Istar$. 
These results highlight the importance of Step~\ref{alg:general:step:select}, which removes high-coherence rows prior to applying the spectral initializer.

\begin{figure}
    \centering
    \includegraphics[width=0.7\textwidth]{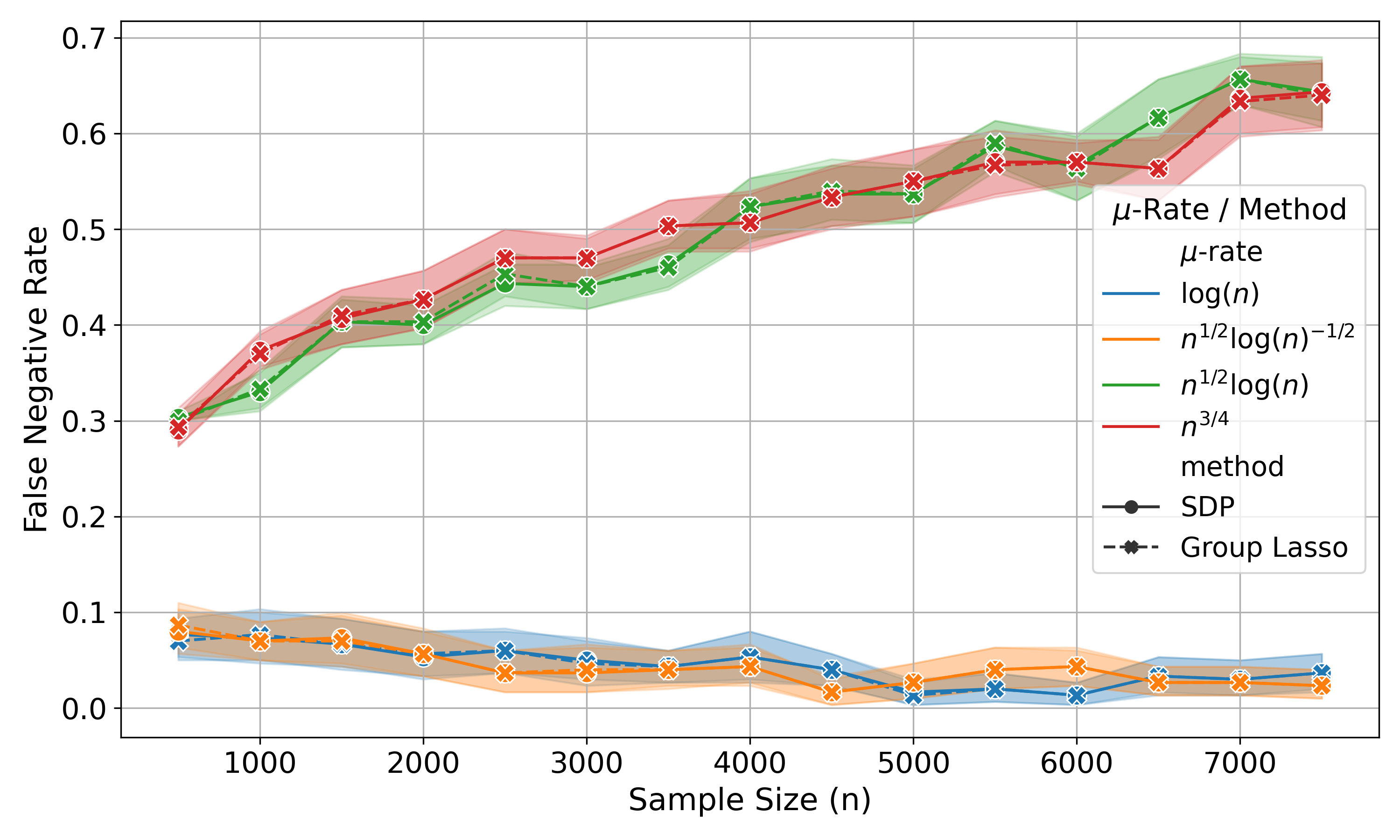}
    \caption{False negative rate of $\Istar$ without Step~\ref{alg:general:step:select} in Algorithm~\ref{alg:general}, across different coherence levels $\mu$.
    Each curve corresponds to a different rate of the incoherence parameter $\mu$, with solid lines for SDP and dashed lines for group Lasso.}
    \label{fig:exp1}
\end{figure}

\subsubsection{Identifying the Recovery Threshold}

We next investigate the effect of the signal strength constant $C$ in the theoretical threshold $\sigma_B = C n^{-1/4} \log^{1/4} n$.  
We first vary $C \in \{0.8, 1.2, 1.6, 2.0, 2.4\}$ and generate $\mBstar$ using Algorithm~\ref{alg:Bstar-gen}.  
To assess recovery under heteroscedastic noise, we sample $W_{ij} \sim \calN(0, \sigma_{ij}^2)$ with $\sigma_{ij}$ uniformly drawn from $[\sigma_{\min}, \sigma_{\max}]$.  
The node support size is set to $m = 10$. 

Figure~\ref{fig:exp7} shows the results of this experiment.
We observe a phase transition at $C \approx 1.6$:  
\begin{enumerate}
    \item When $C \leq 1.2$, the FNR increases with $n$, indicating recovery failure.
    \item When $C \geq 2.0$, the FNR converges to zero.
    \item At the threshold $C = 1.6$, the FNR stabilizes around $0.3$.
\end{enumerate}
These observations align closely with the theoretical predictions in Remark~\ref{rem:mle-snr}, which identifies the signal-to-noise ratio (SNR) threshold
\begin{equation*}
    \SNR = \Omega(n^{1/4} \log^{1/4} n)
\end{equation*}
as the minimum rate required for exact recovery of the node support $\Istar$. 
Below this threshold, no estimator can succeed uniformly over the class of node-sparse signals.
Our empirical findings support this characterization: when $C$ is too small, the effective SNR falls below the exact recovery threshold, and recovery fails.
Both SDP and group Lasso perform similarly in this setting, and are robust to heteroscedasticity when Assumption~\ref{assump:homoscedastic} is satisfied.

Recall from Remark~\ref{rem:mle-snr} that the partial recovery threshold is given by
\begin{equation*}
    \SNR = \Omega\left(n^{1/4} \log^{1/4} \left(\frac{n}{m}\right)\right).
\end{equation*}
When $m = 10$, this threshold has the same asymptotic form as the exact recovery threshold, differing only in the constant factor.
The stablization of the FNR around $0.3$ at $C = 1.6$ provides empirical support for this theoretical prediction, suggesting that $C = 1.6$ lies in the regime of partial recovery.

To investigate the partial recovery threshold in finer detail, we further vary the signal strength constant over a denser grid: $C \in \{1.3, 1.4, 1.5, 1.6, 1.7, 1.8\}$. 
For each combination of sample size $n$, signal strength $C$, and method, we repeat the experiment $100$ times and compute the FNR for each trial.
The results, summarized in Figure~\ref{fig:exp7finer}, show that as $n$ increases, the FNR converges to a stable value for each fixed $C$. 
Notably, stronger signals yield better limiting FNRs: for example, when $C = 1.3$, the FNR levels off around $0.7$, while for $C = 1.8$, it approaches $0.1$.
This trend aligns with the theoretical prediction based on the partial recovery threshold in Remark~\ref{rem:mle-snr}. 



\begin{figure}
    \centering
    \includegraphics[width=0.8\textwidth]{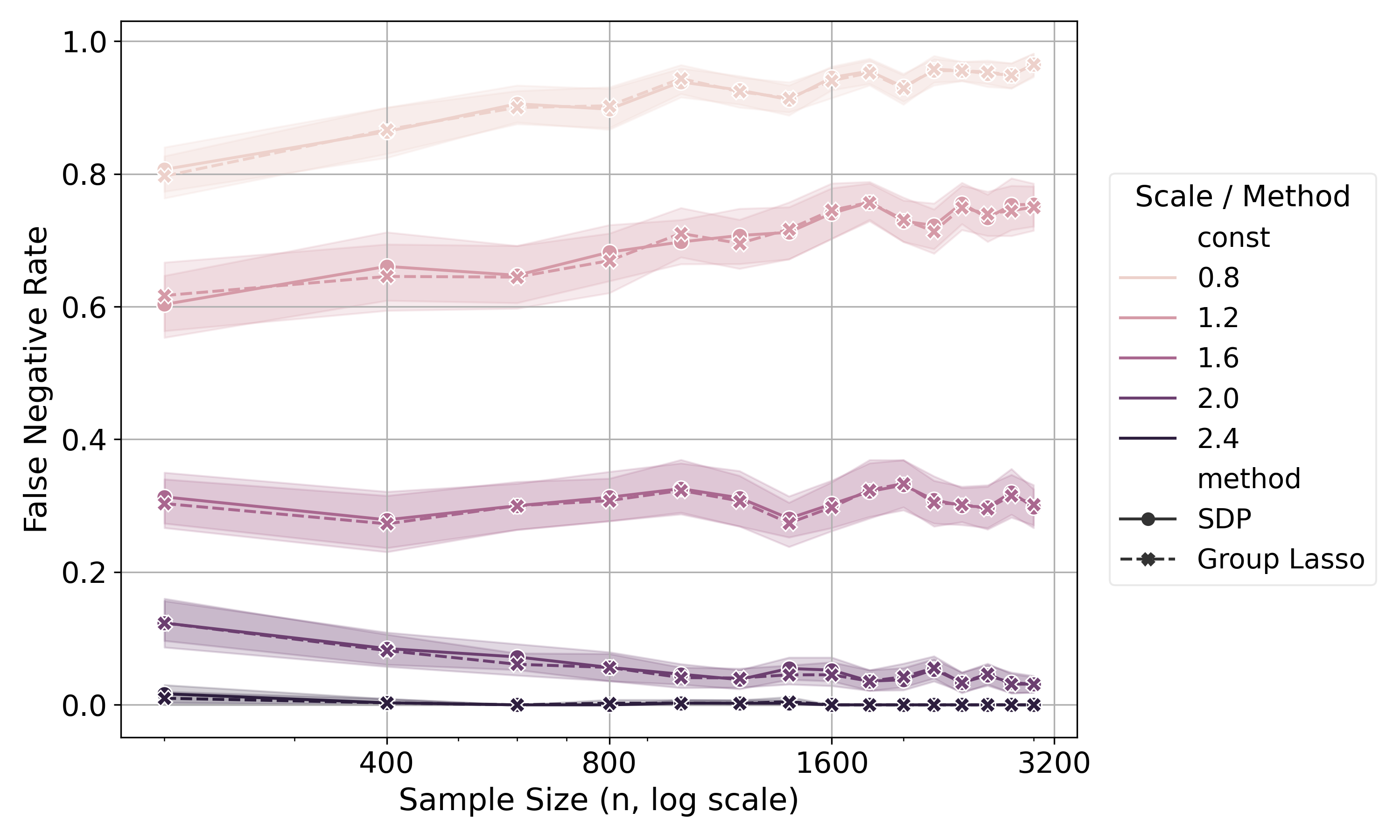}
    \caption{False negative rate of $\Istar$ for SDP and group Lasso under varying signal strength constant $C$ in $\sigma_B = C n^{-1/4} \log^{1/4} n$. 
    Each color corresponds to a different value of the scale parameter $C \in \{0.8, 1.2, 1.6, 2.0, 2.4\}$.
    As $C$ increases, both methods transition from failure to successful recovery. 
    A phase transition occurs near $C = 1.6$: when $C \leq 1.2$, the FNR remains high and increases with $n$; when $C \geq 2.0$, the FNR decays toward zero.
    Both SDP and group Lasso perform similarly in this heteroscedastic setting.}
    \label{fig:exp7}
\end{figure}

\begin{figure}
    \centering
    \includegraphics[width=0.8\textwidth]{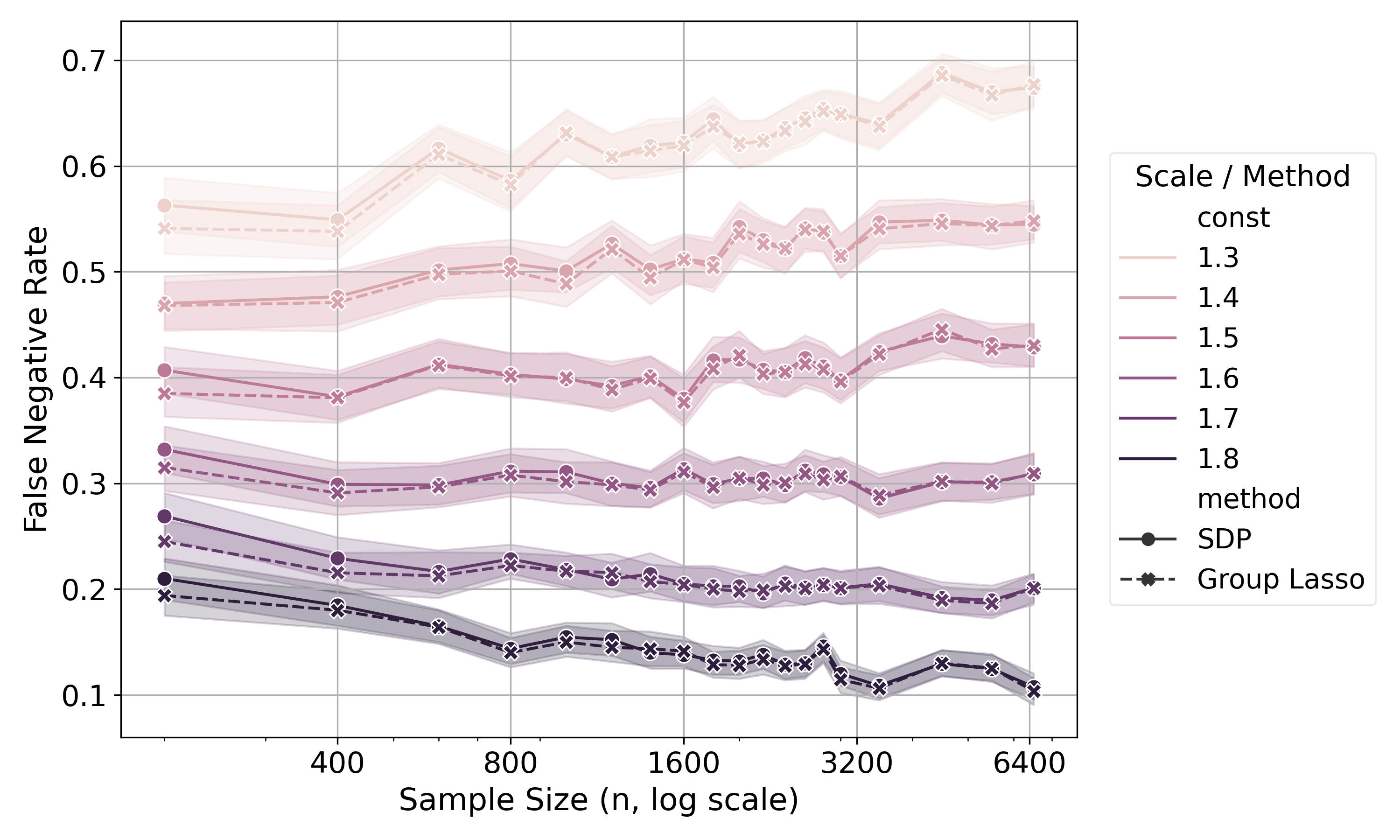}
    \caption{False negative rate of $\Istar$ for SDP and group Lasso under varying signal strength constant $C$ in $\sigma_B = C n^{-1/4} \log^{1/4} n$. 
    Each color corresponds to a different value of the scale parameter $C \in \{1.3, 1.4, 1.5, 1.6, 1.7, 1.8\}$.
    As the matrix dimension $n$ increases, all curves eventually stablize around some constant FNRs, with larger $C$ leading to lower FNRs. 
    Both SDP and group Lasso perform similarly in this heteroscedastic setting.}
    \label{fig:exp7finer} 
\end{figure}

\subsubsection{Failure Case for Group Lasso} 

We now demonstrate a failure mode for group Lasso by constructing a signal matrix $\mBstar$ that satisfies the sparsity assumption but induces misleading group-level structure.
Our construction is similar to Equation~\eqref{eq:Bstar:gl:fail} in Theorem~\ref{thm:failure}.

We set the number of signal nodes to $m = \lfloor 2\sqrt{n} \rfloor$ and define an augmented support set $\Istar\cup J$ of size $m + k$, where $k = \max\{5, \lfloor 0.2 m \rfloor\}$.
We randomly select $m + k$ nodes without replacement and let $\Istar$ denote the first $m$ indices (true signal nodes) and $J$ the remaining $k$ indices (perturbation nodes). 
The signal matrix $\mBstar$ is constructed as follows:
\begin{enumerate}
    \item For each $(i,j) \in \Istar\times J$ (and symmetrically $(j,i)$), set $B^\star_{ij} = \beta_1$, where $\beta_1 = \frac{2.5(n \log n)^{1/4}}{\sqrt{m}}$.
    \item For each $i \in \Istar$ and $j \notin \Istar\cup J$, set $B^\star_{ij} = \beta_2$, where $\beta_2 = 2 n^{-1/4} \log^{1/4} n$.
    \item All other entries of $\mBstar$ are set to zero.
\end{enumerate}

This construction creates two types of signal entries in $\mB$: 
a few relatively large entries, corresponding to entries indexed by an element of $\Istar$ and an element of $J$, and many weaker entries corresponding to entries indexed by an index in $\Istar$ and the rest of the nodes.
Under this setting, although nodes in $\Istar$ have signal strength above the exact recovery threshold given in Remark~\ref{rem:mle-snr}, the group Lasso struggles to separate $J$ and $\Istar$ due to its greedy selection mechanism.

We apply both SDP and group Lasso to the observed matrix $\mY = \mBstar + \mW$ to recover the support $\Istar$.  
As shown in Table~\ref{tab:exp2}, the SDP method consistently achieves exact support recovery across all sample sizes considered, since the signal in $\mBstar$ is well above the exact recovery threshold.
In contrast, group Lasso exhibits persistent recovery failure, with its false negative rate stabilizing around $0.2$.
This value aligns with the fraction $k / (m + k)$ of perturbation nodes, suggesting that group Lasso tends to select the larger-magnitude but spurious connections to $J$ rather than the true support. 
These empirical observations corroborate the theoretical limitations of group Lasso established in Theorem~\ref{thm:failure}.



\begin{table}[ht]
\centering
\caption{False negative rate (FNR) in recovering $\Istar$ of SDP and group Lasso across various sample sizes. 
Each value is reported as the mean (standard deviation) over $50$ repeated trials.}
\setlength{\tabcolsep}{10pt}
\renewcommand{\arraystretch}{1.2}
\begin{tabular}{ccc}
\toprule
\textbf{Sample Size ($n$)} & \textbf{SDP FNR} & \textbf{Group Lasso FNR} \\
\midrule
200  & 0.000 (0.000) & 0.152 (0.026) \\
1200 & 0.000 (0.000) & 0.168 (0.016) \\
2200 & 0.000 (0.000) & 0.180 (0.012) \\
3200 & 0.000 (0.000) & 0.181 (0.009) \\
4200 & 0.000 (0.000) & 0.184 (0.007) \\
5200 & 0.000 (0.000) & 0.184 (0.007) \\
6200 & 0.000 (0.000) & 0.189 (0.006) \\
7200 & 0.000 (0.000) & 0.190 (0.005) \\
\bottomrule
\end{tabular}
\label{tab:exp2}
\end{table}


\subsubsection{Multiple Copies versus Single Copy in SDP}

In Section~\ref{sec:minimax-support}, we established that the SDP method requires Assumption~\ref{assump:homoscedastic} to recover the node support $\Istar$ of $\mBstar$ when there is only one observed copy of $\mBstar$. 
We demonstrate that access to multiple copies allows us to relax Assumption~\ref{assump:homoscedastic} as shown in Theorem~\ref{thm:sdp:multiple}.  
Set the node support size $m = \lceil 2\log n \rceil$ and generate $\mY_1 = \mBstar + \mW_1$ and $\mY_2 = \mBstar + \mW_2$, where noise is constructed as follows:
\begin{enumerate}
    \item For each row $i$, sample $\sigma_i \sim \Unif[\sigma_{\min}, \sigma_{\max}]$, with $\sigma_{\min} = 0.8$ and $\sigma_{\max} = 1.3$.
    \item Set the $i$-th row of $\mSig_0$ to $\sigma_i^2$ and define $\mSig = \mSig_0 + \mSig_0^\top$.
    \item Generate a symmetric matrix $\mW_0$ as before and let $\mW = \mW_0 \circ \mSig$.
\end{enumerate}

This setup yields symmetric but row-heteroscedastic noise.  
We apply two versions of SDP, where for the multiple-copy SDP, the cost matrix is $\mY_1 \circ \mY_2$, and for the single-copy SDP, the cost matrix is $\mY_{\mathrm{avg}} \circ \mY_{\mathrm{avg}}$, where $\mY_{\mathrm{avg}} = (\mY_1 + \mY_2)/2$.

The results in Figure~\ref{fig:exp3} demonstrate that the single-copy SDP fails under row-heteroscedastic noise, with FNR increasing as $n$ grows.
This aligns with the discussion at the beginning of Section~\ref{sec:minimax-support}, confirming that support recovery is ill-posed in the presence of row-wise heteroscedasticity.
By comparison, the multiple-copy SDP accurately recovers $\Istar$ without Assumption~\ref{assump:homoscedastic}, consistent with Theorem~\ref{thm:sdp:multiple}.


\begin{figure}
    \centering
    \includegraphics[width=0.7\textwidth]{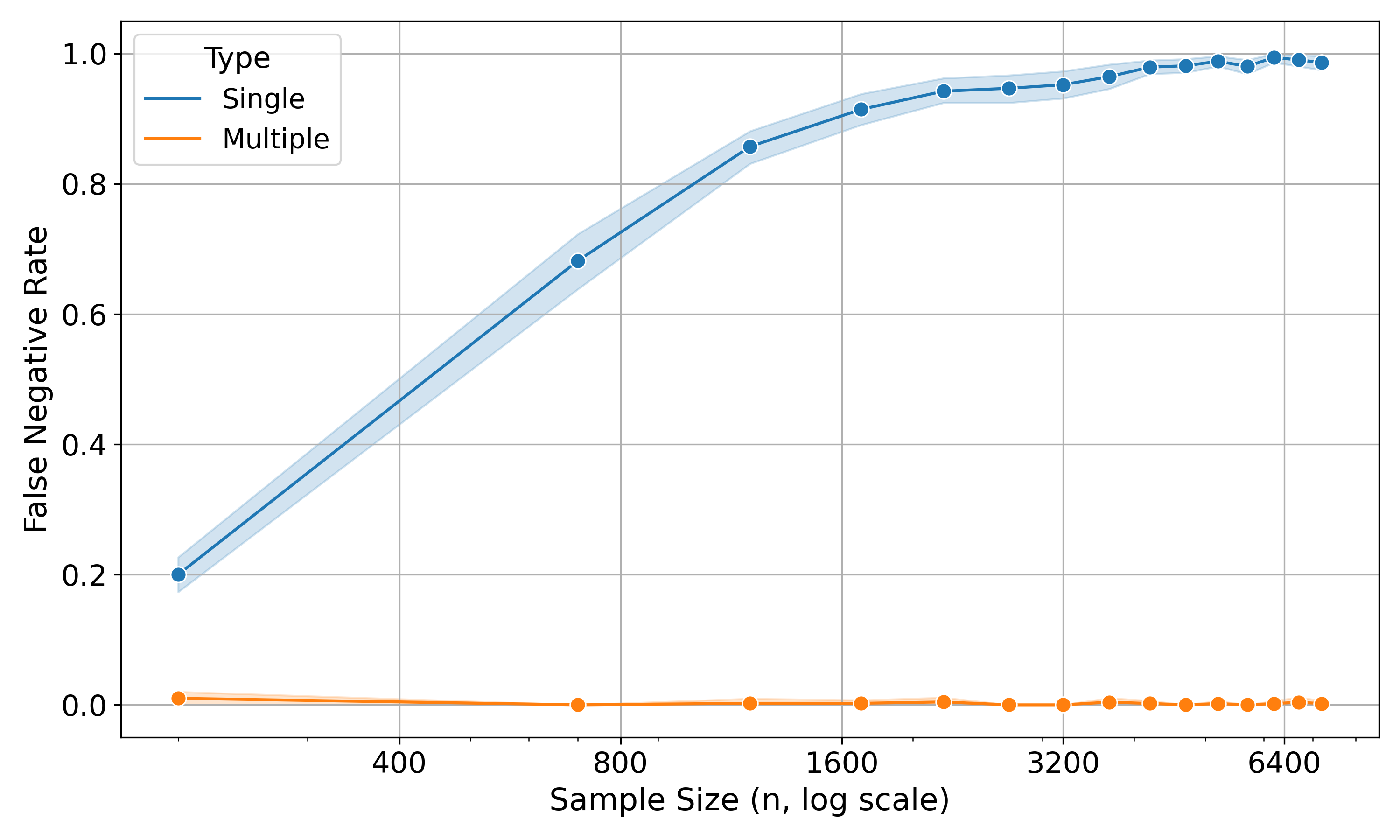}
    \caption{False negative rate for recovering $\Istar$ as a function of sample size $n$ (log scale), comparing two SDP-based methods: single-copy SDP (labeled ``Single'') and multiple-copy SDP (labeled ``Multiple'').  
	}
    \label{fig:exp3}
\end{figure}

\subsubsection{Truncated SDP for Heavy-Tailed Noise}

Finally, we consider a heavy-tailed noise setting where the entries of the noise matrix are drawn i.i.d.~according to a $t_4$ distribution rescaled to have unit variance.
The noise matrix $\mW$ and signal matrix $\mBstar$ are otherwise generated as inthe previous experiments as described in Section~\ref{sec:recovering-Bstar:experiments}.  
We set the node support size $|\Istar|$ to $m = \lceil 2\log n \rceil$ and compare the performance of the vanilla SDP estimator with the truncated version that clips extreme entries, as described in Section~\ref{sec:truncated-sdp}.  
As shown in Figure~\ref{fig:exp4}, the truncated SDP reliably recovers $\Istar$, in agreement with the theoretical guarantees established in Theorem~\ref{thm:trunc:sdp}. 
Specifically, truncated SDP achieves exact support recovery under heavy-tailed noise when the signal-to-noise ratio exceeds the exact recovery threshold $\Theta(n^{1/4} \log^{1/4} n)$ defined in Remark~\ref{rem:mle-snr}. 
In contrast, the vanilla SDP fails under this setting, with its FNR diverging to 1 as $n$ increases.
This behavior also aligns with Theorem~\ref{thm:sdp:L1-bound}, which indicates that when the noise magnitude $L$ significantly exceeds its standard deviation $\sigma$, the vanilla SDP requires a substantially higher SNR to achieve support recovery.


\begin{figure}
    \centering
    \includegraphics[width=0.7\textwidth]{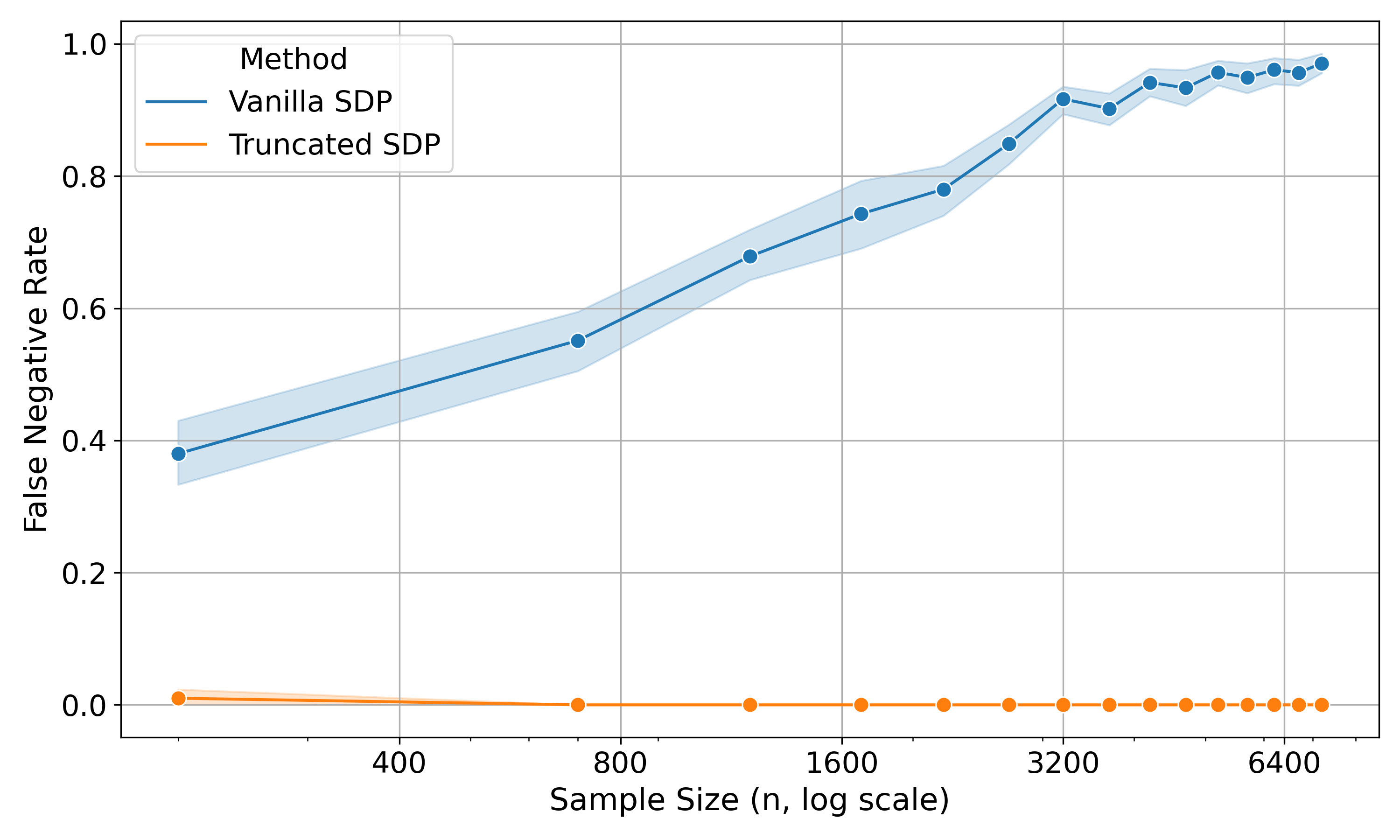} 
    \caption{False negative rate of recovering $\Istar$ under scaled $t_4$-distributed noise as a function of sample size $n$ (log scale). 
    The plot compares two methods: vanilla SDP (blue) and truncated SDP (orange). 
}
    \label{fig:exp4}
\end{figure}
  
\subsection{\texorpdfstring{Improved Estimation of $\mMstar$}{Improved Estimation of M*}} \label{sec:estimating-Mstar:experiments}

We now empirically validate our findings from Section~\ref{sec:common-structure}. 
Throughout this section, we assume access to four observations $\mY_{11}, \mY_{12}, \mY_{21}, \mY_{22}$ generated according to a low-rank-plus-noise model with fixed rank $r=3$. 
The noise matrices are generated as described in Section~\ref{sec:recovering-Bstar:experiments}. 
To construct the estimators, we asymmetrically arrange $\mY_{i1}$ and $\mY_{i2}$ to obtain $\mY_{i}$ for $i = 1,2$.

\subsubsection{\texorpdfstring{Varying $\mu$ and $\lambdastar_{\min}$}{Varying mu and lambda* min}}

We vary $n$ over a range of values from $700$ to $40000$.
The incoherence parameter $\mu$ is set to either $n^{4/5}$ or $n^{5/6}$, and the minimum eigenvalue of the low rank signal matrix $\mMstar$ is chosen as $\lambdastar_{\min} \in \{2.05, 3\} \sqrt{n}$.  
We consider three approaches to estimating $\mUstar$ and consequently $\mMstar$:
\begin{enumerate}
    \item Estimate $\mUstar$ using the bias-corrected eigenvectors $\mUhat$ from Equation~\eqref{eq:mUhat} applied to the average matrix $\mY_{\mathrm{asym}}$ of two assymetrically arranged matrices $\mY_1$ and $\mY_2$.
    Form the estimator $\mMhat_1 := \mUhat \mLambda \mUhat^\top$ as in Corollary~\ref{cor:single:entrywise}, where $\mLambda$ are the eigenvalues of $\mY_{\mathrm{asym}}$. 
    \item Averaging all four matrices $\mY_{ij}$ for $i,j \in [2]$ and compute the top-$r$ eigenspace $\mUhat_{\mathrm{spec}}$ and eigenvalues $\mLambda_{\mathrm{spec}}$ from the spectral decomposition of the average.  
    Then form $\mMhat_{\mathrm{spec}} := \mUhat_{\mathrm{spec}} \mLambda_{\mathrm{spec}} \mUhat_{\mathrm{spec}}^\top$ as the spectral estimator of $\mMstar$.
    \item Use the refined eigenspace estimate $\mU \mPsihat$ from Theorem~\ref{thm:eigenspace:l2infty}, and construct $\mMhat_2 := \mU \mPsihat \mLambda \mPsihat^\top \mU^\top$ as in Theorem~\ref{thm:improved:entrywise}, where $\mU$ consists of the right-eigenvectors of $\mY_1$, and $\mPsihat$ is the bias-correction factor computed from $\mY_{21}$ and $\mY_{22}$ following Equations~\eqref{eq:G:defin} to~\eqref{eq:Psihat:def}. 
\end{enumerate}

The subspace recovery and matrix entrywise recovery results are summarized in Figures~\ref{fig:exp5U} and~\ref{fig:exp5M}, respectively.
We highlight the following observations:
\begin{enumerate}
    \item \textbf{Estimation of $\mUstar$:} 
    The error rate of $\mUhat$ is the worse among all methods, since its error is dominated by the term $\sqrt{\mu / n} / \deltastar_{\min}$ as given in Corollary~\ref{cor:single:eigenspace}, which is significantly larger compared to that of $\mU\mPsihat$ in Theorem~\ref{thm:eigenspace:l2infty} and $\mUhat_{\spec}$ in Theorem~\ref{thm:ustar-row}.  
    When $\mu$ is small, $\mUhat_{\spec}$ achieves the lowest error for smaller matrix dimension $n$. 
    As $n$ increases, the behavior of $\mUhat_{\spec}$ exhibits a clear transition: the error initially decreases with $n$, then flattens, and eventually become worse than that of $\mU \mPsihat$.
    This transition point depends on both $\mu$ and $\lambdastar_{\min}$. 
    For instance, when $\mu = n^{4/5}$ and $\lambdastar_{\min} = 3\sqrt{n}$, the turning point occurs around $n = 8000$, beyond which the error of $\mUhat_{\spec}$ levels off. 
    As $\mu$ increases, the transition shifts to smaller $n$, while increasing $\lambdastar_{\min}$ delays the transition to larger $n$.
    According to Theorem~\ref{thm:ustar-row}, this suggests that for small $n$, the term $\sigma \sqrt{r \log n} / |\lambdastar_{\min}|$ initially dominates the estimation error of $\mUhat_{\mathrm{spec}}$, while the asymptotic term $\sigma \sqrt{\mu r} / |\lambdastar_{\min}|$ becomes dominant only for sufficiently large $n$. 
    \item \textbf{Estimation of $\mMstar$:} 
    We observe a similar trend in the estimation of $\mMstar$.
    The estimation error of $\mMhat_1 = \mUhat \mLambda \mUhat^\top$ increases as $\lambdastar_{\min}$ increases, consistent with Corollary~\ref{cor:single:entrywise}.  
    Among all methods, the spectral estimator $\mMhat_{\mathrm{spec}}$ performs best when $\mu$ is small and $\lambdastar_{\min}$ is large, particularly for small to moderate $n$. 
    As $n$ increases, the error of $\mMhat_{\spec}$ increases, while the error of $\mMhat_2$ remains nearly flat or slightly decreases. 
    Eventually, for large $n$, the error of $\mMhat_{\spec}$ surpasses that of $\mMhat_2$, in line with Theorem~\ref{thm:improved:entrywise}, which shows that $\mMhat_2$ achieves a better error rate than $\mMhat_{\spec}$ for sufficiently large $n$.

\end{enumerate}

\begin{remark}
    The estimators $\mU \mPsihat$ and $\mMhat_2$ are likely suboptimal in terms of the leading constant in their error bounds.
    This inefficiency arises because they do not fully exploit the information available from all four observations: two copies are primarily used for estimation, while the remaining two are incorporated only as a correction term.
    It might be possible to improve the efficiency by combining the spectral estimator with $\mU \mPsihat$ and $\mMhat_2$, which we leave for the future work.
\end{remark}

\begin{figure}
    \centering
    \includegraphics[width=0.7\textwidth]{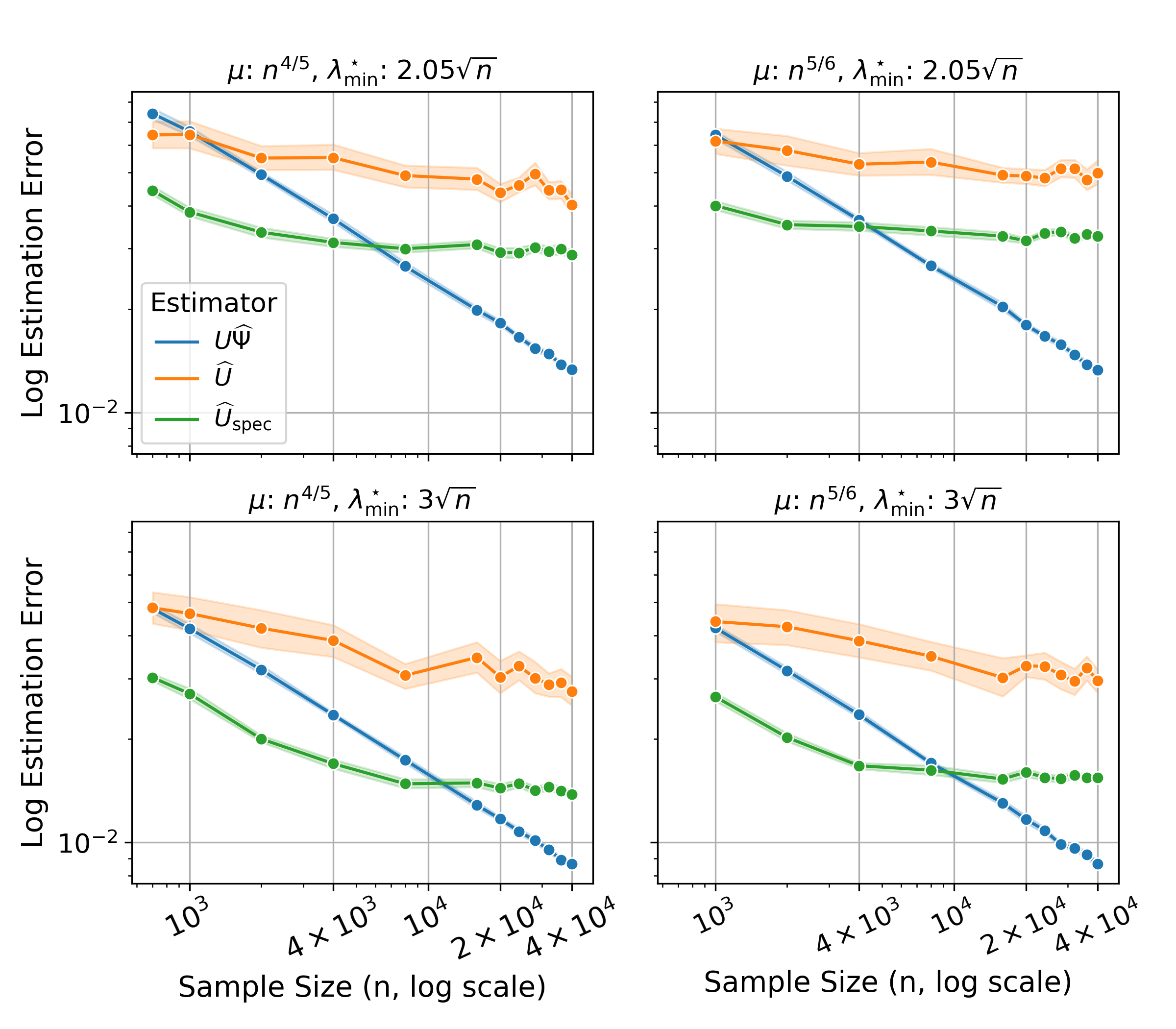}
    \caption{Estimation error of $\mUstar$ under the $\ell_{2,\infty}$ norm across varying coherence $\mu$ and signal strength $\lambdastar_{\min}$. 
    Each subplot corresponds to a specific pair of $(\mu, \lambdastar_{\min})$, with $\mu \in \{n^{4/5}, n^{5/6}\}$ (columns, left to right) and $\lambdastar_{\min} \in \{2.05\sqrt{n}, 3\sqrt{n}\}$ (rows, top to bottom).
    The estimators compared are $\mU \mPsihat$ (blue), $\mUhat$ (orange), and the spectral estimator $\widehat{\mU}_{\mathrm{spec}}$ (green).
    The $y$-axis shows the $\ell_{2,\infty}$ error on a log scale.
    }
    \label{fig:exp5U}
\end{figure}

\begin{figure}
    \centering
    \includegraphics[width=0.7\textwidth]{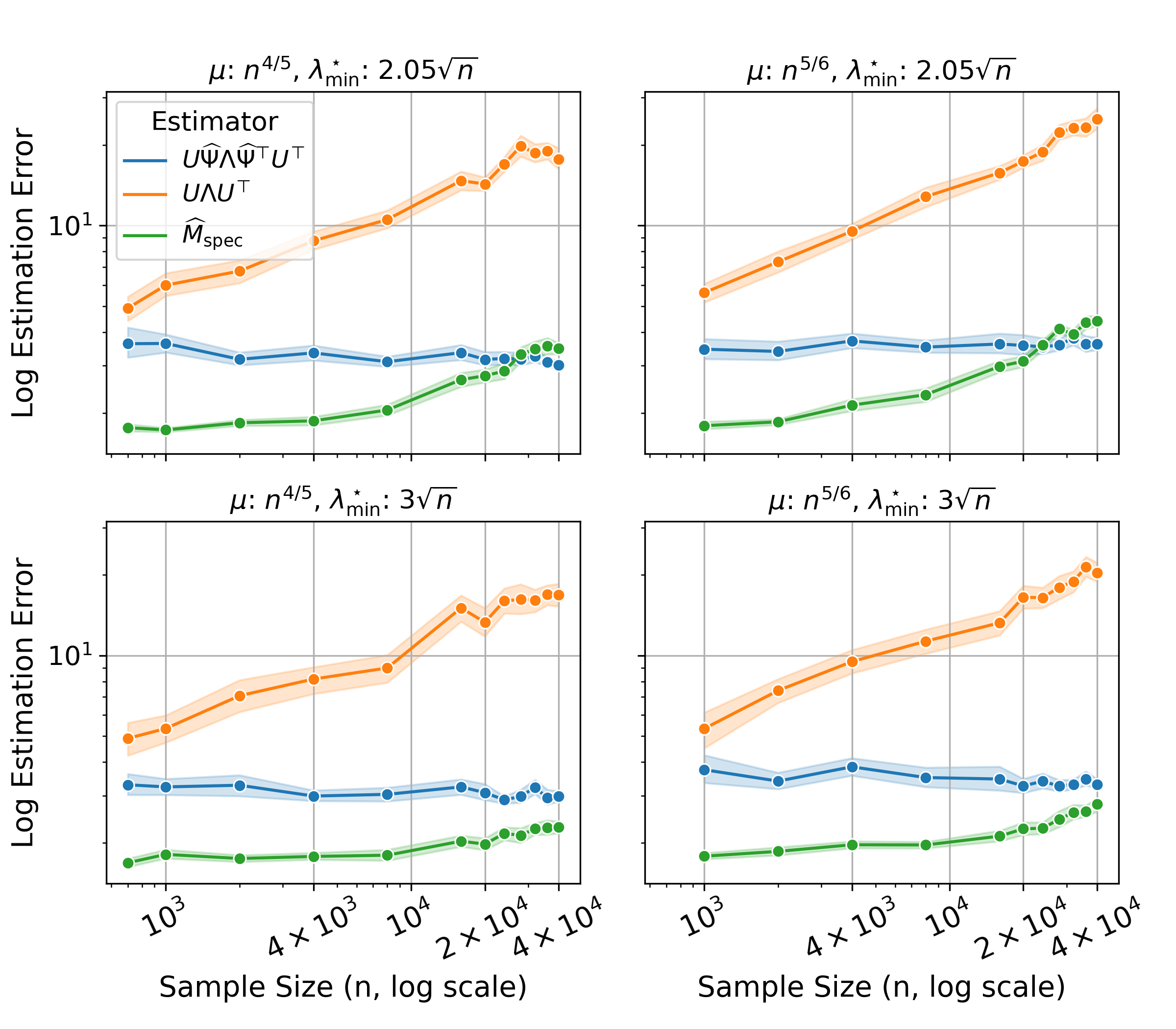}
    \caption{Estimation error of $\mMstar$ under the $\ell_{\infty}$ norm across various regimes of coherence $\mu$ and signal strength $\lambdastar_{\min}$. 
    Each subplot corresponds to a different combination of $(\mu, \lambdastar_{\min})$, where $\mu \in \{n^{4/5}, n^{5/6}\}$ (columns, left to right) and $\lambdastar_{\min} \in \{2.05\sqrt{n}, 3\sqrt{n}\}$ (rows, top to bottom).
    Three estimators are compared: $\mU \mPsihat \mLambda \mPsihat^\top \mU^\top$ (blue), $\mUhat \mLambda \mUhat^\top$ (orange), and the spectral estimator $\widehat{\mM}_{\mathrm{spec}}$ (green).}
    \label{fig:exp5M}
\end{figure}

\subsubsection{Varying Eigengap Ratio}

We conclude by examining the effect of the ratio $\rho := \deltastar_{\max}/\deltastar_{\min}$ on the estimation error rates of $\mMstar$ when using the estimator $\mMhat_2$. 
We fix $\mu = \sqrt{n} \log n$ and $\deltastar_{\min} = \log n$ in the experiments, and vary $\deltastar_{\max} \in \{\log n, n^{1/3}, n^{1/2}\}$.  
The result is shown in Figure~\ref{fig:exp6}. 
Different values of $\rho$ yield overlapping error curves, indicating that changes in $\rho$ have negligible impact on the estimation error rates. 
This empirical finding suggests that the dependence on $\deltastar_{\max}/\deltastar_{\min}$ displayed by the upper bound in Theorem~\ref{thm:improved:entrywise} is likely a technical artifact rather than a fundamental limitation. 
We leave the pursuit of this improvement to future work.

\begin{figure}
    \centering
    \includegraphics[width=0.7\textwidth]{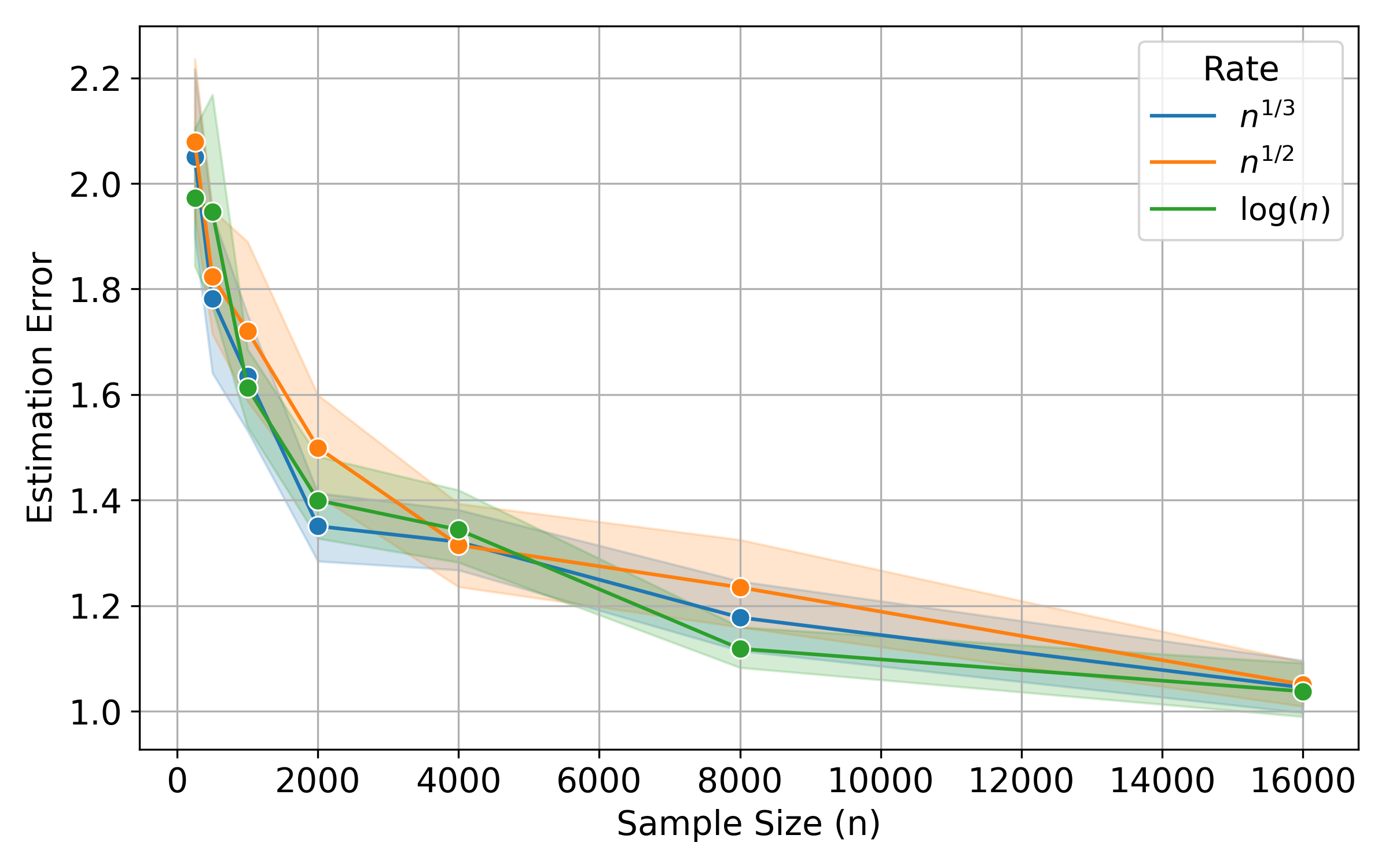}
    \caption{Estimation errors of $\mMhat_2 := \mU \mPsihat \mLambda \mPsihat^\top \mUhat^\top$ as a function of $n$ under the $\ell_{\infty}$ norm. 
    Each curve corresponds to a different scaling rate for the ratio $\rho := \deltastar_{\max} / \deltastar_{\min}$.
    Each point represents the average error over 30 independent trials.}
    \label{fig:exp6}
\end{figure}

\section{Discussion} \label{sec:discussion}

In this paper, we proposed a new statistical framework for analyzing multiple weighted networks that share a low-rank structure but differ through sparse, node-specific perturbations. Our model captures a broad class of practical scenarios, extending beyond traditional multilayer network formulations that primarily address global or smooth variations across conditions.

We introduced a suite of estimation procedures that adapt to the number of observations available per group. Our approach combines spectral initialization, semidefinite programming (SDP) for support recovery, and bias-corrected refinement for improved low-rank estimation. Theoretical results establish minimax optimality under various signal and sparsity regimes, and highlight the interplay between observation multiplicity and statistical efficiency.

We further examined the limitations of commonly used alternatives such as Group Lasso, which fails under certain structured perturbation settings. In contrast, our SDP-based methods exhibit robust support recovery even under heteroscedastic and heavy-tailed noise. Comprehensive numerical experiments corroborate the theory and demonstrate key behaviors, including phase transitions, the role of coherence, and the benefits of bias-correction and multiple network observations.

Several directions for future research also emerge from our findings:

\paragraph{Node-sparse structure estimation.}
First, a deeper understanding of the computational barriers for node-sparse recovery is needed. It remains open whether the limitations of Group Lasso observed here also apply in broader settings, such as covariance estimation.  
Second, while our SDP formulation is based on a least-squares loss, it is not robust to outliers. Incorporating robust loss functions (e.g., Huber or quantile-based objectives) and designing tractable algorithms under such losses would be interesting.
Third, our current framework assumes continuous weights. Extending support recovery guarantees to discrete-weight networks such as binary or count-valued adjacency matrices poses both theoretical and algorithmic challenges.

\paragraph{Low-rank structure estimation.}
The low-rank model for the control group may be oversimplified for real-world data. In many applications, the assumption of a clean low-rank plus noise decomposition may not hold.  
While our bias-corrected estimator achieves a near-optimal rate under the $\ell_{\infty}$ norm, it only outperforms baseline methods when the matrix dimension is sufficiently large. Improving the efficiency of this estimator, especially in terms of its leading constant, would enhance its practical utility in moderate-sized problems.  
Moreover, the fundamental limits of low-rank matrix recovery under $\ell_{\infty}$ and $\ell_{2,\infty}$ norms with only a single observation remain unresolved. In particular, it is unclear whether a coherence-independent estimator exists for subspace recovery in this setting. Developing such an estimator, if it exists, may require substantially more sophisticated techniques.

\medskip
Our results provide a foundation for future work on structure recovery in high-dimensional network models, and contribute to a broader understanding of statistical estimation under sparsity and low-rank constraints.

\newpage


\bibliographystyle{apalike}
\bibliography{bib}

\begin{thebibliography}{}

\bibitem[Agterberg and Zhang, 2024]{agterberg2024estimating}
Agterberg, J. and Zhang, A.~R. (2024).
\newblock Estimating higher-order mixed memberships via the $\ell_{2,\infty}$ tensor perturbation bound.
\newblock {\em Journal of the American Statistical Association}, 0(0):1--11.

\bibitem[Airoldi et~al., 2008]{airoldi2008mixed}
Airoldi, E.~M., Blei, D., Fienberg, S., and Xing, E. (2008).
\newblock Mixed membership stochastic blockmodels.
\newblock {\em Joural of Machine Learning Research}, 9:1981--2014.

\bibitem[Arroyo et~al., 2021]{arroyo2021inference}
Arroyo, J., Athreya, A., Cape, J., Chen, G., Priebe, C.~E., and Vogelstein, J.~T. (2021).
\newblock Inference for {Multiple} {Heterogeneous} {Networks} with a {Common} {Invariant} {Subspace}.
\newblock {\em Journal of Machine Learning Research}, 22(142):1--49.

\bibitem[Athreya et~al., 2018]{athreya2018statistical}
Athreya, A., Fishkind, D.~E., Tang, M., Priebe, C.~E., Park, Y., Vogelstein, J.~T., Levin, K., Lyzinski, V., Qin, Y., and Sussman, D.~L. (2018).
\newblock Statistical inference on random dot product graphs: a survey.
\newblock {\em Journal of Machine Learning Research}, 18(226):1--92.

\bibitem[Bakken et~al., 2016]{bakken2016comprehensive}
Bakken, T.~E., Miller, J.~A., Ding, S.-L., Sunkin, S.~M., Smith, K.~A., Ng, L., Szafer, A., Dalley, R.~A., Royall, J.~J., Lemon, T., et~al. (2016).
\newblock A comprehensive transcriptional map of primate brain development.
\newblock {\em Nature}, 535(7612):367--375.

\bibitem[Barvinok, 1995]{barvinok1995problems}
Barvinok, A.~I. (1995).
\newblock Problems of distance geometry and convex properties of quadratic maps.
\newblock {\em Discrete \& Computational Geometry}, 13:189--202.

\bibitem[Bassett and Sporns, 2017]{bassett2017network}
Bassett, D.~S. and Sporns, O. (2017).
\newblock Network neuroscience.
\newblock {\em Nature Neuroscience}, 20(3):353--364.

\bibitem[Benaych-Georges and Nadakuditi, 2011]{Florent2011eigenvalues}
Benaych-Georges, F. and Nadakuditi, R.~R. (2011).
\newblock The eigenvalues and eigenvectors of finite, low rank perturbations of large random matrices.
\newblock {\em Advances in Mathematics}, 227(1):494--521.

\bibitem[Bhatia, 2013]{bhatia2013matrix}
Bhatia, R. (2013).
\newblock {\em Matrix Analysis}, volume 169.
\newblock Springer Science \& Business Media.

\bibitem[Borgatti et~al., 2009]{borgatti2009network}
Borgatti, S.~P., Mehra, A., Brass, D.~J., and Labianca, G. (2009).
\newblock Network analysis in the social sciences.
\newblock {\em Science}, 323(5916):892--895.

\bibitem[Boumal, 2023]{boumal2023introduction}
Boumal, N. (2023).
\newblock {\em An Introduction to Optimization on Smooth Manifolds}.
\newblock Cambridge University Press.

\bibitem[Boumal et~al., 2016]{boumal2016non}
Boumal, N., Voroninski, V., and Bandeira, A. (2016).
\newblock The non-convex {B}urer-{M}onteiro approach works on smooth semidefinite programs.
\newblock {\em Advances in Neural Information Processing Systems}, 29.

\bibitem[Boyd et~al., 2011]{boyd2011distributed}
Boyd, S., Parikh, N., Chu, E., Peleato, B., Eckstein, J., et~al. (2011).
\newblock Distributed optimization and statistical learning via the alternating direction method of multipliers.
\newblock {\em Foundations and Trends in Machine Learning}, 3(1):1--122.

\bibitem[Burer and Monteiro, 2003]{burer2003nonlinear}
Burer, S. and Monteiro, R.~D. (2003).
\newblock A nonlinear programming algorithm for solving semidefinite programs via low-rank factorization.
\newblock {\em Mathematical Programming}, 95(2):329--357.

\bibitem[Butucea et~al., 2018]{butucea2018hamming}
Butucea, C., Ndaoud, M., Stepanova, N.~A., and Tsybakov, A.~B. (2018).
\newblock {Variable selection with Hamming loss}.
\newblock {\em The Annals of Statistics}, 46(5):1837 -- 1875.

\bibitem[Cai et~al., 2017]{cai2017computational}
Cai, T.~T., Liang, T., and Rakhlin, A. (2017).
\newblock Computational and statistical boundaries for submatrix localization in a large noisy matrix.
\newblock {\em The Annals of Statistics}, 45(4):1403--1430.

\bibitem[Cai and Wu, 2020]{cai2020statistical}
Cai, T.~T. and Wu, Y. (2020).
\newblock {Statistical and computational limits for sparse matrix detection}.
\newblock {\em The Annals of Statistics}, 48(3):1593 -- 1614.

\bibitem[{Center for High Throughput Computing}, 2006]{chtc2006}
{Center for High Throughput Computing} (2006).
\newblock Center for high throughput computing.

\bibitem[Chen and Hero, 2017]{chen2017multilayer}
Chen, P.-Y. and Hero, A.~O. (2017).
\newblock Multilayer spectral graph clustering via convex layer aggregation: theory and algorithms.
\newblock {\em {IEEE} Transactions on Signal and Information Processing over Networks}, 3(3):553--567.

\bibitem[Chen et~al., 2021a]{chen2021asymmetry}
Chen, Y., Cheng, C., and Fan, J. (2021a).
\newblock Asymmetry helps: Eigenvalue and eigenvector analyses of asymmetrically perturbed low-rank matrices.
\newblock {\em The Annals of Statistics}, 49(1):435.

\bibitem[Chen et~al., 2021b]{chen2021spectral}
Chen, Y., Chi, Y., Fan, J., and Ma, C. (2021b).
\newblock Spectral methods for data science: A statistical perspective.
\newblock {\em Foundations and Trends in Machine Learning}, 14(5):566--806.

\bibitem[Chen and Xu, 2016]{CheXu2016}
Chen, Y. and Xu, J. (2016).
\newblock Statistical-computational tradeoffs in planted problems and submatrix localization with a growing number of clusters and sub-matrices.
\newblock {\em Journal of Machine Learning Research}, 17(27):1--57.

\bibitem[Cheng et~al., 2021]{cheng2021tackling}
Cheng, C., Wei, Y., and Chen, Y. (2021).
\newblock Tackling small eigen-gaps: Fine-grained eigenvector estimation and inference under heteroscedastic noise.
\newblock {\em {IEEE} Transactions on Information Theory}, 67(11):7380--7419.

\bibitem[De~Domenico et~al., 2015]{de2015structural}
De~Domenico, M., Nicosia, V., Arenas, A., and Latora, V. (2015).
\newblock Structural reducibility of multilayer networks.
\newblock {\em Nature communications}, 6(1):6864.

\bibitem[Diebold and Y{\i}lmaz, 2014]{diebold2014network}
Diebold, F.~X. and Y{\i}lmaz, K. (2014).
\newblock On the network topology of variance decompositions: Measuring the connectedness of financial firms.
\newblock {\em Journal of econometrics}, 182(1):119--134.

\bibitem[D’Angelo et~al., 2019]{d2019latent}
D’Angelo, S., Murphy, T.~B., and Alf{\`o}, M. (2019).
\newblock Latent space modelling of multidimensional networks with application to the exchange of votes in {E}urovision song contest.
\newblock {\em The Annals of Applied Statistics}, 13(2):900--930.

\bibitem[Easley and Kleinberg, 2010]{easley2010networks}
Easley, D. and Kleinberg, J. (2010).
\newblock {\em Networks, crowds, and markets: Reasoning about a highly connected world}.
\newblock Cambridge University Press.

\bibitem[Fan et~al., 2022]{fan2022asymptotic}
Fan, J., Fan, Y., Han, X., and Lv, J. (2022).
\newblock Asymptotic theory of eigenvectors for random matrices with diverging spikes.
\newblock {\em Journal of the American Statistical Association}, 117(538):996--1009.

\bibitem[Fei and Chen, 2018]{fei2018exponential}
Fei, Y. and Chen, Y. (2018).
\newblock Exponential error rates of {SDP} for block models: Beyond {G}rothendieck’s inequality.
\newblock {\em {IEEE} Transactions on Information Theory}, 65(1):551--571.

\bibitem[Ghoshdastidar et~al., 2020]{ghoshdastidar2020two}
Ghoshdastidar, D., Gutzeit, M., Carpentier, A., and Von~Luxburg, U. (2020).
\newblock Two-sample hypothesis testing for inhomogeneous random graphs.
\newblock {\em The Annals of Statistics}, 48(4):2208--2229.

\bibitem[Gollini and Murphy, 2016]{gollini2016joint}
Gollini, I. and Murphy, T.~B. (2016).
\newblock Joint modeling of multiple network views.
\newblock {\em Journal of Computational and Graphical Statistics}, 25(1):246--265.

\bibitem[Granovetter, 1973]{granovetter1973strength}
Granovetter, M.~S. (1973).
\newblock The strength of weak ties.
\newblock {\em American Journal of Sociology}, 78(6):1360--1380.

\bibitem[Guha et~al., 2024]{guha2024bayesian}
Guha, S., J.~Rodriguez-Acosta, J., and Dinov, I.~D. (2024).
\newblock A {B}ayesian multiplex graph classifier of functional brain connectivity across diverse tasks of cognitive control.
\newblock {\em Neuroinformatics}, 22(4):457--472.

\bibitem[Hajek et~al., 2016]{hajek2016achieving}
Hajek, B., Wu, Y., and Xu, J. (2016).
\newblock Achieving exact cluster recovery threshold via semidefinite programming.
\newblock {\em {IEEE} Transactions on Information Theory}, 62(5):2788--2797.

\bibitem[He et~al., 2023]{he2023semiparametric}
He, Y., Sun, J., Tian, Y., Ying, Z., and Feng, Y. (2023).
\newblock Semiparametric modeling and analysis for longitudinal network data.
\newblock {\em arXiv preprint arXiv:2308.12227}.

\bibitem[Hettmansperger and McKean, 2010]{hettmansperger2010robust}
Hettmansperger, T.~P. and McKean, J.~W. (2010).
\newblock {\em Robust nonparametric statistical methods}.
\newblock CRC Press.

\bibitem[Hoff et~al., 2002]{hoff2002latent}
Hoff, P.~D., Raftery, A.~E., and Handcock, M.~S. (2002).
\newblock Latent space approaches to social network analysis.
\newblock {\em Journal of the American Statistical Association}, 97(460):1090--1098.

\bibitem[Holland et~al., 1983]{holland1983stochastic}
Holland, P.~W., Laskey, K.~B., and Leinhardt, S. (1983).
\newblock Stochastic blockmodels: First steps.
\newblock {\em Social networks}, 5(2):109--137.

\bibitem[Huber and Ronchetti, 2011]{huber2011robust}
Huber, P.~J. and Ronchetti, E.~M. (2011).
\newblock {\em Robust Statistics}.
\newblock John Wiley \& Sons.

\bibitem[Ingster and Suslina, 2003]{ingster2003nonparametric}
Ingster, Y. and Suslina, I.~A. (2003).
\newblock {\em Nonparametric goodness-of-fit testing under {G}aussian models}, volume 169.
\newblock Springer Science \& Business Media.

\bibitem[Jiang et~al., 2020]{jiang2020faster}
Jiang, H., Kathuria, T., Lee, Y.~T., Padmanabhan, S., and Song, Z. (2020).
\newblock A faster interior point method for semidefinite programming.
\newblock In {\em 2020 {IEEE} 61st Annual Symposium on Foundations of Computer Science (FOCS)}, pages 910--918. IEEE.

\bibitem[Karrer and Newman, 2011]{karrer2011stochastic}
Karrer, B. and Newman, M.~E. (2011).
\newblock Stochastic blockmodels and community structure in networks.
\newblock {\em Physical Review E—Statistical, Nonlinear, and Soft Matter Physics}, 83(1):016107.

\bibitem[Kessler et~al., 2022]{kessler2022prediction}
Kessler, D., Levin, K., and Levina, E. (2022).
\newblock Prediction of network covariates using edge and node attributes.
\newblock {\em arXiv preprint arXiv:2210.17519}.

\bibitem[Kivel{\"a} et~al., 2014]{kivela2014multilayer}
Kivel{\"a}, M., Arenas, A., Barthelemy, M., Gleeson, J.~P., Moreno, Y., and Porter, M.~A. (2014).
\newblock Multilayer networks.
\newblock {\em Journal of Complex Networks}, 2(3):203--271.

\bibitem[Klenke, 2013]{klenke2013probability}
Klenke, A. (2013).
\newblock {\em Probability Theory: a comprehensive course}.
\newblock Springer Science \& Business Media.

\bibitem[Klopp and Tsybakov, 2015]{klopp2015estimation}
Klopp, O. and Tsybakov, A.~B. (2015).
\newblock Estimation of matrices with row sparsity.
\newblock {\em Problems of Information Transmission}, 51(4):335--348.

\bibitem[Lei and Lin, 2023]{lei2023bias}
Lei, J. and Lin, K.~Z. (2023).
\newblock Bias-adjusted spectral clustering in multi-layer stochastic block models.
\newblock {\em Journal of the American Statistical Association}, 118(544):2433--2445.

\bibitem[Lei et~al., 2024]{lei2024computational}
Lei, J., Zhang, A.~R., and Zhu, Z. (2024).
\newblock Computational and statistical thresholds in multi-layer stochastic block models.
\newblock {\em The Annals of Statistics}, 52(5):2431--2455.

\bibitem[Levin et~al., 2017]{levin2017central}
Levin, K., Athreya, A., Tang, M., Lyzinski, V., and Priebe, C.~E. (2017).
\newblock A central limit theorem for an omnibus embedding of multiple random dot product graphs.
\newblock In {\em 2017 {IEEE} International Conference on Data Mining Workshops (ICDMW)}, pages 964--967. IEEE.

\bibitem[Levin et~al., 2022]{levin2022recovering}
Levin, K., Lodhia, A., and Levina, E. (2022).
\newblock Recovering shared structure from multiple networks with unknown edge distributions.
\newblock {\em Journal of Machine Learning Research}, 23(3):1--48.

\bibitem[Li et~al., 2021]{li2021convex-relax}
Li, X., Chen, Y., and Xu, J. (2021).
\newblock {Convex Relaxation Methods for Community Detection}.
\newblock {\em Statistical Science}, 36(1):2 -- 15.

\bibitem[Lov\'{a}sz, 2012]{Lovasz2012}
Lov\'{a}sz, L. (2012).
\newblock {\em Large Networks and Graph Limits}.
\newblock American Mathematical Society.

\bibitem[Ma and Wu, 2015]{ma2015computational}
Ma, Z. and Wu, Y. (2015).
\newblock Computational barriers in minimax submatrix detection.
\newblock {\em The Annals of Statistics}, pages 1089--1116.

\bibitem[MacDonald et~al., 2022a]{macdonald2022latent}
MacDonald, P.~W., Levina, E., and Zhu, J. (2022a).
\newblock Latent process models for functional network data.
\newblock {\em arXiv preprint arXiv:2210.07491}.

\bibitem[MacDonald et~al., 2022b]{macdonald2022multiness}
MacDonald, P.~W., Levina, E., and Zhu, J. (2022b).
\newblock Latent space models for multiplex networks with shared structure.
\newblock {\em Biometrika}, 109(3):683--706.

\bibitem[Massart, 2007]{massart2007concentration}
Massart, P. (2007).
\newblock {\em Concentration inequalities and model selection: Ecole d'{\'E}t{\'e} de Probabilit{\'e}s de Saint-Flour XXXIII-2003}.
\newblock Springer.

\bibitem[Mohan et~al., 2014]{mohan2014node}
Mohan, K., London, P., Fazel, M., Witten, D., and Lee, S.-I. (2014).
\newblock Node-based learning of multiple {G}aussian graphical models.
\newblock {\em The Journal of Machine Learning Research}, 15(1):445--488.

\bibitem[{MOSEK ApS}, 2025]{mosek}
{MOSEK ApS} (2025).
\newblock {\em The {MOSEK} Python Fusion {API} manual. Version 11.0.}

\bibitem[Müller and Stoyan, 2002]{muller2002comparison}
Müller, A. and Stoyan, D. (2002).
\newblock {\em Comparison Methods for Stochastic Models and Risks}.
\newblock John Wiley \& Sons.

\bibitem[Negahban and Wainwright, 2011]{negahban2011estimation}
Negahban, S. and Wainwright, M.~J. (2011).
\newblock {Estimation of (near) low-rank matrices with noise and high-dimensional scaling}.
\newblock {\em The Annals of Statistics}, 39(2):1069 -- 1097.

\bibitem[Nielsen and Witten, 2018]{nielsen2018multiple}
Nielsen, A.~M. and Witten, D. (2018).
\newblock The multiple random dot product graph model.
\newblock {\em arXiv preprint arXiv:1811.12172}.

\bibitem[Obozinski et~al., 2011]{obozinski2011support}
Obozinski, G., Wainwright, M.~J., and Jordan, M.~I. (2011).
\newblock {Support union recovery in high-dimensional multivariate regression}.
\newblock {\em The Annals of Statistics}, 39(1):1 -- 47.

\bibitem[Pataki, 1998]{pataki1998rank}
Pataki, G. (1998).
\newblock On the rank of extreme matrices in semidefinite programs and the multiplicity of optimal eigenvalues.
\newblock {\em Mathematics of Operations Research}, 23(2):339--358.

\bibitem[Paul and Chen, 2020]{paul2020random}
Paul, S. and Chen, Y. (2020).
\newblock A random effects stochastic block model for joint community detection in multiple networks with applications to neuroimaging.
\newblock {\em The Annals of Applied Statistics}, 14(2):993--1029.

\bibitem[Recht, 2011]{recht2011simpler}
Recht, B. (2011).
\newblock A simpler approach to matrix completion.
\newblock {\em Journal of Machine Learning Research}, 12(12).

\bibitem[Reli{\'o}n et~al., 2019]{relion2019network}
Reli{\'o}n, J. D.~A., Kessler, D., Levina, E., and Taylor, S.~F. (2019).
\newblock Network classification with applications to brain connectomics.
\newblock {\em The Annals of Applied Statistics}, 13(3):1648.

\bibitem[Rubin-Delanchy et~al., 2022]{rubin2022statistical}
Rubin-Delanchy, P., Cape, J., Tang, M., and Priebe, C.~E. (2022).
\newblock A statistical interpretation of spectral embedding: the generalised random dot product graph.
\newblock {\em Journal of the Royal Statistical Society Series B: Statistical Methodology}, 84(4):1446--1473.

\bibitem[Salter-Townshend and McCormick, 2017]{salter2017latent}
Salter-Townshend, M. and McCormick, T.~H. (2017).
\newblock Latent space models for multiview network data.
\newblock {\em The Annals of Applied Statistics}, 11(3):1217.

\bibitem[Sohn and Wein, 2025]{SohWei2025}
Sohn, Y. and Wein, A.~S. (2025).
\newblock Sharp phase transitions in estimation with low-degree polynomials.
\newblock In {\em Proceedings of the 57th Annual {ACM} Symposium on Theory of Computing}, STOC '25, page 891–902, New York, NY, USA. Association for Computing Machinery.

\bibitem[Sosa and Betancourt, 2022]{sosa2022latent}
Sosa, J. and Betancourt, B. (2022).
\newblock A latent space model for multilayer network data.
\newblock {\em Computational Statistics \& Data Analysis}, 169:107432.

\bibitem[Sporns, 2011]{sporns2011human}
Sporns, O. (2011).
\newblock The human connectome: a complex network.
\newblock {\em Annals of the New York Academy of Sciences}, 1224(1):109--125.

\bibitem[Sussman et~al., 2012]{SusTanFisPri2012}
Sussman, D.~L., Tang, M., Fishkind, D.~E., and Priebe, C.~E. (2012).
\newblock A consistent adjacency spectral embedding for stochastic blockmodel graphs.
\newblock {\em Journal of the American Statistical Association}, 107:1119--1128.

\bibitem[Tsybakov, 2008]{tsybakov2008introduction}
Tsybakov, A.~B. (2008).
\newblock {\em Introduction to Nonparametric Estimation}.
\newblock Springer.

\bibitem[Vaiana and Muldoon, 2020]{vaiana2020multilayer}
Vaiana, M. and Muldoon, S.~F. (2020).
\newblock Multilayer brain networks.
\newblock {\em Journal of Nonlinear Science}, 30(5):2147--2169.

\bibitem[Vandenberghe et~al., 2005]{vandenberghe2005interior}
Vandenberghe, L., Balakrishnan, V.~R., Wallin, R., Hansson, A., and Roh, T. (2005).
\newblock Interior-point algorithms for semidefinite programming problems derived from the kyp lemma.
\newblock In {\em Positive Polynomials in Control}, pages 195--238. Springer.

\bibitem[Vandenberghe and Boyd, 1996]{vandenberghe1996semidefinite}
Vandenberghe, L. and Boyd, S. (1996).
\newblock Semidefinite programming.
\newblock {\em SIAM review}, 38(1):49--95.

\bibitem[Varah, 1975]{Varah1975lower}
Varah, J. (1975).
\newblock A lower bound for the smallest singular value of a matrix.
\newblock {\em Linear Algebra and its Applications}, 11(1):3--5.

\bibitem[Vershynin, 2018]{vershynin2018HDP}
Vershynin, R. (2018).
\newblock {\em High-Dimensional Probability: an Introduction with Applications in Data Science}.
\newblock Cambridge Series in Statistical and Probabilistic Mathematics. Cambridge University Press.

\bibitem[Wainwright, 2019]{wainwright2019high}
Wainwright, M.~J. (2019).
\newblock {\em High-Dimensional Statistics: a Non-asymptotic Viewpoint}, volume~48.
\newblock Cambridge University Press.

\bibitem[Waskom, 2021]{Waskom2021}
Waskom, M.~L. (2021).
\newblock seaborn: statistical data visualization.
\newblock {\em Journal of Open Source Software}, 6(60):3021.

\bibitem[Watrigant et~al., 2012]{watrigant2012ksparse}
Watrigant, R., Bougeret, M., and Giroudeau, R. (2012).
\newblock {The k-Sparsest Subgraph Problem}.
\newblock Technical Report RR-12019, Universit\'{e} de Montpellier.

\bibitem[Xie, 2024]{xie2024bias}
Xie, F. (2024).
\newblock Bias-corrected joint spectral embedding for multilayer networks with invariant subspace: Entrywise eigenvector perturbation and inference.
\newblock {\em {IEEE} Transactions on Information Theory}.

\bibitem[Yan and Levin, 2025]{yan2025improved}
Yan, H. and Levin, K. (2025).
\newblock Improved dependence on coherence in eigenvector and eigenvalue estimation error bounds.
\newblock In Li, Y., Mandt, S., Agrawal, S., and Khan, E., editors, {\em Proceedings of The 28th International Conference on Artificial Intelligence and Statistics}, volume 258, pages 1549--1557.

\bibitem[Yang et~al., 2016]{yang2016rate}
Yang, D., Ma, Z., and Buja, A. (2016).
\newblock Rate optimal denoising of simultaneously sparse and low rank matrices.
\newblock {\em Journal of Machine Learning Research}, 17(92):1--27.

\bibitem[Young and Scheinerman, 2007]{YouSch2007}
Young, S. and Scheinerman, E. (2007).
\newblock Random dot product graph models for social networks.
\newblock In {\em Proceedings of the 5th International Conference on Algorithms and Models for the Web-Graph}, pages 138--149.

\bibitem[Yu, 1997]{yu1997assouad}
Yu, B. (1997).
\newblock Assouad, {F}ano, and {L}e {C}am.
\newblock In {\em Festschrift for Lucien Le Cam: research papers in probability and statistics}, pages 423--435. Springer.

\bibitem[Zhang et~al., 2022]{zhang2022heteroskedastic}
Zhang, A.~R., Cai, T.~T., and Wu, Y. (2022).
\newblock Heteroskedastic {PCA}: Algorithm, optimality, and applications.
\newblock {\em The Annals of Statistics}, 50(1):53--80.

\bibitem[Zhang et~al., 2023]{zhang2023generalized}
Zhang, J., Sun, W.~W., and Li, L. (2023).
\newblock Generalized connectivity matrix response regression with applications in brain connectivity studies.
\newblock {\em Journal of Computational and Graphical Statistics}, 32(1):252--262.

\bibitem[Zhou and Chen, 2025]{zhou2025deflated}
Zhou, Y. and Chen, Y. (2025).
\newblock Deflated {HeteroPCA}: Overcoming the curse of ill-conditioning in heteroskedastic {PCA}.
\newblock {\em The Annals of Statistics}, 53(1):91--116.

\end{thebibliography}

\newpage

\appendix

\section*{Appendix}

\renewcommand{\contentsname}{Table of Contents}
\tableofcontents

\addcontentsline{toc}{section}{Appendix}

\section{Proof of Theorem \ref{thm:mle-result}} \label{sec:mle:proof}

\begin{proof}
To analyze the LSE, the high level idea is to make use of the optimality condition of the LSE that the $\Ihat$ is a solution to the optimization problem in Equation~\eqref{eq:least-square-B}.
Combining with standard concentration inequalities yields the exact and partial recovery results for the LSE. 
Below, we first prove a few concentration inequalities.  

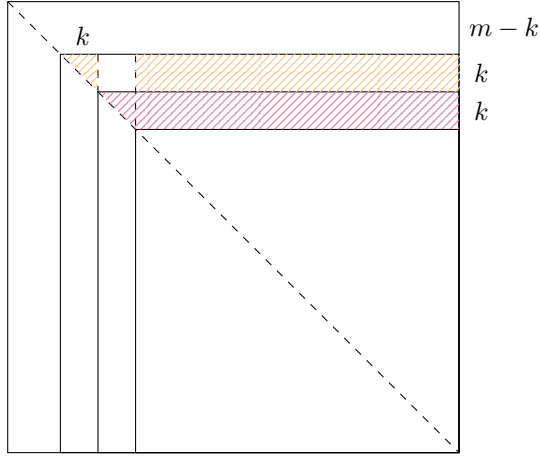
\begin{figure}[h]
    \centering
    \begin{tikzpicture}
        \def\s{6}
        \def\d{0.5}
        \def\o{0.2}

        \draw (0,0) rectangle (\s,\s);  
        \draw (\d+\o,\s-\d-\o) rectangle (\s,0);  
        \draw (2*\d+\o,\s-2*\d-\o) rectangle (\s,0); 
        \draw (3*\d+\o,\s-3*\d-\o) rectangle (\s,0); 
        
        \draw[dashed] (0,\s) -- (\s,0);  
        \draw[dashed] (2*\d+\o,\s-\d-\o) -- (2*\d+\o,\s-2*\d-\o);
        \draw[dashed] (3*\d+\o,\s-\d-\o) -- (3*\d+\o,\s-3*\d-\o);

        \node at (\s+0.6,{\s-0.5*(\d+\o)}) {\small $m - k$};
        \node at (\s+0.3,{\s-(\d+\o)-0.5*\d}) {\small $k$};
        \node at (\s+0.3,{\s-(2*\d+\o)-0.5*\d}) {\small $k$};
        \node at (\d+\o+0.3,{\s-(\d+\o)+0.5*\d}) {\small $k$};

        \fill[pattern=north east lines, pattern color = orange!50] (\d+\o,{\s-(\d+\o)}) -- (2*\d+\o,\s-\d-\o) -- (2*\d+\o,\s-2*\d-\o) -- cycle;
        \fill[pattern=north east lines, pattern color = orange!50] (3*\d+\o,{\s-(\d+\o)}) -- (\s,\s-\d-\o) -- (\s,\s-2*\d-\o) -- (3*\d+\o,\s-2*\d-\o) -- cycle;
        \fill[pattern=north east lines, pattern color = purple!50] (2*\d+\o,{\s-(2*\d+\o)}) -- (\s,\s-2*\d-\o) -- (\s,\s-3*\d-\o) -- (3*\d+\o,\s-3*\d-\o) -- cycle;

    \end{tikzpicture} 
    \caption{The overlap pattern of $I$ and $\Istar$ when $\Istar = [m]$ and $|I \cap \Istar| = m-k$.}
    \label{fig:mle}
\end{figure}

Without loss of generality, we assume that $\Istar = [m]$.
Let $I \subset [n]$ be any set of size $|I|=m$ and $|I\cap \Istar| = m-k$.
A demonstration of the overlapping pattern of the rows and columns indexed by $I$ and $\Istar$ is shown in Figure \ref{fig:mle}. 
Let 
\begin{equation*}
    \calS_{I} := \left\{ (i,j)\in [n]\times[n] : i<j, i \in I \right\}
\end{equation*}
and, for each $k \in [m]$, define
\begin{equation} \label{eq:def:N}
  N_k = k(n-m) + k/2 - k^2/2, 
\end{equation}
which is precisely the cardinality of the set $\calS_{I} \setminus \calS_{\Istar}$ when $|I\cap \Istar| = m-k$.
Under the assumption that $n \geq 3m$, we have that $N_k = \Omega(kn)$ for all $k \in [m]$. 

    We first fix a $k \in [m]$.
     With index set $I$ fixed, we proceed to bound the sum of squares for $W_{ij}$ in the purple shaded region shown in Figure~\ref{fig:mle}, which has $N_k$ entries in total.
It follows from Lemma~\ref{lem:bern-ineq} that for $t_k > 0$, 
\begin{equation*} \begin{aligned}
\Pr\left[\left|
\sum_{(i,j) \in \calS_I - \calS_{\Istar}} (W_{ij}^2 - \E W_{ij}^2)\right|
    \geq t_k \right]
\leq 2 \exp\left( -\frac{1}{2} \min\left\{ \frac{t_k^2}{N_k L^2 \sigma^2},
			\frac{t_k}{L^2} \right\} \right) .
\end{aligned} \end{equation*}
Since there are $\binom{n-m}{k}$ choices for $I \cap {\Istar}^c$, an application of the union bound yields 
\begin{equation*} \begin{aligned}
\Pr \left[ \max_{\substack{I \subset [n] \\ I \cap [m]^c = k}} \left|
    \sum_{(i,j) \in \calS_I \setminus \calS_{\Istar}} \!\!\!\! (W_{ij}^2 - \E W_{ij}^2)\right| \geq t_k \right] 
    &\leq 2 \binom{n-m}{k} \exp\!\left(-\frac{1}{2} \min\left\{ \frac{t_k^2}{N_k L^2 \sigma^2}, \frac{t_k}{L^2} \right\}\right)\\
    &\leq 2\left(\frac{en}{k}\right)^k \exp\!\left(-\frac{1}{2} \min\left\{ \frac{t_k^2}{N_k L^2 \sigma^2}, \frac{t_k}{L^2} \right\}\right)\\
    &\leq 2\exp \! \left( 2k \log \left(\frac{n}{k}\right)
	- \frac{1}{2} \min\!\left\{ \frac{t_k^2}{N_k L^2 \sigma^2},
				\frac{t_k}{L^2} \right\}\right) ,
\end{aligned} \end{equation*}
where the last inequality holds from the fact that $k \le m$ and our assumption that $n > 3m$.
Setting 
\begin{equation*}
    t_k = C \max\left\{L \sigma k \sqrt{n\log\left(\frac{n}{k}\right)}, L^2 k \log \left(\frac{n}{k} \right)\right\},
\end{equation*}
for a suitably large constant $C > 0$, it holds for for any $k \in [m]$ that
\begin{equation} \label{eq:mle:maxbound}
\max_{\substack{I \subset [n] \\ I \cap [m]^c = k}}
	\left|
    	\sum_{(i,j) \in \calS_{I}-\calS_{\Istar}} (W_{ij}^2 - \E W_{ij}^2)
		\right|
	\geq C \max\left\{L \sigma k \sqrt{n\log\left(\frac{n}{k}\right)}, L^2 k \log \left(\frac{n}{k} \right)\right\} 
\end{equation}
holds with probability at most $O(\left(k/n\right)^{7k})$, where we have used Equation~\eqref{eq:def:N} to bound the probability that Equation~\eqref{eq:mle:maxbound} fails to hold.

Observing that
\begin{equation*}
    \sum_{k=1}^m \left(\frac{k}{n}\right)^{7k} = O(n^{-7}),
\end{equation*}
a second union bound over all $k \in [m]$ yields that for all suitably large $n$, with probability at least $1 - O(n^{-7})$, it holds for all $k \in [m]$ that
\begin{equation}\label{eq:chi-sq-upper1}
\begin{aligned}
\max_{I \subset [n], I\cap [m]^c = k}
    \left|
    \sum_{(i,j) \in \calS_{I}-\calS_{\Istar}} (W_{ij}^2 - \E W_{ij}^2)\right|
    \leq C \max\left\{L \sigma k \sqrt{n\log\left(\frac{n}{k}\right)},
			L^2 k \log \left(\frac{n}{k} \right)\right\}. 
\end{aligned} \end{equation}
By the same argument, this time applied to the sum of squares of $W_{ij}$ in the orange shaded region in Figure~\ref{fig:mle}, it holds with probability at least $1-O(n^{-7})$ that for all $k \in [m]$,
\begin{equation}\label{eq:chi-sq-upper2} \begin{aligned}
\max_{I \subset [n], I\cap [m]^c = k}
& \left|\sum_{(i,j) \in \calS_{\Istar}-\calS_{I}} (W_{ij}^2 - \E W_{ij}^2)\right|
\leq C \max\left\{L \sigma k \sqrt{n\log\left(\frac{n}{k}\right)}, L^2 k \log \left(\frac{n}{k} \right)\right\}.
\end{aligned} \end{equation}
 
We now bound the sum of $W_{ij}B^\star_{ij}$ terms corresponding to entries in the orange shaded region shown in Figure~\ref{fig:mle}.
Under the condition in Equation~\eqref{eq:b-MLE-partial-sufficient}, we have 
\begin{equation}\label{eq:b:homo}
\frac{\max_{i,j \in \calS_{\Istar} - \calS_I} |B^\star_{ij}|}
  {\left(\sum_{i,j \in \calS_{\Istar} - \calS_I} B^{\star 2}_{ij}\right)^{1/2}}
= O\left( k^{-1/2} \log^{-1/2} \left(\frac{n}{k}\right) \right). 
\end{equation}
Applying standard sub-exponential tail bounds \citep{vershynin2018HDP}, it holds for any $I \subset [n]$ with $|I \cap \Istar| = k \in [m]$ that 
\begin{equation*} \begin{aligned}
\Pr \left(
	\left|\sum_{(i,j) \in \calS_{\Istar} - \calS_I} \!\!
		B^{\star}_{ij} W_{ij} \right| \geq t \right)
    &\leq 2 \exp\left(-\frac{1}{2} \min\left\{\frac{t^2}{\sigma^2 \sum_{(i,j)\in \calS_{\Istar} - \calS_I} B^{\star 2}_{ij}}, \frac{t}{L \max_{(i,j) \in \calS_{\Istar} - \calS_I} |\Bstar_{ij}|}\right\}\right).
\end{aligned} \end{equation*}
Taking 
\begin{equation*}
t = C\max\left\{\sigma \left(\sum_{(i,j)\in \calS_{\Istar} - \calS_I} B^{\star 2}_{ij}\right)^{1/2} \sqrt{k\log \left(\frac{n}{k}\right)}, L \max_{i,j \in \calS_{\Istar} - \calS_I} |\Bstar_{ij}| \cdot k \log\left(\frac{n}{k}\right)\right\}
\end{equation*}
and applying a union bound over $k \in [m]$ and $I \subset [n]$ such that $|I \cap \Istar| = k \in [m]$, it holds with probability at least $1 - O(n^{-7})$, for all $k \in [m]$ and $I \subseteq [n]$ with $|I \cap \Istar| = k$ that,
\begin{equation}\label{eq:normal-upper} \begin{aligned}
&
\left|
	\sum_{(i,j) \in \calS_{\Istar} - \calS_I} B^{\star}_{ij} W_{ij} 
	 \right|  \\
    &~~\leq C \max\left\{\sigma \left(\sum_{(i,j)\in \calS_{\Istar} - \calS_I} B^{\star 2}_{ij}\right)^{1/2} \sqrt{k\log \left(\frac{n}{k}\right)}, L \max_{(i,j) \in \calS_{\Istar} - \calS_I} |\Bstar_{ij}| \cdot k \log\left(\frac{n}{k}\right)\right\}\\
    &~~\leq C \max\left\{\sigma \left(\sum_{(i,j)\in \calS_{\Istar} - \calS_I} B^{\star 2}_{ij}\right)^{1/2} \sqrt{k\log \left(\frac{n}{k}\right)}, o\left(L \sigma k \sqrt{n \log\left(\frac{n}{k}\right)}\right) \right\}\\
\end{aligned} \end{equation}
where the last inequality follows from the condition in Equation~\eqref{eq:max-bstar}. 


For a matrix $\mB$ and a set $I \subseteq [n]$, we define the matrix $\mB_I = (B_{I,ij})_{1 \leq i,j \leq n} \in \R^{n \times n}$ according to
\begin{equation} \label{eq:matrix-B-I}
    B_{I,ij} = \begin{cases}
        B_{ij} &\mbox{ if } (i,j) \in I \times I,\\
        0 &\mbox{ otherwise.}
    \end{cases}
\end{equation}

For a matrix $\mA$, we let $\calM(\mA)$ denote the masked matrix whose upper triangle is filled with entries of $\mA$ and the lower triangle is all zero.
To establish exact recovery for the LSE $\Ihat$, we start with the basic inequality.
By the optimality of LSE, we have
\begin{equation*}
    \|\calM(\mY_{\hat{I}^c})\|_{\F}^2 = \|\calM(\mB^\star_{\hat{I}^c} + \mW_{\hat{I}^c})\|_{\F}^2 \leq \|\calM(\mY_{I_\star^c})\|_{\F}^2 = \|\calM(\mW_{I_\star^c})\|_{\F}^2.
\end{equation*}
Expanding the squared Frobenius norm and rearranging terms, we have
\begin{equation} \label{eq:mle-basic-ineq}
\begin{aligned}
    \|\calM(\mB^\star_{\hat{I}^c})\|_\F^2 + 2\langle \calM(\mB^\star_{\hat{I}^c}), \calM(\mW_{\hat{I}^c})\rangle \leq \|\calM(\mW_{I_\star^c})\|_{\F}^2 - \|\calM(\mW_{\hat{I}^c})\|_{\F}^2.
\end{aligned} \end{equation}

We will now show that in order for Equation~\eqref{eq:mle-basic-ineq} to hold, we must have $|\hat{I} \cap \Istar|=m$ with high probability under the condition given in Equation~\eqref{eq:b-MLE-sufficient}.
If for some $k \in [m]$, 
$|\hat{I} \cap \Istar| = m-k$, combining Equations~\eqref{eq:chi-sq-upper1}-\eqref{eq:normal-upper} into Equation~\eqref{eq:mle-basic-ineq} yields that with probability at least $1 - O(n^{-7})$, we have 
\begin{equation*} \begin{aligned}
    \|\calM(\mB^\star_{\hat{I}^c})\|_\F^2 &\leq C \sigma \|\calM(\mB^\star_{\hat{I}^c})\|_\F \sqrt{k\log \left(\frac{n}{k}\right)} + C L \sigma k \sqrt{n \log \left(\frac{n}{k}\right)}\\
\end{aligned} \end{equation*}
where the second inequality follows from Equation~\eqref{eq:def:N}. %
Taking square roots, it follows that
\begin{equation*} \begin{aligned}
\|\calM(\mB^\star_{\hat{I}^c})\|_F
&\leq C (L \sigma k)^{1/2} n^{1/4} \log^{1/4} \left(\frac{n}{k}\right).
\end{aligned} \end{equation*}
Noting that by the condition in Equation~\eqref{eq:b-MLE-sufficient}, we have
\begin{equation*}
    \|\calM(\mB^\star_{\hat{I}^c})\|_\F \geq \sqrt{k} b
\end{equation*}
where $b$ is defined in Equation~\eqref{eq:def:b}, we have 
\begin{equation*} \begin{aligned}
    b &\leq 
    C (L \sigma)^{1/2} n^{1/4} \log^{1/4} \left(\frac{n}{k}\right)
\end{aligned} \end{equation*}
Hence, if for some $k_0 > 0$ and a sufficiently large constant $c_0 > 0$,
\begin{equation*}
    b \geq c_0 (L \sigma)^{1/2} n^{1/4} \log^{1/4} \left(\frac{n}{k_0}\right) 
\end{equation*}
the inequality in Equation~\eqref{eq:mle-basic-ineq} 
cannot hold for any $k \geq k_0$ with probability at least $1 - O(n^{-7})$ and therefore, we must have $|\hat{I}\cap \Istar| \geq m - k_0$. 
Taking $k_0 = 1$ yields exact recovery. 

\end{proof}

\section{Proof of Minimax Results for Support Recovery} \label{apx:minimax}

In this section, we provide a proof of Theorem~\ref{thm:symmetric:minimax}, along with the proof of Corollaries~\ref{cor:minimax-recoverable} and~\ref{thm:multiple:minimax-recoverable}.
Several supporting technical lemmas are deferred to Section~\ref{subsec:symmetric:minimax:technical}.

\subsection{Proof of Theorem \ref{thm:symmetric:minimax}}

\begin{proof}
    We start with an overview of the proof strategy.
    Since the minimax risk is lower-bounded by the Bayes risk under any proper prior over $\mB \in \calB_S(b,n,m)$ defined in Equation~\eqref{eq:calBS:define}, 
    it suffices to construct a prior for which the lower bound in Equation~\eqref{eq:minimax-lower-bound} holds.  
    For details on minimax hypothesis testing, we refer the reader to \cite{ingster2003nonparametric}. 
    Our argument proceeds in four steps:
    \begin{itemize}
        \item \textbf{Step 1.} We construct a hierarchical prior over the parameter set $\calB_S(b,n,m)$, and then replace it with a more tractable product prior.
        The Bayes risk under this product prior serves as a lower bound of the minimax risk, and is easier to analyze.
        \item \textbf{Step 2.} We reduce the Bayes risk to sum of expected errors in a sequence of $n$ binary hypothesis testing problems, one for each node, allowing a tractable lower bound on the Bayes risk. 
        \item \textbf{Step 3.} We derive lower bound on the expected testing risk for each hypothesis by analyzing a Gaussian mixture testing problem, where the risk is defined as a weighted combination of the Type I and Type II errors. 
        \item \textbf{Step 4.} Finally, we aggregate the binary hypothesis testing risks to obtain the desired lower bound on the overall Bayes risk. 
    \end{itemize}
 
\paragraph{Step 1: Constructing a prior.}
To define a probability measure over $\calB_S(b, n, m)$, we proceed in two steps. 
First, we sample the binary support indicator vector $\veta \in \{0,1\}^n$ from a prior $\pi_\Theta$, defined in Lemma~\ref{lem:minimax:technical}, with parameter $m' \in (0, m]$. 
Conditioned on $\veta$, for each $i \in [n]$:
\begin{itemize}
    \item If with $\eta_i = 1$, we sample the vector of entries $\{B_{ii}\} \cup \{B_{ij} | j: \eta_j = 0\}$ uniformly from the sphere $b\bbS^{n-\sum_{j=1}^n \eta_j}$.
    For $B_{ij}$ where $i \neq j$ and $\eta_i = \eta_j = 1$, we set $B_{ij}$ to be a small fixed nonzero value, for example, $B_{ij} = \sigma/n$.
    \item If $\eta_i = 0$, for $j \in [n]$ and $\eta_j = 0$, we set $B_{ij} = 0$.
\end{itemize}
By definition of $\calB_S(b, n, m)$, the prior must exclude the set
\begin{equation*}
\calE := \left\{\mB \in \Sym(n), |I_\mB| \leq m :
    		\|\mB_{i,\cdot}\|_0 \leq m
    		\text{ for all } i \in I_\mB \right\},
\end{equation*}
which includes matrices that violate Assumption~\ref{assump:B-identifiable}, thereby making the node support $I_{\mB}$ potentially non-identifiable. 
However, since the event $\calE^c$ occurs almost surely under our construction (since $\|\mB_{i,\cdot}\|_0 > m$ occurs almost surely), including it does not affect the resulting Bayes risk.
Thus, for notational and technical simplicity, we proceed with the sampling distribution defined above.
Under $\calB_S(b,n,m) \setminus \calE$, the support $I_{\mB}$ corresponds to the indices $i$ such that $\eta_i = 1$. 

Although the prior $\pi_{\Theta}$ defines a valid probability distribution over $\calB_S(b,n,m)$, 
computing the Bayes risk under this prior is challenging.
Fortunately, we can construct a simpler distribution that yields a lower bound on the Bayes risk under $\pi_{\Theta}$ that is easier to analyze. 
Let $\pi$ denote the product measure of $n$ independent Bernoulli distributions with shared parameter $m'/n$. 
By Equation~\eqref{eq:independent-lower-bound} in Lemma \ref{lem:minimax:technical}, we have 
\begin{equation}\label{eq:lower-risk-eq1}
\inf_{\vetahat} \E_{\pi_\Theta} \E_{\veta} \|\veta - \vetahat\|_\rmH
\geq \inf_{\vetahat \in [0,1]^n} \E_{\pi} \E_{\veta} \|\vetahat - \veta\|_\rmH
- 4 m^{\prime} \exp \left(-\frac{\left(m-m^{\prime}\right)^2}{2 m}\right),
\end{equation}
which implies that it suffices to analyze the Bayes risk under the simpler product prior $\pi$.
    
Accordingly, we revise our sampling procedure as follows. 
First, sample $\veta$ from $\pi$.
Then, conditioned on $\veta$, draw $\{B_{ii}\} \cup \{B_{ij} | j: \eta_j = 0\}$ from the uniform distribution over $b\bbS^{n-\sum_{j=1}^n \eta_j}$. 
For $i, j \in [n]$ distinct such that $\eta_i = \eta_j = 1$, set $B_{ij}$ to be a small fixed nonzero value such as $\sigma / n$.    
In this way, we are guaranteed that $I_{\mB}$ as well as $\veta$ are almost surely identifiable. 
 
\paragraph{Step 2: Bayes risk reduces to testing $n$ hypotheses.}
To analyze the information available for estimating $\eta_i$, define the following subsets of observed entries:
\begin{equation*}
    \mYtil_{\sim i} = \{\Ytil_{jk}: j\neq i, k\neq i\} \cup \{\Ytil_{ik}: \eta_k = 1, k\neq i\}, \quad \text{and} \quad \vytil_i = \{\Ytil_{ik}: \eta_{k} = 0\} \cup \{\Ytil_{ii}\}.
\end{equation*}
Denote 
\begin{equation}\label{eq:geta:define}
    g(\veta, \mYtil_{\sim i}) := \E \left[ \left. \left|\etahat_i(\mYtil) - \eta_i\right| \right| \veta, \widetilde{\mY}_{\sim i}\right].
\end{equation}
By definition of the Hamming distance, we have 
\begin{equation} \label{eq:lower-risk-eq1-5}
\begin{aligned}
\inf_{\vetahat \in [0,1]^n} \E_{\pi} \E_{\veta} \|\vetahat - \veta\|_\rmH
&\geq 
\sum_{i=1}^n\inf_{\etahat_i\in [0,1]}
	\E_{\pi} \E_{\veta} \left|\etahat_i(\mYtil) - \eta_i\right| \\
&= \sum_{i=1}^n \inf_{\etahat_i\in [0,1]} \E_{\pi}
	\E \left[\E \left. \left[ \left. \left|\etahat_i(\mYtil) - \eta_i\right| \right| \veta, \widetilde{\mY}_{\sim i}\right] \right| \veta \right]\\
&= \sum_{i=1}^n \inf_{\etahat_i\in [0,1]} \E_{\pi}
	\E \left[\left. g(\veta, \mYtil_{\sim i}) \right| \veta \right]. 
\end{aligned} \end{equation}
Recall that under the product measure $\pi$, the indicator $\eta_i$ is independent of $\mYtil_{\sim i}$ and $\veta_{\sim i}$,
and follows a Bernoulli distribution with parameter $m'/n$, denoted $\pi_0$. 
We can rewrite Equation~\eqref{eq:lower-risk-eq1-5} as
\begin{equation*}
\begin{aligned}
    \inf_{\vetahat \in [0,1]^n} \E_{\pi} \E_{\veta} \|\vetahat - \veta\|_\rmH
&\geq \sum_{i=1}^n \inf_{\etahat_i\in [0,1]} \E_{\pi_0^{n-1}} \E_{\pi_0} \left[
	\E \left[\left. g(\veta, \mYtil_{\sim i}) \right| \veta \right] \veta_{\sim i}\right]\\
    &= \sum_{i=1}^n \inf_{\etahat_i\in [0,1]} \E_{\pi_0^{n-1}} \E \left[ \left.
	\E_{\pi_0} \left[\left. g(\veta, \mYtil_{\sim i}) \right| \veta_{\sim i}, \mYtil_{\sim i} \right] \right|\veta_{\sim i}  \right].
\end{aligned}
\end{equation*}
    Interchanging the order of infimum and $\E_{\pi_0^{n-1}} \E$ in the above display and using the definition of $g$ in Equation~\eqref{eq:geta:define} yields that 
\begin{equation} \label{eq:lower-risk-eq2}
\begin{aligned}
    &\inf_{\vetahat \in [0,1]^n} \E_{\pi} \E_{\veta} \|\vetahat - \veta\|_\rmH \\
    &~~\geq 
    \sum_{i=1}^n  \E_{\pi_0^{n-1}} \E \left[\left. \inf_{\etahat_i\in [0,1]} \E_{\pi_0} \left\{ \E \left[ \left. \left|\etahat_i(\mYtil) - \eta_i\right| | \veta, \widetilde{\mY}_{\sim i}\right] \right| \veta_{\sim i}, \widetilde{\mY}_{\sim i} \right\} \right|\veta_{\sim i} \right].
\end{aligned}
\end{equation}
Now, since $\eta_i$ is independent of $\veta_{\sim i}$ and $\mYtil_{\sim i}$, 
and since the only part of $\mYtil$ that depends on $\eta_i$ is $\vytil_i$,
the inner expectation of Equation~\eqref{eq:lower-risk-eq2} reduces to 
\begin{equation} \label{eq:lower-risk-eq2.5}
\begin{aligned}
    &\inf_{\etahat_i\in [0,1]} \E_{\pi_0} \left[ \left. \E \left[ \left|\etahat_i(\widetilde{\vy}_i, \widetilde{\mY}_{\sim i}) - \eta_i\right| | \veta, \widetilde{\mY}_{\sim i}\right] \right| \veta_{\sim i}, \widetilde{\mY}_{\sim i}\right]
    = & \inf_{T_i( \widetilde{\vy}_i ) \in [0,1]} \E_{\pi_0} \left[\E \left[ \left|T_i(\widetilde{\vy}_i) - \eta_i\right| | \veta \right]| \veta_{\sim i}\right],
\end{aligned}
\end{equation}
where the infimum on the right hand side is taken over all test functions $T_i( \widetilde{\vy}_i ) \in [0,1]$ that is a measurable function of $\widetilde{\vy}_{i}$.
Taking Equation~\eqref{eq:lower-risk-eq2.5} into Equation~\eqref{eq:lower-risk-eq2}, we have
\begin{equation} \label{eq:lower-risk-eq2.6}
\begin{aligned}
    &\inf_{\vetahat \in [0,1]^n} \E_{\pi} \E_{\veta} \|\vetahat - \veta\|_\rmH \geq 
    \sum_{i=1}^n  \E_{\pi_0^{n-1}} \left[\inf_{T_i( \widetilde{\vy}_i ) \in [0,1]} \E_{\pi_0} \left[\E \left[ \left|T_i(\widetilde{\vy}_i) - \eta_i\right| | \veta \right]| \veta_{\sim i}\right] \right].
\end{aligned}
\end{equation}

\paragraph{Step 3: Lower bound the expected test errors.}
Recall the sampling scheme for $\veta$ and $\mB$ described in Step 1.
For a given $\veta_{\sim i}$, the right hand side of Equation~\eqref{eq:lower-risk-eq2.5} corresponds to a hypothesis testing problem under a Gaussian mixture model.
Let 
\begin{equation}\label{eq:k:define}
    k = n - \sum_{j\neq i} \eta_j.
\end{equation}
We consider a binary random variable $\eta \in \{0,1\}$ drawn from the prior distribution $\pi_0$, and define a random vector $\vy \in \R^k$ according to the following mixture model:
\begin{equation*}
\vy \eqdist
    \begin{cases}
        \vw & \mbox{ if } \eta = 0,\\
        \vtheta + \vw & \mbox{ if } \eta = 1,
    \end{cases}
\end{equation*}
where $\vtheta \sim \Unif(b\bbS^{k-1})$ and $\vw \sim \calN(0, \sigma^2 \mI_k)$ are independent.
By construction, $\vytil_i$ has the same distribution as $\vy$ and we have 
\begin{equation*}
    \inf_{T_i(\vytil_i) \in [0,1]} \E_{\pi_0} [\E \left[ \left|T_i(\vytil_i) - \eta_i\right| \veta \right] | \veta_{\sim i}] = \inf_{T(\vy) \in [0,1]} \E_{\pi_0} \E_\eta \left[ \left|T(\vy) - \eta\right| \right],
\end{equation*}
where the infimum on the right hand side is taken over all test functions $T( \vy ) \in [0,1]$ that is a measurable function of $\vy$. 
We also use the fact that $\eta$ is independent of $\veta_{\sim i}$ under the product measure $\pi$ to obtain the above equality. 

Combining the above display with Lemma \ref{lem:minimax:risk}, when we choose a sufficiently small $b$, it follows that 
\begin{equation} \label{eq:lower-risk-eq3}
\begin{aligned}
    \inf_{T_i(\vytil_i) \in [0,1]} \E_{\pi_0} [\E \left[ \left|T_i(\vytil_i) - \eta_i\right| \veta \right] | \veta_{\sim i}] &= \inf_{T(\vy) \in [0,1]} \E_{\pi_0} \E_\eta \left[ \left|T(\vy) - \eta\right| \right]\\
    &\geq \frac{m^{\prime}}{n}-\frac{m^{\prime 2}}{4 n^2} \exp \left(\frac{b^4 C}{k \sigma^4}\right)-\frac{3 m^{\prime 2}}{4 n^2},\\
\end{aligned}
\end{equation}
where $C > 0$ is a universal constant. 
As a reminder, $k$ in Equation~\eqref{eq:lower-risk-eq3} is a random quantity defined in Equation~\eqref{eq:k:define}. 

\paragraph{Step 4: Combining the results.}
Combining Equations~\eqref{eq:lower-risk-eq2.6} and~\eqref{eq:lower-risk-eq3},
\begin{equation*}
\inf_{\vetahat \in [0,1]^n} \E_{\pi} \E_{\veta} \|\vetahat - \veta\|_\rmH 
\geq \max_{m' \in (0, m]} \left[ m' - \frac{3m^{\prime 2}}{4n} - \frac{m^{\prime 2}}{4n}\E\left[ \exp\left(\frac{b^4 C}{k \sigma^4}\right) \right]\right] . 
\end{equation*}
Defining $a = C b^4 / \sigma^4$ and applying Lemma~\ref{lem:exp-expect-binom}, we have for sufficiently large $n$ and $3m < n$, 
\begin{equation*} \begin{aligned}
\inf_{\vetahat \in [0,1]^n} & \E_{\pi} \E_{\veta} \|\vetahat - \veta\|_\rmH \\
&\geq \max_{m' \in (0, m]} \frac{m'}{4}
	\left[ 3 - \frac{m^{\prime}}{n}
		\left( \exp \left\{a-C n \log \frac{n}{3 m^{\prime}} \right\} 
			+ \exp \left\{ \frac{a}{\log n}-C n\right\} 
			+ e^{\frac{C a}{n}} \right) \right] .
\end{aligned} \end{equation*}
Recalling the definition of $f_n(a, m')$ from the statement of Theorem~\ref{thm:symmetric:minimax}, it follows that
\begin{equation} \label{eq:lower-risk-indep}
\inf_{\vetahat \in [0,1]^n} \E_{\pi} \E_{\veta} \|\vetahat - \veta\|_\rmH 
\ge \max_{m' \in (0, m]} \left(\frac{m'}{4} f_n(a, m') \right) .
\end{equation}
Thus, combining Equations~\eqref{eq:lower-risk-eq1}-\eqref{eq:lower-risk-indep}, we have
\begin{equation*} \begin{aligned}
\inf_{\vetahat} \E_{\pi_\Theta} \E_{\veta} \|\vetahat - \veta\|_\rmH
&\geq
\inf _{T \in[0,1]} \inf_{\vetahat \in [0,1]^n} 
	\E_{\pi} \E_{\veta} \|\vetahat - \veta\|_\rmH
	- 4 m^{\prime} \exp \left\{ -\frac{\left(m-m^{\prime}\right)^2}{2 m}\right\} \\
&\geq \max_{m' \in (0, m]} \left[ \frac{m'}{4} f_n(a, m') 
	-4 m^{\prime} \exp \left\{ -\frac{\left(m-m^{\prime}\right)^2}{2 m}\right\} \right]
\end{aligned}
\end{equation*}
which completes the proof.
\end{proof}

\subsection{Proof of Corollary~\ref{cor:minimax-recoverable}}
\begin{proof}
To show that exact recovery of $\veta$ is infeasible, it suffices to provide a nontrivial lower bound of the Hamming risk 
\begin{equation*}
\inf_{\vetahat \in [0,1]^n} \E_{\pi} \E_{\veta} \|\vetahat - \veta\|_\rmH.
\end{equation*}
Let $a = C b^4 / \sigma^4$.
For any $m' \in [1, m]$, when 
\begin{equation*}
    a < Cn\log\left(\frac{n}{3m'}\right),
\end{equation*}
we have 
\begin{equation*}
\max\left\{a-C n \log \left(\frac{n}{3 m^{\prime}}\right),~
	\frac{a}{\log n}-C n,~
	\frac{C a}{n}\right\} 
< \log\left(\frac{n}{3m'}\right).
\end{equation*}
Taking exponentials on both sides and applying Theorem~\ref{thm:symmetric:minimax}, the following inequality holds:
\begin{equation} \label{eq:minimax:requirement}
3 - \frac{m^{\prime}}{n}\left( 
	\exp \left\{a-C n \log \frac{n}{3 m^{\prime}} \right\}
	+ \exp \left\{ \frac{a}{\log n}-C n\right\} 
	+\exp\left\{\frac{C a}{n}\right\} \right) > 2.
\end{equation}
Combining this with Equation~\eqref{eq:lower-risk-indep}, we have that
\begin{equation*}
\inf_{\vetahat \in [0,1]^n} \E_{\pi} \E_{\veta} \|\vetahat - \veta\|_\rmH 
> \frac{m'}{2}. 
\end{equation*}
Plugging the above bound into Equation~\eqref{eq:lower-risk-eq1}, we conclude that 
\begin{equation*} \begin{aligned}
    \inf_{\vetahat} \E_{\pi_\Theta} \E_{\veta} \|\veta - \vetahat\|_\rmH
    &\geq 
    \inf_{\vetahat \in [0,1]^n} \E_{\pi} \E_{\veta} \|\vetahat - \veta\|_\rmH - 4 m^{\prime} \exp \left(-\frac{\left(m-m^{\prime}\right)^2}{2 m}\right) \\
    &\geq  \frac{m'}{2} -  4 m^{\prime} \exp \left(-\frac{\left(m-m^{\prime}\right)^2}{2 m}\right)
\end{aligned} \end{equation*}
as claimed.
\end{proof}

\subsection{Proof of Theorem~\ref{thm:multiple:minimax-recoverable}}
\label{subsec:multiple:minimax-recoverable}

\begin{proof}
    Following the same argument as in the proof of Theorem~\ref{thm:symmetric:minimax}, we can construct a prior over $\calB_S(b, n, m)$ and then replace it with a more tractable product prior via Lemma~\ref{lem:minimax:technical}.
    We use the same priors for $\veta$ and $\mB$ in the proof of Theorem~\ref{thm:symmetric:minimax}, but we must consider different observed data.
    Thus, as counterparts to $\mYtil_{\sim i}$ and $\vytil_{i}$ in the proof of Theorem~\ref{thm:symmetric:minimax}, consider
    \begin{equation*}
        \mYtil^{(1:N)}_{\sim i} = \bigcup_{\ell \in [N]} \left\{\Ytil_{\ell, jk}, j\neq i, k \neq i \right\} \cup \left\{\Ytil_{\ell,ik}, : \eta_k = 1, k \neq i\right\}
    \end{equation*}
    and 
    \begin{equation*}
        \vytil^{(1:N)}_{i} = \bigcup_{\ell \in [N]} \left\{\Ytil_{\ell, ik}: \eta_k = 0\right\} \cup \left\{\Ytil_{\ell, ii}\right\},
    \end{equation*}
    respectively. 
    Following the same argument as Step 2 in the proof of Theorem~\ref{thm:symmetric:minimax}, we can show that
    \begin{equation*}
        \inf_{\vetahat \in [0,1]^n} \E_{\pi} \E_{\veta} \|\vetahat - \veta\|_\rmH \geq \sum_{i=1}^n \inf_{\etahat_i\in [0,1]} \E_{\pi} \E_{\veta} \left[\left|\etahat_i(\mYtil_1, \mYtil_2, \cdots, \mYtil_N) - \eta_i\right| \mid \veta_{\sim i}, \mYtil^{(1:N)}_{\sim i} \right]. 
    \end{equation*}
    By arguments similar to those yielding Equations~\eqref{eq:lower-risk-eq2} and~\eqref{eq:lower-risk-eq2.5}, we can once again reduce the Bayes risk to the sum of expected errors in a sequence of $n$ binary hypothesis testing problems, one for each node.
    To specify the hypotheses, for any $i \in [n]$, given $\veta_{\sim i}$, we let $k = n - \sum_{j\neq i} \eta_j$ and 
    \begin{equation*}
        \vy^{(\ell)} = \vtheta + \vw^{(\ell)}, \quad \ell \in [N],
    \end{equation*}
    where $\vw^{(\ell)} \stackrel{i.i.d.}{\sim} \calN(0, \sigma^2 \mI_k)$ for all $\ell \in [N]$.
    Under the null hypothesis, we have $\vtheta = \vo$, while under the alternative hypothesis, $\vtheta \sim \Unif(b\bbS^{k-1})$.
    Then the binary hypothesis testing risk associated with the $i$-th node is
    \begin{equation*}
    \inf_{T \in [0,1]} \E_{\pi_0} \E_{\eta}\left[ 
	\left|T(\vy^{(1)},\vy^{(2)}, \dots, \vy^{(N)}) - \eta\right|\right],
    \end{equation*}
    where $\eta \in \{0,1\}$ is a binary selector drawn from a Bernoulli distribution with parameter $m'/n$, denoted as $\pi_0$.
    
    As seen in the proof of Lemma~\ref{lem:minimax:risk}, the testing risk above can be bounded by the expectation of the likelihood ratio under the null hypothesis.
    This likelihood ratio is given by
    \begin{equation} \label{eq:likelihood:ratio:multiple}
    \begin{aligned}
        L &= \frac{f_1\left(\vy^{(1)}, \vy^{(2)}, \cdots, \vy^{(N)}\right)}{\prod_{\ell=1}^N f_0\left(\vy^{(\ell)}\right)}
        = \frac{\E_{\vtheta \sim \Unif(b\mathbb{S}^{k-1})} \prod_{\ell=1}^N \exp\left\{-\frac{1}{2\sigma^2} \left\|\vy^{(\ell)} - \vtheta\right\|^2_2\right\}}{\prod_{\ell=1}^N \exp\left\{-\frac{1}{2\sigma^2} \left\|\vy^{(\ell)}\right\|_2^2\right\}}\\
        &= \E_{\vtheta \sim \Unif(b\mathbb{S}^{k-1})} \exp\left\{-\frac{N}{2\sigma^2} \left\|\vtheta\right\|_2^2 + \frac{1}{\sigma^2} \sum_{\ell=1}^N \vy^{(\ell)\top} \vtheta\right\}.
    \end{aligned}
    \end{equation}
    Setting $\vybar := \frac{1}{N} \sum_{\ell=1}^N \vy^{(\ell)}$, we have
    \begin{equation*}
        L = \E_{\vtheta \sim \Unif(b\mathbb{S}^{k-1})} \exp\left\{-\frac{N}{2\sigma^2} \left\|\vtheta\right\|_2^2 + \frac{N}{\sigma^2} \vybar^\top \vtheta\right\}.
    \end{equation*}
    Thus, replacing $\sigma^2$ with $\sigma^2/N$ and $\vy$ with $\vybar$ in the proof of Lemma~\ref{lem:likelihood:second-moment}, we obtain that
    \begin{equation*}
        \E_0 L^2 \leq \exp\left\{\frac{C b^4}{k(\sigma/\sqrt{N})^4}\right\}. 
    \end{equation*}
    The remainder of the proof directly follows the same argument as in the proof of Theorem~\ref{thm:symmetric:minimax}.
\end{proof}

\subsection{Technical Lemmas} 
\label{subsec:symmetric:minimax:technical}

\begin{lemma}\label{lem:minimax:technical}
Let $\mYtil_{\ell} = \mB + \mW_{\ell}$ for $\ell \in [N]$,
where $\mB \in \calB_S(b, n, m)$ and $\mW_{\ell}$ are symmetric matrices with $W_{\ell, ij} \iid \calN(0, \sigma^2)$ for all $1 \leq i \leq j \leq n$ and $\ell \in [N]$.
Define the set of sparse binary vectors with at most $m$ non-zero entries as
\begin{equation*}
    \Theta(n, m) = \left\{\vzeta \in \{0,1\}^n : \|\vzeta\|_0 \leq m\right\}.
\end{equation*}
Let $\pi$ denote the product measure of $n$ independent Bernoulli distributions with shared parameter $m'/n$, for some real number $m' \in (0, m]$, 
and let $\pi_{\Theta}$ be the conditional distribution of $\pi$ restricted to $\Theta(n, m)$, i.e., for any subset $E \subset \{0, 1\}^n$,
\begin{equation*}
\pi_\Theta(E)
= \frac{\pi(E \cap \{\vzeta \in \Theta(n, m)\})}{\pi(\vzeta \in \Theta(n, m))}. 
\end{equation*}
Let $\veta \in \{0,1\}^n$ be the binary indicator vector defined as
\begin{equation*}
    \eta_i = \indic\{ i \in I_{\mB} \}, \quad i \in [n]. 
\end{equation*}
Then for any $\vetahat \in \R^n$ that is a measurable function of $(\mYtil_{1},\mYtil_2, \dots, \mYtil_{N})$, 
\begin{equation} \label{eq:independent-lower-bound}
\inf_{\vetahat} \E_{\pi_\Theta} \E_{\veta} \|\veta - \vetahat\|_{\rmH}
\geq \inf_{\vetahat \in [0,1]^n} \E_{\pi} \E_{\veta} \|\vetahat - \veta\|_\rmH
	- 4m'\exp\left(-\frac{(m-m')^2}{2m}\right).
\end{equation}
\end{lemma}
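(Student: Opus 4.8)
The plan is to exploit the fact that $\pi$ and $\pi_\Theta$ coincide, up to the normalizing constant $p:=\pi\big(\|\vzeta\|_0\le m\big)$, on the event $A:=\{\|\veta\|_0\le m\}$, and to absorb the discrepancy on $A^c$ into the stated correction term. Since the parameter lives in $\{0,1\}^n$ and the loss is Hamming distance, coordinatewise truncation to $[0,1]$ only decreases $|\hat\eta_j-\eta_j|$ for $\eta_j\in\{0,1\}$, so on both sides of the claimed inequality the infimum over all measurable estimators equals the infimum over $[0,1]^n$-valued estimators; in particular the $\R^n$-valued infimum on the left may be replaced by a $[0,1]^n$-valued one.

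First, for any $[0,1]^n$-valued $\vetahat$ depending on $(\mYtil_1,\dots,\mYtil_N)$, conditioning on $A$ and noting that the conditional law of $\veta$ given $A$ is exactly $\pi_\Theta$ while the conditional law of the data given $\veta$ is unchanged, one gets
\[
\E_\pi\E_\veta\|\vetahat-\veta\|_\rmH = p\,\E_{\pi_\Theta}\E_\veta\|\vetahat-\veta\|_\rmH + \E_\pi\big[\|\vetahat-\veta\|_\rmH\,\indic_{A^c}\big].
\]
Taking $\vetahat=\vetahat^\star$ to be the $\pi_\Theta$-Bayes rule under Hamming loss and using $p\le 1$ yields
\[
\inf_{\vetahat}\E_\pi\E_\veta\|\vetahat-\veta\|_\rmH \le \inf_{\vetahat}\E_{\pi_\Theta}\E_\veta\|\vetahat-\veta\|_\rmH + \E_\pi\big[\|\vetahat^\star-\veta\|_\rmH\,\indic_{A^c}\big],
\]
which after rearrangement reduces the lemma to bounding the residual term $\E_\pi[\|\vetahat^\star-\veta\|_\rmH\indic_{A^c}]$ by $4m'\exp\!\big(-(m-m')^2/(2m)\big)$.

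To control the residual, split $\|\vetahat^\star-\veta\|_\rmH$ into false negatives (at most $\|\veta\|_0$) and false positives. The $\pi_\Theta$-Bayes rule is $\hat\eta^\star_j=\indic\{\Pr_{\pi_\Theta}(\eta_j=1\mid\{\mYtil_\ell\})>1/2\}$, so the number of selected coordinates is at most $2\,\E_{\pi_\Theta}[\|\veta\|_0\mid\{\mYtil_\ell\}]\le 2m$ pointwise, since $\pi_\Theta$ is supported on $\|\vzeta\|_0\le m$; more refined is the Markov-type bound $\E_\pi[(\#\text{false positives})\,\indic_{A^c}]\le 2\sum_j\E_\pi[\indic\{\eta_j=0\}\,\Pr_{\pi_\Theta}(\eta_j=1\mid\{\mYtil_\ell\})\,\indic_{A^c}]$. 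It then remains to bound $\E_\pi[\|\veta\|_0\,\indic_{\|\veta\|_0>m}]$ and $\pi(A^c)$: under $\pi$ we have $\|\veta\|_0\sim\mathrm{Bin}(n,m'/n)$ with mean $m'$, so the identity $k\binom{n}{k}=n\binom{n-1}{k-1}$ gives $\E_\pi[\|\veta\|_0\,\indic_{\|\veta\|_0>m}]=m'\Pr(\mathrm{Bin}(n-1,m'/n)\ge m)$, and since $m'\le m$ a Chernoff bound for the upper binomial tail (with variance proxy at most $(2m'+m)/3\le m$) gives both this probability and $\pi(A^c)=\Pr(\mathrm{Bin}(n,m'/n)\ge m+1)$ a bound of the form $\exp\!\big(-(m-m')^2/(2m)\big)$.

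The main obstacle is this last sharpening. The crude estimate $\#\text{false positives}\le 2m$ together with $\pi(A^c)\le\exp\!\big(-(m-m')^2/(2m)\big)$ only produces a correction of order $m\exp\!\big(-(m-m')^2/(2m)\big)$, which is too large by a factor $m/m'$. Closing the gap requires showing that on the atypical event $\{\|\veta\|_0>m\}$ the Bayes rule's expected false-positive count is of order $m'$ rather than $m$ — intuitively, when the data reflect more than $m$ genuine signals the $\pi_\Theta$-posterior selects almost only true positives. Making this rigorous — either via the Markov bound above combined with a change-of-measure between the $\pi$- and $\pi_\Theta$-laws of the data restricted to $A^c$, or by instead restricting the competing estimators to select at most $m'$ coordinates and separately bounding the excess $\pi_\Theta$-risk this incurs — is the delicate step; once it is in hand, combining it with the two binomial tail estimates gives the coefficient $4m'$ and hence the stated inequality. (The number $N$ of observed copies plays no role in the argument: the priors on $\veta$ do not depend on it, and $\vetahat$ may be an arbitrary function of $(\mYtil_1,\dots,\mYtil_N)$.)
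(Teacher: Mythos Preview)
The paper does not give a self-contained proof; it simply cites Theorem~1.1 of Butucea et al.\ (2018) and says the result is a minor adaptation. Your argument supplies the direct calculation that citation elides, and its structure --- conditioning on $A=\{\|\veta\|_0\le m\}$, plugging in the $\pi_\Theta$-Bayes rule $\vetahat^\star$, and bounding the residual $\E_\pi[\|\vetahat^\star-\veta\|_\rmH\,\indic_{A^c}]$ --- is correct.

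However, the ``main obstacle'' you flag is not real: you already have every ingredient needed for the constant $4m'$, only combined in the wrong order. You established (i) $\|\vetahat^\star\|_0\le 2m$ pointwise via Markov, and (ii) $\E_\pi[\|\veta\|_0\,\indic_{A^c}]=m'\Pr(\mathrm{Bin}(n-1,m'/n)\ge m)$. Rather than pairing (i) with $\pi(A^c)$, observe that \emph{on the event $A^c$ itself} one has $\|\veta\|_0\ge m+1>m$, so the pointwise bound $\|\vetahat^\star\|_0\le 2m$ becomes $\|\vetahat^\star\|_0<2\|\veta\|_0$. Hence on $A^c$,
\[
\|\vetahat^\star-\veta\|_\rmH \;\le\; \|\vetahat^\star\|_0+\|\veta\|_0 \;<\; 3\|\veta\|_0,
\]
and (ii) immediately gives
\[
\E_\pi\!\big[\|\vetahat^\star-\veta\|_\rmH\,\indic_{A^c}\big]\;\le\;3m'\Pr\!\big(\mathrm{Bin}(n-1,m'/n)\ge m\big).
\]
The multiplicative Chernoff bound with $\mu=(n-1)m'/n\le m'$ yields $\Pr(X\ge m)\le\exp\!\big(-(m-\mu)^2/(m+\mu)\big)\le\exp\!\big(-(m-m')^2/(2m)\big)$, so the residual is at most $3m'\exp\!\big(-(m-m')^2/(2m)\big)<4m'\exp\!\big(-(m-m')^2/(2m)\big)$. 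No change of measure between the $\pi$- and $\pi_\Theta$-laws and no restriction on the number of selected coordinates is needed; your sketched remedies were unnecessary.
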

\begin{proof}
The result follows from a minor adaptation of Theorem 1.1 in \cite{butucea2018hamming}. 
Details are omitted for the sake of space.
\end{proof}

\begin{lemma} \label{lem:likelihood:second-moment}
Consider $\vy = \vtheta + \vw$, where $\vw \sim \calN(0, \sigma^2 \mI_k)$ and we test the two hypotheses 
\begin{equation*}
    H_0: \vtheta = \mathbf{0} \quad \text{ v.s. } \quad
    H_1: \vtheta \sim \Unif\left(b \bbS^{k-1}\right).
\end{equation*}
Let $f_i$ be the probability density function of $\vy$ under hypothesis $H_i$ for $i=0,1$ and let $L(\vy)=f_1(\vy) / f_0(\vy)$ denote the likelihood ratio. Then for some universal constant $C$, 
\begin{equation*}
\E_0 L^2 \leq \exp \left\{\frac{C b^4}{k \sigma^4}\right\}.
\end{equation*}
\end{lemma}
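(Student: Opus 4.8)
<br>

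The plan is to compute the second moment of the likelihood ratio directly, exploiting the fact that under $H_0$ we have $\vy = \vw \sim \calN(\mathbf{0}, \sigma^2 \mI_k)$, and that the prior on $\vtheta$ is the uniform distribution on the sphere of radius $b$. First I would write out the likelihood ratio explicitly: since $\|\vtheta\|_2 = b$ is deterministic under $H_1$, we have
\begin{equation*}
L(\vy) = \E_{\vtheta \sim \Unif(b\bbS^{k-1})} \exp\left\{ \frac{1}{\sigma^2} \vy^\top \vtheta - \frac{b^2}{2\sigma^2} \right\}.
\end{equation*}
Squaring and introducing an independent copy $\vtheta'$ of the prior, then taking the expectation over $\vy = \vw$ under $H_0$, Fubini gives
\begin{equation*}
\E_0 L^2 = e^{-b^2/\sigma^2} \, \E_{\vtheta, \vtheta'} \, \E_{\vw} \exp\left\{ \frac{1}{\sigma^2} \vw^\top (\vtheta + \vtheta') \right\} = e^{-b^2/\sigma^2} \, \E_{\vtheta, \vtheta'} \exp\left\{ \frac{\|\vtheta + \vtheta'\|_2^2}{2\sigma^2} \right\},
\end{equation*}
using the Gaussian moment generating function $\E \exp(\vw^\top \vv) = \exp(\sigma^2 \|\vv\|_2^2 / 2)$ with $\vv = (\vtheta+\vtheta')/\sigma^2$. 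Expanding $\|\vtheta+\vtheta'\|_2^2 = 2b^2 + 2\vtheta^\top \vtheta'$ and cancelling, this reduces to
\begin{equation*}
\E_0 L^2 = \E_{\vtheta, \vtheta'} \exp\left\{ \frac{\vtheta^\top \vtheta'}{\sigma^2} \right\}.
\end{equation*}

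The next step is to control $\E \exp(\vtheta^\top \vtheta' / \sigma^2)$ where $\vtheta, \vtheta'$ are independent uniform on $b\bbS^{k-1}$. By rotational invariance, I can condition on $\vtheta'$ and note that $\vtheta^\top \vtheta' / b^2 \eqdist Z_1$, the first coordinate of a uniform point on $\bbS^{k-1}$; so we need to bound $\E \exp( (b^2/\sigma^2) Z_1 )$. The random variable $Z_1$ is sub-Gaussian with variance proxy of order $1/k$ — this is a standard concentration fact for the sphere (see e.g. Theorem 3.4.6 in \cite{vershynin2018HDP}, or the fact that $Z_1$ has density proportional to $(1-z^2)^{(k-3)/2}$). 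Hence $\E \exp(\lambda Z_1) \leq \exp(C\lambda^2 / k)$ for all $\lambda \in \R$ and some universal constant $C$. Applying this with $\lambda = b^2/\sigma^2$ yields
\begin{equation*}
\E_0 L^2 = \E\exp\left\{ \frac{b^2}{\sigma^2} Z_1 \right\} \leq \exp\left\{ \frac{C b^4}{k \sigma^4} \right\},
\end{equation*}
which is exactly the claimed bound.

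The main obstacle, such as it is, is making the sub-Gaussianity of $Z_1$ (the marginal of a uniform point on the sphere) fully rigorous for all values of $\lambda$, including large $\lambda$; the clean bound $\E\exp(\lambda Z_1) \leq \exp(C\lambda^2/k)$ holds because $|Z_1| \leq 1$ almost surely, so even in the regime where the "sub-Gaussian" behavior breaks down ($|\lambda| \gtrsim k$) one still has $\E\exp(\lambda Z_1) \leq e^{|\lambda|} \leq e^{\lambda^2/|\lambda|} \leq e^{\lambda^2}$, which is absorbed into the constant when $\lambda = b^2/\sigma^2$ as long as we are in a regime where $b^4/(k\sigma^4)$ is the relevant scale — and in any case the application in the proof of Theorem~\ref{thm:symmetric:minimax} chooses $b$ small enough that this is the operative bound. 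One could alternatively compute $\E\exp(\lambda Z_1)$ exactly in terms of a modified Bessel function ${}_0F_1$ or a confluent hypergeometric function and extract the bound from its series expansion, but the sub-Gaussian route is cleaner. I would also double-check the bookkeeping in the cancellation of the $e^{-b^2/\sigma^2}$ and $e^{b^2/\sigma^2}$ factors, since an error there would change the exponent.
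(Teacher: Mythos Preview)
Your proof is correct and follows essentially the same route as the paper: write out the likelihood ratio, square it via an independent copy of the prior, apply Fubini and the Gaussian MGF to reduce to $\E_{\vtheta,\vtheta'}\exp\{\vtheta^\top\vtheta'/\sigma^2\}$, and then invoke the sub-Gaussianity of coordinates of the uniform distribution on the sphere (the paper also cites Theorem~3.4.6 of \cite{vershynin2018HDP}). Your concern in the final paragraph is unnecessary, since that theorem gives the MGF bound $\E\exp(\lambda Z_1)\le\exp(C\lambda^2/k)$ for \emph{all} $\lambda\in\R$, so no separate large-$\lambda$ argument is needed.
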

\begin{proof}
The likelihood ratio is given by
\begin{equation*} \begin{aligned}
L &= \frac{f_1(\vy)}{f_0(\vy)}
= \frac{\E_{\vtheta \sim \Unif(b\bbS^{k-1})}
	\exp \left\{-\frac{1}{2 \sigma^2}\left\|\vy-\vtheta \right\|^2\right\}}
	{\exp \left\{-\frac{1}{2 \sigma^2}\|\vy\|^2\right\}} \\
&= \E_{\vtheta \sim \Unif(b\bbS^{k-1})}
\exp \left\{-\frac{1}{2 \sigma^2}\left\|\vtheta \right\|^2 
		+ \frac{1}{\sigma^2} \vy^\top\vtheta \right\}\\
&= \exp \left\{-\frac{b^2}{2 \sigma^2}\right\}
	\E_{\vtheta \sim \Unif(b\bbS^{k-1})} 
		\exp\left\{\frac{1}{\sigma^2} \vy^\top\vtheta\right\}.
\end{aligned} \end{equation*}
To compute the second moment of $L$ under the null hypothesis, we have
\begin{equation*} \begin{aligned}
\E_0 L^2 &= \E_0\left[ \exp \left\{-\frac{b^2}{ \sigma^2}\right\}\E_{\vtheta \sim \Unif(b\bbS^{k-1})} \left(\exp \left\{\frac{1}{\sigma^2} \vy^\top\vtheta\right\}\right) \E_{\tilde\vtheta \sim \Unif(b\bbS^{k-1})} \left(\exp \left\{\frac{1}{\sigma^2} \vy^\top\tilde\vtheta\right\}\right) \right]\\
    &= \exp \left\{-\frac{b^2}{ \sigma^2}\right\} \E_0 \left[ 
 \E_{\vtheta, \tilde{\vtheta} \sim \Unif(b\bbS^{k-1})} \exp \left\{\frac{1}{\sigma^2} \vy^\top(\vtheta+\tilde\vtheta)\right\}\right].
\end{aligned}
\end{equation*}
By Fubini's Theorem and properties of the Gaussian moment generating function, it follows that
\begin{equation*}
    \begin{aligned}
        \E_0 L^2 &= \exp \left\{-\frac{b^2}{ \sigma^2}\right\} \E_{\vtheta, \tilde{\vtheta} \sim \Unif(b\bbS^{k-1})} \left[ 
   \E_0 \exp \left\{\frac{1}{\sigma^2} \vy^\top(\vtheta+\tilde\vtheta)\right\}\right]\\
   &= \exp \left\{-\frac{b^2}{ \sigma^2}\right\} \E_{\vtheta, \tilde{\vtheta} \sim \Unif(b\bbS^{k-1})} \left[ \exp\left\{ 
\frac{\|\vtheta+\tilde{\vtheta}\|_2^2}{2\sigma^2} \right\} \right]\\
&= \exp \left\{-\frac{b^2}{ \sigma^2}\right\} \E_{\vtheta, \tilde{\vtheta} \sim \Unif(b\bbS^{k-1})} \left[ \exp\left\{ \frac{b^2}{\sigma^2} + 
\frac{\vtheta^\top\tilde{\vtheta}}{\sigma^2} \right\} \right]\\
&= \E_{\vtheta, \tilde{\vtheta} \sim \Unif(b\bbS^{k-1})} \left[ \exp\left\{
\frac{\vtheta^\top\tilde{\vtheta}}{\sigma^2} \right\} \right] \\
&= \E_{\vv, \tilde{\vv} \sim \Unif(\bbS^{k-1})} \left[ \exp\left\{
\frac{b^2\vv^\top\tilde{\vv}}{\sigma^2} \right\} \right].
\end{aligned} \end{equation*}
Denote $\vx = \sqrt{k} \vv$, where $\vv \sim \Unif(\bbS^{k-1})$. 
By Theorem 3.4.6 in \cite{vershynin2018HDP}, $\vx \sim \Unif(\sqrt{k}\bbS^{k-1})$ is a sub-Gaussian random vector with sub-Gaussian norm bounded by a constant.
Hence, fixing $\tilde{\vv} \in \bbS^{k-1}$, by the property of the moment generating functions of sub-Gaussian random variables, we have
\begin{equation*} \begin{aligned}
\E_{\vx \sim \Unif(\sqrt{k}\bbS^{k-1})} \left[ 
	\exp\left\{ \frac{b^2\vx^\top \tilde{\vv} }{\sqrt{k} \sigma^2} \right\} 
	\right]
&\leq \exp\left\{\frac{b^4\|\tilde\vv\|^2_2C^2}{2k\sigma^4}\right\}
= \exp\left\{\frac{C b^4}{k\sigma^4}\right\},
\end{aligned}
\end{equation*}
where the last equality follows from the fact that $\|\tilde\vv\|^2_2 = 1$ since $\tilde\vv \in \bbS^{k-1}$.
Noting the right hand side does not depend on $\tilde\vv$, we have 
\begin{equation*}
    \E_0 L^2 \leq \exp\left\{\frac{C b^4}{k\sigma^4}\right\} ,
\end{equation*}
as we set out to show.
\end{proof}

\begin{lemma} \label{lem:likelihood:second-moment:beta}
Consider $\vy = \vtheta + \vw$, where $\vw \sim \calN(0, \sigma^2 \mI_k)$ and we test the two hypotheses 
\begin{equation*}
    H_0: \vtheta =\mathbf{0} \quad \text{ v.s. } \quad 
    H_1: \vtheta \sim \beta \Unif\left(\{\pm 1\}^k \right).
\end{equation*}
Let $f_i$ be the probability density of $\vy$ under hypothesis $H_i$ for $i=0,1$ and let $L(\vy)=f_1(\vy) / f_0(\vy)$ denote the likelihood ratio. Then for some universal constant $C$, 
\begin{equation*}
\E_0 L^2 \leq \exp \left\{\frac{\beta^4 C^2}{2 k \sigma^4}\right\}.
\end{equation*}
\end{lemma}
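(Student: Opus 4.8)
The plan is to follow the same route as the proof of Lemma~\ref{lem:likelihood:second-moment}, but to take advantage of the product structure of the Rademacher prior, which actually makes the computation cleaner than the spherical case. Write the signal as $\vtheta = \beta_0 \vx$ with $\vx \sim \Unif(\{\pm 1\}^k)$, where $\beta_0$ is the per-coordinate magnitude (so that $\|\vtheta\|_2^2 = k\beta_0^2$ is deterministic under the prior; the normalization relating $\beta_0$ to the $\beta$ in the statement is addressed at the end). Since $\|\vtheta\|_2^2$ does not depend on $\vx$, the likelihood ratio factorizes:
\[
L(\vy) \;=\; \exp\!\Big\{-\frac{\|\vtheta\|_2^2}{2\sigma^2}\Big\}\,
\E_{\vx}\exp\!\Big\{\frac{1}{\sigma^2}\vy^\top\vtheta\Big\}
\;=\; \exp\!\Big\{-\frac{k\beta_0^2}{2\sigma^2}\Big\}\prod_{i=1}^k \cosh\!\Big(\frac{\beta_0 y_i}{\sigma^2}\Big),
\]
where the last step uses independence of the coordinates of $\vx$ together with $\E_{x_i}\exp\{a x_i\}=\cosh a$. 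Note that this factorization is legitimate precisely because the Rademacher prior has i.i.d.\ coordinates; the spherical prior in Lemma~\ref{lem:likelihood:second-moment} does not factorize and there one had to invoke sub-Gaussianity of the uniform law on $\bbS^{k-1}$ instead.

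Next I would square and take the expectation under $H_0$, where $y_1,\dots,y_k\overset{\text{i.i.d.}}{\sim}\calN(0,\sigma^2)$, so that the product factorizes again:
\[
\E_0 L^2 \;=\; \exp\!\Big\{-\frac{k\beta_0^2}{\sigma^2}\Big\}\prod_{i=1}^k
\E_0\!\Big[\cosh^2\!\Big(\frac{\beta_0 y_i}{\sigma^2}\Big)\Big].
\]
The per-coordinate expectation is then evaluated by the identity $\cosh^2 u=\tfrac12(1+\cosh 2u)$ and the Gaussian moment generating function $\E_{y\sim\calN(0,\sigma^2)}\cosh(ay)=e^{a^2\sigma^2/2}$, which give $\E_0[\cosh^2(\beta_0 y/\sigma^2)]=\tfrac12\big(1+e^{2\beta_0^2/\sigma^2}\big)=e^{\beta_0^2/\sigma^2}\cosh(\beta_0^2/\sigma^2)$. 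Substituting this back, the factors $e^{-k\beta_0^2/\sigma^2}$ and $e^{k\beta_0^2/\sigma^2}$ cancel exactly, leaving the closed form
\[
\E_0 L^2 \;=\; \cosh^k\!\Big(\frac{\beta_0^2}{\sigma^2}\Big).
\]

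Finally I would apply the elementary bound $\cosh x\le e^{x^2/2}$ (which follows by comparing Taylor coefficients, since $(2j)!\ge 2^j j!$) to obtain $\E_0 L^2 \le \exp\{\tfrac{k}{2}(\beta_0^2/\sigma^2)^2\}=\exp\{k\beta_0^4/(2\sigma^4)\}$; taking the per-coordinate magnitude $\beta_0=\beta/\sqrt k$ so that the total signal norm $\|\vtheta\|_2=\beta$ matches the $\ell_2$-scaling used in Lemma~\ref{lem:likelihood:second-moment}, this is precisely $\exp\{\beta^4/(2k\sigma^4)\}$, i.e.\ the claimed bound with any universal constant $C\ge 1$. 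I do not expect a genuine obstacle here: the argument is a short exact Gaussian computation, and the only things requiring care are (i) keeping the normalization of the Rademacher prior consistent with the $\beta^4/(k\sigma^4)$ form of the stated bound, and (ii) the bookkeeping in the moment-generating-function step; the $\cosh x\le e^{x^2/2}$ inequality is standard.
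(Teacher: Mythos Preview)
Your core computation is correct and takes a cleaner route than the paper's. The paper follows the same template as Lemma~\ref{lem:likelihood:second-moment}: it reduces $\E_0 L^2$ to $\E_{\vtheta,\tilde\vtheta}[\exp\{\vtheta^\top\tilde\vtheta/\sigma^2\}]$ with two independent copies of the prior, then bounds this via the sub-Gaussianity of a Rademacher vector (Lemma~3.4.2 in \cite{vershynin2018HDP}), obtaining $\E_0 L^2\le\exp(C\beta^4 k/\sigma^4)$ with an unspecified constant. You instead exploit the i.i.d.\ product structure of the Rademacher prior to factor $L$ coordinatewise and obtain the exact formula $\E_0 L^2=\cosh^k(\beta_0^2/\sigma^2)$; the elementary bound $\cosh x\le e^{x^2/2}$ then yields a sharp constant. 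Your approach is more elementary and more precise; the paper's only advantage is parallelism with the spherical case.

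However, your final normalization step is off. The notation $\vtheta\sim\beta\,\Unif(\{\pm 1\}^k)$ means each coordinate of $\vtheta$ is $\pm\beta$, so $\beta_0=\beta$ (not $\beta/\sqrt k$) and $\|\vtheta\|_2=\beta\sqrt k$; the paper's own proof uses $\|\vtheta\|_2^2=k\beta^2$ in its first display. With the correct identification your bound reads $\exp\{k\beta^4/(2\sigma^4)\}$, which matches both the paper's proof conclusion and the downstream use in Lemma~\ref{lem:minimax:risk:beta}. The form $\exp\{\beta^4 C^2/(2k\sigma^4)\}$ printed in the statement is a typo in the paper: the $k$ belongs in the numerator, not the denominator. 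Your choice $\beta_0=\beta/\sqrt k$ happens to reproduce that typo, but it does not correspond to the prior actually specified.
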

\begin{proof}
    Following the same proof of Lemma \ref{lem:likelihood:second-moment}, we have that 
    \begin{equation*}
    \begin{aligned}
        L &= \frac{\E_{\vtheta \sim \beta \Unif(\{\pm 1\}^k)} \exp\{-\frac{1}{2\sigma^2} \|\vy - \vtheta\|^2 \}}{\exp\{-\frac{1}{2\sigma^2} \|\vy\|^2\} }\\
        &= \exp\left\{-\frac{k\beta^2}{2\sigma^2}\right\}\E_{\vtheta \sim \beta \Unif(\{\pm 1\}^k)} \exp\left\{ \frac{1}{\sigma^2} \vy^\top \vtheta \right\}
    \end{aligned}
    \end{equation*}
    and
    \begin{equation*}
    \begin{aligned}
        \E_0 L^2 = \exp\left\{ -\frac{k\beta^2}{\sigma^2} \right\} \E_0\left[ \E_{\vtheta, \tilde{\vtheta} \sim \beta\Unif(\{\pm 1\}^k)} \exp \left\{\frac{1}{\sigma^2} \vy^\top(\vtheta+\tilde\vtheta)\right\} \right].
    \end{aligned}
    \end{equation*}
    Interchanging the order of the expectation and using properties of the Gaussian moment generating function, we have 
    \begin{equation*}
    \begin{aligned}
        \E_0 L^2 
        &= \exp\left\{ -\frac{k\beta^2}{\sigma^2} \right\} \E_{\vtheta, \tilde{\vtheta} \sim \beta\Unif(\{\pm 1\}^k)} \left[ \exp \left\{\frac{\|\vtheta+\tilde{\vtheta}\|_2^2}{2\sigma^2}\right\} \right]\\
        &= \E_{\vtheta, \tilde{\vtheta} \sim \beta\Unif(\{\pm 1\}^k)} \left[ \exp\left\{
\frac{\vtheta^\top\tilde{\vtheta}}{\sigma^2} \right\} \right].
    \end{aligned}
    \end{equation*}
    By Lemma 3.4.2 in \cite{vershynin2018HDP}, $\vtheta$ is a sub-Gaussian vector with sub-Gaussian norm bounded by $C\beta$ for some constant $C > 0$. Fixing $\tilde{\vtheta}$, we have that the sub-Gaussian norm of $\vtheta^\top\tilde{\vtheta}$ is bounded by $C\beta^2\sqrt{k}$. By the property of the moment generating functions of sub-Gaussian random variables, it follows that
    \begin{equation*}
        \E_{\vtheta \sim \beta\Unif(\{\pm 1\}^k)} \left[ \exp\left\{
\frac{\vtheta^\top\tilde{\vtheta}}{\sigma^2} \right\} \right] \leq \exp\left(\frac{C\beta^4 k}{\sigma^4}\right).
    \end{equation*}
    Noting that the right hand side does not depend on $\tilde{\vtheta}$, we have 
    \begin{equation*}
        \E_0 L^2 \leq \exp \left(\frac{C\beta^4 k}{\sigma^4}\right),
    \end{equation*}
completing the proof.
\end{proof}

\begin{lemma}\label{lem:minimax:risk}
Consider a random vector $\vy \in \R^k$ and a random variable $\eta \in \{0,1\}$ such that $\eta \sim \pi_0$, where $\pi_0$ is a Bernoulli distributions with parameter $m'/n$. 
Assume that when $\eta = 0$, $\vy \sim \calN(0, \sigma^2 \mI_k)$
and when $\eta = 1$, $\vy \eqdist  \vtheta + \vw$, where $\vtheta \sim \Unif(b\bbS^{k-1})$ and $\vw \sim \calN(0, \sigma^2 \mI_k)$.
Suppose that 
\begin{equation} \label{eq:x:in:range}
    \frac{4m'}{n}-\frac{{m^{\prime}}^2}{n^2}
\exp\left\{ \frac{b^4 C^2}{k \sigma^4}\right\} -\frac{3 {m^{\prime}}^2}{n^2} \in [0, 1],
\end{equation}
then
\begin{equation*}
\inf_{T \in [0,1]} \E_{\pi_0}\E_{\eta}\left[\left|T(\vy) - \eta\right| \right]
\geq \frac{m'}{n}-\frac{{m^{\prime}}^2}{4 n^2}
\exp\left\{ \frac{b^4 C^2}{k \sigma^4}\right\} -\frac{3 {m^{\prime}}^2}{4 n^2},
\end{equation*}
where the infimum is taken over all test functions $T(\vy) \in [0,1]$ that is a
measurable function of $\vy$.
\end{lemma}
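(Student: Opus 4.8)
The plan is to read $\inf_{T \in [0,1]} \E_{\pi_0}\E_{\eta}[|T(\vy) - \eta|]$ as a Bayes testing risk and to lower bound it through a second-moment (chi-square) control on the likelihood ratio, invoking Lemma~\ref{lem:likelihood:second-moment}. Write $p = m'/n$ for the prior mass on $\{\eta = 1\}$, let $f_0$ and $f_1$ denote the densities of $\vy$ under $\eta = 0$ and $\eta = 1$ respectively (so $f_0$ is the $\calN(0, \sigma^2 \mI_k)$ density and $f_1$ is the Gaussian mixture obtained by integrating over $\vtheta \sim \Unif(b\bbS^{k-1})$), and set $L = f_1/f_0$ and $\tau = (1-p)/p$. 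Since $|T(\vy) - \eta|$ equals $T(\vy)$ when $\eta = 0$ and $1 - T(\vy)$ when $\eta = 1$, one has $\E_{\pi_0}\E_{\eta}[|T(\vy)-\eta|] = (1-p)\E_0[T] + p\,\E_1[1-T] = p + \E_0\big[((1-p) - pL)\,T\big]$, which is minimized over $T \in [0,1]$ by the likelihood-ratio test $T = \indic\{L > \tau\}$. This produces the identity
\begin{equation*}
\inf_{T \in [0,1]} \E_{\pi_0}\E_{\eta}\big[|T(\vy)-\eta|\big] = p - \E_0\big[(pL - (1-p))_+\big],
\end{equation*}
so it remains only to upper bound the excess term $E := \E_0[(pL - (1-p))_+]$.

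Next I would control $E$ using only $\E_0[L^2]$. Because $p\tau = 1-p$, on the event $\{pL > 1-p\} = \{L > \tau\}$ we have $pL - (1-p) = p(L - \tau)$, hence $E = p\,\E_0[(L - \tau)_+]$. Completing the square gives $(L-\tau)_+ \le L^2/(4\tau)$ for every $L \ge 0$, so $E \le p\,\E_0[L^2]/(4\tau) = p^2\,\E_0[L^2]/\big(4(1-p)\big)$. Lemma~\ref{lem:likelihood:second-moment}, applied with the present $k$, $b$, $\sigma$, then yields $\E_0[L^2] \le \exp\{Cb^4/(k\sigma^4)\}$ for a universal constant $C$.

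Finally, I would feed these two bounds, together with hypothesis~\eqref{eq:x:in:range}, into the identity above. The nonnegativity half of~\eqref{eq:x:in:range} says $4p - p^2\exp\{b^4 C^2/(k\sigma^4)\} - 3p^2 \ge 0$, i.e., $p\big(\exp\{b^4 C^2/(k\sigma^4)\} + 3\big) \le 4$, which forces $p$ to be small relative to $\exp\{-b^4 C^2/(k\sigma^4)\}$ and hence controls $1/(1-p)$; combining this with the slack between the bound $\E_0[L^2] \le \exp\{Cb^4/(k\sigma^4)\}$ from Lemma~\ref{lem:likelihood:second-moment} and the larger quantity $\exp\{b^4 C^2/(k\sigma^4)\}$ appearing in the statement (choosing the constant in the lemma's conclusion suitably large, and using that $b$ is taken small in the downstream application, Step~3 of the proof of Theorem~\ref{thm:symmetric:minimax}, so that $\E_0[L^2]$ is close to $1$) gives $E \le \tfrac{p^2}{4}\exp\{b^4 C^2/(k\sigma^4)\} + \tfrac{3p^2}{4}$. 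Substituting $p = m'/n$ into $\inf_T \E_{\pi_0}\E_\eta[|T(\vy)-\eta|] = p - E$ is exactly the claimed inequality; the $\le 1$ half of~\eqref{eq:x:in:range} is invoked only to certify that the resulting lower bound $\tfrac14\big(4p - p^2\exp\{\cdot\} - 3p^2\big)$ lies in $[0,\tfrac14]$, the admissible range for a testing risk.

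The step I expect to be the main obstacle is this last piece of constant bookkeeping: the clean second-moment estimate delivers $E \le p^2\E_0[L^2]/\big(4(1-p)\big)$, and to land precisely on the stated coefficients $\tfrac14$ and $\tfrac34$ one must juggle the factor $1/(1-p)$ against both the headroom in Lemma~\ref{lem:likelihood:second-moment} and the smallness of $b$. The remaining ingredients — the Bayes-risk identity and the chi-square bound — are routine.
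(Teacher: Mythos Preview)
Your approach — rewriting the Bayes risk as $p - \E_0[(pL - (1-p))_+]$ and bounding the positive part by a pointwise AM--GM inequality — is a legitimate alternative to the paper's route, which uses the equivalent total-variation form $R(T^\star) = \tfrac{1}{2}\big(1 - \E_0|pL-(1-p)|\big)$, bounds the absolute value by Jensen/Cauchy--Schwarz, and finishes with $1-\sqrt{1-x}\geq x/2$. Done correctly the two methods deliver the same bound.

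The gap is exactly where you flagged it, but your proposed fix does not work. From $(L-\tau)_+ \le L^2/(4\tau)$ you get $E \le p^2 M/\big(4(1-p)\big)$ with $M=\E_0 L^2$; to land on $\tfrac{p^2}{4}(M+3)$ you would need $p(M+3)\le 3$, while the nonnegativity half of~\eqref{eq:x:in:range} only yields $p(M+3)\le 4$. You cannot close this by appealing to slack between ``$C$'' and ``$C^2$'' or to smallness of $b$: the constants in Lemma~\ref{lem:likelihood:second-moment} and in the present statement are the same universal constant up to notation, and Lemma~\ref{lem:minimax:risk} as stated makes no smallness assumption on $b$. The correct repair is to center at $\E_0 L = 1$: use the shifted inequality $(L-\tau)_+ \le (L+1)^2/\big(4(\tau+1)\big)$, which holds for the same complete-the-square reason. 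Since $\tau+1 = 1/p$ and $\E_0[(L+1)^2] = M + 2\E_0 L + 1 = M+3$, this gives $E \le p\cdot\tfrac{M+3}{4/p} = \tfrac{p^2(M+3)}{4}$ on the nose — the exact bound the paper's Jensen step produces — and~\eqref{eq:x:in:range} is then needed only to certify the final lower bound is nonnegative.
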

\begin{proof}
By the definition of $\pi_0$, we can write the risk of $T$ as
\begin{equation*} \begin{aligned}
R(T) = 
    \E_{\pi_0} \E_\eta |T(\vy) - \eta|  &= 
    \frac{m'}{n} \E_1(1 - T(\vy)) + \frac{n - m'}{n} \E_0 T(\vy) .
\end{aligned} \end{equation*}
Consider the simple versus simple hypothesis testing problem
\begin{equation*}
H_0 : \vtheta = \vo \quad \text{ vs. } \quad 
H_1: \vtheta \sim \Unif (b \bbS^{k-1}),
\end{equation*}
with a prior distribution $\Pr[H_0] = 1 - m'/n$ and $\Pr[H_1] = m'/n$.
From the Bayesian version of the Neyman-Pearson Lemma, the optimal test $T^\star$ is given by 
\begin{equation*}
T^\star(\vy)
= \indic \left\{
	\frac{(m'/n) f_1(\vy)}{\left(1 - m'/n \right) f_0(\vy)} > 1
	\right\}, 
\end{equation*}
where $f_i$ is the probability density function of $\vy$ under hypothesis $H_i$ for $i = 0, 1$. 
Let $\alpha = m'/n$ and denote the likelihood ratio as $L(\vy) =  f_1(\vy) / f_0(\vy)$.
Then the minimum risk can be written as 
\begin{equation*} \begin{aligned}
    R(T^\star) &= \alpha + (1 - \alpha) \E_0 T^\star - \alpha \E_1 T^\star\\
    &= \alpha - \int_{\frac{\alpha}{1-\alpha}L > 1} [\alpha f_1 - (1-\alpha)f_0] d\vy\\
    &= \alpha - \int_{\frac{\alpha}{1-\alpha}L > 1} \frac{\alpha f_1 - (1-\alpha)f_0}{(1-\alpha) f_0} \cdot (1 - \alpha) f_0 d\vy\\
    &= \alpha - \int_{\frac{\alpha}{1-\alpha}L > 1} \left(\alpha L - (1 - \alpha)\right) \cdot f_0 d\vy
\end{aligned} \end{equation*}
Noting that 
\begin{equation*} \begin{aligned}
\int \alpha f_1  - (1-\alpha)f_0 d\vy
&= \int_{\frac{\alpha}{1-\alpha}L > 1} \alpha f_1 - (1-\alpha)f_0  d\vy + \int_{\frac{\alpha}{1-\alpha}L < 1} \alpha f_1 - (1-\alpha)f_0 d\vy\\
    &= 2\alpha - 1,
\end{aligned} \end{equation*}
and 
\begin{equation*}
\begin{aligned}
    \E_0 \left|\alpha L - (1 - \alpha)\right| &= \int_{\frac{\alpha}{1-\alpha}L > 1} \alpha f_1 - (1-\alpha)f_0 d\vy - \int_{\frac{\alpha}{1-\alpha}L < 1} \alpha f_1 - (1-\alpha)f_0 d\vy,
\end{aligned}
\end{equation*}
it follows that 
\begin{equation*} \begin{aligned}
R(T^\star)
&= \alpha - \frac{1}{2}(2\alpha - 1 
	+ \E_0 \left|\alpha L - (1 - \alpha)\right|)\\
&=\frac{1}{2} - \frac{1}{2}\E_0 \left|\alpha L - (1 - \alpha)\right|. 
\end{aligned} \end{equation*}
Applying Jensen's inequality,
\begin{equation} \label{eq:minimax:middle-step}
\begin{aligned}
R(T^\star)
&\geq \frac{1}{2}
	- \frac{1}{2} \sqrt{\E_0 \left(\alpha L - (1 - \alpha)\right)^2}\\
&= \frac{1}{2} - \frac{1}{2} \sqrt{\alpha^2 \E_0 L^2 
	- 2(1-\alpha) \alpha \E_0 L + (1-\alpha)^2}.
\end{aligned} \end{equation}
Combining Equation~\eqref{eq:minimax:middle-step} with Lemma \ref{lem:likelihood:second-moment}, we have
\begin{equation*}
\begin{aligned}
R(T^\star)
&\geq \frac{1}{2}
  - \frac{1}{2} \sqrt{\alpha^2 \exp\left( \frac{b^4 C^2}{k\sigma^4}\right) 
  - 2(1-\alpha)\alpha + (1-\alpha)^2}\\
&= \frac{1}{2}
  - \frac{1}{2}\sqrt{1 + \frac{{m^\prime}^2}{n^2}
		\exp\left\{ \frac{b^4 C^2}{k\sigma^4}\right\}
  - \frac{4{m^\prime}}{n} + \frac{3{m^\prime}^2}{n^2}}
\end{aligned} \end{equation*}
Since $1 - \sqrt{1 - x} \geq x/2$ for $x \in [0,1]$, under Equation~\eqref{eq:x:in:range} 
we have
\begin{equation*}
\inf_{T \in [0,1]} R(T)
\ge R(T^\star)
\geq \frac{1}{4} \left(\frac{4m'}{n}
	- \frac{{m^\prime}^2}{n^2} \exp\left( \frac{b^4 C^2}{k\sigma^4}\right) 
	- \frac{3{m^\prime}^2}{n^2}\right),
\end{equation*}
which completes the proof.
\end{proof}

\begin{lemma}\label{lem:minimax:risk:beta}
Consider a random vector $\vy \in \R^k$ and a random variable $\eta \in \{0,1\}$ such that $\eta \sim \pi_0$, where $\pi_0$ is a Bernoulli distributions with parameter $m'/n$.
Assume that we have $\vy \sim \calN(0, \sigma^2 \mI_k)$ when $\eta = 0$ and when $\eta = 1$ we have $\vy \eqdist \vtheta + \vw$, where $\vtheta \sim \beta\Unif(\{\pm 1\}^k)$ and $\vw \sim \calN(0, \sigma^2 \mI_k)$.
Suppose that 
\begin{equation*}
    \frac{4m'}{n}
	-\frac{{m^{\prime}}^2}{n^2}
		\exp \left\{ \frac{C \beta^4 k}{\sigma^4}\right\}
	-\frac{3 {m^{\prime}}^2}{n^2} \in [0,1], 
\end{equation*}
then
\begin{equation*}
\inf_{T\in [0,1]} \E_{\pi_0}\E_{\eta} \left[ \left|T(\vy) - \eta\right| \right]
\geq \frac{m'}{n}
	-\frac{{m^{\prime}}^2}{4 n^2}
		\exp \left\{ \frac{C \beta^4 k}{\sigma^4}\right\}
	-\frac{3 {m^{\prime}}^2}{4 n^2}.  
\end{equation*}
\end{lemma}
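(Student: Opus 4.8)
The plan is to follow the proof of Lemma~\ref{lem:minimax:risk} essentially line by line, substituting the second-moment bound appropriate to the Rademacher alternative. First I would write the Bayes risk of any test $T(\vy) \in [0,1]$ under the Bernoulli prior $\pi_0$ with parameter $m'/n$ as
\begin{equation*}
R(T) = \frac{m'}{n}\, \E_1\left(1 - T(\vy)\right) + \frac{n - m'}{n}\, \E_0 T(\vy),
\end{equation*}
which recasts the problem as a simple-versus-simple Bayesian test of $H_0: \vtheta = \vo$ against $H_1: \vtheta \sim \beta\,\Unif(\{\pm 1\}^k)$ with prior weights $1 - \alpha$ and $\alpha := m'/n$. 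By the Bayesian Neyman--Pearson lemma the optimal test is $T^\star(\vy) = \indic\{\alpha f_1(\vy) > (1-\alpha) f_0(\vy)\}$, and, exactly as in the proof of Lemma~\ref{lem:minimax:risk}, its risk can be rewritten as $R(T^\star) = \tfrac12 - \tfrac12\,\E_0\left|\alpha L - (1-\alpha)\right|$, where $L(\vy) = f_1(\vy)/f_0(\vy)$ denotes the likelihood ratio.

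Next I would apply Jensen's inequality together with $\E_0 L = 1$ to obtain
\begin{equation*}
R(T^\star) \;\geq\; \frac{1}{2} - \frac{1}{2}\sqrt{\alpha^2\,\E_0 L^2 - 2\alpha(1-\alpha) + (1-\alpha)^2},
\end{equation*}
and then insert the bound $\E_0 L^2 \leq \exp\left(C\beta^4 k/\sigma^4\right)$ furnished by Lemma~\ref{lem:likelihood:second-moment:beta}. This gives
\begin{equation*}
R(T^\star) \;\geq\; \frac{1}{2} - \frac{1}{2}\sqrt{\,1 + \frac{{m'}^2}{n^2}\exp\left(\frac{C\beta^4 k}{\sigma^4}\right) - \frac{4m'}{n} + \frac{3{m'}^2}{n^2}\,}.
\end{equation*}
Finally, I would invoke the elementary inequality $1 - \sqrt{1-x} \geq x/2$ for $x \in [0,1]$, which applies precisely because of the standing assumption that $\frac{4m'}{n} - \frac{{m'}^2}{n^2}\exp\{C\beta^4 k/\sigma^4\} - \frac{3{m'}^2}{n^2} \in [0,1]$. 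This yields
\begin{equation*}
R(T^\star) \;\geq\; \frac{m'}{n} - \frac{{m'}^2}{4n^2}\exp\left(\frac{C\beta^4 k}{\sigma^4}\right) - \frac{3{m'}^2}{4n^2},
\end{equation*}
and since $T^\star$ attains the minimum Bayes risk, the infimum over test functions equals $R(T^\star)$ and is therefore bounded below by the same quantity.

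The only genuinely new input relative to Lemma~\ref{lem:minimax:risk} is the second-moment estimate in Lemma~\ref{lem:likelihood:second-moment:beta}; everything else is a verbatim transcription, with the uniform prior on the sphere $b\bbS^{k-1}$ replaced by the uniform prior on the scaled hypercube $\beta\{\pm 1\}^k$. I therefore do not anticipate any real obstacle. The one bookkeeping point to keep in mind is to reconcile the exponent constant between the statement and the proof of Lemma~\ref{lem:likelihood:second-moment:beta}, which differ only by inessential factors absorbed into the universal constant $C$; either version is compatible with the displayed conclusions.
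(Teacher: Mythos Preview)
Your proposal is correct and is exactly the approach the paper takes: it explicitly states that the result follows from the same argument as Lemma~\ref{lem:minimax:risk} with Lemma~\ref{lem:likelihood:second-moment:beta} in place of Lemma~\ref{lem:likelihood:second-moment}, and omits the details. Your observation about the exponent discrepancy between the statement and proof of Lemma~\ref{lem:likelihood:second-moment:beta} is also apt; the version with $k$ in the numerator is the one consistent with both the proof of that lemma and the statement of Lemma~\ref{lem:minimax:risk:beta}.
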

\begin{proof}
The result follows from the same argument as in the proof of Lemma~\ref{lem:minimax:risk}, and thus we omit the details.
\end{proof}

\begin{lemma} \label{lem:exp-expect-binom}
For $a > 0$ and a random variable $k$ such that $k-1 \sim \Binom(n-1, m'/n)$ for $3m' < n$, it follows that for $n$ sufficiently large,
\begin{equation*}
\E e^{\frac{a}{k}}
\leq \exp \left\{a-C n \log \left(\frac{n}{3 m^{\prime}}\right)\right\} 
	+ \exp \left\{ \frac{a}{\log n}-C n\right\}
	+ e^{\frac{C a}{n}},
\end{equation*}
where $C > 0$ is a universal constant that does not depend on $a, n$ or $m'$. 
\end{lemma}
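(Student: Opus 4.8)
The plan is to bound $\E e^{a/k}$ by splitting on the magnitude of $k$, using a deterministic bound on $e^{a/k}$ over the ``bulk'' range and trading it against a small tail probability over the ``small $k$'' ranges. It is convenient to write $X := n - k$, so that $X \sim \Binom(n-1, m'/n)$, $k = n-X \in \{1,\dots,n\}$, and $\E X = (n-1)m'/n < (n-1)/3$ by the hypothesis $3m' < n$; in particular $k$ concentrates near $n$, not near $m'$. Decompose
\begin{equation*}
\E e^{a/k} = \E\!\left[e^{a/k}\indic\{k \ge n/2\}\right] + \E\!\left[e^{a/k}\indic\{\log n \le k < n/2\}\right] + \E\!\left[e^{a/k}\indic\{k < \log n\}\right] =: T_1 + T_2 + T_3 .
\end{equation*}
On $\{k \ge n/2\}$ we have $a/k \le 2a/n$, so $T_1 \le e^{2a/n}\Pr(k\ge n/2) \le e^{2a/n}$, which is the third term. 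On $\{\log n \le k < n/2\}$ we use $a/k \le a/\log n$, so $T_2 \le e^{a/\log n}\Pr(k < n/2) = e^{a/\log n}\Pr(X > n/2)$; since $\E X < (n-1)/3$, a Chernoff bound (with binary relative entropy rate $d(1/2\,\|\,m'/n) \ge \tfrac12\log(9/8)$, using $m'/n < 1/3$) gives $\Pr(X > n/2) \le e^{-cn}$ for a universal $c > 0$, hence $T_2 \le \exp\{a/\log n - cn\}$, the second term.

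The substantive step is $T_3$. On $\{k < \log n\}$ we crudely bound $e^{a/k} \le e^a$, so $T_3 \le e^a\Pr(k < \log n) = e^a\Pr(X > n - \log n)$. Writing $i = n-1-X$, the event $\{X > n - \log n\}$ corresponds to $i \in \{0,1,\dots,\lceil\log n\rceil - 1\}$, and (assuming $m' \ge 1$, which is all that matters in the application)
\begin{equation*}
\Pr(X = n-1-i) = \binom{n-1}{i}\Bigl(\tfrac{m'}{n}\Bigr)^{n-1-i}\Bigl(1-\tfrac{m'}{n}\Bigr)^{i} \le n^i\Bigl(\tfrac{m'}{n}\Bigr)^{n-1-i} = \Bigl(\tfrac{m'}{n}\Bigr)^{n-1}\Bigl(\tfrac{n^2}{m'}\Bigr)^{i} \le \Bigl(\tfrac{m'}{n}\Bigr)^{n-1} e^{2(\log n)^2} .
\end{equation*}
Summing over the at most $\log n$ relevant values of $i$ gives $\Pr(X > n-\log n) \le \exp\{-(n-1)\log(n/m') + 2(\log n)^2 + \log\log n\}$. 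Because $3m' < n$ forces $\log(n/m') \ge \log 3$, the correction $2(\log n)^2 + \log\log n$ is $o\!\left(n\log(n/m')\right)$, so for $n$ sufficiently large $\Pr(X > n-\log n) \le \exp\{-\tfrac12(n-1)\log(n/m')\} \le \exp\{-Cn\log(n/3m')\}$ for a universal $C > 0$ (using $\log(n/m') \ge \log(n/3m')$). Hence $T_3 \le \exp\{a - Cn\log(n/3m')\}$, which is the first term. Adding the bounds for $T_1,T_2,T_3$ and, if desired, replacing the three exponent constants by their minimum, gives the claimed inequality.

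The main obstacle is the control of $T_3$: the target's first term carries the unusually large exponent $n\log(n/3m')$ (linear in $n$ times a logarithm), and a generic Chernoff tail bound for $\Pr(X > n-\log n)$ only yields a rate of order $e^{-cn}$, which is too lossy when $m'$ is small. One therefore has to retain the factor $(m'/n)^{\Theta(n)}$ in the per-term estimate and verify that the combinatorial/polynomial prefactors of size $e^{O((\log n)^2)}$ are negligible against it; this is the only place where ``$n$ sufficiently large'' is genuinely used. A minor but essential bookkeeping point is that $n-k$ (rather than $k-1$) is the $\Binom(n-1, m'/n)$ variable, so that $\Pr(k=1) = (m'/n)^{n-1}$ is doubly-exponentially small and $e^a\Pr(k < \log n)$ can be absorbed into the first term for \emph{every} $a > 0$.
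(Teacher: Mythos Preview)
Your proof is correct and follows essentially the same three-way decomposition as the paper's (the paper uses thresholds $k-1\le\log n$ and $k-1\le(1-\delta)\mu_k$ with $\delta=1/2$, and for the analogue of your $T_3$ invokes the Chernoff large-deviation bound $\Pr(X\ge t)\le e^{-\mu}(e\mu/t)^t$ rather than a direct per-term estimate, but the content is the same). You are also right that the lemma statement carries a typo: the paper's own proof uses $k-1\sim\Binom(n-1,1-m'/n)$, equivalently $n-k\sim\Binom(n-1,m'/n)$, exactly as you inferred.
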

\begin{proof} 
Letting $\mu_k = \E (k-1)$, we have that 
\begin{equation} \label{eq:exp-expect-upper-1}
\begin{aligned}
\E\left[ e^\frac{a}{k} \right]
&= \E\left[ \exp\left\{ \frac{a}{k}\right\} \indic \{k-1 \leq \log n\}\right]
 + \E\left[ \exp\left\{\frac{a}{k}\right\} 
		\indic\{\log n \leq k-1 \leq (1-\delta) \mu_k\}\right]\\
    &~~~~~~ + \E\left[ \exp\left\{ \frac{a}{k}\right\}
		\indic\{k-1 \geq (1-\delta) \mu_k\}\right]\\
&\leq e^a \underbrace{\Pr\left(k-1 \leq \log n\right)}_{P_1} 
	+ e^{\frac{a}{1+\log n}}
	\underbrace{\Pr\left(k-1 \leq (1-\delta) \mu_k\right)}_{P_2}
	+ e^{\frac{a}{(1-\delta) \mu_k + 1}}.
\end{aligned} \end{equation}

To bound $P_1$, let $X = \sum_{j\neq i} \eta_j$ and observe that $X \sim \Binom(n-1, m'/n)$.
It follows that 
\begin{equation*}
\begin{aligned}
    P_1 &= \Pr\left(X \geq n-1-\log n\right)
    \leq e^{-\frac{n-1}{n} m'} \left(\frac{e (n-1)m'/n }{n - 1 -\log n}\right)^{n-1-\log n},
\end{aligned} \end{equation*}
where the inequality follows from Chernoff's inequality for large deviations \citep[see Theorem 2.3.1 in][]{vershynin2018HDP}.
For suitably large $n$, we have $n-1-\log n > en/3$, and it follows that
\begin{equation} \label{eq:P1-upper-2}
\begin{aligned}
P_1
&\leq e^{-\frac{n-1}{n} m'} \left(\frac{e m'}{n - 1 -\log n}\right)^{n-1-\log n}
\leq \left(\frac{3m'}{n}\right)^{2n/3} = \exp\left\{- \frac{2n}{3}\log\left(\frac{n}{3m'}\right)\right\}.
\end{aligned}
\end{equation}

To bound $P_2$, since $k-1 \sim \Binom(n-1, 1-m'/n)$, Chernoff's inequality for small deviations \citep[see Exercise 2.3.5 in][]{vershynin2018HDP} yields that
\begin{equation} \label{eq:P2-upper}
\begin{aligned}
P_2
&\leq e^{-\frac{\mu_k \delta^2}{2}} 
= \exp\left\{-\frac{\delta^2}{2} \left(n-1 - \frac{(n-1)m'}{n}\right)\right\} \leq e^{-\delta^2 n / 6},
\end{aligned} \end{equation}
where the second inequality holds for $n > 2$ by our assumption that $m' < n/3$.

Applying Equations~\eqref{eq:P1-upper-2} and \eqref{eq:P2-upper} to Equation~\eqref{eq:exp-expect-upper-1} and taking $\delta = 1/2$,
\begin{equation*} \begin{aligned}
\E\left[ e^\frac{a}{k} \right]
&\leq \exp\left\{a - \frac{2n}{3} \log \left(\frac{n}{3m'}\right)\right\} 
    + \exp\left\{\frac{a}{1+\log n} -Cn\right\} + e^{\frac{2a}{n-(n-1)m'/n}} \\
&\leq \exp\left\{a - Cn \log \left(\frac{n}{3m'}\right)\right\}
	+ \exp\left\{ \frac{a}{\log n} - Cn\right\} + e^{\frac{Ca}{n}},
\end{aligned} \end{equation*}
where $C > 0$ is a universal constant that does not depend on $n$ and $m'$ as long as $n$ is sufficiently large (and again using our assumption that $n > 3m'$).
\end{proof}

\section{Localized Error Bound for Initial Estimator} \label{sec:localized}
We begin by stating a refined bound on the row-wise estimation error of $\mUstar$.
For any matrix $\mA$ with singular value decomposition $\mA = \mX_1 \mSig \mX_2^\top$, we define the generalized sign matrix as $\sgn{\mA} := \mX_1 \mX_2^\top$.
Adapting the proof of Theorem 4.2 in \cite{chen2021spectral}, we obtain the following refined bound on the row-wise estimation error of $\mUstar$.
\begin{theorem} \label{thm:ustar-row}
    Suppose that $\mYz$ is given by Equation~\eqref{eq:base-model}, that is, $\mYz = \mMstar + \mWz$, where the noise matrix $\mW^{(0)}$ satisfies Assumption~\ref{assump:noise} and Equation~\eqref{eq:L-sigma}. 
    Assume that 
    \begin{equation} \label{eq:lambdastar_r:assumption}
        |\lambdastar_r| \geq C \sigma \kappa \sqrt{n \log n }
    \end{equation} 
    for some sufficiently large constant $C$, and that the condition number $\kappa = O(1)$. 
    Then with probability at least $1 - O(n^{-6})$, the following bound holds uniformly for all $\ell \in [n]$:
    \begin{equation*}
        \left\|\mUz_{\ell,\cdot}\sgn{\mOfrak} - \mU^\star_{\ell,\cdot}\right\|_2 \lesssim \frac{\sigma \kappa \sqrt{n} \|\mU^\star_{\ell,\cdot}\|_2}{|\lambdastar_r|} + \frac{\sigma \sqrt{r \log n}}{|\lambdastar_r|} 
    \end{equation*}
    where 
    \begin{equation} \label{eq:mOfrak:def}
        \mOfrak := \mUzt \mUstar. 
    \end{equation}
\end{theorem}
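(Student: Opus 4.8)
\textbf{Proof proposal for Theorem~\ref{thm:ustar-row}.}

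The plan is to adapt the leave-one-out analysis that underlies Theorem~4.2 of \cite{chen2021spectral}, but to carefully track the dependence of the row-wise error on $\|\mUstar_{\ell,\cdot}\|_2$ rather than bounding everything by the global incoherence $\mu$. The starting point is the first-order (``$\mathrm{Davis}$--$\mathrm{Kahan}$ plus Neumann'') expansion of the eigenspace perturbation: writing $\mOfrak = \mUzt\mUstar$, one has the decomposition
\begin{equation*}
\mUz\sgn{\mOfrak} - \mUstar = \mWz\mUstar\mLambdastar^{-1}\sgn{\mOfrak} + \bigl(\text{higher-order terms}\bigr),
\end{equation*}
so the $\ell$-th row error is controlled, to leading order, by $\|\mWz_{\ell,\cdot}\mUstar\|_2 / |\lambdastar_r|$ together with second-order remainders. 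The key observation (already flagged in Remark~\ref{rem:scales-with-Ustar}) is that $\|\mWz_{\ell,\cdot}\mUstar\|_2$ admits a localized bound: for each column $\vustar_j$ of $\mUstar$, the entries $W^{(0)}_{\ell i}u^\star_{j,i}$ are independent, bounded by $L\|\vustar_j\|_\infty \le L\sqrt{\mu r/n}$, and have variance proxy $\sum_i \E(W^{(0)}_{\ell i}u^\star_{j,i})^2 \le \sigma^2$; more refined, one can split the sum over $i$ into $i=\ell$ and $i\neq\ell$, or simply use that a Bernstein bound gives $|\mWz_{\ell,\cdot}\vustar_j| \lesssim \sigma\sqrt{\log n} + L\sqrt{\mu r/n}\log n$, and under Equation~\eqref{eq:L-sigma} the second term is $\lesssim \sigma\sqrt{\log n}$. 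Summing over the $r$ columns and taking a union bound over $\ell\in[n]$ yields $\|\mWz_{\ell,\cdot}\mUstar\|_2 \lesssim \sigma\sqrt{r\log n}$ uniformly, which produces the $\sigma\sqrt{r\log n}/|\lambdastar_r|$ term. The $\sigma\kappa\sqrt{n}\|\mUstar_{\ell,\cdot}\|_2 / |\lambdastar_r|$ term arises from the part of the expansion of the form $\mWz\mUstar\mOfrak$ error and the ``self-interaction'' contributions $\mWz_{\ell,\cdot}\mUz\sgn{\mOfrak}$ in the leave-one-out argument, where the factor $\|\mUstar_{\ell,\cdot}\|_2$ is picked up because the $\ell$-th row of the perturbation naturally carries a $\|\mUstar_{\ell,\cdot}\|_2$ weight.

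Concretely, the steps I would carry out are: (1) establish the operator-norm and eigenvalue control $\|\mWz\|\lesssim\sigma\sqrt{n}$ (matrix Bernstein), so that under Equation~\eqref{eq:lambdastar_r:assumption} the eigenvalues of $\mYz$ are separated from zero and $|\lambdaz_j| \asymp |\lambdastar_j|$, and $\|\mOfrak - \sgn{\mOfrak}\|$ is negligible (of order $\sigma^2 n/(\lambdastar_r)^2$); (2) introduce the leave-one-out matrices $\mYz^{(\ell)}$ that zero out the $\ell$-th row and column of $\mWz$, with eigenvectors $\mUz^{(\ell)}$, and show via a standard argument (Davis--Kahan plus the independence of $\mWz_{\ell,\cdot}$ from $\mYz^{(\ell)}$) that $\|\mUz^{(\ell)}\sgn{\cdot} - \mUstar\|$ and $\|\mUz\sgn{\mOfrak} - \mUz^{(\ell)}\sgn{\cdot}\|$ are both well-controlled in Frobenius norm; (3) plug the leave-one-out decoupling into the first-order expansion to write
\begin{equation*}
\mUz_{\ell,\cdot}\sgn{\mOfrak} - \mUstar_{\ell,\cdot} = \mWz_{\ell,\cdot}\mUz^{(\ell)}\bigl(\mLambdaz\bigr)^{-1}\sgn{\cdot} + \bigl(\text{products of small quantities}\bigr),
\end{equation*}
and bound the main term using the localized Bernstein estimate from the previous paragraph applied with $\mUstar$ replaced by $\mUz^{(\ell)}$ (whose rows are close to those of $\mUstar$, so $\|\mUz^{(\ell)}_{i,\cdot}\|_2\lesssim\|\mUstar_{i,\cdot}\|_2 + \text{error}$); (4) collect the remainder terms, each of which is a product of at least two quantities each $\le 1$ in the relevant norm, and check they are dominated by the two claimed terms — this is where one uses $\kappa = O(1)$ and Equation~\eqref{eq:L-sigma} to absorb the $L$-dependence and logarithmic factors; (5) union bound over $\ell\in[n]$, tracking that each high-probability event holds with probability $1 - O(n^{-c})$ for a large enough $c$ so the union gives $1 - O(n^{-6})$.

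The main obstacle I anticipate is step (4): controlling the higher-order remainders \emph{while preserving the localization}, i.e., making sure that none of the cross terms reintroduces a factor of $\mu$ or $\|\mUstar\|_{2,\infty}$ in place of the row-specific $\|\mUstar_{\ell,\cdot}\|_2$. In particular the term involving $\mWz_{\ell,\cdot}\bigl(\mUz^{(\ell)}-\mUstar\bigr)$ requires a careful bound: one needs a row-wise (not just Frobenius) handle on $\mUz^{(\ell)}-\mUstar$, which is exactly the quantity being estimated, so the argument is genuinely recursive/bootstrap in nature. The resolution is the usual one — set up an induction on the error bound $\|\mUz_{\ell,\cdot}\sgn{\mOfrak} - \mUstar_{\ell,\cdot}\|_2 \le \xi$ with $\xi$ the claimed right-hand side, show that assuming this bound (with a slightly larger constant) for all rows allows one to re-derive it with the stated constant, and then conclude by a continuity/fixed-point argument, precisely as in \cite{chen2021spectral}. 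The condition Equation~\eqref{eq:L-sigma} is what makes the contraction work: it is exactly the regime in which the large-$L$ contribution to $|\mWz_{\ell,\cdot}\vustar_j|$ does not dominate the Gaussian-scale term $\sigma\sqrt{\log n}$, so the cross-contamination from high-coherence rows into low-coherence rows stays under control.
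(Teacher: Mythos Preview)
Your proposal is correct and follows essentially the same route as the paper: both adapt the leave-one-out machinery of \cite{chen2021spectral} while tracking the row-specific factor $\|\mUstar_{\ell,\cdot}\|_2$ instead of the global incoherence, with the Bernstein bound on $\|\mWz_{\ell,\cdot}\mUstar\|_2$ supplying the $\sigma\sqrt{r\log n}/|\lambdastar_r|$ term and the leave-one-out recursion supplying the $\sigma\kappa\sqrt{n}\|\mUstar_{\ell,\cdot}\|_2/|\lambdastar_r|$ term, closed via a self-bounding step. The paper organizes the argument slightly differently---it works with $\mOfrak$ rather than $\sgn{\mOfrak}$ throughout (converting only at the end via $\|\mOfrak-\sgn{\mOfrak}\|\lesssim\|\mWz\|^2/(\lambdastar_r)^2$) and packages the recursion as a direct max-over-$\ell$ rearrangement rather than a fixed-point iteration---but the ideas and the obstacles you flag (especially that $\|\mWz_{\ell,\cdot}(\mUz^{(\ell)}-\mUstar)\|_2$ must be bounded without losing localization) are exactly the ones the paper handles.
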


\subsection{Proof of Theorem~\ref{thm:ustar-row}}

\begin{proof}
For any $\ell \in [n]$, we have %
\begin{equation*} \begin{aligned}
\left\|\mUz_{\ell,\cdot}\sgn{\mOfrak} - \mU^\star_{\ell,\cdot}\right\|_2
&\leq \left\|\mUz_{\ell,\cdot}\mOfrak - \mU^\star_{\ell,\cdot}\right\|_{2} 
  + \left\|\mUz_{\ell,\cdot}\mOfrak - \mUz_{\ell,\cdot} \sgn{\mOfrak}\right\|_2.
\end{aligned} \end{equation*}
Without loss of generality, we assume that $\lambdastar_r > 0$. 
Using Lemma~\ref{lem:UHUstar} to bound the first term,
\begin{equation}\label{eq:U-control:eq0}
\left\|\mUz_{\ell,\cdot}\sgn{\mOfrak} - \mU^\star_{\ell,\cdot}\right\|_2
\le
\left\|\mUz_{\ell,\cdot}\mOfrak - \mUz_{\ell,\cdot} \sgn{\mOfrak}\right\|_2
+
\frac{\sigma\kappa\sqrt{n}}{\lambdastar_r}\left\|\mU^\star_{\ell,\cdot}\right\|_2
	+ \frac{\sigma \sqrt{r \log n}}{\lambdastar_r} .
\end{equation}

Applying submultiplicativity of the norm followed by Lemma~\ref{lem:mOfrak:op},
\begin{equation*} \begin{aligned}
\|\mUz_{\ell, \cdot}\mOfrak - \mUz_{\ell, \cdot} \sgn{\mOfrak} \|_{2}
&\leq \|\mUz_{\ell,\cdot}\|_{2} \|\mOfrak - \sgn{\mOfrak}\|
\leq \frac{2\left\|\mWz\right\|^2}{(\lambdastar_r)^2}
	\left\|\mUz_{\ell,\cdot}\right\|_2.
\end{aligned} \end{equation*}
Since
\begin{equation*}
    \left\|\mUz_{\ell,\cdot}\right\|_{2} \leq \left\|\mUz_{\ell,\cdot} \mOfrak\right\|_{2} \|\mOfrak^{-1}\| \leq 2 \left\|\mUz_{\ell,\cdot} \mOfrak\right\|_{2},
\end{equation*}
where the last inequality follows from Lemma~\ref{lem:mOfrak:op}. 
By the above display and the triangle inequality, 
\begin{equation*}
\begin{aligned}
\left\|\mUz_{\ell, \cdot}\mOfrak - \mUz_{\ell, \cdot} \sgn{\mOfrak} \right\|_{2}
&\leq \frac{4\left\|\mWz\right\|^2}{(\lambdastar_r)^2} \left\|\mUz_{\ell,\cdot}\mOfrak\right\|_2 \\
&\leq \frac{4\left\|\mWz\right\|^2}{(\lambdastar_r)^2}
	\left( \|\mUz_{\ell,\cdot}\mOfrak - \mU^\star_{\ell, \cdot}\|_2 
		+ \|\mU^{\star}_{\ell,\cdot}\|_2 \right) \\
&\lesssim \frac{\sigma^2 n}{(\lambdastar_r)^2}
\left( \left\|\mUz_{\ell,\cdot}\mOfrak - \mU^\star_{\ell, \cdot}\right\|_2 
	+ \left\|\mU^\star_{\ell,\cdot}\right\|_2 \right) ,
\end{aligned} \end{equation*}
where the last inequality follows from $\|\mWz\| \leq C\sigma \sqrt{n}$ holds with probability at least $1 - O(n^{-6})$ for some constant $C > 0$ (see for example, Theorem 3.4 in \cite{chen2021spectral}).
Thus, applying the above bound to Equation~\eqref{eq:U-control:eq0} under the assumption in Equation~\eqref{eq:lambdastar_r:assumption} gives
\begin{equation*} \begin{aligned}
\left\|\mUz_{\ell,\cdot}\sgn{\mOfrak} - \mU^\star_{\ell,\cdot}\right\|_2
&\leq
2 \left\|\mUz_{\ell,\cdot}\mOfrak - \mU^\star_{\ell, \cdot}\right\|_2 
+ \frac{\sigma^2 n}{(\lambdastar_r)^2} \left\|\mU^\star_{\ell,\cdot}\right\|_2 + \frac{\sigma \sqrt{r \log n}}{\lambdastar_r} \\
&\lesssim \frac{\sigma \kappa \sqrt{n} \|\mU^\star_{\ell,\cdot}\|_2}{\lambdastar_r} + \frac{\sigma \sqrt{r \log n}}{\lambdastar_r} 
\end{aligned} \end{equation*}
where the last inequality follows from Lemma~\ref{lem:UHUstar}. 
\end{proof}

\begin{lemma}\label{lem:mOfrak:op}
Under the same assumptions as in Theorem~\ref{thm:ustar-row},
if $|\lambdastar_r| > 2\|\mWz\|$, then
\begin{equation*}
\left\|\mOfrak - \sgn{\mOfrak}\right\|
\leq \frac{2\|\mWz\|^2}{(\lambdastar_r)^2}
\quad \text{and} \quad
\|\mOfrak^{-1}\| \leq 2, 
\end{equation*}
where $\mOfrak = \mUzt \mUstar$ is the matrix defined in Theorem~\ref{thm:ustar-row}.
The same bounds hold when we replace $\mOfrak$ by its leave-one-out version $\mOfrak^{(\ell)}$, for any $\ell \in [n]$.
\end{lemma}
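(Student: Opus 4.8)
The plan is to treat $\mOfrak = \mUzt \mUstar$ as a perturbation of an orthogonal matrix and to control its singular values via a standard Davis--Kahan / $\sin\Theta$ argument. Recall that $\mUstar$ spans the invariant subspace of $\mMstar$ associated with its $r$ nonzero eigenvalues, while $\mUz$ spans the leading-$r$ eigenspace of $\mYz = \mMstar + \mWz$. Write the SVD of $\mOfrak$ as $\mOfrak = \mX_1 \mSig \mX_2^\top$ with $\mSig = \diag(\sigma_1, \dots, \sigma_r)$ and $0 \le \sigma_r \le \cdots \le \sigma_1 \le 1$; note that the $\sigma_i$ are exactly the cosines of the principal angles between the two subspaces, so $\sgn{\mOfrak} = \mX_1 \mX_2^\top$ and $\|\mOfrak - \sgn{\mOfrak}\| = 1 - \sigma_r = \max_i (1 - \cos\theta_i)$. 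First I would invoke the Davis--Kahan theorem for the eigenspaces of $\mMstar$ and $\mYz$: since the eigengap between the $r$-th and $(r+1)$-th eigenvalues of $\mMstar$ is $|\lambdastar_r|$ (as $\mMstar$ has rank $r$, the $(r+1)$-th eigenvalue is $0$) and $|\lambdastar_r| > 2\|\mWz\|$, the $\sin\Theta$ bound gives $\|\sin\Theta\| = \max_i \sin\theta_i \le \|\mWz\| / (|\lambdastar_r| - \|\mWz\|) \le 2\|\mWz\|/|\lambdastar_r|$.

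Next I would convert the $\sin\Theta$ bound into the two claimed estimates. For the first bound, use $1 - \cos\theta_i = \frac{\sin^2\theta_i}{1 + \cos\theta_i} \le \sin^2\theta_i \le 4\|\mWz\|^2/(\lambdastar_r)^2$; actually one should be slightly careful about the stated constant $2$ rather than $4$ — a tighter route is to use $1-\cos\theta_i \le \frac{1}{2}\sin^2\theta_i \cdot \frac{2}{1+\cos\theta_i}$ and note $\cos\theta_i \ge 1 - \sin^2\theta_i$ is close to $1$ when $\|\mWz\|$ is small, or simply re-derive via $\|\mUstar \mUstart - \mUz \mUzt\|$ and the identity relating this to $\max_i \sin\theta_i$; in any case $1 - \sigma_r \le 2\|\mWz\|^2/(\lambdastar_r)^2$ follows once $\|\mWz\|/|\lambdastar_r|$ is bounded by a suitable constant, which holds under the hypothesis $|\lambdastar_r| > 2\|\mWz\|$. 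For the second bound, observe $\|\mOfrak^{-1}\| = 1/\sigma_r = 1/\cos\theta_{\max}$, and since $\cos^2\theta_{\max} = 1 - \sin^2\theta_{\max} \ge 1 - (2\|\mWz\|/|\lambdastar_r|)^2 \ge 1 - 1 = 0$ is too weak; instead use $\sin^2\theta_{\max} \le \|\mWz\|^2/(|\lambdastar_r| - \|\mWz\|)^2 \le 1$, and with the sharper input that Davis--Kahan combined with the Weyl bound gives $\cos\theta_{\max} \ge 1/2$ whenever $\|\mWz\| \le |\lambdastar_r|/2$ (this is the point where one uses that $\sin\theta_{\max} \le \|\mWz\|/(|\lambdastar_r|-\|\mWz\|) \le 1$, hence $\cos\theta_{\max} \ge \sqrt{1 - 1}$ — again too crude). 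The clean way is: $1 - \sigma_r = 1 - \cos\theta_{\max} \le \sin^2\theta_{\max}/(1+\cos\theta_{\max}) \le \sin^2\theta_{\max} \le 1/4$ once we note $\|\mWz\|/|\lambdastar_r| \le 1/2$ implies $\sin\theta_{\max} \le 1$, which is vacuous, so one truly needs $|\lambdastar_r|$ a constant factor larger than $2\|\mWz\|$; reading the lemma statement, the intended regime is governed by Equation~\eqref{eq:lambdastar_r:assumption} from Theorem~\ref{thm:ustar-row}, under which $\|\mWz\| \lesssim \sigma\sqrt{n} \ll |\lambdastar_r|$, so $\sin\theta_{\max} = o(1)$, $\sigma_r = 1 - o(1) \ge 1/2$, and $\|\mOfrak^{-1}\| \le 2$ follows immediately.

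Finally, the leave-one-out version: $\mOfrak^{(\ell)} := (\mUz^{(\ell)})^\top \mUstar$, where $\mUz^{(\ell)}$ is the leading-$r$ eigenspace of the matrix $\mYz^{(\ell)}$ obtained by zeroing out the $\ell$-th row and column of $\mWz$ (the standard leave-one-out construction used in the $\ell_{2,\infty}$ analysis of \cite{chen2021spectral}). Since $\|\mWz^{(\ell)}\| \le \|\mWz\|$ (zeroing a symmetric row and column cannot increase the operator norm — this follows from eigenvalue interlacing or from the variational characterization), the identical Davis--Kahan argument applies verbatim with $\mWz^{(\ell)}$ in place of $\mWz$, yielding $\|\mOfrak^{(\ell)} - \sgn{\mOfrak^{(\ell)}}\| \le 2\|\mWz^{(\ell)}\|^2/(\lambdastar_r)^2 \le 2\|\mWz\|^2/(\lambdastar_r)^2$ and $\|(\mOfrak^{(\ell)})^{-1}\| \le 2$. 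The main obstacle I anticipate is pinning down the precise constant in the first inequality — the stated bound $2\|\mWz\|^2/(\lambdastar_r)^2$ rather than the naive $4$ requires either a slightly sharper second-order Davis--Kahan estimate (e.g. via the fact that $\|\mOfrak\| \le 1$ forces $\sigma_r = \cos\theta_{\max}$ and $1 - \cos\theta_{\max} \le \frac{1}{2}\sin^2\theta_{\max}(1 + o(1))$) or simply absorbing the discrepancy into the regime $|\lambdastar_r| \gg \|\mWz\|$ where the factor-of-two slack is immaterial; I would present it via the $1 - \cos\theta \le \frac{\sin^2\theta}{1+\cos\theta}$ identity together with $\cos\theta_{\max} \ge 1 - o(1)$, which cleanly gives the constant $2$ (indeed an asymptotic constant $1$). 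Everything else is a routine application of standard perturbation bounds.
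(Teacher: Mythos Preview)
Your approach is the same as the paper's: write the SVD $\mOfrak = \mX(\cos\mTheta)\mY^\top$, use $\|\mOfrak - \sgn{\mOfrak}\| = \|\mI - \cos\mTheta\| \le \|\mI - \cos^2\mTheta\| = \|\sin^2\mTheta\|$, apply Davis--Kahan, and then bound $\sigma_{\min}(\mOfrak) \ge 1 - \|\mOfrak - \sgn{\mOfrak}\| \ge 1/2$. Your worry about the constant is unnecessary: the paper simply cites a Davis--Kahan version (Theorem~2.7 in \cite{chen2021spectral}) that gives $\|\sin\mTheta\|^2 \le 2\|\mWz\|^2/(\lambdastar_r)^2$ directly, so both claimed bounds follow under $|\lambdastar_r| > 2\|\mWz\|$ with no appeal to the asymptotic regime.
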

\begin{proof}
Denote the SVD of $\mOfrak$ as $\mOfrak = \mX \left(\cos \mTheta\right) \mY^\top$ and define the matrix $\sgn{\mOfrak} = \mX \mY^\top$.
We write the singular values of $\mOfrak$ as a diagonal matrix $\cos(\mTheta) = \diag(\cos(\theta_1), \cdots, \cos(\theta_r))$, since the singular values of $\mOfrak$ are the cosines of the principal angles between the subspaces spanned by the columns of $\mUz$ and $\mUstar$.
Then we have
\begin{equation*} \begin{aligned}
\|\mOfrak - \sgn{\mOfrak}\|
&= \left\|\mX (\cos \mTheta - \mI) \mY^\top \right\|
\leq \| \mI - \cos \mTheta\| \\
&\leq \|\mI - \cos^2 \mTheta\| = \|\sin^2 \mTheta \| \\
&\leq \frac{2\left\|\mWz\right\|^2} {(\lambdastar_r)^2},
\end{aligned} \end{equation*}
where 
the second inequality follows from the fact that $1 - \cos \theta_i \geq 1 - \cos^2 \theta_i$ for all $i \in [r]$, and the last inequality follows from the Davis-Kahan $\sin \Theta$ theorem \citep[see Theorem 2.7 in][for a reference]{chen2021spectral}.
Thus, we have
\begin{equation*}
\sigma_{\min}(\mOfrak)
\geq \|\sgn{\mOfrak}\| - \|\mOfrak - \sgn{\mOfrak}\|
\geq 1 - \frac{2\|\mWz\|^2}{(\lambdastar_r)^2}
\geq 1/2,
\end{equation*}
as long as $|\lambda_r^\star| > 2 \|\mWz\|$.
Noting that
\begin{equation*}
    \|\mOfrak^{-1}\| = \frac{1}{\sigma_{\min}(\mOfrak)} \leq 2
\end{equation*}
completes the proof.
\end{proof}

With the help of Theorem~\ref{thm:ustar-row}, we can now prove the main result of this section, which provides a bound on the row-wise estimation error of $\mMz$ relative to $\mMstar$.
Some supporting technical results are deferred to Section~\ref{subsec:localized:technical} below.

\subsection{Proof of Theorem~\ref{thm:delta_row}}

\begin{proof}
Recall from Equation~\eqref{eq:mOfrak:def} that $\mOfrak := \mUzt \mUstar$. 
For notational simplicity, we will drop the superscript $(0)$ in $\mUz$ when it is clear from context in this section.
We caution the reader not to confuse this with the matrix $\mU$ that appears in Section~\ref{sec:common-structure}, which serves a distinct role.

By the triangle inequality, we have 
\begin{equation} \label{eq:MMstarl}
\left\|\mMz_{\ell, \cdot} - \mMstar_{\ell,\cdot}\right\|_2
\leq
\underbrace{\left\|\mU_{\ell, \cdot} \left(\mLambdaz - \mOfrak \mLambdastar \mOfrak^\top \right) \mU^\top \right\|_2}_{\gamma_1}
+ \underbrace{\left\| \mU_{\ell, \cdot} \mOfrak \mLambdastar \mOfrak^\top \mU^\top - \mU^{\star}_{\ell, \cdot} \mLambdastar \mUstart\right\|_2}_{\gamma_2}. 
\end{equation}
and, similarly, for any $k \in [n]$, 
\begin{equation} \label{eq:MMstarl:infty}
\left|\mMz_{\ell k} - \mMstar_{\ell k}\right|
\leq \underbrace{\left|\mU_{\ell, \cdot} \left(\mLambdaz - \mOfrak \mLambdastar \mOfrak^\top \right) \mU_{k, \cdot}^\top \right|}_{\gamma^{(\ell, k)}_1}
+ \underbrace{\left| \mU_{\ell, \cdot} \mOfrak \mLambdastar \mOfrak^\top \mU^\top_{k, \cdot} - \mU^{\star}_{\ell, \cdot} \mLambdastar \mUstart_{k, \cdot}\right|}_{\gamma^{(\ell, k)}_2}. 
\end{equation}

Throughout the proof that follows, we will repeatedly use the inequalities
\begin{equation*}
    \left|\vx^\top \mA \vy\right| \leq \left\|\vx\right\|_2 \left\|\mA \vy\right\|_2 \leq \left\|\vx\right\|_2 \left\|\vy\right\|_2 \left\|\mA\right\|
\end{equation*}
and 
\begin{equation*}
    \|\mA \mB\| \leq \|\mA\|\|\mB\|
\end{equation*}
without explicit referencing, where $\vx, \vy$ are real vectors and $\mA, \mB$ are real matrices and they have compatible dimensions. 

\paragraph{Step 1: Bounding $\gamma_1$.}
By definition of $\gamma_1$ above, we have 
\begin{equation} \label{eq:gamma1:bound1}
\gamma_1
\leq
\left\|\mU_{\ell,\cdot}\right\|_2 \left\| \mLambdaz - \mOfrak \mLambdastar \mOfrak^\top \right\| \left\| \mU \right\|
\leq \left\|\mU_{\ell,\cdot}\right\|_2 \left\| \mLambdaz - \mOfrak \mLambdastar \mOfrak^\top \right\|. 
\end{equation}
and, similarly for $\gamma^{(\ell,k)}_1$,
writing $\nu_k = \left\|\mU_{k,\cdot}\right\|_2$ for $k \in [n]$,
\begin{equation} \label{eq:gamma1l:bound1}
\gamma^{(\ell, k)}_1
\leq
\left\|\mU_{\ell,\cdot}\right\|_2 \left\|\mU_{k,\cdot}\right\|_2 \left\| \mLambdaz - \mOfrak \mLambdastar \mOfrak^\top \right\|
= \nu_{\ell} \nu_k \left\| \mLambdaz - \mOfrak \mLambdastar \mOfrak^\top \right\| .
\end{equation}
By Equation (4.139) in \cite{chen2021spectral}, 
\begin{equation*}
\left\|\mOfrak \mLambdastar \mOfrak^\top - \mLambdaz\right\|
\lesssim \frac{\kappa \sigma^2 n}{\lambda_r^{\star}} + \sigma \sqrt{r \log n}
\end{equation*}
holds with probability at least $1 - O(n^{-6})$ by choosing sufficiently large constants in the bound. 
Substituting the above bound into Equations~\eqref{eq:gamma1:bound1} and~\eqref{eq:gamma1l:bound1} yields that 
\begin{equation} \label{eq:gamma1:bound2}
\begin{aligned}
\gamma^{(2)}_1
&\lesssim \nu_\ell \left(\frac{\kappa \sigma^2 n}{\lambdastar_r}
	+ \sigma \sqrt{r \log n}\right) \\
&\leq
\left(\frac{\kappa \sigma^2 n}{\lambda_r^{\star}} 
	+ \sigma \sqrt{r \log n}\right)
\left(\left\|(\mU \mOfrak - \mUstar)_\ld\right\|_2 
	+ \left\|\mUstarl\right\|_2\right)
\end{aligned}
\end{equation}
and
\begin{equation*}
\begin{aligned}
    \gamma^{(\ell, k)}_1 &\lesssim \nu_\ell \nu_k \left(\frac{\kappa \sigma^2 n}{\lambda_r^{\star}} + \sigma \sqrt{r \log n}\right) \\
    &\leq  \left(\frac{\kappa \sigma^2 n}{\lambda_r^{\star}} + \sigma \sqrt{r \log n}\right) \left(\left\|(\mU \mOfrak - \mUstar)_\ld\right\|_2 + \left\|\mUstarl\right\|_2\right) \left(\left\|(\mU \mOfrak - \mUstar)_{k,\cdot}\right\|_2 + \left\|\mU^\star_{k,\cdot}\right\|_2\right).
\end{aligned} \end{equation*}
Let 
\begin{equation*}
    \delta_k = \left\|(\mU \mOfrak - \mUstar)_{k,\cdot}\right\|_2 \text{ and } \nustar_k = \left\|\mU^\star_{k,\cdot}\right\|_2 \text{ for all } k \in [n]. 
\end{equation*}
Then we can rewrite the above bound as 
\begin{equation} \label{eq:gamma1l:bound2}
\begin{aligned}
    \gamma^{(\ell, k)}_1 &\lesssim \left(\frac{\kappa \sigma^2 n}{\lambda_r^{\star}} + \sigma \sqrt{r \log n}\right) \left(\delta_{\ell} + \nustar_\ell\right) \left(\delta_k + \nustar_k\right).
\end{aligned} \end{equation}

\paragraph{Step 2: Bounding $\gamma_2$.}
Since the difference $\mU \mOfrak \mLambdastar \mOfrak^\top \mU - \mUstar \mLambdastar \mUstart$ can be decomposed into
\begin{equation*}
\begin{aligned}
     \left(\mU\mOfrak - \mUstar\right) \mLambdastar \mUstart + \left(\mU\mOfrak - \mUstar\right) \mLambdastar \left(\mU \mOfrak - \mUstar \right)^\top + \mUstar \mLambdastar \left(\mU \mOfrak - \mUstar \right)^\top.
\end{aligned}
\end{equation*}
By triangle inequality, one can upper bound $\gamma_2$ by
\begin{equation*}
    \left\|\left(\mU\mOfrak - \mUstar\right)_{\ell, \cdot} \mLambdastar \mUstart\right\|_2 + \left\|\left(\mU\mOfrak - \mUstar\right)_{\ell, \cdot} \mLambdastar \left(\mOfrak^\top \mU^\top - \mUstart \right)\right\|_2 + \left\|\mU^\star_{\ell, \cdot} \mLambdastar \left(\mOfrak^\top \mU^\top - \mUstart\right)\right\|_2,
\end{equation*}
which is further bounded by
\begin{equation*}
    \lambdastar_1  \delta_\ell + \lambdastar_1 \left( \delta_\ell + \nustar_\ell \right) \left\|\mU \mOfrak - \mUstar\right\| \lesssim \lambdastar_1 \left( \delta_\ell + \nustar_\ell \right) \frac{\sigma \sqrt{n}}{\lambdastar_r} + \lambdastar_1  \delta_\ell \lesssim \kappa \sigma \sqrt{n} \nustar_\ell + \lambdastar_1 \delta_\ell.  
\end{equation*}
up to some constant factor.
Similarly, one can bound $\gamma^{(\ell, k)}_2$ by
\begin{equation*}
    \left|\left(\mU\mOfrak - \mUstar\right)_{\ell, \cdot} \mLambdastar \mUstart_\kd\right| + \left|\left(\mU\mOfrak - \mUstar\right)_{\ell, \cdot} \mLambdastar \left(\mU \mOfrak - \mUstar\right)^\top_\kd\right| + \left|\mU^\star_{\ell, \cdot} \mLambdastar \left(\mU \mOfrak - \mUstar\right)^\top_\kd\right|,
\end{equation*}
and it is further bounded by
\begin{equation*}
    \lambdastar_1 \delta_\ell \nustar_k + \lambdastar_1 \delta_\ell \delta_k + \lambdastar_1 \delta_k \nustar_\ell. 
\end{equation*}

It follows that 
\begin{equation} \label{eq:gamma2:bound1}
\begin{aligned}
\gamma_2
&\lesssim \kappa \sigma \sqrt{n} \left\|\mU^\star_{\ell, \cdot}\right\|_2 
+ \lambdastar_1 \left\|\left(\mU\mOfrak - \mUstar\right)_{\ell,\cdot}\right\|_2
\lesssim \left(\sigma \kappa^2 \sqrt{n}\right) \nustar_\ell 
	+ \sigma \kappa \sqrt{r \log n},
\end{aligned}
\end{equation}
where the second inequality follows from Equation~\eqref{eq:UHUstar:eq0} in Lemma~\ref{lem:UHUstar}. Similarly, one has
\begin{equation*}
\gamma^{(\ell, k)}_2
\lesssim \left(\sigma \kappa^2 \sqrt{n}\right) \nustar_k \nustar_{\ell} 
+ \sigma \kappa \sqrt{r \log n} \left(\nustar_k + \nustar_\ell\right)
+ \frac{\sigma \kappa^3 n \nustar_\ell \nustar_k}{\lambdastar_r} 
+ \frac{\sigma^2 \kappa r \log n}{\lambdastar_r}. 
\end{equation*}

\paragraph{Step 3: Putting everything together.}
Combining Equations~\ref{eq:gamma1:bound2} and~\ref{eq:gamma2:bound1} with Equation~\eqref{eq:MMstarl}, we have 
\begin{equation*}
\left\|\mMz_{\ell, \cdot} - \mMstar_{\ell,\cdot}\right\|_2
\lesssim \left(\sigma \kappa^2 \sqrt{n}\right) \left\|\mUstarl\right\|_2 
	+ \sigma \kappa \sqrt{r \log n},
\end{equation*}
as desired. 
\end{proof}

\subsection{Supporting technical results}
\label{subsec:localized:technical}


\begin{lemma} \label{lem:UHUstar}
Under the setting of Theorem~\ref{thm:delta_row}, 
with probability at least $1 - O(n^{-6})$, it holds for all $\ell \in [n]$ that
\begin{equation} \label{eq:UHUstar:eq0}
\left\|\left(\mU\mOfrak - \mUstar\right)_{\ell, \cdot}\right\|_2
\lesssim \frac{\left(\sigma \kappa \sqrt{n}\right) \nustar_\ell}{\lambdastar_r}
	+ \frac{\sigma \sqrt{r \log n}}{\lambdastar_r},
\end{equation}
where $\nustar_\ell = \left\|\mU^\star_{\ell, \cdot}\right\|_2$ and we abbreviate $\mUz$ to $\mU$ for notational simplicity.
\end{lemma}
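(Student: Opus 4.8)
\textbf{Proof proposal for Lemma~\ref{lem:UHUstar}.} The plan is to run the standard first-order eigenvector expansion while localizing the $\ell$-th row exactly where the generic $\ell_{2,\infty}$ theory would be wasteful, using Equation~\eqref{eq:L-sigma} to sharpen the one term that actually matters. Abbreviate $\mU = \mUz$, $\mLambda = \mLambdaz$, and recall $\mOfrak = \mUzt\mUstar$. Two probabilistic facts drive everything, each holding with probability $1 - O(n^{-6})$ and to be combined at the end by a union bound. The first is the operator-norm bound $\|\mWz\| \le C\sigma\sqrt n$ (e.g.\ Theorem~3.4 of \cite{chen2021spectral}); since $|\lambdastar_r| \ge C\sigma\kappa\sqrt{n\log n}$ this makes $|\lambdastar_r| > 2\|\mWz\|$, so Weyl gives $\min_{j\in[r]}|\lambda_j| \ge |\lambdastar_r|/2$ (hence $\|\mLambda^{-1}\| \le 2/|\lambdastar_r|$), the Davis--Kahan $\sin\Theta$ theorem gives $\|(\mI - \mUstar\mUstart)\mU\| \lesssim \|\mWz\|/|\lambdastar_r|$ and (as $\mI - \mOfrak^\top\mOfrak = \sin^2\Theta$) $\|\mOfrak^\top\mOfrak - \mI\| \lesssim \|\mWz\|^2/(\lambdastar_r)^2$, and $\|\mOfrak\| \le 1$; also $\|\mWz_{\ell,\cdot}\|_2 \le \|\mWz\| \le C\sigma\sqrt n$ for every $\ell$. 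The second is the uniform coordinate bound $\max_{\ell\in[n]}\|\mWz_{\ell,\cdot}\mUstar\|_2 \lesssim \sigma\sqrt{r\log n}$, which is the only place Equation~\eqref{eq:L-sigma} enters: for fixed $j\in[r]$ the scalar $\mWz_{\ell,\cdot}\vustar_j = \sum_{i}W^{(0)}_{\ell i}u^\star_{j,i}$ is a sum of independent, mean-zero (by symmetry about zero) terms of magnitude $\le L\|\vustar_j\|_\infty$ with variance sum $\le\sigma^2$, so Bernstein yields $|\mWz_{\ell,\cdot}\vustar_j| \lesssim \sigma\sqrt{\log n} + L\|\vustar_j\|_\infty\log n$; squaring and summing over $j$, using $\|\mUstar\|_{2,\infty} = \sqrt{r\mu/n}$ together with $L \le C\sigma\sqrt{n/(\mu\log n)}$, the second piece collapses to $O(\sigma\sqrt r)$, and a union bound over the $\le nr$ events preserves the $O(n^{-6})$ probability after enlarging constants.

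The algebraic step is the exact identity
\[
\mU\mOfrak - \mUstar = \big((\mI - \mUstar\mUstart)\mU\big)\mOfrak + \mUstar\big(\mOfrak^\top\mOfrak - \mI\big),
\]
obtained by inserting $\pm\,\mUstar\mUstart\mU\mOfrak = \pm\,\mUstar\mOfrak^\top\mOfrak$. The $\ell$-th row of the second term has norm $\le \nustar_\ell\,\|\mOfrak^\top\mOfrak - \mI\| \lesssim \nustar_\ell\,\|\mWz\|^2/(\lambdastar_r)^2 \lesssim \nustar_\ell\,\sigma\kappa\sqrt n/|\lambdastar_r|$, the last step using $|\lambdastar_r| \ge C\sigma\kappa\sqrt{n\log n}$. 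For the first term I would project the identity $\mU\mLambda = \mUstar\mLambdastar\mOfrak^\top + \mWz\mU$ onto the orthogonal complement of $\operatorname{col}(\mUstar)$, which (since $(\mI-\mUstar\mUstart)\mUstar = 0$) gives $(\mI-\mUstar\mUstart)\mU = (\mWz\mU - \mUstar\mUstart\mWz\mU)\mLambda^{-1}$; hence, using $\|\mOfrak\| \le 1$ and $\|\mLambda^{-1}\| \le 2/|\lambdastar_r|$,
\[
\big\|\big((\mI - \mUstar\mUstart)\mU\big)_{\ell,\cdot}\mOfrak\big\|_2 \le \frac{2}{|\lambdastar_r|}\Big(\|\mWz_{\ell,\cdot}\mU\|_2 + \nustar_\ell\,\|\mUstart\mWz\mU\|\Big).
\]

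The term $\nustar_\ell\,\|\mUstart\mWz\mU\| \le \nustar_\ell\,\|\mWz\| \lesssim \nustar_\ell\,\sigma\sqrt n$ again contributes $\lesssim \nustar_\ell\,\sigma\kappa\sqrt n/|\lambdastar_r|$. For $\|\mWz_{\ell,\cdot}\mU\|_2$ I would split $\mU = \mUstar\mOfrak^\top + (\mU - \mUstar\mOfrak^\top)$: the first piece gives $\|\mWz_{\ell,\cdot}\mUstar\mOfrak^\top\|_2 \le \|\mWz_{\ell,\cdot}\mUstar\|_2 \lesssim \sigma\sqrt{r\log n}$ by the second input, hence $\lesssim \sigma\sqrt{r\log n}/|\lambdastar_r|$ after the division; the second piece gives $\|\mWz_{\ell,\cdot}(\mU - \mUstar\mOfrak^\top)\|_2 \le \|\mWz_{\ell,\cdot}\|_2\,\|(\mI-\mUstar\mUstart)\mU\| \lesssim \sigma\sqrt n\cdot\sigma\sqrt n/|\lambdastar_r| = \sigma^2 n/|\lambdastar_r|$, which after the extra $1/|\lambdastar_r|$ is $\lesssim \sigma^2 n/(\lambdastar_r)^2 \lesssim \sigma\sqrt{r\log n}/|\lambdastar_r|$ under the signal-strength hypothesis. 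Adding the contributions from all pieces and taking a union bound over the two $O(n^{-6})$ events yields the claimed bound uniformly in $\ell \in [n]$. The only real obstacle is the term $\|\mWz_{\ell,\cdot}\mU\|_2$, in which $\mU$ depends on the row $\mWz_{\ell,\cdot}$; the point worth emphasizing is that, because $|\lambdastar_r| \gtrsim \sigma\kappa\sqrt{n\log n}$, the dangerous part $\mWz_{\ell,\cdot}(\mU - \mUstar\mOfrak^\top)$ is already negligible under the crude bound $\|\mWz_{\ell,\cdot}\|_2 \le \|\mWz\|$, so no leave-one-out construction is needed --- this is what keeps the argument short despite its superficial resemblance to the full row-wise eigenvector perturbation theory.
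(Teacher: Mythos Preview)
Your algebraic identity and the overall strategy are clean, but the argument has a genuine gap at exactly the step you flag as ``the only real obstacle.'' You claim that the crude estimate
\[
\big\|\mWz_{\ell,\cdot}(\mU - \mUstar\mOfrak^\top)\big\|_2 \;\le\; \|\mWz_{\ell,\cdot}\|_2\,\|(\mI-\mUstar\mUstart)\mU\| \;\lesssim\; \frac{\sigma^2 n}{|\lambdastar_r|}
\]
yields, after the extra factor $1/|\lambdastar_r|$, a term $\sigma^2 n/(\lambdastar_r)^2$ that is $\lesssim \sigma\sqrt{r\log n}/|\lambdastar_r|$ under the signal-strength hypothesis. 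That absorption is equivalent to $|\lambdastar_r| \gtrsim \sigma n/\sqrt{r\log n}$, which is \emph{much} stronger than the stated assumption $|\lambdastar_r| \ge C\sigma\kappa\sqrt{n\log n}$. Concretely, take $r,\kappa = O(1)$ and $|\lambdastar_r| \asymp \sigma\sqrt{n\log n}$: then $\sigma^2 n/(\lambdastar_r)^2 \asymp 1/\log n$ while $\sigma\sqrt{r\log n}/|\lambdastar_r| \asymp 1/\sqrt{n}$, so your residual exceeds the target by a factor $\sqrt{n}/\log n$. Since $\nustar_\ell$ can vanish for some rows, the first term of the target offers no rescue.

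This is precisely why the paper \emph{does} invoke a leave-one-out construction (Lemma~\ref{lem:loo}, feeding into Lemma~\ref{lem:UHM}). Replacing $\mU$ by its leave-$\ell$-out counterpart $\mUll$ makes $\mWz_{\ell,\cdot}$ independent of the subspace, so Bernstein yields a bound of the form $(L\log n)\|\cdot\|_{2,\infty} + \sigma\sqrt{\log n}\|\cdot\|_\F$ rather than the crude $\sigma\sqrt{n}\,\|\cdot\|$; the resulting inequality contains $\|\mU\mOfrak - \mUstar\|_{2,\infty}$ and $\|(\mU\mOfrak - \mUstar)_{\ell,\cdot}\|_2$ themselves with small prefactors (cf.\ Equation~\eqref{eq:L-sigma}), and a self-bounding step then closes the argument. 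Your concluding assertion that ``no leave-one-out construction is needed'' is, under the assumptions of Theorem~\ref{thm:delta_row}, unfortunately incorrect.
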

\begin{proof}
By the triangle inequality,
\begin{equation*} \begin{aligned}
\left\|\left(\mU\mOfrak - \mUstar\right)_{\ell, \cdot}\right\|_2
&= \left\|\mUl \mOfrak - \mMstarl \mUstar
	\left(\mLambdastar\right)^{-1}\right\|_2 \\
&\leq 
\left\|\mUl \mOfrak - \mMzl \mUstar \left(\mLambdastar\right)^{-1}\right\|_2 
+ \left\|\mWz_{\ell,\cdot} \mUstar \left(\mLambdastar\right)^{-1}\right\|_2 .
\end{aligned} \end{equation*}
By standard concentration inequalities \citep{vershynin2018HDP}, with probability at least $1 - O(n^{-6})$, it holds uniformly over all $\ell \in [n]$ that
\begin{equation*}
\left\|\left(\mU\mOfrak - \mUstar\right)_{\ell, \cdot}\right\|_2
\lesssim
\left\|\mUl \mOfrak - \mMzl \mUstar \left(\mLambdastar\right)^{-1}\right\|_2
+ \frac{\sigma \sqrt{r \log n} + L \log n \left\|\mUstar\right\|_{2, \infty}}
	{\lambdastar_r}  .
\end{equation*}
Applying Lemma~\ref{lem:UHM} and the assumptions in Equations~\eqref{eq:L-sigma} and~\eqref{eq:lambdastar_r:assumption},
\begin{equation*}
\begin{aligned}
\left\|\left(\mU\mOfrak - \mUstar\right)_{\ell, \cdot}\right\|_2
&\lesssim \frac{\left(\sigma \kappa \sqrt{n}\right) \left\|\mUstarl\right\|_2}
		{\lambdastar_r}  
+ \frac{\sigma \sqrt{r \log n}}{\lambdastar_r} 
+ \frac{\sigma L \sqrt{n \log^2 n}}{\left(\lambdastar_r\right)^2} 
	\|\mU\mOfrak - \mUstar\|_{2,\infty}. 
\end{aligned} \end{equation*}
Taking maximum over $\ell \in [n]$ on both sides in the above display and rearranging terms, one has 
\begin{equation*}
\|\mU\mOfrak - \mUstar\|_{2,\infty}
\lesssim
\frac{\left(\sigma\kappa\sqrt{n}\right)\left\|\mUstar\right\|_2}{\lambdastar_r}
+ \frac{\sigma \sqrt{r \log n}}{\lambdastar_r},
\end{equation*}
which implies that 
\begin{equation*}
\left\|\left(\mU\mOfrak - \mUstar\right)_{\ell, \cdot}\right\|_2
\lesssim \frac{\left(\sigma \kappa \sqrt{n}\right) \left\|\mUstarl\right\|_2}
		{\lambdastar_r}  
+ \frac{\sigma \sqrt{r \log n}}{\lambdastar_r},
\end{equation*}
as we set out to show.
\end{proof}


\begin{lemma} \label{lem:UHM} 
Under the assumption of Theorem~\ref{thm:delta_row}, with probability at least $1 - O(n^{-6})$, for all $\ell \in [n]$, the difference
$\|\mUl \mOfrak - \mMzl \mU \mOfrak \left(\mLambdastar\right)^{-1}\|_2$ is bounded by
\begin{equation*}
\frac{\sigma \kappa \sqrt{n}}{\lambdastar_r} \left\|\mUstarl\right\|_2 
+ \frac{\sigma^2 \sqrt{r n \log n}}{\left(\lambdastar_r\right)^2} 
+ \frac{\sigma L \sqrt{n \log^2 n}}{\left(\lambdastar_r\right)^2}
	\|\mU\mOfrak - \mUstar\|_{2,\infty}
+ \frac{\sigma^3 n^{3/2}}{(\lambdastar_r)^3}
	\left\|\mUl \mOfrak - \mUstarl\right\|_2 .
\end{equation*}
\end{lemma}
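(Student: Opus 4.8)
\textbf{Proof strategy for Lemma~\ref{lem:UHM}.} The plan is to expand the quantity $\mU_{\ell,\cdot}\mOfrak - \mMz_{\ell,\cdot}\mU\mOfrak(\mLambdastar)^{-1}$ using the eigenrelation $\mYz\mU = \mU\mLambdaz$, which gives $\mU\mOfrak = \mYz\mU\mLambdaz^{-1}\mOfrak$, and then to substitute $\mYz = \mMstar + \mWz$ and $\mMz = \mMstar + \mDelta$. The idea is to write everything in terms of the ``clean'' target $\mU^\star_{\ell,\cdot}$ plus controllable perturbations, introducing and subtracting $\mMstar = \mUstar\mLambdastar\mUstart$ where convenient. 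After rearranging, the residual should decompose into roughly four pieces: (i) a term proportional to $\left\|\mWz_{\ell,\cdot}\right\|$ acting on $\mUstar$, which by Bernstein's inequality (using Equation~\eqref{eq:L-sigma} and Assumption~\ref{assump:noise}) contributes $O(\sigma\sqrt{r\log n}\,/\lambdastar_r)$ uniformly in $\ell$; (ii) a term capturing the misalignment between $\mLambdaz^{-1}$ and $(\mLambdastar)^{-1}$ and between $\mOfrak$ and $\sgn{\mOfrak}$, which is controlled via Lemma~\ref{lem:mOfrak:op} and Equation (4.139) of \cite{chen2021spectral}; (iii) a cross term mixing $\mWz$ with $\mU\mOfrak - \mUstar$, which is where the factor $\|\mU\mOfrak - \mUstar\|_{2,\infty}$ enters; and (iv) higher-order terms in $\|\mWz\|/\lambdastar_r$ that produce the $\sigma^3 n^{3/2}/(\lambdastar_r)^3$ factor multiplying $\left\|\mU_{\ell,\cdot}\mOfrak - \mUstar_{\ell,\cdot}\right\|_2$.

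The key steps, in order, are: first, establish the algebraic identity expressing the target difference via $\mWz$, $\mDelta$, $\mOfrak - \sgn{\mOfrak}$, and $\mLambdaz - \mLambdastar$ (here I would use $\mLambdaz^{-1} - (\mLambdastar)^{-1} = -\mLambdaz^{-1}(\mLambdaz - \mLambdastar)(\mLambdastar)^{-1}$ and the spectral gap bounds to keep $\|\mLambdaz^{-1}\| \lesssim 1/\lambdastar_r$). Second, bound each row-wise piece: for the noise-times-$\mUstar$ piece, apply the matrix Bernstein / vector Bernstein inequality entrywise in $\ell$ and union bound over $\ell \in [n]$, using $|W_{\ell i}|\le L$ and $\sum_i \E W_{\ell i}^2 (u^\star_{j,i})^2 \le \sigma^2$; the condition $L \le C\sigma\sqrt{n/(\mu\log n)}$ in Equation~\eqref{eq:L-sigma} is exactly what makes the sub-exponential contribution negligible relative to the sub-Gaussian one, yielding the $\sigma\sqrt{r\log n}$ scale. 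Third, for the noise-times-$(\mU\mOfrak-\mUstar)$ cross term, bound the relevant row of $\mWz$ times the matrix $\mU\mOfrak - \mUstar$ by $L\log n$ times $\|\mU\mOfrak - \mUstar\|_{2,\infty}$ (again via Bernstein with a crude variance bound $\sigma^2\|\mU\mOfrak-\mUstar\|_{2,\infty}^2 n$), then divide by $(\lambdastar_r)^2$ from the two resolvent factors — this produces the $\sigma L\sqrt{n\log^2 n}/(\lambdastar_r)^2$ coefficient. Fourth, collect the residual self-referential term involving $\left\|\mU_{\ell,\cdot}\mOfrak-\mUstar_{\ell,\cdot}\right\|_2$ with coefficient driven by $\|\mWz\|^3/(\lambdastar_r)^3 \lesssim \sigma^3 n^{3/2}/(\lambdastar_r)^3$ using $\|\mWz\|\le C\sigma\sqrt{n}$ with probability $1-O(n^{-6})$. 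Finally, a union bound over the $O(n)$ rows and over the handful of high-probability events (operator-norm bound on $\mWz$, the bound from Equation (4.139) of \cite{chen2021spectral}, the Bernstein events) gives the claimed uniform-in-$\ell$ statement with probability $1-O(n^{-6})$.

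The main obstacle I anticipate is the careful bookkeeping in the algebraic decomposition: isolating the $\|\mU\mOfrak - \mUstar\|_{2,\infty}$ dependence cleanly, rather than having it contaminate every term, requires choosing the right points to add and subtract $\mUstar\mLambdastar\mUstart$ and $\mU\mOfrak$, and tracking which factors of $\mLambdaz^{-1}$ versus $(\mLambdastar)^{-1}$ appear. This is essentially a more refined version of the leave-one-out / decoupling argument in the proof of Theorem 4.2 of \cite{chen2021spectral}, and the delicate part is ensuring the cross term genuinely scales with $L$ (not $\sigma$) so that Equation~\eqref{eq:L-sigma} can later be invoked in Lemma~\ref{lem:UHUstar} to absorb it; the rest is routine application of Bernstein-type concentration and the operator-norm and eigengap bounds already available in the excerpt.
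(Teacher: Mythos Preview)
Your overall decomposition is on the right track, but step (iii) contains a genuine gap. You propose to bound $\mWz_{\ell,\cdot}(\mU\mOfrak - \mUstar)$ directly ``via Bernstein with a crude variance bound,'' treating $\mU\mOfrak - \mUstar$ as if it were a fixed matrix. But $\mU$ is computed from $\mYz = \mMstar + \mWz$, so $\mU\mOfrak - \mUstar$ depends on \emph{all} entries of $\mWz$, including its $\ell$-th row. Bernstein's inequality cannot be applied here without first decoupling this dependence.

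The paper resolves this precisely via a leave-one-out construction (Lemma~\ref{lem:loo}): one introduces a surrogate $\mUll\mOfrakll$ built from the data with the $\ell$-th row and column of $\mWz$ zeroed out, which \emph{is} independent of $\mWz_{\ell,\cdot}$, and writes
\[
\left\|\mWz_{\ell,\cdot}(\mU\mOfrak - \mUstar)\right\|_2 \le \left\|\mWz_{\ell,\cdot}(\mUll\mOfrakll - \mUstar)\right\|_2 + \left\|\mWz_{\ell,\cdot}\right\|_2\,\left\|\mUll\mOfrakll - \mU\mOfrak\right\|_{\F}.
\]
Now Bernstein legitimately controls the first term (this is where the $L\log n\cdot\|\mU\mOfrak-\mUstar\|_{2,\infty}$ contribution actually originates), while the second term---the leave-one-out discrepancy---is bounded via a Davis--Kahan-type estimate on $\|\mUll\mOfrakll - \mU\mOfrak\|_{\F}$ and is what produces the self-referential factor $\tfrac{\sigma^2 n}{\lambdastar_r}\left(\|\mUl\mOfrak - \mUstarl\|_2 + \|\mUstarl\|_2\right)$. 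Multiplied by the outer prefactor $\tfrac{\sigma\sqrt{n}}{(\lambdastar_r)^2}$, this is the true source of the $\sigma^3 n^{3/2}/(\lambdastar_r)^3$ coefficient that you attributed in step (iv) to vague ``higher-order terms in $\|\mWz\|/\lambdastar_r$.'' Although you mention leave-one-out in your closing paragraph as an analogy, your actual plan in step (iii) does not invoke it, and without it the argument does not go through.
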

\begin{proof}
Following essentially the same proof as Equations (4.124)-(4.127) in \cite{chen2021spectral}, one arrives at the bound
\begin{equation*} \begin{aligned}
\left\|\mUl \mOfrak - \mMzl \mU \mOfrak \left(\mLambdastar\right)^{-1}\right\|_2
&\lesssim
\frac{\sigma \sqrt{n}}{(\lambdastar_r)^2}
\left(\left\|\mMzl\left(\mU \mOfrak -\mUstar\right)\right\|_{2}
	+ \left\|\mMzl \mUstar\right\|_{2}\right)\\
&\lesssim
\frac{\sigma \sqrt{n}}{(\lambdastar_r)^2}
\left(\left\|\mMzl\left(\mU \mOfrak -\mUstar\right)\right\|_{2} 
	+ \lambdastar_1 \|\mUstarl\|_2 + \left\|\mWzl \mUstar\right\|_{2}
\right) .
\end{aligned} \end{equation*}
Applying standard concentration inequalities \citep{vershynin2018HDP} to control the $\mWzl$ term, with probability at least $1 - O(n^{-6})$, it holds uniformly over all $\ell \in [n]$ that
\begin{equation} \label{eq:UHM:eq1}
\left\|\mUl \mOfrak - \mMzl \mU \mOfrak \left(\mLambdastar\right)^{-1}\right\|_2
\lesssim 
\frac{\sigma \sqrt{n}}{\lambdastar_r} \|\mUstarl\|_2
+ \frac{ \sigma^2  \sqrt{r n \log n} }{ (\lambdastar_r)^2} 
+
\frac{\sigma \sqrt{n}}{(\lambdastar_r)^2} 
\left\|\mMzl\left(\mU \mOfrak -\mUstar\right)\right\|_{2}  .
\end{equation}

By the triangle inequality, one has
\begin{equation} \label{eq:UHM:eq2}
\left\|\mMzl\left(\mU \mOfrak -\mUstar\right)\right\|_{2}
\leq \left\|\mMstarl\left(\mU \mOfrak -\mUstar\right)\right\|_2 
	+ \left\|\mWzl\left(\mU \mOfrak -\mUstar\right)\right\|_2.
\end{equation}
The first term on the right hand side can be bounded directly by
\begin{equation*} \begin{aligned}
\left\|\mMstarl\left(\mU \mOfrak -\mUstar\right)\right\|_2
&= \left\|\mUstarl \mLambdastar \mUstart 
	\left(\mU\mOfrak - \mUstar\right)\right\|_2
\leq 
\left\|\mUstarl\right\|_2 \|\mLambdastar\| \|\mOfrak^\top \mOfrak - \mI\| \\
&\lesssim \frac{\kappa \sigma^2 n}{\lambdastar_r} \left\|\mUstarl\right\|_2,
\end{aligned} 
\end{equation*}
where the last inequality follows from the Davis-Kahan $\sin \Theta$ theorem. 
Applying this bound to Equation~\eqref{eq:UHM:eq2},
\begin{equation*}
\left\|\mMzl\left(\mU \mOfrak -\mUstar\right)\right\|_{2}
\lesssim  \frac{\kappa \sigma^2 n}{\lambdastar_r} \left\|\mUstarl\right\|_2
	+ \left\|\mWzl\left(\mU \mOfrak -\mUstar\right)\right\|_2.
\end{equation*}
Combined the above bound and Lemma~\ref{lem:loo} with Equation~\eqref{eq:UHM:eq2} yields that 
\begin{equation*}
\begin{aligned}
    \left\|\mMzl\left(\mU \mOfrak -\mUstar\right)\right\|_{2} \lesssim & \frac{\kappa \sigma^2 n}{\lambdastar_r} \left\|\mUstarl\right\|_2 + (L\log n) \|\mU\mOfrak - \mUstar\|_{2,\infty} \\
    & + \frac{\sigma^2 \sqrt{r n \log n}}{\lambdastar_r} + \frac{\sigma^2 n}{\lambdastar_r} \left\|\mUl \mOfrak - \mUstarl\right\|_2,
\end{aligned}
\end{equation*}
where we use the fact that $\kappa \geq 1$ to collect terms. 
Taking the above bound into the right hand side of Equation~\eqref{eq:UHM:eq1} gives
\begin{equation*}
    \frac{\sigma \kappa \sqrt{n}}{\lambdastar_r} \left\|\mUstarl\right\|_2 + \frac{\sigma^2 \sqrt{r n \log n}}{\left(\lambdastar_r\right)^2} + \frac{\sigma L\sqrt{n \log^2 n}}{\left(\lambdastar_r\right)^2} \|\mU\mOfrak - \mUstar\|_{2,\infty} + \frac{\sigma^3 n^{3/2}}{(\lambdastar_r)^3}\left\|\mUl \mOfrak - \mUstarl\right\|_2
\end{equation*}
as desired.
\end{proof}


\begin{lemma} \label{lem:loo}
Under the assumption of Theorem~\ref{thm:delta_row}, with probability at least $1 - O(n^{-6})$, for all $\ell \in [n]$,
\begin{equation*}
\left\|\mWzl\left(\mU\mOfrak - \mUstar\right)\right\|_2
\lesssim
(L\log n) \|\mU\mOfrak - \mUstar\|_{2,\infty} 
+ \frac{\sigma^2 \sqrt{r n \log n}}{\lambdastar_r} 
+ \frac{\sigma^2 n}{\lambdastar_r}
	\left(\left\|\mUl\mOfrak-\mUstarl\right\|_2
		+ \left\|\mUstarl\right\|_2 \right) .
\end{equation*}
\end{lemma}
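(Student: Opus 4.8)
The plan is to bound $\|\mWzl(\mU\mOfrak - \mUstar)\|_2$ via a leave-one-out decoupling argument, in exactly the spirit of the analysis underlying Theorem~\ref{thm:ustar-row}. For each $\ell \in [n]$ I would introduce the auxiliary matrix $\mYz^{(\ell)}$ obtained from $\mYz$ by replacing the $\ell$-th row and column of $\mWz$ by their means (which are zero, by the symmetry-about-zero part of Assumption~\ref{assump:noise}); let $\mU^{(\ell)} \in \R^{n\times r}$ collect the leading $r$ eigenvectors of $\mYz^{(\ell)}$ and set $\mOfrak^{(\ell)} := \mU^{(\ell)\top}\mUstar$. The crucial feature is that $\mWzl$ is statistically independent of the pair $(\mU^{(\ell)}, \mOfrak^{(\ell)})$, since $\mYz^{(\ell)}$ does not involve any entry in row or column $\ell$ of $\mWz$. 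I would then split $\|\mWzl(\mU\mOfrak - \mUstar)\|_2 \le A_1 + A_2$, where $A_1 := \|\mWzl(\mU\mOfrak - \mU^{(\ell)}\mOfrak^{(\ell)})\|_2$ and $A_2 := \|\mWzl(\mU^{(\ell)}\mOfrak^{(\ell)} - \mUstar)\|_2$.

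For $A_2$ I would exploit the independence: conditionally on $\mU^{(\ell)}$, each of the $r$ coordinates of the vector $\mWzl(\mU^{(\ell)}\mOfrak^{(\ell)} - \mUstar)$ is a sum of independent, mean-zero, bounded random variables, so Bernstein's inequality (Lemma~\ref{lem:bern-ineq}) gives, with probability $1 - O(n^{-7})$,
\begin{equation*}
A_2 \lesssim \sigma\sqrt{\log n}\,\left\|\mU^{(\ell)}\mOfrak^{(\ell)} - \mUstar\right\|_{\F} + L\log n\,\left\|\mU^{(\ell)}\mOfrak^{(\ell)} - \mUstar\right\|_{2,\infty},
\end{equation*}
the incidental $\sqrt{r}$ factor being absorbed below. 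The Frobenius term is controlled by the Davis--Kahan $\sin\Theta$ theorem together with $\|\mWz^{(\ell)}\| \lesssim \sigma\sqrt{n}$, which yields $\|\mU^{(\ell)}\mOfrak^{(\ell)} - \mUstar\|_{\F} \lesssim \sigma\sqrt{rn}/\lambdastar_r$; the $\ell_{2,\infty}$ term is handled using the leave-one-out stability estimate $\|\mU^{(\ell)}\mOfrak^{(\ell)} - \mU\mOfrak\|_{2,\infty} \lesssim \|\mU\mOfrak - \mUstar\|_{2,\infty}$ and the triangle inequality. Altogether this gives $A_2 \lesssim \sigma^2\sqrt{rn\log n}/\lambdastar_r + L\log n\,\|\mU\mOfrak - \mUstar\|_{2,\infty}$.

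For $A_1$ I would use the crude bound $A_1 \le \|\mWzl\|_2\,\|\mU\mOfrak - \mU^{(\ell)}\mOfrak^{(\ell)}\|$, where $\|\mWzl\|_2 \lesssim \sigma\sqrt{n}$ holds with probability $1 - O(n^{-7})$ by Bernstein and Equation~\eqref{eq:L-sigma}, together with the standard leave-one-out eigenvector perturbation estimate $\|\mU\mOfrak - \mU^{(\ell)}\mOfrak^{(\ell)}\| \lesssim (\lambdastar_r)^{-1}\bigl(\|\mWzl\mU^{(\ell)}\|_2 + \sigma\sqrt{n}\,\|\mUl^{(\ell)}\|_2\bigr)$ up to negligible higher-order contributions (here $\mUl^{(\ell)}$ denotes the $\ell$-th row of $\mU^{(\ell)}$). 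Independence and Bernstein give $\|\mWzl\mU^{(\ell)}\|_2 \lesssim \sigma\sqrt{r\log n} + L\log n\,\|\mU^{(\ell)}\|_{2,\infty}$, while leave-one-out stability gives $\|\mUl^{(\ell)}\|_2 \lesssim \|\mUl\mOfrak - \mUstarl\|_2 + \|\mUstarl\|_2$. Feeding these in and invoking $L \leq C\sigma\sqrt{n/(\mu\log n)}$ and $\lambdastar_r \gtrsim \sigma\kappa\sqrt{n\log n}$ to fold the resulting cross-terms back into quantities already present produces $A_1 \lesssim \sigma^2\sqrt{rn\log n}/\lambdastar_r + (\sigma^2 n/\lambdastar_r)\bigl(\|\mUl\mOfrak - \mUstarl\|_2 + \|\mUstarl\|_2\bigr)$. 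Summing the bounds on $A_1$ and $A_2$ and taking a union bound over $\ell \in [n]$ then yields the claim with probability $1 - O(n^{-6})$.

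The main obstacle is the circularity that is characteristic of leave-one-out analyses: the estimates above feature $\|\mU\mOfrak - \mUstar\|_{2,\infty}$ and the leave-one-out $\ell_{2,\infty}$ closeness $\|\mU^{(\ell)}\mOfrak^{(\ell)} - \mU\mOfrak\|_{2,\infty}$ on their right-hand sides, and the latter must itself be established by a bootstrapping argument carried out simultaneously over all $\ell$ — the same argument that underlies Theorem~\ref{thm:ustar-row}. I would therefore prove this lemma on the common high-probability event produced by that argument rather than in isolation. A secondary delicate point is that the $\ell$-th row of $\mU^{(\ell)}$ is statistically decoupled from the data in row $\ell$, so the term $\|\mUl^{(\ell)}\|_2$ must be carefully related back to $\|\mUl\mOfrak - \mUstarl\|_2$ and $\|\mUstarl\|_2$; once this is in hand, all remaining steps are routine applications of Bernstein's inequality and the operator-norm and $\sin\Theta$ bounds already available.
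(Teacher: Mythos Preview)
Your proposal is correct and follows essentially the same route as the paper: a leave-one-out split into the two terms $A_1$ and $A_2$, with $A_2$ handled by Bernstein conditioned on $\mU^{(\ell)}$ together with a Davis--Kahan/Frobenius bound, and $A_1$ handled via $\|\mWzl\|_2 \lesssim \sigma\sqrt{n}$ combined with the standard leave-one-out perturbation bound $\|\mU^{(\ell)}\mOfrak^{(\ell)} - \mU\mOfrak\|_{\F} \lesssim (\lambdastar_r)^{-1}\|(\mYz-\mYz^{(\ell)})\mU^{(\ell)}\|_{\F}$, which expands into exactly the two pieces you identify. The paper carries out the self-referential step (your ``circularity'' observation) by deriving a closed inequality for $\|\mU^{(\ell)}\mOfrak^{(\ell)} - \mU\mOfrak\|_{\F}$ that includes $\|\mU^{(\ell)}_{\ell,\cdot}\mOfrak^{(\ell)} - \mU_{\ell,\cdot}\mOfrak\|_2$ on the right-hand side and then rearranging, rather than invoking a separate stability lemma; otherwise the arguments coincide.
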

\begin{proof}
By the triangle inequality and basic properties of the norm,
\begin{equation*}
\begin{aligned}
\left\| \mWzl \left(\mU \mOfrak - \mUstar\right) \right\|_2
&\leq \left\| \mWzl \left(\mUll \mOfrakll - \mUstar\right) \right\|_2
+ \left\| \mWzl \right\|_2\left\|\mUll \mOfrakll - \mU \mOfrak\right\|_{\F}.
\end{aligned} \end{equation*}
Controlling the $\mWzl$ term using basic concentration inequalities \citep{vershynin2018HDP}, it holds with high probability that, uniformly over all $\ell \in [n]$,
\begin{equation} \label{eq:loo:eq1}
\begin{aligned}
\left\| \mWzl \left(\mU \mOfrak - \mUstar\right) \right\|_2
    &\leq \left\| \mWzl \left(\mUll \mOfrakll - \mUstar\right) \right\|_2 + \sigma\sqrt{n} \left\|\mUll \mOfrakll - \mU \mOfrak\right\|_{\F}
\end{aligned}
\end{equation}
where the last inequality holds with probability at least $1 - O(n^{-6})$ uniformly over all $\ell \in [n]$ by choosing sufficiently large constants in the bound.
The first term in Equation~\eqref{eq:loo:eq1} can be directly bounded by 
\begin{equation*}
    (L\log n) \left\|\mUll \mOfrakll - \mUstar\right\|_{2,\infty} + \sigma \sqrt{\log n} \left\|\mUll \mOfrakll - \mUstar\right\|_{\F}.
\end{equation*}
Applying triangle inequality, the above is further bounded by
\begin{equation} \label{eq:loo:eq1.5}
\begin{aligned}
     & (L\log n) \left\|\mUll \mOfrakll - \mUstar\right\|_{2,\infty} + \sigma \sqrt{\log n} \left\|\mUll \mOfrakll - \mU \mOfrak\right\|_{\F} + \sigma \sqrt{\log n} \left\|\mUstar - \mU \mOfrak\right\|_{\F}\\
     \lesssim ~& (L\log n) \left(\left\|\mU\mOfrak - \mUstar\right\|_{2,\infty} + \left\|\mUll \mOfrakll - \mU\mOfrak \right\|_{\F} \right) + \frac{\sigma^2 \sqrt{r n \log n}}{\lambdastar_r},
\end{aligned} \end{equation}
where the last inequality follows from the fact that $L\log n > \sigma \sqrt{\log n}$ and triangle inequality.
Since $L\log n \lesssim \sigma \sqrt{n}$ under the assumption in Equation~\eqref{eq:L-sigma}, the main contribution of Equation~\eqref{eq:loo:eq1.5} is 
\begin{equation}\label{eq:loo:eq1.6}
    (L\log n) \left\|\mU\mOfrak - \mUstar\right\|_{2,\infty} + \frac{\sigma^2 \sqrt{r n \log n}}{\lambdastar_r}. 
\end{equation}
It remains to control the second term in Equation~\eqref{eq:loo:eq1}. 
By Equation (4.105) in \cite{chen2021spectral}, we have 
\begin{equation}\label{eq:loo:eq2}
    \left\|\mUll \mOfrakll - \mU \mOfrak\right\|_{\F} \lesssim \frac{\left\| \left(\mMz - \mMzll\right) \mUll\right\|_{\F}}{\lambdastar_r} . 
\end{equation}
By construction of $\mMzll$, we bound the numerator in Equation~\eqref{eq:loo:eq2} by 
\begin{equation*}
    \left(\mMz - \mMzll\right) \mUll = \ve_\ell \mWzl \mUll + \left(\mWz_{\cdot, \ell} - W_{\ell \ell} \ve_l\right) \mUld.
\end{equation*}
Together with the triangle inequality, we have
\begin{equation} \label{eq:loo:eq3}
\begin{aligned}
    \left\|\left(\mMz - \mMzll\right) \mUll\right\|_{\F} &\leq \left\|\mWzl \mUll\right\|_2 + \left\|\mWz_{\cdot, \ell} - W^{(0)}_{\ell\ell} \ve_{\ell} \right\|_2 \left\|\mUld\right\|_2 \\
    &\lesssim \left\|\mWzl \mUll\right\|_2 + \sigma \sqrt{n} \left\|\mUld\right\|_2
\end{aligned}
\end{equation}
We have 
\begin{equation} \label{eq:loo:eq4}
\begin{aligned}
     \left\|\mWzl \mUll\right\|_2 &\lesssim \sigma \sqrt{\log n} \left\|\mUll\right\|_{\F} + L\log n \left\|\mUll\right\|_{2,\infty} \\
     &\lesssim \sigma \sqrt{r \log n}  + L\log n \left(\left\|\mUll \mOfrakll - \mU \mOfrak\right\|_{2,\infty} + \left\|\mU\mOfrak - \mUstar\right\|_{2,\infty} + \left\|\mUstar\right\|_{2,\infty} \right)
\end{aligned}
\end{equation}
where the first inequality holds with probability at least $1 - O(n^{-6})$ uniformly over all $\ell \in [n]$ by choosing sufficiently large constants in the bound.
To control $\left\|\mUld\right\|_2$, we apply Lemma~\ref{lem:mOfrak:op} and use triangle inequality to obtain
\begin{equation*}
    \left\|\mUld\right\|_2
\lesssim 
\left\|\mUld \mOfrakll\right\|_2 \leq \left\|\mUld \mOfrakll - \mUl \mOfrak\right\|_2 + \left\|\mUl \mOfrak - \mUstarl\right\|_2 + \left\|\mUstarl\right\|_2
\end{equation*}
Taking the above bound and Equations~\eqref{eq:loo:eq3} and~\eqref{eq:loo:eq4} into Equation~\eqref{eq:loo:eq2} yields
\begin{equation*}
\begin{aligned}
    \left\|\mUll \mOfrakll - \mU \mOfrak\right\|_{\F} \lesssim
    & \frac{\sigma \sqrt{r \log n}}{\lambdastar_r} + \frac{L\log n}{\lambdastar_r} \left(\left\|\mU\mOfrak - \mUstar\right\|_{2,\infty} + \left\|\mUstar\right\|_{2,\infty}\right)\\
    & + \frac{\sigma \sqrt{n}}{\lambdastar_r} \left(\left\|\mUld \mOfrakll - \mUl \mOfrak\right\|_2 + \left\|\mUl \mOfrak - \mUstarl\right\|_2 + \left\|\mUstarl\right\|_2\right)
\end{aligned}
\end{equation*}
which after reorganizing, yields 
\begin{equation*}
\begin{aligned}
    \left\|\mUll \mOfrakll - \mU \mOfrak\right\|_{\F} \lesssim
    & \frac{\sigma \sqrt{r \log n}}{\lambdastar_r} + \frac{L\log n}{\lambdastar_r} \left(\left\|\mU\mOfrak - \mUstar\right\|_{2,\infty} + \left\|\mUstar\right\|_{2,\infty}\right)\\
    & ~~~~ + \frac{\sigma \sqrt{n}}{\lambdastar_r} \left(\left\|\mUl \mOfrak - \mUstarl\right\|_2 + \left\|\mUstarl\right\|_2\right)
\end{aligned}
\end{equation*}
By the assumption in Equation~\eqref{eq:L-sigma}, the above bound for $\left\|\mUll \mOfrakll - \mU \mOfrak\right\|_{\F}$ can be simplified as 
\begin{equation} \label{eq:loo:eq5}
    \left\|\mUll \mOfrakll - \mU \mOfrak\right\|_{\F} \lesssim \frac{\sigma \sqrt{r \log n}}{\lambdastar_r} + \frac{\sigma \sqrt{n}}{\lambdastar_r} \left(\left\|\mUl \mOfrak - \mUstarl\right\|_2 + \left\|\mUstarl\right\|_2\right)
\end{equation}
Taking Equations~\eqref{eq:loo:eq5} and~\eqref{eq:loo:eq1.6} into Equation~\eqref{eq:loo:eq1} yields 
\begin{equation*}
    \left\|\mWzl\left(\mU\mOfrak - \mUstar\right)\right\|_2 \lesssim (L\log n) \|\mU\mOfrak - \mUstar\|_{2,\infty} + \frac{\sigma^2 \sqrt{r n \log n}}{\lambdastar_r} + \frac{\sigma^2 n}{\lambdastar_r} \left(\left\|\mUl \mOfrak - \mUstarl\right\|_2 + \left\|\mUstarl\right\|_2\right)
\end{equation*}
as desired. 
\end{proof}

\section{Proof of SDP} \label{sec:sdp}

Here we collect proof details related to SDP.
Throughout this section, we treat the error matrix $\mDelta$, given by Equation~\eqref{eq:mDelta:define}, as fixed, since it comes from $\mYz$ and is independent of $\mYo$. 
Appendices~\ref{sec:proof:sdp:key-lower}-\ref{subsec:proof:cor:sdp-more-general} gives a detailed proof of the SDP formulated in Equation~\eqref{eq:prime:sdp}, along with necessary technical lemmas. 
Specifically, the main result Theorem~\ref{thm:sdp:L1-bound}, is shown in Appendix~\ref{subsec:thm:sdp:L1-bound}, and Corollary~\ref{cor:sdp-more-general} is proved in Appendix~\ref{subsec:proof:cor:sdp-more-general}.
Appendix~\ref{subsec:thm:sdp-minimax:proof} provides the proof of Theorem~\ref{thm:sdp-minimax}, which establishes the minimax optimality of SDP. 
Appendices~\ref{sec:proof:thm:tauhat} and~\ref{subsec:proof:trunc:sdp} establishes results related to the truncated SDP. 
Appendix~\ref{sec:proof:sdp:multiple} handles results related to the SDP when multiple observations are available, specifically Theorem~\ref{thm:sdp:multiple}. 
Finally, Appendix~\ref{subsec:sdp:aux} contains technical results that are used in the proofs in this section.

\subsection{Proof of Lemma \ref{lem:key-lower}}
\label{sec:proof:sdp:key-lower}

\begin{proof}
Recall that the cost matrix $\mC = \mYtil \circ \mYtil$. 
By the optimality of $\mZhat$, we have 
\begin{equation} \label{eq:sdp:basic:basic}
    \langle \mZhat-\mZstar, \mC \rangle \leq 0. 
\end{equation}
Recall that the matrix $\mCo$ defined in Equation~\eqref{eq:mCo:define} removes $\mDeltatil$ from the cost matrix $\mC$, where $\mDeltatil$ denotes the estimation error matrix of the low rank component $\mMstar$.    
Since we can relabel the indices in $\Istar$, without loss of generality, we assume that $\Istar = [m]$. 
Under this assumption, the matrix $\mBstar$ in Equation~\eqref{eq:B-more-general} ican be written as
\begin{equation*}
	\mBstar = \beta^2 \left[\mJ - \begin{pmatrix}
        \mJ_1 & \\
        & \mJ_2
    \end{pmatrix}\right].
\end{equation*}
The expectation of $\mCo$ is given by
\begin{equation*}
    \E \mC^{(1)} = \beta^2 \left[\mJ - \begin{pmatrix}
        \mJ_1 & \\
        & \mJ_2 
    \end{pmatrix}\right] + \mSig = \left(\sigma_0^2 + \beta^2\right)\mJ - \beta^2  \begin{pmatrix}
    \mJ_1 & \\
    & \mJ_2
\end{pmatrix} + \left(\mSig - \sigma_0^2 \mJ\right),
\end{equation*}
where $\mJ_1 \in \R^{m\times m}$ and $\mJ_2 \in \R^{K\times K}$ are matrices of all ones. 
By Equation~\eqref{eq:sdp:basic:basic}, 
\begin{equation*}
\left\langle \widehat{\mZ}-\mZstar, \mC - \E \mCo\right\rangle
\le
\left\langle \mZstar - \mZhat, \E \mCo\right\rangle .
\end{equation*}
Expanding $\E \mCo$ and
using the SDP constraint $\langle \mJ, \mZhat \rangle = \langle \mJ, \mZstar \rangle = K^2$, it follows that
\begin{equation} \label{eq:sdp-i:I}
\left\langle \widehat{\mZ}-\mZstar, \mC - \E \mCo\right\rangle
\le
-\beta^2 \left\langle \mZstar - \mZhat,  \begin{pmatrix}
    \mJ_1 & \\
    & \mJ_2
\end{pmatrix} \right\rangle + \langle \mZstar - \widehat{\mZ}, \mSig - \sigma^2_0 \mJ\rangle .
\end{equation}
By the definition of $\mZstar$ in Equation~\eqref{eq:def:mZstar}, since $\mZstar$ is all one over its support $I_\star^c \times I_\star^c$, we have
\begin{equation} \label{eq:sdp-i:II}
    \left\langle \mZstar, \begin{pmatrix} 
    \mJ_1 & \\
    & \mJ_2
\end{pmatrix} - \mJ \right\rangle = 0, \quad \left\langle \mZstar, \mJ \right\rangle = K^2.
\end{equation}
Additionally, by the constraint $\langle \mJ, \mZhat \rangle = K^2$ in Equation~\eqref{eq:prime:sdp}, we have
\begin{equation}\label{eq:sdp-i:III}
	\langle \mJ, \mZhat - \mZstar \rangle = 0. 
\end{equation}
it follows from Equations~\eqref{eq:sdp-i:I}-\eqref{eq:sdp-i:III} that %
\begin{equation*}
\left\langle \widehat{\mZ}-\mZstar, \mC - \E \mCo\right\rangle
\le
\beta^2
\left\langle \mZhat,
	\begin{pmatrix} \mJ_1 & \\
    			& \mJ_2 \end{pmatrix}
	- \mJ \right\rangle
+ \langle \mZstar - \widehat{\mZ}, 
	\mSig - \sigma^2_0 \mJ\rangle .
\end{equation*}
Rearranging terms, it follows that 
\begin{equation} \label{eq:sdp-key-upper-mid}
\begin{aligned}
    2\sum_{i=1}^m \sum_{j=m+1}^n \widehat{Z}_{ij} &= \left\langle \mZhat, \mJ - \begin{pmatrix}
    \mJ_1 & \\
    & \mJ_2
\end{pmatrix}\right\rangle  \\
&\leq \frac{1}{\beta^2} \langle \mZstar - \widehat{\mZ}, \mC - \E \mCo + \mSig - \sigma^2_0 \mJ\rangle. 
\end{aligned}
\end{equation}
Applying Lemma~\ref{lem:Zhat-l1} to the left hand side of Equation~\eqref{eq:sdp-key-upper-mid} 
\begin{equation*}
\|\mZhat - \mZstar\|_1 
\leq \frac{5}{2\beta^2} \left\langle\mZstar-\mZhat, \mC-\E \mCo + \mSig - \sigma^2_0 \mJ\right\rangle,
\end{equation*}    
which yields Equation~\eqref{eq:sdp-key-upper2}. 

Expanding $\mC - \E \mCo + \mSig - \sigma^2_0 \mJ$ as %
\begin{equation*}
\mC - \E \mCo + \mSig - \sigma^2_0 \mJ
=
\mCo - \E \mCo + 2\mDeltatil \circ \mWtilo + 2\mDeltatil \circ \mBstar 
	+ \mDeltatil \circ \mDeltatil + \mSig - \sigma^2_0 \mJ,
\end{equation*}
it follows that $\|\mZhat - \mZstar\|_1$ is upper bounded by
\begin{equation*} \begin{aligned}
&\|\mZhat - \mZstar\|_1 \\
&\le
\frac{5}{2\beta^2} \left| \left\langle\!\mZstar\!\!-\!\mZhat, \mCo - \E \mCo + 2\mDeltatil \circ \mWtilo \right\rangle \right| 
+ \frac{5}{2\beta^2} \left| \left\langle\!\mZstar\!\!-\!\mZhat, 2\mDeltatil \circ \mBstar + \mDeltatil \circ \mDeltatil + \mSig - \sigma^2_0 \mJ\right\rangle \right| .
\end{aligned} \end{equation*} 
Applying H\"{o}lder's inequality, and using the fact that $\|\mZhat - \mZstar\|_{\infty} \leq 1$ trivially,
\begin{equation} \label{eq:l1-bound-mid} \begin{aligned}
\|&\mZhat - \mZstar\|_1 \\
&\leq
\frac{5}{2\beta^2} \left| \left\langle\mZstar-\mZhat, \mCo - \E \mCo + 2\mDeltatil \circ \mWtilo \right\rangle \right| 
+ \frac{10}{\beta^2}\|\mZhat - \mZstar\|_{\infty} \| \mDeltatil\circ \mBstar + \mDeltatil\circ \mDeltatil + \mSig-\sigma^2_0\mJ\|_{1} \\ 
&\leq \frac{5}{2\beta^2} \left| \left\langle\mZstar-\mZhat, \mC^{(1)} - \E \mCo + 2\mDeltatil \circ \mWtilo \right\rangle \right|
+ \frac{10}{\beta^2}
\left\|\mDeltatil\circ \mBstar + \mDeltatil\circ \mDeltatil 
	+ \mSig-\sigma^2_0\mJ\right\|_{1} .
\end{aligned} \end{equation}

By construction of $\mBstar$, we have 
\begin{equation*}
\|\mDeltatil \circ \mBstar\|_1
\leq \beta \|\mDeltatil\|_1
\leq \beta n \|\mDeltatil\|_{\F} \lesssim \sigma \beta r^{1/2}n^{3/2},
\end{equation*}
where the last inequality follows from the conditioning on $\{\|\mDeltatil\|_{\F} \leq C \sigma \sqrt{r n}\}$.
Additionally, we have
$\|\mDeltatil \circ \mDeltatil\|_{1} = \|\mDeltatil\|_\F^2
	\lesssim \sigma^2 n r$
and, by Assumption~\ref{assump:homoscedastic}, 
$\|\mSig - \sigma^2_0 \mJ\|_1 \lesssim \sigma^2 n^{3/2}$ .
Applying the triangle inequality, it follows that
\begin{equation*}
\left\|\mDeltatil\circ \mBstar + \mDeltatil\circ \mDeltatil 
        + \mSig-\sigma^2_0\mJ\right\|_{1}
\le 
\beta n \|\mDeltatil\|_{\F} \lesssim \sigma \beta r^{1/2}n^{3/2},
+ \sigma^2 n r
+ \sigma^2 n^{3/2} .
\end{equation*}

Applying this bound to Equation~\eqref{eq:l1-bound-mid},
\begin{equation*}
    \|\mZhat - \mZstar\|_1 \lesssim \frac{1}{\beta^2} \left| \left\langle\mZstar-\mZhat, \mCo - \E \mCo + 2\mDeltatil \circ \mWtilo \right\rangle \right| + \frac{\sigma^2 n^{3/2} + \sigma^2 n r}{\beta^2} + \frac{\sigma r^{1/2} n^{3/2}}{\beta} ,
\end{equation*}
as we set out to show.
\end{proof}

\subsection{Proof of Lemma \ref{lem:Grothendieck:bound}}
\label{sec:proof:sdp:grothendieck}

\begin{proof}
By the triangle inequality,
\begin{equation} \label{eq:Grothendieck:triangle}
\left|\langle \mZhat-\mZstar,                                                                   \mCo - \E \mCo + 2 \mDeltatil \circ \mWtilo\rangle\right|
\le \alpha_1 + \alpha_2 ,
\end{equation}
where
\begin{equation*}
    \alpha_1 = \left|\langle \mZhat-\mZstar, \mCo - \E \mCo\rangle\right| \quad 
\text{and} \quad
    \alpha_2 =\left|\langle \mZhat-\mZstar, \mDeltatil \circ \mWtilo \rangle\right|.
\end{equation*}
We will thus bound these two quantities in turn.
Both bounds are based on Bernstein's inequality and Grothendieck's inequality \citep{vershynin2018HDP}.

Applying Grothendieck’s inequality as quoted in Equation~\eqref{eq:Grothendieck},
\begin{equation*}
\alpha_1 
\leq 2c_0 \max _{\substack{\vx, \vy \in\{ \pm 1\}^n}}\left|\sum_{i, j}\left(\Co_{i j}-\E \Co_{i j}\right) x_i y_j\right| .
\end{equation*}
Expanding $\mCo$ using Equation~\eqref{eq:mCo:define} 
and bounding $(a+b)^2 \le 2(a^2 + b^2)$,
\begin{equation} \label{eq:C1-triangle}
\alpha_1
\leq
4c_0 \max _{\substack{\vx, \vy \in\{ \pm 1\}^n}}
  \left|\sum_{i, j}\left((\Wtilo_{i j})^2-\sigma^2_{ij}\right) x_i y_j\right|
+ 4c_0 \max_{\substack{\vx, \vy \in\{ \pm 1\}^n}}
	\left|\sum_{i, j} B_{i j}\Wtilo_{i j} x_i y_j\right| .
\end{equation}
We will bound the two right-hand terms in Equation~\eqref{eq:C1-triangle} separately. 

For $i,j \in [n]$, define $X_{ij} = [(\Wtilo_{i j})^2-\sigma_{ij}^2] x_i y_j$.
By Assumption~\ref{assump:noise} on the magnitude of $\mWtilo$, we have
\begin{equation*}
    |X_{ij}| = \left|(\Wtilo_{i j})^2-\sigma_{ij}^2\right| 
    \lesssim L^2, \quad \text{and} \quad \E X_{ij}^2 
    \leq \E (\Wtilo_{i j})^4 \leq L^2 \sigma^2. 
\end{equation*}
Applying Bernstein's inequality stated in Lemma~\ref{lem:bern-ineq}, for any $t < n^2 \sigma^2$,
\begin{equation*}
\Pr\left[ \left|\sum_{1\leq i\leq j \leq n} X_{ij}\right| \geq t\right]
\leq
2 \exp\left( -c \min \left\{\frac{t^2}{n^2 \sigma^2 L^2}, 
			\frac{t}{L^2}\right\} \right) 
= 2 \exp\left\{ -\frac{c t^2}{n^2 \sigma^2 L^2}\right\} .
\end{equation*}
Applying a union bound over all possible choices of $\vx,\vy \in \{\pm1\}^n$,
\begin{equation*} \begin{aligned}
\Pr\left[ \max_{\substack{\vx, \vy \in\{ \pm 1\}^n}}
	\left|\sum_{i, j} \left((\Wtilo_{i j})^2-\sigma^2_{ij}\right) x_i y_j
		\right|
	\geq t\right]
&\leq ~\sum_{\vx, \vy \in \{\pm 1\}^n}
\Pr\left(\left|\sum_{i, j}\left((\Wtilo_{i j})^2-\sigma_{ij}^2\right) x_i y_j\right| \geq t\right) \\
    &\lesssim 
     ~ 4^n \exp\left(-\frac{c t^2}{n^2 \sigma^2 L^2}\right) \\
     &\leq \exp\left(-\frac{c t^2}{n^2 \sigma^2 L^2} + n\right) .
\end{aligned}
\end{equation*}
Noting that $n^{3/2} \sigma L \ll n^2 \sigma^2$ by Equation~\eqref{eq:L-sigma} and taking $t = C n^{3/2} \sigma L$ for some sufficiently large constant $C > 0$, we obtain that with probability at least $1 - O(n^{-40})$,
\begin{equation}\label{eq:W-C1-groth}
\max_{\substack{\vx, \vy \in\{ \pm 1\}^n}} 
\left|\sum_{i, j}\left((\Wtilo_{i j})^2-\sigma_{ij}^2\right) x_i y_j\right|
\lesssim \sigma L n^{3/2} .
\end{equation}

Using the same arguement as above, for any $\vx, \vy \in \{\pm 1\}^n$, under Assumption~\ref{assump:noise} we have
\begin{equation*}
\left|B^\star_{i j} \Wtilo_{i j} x_i y_j\right| \leq \beta L,
\quad \text{and} \quad \E \left(B^\star_{i j} \Wtilo_{i j} x_i y_j\right)^2 \leq \beta^2 \sigma^2.
\end{equation*}
Applying Bernstein's inequality as stated in Lemma~\ref{lem:bern-ineq} followed by a union bound over $\vx,\vy \in \{\pm1\}$, it holds with probability at least $1 - O(n^{-40})$ that
\begin{equation} \label{eq:BW-C1-groth}
\max_{\substack{\vx, \vy \in\{ \pm 1\}^n}} 
\left| \sum_{1\leq i, j\leq n} B^\star_{i j} \Wtilo_{i j} x_i y_j\right|
\lesssim
\max\left\{ m^{1/2} n \beta \sigma, \beta L n \right\}
\lesssim n^{3/2} \beta \sigma, 
\end{equation}
where the last inequality follows from Equation~\eqref{eq:L-sigma} and the fact that $m \leq n$.

Applying Equations~\eqref{eq:W-C1-groth} and~\eqref{eq:BW-C1-groth} to Equation~\eqref{eq:C1-triangle} and using the assumption that $\beta \leq L$, 
\begin{equation}\label{eq:C1-groth-concen}
\alpha_1 
\lesssim \sigma n^{3/2} L. 
\end{equation}

Again applying Grothendieck's inequality,
\begin{equation} \label{eq:alpha2:groth}
\alpha_2
\lesssim \max_{\vx, \vy \in \{\pm\}^{n}} 
  \left|\sum_{1\leq i,j \leq n} \Deltatil_{ij} \Wtilo_{ij} x_i y_j\right|.
\end{equation}
Noting that 
\begin{equation*}
\left|\Deltatil_{ij} \Wtilo_{ij} x_i y_j\right|
\leq \|\mDeltatil\|_{\infty} L 
\quad \text{and} \quad
\E \left(\Deltatil_{ij} \Wtilo_{ij} x_i y_j\right)^2 
\leq \Deltatil_{ij}^2 \sigma^2,
\end{equation*}
applying Bernstein's inequality as stated in Lemma~\ref{lem:bern-ineq} yields that for any $t > 0$,
\begin{equation*}
\Pr\left[ \left|\sum_{1\leq i\leq j \leq n} 
		\Deltatil_{ij} \Wtilo_{ij} x_i y_j\right| \geq t\right]
\leq 2 \exp \left( -c \min\left\{ \frac{t}{\|\mDeltatil\|_{\infty} L}, 
			\frac{t^2}{\sigma^2 \|\mDelta\|_\F^2}\right\} \right).
\end{equation*}
Applying the union bound and choosing a suitable $t$, it follows that with probability at least $1 - O(n^{-40})$ 
\begin{equation*} \begin{aligned}
    \max_{\vx, \vy \in \{\pm 1\}^n}\left|\sum_{1\leq i\leq j \leq n} \Deltatil_{ij} \Wtilo_{ij} x_i y_j\right| &\lesssim \max \left\{ L n \|\mDeltatil\|_{\infty}, \sigma \|\mDeltatil\|_\F \sqrt{n} \right\} \lesssim \max \left\{ \sigma L n, \sigma^2 n r^{1/2} \right\}\\
    &\lesssim \sigma^2 n^{3/2}
\end{aligned} \end{equation*}
where the second inequality holds as we conditioned on the event $\{\|\mDeltatil\|_{\F} \leq C\sigma \sqrt{r n}\} \cap \{\|\mDeltatil\|_{\infty} \leq C \sigma\}$, the last inequality follows from Equation~\eqref{eq:L-sigma} and the fact that $r \leq n$.
Applying this bound to Equation~\eqref{eq:alpha2:groth} yields
\begin{equation} \label{eq:alpha2:bound}
\alpha_2
\lesssim \sigma^2 n^{3/2}. 
\end{equation}

Applying Equations~\eqref{eq:C1-groth-concen} and~\eqref{eq:alpha2:bound} to Equation~\eqref{eq:Grothendieck:triangle}
\begin{equation} \label{eq:Grothendieck:final}
\left|\langle \mZhat-\mZstar,                                                                   \mCo - \E \mCo + 2 \mDeltatil \circ \mWtilo\rangle\right|
\lesssim 
\sigma n^{3/2} L + \sigma^2 n^{3/2}
\le \sigma n^{3/2} L, 
\end{equation}
with probability at least $1 - O(n^{-40})$ conditioned on the event $\{\|\mDeltatil\|_{\F} \leq C\sigma \sqrt{r n}\} \cap \{\|\mDeltatil\|_{\infty} \leq C \sigma\}$, as we set out to show, where the last inequality follows from the assumption in Equation~\eqref{eq:L-sigma}.
\end{proof}


\subsection{Lemma \ref{lem:S2:bound} and Proof}

\label{subsec:lem:S2:bound}

\begin{lemma} \label{lem:S2:bound}
Suppose that $\beta \leq L$, $n > 2m$ and that the conditions in Lemma~\ref{lem:key-lower} hold.
Conditional on the event
\begin{equation} \label{eq:event:S2:delta}
\left\{\|\mDelta\| \leq C\sigma \sqrt{n} \right\}
\cap
\left\{ \|\mDelta\|_{\infty}
\leq \frac{C\sigma \mut^{1/2} r (\mut \vee \log n)^{1/2}}{\sqrt{n}}\right\},
\end{equation}
with probability at least $1 - O(n^{-7})$, we have
\begin{equation*}
|S_2| \lesssim \left(\frac{\sigma^2 \mut^{1/2} r^{3/2} \left(\mut \vee \log n\right)^{1/2}}{n} + \frac{\sigma L}{\sqrt{n}}\right) \|\mZhat - \mZstar\|_1,
\end{equation*}
where $S_2$ is defined in Equation~\eqref{eq:S2:define}. 
\end{lemma}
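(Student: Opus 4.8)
Start from the decomposition of Lemma~\ref{lem:key-lower}: set $\mA := \mC - \E\mCo + \mSig - \sigma_0^2\mJ$, so that $S_2 = \langle \mZstar - \mZhat, \calP_{T^\perp}(\mA)\rangle$. Since $\calP_{T}$ (hence $\calP_{T^\perp} = \mathrm{Id} - \calP_T$) is an orthogonal projection with respect to the trace inner product, and since $\mZstar$ lies in the tangent space $T$ (so $\calP_{T^\perp}(\mZstar) = 0$), we have $S_2 = -\langle \calP_{T^\perp}(\mZhat), \mA\rangle$, whence by trace-norm/operator-norm duality $|S_2| \le \|\calP_{T^\perp}(\mZhat)\|_* \,\|\mA\|$. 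The first factor is controlled using only the SDP feasibility of $\mZhat$. Writing $\vv := K^{-1/2}\vnu_\star$, a unit vector with $\vv\vv^\top = K^{-1}\mZstar$, a short computation from Equation~\eqref{eq:calP:define} gives $\calP_{T^\perp}(\mZhat) = (\mI_n - \vv\vv^\top)\mZhat(\mI_n - \vv\vv^\top) \succeq 0$, so its nuclear norm equals its trace, which is $\operatorname{tr}(\mZhat) - \vv^\top\mZhat\vv = K - K^{-1}\langle\mZstar,\mZhat\rangle = K^{-1}\langle\mZstar, \mZstar-\mZhat\rangle$, using the SDP constraints $\langle\mI_n,\mZhat\rangle = K$ and $\langle\mJ,\mZhat\rangle = \langle\mJ,\mZstar\rangle = K^2$. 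Since all entries of $\mZstar$ and $\mZhat$ lie in $[0,1]$, this is at most $\|\mZhat - \mZstar\|_1$, and with $K = n-m > n/2$ (as $n > 2m$) we obtain $\|\calP_{T^\perp}(\mZhat)\|_* \le 2n^{-1}\|\mZhat - \mZstar\|_1$.

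It therefore remains to show that, on the conditioning event~\eqref{eq:event:S2:delta}, $\|\mA\| \lesssim \sigma L\sqrt{n} + \sigma^2 \mut^{1/2} r^{3/2}(\mut\vee\log n)^{1/2}$ with probability $1 - O(n^{-7})$. Expanding $\mYtil = \mBstar + \mWtilo + \mDeltatil$ and subtracting the mean of $\mCo$,
\begin{equation*}
\mA = \big(\mWtilo\circ\mWtilo - \mSig\big) + \big(\mSig - \sigma_0^2\mJ\big) + 2\,\mBstar\circ\mWtilo + 2\,\mBstar\circ\mDeltatil + 2\,\mWtilo\circ\mDeltatil + \mDeltatil\circ\mDeltatil,
\end{equation*}
and I would bound each summand in operator norm. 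The term $\mSig - \sigma_0^2\mJ$ is $O(\sigma^2\sqrt n) = O(\sigma L\sqrt n)$ by Assumption~\ref{assump:homoscedastic}. The term $\mWtilo\circ\mWtilo - \mSig$ is a symmetric matrix with independent mean-zero entries bounded by $L^2$ and variances at most $\sigma^2 L^2$, so matrix Bernstein / Wigner-type concentration gives $O(\sigma L\sqrt n + L^2\sqrt{\log n}) = O(\sigma L\sqrt n)$, the second term being lower order by Equation~\eqref{eq:L-sigma}. For $\mBstar\circ\mWtilo$, using the node-sparsity of $\mBstar$, it is (twice the symmetrization of) an $m\times n$ random matrix with bounded mean-zero entries of variance $\le \beta^2\sigma^2$, so a non-asymptotic rectangular bound gives $O(\beta\sigma\sqrt n + \beta L\sqrt{\log n}) \lesssim \sigma L\sqrt n$ since $\beta \le L$. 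For $\mWtilo\circ\mDeltatil$, matrix Bernstein with variance proxy $\le \sigma^2\max_i\|\mDeltatil_{i,\cdot}\|_2^2$ and maximal entry $\le L\|\mDeltatil\|_\infty$, together with the row-wise bound of Theorem~\ref{thm:delta_row} (which on $\calU$ gives $\max_i\|\mDeltatil_{i,\cdot}\|_2 \lesssim \sigma\sqrt{r(\mut\vee\log n)}$) and the bound on $\|\mDeltatil\|_\infty$ from~\eqref{eq:event:S2:delta}, yields a bound dominated by $\sigma^2\sqrt{r(\mut\vee\log n)\log n}$; and $\|\mDeltatil\circ\mDeltatil\| \le \max_i\|\mDeltatil_{i,\cdot}\|_2^2 \lesssim \sigma^2 r(\mut\vee\log n)$ by the Schur test. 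Collecting these contributions and absorbing lower-order factors gives the claimed bound on $\|\mA\|$.

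The main obstacle is the term $\mBstar\circ\mDeltatil$, since $\mDeltatil$ is deterministic (it comes only from $\mYz$) and no concentration is available, while a crude Frobenius or Hadamard bound is too lossy. The resolution exploits the structure of Equation~\eqref{eq:B-more-general}: $\mBstar$ is confined to the $m$ rows and columns indexed by $\Istar$, so $\mBstar\circ\mDeltatil$ is, up to its sign pattern, $\beta$ times a pair of coordinate-block extractions $\mathbf{P}_{\Istar}\mDeltatil\mathbf{P}_{\Istar^c}$ and $\mathbf{P}_{\Istar^c}\mDeltatil\mathbf{P}_{\Istar}$ of $\mDeltatil$; since coordinate restrictions are operator-norm contractive, this is $\lesssim \beta\|\mDeltatil\| \le \beta \cdot C\sigma\sqrt n \lesssim \sigma L\sqrt n$ on the event $\{\|\mDelta\| \le C\sigma\sqrt n\}$ from~\eqref{eq:event:S2:delta} and using $\beta \le L$. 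A union bound over the $O(1)$ concentration events (each of probability $1 - O(n^{-c})$ for large $c$) then gives the bound on $\|\mA\|$ with probability $1 - O(n^{-7})$, and combining with $\|\calP_{T^\perp}(\mZhat)\|_* \le 2n^{-1}\|\mZhat-\mZstar\|_1$ yields $|S_2| \lesssim \big(n^{-1}\sigma^2\mut^{1/2}r^{3/2}(\mut\vee\log n)^{1/2} + n^{-1/2}\sigma L\big)\|\mZhat - \mZstar\|_1$. I expect the genuine technical burden to lie not in the conceptual reduction above but in the term-by-term operator-norm bookkeeping: verifying that every logarithmic and $(r,\mut)$-polynomial factor produced by the Hadamard-product estimates is in fact absorbed into the stated bracket.
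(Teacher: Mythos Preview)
Your approach is essentially identical to the paper's: both bound $|S_2| \leq \tr\!\big(\calP_{T^\perp}(\mZhat)\big)\,\|\mA\|$ via matrix H\"older and $\calP_{T^\perp}(\mZhat) \succeq 0$, compute that trace exactly as you do (the paper packages this computation as Lemma~\ref{lem:S2-aux}), and then split $\mA$ into the same six summands and bound each in operator norm. The one place your bookkeeping diverges is $\|\mDeltatil\circ\mDeltatil\|$: the paper uses $\|\mDeltatil\circ\mDeltatil\|_F \leq \|\mDeltatil\|_\infty\|\mDeltatil\|_F$, which draws only on the quantities in~\eqref{eq:event:S2:delta} and the $\|\mDeltatil\|_F$ bound from Lemma~\ref{lem:key-lower} and produces exactly the first bracket term, whereas your Schur-test bound $\max_i\|\mDeltatil_{i,\cdot}\|_2^2$ requires a row-norm control not contained in~\eqref{eq:event:S2:delta} and yields a slightly different $(r,\mut)$-factor.
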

\begin{proof}
	Recall that $\calP_{T^\perp}$ is defined in Equation~\eqref{eq:calP:define}.
Expanding the definition of $S_2$,
\begin{equation} \label{eq:S2:start} \begin{aligned}
|S_2| &= \left| \left \langle \calP_{T^\perp}(\mZstar-\mZhat),
		\mC -\E \mCo + \mSig - \sigma^2_0 \mJ\right \rangle \right| 
= \left| \left \langle \calP_{T^\perp}(\mZhat), 
	\mC -\E \mCo + \mSig - \sigma^2_0 \mJ\right \rangle\right|\\
&\leq \tr \left(\calP_{T^\perp}(\mZhat)\right) 
	\left(\left\|\mC -\E \mCo \right\| 
+ \left\|\mSig - \sigma_0^2 \mJ\right\|\right),
\end{aligned}
\end{equation}
where the last inequality holds by the matrix Hölder inequality and the fact that $\calP_{T^\perp}(\mZhat) \succeq 0$. 
By the triangle inequality,
\begin{equation} \label{eq:CEC1:bigtri}
\left\|\mC - \E \mCo\right\| 
\lesssim \|\mDeltatil \circ \mDeltatil\| + \|\mWtilo \circ \mWtilo - \mSig\| 
+ \|\mBstar \circ \mDeltatil\| + \|\mBstar \circ \mWtilo\| 
+ \|\mDeltatil \circ \mWtilo\| .
\end{equation}

By basic properties of matrix norms,
\begin{equation} \label{eq:CEC1:term1}
\begin{aligned}
\|\mDeltatil \circ \mDeltatil\| &\leq \|\mDeltatil \circ \mDeltatil\|_\F 
= \left(\sum_{1\leq i,j \leq n} \Deltatil_{ij}^4 \right)^{1/2}
\leq \|\mDeltatil\|_{\infty} \|\mDeltatil\|_\F \\
&\lesssim \sigma^2 \mut^{1/2} r^{3/2} \left(\mut \vee \log n\right)^{1/2}.
\end{aligned} \end{equation}
By Assumption~\ref{assump:noise},
\begin{equation*}
\left|(\Wtilo_{ij})^2 - \sigma_{ij}^2\right| \leq 2L^2 \quad \text{and} \quad \E \left((\Wtilo_{ij})^2 - \sigma_{ij}^2\right)^2 \leq \sigma^2 L^2 .
\end{equation*}
The matrix version of Bernstein's inequality, as stated in Theorem~\ref{thm:bernstein}, implies that for any $t > 0$,
\begin{equation*}
\Pr\left[ \|\mWtilo \circ \mWtilo - \mSig\| \geq 4\sigma L \sqrt{n} + t\right]
\leq n \exp\left(-\frac{c t^2}{L^4}\right).
\end{equation*}
Taking $t = C L^2 \log^{1/2} n$ for suitably large constant $C$, with probability at least $1-O(n^{-8})$,
\begin{equation} \label{eq:CEC1:term2}
\left\|\mWtilo \circ \mWtilo - \mSig\right\| 
\lesssim \sigma L \sqrt{n} + L^2 \log^{1/2} n \lesssim \sigma L \sqrt{n}. 
\end{equation}

By basic properties of the norm and Equation~\eqref{eq:event:S2:delta},
\begin{equation} \label{eq:CEC1:term3}
\|\mBstar \circ \mDeltatil\| \leq \beta \|\mDeltatil\| 
\leq \beta \sigma \sqrt{n}. 
\end{equation}
By Lemma~\ref{lem:A-circ-W-op}, we have
\begin{equation} \label{eq:CEC1:term4}
\left\|\mBstar \circ \mWtilo\right\| \lesssim \sigma \|\mBstar\|_{2, \infty} + L \|\mBstar\|_{\infty} \sqrt{\log n} = \sigma \beta \sqrt{n} + L \beta \sqrt{\log n}
\end{equation}
and 
\begin{equation} \label{eq:CEC1:term5} 
\begin{aligned}
\left\|\mDeltatil \circ \mWtilo\right\|
&\lesssim \sigma \|\mDeltatil\|_{2, \infty} 
	+ L \|\mDeltatil\|_{\infty} \sqrt{\log n}  \\
&\lesssim \sigma^2 r^{1/2} \left(\mut \vee \log n\right)^{1/2} 
+ \frac{\sigma L \mut^{1/2} r \log^{1/2} n}{\sqrt{n}} 
	\left(\mut \vee \log n\right)^{1/2}.
\end{aligned} \end{equation}

Applying Equations~\eqref{eq:CEC1:term1} through~\eqref{eq:CEC1:term5} to Equation~\eqref{eq:CEC1:bigtri}, under the assumption that $\beta \leq L$ and the growth rate assumption on $\sigma, L$ in Equation~\eqref{eq:L-sigma}
\begin{equation*}
\|\mC - \E \mCo\| 
\lesssim \sigma^2 \mut^{1/2} r^{3/2} \left(\mut \vee \log n\right)^{1/2} + \sigma L \sqrt{n}.
\end{equation*}
Additionally, under Assumption~\ref{assump:homoscedastic}, we have 
\begin{equation*}
    \left\|\mC - \E \mCo\right\| + \|\mSig - \sigma_0^2 \mJ\| \lesssim \sigma^2 \mut^{1/2} r^{3/2} \left(\mut \vee \log n\right)^{1/2} + \sigma L \sqrt{n}. 
\end{equation*}
Applying this bound to Equation~\eqref{eq:S2:start},
\begin{equation*}
|S_2| \le
\left[ \sigma^2 \mut^{1/2} r^{3/2} \left(\mut \vee \log n\right)^{1/2} 
	+ \sigma L \sqrt{n} \right] \tr \left(\calP_{T^\perp}( \mZhat ) \right).
\end{equation*}
Applying Lemma~\ref{lem:S2-aux}, 
\begin{equation*}
|S_2| \le
\left[ \sigma^2 \mut^{1/2} r^{3/2} \left(\mut \vee \log n\right)^{1/2} 
	+ \sigma L \sqrt{n} \right]
\frac{ \|\mZhat - \mZstar\|_1 }{ K }.
\end{equation*}
By $K = n-m$ and $m < n/3$, the above bound implies the result stated in Lemma~\ref{lem:S2:bound}.  
\end{proof}

\subsection{Lemma \ref{lem:S1:bound} and Proof} \label{subsec:lem:S1:bound}

\begin{lemma}\label{lem:S1:bound}
Under the conditions in Lemmas~\ref{lem:key-lower} and~\ref{lem:Grothendieck:bound} hold, suppose that the event
\begin{equation} \label{eq:event:S1:delta}
\left\{\|\mDelta\|_{\F} \leq C\sigma \sqrt{n r}, \; \|\mDelta\|_{\infty} \leq \frac{C\sigma \mut^{1/2} r (\mut \vee \log n)^{1/2}}{\sqrt{n}}, 
\; \|\mDelta\|_{2,\infty} \leq C \sigma r^{1/2} \left(\mut \vee \log n\right)^{1/2}\right\},
\end{equation}
holds for some sufficiently large constant $C > 0$.
Then with probability at least $1 - O(n^{-7})$, the quantity $S_1$ defined in Equation~\eqref{eq:S1:define} satisfies
\begin{equation*} \begin{aligned}
|S_{1}|
&\lesssim \sigma \|\mZhat - \mZstar\|_1 
\left(\beta \sqrt{\frac{r(\mut \vee \log n)}{n}} + \sigma \frac{r (\mut \vee \log n)}{n}
+ L \sqrt{\frac{\log \left(n^2 \|\mZhat - \mZstar\|_1^{-1}\right)}{n}}\right) 
\end{aligned}
\end{equation*}
\end{lemma}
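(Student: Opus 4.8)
\textbf{Proof proposal for Lemma~\ref{lem:S1:bound}.}

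The plan is to expand $S_1$ using the definition of the tangent-space projection $\calP_T$ in Equation~\eqref{eq:calP:define} and to bound each resulting term by a product of $\|\mZhat - \mZstar\|_1$ with a scalar factor, exploiting the rank-one structure of $\mZstar = \vnu_\star \vnu_\star^\top$ with $\|\vnu_\star\|_2^2 = K$. Writing $\mA := \mC - \E\mCo + \mSig - \sigma_0^2 \mJ$, the three pieces of $\calP_T(\mA)$ — namely $K^{-1}\mZstar \mA$, $K^{-1}\mA \mZstar$, and $-K^{-2}\mZstar \mA \mZstar$ — when paired against $\mZstar - \mZhat$ each reduce to an inner product of the form $K^{-1}\vnu_\star^\top \mA \vv$ or $K^{-2}(\vnu_\star^\top \mA \vnu_\star)(\vnu_\star^\top \vv)$ where $\vv$ is a column/row combination of $\mZstar - \mZhat$. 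The key observation is that $\|\vnu_\star\|_2 = \sqrt{K}$, and that $\|\mZstar - \mZhat\|_1$ controls the relevant vector norms of the contracted object (since the row sums of $|\mZstar - \mZhat|$ are bounded in $\ell_1$). So after the dust settles, $|S_1|$ is bounded by $C K^{-1}\|\mZhat - \mZstar\|_1$ times $\|\mA \vnu_\star\|_\infty$ (plus a lower-order term involving $|\vnu_\star^\top \mA \vnu_\star|/K$). Using $K \asymp n$ reduces everything to controlling $\|\mA \vnu_\star\|_\infty$ and $|\vnu_\star^\top \mA \vnu_\star|$.

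Next I would decompose $\mA$ into its constituent pieces as in the proof of Lemma~\ref{lem:key-lower}:
\begin{equation*}
\mA = (\mCo - \E\mCo) + 2\mDeltatil \circ \mWtilo + 2\mDeltatil \circ \mBstar + \mDeltatil \circ \mDeltatil + (\mSig - \sigma_0^2\mJ),
\end{equation*}
and bound $\|\mA\vnu_\star\|_\infty$ term by term. The stochastic terms $(\mCo - \E\mCo)\vnu_\star$ and $(\mDeltatil \circ \mWtilo)\vnu_\star$ are handled by Bernstein's inequality (Lemma~\ref{lem:bern-ineq}) applied coordinatewise, followed by a union bound over the $n$ coordinates: for the first, each coordinate is a sum of $O(n)$ centered sub-exponential terms with variance proxy controlled by $\sigma^2$ (using $\beta \le L$ to bound the cross term $B^\star_{ij}W_{ij}$ and the bounded-noise assumption), giving a bound of order $\sigma\sqrt{n(\mut\vee\log n)} \cdot \beta$ plus $\sigma^2 r(\mut\vee\log n)$ plus the $L$-dependent deviation term $\sigma L\sqrt{n\log(\cdot)}$; the precise $\log(n^2/\|\mZhat-\mZstar\|_1)$ factor arises because one must make the union-bound/failure-probability scale with the target bound on $\|\mZhat-\mZstar\|_1$ — i.e., a peeling/stratification argument over dyadic ranges of $\|\mZhat-\mZstar\|_1$, exactly as in \cite{fei2018exponential}. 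The deterministic terms $\mDeltatil \circ \mBstar$, $\mDeltatil \circ \mDeltatil$, and $\mSig - \sigma_0^2\mJ$ are bounded using the conditioning event in Equation~\eqref{eq:event:S1:delta} together with Assumption~\ref{assump:homoscedastic}: e.g., $\|(\mDeltatil\circ\mBstar)\vnu_\star\|_\infty \le \|\mBstar\|_{2,\infty}\|\mDeltatil\|_{2,\infty} \lesssim \sigma\beta\sqrt{r(\mut\vee\log n)}$, and similarly for the rest, all of which feed cleanly into the stated bound after dividing by $K \asymp n$.

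The main obstacle I expect is the \emph{self-referential / peeling argument} that produces the $\log(n^2\|\mZhat-\mZstar\|_1^{-1})$ factor rather than a plain $\log n$. Because $\mZhat$ is itself data-dependent, one cannot simply fix $\vx, \vy$ and union-bound; instead one must either (i) invoke Grothendieck's inequality (Equation~\eqref{eq:Grothendieck}) to reduce to a supremum over sign vectors — but this gives the coarser bound of Lemma~\ref{lem:Grothendieck:bound}, not the sharp $\sqrt{\|\mZhat-\mZstar\|_1/n}$ scaling needed here — or (ii) stratify: for each dyadic value $2^{-k} \le \|\mZhat - \mZstar\|_1/n^2 \le 2^{-k+1}$, establish the deviation bound with failure probability $\exp(-c \cdot 2^{-k} n \cdot (\text{something}))$ and sum over $k$. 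Getting the bookkeeping right here — tracking how the sparsity level of $\mZstar - \mZhat$ interacts with the variance proxy and ensuring the union bound over $O(\log n)$ strata plus $n$ coordinates still yields probability $1 - O(n^{-7})$ — is the delicate part. Once the peeling is set up correctly, the remaining estimates are routine Bernstein and triangle-inequality computations of the kind already carried out in Appendix~\ref{sec:proof:sdp:grothendieck} and in the proof of Lemma~\ref{lem:key-lower}.
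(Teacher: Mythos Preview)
Your high-level decomposition is correct and matches the paper: expand $\calP_T(\mA)$ via the rank-one structure of $\mZstar = \vnu_\star\vnu_\star^\top$, and each piece of $S_1$ reduces to a weighted sum $K^{-1}\sum_j c_j X_j$ where $X_j = (\vnu_\star^\top\mA)_j$ and the coefficients $c_j := \sum_i \nu_{\star,i}(\Zstar_{ij}-\Zhat_{ij})$ satisfy $|c_j| \le K$ and $\sum_j |c_j| \le \varepsilon_0 := \|\mZhat-\mZstar\|_1$. The deterministic terms are handled essentially as you describe.

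The gap is in the stochastic step. Your bound $|S_1| \lesssim K^{-1}\varepsilon_0\,\|\mA\vnu_\star\|_\infty$ uses only the constraint $\sum_j|c_j|\le\varepsilon_0$, and a coordinatewise Bernstein plus union bound over $n$ coordinates gives $\|\mA\vnu_\star\|_\infty \lesssim \sigma L\sqrt{n\log n}$, a quantity that \emph{does not depend on} $\varepsilon_0$. No peeling over dyadic ranges of $\varepsilon_0$ can sharpen this to $\log(n^2/\varepsilon_0)$, because you have already passed to a data-independent supremum. The fix is to also use the second constraint $|c_j|\le K$: under both constraints, the worst case puts mass $K$ on the $\rho := \lceil\varepsilon_0/K\rceil$ coordinates where $|X_j|$ is largest, so
\[
K^{-1}\Bigl|\sum_j c_j X_j\Bigr| \;\le\; \sum_{j=1}^{\rho} X_{(j)},
\]
the sum of the top-$\rho$ order statistics (this is Lemma~\ref{lem:reduce-to-order}). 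One then bounds $\sum_{j=1}^{\rho} X_{(j)}$ \emph{uniformly over all} $\rho\in[1,c_u n]$ by union-bounding over the $\binom{n}{\rho}\cdot 2^\rho$ sign patterns of size $\rho$; the combinatorial factor contributes $\rho\log(n/\rho)$ to the exponent, which is exactly what produces the $\log(n^2/\varepsilon_0)$ dependence after substituting $\rho\asymp\varepsilon_0/n$ (Lemma~\ref{lem:order-statistics-bound}). So the ``peeling'' is over the \emph{effective support size} $\rho$ of the coefficient vector, not over $\varepsilon_0$ as a numerical level, and it must happen \emph{before} taking the supremum, not after. This is the mechanism in \cite{fei2018exponential} that you allude to; once you replace $\|\mA\vnu_\star\|_\infty$ with the order-statistic sum, the rest of your outline goes through.
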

\begin{proof}
Recalling the definition of $S_1$ and applying the triangle inequality, we have
\begin{equation}\label{eq:S1:start}
|S_1|
\le \left| \left \langle \mZstar-\mZhat, \calP_T\left(\mC - \E \mCo\right)
	 \right \rangle \right|
+ \left| \left\langle \mZstar-\mZhat, \calP_T\left(\mSig - \sigma_0^2 \mJ\right)
        \right\rangle \right| .
\end{equation}
We remind the reader that $\calP_T$ is defined in Equation~\eqref{eq:calP:define}.


Consider two upper-triangular matrices:
$\mXi^{(1)} = [\xi^{(1)}_{ij}] \in \R^{n \times n}$ with entries
\begin{equation} \label{eq:def:xi1}
\xi^{(1)}_{ij} =
\begin{cases}
    2(\Deltatil_{ij} + B^\star_{ij}) \Wtilo_{ij} & \text{ if } 1\leq i < j \leq n\\
    0 & \text{ otherwise } \\
\end{cases}
\end{equation}
and $\mXi^{(2)} = [\xi^{(2)}_{ij}] \in \R^{n \times n}$ with entries
\begin{equation} \label{eq:def:xi2}
\xi^{(2)}_{ij} =
\begin{cases}
    (\Wtilo_{ij})^2 - \sigma^2_{ij} & \text{ if } 1\leq i < j \leq n\\
    \frac{1}{2}(\Wtilo_{ii})^2 - \frac{1}{2}\sigma^2_{ii} & \text{ if } 1\leq i = j \leq n \\
    0 & \text{ if } 1\leq j < i \leq n .
\end{cases}
\end{equation}
Expanding out the Hadamard products, one may verify that
\begin{equation*}
2(\mBstar + \mDeltatil) \circ \mWtilo 
+ \left(\mWtilo \circ \mWtilo - \mSig\right)
= \mXi^{(1)} + \mXi^{(1)\top} + \mXi^{(2)} + \mXi^{(2)\top},
\end{equation*}
and we can decompose $\mC - \E \mCo$ as
\begin{equation} \label{eq:CEC1:decomp}
\mC - \E \mCo
= \mXi^{(1)} + \mXi^{(1)\top} 
	+ \mXi^{(2)} + \mXi^{(2)\top} 
	+ \mDeltatil \circ \mDeltatil 
	+ 2\mBstar \circ \mDeltatil.
\end{equation}

Defining the quantities
\begin{equation} \label{eq:def:S1parts}
\begin{aligned}
S_{11} &=
\left\langle\mZstar-\mZhat, 
\calP_{T}(\mXi^{(1)} + \mXi^{(1)\top})\right\rangle \\
S_{12} &=
\left\langle\mZstar-\mZhat, 
\calP_{T}(\mXi^{(2)} + \mXi^{(2)\top})\right\rangle ~~~\text{ and } \\
S_{13} &=
\left\langle\mZstar-\mZhat, 
\calP_{T}(\mDeltatil \circ \mDeltatil + 2\mBstar \circ \mDeltatil)
\right\rangle ,
\end{aligned} \end{equation}
applying the decomposition in Equation~\eqref{eq:CEC1:decomp} and applying the triangle inequality,
\begin{equation} \label{eq:S1parts:tri}
\left| \left \langle \mZstar-\mZhat, \calP_T\left(\mC - \E \mCo\right)
         \right \rangle \right|
\le |S_{11}| + |S_{12}| + |S_{13}| .
\end{equation}

We will proceed to bound these three terms separately.
Throughout the proof, we will frequently use the equalities
\begin{equation*}
    \left\langle \mA_1, \mA_2 \mA_3\right\rangle = \tr(\mA_1^\top \mA_2 \mA_3) = \tr(\mA_3 \mA_1^\top \mA_2) = \left\langle \mA_1 \mA_3^\top, \mA_2 \right\rangle 
\end{equation*}
and
\begin{equation*}
    \left \langle \mA_1, \mA_2 \right \rangle = \tr\left(\mA_1^\top \mA_2\right) = \tr\left(\mA_2^\top \mA_1\right) = \left \langle \mA_2, \mA_1 \right \rangle
\end{equation*}
which holds for any compatible matrices $\mA_1, \mA_2$ and $\mA_3$.

Recalling the definition of $S_{11}$ from Equation~\eqref{eq:def:S1parts}, the definition of $\calP_T$ in Equation~\eqref{eq:calP:define} and $K=n-m$, we have
\begin{equation*} \begin{aligned}
S_{11}
&= \left\langle\mZstar-\mZhat, 
	\calP_{T}(\mXi^{(1)} + \mXi^{(1)\top})\right\rangle \\
&= 2\left\langle\mZstar-\mZhat,
	\frac{ \mZstar \mXi^{(1)}}{K}
	+ \frac{\mZstar\mXi^{(1)\top}}{K} \right\rangle 
- \left\langle\mZstar-\mZhat, 
	\frac{ \mZstar\mXi^{(1)}\mZstar}{K^2} 
		+ \frac{\mZstar\mXi^{(1)\top} \mZstar}{K^2}
	\right\rangle.
\end{aligned} \end{equation*}

Since
\begin{equation*} \begin{aligned}
\left\langle\mZstar-\mZhat, 
	 \frac{1}{K^2} \mZstar \mXi^{(1)} \mZstar\right\rangle
= \left\langle \left(\mZstar-\mZhat\right) \frac{ \mZstar }{K},
		 \frac{1}{K} \mZstar \mXi^{(1)} \right\rangle
\end{aligned}
\end{equation*}
and
\begin{equation*} \begin{aligned}
\left\langle\mZstar-\mZhat, \frac{1}{K^{2}}\mZstar \mXi^{(1) \top} \mZstar
	\right\rangle 
&=\left\langle\left(\mZstar-\mZhat\right) \frac{ \mZstar }{K},
		\frac{ \mZstar }{K} \mXi^{(1)} \right\rangle,
\end{aligned} \end{equation*}
we have 
\begin{equation*}
S_{11}
= 
2 \left\langle\mZstar-\mZhat,
	\frac{\mZstar}{K} \mXi^{(1)} + \frac{\mZstar}{K}\mXi^{(1)\top} 
	\right\rangle 
-2\left\langle\left(\mZstar-\mZhat\right) \frac{\mZstar}{K}, 
		\frac{\mZstar}{K} \mXi^{(1)} \right\rangle.
\end{equation*}
Hence, by the triangle inequality, it holds that 
\begin{equation} \label{eq:S1parts:S11:expand}
|S_{11}| \lesssim 
\left|\left\langle\mZstar-\mZhat, 
	\frac{\mZstar \mXi^{(1)}}{K} \right\rangle\right| 
+
\left|\left\langle\mZstar-\mZhat, 
	\frac{\mZstar\mXi^{(1)\top}}{K} \right\rangle\right| 
+
\left|\left\langle\left(\mZstar-\mZhat\right) \frac{\mZstar}{K},
	\frac{\mZstar \mXi^{(1)}}{K} \right\rangle\right|.
\end{equation}

By a similar argument,
\begin{equation} \label{eq:S1parts:S12:expand} \begin{aligned}
|S_{12}| &\lesssim 
\left|\left\langle\mZstar-\mZhat, 
	\frac{\mZstar}{K} \mXi^{(2)} \right\rangle\right| 
+ \left|\left\langle\mZstar-\mZhat, 
	\frac{ \mZstar\mXi^{(2)\top}}{K} \right\rangle\right|
+ \left|\left\langle\left(\mZstar-\mZhat\right) \frac{\mZstar}{K}, 
		\frac{ \mZstar \mXi^{(2)}}{K} \right\rangle\right|.
\end{aligned}
\end{equation}

Recalling the definitions of the matrices $\mXi^{(1)}$ and $\mXi^{(2)}$ from Equations~\eqref{eq:def:xi1} and~\eqref{eq:def:xi2}, define the random variables
$X^{(\ell)}_j = \left|\sum_{k=m+1}^n \xi^{(\ell)}_{kj}\right|$ 
for $j \in [n]$ and $\ell \in \{1,2\}$.
Let 
$$
\varepsilon_0 = \|\mZstar - \mZhat\|_{1}. 
$$
Since $\|\mZstar - \mZhat\|_{\infty} \leq 1$,
it follows from Lemma~\ref{lem:reduce-to-order} that 
\begin{equation} \label{eq:S1:S11S12:intermezzo}
\left|\left\langle\mZstar-\mZhat, 
		\frac{\mZstar \mXi^{(\ell)}}{K} \right\rangle\right| 
\leq \sum_{j=1}^{\lceil \varepsilon_0 / K\rceil} X^{(\ell)}_{(j)}, \quad
    \text{and} \quad  
\left|\left\langle\left(\mZstar-\mZhat\right) \frac{\mZstar}{K},
	\mZstar \mXi^{(\ell)}/K \right\rangle\right| 
\leq \sum_{j=1}^{\lceil \varepsilon_0 / K\rceil} X^{(\ell)}_{(j)}.
\end{equation}
The terms involving $\mXi^{(\ell)\top}$ in $S_{11}$ and $S_{12}$ can be controlled similarly.
Following the assumptions in Lemma \ref{lem:key-lower} and \ref{lem:Grothendieck:bound}, we have $\varepsilon_0/K = o_{\Pr}(n)$. 
Hence, the conditions in Lemma~\ref{lem:order-statistics-bound} are satisfied with high probability. 
Two applications of Lemma~\ref{lem:order-statistics-bound}, one with
\begin{equation*}
    \nu_{\xi^{(1)}} \lesssim n\beta^2 \sigma^2 + \sigma^2 \|\mDeltatil\|_{2,\infty}^2, \quad L_{\xi^{(1)}} \leq (\|\mDeltatil\|_{\infty} + \beta) L 
\end{equation*}
and the other with
\begin{equation*}
    \nu_{\xi^{(2)}} \lesssim n L^2 \sigma^2, \quad L_{\xi^{(2)}} \lesssim L^2.
\end{equation*}
yield that with probability at least $1 - O(n^{-40})$ that 
\begin{equation*}
\begin{aligned}
    \sum_{j=1}^{\lceil \varepsilon_0 / K\rceil} X^{(1)}_{(j)}
&\leq C_0 \lceil \varepsilon_0 / K\rceil
\max \left\{ \sqrt{\nu_{\xi^{(1)}} \log (K^2 / \varepsilon_0)}, 
	L_{\xi} \log \left(K^2 / \varepsilon_0 \right)\right\} \\
&\lesssim \left( \frac{\varepsilon_0}{n} \right) \max\left\{ \sigma \left(\beta \sqrt{n} + \|\mDeltatil\|_{2,\infty}\right) \sqrt{\log\left(\frac{n^2}{\varepsilon_0}\right)}, (\|\mDeltatil\|_{\infty}+\beta) L \log\left(\frac{n^2}{\varepsilon_0}\right) \right\}
\end{aligned}
\end{equation*}
and
\begin{equation*}
\begin{aligned}
    \sum_{j=1}^{\lceil \varepsilon_0 / K\rceil} X^{(2)}_{(j)} 
    &\lesssim \left( \frac{\varepsilon_0}{n} \right) \max\left\{ \sigma L \sqrt{n \log\left(\frac{n^2}{\varepsilon_0}\right)}, L^2 \log\left(\frac{n^2}{\varepsilon_0}\right) \right\}.
\end{aligned}
\end{equation*}
Applying the above two displays to Equation~\eqref{eq:S1:S11S12:intermezzo} and applying the result in turn to Equations~\eqref{eq:S1parts:S11:expand} and~\eqref{eq:S1parts:S12:expand},
\begin{equation} \label{eq:S1:S11plusS12:done}
|S_{11}| + |S_{12}| 
\lesssim \varepsilon_0 L
\max\left\{ \frac{\sigma \log^{1/2} \frac{n^2}{\varepsilon_0} }{\sqrt{n}}, 
	\frac{L \log \frac{n^2}{\varepsilon_0} }{n} \right\} 
\lesssim \sigma \varepsilon_0 L
	\frac{ \log^{1/2} \frac{n^2}{\varepsilon_0} }{\sqrt{n}},
\end{equation}
where the last inequality follows from the growth rate conditions in Lemma~\ref{lem:key-lower}.

%
%
By the triangle inequality, we have
\begin{equation} \label{eq:S1:S13:tri} \begin{aligned}
\left|S_{13}\right| \lesssim 
\left|\left\langle\mZstar-\mZhat,
	\calP_{T}(\mDeltatil \circ \mDeltatil)\right\rangle\right| 
+ \left|\left\langle\mZstar-\mZhat, 
	\calP_{T}(\mDeltatil \circ \mBstar)\right\rangle\right|.
\end{aligned} \end{equation}

To bound the first of these terms, we expand the definition of $\calP_T$ and apply the triangle inequality to obtain
\begin{equation*}
\begin{aligned}
&\left|\left\langle\mZstar-\mZhat,
	\calP_{T}(\mDeltatil \circ \mDeltatil)\right\rangle\right|  \\
&~~~~~~\le
\left| \left\langle\mZstar-\mZhat,
	\frac{\mZstar}{K} (\mDeltatil \circ \mDeltatil)\right\rangle \right|
+ \left| \left\langle\mZstar-\mZhat,
	(\mDeltatil \circ \mDeltatil)\frac{\mZstar}{K} \right\rangle \right|
+ \left| \left\langle\mZstar-\mZhat,
	\frac{\mZstar(\mDeltatil \circ \mDeltatil)\mZstar}{K^2} \right\rangle
	\right| \\
&~~~~~~= 2
\left| \left\langle\mZstar-\mZhat, 
		\frac{\mZstar}{K}(\mDeltatil \circ \mDeltatil) 
	\right\rangle \right|
+ \left| \left\langle \left(\mZstar-\mZhat\right)\frac{\mZstar}{K},
	\frac{\mZstar}{K}(\mDeltatil \circ \mDeltatil) \right\rangle \right|.
\end{aligned}
\end{equation*}
Let $X^{(3)}_j = \sum_{i=m+1}^n \Deltatil_{ij}^2$ for $j \in [n]$. 
Applying Lemma~\ref{lem:reduce-to-order}, we have
\begin{equation} \label{eq:S1:S13:part1} \begin{aligned}
\left|\left\langle\mZstar-\mZhat, \calP_{T}(\mDeltatil \circ \mDeltatil)\right\rangle\right| 
&\lesssim \sum_{j=1}^{\lceil \varepsilon_0 / K\rceil} X_{(j)}^{(3)} 
\lesssim \lceil \varepsilon_0 / K\rceil \|\mDeltatil\|_{2,\infty}^2  \\
&\lesssim \sigma^2 r (\mut \vee \log n) \lceil \varepsilon_0 / K\rceil,
\end{aligned}
\end{equation} 
where the last inequality follows from Equation~\eqref{eq:event:S1:delta}. 

Similarly, for the second term in Equation~\eqref{eq:S1:S13:tri},
\begin{equation*}
\left|\left\langle\mZstar-\mZhat, 
        \calP_{T}(\mDeltatil \circ \mBstar)\right\rangle\right|
\le
2 \left| \left\langle\mZstar-\mZhat, 
	\frac{\mZstar}{K} (\mDeltatil \circ \mBstar) \right\rangle \right|
+
\left| \left\langle \left(\mZstar-\mZhat\right) \frac{\mZstar}{K},
	\frac{\mZstar}{K} (\mDeltatil \circ \mBstar) \right\rangle \right| 
\end{equation*}

Following the same argument as above, we let $X^{(4)}_j = \left|\sum_{i=m+1}^n \Deltatil_{ij} \Bstar_{ij}\right|$ for $j \in [n]$.
Applying Lemma~\ref{lem:reduce-to-order} 
\begin{equation*} \begin{aligned}
\left|\left\langle\mZstar-\mZhat, 
	\calP_{T}(\mDeltatil \circ \mBstar)\right\rangle\right| 
&\lesssim 
\left|\left\langle\mZstar-\mZhat, 
	\frac{\mZstar}{K} (\mDeltatil \circ \mBstar) \right\rangle\right| 
+ \left|\left\langle \left(\mZstar-\mZhat\right) \frac{\mZstar}{K}, 
		\frac{\mZstar}{K} (\mDeltatil \circ \mBstar) 
	\right\rangle\right|\\
&\lesssim \sum_{j=1}^{\lceil \varepsilon_0 / K\rceil} X_{(j)}^{(4)} 
\lesssim \beta \lceil \varepsilon_0 / K\rceil \max_{j \in [m]} \left|\sum_{i=m+1}^{n} \Deltatil_{ij}\right|.
\end{aligned} \end{equation*}
Conditioned on the event in Equation~\eqref{eq:event:S1:delta}, we apply the bound 
\begin{equation*}
\left|\sum_{i=m+1}^{n} \Deltatil_{ij}\right| 
\leq \sqrt{n} \|\mDeltatil\|_{2, \infty} 
\lesssim \sigma \sqrt{n r (\mut \vee \log n)},
\end{equation*}
and obtain
\begin{equation} \label{eq:S1:S13:part2} 
\left|\left\langle\mZstar-\mZhat, \calP_{T}(\mDeltatil \circ \mBstar)
	\right\rangle\right| 
\lesssim  \sigma \beta \lceil \varepsilon_0 / K\rceil
		\sqrt{n r (\mut \vee \log n)}.
\end{equation}

Applying Equations~\eqref{eq:S1:S13:part1} and~\eqref{eq:S1:S13:part2} to Equation~\eqref{eq:S1:S13:tri},
\begin{equation} \label{eq:S1:S13:done}
|S_{13}|
\lesssim \sigma \beta \lceil \varepsilon_0 / K\rceil
		\sqrt{n r (\mut \vee \log n)} 
+ \sigma^2 \lceil \varepsilon_0 / K\rceil r (\mut \vee \log n) .
\end{equation}

Applying Equations~\eqref{eq:S1:S11plusS12:done} and~\eqref{eq:S1:S13:done} to Equation~\eqref{eq:S1parts:tri} and using the fact that $K = \Theta(n)$ yields
\begin{equation} \label{eq:S1:pcc}\begin{aligned}
&\left| \left \langle \mZstar-\mZhat, \calP_T\left(\mC - \E \mCo\right)
         \right \rangle \right| \\
&~~~\lesssim
\sigma L \varepsilon_0 
        \frac{ \log^{1/2} \frac{n^2}{\varepsilon_0} }{\sqrt{n}}
+
\sigma \beta \varepsilon_0
                \sqrt{ \frac{r (\mut \vee \log n)}{n} }
+ \sigma^2 \varepsilon_0 \frac{r (\mut \vee \log n)}{n} .
\end{aligned} \end{equation}

By Assumption~\ref{assump:homoscedastic}, for all $j \in [n]$,
\begin{equation*}
\left|\sum_{i=m+1}^n \sigma^2_{ij} - \sigma_0^2\right| \lesssim \sigma^2 \sqrt{n}
\end{equation*}
Using the same argument as above, we can also show that
\begin{equation}\label{eq:S1:sigma}
\left|\left\langle\mZstar-\mZhat, \calP_{T}(\mSig - \sigma_0^2 \mJ)\right\rangle\right| 
\lesssim  \sigma^2  \lceil \varepsilon_0 / K\rceil \sqrt{n} \lesssim \frac{\sigma^2 \varepsilon_0}{\sqrt{n}}.
\end{equation}
We omit the details here, as they are similar to the arguments above.

Since $\sigma \leq L$ by Equation~\eqref{eq:L-sigma}, applying Equations~\eqref{eq:S1:sigma} and~\eqref{eq:S1:pcc} to Equation~\eqref{eq:S1:start} yields
\begin{equation*}
\left| S_1 \right|
\lesssim
\sigma L \varepsilon_0
        \frac{ \log^{1/2} \frac{n^2}{\varepsilon_0} }{\sqrt{n}}
+
\sigma \beta \varepsilon_0
                \sqrt{\frac{r (\mut \vee \log n)}{n}} 
+ \sigma^2 \varepsilon_0 \frac{r (\mut \vee \log n)}{n}.
\end{equation*} 
Taking $\varepsilon_0 = \|\mZstar - \mZhat\|_1$ into the above bound yields the desired result.




\end{proof}

\begin{lemma}\label{lem:order-statistics-bound}
    Consider independent, mean zero random variables $(\xi_{ij})_{i\in[K], j\in[n]}$, where $K > n/2$. 
    Assume that 
    \begin{equation*}
        \max_{i\in [K], j\in[n]} |\xi_{ij}| \leq L_{\xi}, \quad \text{and} \quad \sum_{i=1}^K \E \xi_{ij}^2 \leq \nu_{\xi}.
    \end{equation*}
    Define $X_j:=\left|\sum_{i=1}^K \xi_{i j}\right|$ for $j \in [n]$. For a constant $C_0$ sufficiently large and $n$ sufficiently large, we have 
    \begin{equation}
        \Pr\left\{\exists \rho \in\left[1,\left\lceil c_u n\right\rceil\right]: \sum_{j=1}^{\lceil\rho\rceil} X_{(j)}>C_0 \lceil \rho \rceil \max\left[\sqrt{\nu_{\xi} \log(K/\lceil \rho \rceil)}, L_{\xi} \log (K/\lceil \rho \rceil) \right] \right\} = O(n^{-40}),
    \end{equation}
    where $c_u > 0$ is a sufficiently small constant. 
\end{lemma}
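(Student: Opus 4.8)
\textbf{Proof proposal for Lemma~\ref{lem:order-statistics-bound}.}

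The plan is to control the partial sums of order statistics of $(X_j)_{j \in [n]}$ by a union bound over the ``active set'' of indices achieving the largest values, combined with a Bernstein-type tail bound for each individual $X_j$ and a careful bookkeeping of the number of such subsets. First, I would fix $\rho \in [1, \lceil c_u n \rceil]$ and let $\ell = \lceil \rho \rceil$. The event $\sum_{j=1}^{\ell} X_{(j)} > t_\ell$ means that there exists a subset $S \subseteq [n]$ with $|S| = \ell$ such that $\sum_{j \in S} X_j > t_\ell$. Since each $X_j$ is the absolute value of a sum of independent mean-zero bounded random variables with variance proxy $\nu_\xi$ and magnitude bound $L_\xi$, Bernstein's inequality (Lemma~\ref{lem:bern-ineq}) gives, for each fixed $j$ and any $s > 0$,
\begin{equation*}
\Pr(X_j \geq s) \leq 2 \exp\left(-c \min\left\{\frac{s^2}{\nu_\xi}, \frac{s}{L_\xi}\right\}\right).
\end{equation*}
The first step of the argument proper is to convert ``the sum over $S$ exceeds $t_\ell$'' into ``at least one $X_j$ with $j \in S$ exceeds $t_\ell / \ell$,'' so that a union bound over $j \in S$ and then over all $\binom{n}{\ell}$ choices of $S$ applies. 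This loses only a factor $\ell \binom{n}{\ell} \leq \ell (en/\ell)^\ell$, which is controlled because the target threshold $t_\ell / \ell = C_0 \max\{\sqrt{\nu_\xi \log(K/\ell)}, L_\xi \log(K/\ell)\}$ is chosen precisely so that the exponential decay in Bernstein dominates the combinatorial growth.

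More precisely, setting $s = t_\ell/\ell$, Bernstein gives $\Pr(X_j \geq s) \leq 2 \exp(-c C_0^2 \log(K/\ell))$ when the Gaussian term dominates, and $\leq 2\exp(-c C_0 \log(K/\ell))$ when the sub-exponential term dominates; in either case, picking $C_0$ large enough (and using $K > n/2$ so $\log(K/\ell) \gtrsim \log(n/\ell)$), this is at most $(en/\ell)^{-c' C_0 \ell}$ times a harmless polynomial factor per index, so summing over $j \in S$ gives $\ell \cdot (\ell/(en))^{c' C_0}$ and then over the $\binom{n}{\ell}$ subsets gives a bound like $\ell (en/\ell)^{\ell} \cdot \ell (\ell/(en))^{c' C_0 \ell} = \ell^2 (en/\ell)^{(1 - c' C_0)\ell}$. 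For $C_0$ sufficiently large this is at most $(en/\ell)^{-\ell}$, which for all $\ell \geq 1$ is $\leq e^{-1} \cdot (n)^{-1}$-ish; to get the claimed $O(n^{-40})$ uniformly over $\rho$, I would either (i) take $C_0$ large enough that the exponent $(c' C_0 - 1)\ell \log(n/\ell) \geq 41 \log n$ for all $\ell$ in the range (using $\log(n/\ell) \geq \log(1/c_u)$ when $\ell \leq c_u n$), or (ii) do a final union bound over the at most $\lceil c_u n \rceil$ values of $\rho$, absorbing an extra $\log n$ factor into the constant. Both are routine once the per-$\rho$ bound is of the form $n^{-C_0'}$ with $C_0'$ tunable.

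The main obstacle — and the step requiring the most care — is ensuring that the combinatorial factor $\binom{n}{\ell}$ is genuinely beaten by the tail decay \emph{uniformly in $\ell$ across the whole range $[1, \lceil c_u n\rceil]$}, rather than just for fixed $\ell$. The tension is that $\binom{n}{\ell}$ grows like $(en/\ell)^\ell$, whose logarithm is $\ell \log(en/\ell)$, while the tail exponent after the union bound over $j \in S$ is of order $C_0 \ell \log(K/\ell)$; these match in functional form, so the whole argument hinges on $C_0$ being a large enough multiple of the implicit Bernstein constant, \emph{and} on the threshold inside the logarithm being $K/\ell$ (or $n/\ell$) rather than something smaller. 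This is exactly why the statement has $\log(K/\lceil\rho\rceil)$ rather than a fixed $\log n$. The smallness of $c_u$ enters to guarantee $\log(K/\ell) \geq \log(K/(c_u n)) \gtrsim \log(1/c_u) > 0$ is bounded below, so that even the worst case $\ell \asymp c_u n$ still has a logarithmically growing gain per index; without $c_u$ small, at $\ell = n$ the factor $\log(K/\ell)$ could vanish and the bound would fail. Once this uniform control is in place, the rest is a standard union bound and I would not expect further difficulty.
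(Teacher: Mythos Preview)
Your pigeonhole step is the gap. Reducing ``$\sum_{j\in S}X_j>t_\ell$'' to ``some $X_j$ with $j\in S$ exceeds $t_\ell/\ell$'' and then applying Bernstein to a \emph{single} $X_j$ at level $s=t_\ell/\ell$ yields $\Pr(X_j\geq s)\leq 2\exp(-c\min\{s^2/\nu_\xi,\,s/L_\xi\})\lesssim (K/\ell)^{-cC_0}$, with \emph{no} $\ell$ in the exponent. Your write-up then silently inserts that missing $\ell$ when combining with $\binom{n}{\ell}$: you pass from ``summing over $j\in S$ gives $\ell\cdot(\ell/(en))^{c'C_0}$'' to ``$\ell(en/\ell)^\ell\cdot\ell(\ell/(en))^{c'C_0\ell}$'' without justification. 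With honest bookkeeping the per-$\ell$ probability is $\binom{n}{\ell}\cdot\ell\cdot(K/\ell)^{-cC_0}$, which is of order $(en/\ell)^{\ell}(n/\ell)^{-cC_0}$; once $\ell$ exceeds a constant multiple of $C_0$ this no longer decays, and for $\ell\asymp c_u n$ it is exponentially large in $n$. Your diagnosis that the combinatorial factor and the tail must match as $\ell\log(n/\ell)$ is exactly right, but the pigeonhole-plus-single-index-Bernstein route does not produce that exponent.

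The paper's proof obtains the needed $\ell$ in the exponent by applying Bernstein to the \emph{aggregate} sum $\sum_{j=1}^T s_j\sum_{i=1}^K\xi_{ik_j}$, for each fixed subset $\{k_1,\dots,k_T\}$ and each sign vector $\vs\in\{\pm 1\}^T$ (the sign vectors unwrap the absolute values, since $\sum_{j\in S}X_j=\max_{\vs}\sum_j s_j\sum_i\xi_{ik_j}$). This sum has $KT$ independent mean-zero terms with total variance at most $T\nu_\xi$ and uniform bound $L_\xi$, so Bernstein at threshold $u_T$ gives exponent $c\min\{u_T^2/(T\nu_\xi),\,u_T/L_\xi\}\geq cC_0\,T\log(K/T)$. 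The factor $T$ now enters legitimately through the variance scaling, and it is precisely what beats $2^T\binom{n}{T}\leq(2en/T)^T$ uniformly over $T\in[1,\lceil c_u n\rceil]$.
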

\begin{proof}
For any $u \geq 0$, consider an integer $0 < T < n$ and any integers $1\leq k_1 < k_2 < \cdots < k_T \leq n$.
For any $\vs \in \{-1, +1\}^T$, by Bernstein's inequality as stated in Lemma~\ref{lem:bern-ineq}, we have 
\begin{equation*}
\begin{aligned}
\Pr\left[\sum_{i=1}^K \sum_{j=1}^T  \xi_{i k_j} s_j > u\right]
\leq \exp\left(-c \min \left\{ \frac{u^2}{T \nu_{\xi}}, \frac{u}{L_\xi}\right\}
	\right).
\end{aligned}
\end{equation*}
Taking 
\begin{equation*}
u = u_T
= C_0 T \max\left[\sqrt{\nu_{\xi} \log(K/T)}, L_{\xi} \log (K/T) \right],
\end{equation*}
where $C_0 = (C/c)^{1/2}$ for $C$ sufficiently large, it follows that for sufficiently large $n$,
\begin{equation} \label{eq:orderstat:probbound:single}
\begin{aligned}
\Pr\left[\sum_{i=1}^K \sum_{j=1}^T  \xi_{i k_j} s_j > u_T \right]
\leq
\left(\frac{T}{K}\right)^{CT} \leq \left(\frac{2T}{n}\right)^{CT}.
\end{aligned}
\end{equation}
Hence, taking a union bound over all $1 \leq k_1 < k_2 < \cdots < k_T \leq n$, we have 
\begin{equation*} \begin{aligned}
\Pr \left[ \sum_{j=1}^T X_{(j)} > u_T\right]
&= \Pr\left[ \bigcup_{k_1<\cdots<k_T} \left\{\sum_{j=1}^T X_{k_j} > u_T\right\}
	\right]
\leq \sum_{k_1<\cdots<k_T} \Pr \left[ \sum_{j=1}^T X_{k_j} > u_T\right] \\
&= \sum_{k_1<\cdots<k_T}
\Pr\left[ \max_{\vs \in \{\pm 1\}^T} 
		\sum_{j=1}^T \sum_{i=1}^K \xi_{i k_j} s_j  > u_T\right] \\
&\leq \sum_{k_1<\cdots<k_T} \sum_{\vs \in \{\pm 1\}^{T}} 
	\Pr\left[ \sum_{j=1}^T  \sum_{i=1}^K \xi_{i k_j} s_j  > u_T\right] .
\end{aligned} \end{equation*}
Applying Equation~\eqref{eq:orderstat:probbound:single} to the summands,
\begin{equation*} \begin{aligned}
\Pr \left[\sum_{j=1}^T X_{(j)} > u_T\right]
&\leq 2^T  \binom{n}{T} \left(\frac{2T}{n}\right)^{CT}
\leq \left(\frac{n e}{T}\right)^T e^{(C+1)T} \left(\frac{T}{n}\right)^{CT} \\
&\leq e^{(C+2)T}\left(\frac{T}{n}\right)^{(C-1)T}
= \exp\left\{ -(C-1)T \log \frac{n}{T} + (C+2)T\right\}. 
\end{aligned} \end{equation*}
For $T \leq c_u n$ where $c_u = \exp\{-(C+2)/(C-41)\}$, we have 
\begin{equation*}
(C+2)T - (C-1)T \log \frac{n}{T} \leq -40T\log \frac{n}{T},
\end{equation*}
and it follows that
\begin{equation*}
\Pr \left[ \sum_{j=1}^T X_{(j)} > u_T\right]
\leq \left(\frac{T}{n}\right)^{7T}. 
\end{equation*}
Hence, taking a union bound over $\lceil \rho \rceil \in [1, \lceil c_u n \rceil]$ yields
\begin{equation*} \begin{aligned}
&\Pr\left[ \exists \rho \in [1, \lceil c_u n\rceil]: 
	\sum_{j=1}^{\lceil \rho \rceil} X_{(j)} > C_0 \lceil \rho \rceil 
	\max\left\{ \sqrt{\nu_{\xi} \log(K/\lceil \rho \rceil)}, 
	L_{\xi} \log (K/\lceil \rho \rceil) \right\} \right] \\
&~~~\leq
\sum_{T=1}^{\lceil c_u n\rceil}\left(\frac{T}{n}\right)^{40T}
= \sum_{t=1}^{41} \left(\frac{T}{n}\right)^{40T}
	+ \sum_{T=42}^{\lceil c_u n\rceil}\left(\frac{T}{n}\right)^{40T} \\
&~~~=  O(n^{-40}) ,
\end{aligned} \end{equation*}
as we set out to show.
\end{proof}

\subsection{Proof of Theorem~\ref{thm:sdp:L1-bound}}
\label{subsec:thm:sdp:L1-bound}
 
\begin{proof}
Define the events 
\begin{equation*}
\calE_1 =
\left\{\|\mDelta\|_{\F} \! \leq \! C\sigma \sqrt{n r} \right\}
\bigcap
\left\{ \|\mDelta\|_{\infty}
	\leq \frac{C\sigma \mut^{1/2} r (\mut \vee \log n)^{1/2}}{\sqrt{n}}
\right\}
\bigcap
\left\{ \|\mDelta\|_{2,\infty} \!\leq\!
	C \sigma r^{1/2}\! \left(\mut \vee \log n\right)^{1/2}\right\},
\end{equation*}
and 
\begin{equation*}
    \calE_2 = \left\{\|\mDelta\| \!\leq\! C\sigma \sqrt{n}\right\}.
\end{equation*}
We first condition on the event $\calE_1 \cap \calE_2$. 
Let $\varepsilon_0 := \|\mZhat - \mZstar\|_1$ and recall the definitions of $S_1$ and $S_2$ from Equations~\eqref{eq:S1:define} and~\eqref{eq:S2:define}, respectively.
Combining Lemmas \ref{lem:key-lower}, \ref{lem:S2:bound} and \ref{lem:S1:bound} yields 
\begin{equation*} \begin{aligned}
\varepsilon_0\beta^2 &\lesssim |S_1| + |S_2| \\
&\lesssim \sigma \varepsilon_0 \left(\beta \sqrt{\frac{r(\mut \vee \log n)}{n}} + L \sqrt{\frac{\log n^2/\varepsilon_0}{n}}\right)
    +  \sigma \varepsilon_0 \left(\frac{\sigma \mut^{1/2} r^{3/2} \left(\mut \vee \log n\right)^{1/2}}{n} + \frac{L}{\sqrt{n}}\right).
\end{aligned}
\end{equation*}
If $\varepsilon_0 = 0$, then the bound in Equation~\eqref{eq:tau-bound} holds trivially, so we focus on the event $\{\varepsilon_0 \geq 0\}$. 
Dividing both sides by $\varepsilon_0$ and rearranging, we obtain
\begin{equation} \label{eq:tau-bound0}
\beta^2 \lesssim \sigma \left(\beta \sqrt{\frac{r(\mut \vee \log n)}{n}} + L \sqrt{\frac{\log n^2/\varepsilon_0}{n}}\right) + \frac{\sigma^2 \mut^{1/2} r^{3/2} \left(\mut \vee \log n\right)^{1/2}}{n}.
\end{equation}
When $\mut \leq \log n$, the last term is absorbed by the others. 
In the case $\mut > \log n$, to ensure the bound is non-trivial, we require that
\begin{equation*}
c\beta > \sigma \sqrt{\frac{\mut r}{n}} \quad \text{and} \quad \frac{\sigma^2 \mut r^{3/2}}{n} \leq c \beta^2
\end{equation*}
for some sufficiently small constant $c$, which is implied by the assumption $\mut r^{3/2} \leq c n \beta^2 / \sigma^2$.
Applying these bounds to Equation~\eqref{eq:tau-bound0} and solving for $\varepsilon_0$ yields the claimed exponential bound in Equation~\eqref{eq:tau-bound}.
Standard low rank perturbation theory implies that the event $\calE_1 \cap \calE_2$ holds with probability at least $1 - O(n^{-6})$ provided that $C$ is chosen sufficiently large. 
Combining this with the high-probability bounds in Lemmas~\ref{lem:S1:bound} and~\ref{lem:S2:bound} completes the proof.
\end{proof}

\subsection{Proof of Corollary \ref{cor:sdp-more-general}}
\label{subsec:proof:cor:sdp-more-general}
\begin{proof}
Define the events
\begin{equation*}
\calE_1 =
\left\{\|\mDelta\|_{\F} \! \leq \! C\sigma \sqrt{n r} \right\}
\bigcap
\left\{ \|\mDelta\|_{\infty}
	\leq \frac{C\sigma \mut^{1/2} r (\mut \vee \log n)^{1/2}}{\sqrt{n}}
\right\}
\bigcap
\left\{ \|\mDelta\|_{2,\infty} \!\leq\!
	C \sigma r^{1/2}\! \left(\mut \vee \log n\right)^{1/2}\right\},
\end{equation*}
and 
\begin{equation*}
    \calE_2 = \left\{\|\mDelta\| \!\leq\! C\sigma \sqrt{n}\right\}.
\end{equation*}
We first fix $\mDelta$ to be any matrix satisfying the conditions of
\begin{equation*}
    \calE = \calE_1 \cap \calE_2. 
\end{equation*}
    Suppose that $\mBstar$ satisfies Equation~\eqref{eq:B-more-and-more-general} and $\mB^{(0)}$ satisfies Equation~\eqref{eq:B-more-general} with the same $\Istar$. 
For all $i,j \in [n]$, let the entries of $\mC$ and $\mC^{(0)}$ be
\begin{equation} \label{eq:C-and-C0}
C_{ij} = (\Wtilo_{ij} + \Deltatil_{ij} + \Bstar_{ij})^2 \quad \text{and} \quad C^{(0)}_{ij} = (\Wtilo_{ij} + \Deltatil_{ij} + B^{(0)}_{ij})^2.
\end{equation}
    
For any $1\leq i,j \leq n$ such that $\left|\Deltatil_{ij} + \Bstar_{ij}\right| < \left|\Deltatil_{ij} + B^{(0)}_{ij}\right|$, one can instead substitue $B^{(0)}_{ij}$ for $\Bstar_{ij}$ and shrink $|\Deltatil_{ij}|$ to $|\Deltatil^{(0)}_{ij}|$, so that the new $\mDeltatil^{(0)} \in \calE$ and satisfies
\begin{equation*}
    \mDeltatil^{(0)} + \mB^{(0)} = \mDeltatil + \mBstar
\end{equation*}
Repeating the proof of Theorem~\ref{thm:sdp:L1-bound}
on $\mDeltatil^{(0)} + \mB^{(0)}$ instead shows that Theorem~\ref{thm:sdp:L1-bound} still holds for the new $\mB^{(0)}$. 
    
Based on the above observation, without loss of generality, we assume that $\left|\Deltatil_{ij} + \Bstar_{ij}\right| \geq \left|\Deltatil_{ij} + B^{(0)}_{ij}\right|$ holds for all $1\leq i,j \leq n$. 
Under Assumption~\ref{assump:mono} and by Lemma~\ref{lem:stoc-dom}, we have that the term $C_{ij}$ has first-order stochastic dominance over $C^{(0)}_{ij}$. 
By Strassen's theorem \citep[see, e.g., Theorem 17.58 in][]{klenke2013probability}, we can construct a matrix $\mC'$ with entries $C'_{ij} \eqdist C^{(0)}_{ij}$, such that $C'_{ij} \leq C_{ij}$ holds almost surely for all $i,j \in [n]$. 
Noting that for $(i,j) \in I_\star^c \times I_\star^c$, $B^\star_{ij} = B^{(0)}_{ij} = 0$, we can pick $C'_{ij} = C_{ij}$ for these entries.


Let $\mC^{(1)} := \left(\mB^{(0)} + \mW\right) \circ \left(\mB^{(0)} + \mW\right)$. 
Let $\mZhat$ be the solution to the SDP in Equation~\eqref{eq:prime:sdp:equal} with $\mC$ defined in Equation~\eqref{eq:C-and-C0}.
Following the same proof as Lemma~\ref{lem:key-lower}, Equation~\eqref{eq:sdp-key-upper2} still holds for the $\mC^{(1)}$ given above, since it only relies on the structure of $\mB^{(0)}$ given in Equation~\eqref{eq:B-more-general}.
We have
\begin{equation*}
\begin{aligned}
\left\|\mZhat-\mZstar\right\|_1
&\leq \frac{5}{2 \beta^2}\left\langle\mZstar-\mZhat, 
	\mC-\E \mC^{(1)} + \mSig - \sigma_0^2 \mJ\right\rangle\\
&\leq \frac{5}{2 \beta^2}\left\langle\mZstar-\mZhat, 
		\mC'-\E \mC^{(1)} + \mSig - \sigma_0^2 \mJ\right\rangle,
\end{aligned}
\end{equation*}
where the second inequality follows the fact that the support of $\mC'-\mC$ and $\mZstar$ as given in Equation~\eqref{eq:def:mZstar} are disjoint, the matrix $\mZhat$ is entrywise nonnegative and 
\begin{equation*}
\left\langle \mZstar - \mZhat, \mC' - \mC\right\rangle = \left\langle \mZhat, \mC - \mC'\right\rangle \geq 0.
\end{equation*}
Noting that the other results from Lemmas \ref{lem:Grothendieck:bound}, \ref{lem:S2:bound} and \ref{lem:S1:bound} holds for any $\mZhat$ satisfying Equation~\eqref{eq:sdp-key-upper2}, 
we conclude that all results proved for $\mBstar$ satisfying Equation~\eqref{eq:B-more-general} remain valid for $\mBstar$ satisfying Equation~\eqref{eq:B-more-and-more-general}. 
Finally, noting that the event $\calE$ holds with probability at least $1 - O(n^{-6})$ by Thereom~\ref{thm:delta_row} completes the proof.
\end{proof}

\subsection{Proof of Theorem \ref{thm:sdp-minimax}}
\label{subsec:thm:sdp-minimax:proof}
\begin{proof}
The result follows by an adaptation of the proof techniques from Theorems~\ref{thm:symmetric:minimax} and~\ref{thm:multiple:minimax-recoverable}, with the key modification being that we apply Lemma~\ref{lem:likelihood:second-moment:beta} in place of Lemma~\ref{lem:likelihood:second-moment}, and Lemma~\ref{lem:minimax:risk:beta} in place of Lemma~\ref{lem:minimax:risk}.
The essential change occurs in the construction of the prior over $\mB$.
Specifically, in the computation of the likelihood ratio as in Equation~\eqref{eq:likelihood:ratio:multiple}, 
we replace the uniform distribution on the sphere $\Unif(b\bbS^{k-1})$ with the uniform distribution over the discrete hypercube $\Unif(\beta\{\pm 1\}^{k})$, 
reflecting the entrywise lower-bound condition in $\calB'_S(\beta, n, m)$.
The remainder of the argument proceeds identically, and thus details are omitted.
\end{proof}

\subsection{Proof of Theorem~\ref{thm:tauhat}}
\label{sec:proof:thm:tauhat}

\begin{proof} 
By Theorem~\ref{thm:delta_row}, when $\kappa r = O(1)$, the estimation error on the subset $\calS$ satisfies
\begin{equation*}
    \left\|\mDelta_{\calS}\right\|_{\infty} \lesssim \frac{\sigma \log n}{\sqrt{n}}.
\end{equation*}
Consequently, the Frobenius norm is bounded as
\begin{equation} \label{eq:DeltaS:frob}
    \left\|\mDelta_{\calS}\right\|_{\F} \leq n \left\|\mDelta_{\calS}\right\|_{\infty} \lesssim \sigma \sqrt{n} \log n.
\end{equation}

By the triangle inequality, the residual matrix $\mWcheck$ defined in Equation~\eqref{eq:mWcheck:define} obeys 
\begin{equation*}
\left\|\mW_{\calS}\right\|_{\F} - \left\|\mDelta_\calS\right\|_{\F}
\leq \left\|\mWcheck\right\|_{\F}
\leq \left\|\mW_{\calS}\right\|_{\F} + \left\|\mDelta_{\calS}\right\|_{\F}, 
\end{equation*}
and Equation~\eqref{eq:DeltaS:frob} thus implies
\begin{equation}\label{eq:WcalS:tri}
\left\|\mW_{\calS}\right\|_{\F} - \sigma \sqrt{n} \log n 
\lesssim \left\|\mWcheck\right\|_{\F}
\lesssim \left\|\mW_{\calS}\right\|_{\F} + \sigma \sqrt{n} \log n.
\end{equation}
To bound $\|\mW_{\calS}\|_{\F}$, note that
\begin{equation*}
    \sum_{i,j \in \calS} \E \left(W_{\calS, i, j}\right)^4 \leq L^2 \sum_{i,j \in \calS} \sigma^2_{ij}, 
    \quad \text{and} \quad \left\|\mW_{\calS} \circ \mW_{\calS}\right\|_{\infty} \leq L^2.
\end{equation*}
Thus, applying Bernstein's inequality as stated in Lemma~\ref{lem:bern-ineq}, it holds with probability at least $1 - O(n^{-6})$ that
\begin{equation*}
    \left|\sum_{i,j \in \calS} (W_{\calS, i, j})^2 - \E \sum_{i,j \in \calS} (W_{\calS, i, j})^2\right| 
    \lesssim \max\left\{L \sqrt{\sum_{i, j \in \calS} \sigma^2_{ij} \log n}, L^2 \log n \right\} 
    \lesssim \sigma^2 n \log n  ,
\end{equation*}
so that 
\begin{equation*}
\left\|\mW_{\calS}\right\|_{\F}^2
\lesssim \sigma^2 n\log n + |\calS|^2 \sigma^2 .
\end{equation*}
Taking square roots and applying this to Equation~\eqref{eq:WcalS:tri}, we obtain
\begin{equation*}
    |\calS| \sigma_{\min} - \sigma\sqrt{n} \log n \lesssim \left\|\mWcheck\right\|_{F} \lesssim \sigma |\calS| + \sigma \sqrt{n} \log n.
\end{equation*}
Since by construction, $|\calS| \geq n/2$, it follows that for $n$ sufficiently large, the threshold $\tau$ defined in Equation~\eqref{eq:tauhat:define} satisfies
\begin{equation*}
    c_0 \sigma \leq \tau \leq C_0 \sigma
\end{equation*}
for some constants $c_0, C_0 > 0$, as desired.
\end{proof}

\subsection{Proof of Theorem~\ref{thm:trunc:sdp}}
\label{subsec:proof:trunc:sdp}

\begin{proof}
We follow the same strategy used in the analysis of the vanilla SDP estimator. 
We begin by considering the case where $\mBstar$ satisfies the structured form in Equation~\eqref{eq:B-more-general}, and proceed in four steps.
In Step 1, analogous to the basic inequality obtained in Lemma~\ref{lem:key-lower}, we obtain the basic inequality for the truncated SDP in Equation~\eqref{eq:basic-ineq:trunc}, 
which yields the key inequality in Equation~\eqref{eq:trunc:important} that serves as the foundation of our analysis. 
In Step 2, we extend the results in Lemma~\ref{lem:S2:bound} to the truncated SDP, adapting to the truncated setting our bound for the projection onto the orthogonal complement space $T^\perp$.
In Step 3, we similarly extend Lemma~\ref{lem:S1:bound} to the truncated SDP.
In Step 4, we finally extend Corollary~\ref{cor:sdp-more-general} to the truncated SDP setting, thereby concluding the proof under the more general signal model in Equation~\eqref{eq:B-more-and-more-general}.

We condition our argument on the event 
\begin{equation*}
\calE
= \! \left\{\|\mDeltatil\|_{\F} \! \leq \! C \sigma \sqrt{r n} \right\}
\bigcap
\left\{ \|\mDeltatil\| \! \leq \! C\sigma \sqrt{n} \right\}
\bigcap
\left\{ \|\mDeltatil\|_{\infty} 
	\! \leq \!
	\frac{C \sigma \mut^{1 / 2} r(\mut \!\vee\! \log n)^{1 / 2}}{\sqrt{n}}
\right\}
\bigcap \left\{c_0 \sigma \!\leq\! \tau \!\leq\! C_0 \sigma\right\}
\end{equation*}
and we first consider the case where 
\begin{equation} \label{eq:beta:trunc}
    \beta = o(\sigma). 
\end{equation}

\paragraph{Step 1: Basic Inequality and Grothendieck's Inequality.}
By the optimality of $\mZhat$, we have
\begin{equation*}
    \left\langle \mZhat-\mZstar, \mCbar\right\rangle \leq 0.
\end{equation*}
Subtracting the expectation $\E \mCbar$ from both sides  yields
\begin{equation} \label{eq:basic-ineq:trunc}
    \left\langle \mZhat-\mZstar, \mCbar - \E \mCbar\right\rangle \leq \left\langle \mZstar-\mZhat, \E \mCbar\right\rangle.
\end{equation}

We now analyze the structure of the expectation $\E \Cbar_{ij}$, 
depending on the location of $(i,j)$ in the support pattern of $\mBstar$.
First, for $i, j \in I^{c}_\star$, we have 
\begin{equation*}
\Cbar_{ij} = \begin{cases}
    \left(\Deltatil_{ij}+\Wtilo_{ij}\right)^2, & \mbox{ if } \left|\Deltatil_{ij}+\Wtilo_{ij}\right| \leq \tau\\
    \tau^2, & \mbox{ otherwise. }
\end{cases} \end{equation*}
By Lemma~\ref{lem:truncated-upper}, we have for some constant $C_1 > 0$,
\begin{equation*}
    \E \Cbar_{ij} \leq C_1 \Deltatil_{ij}^2 + \E \Wbar_{ij}^2,
\end{equation*}
which yields that 
\begin{equation} \label{eq:up-bound1:trunc}
    (\Zstar_{ij} - \Zhat_{ij}) \E \Cbar_{ij} \leq (\Zstar_{ij} - \Zhat_{ij}) \left(C_1 \Deltatil_{ij}^2 + \E \Wbar_{ij}^2\right). 
\end{equation}
Next, for $i \in \Istar$ and $j \in [n]$, under Equation~\eqref{eq:beta:trunc} and the assumption that $\mutil r^{3/2} = O(\sqrt{n})$, we have
\begin{equation*}
    \left|\Bstar_{ij}+\Deltatil _{ij}\right| \leq \beta + \frac{C \sigma \mut^{1/2} r^{3/2} (\mut \wedge \log n)^{1/2}}{\sqrt{n}} = o(\sigma) \leq \tau,
\end{equation*}
by Lemma~\ref{lem:truncated-lower}, there exists a constant $0 < \alpha \leq \frac{1}{2}$ such that
\begin{equation*}
    \E \Cbar_{ij} \geq \alpha^2 (\Bstar_{ij}+\Deltatil _{ij})^2 + \E \Wbar^2_{ij}.
\end{equation*}
Since $\Zstar_{ij} = 0$ for $i \in \Istar$, we have
\begin{equation} \label{eq:up-bound2:trunc}
    (\Zstar_{ij} - \Zhat_{ij}) \E \Cbar_{ij} = - \Zhat_{ij}\E \Cbar_{ij} \leq - \Zhat_{ij} \left[\alpha^2 (\Bstar_{ij}+\Deltatil _{ij})^2 + \E \Wbar^2_{ij}\right].
\end{equation}
For notational convenience, we define
\begin{equation*}
    \Theta_{ij} = \begin{cases}
        C_1 \Deltatil _{ij}^2, & \mbox{ if } i, j \in I_{\star}^c \\
        \alpha^2 \Deltatil _{ij}^2, & \mbox { otherwise. }
    \end{cases}
\end{equation*}
Expanding the inner product in Equation~\eqref{eq:basic-ineq:trunc} and applying Equations~\eqref{eq:up-bound1:trunc} and~\eqref{eq:up-bound2:trunc}, we have
\begin{equation} \label{eq:trunc:important}
\begin{aligned}
\left\langle \mZhat-\mZstar, \mCbar - \E \mCbar\right\rangle
&\leq \left\langle \mZstar-\mZhat, \mTheta + \alpha^2 \mBstar \circ \mBstar + 2\alpha^2 \mBstar \circ \mDeltatil  + \E \mWbar \circ \mWbar\right\rangle. 
\end{aligned}
\end{equation}

Following the same proof as Lemma~\ref{lem:key-lower}, 
one arrives at the bound
\begin{equation} \label{eq:l1-bound:truncated}
\begin{aligned}
    \left\|\mZhat - \mZstar\right\|_1 &\leq \frac{5}{2\alpha^2\beta^2} \left\langle \mZstar - \mZhat, \mCbar - \E \mCbar\right\rangle \\
    &~~~~+ \frac{5}{2\alpha^2\beta^2} \left\langle \mZstar - \mZhat, 2\alpha^2 \mBstar \circ \mDeltatil  
    + \mTheta + \E \mWbar \circ \mWbar - \sigbar^2 \mJ\right\rangle. 
\end{aligned}
\end{equation}
The first term above is bounded by combining Grothendieck's inequality (see Theorem 3.5.1 in \cite{vershynin2018HDP} for a detailed reference) %
followed by Lemma~\ref{lem:Cbar-bern}, which yields that with probability at least $1 - O(n^{-10})$,
\begin{equation} \label{eq:l1-bound:trunc:term1}
\left|\left\langle \mZstar - \mZhat, \mCbar - \E \mCbar \right\rangle\right|
\lesssim \tau^2 n^{3/2} \lesssim \sigma^2 n^{3/2} .
\end{equation}
The second term in Equation~\eqref{eq:l1-bound:truncated} can be handled by the same argument as in Lemma~\ref{lem:key-lower}, with Assumption~\ref{assump:strong:homoscedastic} in place of Assumption~\ref{assump:homoscedastic}. 
We omit the details. 
Applying these bounds to Equation~\eqref{eq:l1-bound:truncated}, we conclude that when either $\beta = \omega(\sigma n^{-1/4})$ or $m = o(n)$, the estimation error satisfies
\begin{equation*}
    \frac{1}{n^2}\left\|\mZhat - \mZstar\right\|_1 = o_{\Pr}(1). 
\end{equation*}

\paragraph{Step 2: Extending Lemma~\ref{lem:S2:bound}.}
To control the contribution from the projection onto the orthogonal complement of the tangent space, we define the term
\begin{equation} \label{eq:def:Sbar2}
\Sbar_2 = \left\langle \calP_{T^\perp} \left(\mZstar - \mZhat\right), 
    \mCbar - \E \mCbar + 2\alpha^2 \mBstar \circ \mDeltatil
	+ \mTheta + \E \mWbar \circ \mWbar - \sigbar^2 \mJ\right\rangle,
\end{equation}
which mirrors the role of $S_2$, defined in Equation~\eqref{eq:S2:define} for the non-truncated setting.
Our goal is to bound $|\Sbar_2|$ in terms of the estimation error $\|\mZhat - \mZstar\|_1$.

Following the same proof as Lemma~\ref{lem:S2:bound}, we first establish bounds on 
\begin{equation*}
    \|\mCbar - \E \mCbar\| \quad \text{and} \quad \left\|2\alpha^2 \mBstar \circ \mDeltatil  + \mTheta + \E \mWbar \circ \mWbar - \sigbar^2 \mJ\right\|.
\end{equation*}
Observe that since $\Cbar_{ij}$ are bounded by $\tau^2$ due to truncation, 
\begin{equation*}
\left|\Cbar_{ij} - \E \Cbar_{ij}\right| \leq 2\tau^2
\quad \text{and} \quad
\sum_{j} \E \Cbar^2_{ij} \leq \tau^4 n, 
\end{equation*}
an application of the matrix Bernstein inequality (see Theorem~\ref{thm:matrix:bernstein}) yields 
\begin{equation*}
\left\|\mCbar - \E \mCbar\right\|
\lesssim \tau^2 \sqrt{n} \lesssim \sigma^2 \sqrt{n}
\end{equation*}
with probability at least $1 - O(n^{-7})$.
Additionally, we invoke the same bounds as in Lemma~\ref{lem:S2:bound}, adapted to the truncated setting:
\begin{equation*} \begin{aligned}
\left\|2\alpha^2 \mBstar \circ \mDeltatil \right\|
&\leq 2\alpha^2 \beta \|\mDeltatil \| \leq 2\alpha^2 \beta \sigma \sqrt{n} \\
\end{aligned} \end{equation*}
and 
\begin{equation*} \begin{aligned}
\left\|\mTheta\right\|
&\leq \left\|\mTheta\right\|_{\F}
\leq \frac{3}{2} \|\mDeltatil \circ\mDeltatil \|_{\F}
\lesssim \sigma^2 \mut^{1/2} r^{3/2} \left(\mut \wedge \log n\right)^{1/2}.
\end{aligned} \end{equation*}
By Assumption~\ref{assump:strong:homoscedastic}, we have
\begin{equation*}
    \left\|\E \mWbar \circ \mWbar - \sigbar^2 \mJ\right\| \lesssim \sigma^2 \sqrt{n}. 
\end{equation*}

Taking absolute values in Equation~\eqref{eq:def:Sbar2}, applying the triangle inequality followed by the above four displays yields 
\begin{equation} \label{eq:Sbar2:bound}
\left|\Sbar_2\right|
\lesssim
\left[ \frac{\sigma^2 \mut^{1/2} r^{3/2}
	\left(\mut \wedge \log n\right)^{1/2}}{n} 
	+ \frac{\sigma^2}{\sqrt{n}}\right] \left\|\mZhat - \mZstar\right\|_1
\end{equation}
with probability at least $1 - O(n^{-7})$.

\paragraph{Step 3: Extending Lemma~\ref{lem:S1:bound}.}
We now extend the analysis of the tangent space component to the truncated setting. 
Similar to Lemmma~\ref{lem:S1:bound}, we define 
\begin{equation} \label{eq:def:Sbar1}
\Sbar_1
= \left\langle \calP_{T} \left(\mZstar - \mZhat\right),
	\mCbar - \E \mCbar + 2\alpha^2 \mBstar \circ \mDeltatil 
	+ \mTheta + \E \mWbar \circ \mWbar - \sigbar^2 \mJ\right\rangle. 
\end{equation}
Following the same proof as Lemma~\ref{lem:S1:bound}, we let
\begin{equation*}
    X_j = \sum_{i=m+1}^n \left( \Cbar_{ij} - \E \Cbar_{ij} \right), \quad \text{for all } j \in [n].
\end{equation*}
By Lemma~\ref{lem:order-statistics-bound}, 
it holds with probability at least $1 - O(n^{-40})$ that
\begin{equation*}
\sum_{j=1}^{\lceil \varepsilon_0 / K \rceil} X_{(j)} 
\leq \left(\frac{\varepsilon_0}{n}\right) \sigma^2
	\sqrt{n \log \left(\frac{n^2}{\varepsilon_0}\right)} ,
\end{equation*}
where $\varepsilon_0 := \|\mZhat - \mZstar\|_1$ and $X_{(j)}$ is the $j$-th order statistic of the random variables $X_1, X_2, \ldots, X_n$. 
The following the same argument as Lemma~\ref{lem:S1:bound} and recalling the definition of $\epsilon_0 = \| \mZhat - \mZstar \|_1$, we can bound $\Sbar_1$ in Equation~\eqref{eq:def:Sbar1} as
\begin{equation} 
\label{eq:Sbar1:bound}
|\Sbar_1|
\lesssim
\left\| \mZhat - \mZstar \right\|_1
\left(\beta \vee \sigma\right) 
\frac{\sigma\log^{1/2}\left(\frac{n^2}{\varepsilon_0}\right)}{\sqrt{n}}. 
\end{equation}

Combining Equations~\eqref{eq:Sbar1:bound} and~\eqref{eq:Sbar2:bound}, we obtain a bound for the truncated SDP equivalent to that given in Theorem~\ref{thm:sdp:L1-bound} for the non-truncated case.

\paragraph{Step 4: Extending Corollary~\ref{cor:sdp-more-general}.}

Our final step is to show that Corollary~\ref{cor:sdp-more-general} also holds for the truncated SDP. 
Suppose that $\mBstar$ satisfies Equation~\eqref{eq:B-more-and-more-general} and $\mB^{(0)}$ satisfies Equation~\eqref{eq:B-more-general}. 
Following the same proof as Corollary~\ref{cor:sdp-more-general}, we assume that without loss of generality, 
\begin{equation*}
    \left|\Deltatil_{ij} + \Bstar_{ij}\right| \geq \left|\Deltatil _{ij} + B_{ij}^{(0)}\right|
\end{equation*}
holds for all $i,j \in [n]$. 
Let 
\begin{equation*}
C_{i j}=\left(\Wtilo_{i j}+\Deltatil _{i j}+B_{i j}^{\star}\right)^2
\quad \text { and } \quad
C_{i j}^{(0)}=\left(\Wtilo_{i j}+\Deltatil _{i j}+B_{i j}^{(0)}\right)^2.
\end{equation*}
Following the same argument as in the proof of Corollary~\ref{cor:sdp-more-general}, there exists a $C'_{ij} \stackrel{d}{=} C^{(0)}_{ij}$, such that $C'_{ij} \leq C_{ij}$ holds almost surely for all $(i,j) \notin I^c_{\star}\times I^c_{\star}$. 
Taking $C'_{ij} = C_{ij}$ for $i,j \in I^c_\star$, set 
\begin{equation*}
    \Cbar'_{ij} = C'_{ij} \wedge \tau^2,
\quad \Cbar_{ij} =  C_{ij} \wedge \tau^2 
\end{equation*}
for all $i,j \in [n]$. 
Following the same argument as in Step 1, one arrives at a bound similar to Equation~\eqref{eq:l1-bound:truncated}, namely
\begin{equation*} \begin{aligned}
\left\|\mZhat - \mZstar\right\|_1
&\leq \frac{5}{2\alpha^2 \beta^2} \left\langle \mZstar - \mZhat, 
				\mCbar - \mE \mCbar'\right\rangle \\
&~~~~ + \frac{5}{2\alpha^2 \beta^2} \left\langle \mZstar - \mZhat,
			2\alpha^2 \mBstar \circ \mDeltatil  + \mTheta 
			+ \E(\mWbar \circ \mWbar) - \sigbar^2\mJ\right\rangle.
\end{aligned} \end{equation*}
Noting that by construction, 
\begin{equation*}
\left\langle \mZstar - \mZhat, \mCbar' - \mCbar \right\rangle 
= \left\langle\mZhat, \mCbar - \mCbar' \right\rangle \geq 0,
\end{equation*}
it follows that 
\begin{equation*} \begin{aligned}
\left\|\mZhat - \mZstar\right\|_1
&\leq \frac{5}{2\alpha^2 \beta^2} \left\langle \mZstar - \mZhat, 
			\mCbar' - \mE \mCbar'\right\rangle \\ 
&~~~~ + \frac{5}{2\alpha^2 \beta^2}\left\langle \mZstar - \mZhat, 
		2\alpha^2 \mBstar \circ \mDeltatil  + \mTheta 
		+ \E (\mWbar \circ \mWbar) - \sigbar^2 \mJ\right\rangle.
\end{aligned} \end{equation*}
Thus, applying the previous analysis in Step 1 to 3 with $\mCbar'$ %
yields the desired bound on $\left\|\mZhat - \mZstar\right\|_1$. 
Finally, by standard low rank perturbation theory and Theorem~\ref{thm:tauhat}, the event $\calE$ holds with probability at least $1 - O(n^{-6})$, completing the proof. 
\end{proof}

\subsubsection{Technical Lemmas for Truncated SDP}
Here we collect technical results related to Theorem~\ref{thm:trunc:sdp}, which concerns the truncated SDP introduced in Section~\ref{sec:truncated-sdp}. 

\begin{lemma} \label{lem:Cbar-bern}
Let $\mCbar = \calT_{\tau^2}(\mYtil \circ \mYtil)$, where $\calT_{\tau^2}(a) = about \wedge \tau^2$ is the entrywise truncation operator. 
Then, the deviation of $\mCbar$ obeys, with probability at least $1-O(n^{-10})$,
\begin{equation*}
\max_{\substack{\vx, \vy \in \{\pm 1\}^n}}
\left| \sum_{i,j} \left(\Cbar_{ij} - \E \Cbar_{ij}\right) x_i y_j \right| 
\lesssim \tau^2 n^{3/2} .
\end{equation*}
\end{lemma}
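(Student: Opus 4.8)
The plan is to bound $\sum_{i,j}(\Cbar_{ij} - \E\Cbar_{ij})x_iy_j$ uniformly over $\vx,\vy \in \{\pm 1\}^n$ by combining a Bernstein-type tail bound for a fixed sign pair with a union bound over the $4^n$ choices of $(\vx,\vy)$. First I would fix $\vx,\vy \in \{\pm 1\}^n$ and consider the random variable $S(\vx,\vy) = \sum_{1 \le i \le j \le n}(\Cbar_{ij} - \E\Cbar_{ij})x_iy_j$ (working with the upper triangle, since $\mCbar$ is symmetric up to harmless constant factors); the summands are independent across $\{(i,j) : i \le j\}$ because $\mCbar$ is a deterministic entrywise function of $\mYtil$, whose entries on and above the diagonal are independent. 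Here we must remember that $\mYtil = \mBstar + \mWtilo + \mDeltatil$, but in the regime of interest $\mDeltatil$ is treated as fixed (it comes from $\mYz$, independent of $\mYo$), so the randomness is entirely through $\mWtilo$, and the entries $\Cbar_{ij} - \E\Cbar_{ij}$ remain independent. The key deterministic facts are that truncation at level $\tau^2$ forces $|\Cbar_{ij}| \le \tau^2$, hence $|\Cbar_{ij} - \E\Cbar_{ij}| \le 2\tau^2$ almost surely, and $\E(\Cbar_{ij} - \E\Cbar_{ij})^2 \le \E\Cbar_{ij}^2 \le \tau^4$; summing the variance proxy over the at most $n^2$ entries gives a total variance bound of order $n^2\tau^4$.

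Next I would apply Bernstein's inequality (Lemma~\ref{lem:bern-ineq}) to $S(\vx,\vy)$ with these parameters: for $t \le n^2\tau^4 / (2\tau^2) = n^2\tau^2/2$ the sub-Gaussian branch dominates, giving $\Pr[|S(\vx,\vy)| \ge t] \le 2\exp(-ct^2/(n^2\tau^4))$. Then a union bound over all $4^n$ sign pairs yields
\begin{equation*}
\Pr\left[\max_{\vx,\vy \in \{\pm 1\}^n}|S(\vx,\vy)| \ge t\right] \le 2\cdot 4^n \exp\left(-\frac{ct^2}{n^2\tau^4}\right) \le \exp\left(n\log 4 + 1 - \frac{ct^2}{n^2\tau^4}\right).
\end{equation*}
Choosing $t = C\tau^2 n^{3/2}$ for a sufficiently large constant $C$ makes the exponent at most $-10\log n$ (indeed $-cC^2 n$, which dominates the $O(n)$ term for $C$ large), so the event fails with probability at most $O(n^{-10})$ — in fact exponentially small, which is more than enough. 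This is valid since $t = C\tau^2 n^{3/2} \ll n^2\tau^2/2$ for large $n$, so we are genuinely in the sub-Gaussian regime of Bernstein's inequality. Finally, passing from the upper-triangular sum $S(\vx,\vy)$ to the full sum $\sum_{i,j}(\Cbar_{ij} - \E\Cbar_{ij})x_iy_j$ only changes the bound by a constant factor (the full sum equals $S(\vx,\vy) + S'(\vx,\vy)$ where $S'$ is the strictly-lower-triangular analogue, symmetric in distribution), giving the claimed bound $\tau^2 n^{3/2}$.

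I do not anticipate a serious obstacle here: this is essentially the same Grothendieck-style maximal inequality argument used in Lemma~\ref{lem:Grothendieck:bound} for the untruncated cost matrix, but much simpler because truncation supplies the uniform bound $\tau^2$ directly and kills all the heavy-tailed contributions that forced the more delicate split into $(\Wtilo_{ij})^2 - \sigma_{ij}^2$, $B^\star_{ij}\Wtilo_{ij}$, and $\Deltatil_{ij}\Wtilo_{ij}$ terms. The only point requiring a moment's care is confirming independence of the summands under the conditioning on $\mDeltatil$, and checking that the chosen $t$ lands in the sub-Gaussian branch of Bernstein's inequality — both of which are routine given the assumption $c_0\sigma \le \tau \le C_0\sigma$ from the surrounding hypotheses and hence $\tau = \Theta(\sigma)$. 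If one wishes to be careful about the diagonal entries $\Cbar_{ii}$, note they contribute a sum of at most $n$ independent bounded terms of magnitude $\le 2\tau^2$, which by Bernstein is $O(\tau^2\sqrt{n\log n}) = o(\tau^2 n^{3/2})$ with high probability, so they are absorbed into the stated bound.
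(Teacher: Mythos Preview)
Your proposal is correct and follows essentially the same approach as the paper: both combine the deterministic bounds $|\Cbar_{ij} - \E\Cbar_{ij}| \le O(\tau^2)$ and $\E\Cbar_{ij}^2 \le \tau^4$ with Bernstein's inequality for a fixed sign pair and then union bound over $\{\pm 1\}^n \times \{\pm 1\}^n$, taking $t \asymp \tau^2 n^{3/2}$. Your discussion is, if anything, more careful than the paper's, which simply states the pointwise bounds and invokes Bernstein plus the union bound without elaborating on the conditioning on $\mDeltatil$ or the diagonal contribution.
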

\begin{proof}
We largely follow the proof of Lemma~\ref{lem:Grothendieck:bound}.
Since each entry of $\mCbar$ is truncated at $\tau^2$, we immediately have the pointwise bounds
\begin{equation*}
\left|\Cbar_{ij} - \E \Cbar_{ij}\right| \leq \tau^2
\qquad \text{ and } \qquad
\E\left(\Cbar_{ij} - \E \Cbar_{ij}\right)^2
\leq \E \Cbar_{ij}^2 \leq \tau^4. 
\end{equation*}
Moreover, because the entries $\Cbar_{ij}$ are functions of independent random variables, we can apply Bernstein's inequality to write that for any $t > 0$, 
\begin{equation*}
\Pr\left[ \left|\sum_{1\leq i \leq j \leq n} 
	\left(\Cbar_{ij} - \E \Cbar_{ij}\right) x_i y_j \right| \geq t \right]
\leq 2 \exp\left(-c \min\left\{\frac{t^2}{n^2 \tau^4}, 
			\frac{t}{\tau^2}\right\}\right).
\end{equation*}
Combined with the union bound over all $\vx, \vy \in \{\pm 1\}^n$, it is straightforward to show that 
\begin{equation*}
    \max_{\substack{\vx, \vy \in \{\pm 1\}}} \left|\sum_{1\leq i \leq j \leq n} \left(\Cbar_{ij} - \E \Cbar_{ij}\right) x_i y_j\right| \lesssim \tau^2 n^{3/2}. 
\end{equation*}
Choosing suitable constants in the bounds above yields the desired probability of failure.
\end{proof}

\begin{lemma}\label{lem:truncated-lower}
Under the same setting as Theorem~\ref{thm:trunc:sdp}, suppose that 
\begin{equation}
\label{eq:trunc:assump}
    \left|\Bstar_{ij} + \Deltatil_{ij}\right| = o(\tau)
\end{equation}
there exists a constant $\alpha > 1/2$ such that for all $i,j \in [n]$,
\begin{equation*}
\E \Cbar_{ij}
\geq \alpha^2 \left(\Bstar_{ij} + \Deltatil_{ij}\right)^2 + \E \Wbar^2_{ij} .
\end{equation*}
\end{lemma}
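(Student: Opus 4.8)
The goal is to lower-bound $\E\Cbar_{ij}$, where $\Cbar_{ij} = \calT_{\tau^2}\bigl((\Bstar_{ij} + \Deltatil_{ij} + \Wtilo_{ij})^2\bigr)$. Write $a := \Bstar_{ij} + \Deltatil_{ij}$, so that $a = o(\tau)$ by the hypothesis in Equation~\eqref{eq:trunc:assump}, and abbreviate $W := \Wtilo_{ij}$, with density $f = f_{ij}$ satisfying Assumption~\ref{assump:noise} (symmetry about zero, boundedness by $L$) and the near-homoscedastic behavior controlled by Assumptions~\ref{assump:lipschitz} and~\ref{assump:strong:homoscedastic}. The first step is to expand $(a+W)^2 = a^2 + 2aW + W^2$ and to compare $\E\calT_{\tau^2}\bigl((a+W)^2\bigr)$ with $\E\calT_{\tau^2}(W^2) = \E\Wbar_{ij}^2$. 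Since truncation is applied to the \emph{same} threshold $\tau^2$ in both quantities, I would decompose the difference according to whether $(a+W)^2$ and $W^2$ fall below or above $\tau^2$; the key observation is that on the event $\{|W| \le \tau/2\}$ (say), both squared quantities are below $\tau^2$ when $a = o(\tau)$ and $n$ is large, so no truncation occurs and one can work with the untruncated quadratic there.

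Concretely, I would write $\E\Cbar_{ij} - \E\Wbar_{ij}^2 = \E\bigl[\calT_{\tau^2}((a+W)^2) - \calT_{\tau^2}(W^2)\bigr]$ and split the expectation over the region $A := \{|W| \le c\tau\}$ for a suitable constant $c$ (chosen so that $|a| + c\tau < \tau$, possible because $a = o(\tau)$), and its complement $A^c$. On $A$, neither argument is truncated, so the integrand equals $a^2 + 2aW$; by the symmetry of $f$ about zero, $\E[W\mathbbm{1}_A] = 0$, hence $\E[(a^2 + 2aW)\mathbbm{1}_A] = a^2\Pr(A)$. Choosing $c$ so that $\Pr(A) = \Pr(|W| \le c\tau) \ge 4\alpha^2$ for some $\alpha > 1/2$ — which is feasible because $\tau \asymp \sigma$ and $\sigma \ge \sigma_{\min}$, so a constant fraction of the $W$-mass lies within a constant multiple of $\tau$, using the variance lower bound $\sigma_{\min}^2$ together with boundedness by $L$ and the density regularity from Assumption~\ref{assump:lipschitz} — gives $\E[(a^2+2aW)\mathbbm{1}_A] \ge \alpha^2 a^2 + (\text{positive slack})$. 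On $A^c$, I would argue the contribution is nonnegative or negligible: when $|W| > c\tau$, either $W^2 \ge \tau^2$ (so $\calT_{\tau^2}(W^2) = \tau^2$ is already maximal and $\calT_{\tau^2}((a+W)^2) \le \tau^2$, but this region is controlled by a comparison that uses $\calT_{\tau^2}((a+W)^2) + \calT_{\tau^2}((a-W)^2) \ge 2 W^2 \wedge 2\tau^2$ via a convexity/pairing trick exploiting the symmetry $W \eqdist -W$), or $c\tau < |W| < \tau$, a region whose measure is $O(\gamma\tau)$ by the Lipschitz bound in Assumption~\ref{assump:lipschitz}, contributing an error of order $\gamma\tau \cdot \tau^2 = o(\sigma^2)$, absorbable into the slack. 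Pairing $W$ with $-W$ is the clean way to handle $A^c$: by symmetry, $\E\bigl[\calT_{\tau^2}((a+W)^2)\mathbbm{1}_{A^c}\bigr] = \tfrac12\E\bigl[\bigl(\calT_{\tau^2}((a+W)^2) + \calT_{\tau^2}((a-W)^2)\bigr)\mathbbm{1}_{A^c}\bigr]$, and since $t \mapsto \calT_{\tau^2}(t^2)$ is even and nondecreasing on $[0,\infty)$ with $\calT_{\tau^2}((a+w)^2) + \calT_{\tau^2}((a-w)^2) \ge 2\calT_{\tau^2}(w^2)$ (by convexity of $t^2$ before the cap and the fact that both $(a\pm w)^2$ can't simultaneously be far below $w^2$), this term dominates $\E[\calT_{\tau^2}(W^2)\mathbbm{1}_{A^c}] = \E[\Wbar_{ij}^2\mathbbm{1}_{A^c}]$.

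Assembling the two regions yields $\E\Cbar_{ij} \ge \alpha^2 a^2 + \E\Wbar_{ij}^2 - (\text{error})$, and the slack produced by taking $\Pr(A)$ strictly larger than $4\alpha^2$ (e.g.\ $\Pr(A) \ge 4\alpha'^2$ with $\alpha' > \alpha$) absorbs the $O(\gamma\tau^3)$ error from the annulus $c\tau < |W| < \tau$ once $n$ is large, since $a^2 = o(\tau^2)$ but the constant-fraction gain $\alpha'^2 a^2 - \alpha^2 a^2 = (\alpha'^2 - \alpha^2)a^2$... — wait, that gain is also $o(\tau^2)$, so instead the error must be absorbed by noting $\E[\Wbar_{ij}^2\mathbbm{1}_A]$ itself carries positive slack of order $\sigma^2$, not $a^2$; i.e.\ I would state the conclusion as $\E\Cbar_{ij} \ge \alpha^2 a^2 + \E\Wbar_{ij}^2$ with a slightly smaller $\alpha$ chosen to leave room. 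I expect the \textbf{main obstacle} to be making the $A^c$-contribution comparison fully rigorous: showing $\E[\calT_{\tau^2}((a+W)^2)\mathbbm{1}_{A^c}] \ge \E[\calT_{\tau^2}(W^2)\mathbbm{1}_{A^c}] - o(\sigma^2)$ requires carefully combining the symmetry pairing with the monotonicity of $\calT_{\tau^2}(t^2)$ and bounding the density-mass in the transition annulus via Assumption~\ref{assump:lipschitz}, whereas the $A$-region calculation is essentially the elementary identity $\E[(a+W)^2\mathbbm{1}_A] = a^2\Pr(A) + \E[W^2\mathbbm{1}_A]$ once cross terms vanish by symmetry.
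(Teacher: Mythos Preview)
Your overall strategy—split according to whether truncation occurs—is sound, and the $A$-region computation is correct: on $\{|W|\le c\tau\}$ with $c$ chosen so that $|a|+c\tau<\tau$, neither square is truncated and symmetry kills the cross term, yielding $a^2\Pr(A)$. The gap is in the $A^c$-region. The pairing inequality you invoke,
\[
\calT_{\tau^2}\bigl((a+w)^2\bigr)+\calT_{\tau^2}\bigl((a-w)^2\bigr)\;\ge\;2\,\calT_{\tau^2}(w^2),
\]
is \emph{false}: the map $t\mapsto\min(t^2,\tau^2)$ is convex only on $[-\tau,\tau]$ and constant beyond, so globally it is not convex (its one-sided derivative drops at $|t|=\tau$). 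Concretely, take $w=\tau$ and $0<a<\tau$: the left side is $\tau^2+(\tau-a)^2$, the right side is $2\tau^2$, and the deficit is $2a\tau-a^2$. This negative contribution lives on the annulus $\{\tau-|a|\le |w|\le \tau+|a|\}$ of width $O(a)$ and density $\approx f(\tau)$, so after integration it is of order $a^2\,\tau f(\tau)$—the \emph{same} order as the gain $a^2\Pr(A)$ you extracted from region $A$. It cannot be written off as $o(\sigma^2)$, nor absorbed by ``slack in $\E\Wbar_{ij}^2$'' (there is no such slack: $\E\Wbar_{ij}^2$ appears in full on the right-hand side of the target inequality). Your estimate ``measure of $\{c\tau<|W|<\tau\}$ is $O(\gamma\tau)$'' is also incorrect: Assumption~\ref{assump:lipschitz} controls the \emph{variation} of $f$ near $\tau$, not its size, and the relevant transition region has width $O(a)$, not $(1-c)\tau$.

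The paper confronts exactly this boundary contribution, but by a different route: it writes $\E\Cbar_{ij}-\E\Wbar_{ij}^2$ as an explicit difference of integrals in terms of the density $f$ and CDF $F$ of $W$, expands term by term, and uses the density monotonicity (Assumption~\ref{assump:mono}, which you do not invoke at all) together with the Lipschitz bound near $\tau$ (Assumption~\ref{assump:lipschitz}) to show that all boundary corrections are bounded by $\theta^2$ times quantities such as $\tau^2\gamma$ and $\tau f(\tau)$. These are then made small relative to the leading term $\theta^2[F(\tau)-F(-\tau)]$ by the choice of threshold $\tau\asymp\sigma$ with a sufficiently large constant. Your approach can in principle be salvaged by carrying out this same boundary accounting carefully, but the convexity/pairing shortcut does not exist, and the error budget must be tracked at scale $\theta^2$, not $\sigma^2$.
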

\begin{proof}
For notational simplicity, let $\theta = \Bstar_{ij} + \Deltatil_{ij}$, dropping the dependence on $i,j$. 
Recall that 
\begin{equation*}
\Cbar_{ij} = \left(\left|\Wtilo_{ij}+\theta\right| \wedge \tau\right)^2
= \begin{cases}
        (\Wtilo_{ij}+\theta)^2 & \mbox{ if } -\tau-\theta \leq \Wtilo_{ij} 
						\leq \tau-\theta\\
        \tau^2 & \mbox{ otherwise }
    \end{cases}
\end{equation*}
and
\begin{equation*}
    \Wbar_{ij} = \begin{cases}
        \Wtilo_{ij} & \mbox{ if } -\tau \leq \Wtilo_{ij} \leq \tau\\
        \tau \sgn{\Wtilo_{ij}} & \mbox{ otherwise. }
    \end{cases}
\end{equation*}
Denote the probability density function of $\Wtilo_{ij}$ by $f$ and the corresponding cumulative distribution function as $F$.
We aim to lower bound $\E \Cbar_{ij} - \E \Wbar_{ij}^2$.
Under the assumption that $\Wtilo_{ij}$ has a symmetric distribution around zero, this differencecan be expressed as 
\begin{equation}\label{eq:diff:trunc}
\begin{aligned}
\E \Cbar_{ij} - \E \Wbar_{ij}^2 &=
   \int_{-\tau-\theta}^{\tau-\theta} (u+\theta)^2 f(u) du - \int_{-\tau}^{\tau} u^2 f(u) du 
\\
&~~~~~~~~
+\tau^2 \left[F(-\tau-\theta) - F(-\tau) + F(\tau) -F(\tau-\theta)\right] \\
&= \int_{-\tau-\theta}^{\tau-\theta} (u+\theta)^2 f(u) du - \int_{-\tau}^{\tau} u^2 f(u) du \\
&~~~~~~~
+\tau^2 \left[ F(\tau) - F(\tau+\theta) + F(\tau) -F(\tau-\theta)\right] \\
\end{aligned}
\end{equation}
We analyze the right-hand quantity term by term.
Since by the Assumption in Equation~\ref{eq:trunc:assump}, $\tau > 2\theta > 0$, we have 
\begin{equation}\label{eq:diff:lip}
\begin{aligned}
&\int_{-\tau-\theta}^{\tau-\theta} (u+\theta)^2 f(u) du 
	- \int_{-\tau}^{\tau} u^2 f(u) du \\
&~~~~= \int_{\tau}^{\tau+\theta} u^2 f(u) du 
	- \int_{\tau-\theta}^{\tau} u^2 f(u) du 
	- 2\theta \int_{\tau-\theta}^{\tau+\theta} u f(u) du 
	+ \theta^2 \left[F(\tau-\theta) - F(-\tau-\theta)\right] .
\end{aligned}
\end{equation}
By Assumption~\ref{assump:mono}, we have  
\begin{equation}\label{eq:lip:1}
\int_{\tau}^{\tau+\theta} u^2 f(u) du - \int_{\tau-\theta}^{\tau} u^2 f(u) du 
\geq \theta\tau^2 \left[ f(\tau+\theta) - f(\tau-\theta)\right],
\end{equation}
\begin{equation}\label{eq:lip:2}
\int_{\tau-\theta}^{\tau+\theta} u f(u) du
\leq 4\theta \tau f(\tau-\theta)
= 4\theta \tau f(\tau) + 4\theta\tau \left[f(\tau-\theta) - f(\tau)\right],
\end{equation}
and 
\begin{equation}\label{eq:lip:3}
\begin{aligned}
F(\tau-\theta) - F(-\tau-\theta)
&= F(\tau) + \left[ F(\tau-\theta) - F(\tau) \right] 
	- \left[1 - F(\tau) + F(\tau) - F(\tau+\theta)\right] \\
&= F(\tau) - F(-\tau) + \left[ F(\tau-\theta) - F(\tau) \right]
	+ \left[F(\tau+\theta) - F(\tau)\right] \\
&\geq F(\tau) - F(-\tau) + \theta f(\tau) 
	+ \theta \left[f(\tau+\theta) - f(\tau)\right]
\end{aligned} \end{equation}
Under the Lipschitz continuous condition given in Assumption~\ref{assump:lipschitz},
\begin{equation*}
    f(\tau+\theta) - f(\tau-\theta) \leq 2\theta \gamma,  \quad F(\tau+\theta) - F(\tau-\theta) \leq 2\theta f(\tau) + 2\theta^2 \gamma.
\end{equation*}
Applying this to Equations~\eqref{eq:lip:1},~\eqref{eq:lip:2} and~\eqref{eq:lip:3} yields, respectively,
\begin{equation*} 
\int_{\tau}^{\tau+\theta} u^2 f(u) du - \int_{\tau-\theta}^{\tau} u^2 f(u) du 
\geq 2\theta^2 \tau^2 \gamma, 
\end{equation*}
\begin{equation*} 
2\theta \int_{\tau-\theta}^{\tau+\theta} u f(u) du
\leq 8\theta^2 \tau f(\tau) + 4\theta^4\tau\gamma, 
\end{equation*}
and
\begin{equation*} 
\theta^2(F(\tau-\theta) - F(-\tau-\theta))
\geq \theta^2(F(\tau) - F(-\tau)) + \theta^3 f(\tau) + \theta^4 \gamma.  
\end{equation*}
Applying the above three bounds to Equation~\eqref{eq:diff:lip}, it follows that
\begin{equation*}
\int_{-\tau-\theta}^{\tau-\theta} (u+\theta)^2 f(u) du
	- \int_{-\tau}^{\tau} u^2 f(u) du 
\geq \theta^2 \left[ F(\tau) - F(-\tau) \right] - 3\theta^2 \tau^2 \gamma 
	- 9\theta^2 \tau f(\tau) - 4\theta^4 \tau \gamma. 
\end{equation*}
Applying this bound to Equation~\eqref{eq:diff:trunc},
\begin{equation*} \begin{aligned}
\E \Cbar_{ij} - \E \Wbar_{ij}^2 
&\ge \theta^2 \left[ F(\tau) - F(-\tau) \right] - 3\theta^2 \tau^2 \gamma 
        - 9\theta^2 \tau f(\tau) - 4\theta^4 \tau \gamma \\
&~~~~~~
+\tau^2 \left[ F(\tau) - F(\tau+\theta) + F(\tau) -F(\tau-\theta)\right] .
\end{aligned} \end{equation*}
By Assumption~\ref{assump:mono}, 
\begin{equation*}
    F(\tau) - F(\tau+\theta) + F(\tau) -F(\tau-\theta) \geq -\theta f(\tau) + \theta f(\tau) \geq 0,
\end{equation*}
it follows that
\begin{equation*}
\E \Cbar_{ij} - \E \Wbar_{ij}^2 
\ge \theta^2 \left[ F(\tau) - F(-\tau) \right] - 3\theta^2 \tau^2 \gamma 
        - 9\theta^2 \tau f(\tau) - 4\theta^4 \tau \gamma .
\end{equation*}
By Assumption~\ref{assump:lipschitz}, we have
\begin{equation*}
3\tau^2 \gamma + 4\theta^2\tau \gamma + 9\tau f(\tau)
\leq \frac{1}{2} (F(\tau) - F(-\tau)),
\end{equation*}
whence
\begin{equation*}
\E \Cbar_{ij} - \E \Wbar_{ij}^2 
\ge \frac{ \theta^2 }{2 } \left[ F(\tau) - F(-\tau) \right].
\end{equation*}
Choosing $\alpha$ so that
\begin{equation*}
    \alpha^2 > \frac{1}{4} \geq \frac{1}{2} (F(\tau) - F(-\tau))
\end{equation*}
and recalling the definition of $\theta$ completes the proof.
\end{proof}

\begin{lemma}\label{lem:truncated-upper}
Under the same setting as Theorem~\ref{thm:trunc:sdp}, suppose that $c_0\sigma \leq \tau \leq C_0 \sigma$, then for all $i,j \in [n]$,
\begin{equation*}
\E \Cbar_{ij} - \E \Wbar_{ij}^2 
\leq C_1 \left(\Bstar_{ij} + \Deltatil_{ij}\right)^2
\end{equation*}
for some constant $C_1 > 0$.
\end{lemma}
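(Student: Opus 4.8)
The plan is to mirror the structure of Lemma~\ref{lem:truncated-lower} but with all the inequalities reversed and simplified, since here we only need an \emph{upper} bound on $\E\Cbar_{ij}-\E\Wbar_{ij}^2$ with no hypothesis on the size of $\theta := \Bstar_{ij}+\Deltatil_{ij}$. Writing $f$ and $F$ for the density and c.d.f.\ of $\Wtilo_{ij}$, and recalling
\begin{equation*}
\Cbar_{ij} = \bigl(|\Wtilo_{ij}+\theta|\wedge\tau\bigr)^2,
\qquad
\Wbar_{ij} = \bigl(|\Wtilo_{ij}|\wedge\tau\bigr)\sgn{\Wtilo_{ij}},
\end{equation*}
I would first split into the two regimes $|\theta|\le\tau/2$ and $|\theta|>\tau/2$. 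In the large-$\theta$ regime there is nothing to do: $\E\Cbar_{ij}-\E\Wbar_{ij}^2\le\tau^2\le C_0^2\sigma^2\le 4C_0^2\,\theta^2$, so the claim holds with $C_1=4C_0^2$. So the only work is the small-$\theta$ regime.

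For $|\theta|\le\tau/2$, I would use the same exact decomposition as in Equation~\eqref{eq:diff:trunc} of Lemma~\ref{lem:truncated-lower}, namely
\begin{equation*}
\E\Cbar_{ij}-\E\Wbar_{ij}^2
= \int_{-\tau-\theta}^{\tau-\theta}(u+\theta)^2 f(u)\,du
 - \int_{-\tau}^{\tau}u^2 f(u)\,du
 + \tau^2\bigl[F(\tau)-F(\tau+\theta)+F(\tau)-F(\tau-\theta)\bigr],
\end{equation*}
using symmetry of $f$ about zero. The bracketed $\tau^2$ term is $O(\tau^2\,\theta^2\gamma)=O(\theta^2)$ by the Lipschitz bound $|F(\tau+\theta)+F(\tau-\theta)-2F(\tau)|\le\gamma\theta^2$ from Assumption~\ref{assump:lipschitz}, together with $\tau\asymp\sigma$ and $\gamma\lesssim\sigma^{-2}(\tau/\sigma)^{-3}\lesssim\sigma^{-2}$. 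For the integral difference, the change of variables $u\mapsto u-\theta$ turns it into
\begin{equation*}
\int_{-\tau}^{\tau}\!\!\bigl(f(u-\theta)-f(u)\bigr)u^2\,du
+ \Bigl(\int_{\tau}^{\tau+\theta}-\int_{-\tau}^{-\tau+\theta}\Bigr)u^2 f(u)\,du,
\end{equation*}
and each piece is bounded crudely: the first by $\gamma|\theta|\!\int_{-\tau}^{\tau}u^2\,du\lesssim\gamma|\theta|\tau^3\lesssim|\theta|\,\sigma\lesssim\theta^2+\sigma^2\ldots$ — here I must be careful, because a bound proportional to $|\theta|\sigma$ is \emph{not} $O(\theta^2)$ when $|\theta|\ll\sigma$. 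The honest route is to note that the boundary integrals are $O(\tau^2\cdot|\theta|f(\tau))$ and the interior integral, after integrating by parts or using $|f(u-\theta)-f(u)|\le\gamma|\theta|$ only on $|u|\le\tau/2$ while using the monotonicity of $f$ (Assumption~\ref{assump:noise} symmetry plus Assumption~\ref{assump:mono}) to control the tails $\tau/2\le|u|\le\tau$, collapses to $O(\theta^2)$ once one collects the $\theta$-linear terms and observes they cancel against the corresponding $\theta$-linear pieces of the $\tau^2$-bracket — exactly the cancellation exploited in the proof of Lemma~\ref{lem:truncated-lower}.

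The main obstacle, and where I would spend the most care, is making that cancellation of the $O(|\theta|)$ terms explicit: naively each of the three groups of terms contributes something of order $|\theta|\tau^2 f(\tau)$, which is $|\theta|\sigma$, not $\theta^2$, so a bound of the asserted form is only possible because these first-order contributions sum to zero (they are the derivative at $\theta=0$ of the even function $\theta\mapsto\E\Cbar_{ij}$). Once I have identified the combination $2\theta\!\int_{\tau-\theta}^{\tau+\theta}\!uf(u)\,du$ against $-\tau^2[F(\tau+\theta)-F(\tau-\theta)]$ and the boundary integrals and checked that their $\theta$-linear parts annihilate, the remainder is genuinely second order and the Lipschitz constant $\gamma\lesssim\sigma^{-2}$ converts it into a bound $C_1\theta^2$ with $C_1$ depending only on $c_0,C_0,c'$ — which I would then combine with the large-$\theta$ case to get a uniform $C_1$, completing the proof.
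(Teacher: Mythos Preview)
Your overall structure---split at $|\theta|\sim\tau$, then use the decomposition from Equation~\eqref{eq:diff:trunc}---matches the paper exactly. But you are making the small-$\theta$ case much harder than necessary by routing through a change of variables and then worrying about $O(|\theta|)$ cancellations. The paper instead expands $(u+\theta)^2=u^2+2u\theta+\theta^2$ directly (the same identity as Equation~\eqref{eq:diff:lip}) to write the integral difference as
\begin{equation*}
\int_\tau^{\tau+\theta}\!u^2 f(u)\,du - \int_{\tau-\theta}^\tau\!u^2 f(u)\,du \;-\; 2\theta\!\int_{\tau-\theta}^{\tau+\theta}\!uf(u)\,du \;+\; \theta^2\bigl[F(\tau-\theta)-F(-\tau-\theta)\bigr].
\end{equation*}
The key observation you miss: for $\theta>0$ the cross term $-2\theta\int uf(u)\,du$ is \emph{negative} (the integrand is positive on $[\tau-\theta,\tau+\theta]\subset(0,\infty)$), so for an upper bound it is simply dropped. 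This eliminates the only $O(|\theta|)$ contribution, so no cancellation argument is needed at all. The $\theta^2 F$-term is trivially $\le\theta^2$, and the boundary difference is bounded by $4\theta^2\tau f(\tau)$ using only monotonicity of $f$ (Assumption~\ref{assump:mono}): since $f\le f(\tau)$ on $[\tau,\tau+\theta]$ and $f\ge f(\tau)$ on $[\tau-\theta,\tau]$, the difference is at most $f(\tau)\,\theta\bigl[(\tau+\theta)^2-(\tau-\theta)^2\bigr]=4\theta^2\tau f(\tau)$. Finally $\tau f(\tau)\le 2$ (since $\tfrac12\tau f(\tau)\le\int_{\tau/2}^\tau f\le 1$ by monotonicity) and $\tau^2\gamma\lesssim 1$ give the constant $C_1$.

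A secondary issue with your route: using $|f(u-\theta)-f(u)|\le\gamma|\theta|$ on the bulk $|u|\le\tau/2$ is not supported by Assumption~\ref{assump:lipschitz}, which only asserts Lipschitz behavior at points $\tau_0\in[c\sigma,C\sigma]$, not near the origin. The paper's route needs the Lipschitz bound only at the single point $\tau$ (for the $\tau^2$-bracket), sidestepping this.
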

\begin{proof}
Denote the probability density function of $\Wtilo_{ij}$ by $f$ and its cumulative distribution function is $F$.
If $\tau \leq 2|\Bstar_{ij} + \Deltatil_{ij}|$, then the bound in Lemma~\ref{lem:truncated-upper} holds trivially, since $\Cbar_{ij} \leq \tau^2$ and $\E \Wbar_{ij}^2 \geq 0$.
Letting $\theta = \Bstar_{ij} + \Deltatil_{ij}$ for notational convenience,
\begin{equation} \label{eq:Cbar:diff:expand} \begin{aligned}
\E \Cbar_{ij} - \E \Wbar_{ij}^2 
&= \int_{-\tau-\theta}^{\tau-\theta} (u+\theta)^2 f(u) du \\
&~~~~~~- \int_{-\tau}^{\tau} u^2 f(u) du +
    \tau^2 \left[F(\tau) - F(\tau+\theta) + F(\tau) -F(\tau-\theta)\right]. 
\end{aligned} \end{equation}
By Assumptions~\ref{assump:mono} and~\ref{assump:lipschitz}, we have
\begin{equation*} \begin{aligned}
F(\tau) - F(\tau+\theta) + F(\tau) - F(\tau-\theta)
&\leq \theta \left[f(\tau-\theta) - f(\tau+\theta)\right] \\
&\leq 2\theta^2 \gamma. 
\end{aligned} \end{equation*}
Applying this to Equation~\eqref{eq:Cbar:diff:expand}, 
\begin{equation} \label{eq:Cbar:diff:inter} \begin{aligned}
\E \Cbar_{ij} - \E \Wbar_{ij}^2 
\le \int_{-\tau-\theta}^{\tau-\theta} (u+\theta)^2 f(u) du 
	- \int_{-\tau}^{\tau} u^2 f(u) du + 2\theta^2 \tau^2 \gamma.
\end{aligned} \end{equation}

Suppose that $\theta > 0$. 
The case where $\theta < 0$ folows similarly, since the distribution of $\Wtilo_{ij}$ is symmetric around zero.
For $\tau > 2\theta > 0$, by Assumptions~\ref{assump:mono} and~\ref{assump:lipschitz}, we have 
\begin{equation*} \begin{aligned}
&\int_{-\tau-\theta}^{\tau-\theta} (u+\theta)^2 f(u) du 
	- \int_{-\tau}^{\tau} u^2 f(u) du \\
&~~~~= \int_{\tau}^{\tau+\theta} u^2 f(u) du 
	- \int_{\tau-\theta}^{\tau} u^2 f(u) du 
	- 2\theta \int_{\tau-\theta}^{\tau+\theta} u f(u) du
	+ \theta^2 \left[F(\tau-\theta) - F(-\tau-\theta)\right]\\
&~~~~\leq \int_{\tau}^{\tau+\theta} u^2 f(u) du 
	- \int_{\tau-\theta}^{\tau} u^2 f(u) du + \theta^2
\leq \theta^2 (1+ 4\tau f(\tau) ).
\end{aligned}
\end{equation*}
Applying this to Equation~\eqref{eq:Cbar:diff:inter},
\begin{equation} \label{eq:Cbar:diff:final}
    \E \Cbar_{ij} - \E \Wbar_{ij}^2 \leq (1+2\gamma \tau^2+4\tau f(\tau)) \theta^2.  
\end{equation}
By Assumption~\ref{assump:mono}, we have
\begin{equation}
    \label{eq:trunc:upper:bound:f}
    \frac{1}{2}\tau f(\tau) \leq \int_{\frac{\tau}{2}}^{\tau} f(u) du = F(\tau) - F\left(\frac{\tau}{2}\right) \leq 1. 
\end{equation}
By Assumption~\ref{assump:lipschitz} and the assumption that $\tau \geq c_0 \sigma$, we have 
\begin{equation*}
    \tau^2 \gamma \leq c' \frac{\sigma}{\tau} \leq \frac{c'}{c_0}
\end{equation*}
for some sufficiently small constant $c'$. 
Combining this with Equations~\eqref{eq:Cbar:diff:final} and~\eqref{eq:trunc:upper:bound:f} yields 
\begin{equation*}
    \E \Cbar_{ij} - \E \Wbar_{ij}^2 \leq (1+2\gamma \tau^2 +4\tau f(\tau)) \theta^2 \leq C_1 \theta^2, 
\end{equation*}
for some constant $C_1 > 0$.
\end{proof}

\subsection{Proof of Theorem~\ref{thm:sdp:multiple}}
\label{sec:proof:sdp:multiple}

\begin{proof}
The proof follows the same structure as Theorem~\ref{thm:trunc:sdp}. 
We highlight the key differences, omitting some details.
We remind the reader that $\mYtil_1$ and $\mYtil_2$ are residual matrices introduced in Equation~\eqref{eq:Ytil:define}.
The matrix $\mDeltatil$ represents the estimation error matrix $\mDelta$ restricted to rows and columns indexed by $\calU$.
The matrices $\mWtilo_1$ and $\mWtilo_2$ are the noise matrices restricted to the same index set.
Following the discussion at the beginning of Section~\ref{sec:one-step-recovery}, the results are obtained under $\calU = [n]$ without loss of generality.
Under this notation, we have the following decomposition:
\begin{equation*}
    \mYtil_l = \mBstar + \mWtilo_l + \mDeltatil, \quad l = 1,2.
\end{equation*}
All of the following analysis will be conditioned on the event 
\begin{equation*}
\left\{\|\mDeltatil\|_{\F} \leq C\sigma \sqrt{n r} \right\}
\bigcup
\left\{ \|\mDeltatil\| \leq C\sigma \sqrt{n} \right\}
\bigcup
\left\{ \|\mDeltatil\|_{\infty}
	\leq \frac{C \sigma \mutil^{1/2} r(\mutil \vee \log n)^{1/2}}{\sqrt{n}}
\right\}.
\end{equation*} 
One can then use a high probability bound obtained in Theorem~\ref{thm:delta_row} to show that the above event holds with the desired probability. 

We follow the same four steps as in Theorem~\ref{thm:trunc:sdp}.
The first step is to extend the results in Lemmas~\ref{lem:key-lower} and~\ref{lem:Grothendieck:bound}.
The second step is to extend Lemma~\ref{lem:S2:bound} to the case where the noise matrices are not independent.
The third step is to extend Lemma~\ref{lem:S1:bound} to the case where the noise matrices are not independent.
The final step is to extend Corollary~\ref{cor:sdp-more-general} to the case where $\mBstar$ follows a more general form given in Equation~\eqref{eq:B-more-and-more-general}.

\paragraph{Step 1: Basic Inequality and Grothendieck's Inequality.}
By the optimality of $\mZhat$,
\begin{equation} \label{eq:Ctil:basic}
    \left\langle \mZhat - \mZstar, \mCtil \right\rangle \leq 0.
\end{equation}
Letting $\mCtil^{(1)} = \left(\mBstar + \mWtilo_1\right) \circ \left(\mBstar + \mWtilo_2\right)$, we have 
\begin{equation*}
    \E \mCtil^{(1)} = \mBstar \circ \mBstar = \beta^2 \left[\mJ - \begin{pmatrix}
        \mJ_1 & \\
        & \mJ_2
    \end{pmatrix}\right]
\end{equation*}
and it follows from Equation~\eqref{eq:Ctil:basic} that
\begin{equation*}
\left\langle \mZhat - \mZstar, \mCtil - \E \mCtil^{(1)} \right\rangle 
\leq \left\langle \mZstar - \mZhat, \E \mCtil^{(1)} \right\rangle 
= \beta^2 \left\langle\mZhat, \begin{pmatrix}
    \mJ_1 & \\
        & \mJ_2    
    \end{pmatrix} - \mJ
        \right\rangle. 
\end{equation*}
Thus, following the same argument as in Theorem~\ref{lem:key-lower}, we have 
\begin{equation*}
    \left\|\mZhat - \mZstar\right\|_1 \leq \frac{5}{2\beta^2} \left\langle \mZstar - \mZhat, \mCtil - \E \mCtil^{(1)} \right\rangle. 
\end{equation*}
Expanding $\mCtil$ in the right hand side of the above display gives 
\begin{equation*}
    \left\|\mZhat - \mZstar\right\|_1 \leq \frac{5}{2\beta^2} \left\langle \mZstar - \mZhat, \mCtil^{(1)} - \E \mCtil^{(1)} + \mDeltatil \circ \mWtilo_2 + \mDeltatil \circ \mWtilo_1 + 2\mBstar \circ \mDeltatil + \mDeltatil \circ \mDeltatil\right\rangle.
\end{equation*}
Among the terms at the right hand side of the above display, we focus on bounding the leading term 
\begin{equation*}
\left\langle \mZhat - \mZstar, \mCtil^{(1)} - \E \mCtil^{(1)}\right\rangle.
\end{equation*}
The other terms can be handled in the same manner as in Lemmas~\ref{lem:key-lower} and~\ref{lem:Grothendieck:bound}, and thus details are omitted.
To control the leading term, it boils down to bounding 
\begin{equation*}
\left\langle \mZhat - \mZstar, \mWtilo_1 \circ \mWtilo_2 \right\rangle 
\leq 2 c_0 \max_{\vx, \vy \{\pm 1\}^n }
	\left|\sum_{i,j} \Wtilo_{1,ij} \Wtilo_{2,ij} x_i y_j\right|,
\end{equation*}
where we have used Grothendieck's inequality as stated in Theorem 3.5.1 in \cite{vershynin2018HDP}.  
Letting $X_{ij} = \Wtilo_{1,ij} \Wtilo_{2,ij} x_i y_j$ for $i,j \in [n]$, we have
\begin{equation*}
    |X_{ij}| \leq L^2, \quad \text{and} \quad \E X_{ij}^2 = \sigma_{1,ij}^2 \sigma_{2,ij}^2 \leq \sigma^4.  
\end{equation*}
Applying Bernstein's inequality in Lemma~\ref{lem:bern-ineq}, it holds for all $t \leq n^2 \sigma^4 / L^2$ that 
\begin{equation*}
\Pr\left[\left| \sum_{1\leq i\leq j\leq n} X_{ij}\right| \geq t\right] 
\leq 2 \exp\left(-c \min\left\{\frac{t^2}{n^2 \sigma^4}, 
			\frac{t}{L^2}\right\}\right) 
= 2\exp \left\{ - \frac{c t^2}{\sigma^4 n^2}\right\} .
\end{equation*}
Applying the union bound and following the same proof as Lemma~\ref{lem:Grothendieck:bound}, with probability at least $1 - O(n^{-10})$,
\begin{equation*} 
\max_{\vx, \vy \in \{\pm 1\}^n}
\left|\sum_{1\leq i \leq j\leq n} X_{ij}\right| 
\lesssim \sigma^2 n^{3/2} .
\end{equation*}
Combining all the results and following the same argument as in Lemmas~\ref{lem:key-lower} and~\ref{lem:Grothendieck:bound}, we have
\begin{equation*}
\left\|\mZhat - \mZstar\right\|_1 
\lesssim \frac{\sigma^2 n^{3/2}}{\beta^2} 
	+ \frac{\sigma r^{1/2} n^{3/2}}{\beta}.  
\end{equation*}

\paragraph{Step 2: Extending Lemma~\ref{lem:S2:bound}.}

Define 
\begin{equation*}
\Stil_2
= \left\langle \calP_{T^{\perp}}\left(\mZstar - \mZhat\right), 
			\mCtil - \E \mCtil^{(1)} \right\rangle.
\end{equation*}
We follow the same argument as in Lemma~\ref{lem:S2:bound}, the key difference being that we need to bound the term 
\begin{equation*}
        \left\|\mWtilo_1 \circ \mWtilo_2\right\|
\end{equation*}
instead of $\|\mWtilo \circ \mWtilo\|$.
Since 
\begin{equation*}
\left|\Wtilo_{1,ij} \Wtilo_{2,ij}\right| \leq L^2, \quad \text{and} \quad 
\sum_{j} \left(\Wtilo_{1,ij} \Wtilo_{2,ij}\right)^2 \leq \sigma^4 n,
\end{equation*}
the matrix Bernstein inequality in Theorem~\ref{thm:matrix:bernstein} yields 
\begin{equation*}
    \left\|\mWtilo_1 \circ \mWtilo_2\right\| \lesssim \sigma^2 \sqrt{n} + L^2 \sqrt{\log n}.
\end{equation*}
By the same argument as in the proof of Lemma~\ref{lem:S2:bound}, we have 
\begin{equation*}
\left\|\mCtil - \E \mCtil^{(1)}\right\|
\lesssim \sigma^2 \mutil^{1/2} r^{3/2} (\mutil \vee \log n)^{1/2} 
+ \sigma^2 \sqrt{n} + L^2 \sqrt{\log n}
\end{equation*}
and 
\begin{equation*} \begin{aligned}
|\Stil_2| &\leq \tr\left(\calP_{T^\perp}(\mZhat)\right) 
	\left\|\mCtil - \E \mCtil^{(1)}\right\|
\leq \frac{\left\|\mCtil - \E \mCtil^{(1)}\right\|}{K}
		\left\|\mZhat - \mZstar\right\|_1 \\
&\lesssim 
\left\|\mZhat - \mZstar\right\|_1\
  \left(\frac{\sigma^2 \mutil^{1/2} r^{3/2} (\mutil\vee\log n)^{1/2}}{n} 
+ \frac{\sigma^2}{\sqrt{n}} + \frac{L^2 \sqrt{\log n}}{n}\right).  
\end{aligned} \end{equation*}

\paragraph{Step 3: Extending Lemma~\ref{lem:S1:bound}.}

Let 
\begin{equation*}
    \Stil_1 = \left\langle \mZstar - \mZhat, \calP_{T}\left(\mCtil - \E\mCtil^{(1)}\right)\right\rangle
\end{equation*}
and define $\mXitil = \mW_1 \circ \mW_2$.
We follow the proof of Lemma~\ref{lem:S1:bound}, with the difference that we need to bound 
\begin{equation*}
\Stil_{10}=\left\langle\mZhat - \mZstar, \calP_{T}\left(\mXitil 
	+ \mXitil^\top\right)\right\rangle
\end{equation*}
instead of $\mXitil$ being $\mWtilo \circ \mWtilo$.
Following the same argument as in Lemma~\ref{lem:S1:bound}, we let
\begin{equation*}
    X_{j} := \left|\sum_{k=m+1}^n \xitil_{kj}\right|
\end{equation*}
for $j \in [n]$. The key quantity to control is the sum of order statistics
\begin{equation*}
    \sum_{j=1}^{\lceil \varepsilon_0 / K \rceil} X_{(j)},
\end{equation*}
where $\varepsilon_0 := \left\|\mZhat - \mZstar\right\|_1$.
To apply Lemma~\ref{lem:order-statistics-bound}, we note that the variance and magnitude parameters are given by
\begin{equation*}
\nu_{\xitil} 
\lesssim n \sigma^4, \quad \text{and} \quad L_{\xitil}^2 \lesssim L^2,
\end{equation*}
which implies that with probability at least $1 - O(n^{-40})$,
\begin{equation*}
\sum_{j=1}^{\lceil \varepsilon_0 / K \rceil} X_{(j)} 
\lesssim \frac{\varepsilon_0}{n} 
\max\left\{ \sigma^2 \sqrt{n \log \left(\frac{n^2}{\varepsilon_0}\right)}, 
    	L^2 \log \left(\frac{n^2}{\varepsilon_0}\right) \right\} .
\end{equation*}
The remaining terms in $\Stil_1$ follow similarly to Lemma~\ref{lem:S1:bound}, leading to
\begin{equation*}
|\Stil_1|
\lesssim 
\varepsilon_0 \left( \sigma^2 \sqrt{\frac{\log (n^2/\varepsilon_0)}{n}} 
+ \frac{L^2}{n} \log \left(\frac{n^2}{\varepsilon_0}\right) 
+ \sigma \beta \sqrt{\frac{r (\mutil \vee \log n)}{n}}\right). 
\end{equation*}
Under the setting where $\mutil r^{3/2} \leq c n \beta^2 / \sigma^2$, 
it follows that 
\begin{equation*}
\beta^2 \left\|\mZhat - \mZstar\right\|_1 
\lesssim \varepsilon_0 \left( \sigma^2 \sqrt{\frac{\log(n^2/\varepsilon_0)}{n}}
	+ \frac{L^2}{n} \log \left(\frac{n^2}{\varepsilon_0}\right)\right),
\end{equation*}
which implies that
\begin{equation*}
\left\|\mZhat - \mZstar\right\|_1 
\lesssim n^2 \max\left\{ \exp\left(-\frac{n \beta^2}{L^2}\right), 
		\exp\left(-\frac{n \beta^4}{\sigma^4}\right)\right\}. 
\end{equation*}

\paragraph{Step 4: Extending Corollary~\ref{cor:sdp-more-general}.}
Our final step is to extend the previous results to the case where $\mBstar$ follows Equation~\eqref{eq:B-more-and-more-general}. 
For all $i,j \in [n]$, 
let 
\begin{equation*}
\Ctil_{ij} 
= \left(\Wtilo_{1,ij} + \Deltatil_{ij} 
+ \Bstar_{ij}\right) \left(\Wtilo_{2,ij} + \Deltatil_{ij} + \Bstar_{ij}\right)
\end{equation*}
and 
\begin{equation*}
\Ctil^{(0)}_{ij}
= \left(\Wtilo_{1,ij} + \Deltatil_{ij} + B^{(0)}_{ij}\right) 
	\left(\Wtilo_{2,ij} + \Deltatil_{ij} + B^{(0)}_{ij}\right),
\end{equation*}
where $\mB^{(0)}$ follows Equation~\eqref{eq:B-more-general}. 
By construction of $\mBstar$, we have 
\begin{equation} \label{eq:increase:Bstar}
    \Deltatil_{ij} + \Bstar_{ij} \geq \Deltatil_{ij} + B^{(0)}_{ij}
\end{equation}
for all $1\leq i,j \leq n$.
Let 
\begin{equation*}
\Theta_{ij} := \Deltatil_{ij} + \Bstar_{ij} \quad \text{and} \quad
\Theta_{ij}^{(0)} := \Deltatil_{ij} + B^{(0)}_{ij}.
\end{equation*}
Our goal is then to show that for any $t \in \R$, 
\begin{equation*}
\Pr\left[\left(\Wtilo_{1,ij}+\Theta_{ij}\right)
		\left(\Wtilo_{2,ij}+\Theta_{ij}\right) \geq t\right]
\geq
\Pr\left[\left(\Wtilo_{1,ij}+\Theta^{(0)}_{ij}\right)
		\left(\Wtilo_{2,ij}+\Theta^{(0)}_{ij}\right) \geq t\right].
\end{equation*}
Let $B_{k, ij} := \Wtilo_{k,ij} + \Theta_{ij}$ and $A_{k, ij} := \Wtilo_{k,ij} + \Theta^{(0)}_{ij}$. 
By Equation~\eqref{eq:increase:Bstar}, we have $A_{k,ij} \preceq B_{k,ij}$ for $k = 1,2$.
By Lemma~\ref{lem:stoc:order}, it follows that $A_{1,ij} A_{2,ij} \preceq B_{1,ij} B_{2,ij}$. 
The remainder of the argument follows the same argument as in the proof of Corollary~\ref{cor:sdp-more-general}, and so details are omitted.
\end{proof}

\subsection{Auxilary Lemmas for SDP Proofs} \label{subsec:sdp:aux}

\begin{lemma}\label{lem:Zhat-l1}
Under the same assumptions as in Lemma~\ref{lem:key-lower}, 
\begin{equation*}
\left\|\mZhat - \mZstar \right\|_1 < 5 \sum_{i=1}^m \sum_{j=m+1}^n \Zhat_{ij}
\end{equation*}
\end{lemma}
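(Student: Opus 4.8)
The plan is to decompose the $\ell_1$-error $\|\mZhat - \mZstar\|_1$ into its contributions on the various blocks determined by the partition $[n] = \Istar \cup \Istar^c$ (with $\Istar = [m]$ after relabelling), namely the blocks $\Istar \times \Istar$, $\Istar \times \Istar^c$, $\Istar^c \times \Istar$, and $\Istar^c \times \Istar^c$, and to show each block's contribution is controlled by $\sum_{i\in\Istar}\sum_{j\in\Istar^c}\Zhat_{ij}$. Recall $\mZstar = \vnu_\star\vnu_\star^\top$ is the all-ones matrix on $\Istar^c \times \Istar^c$ and zero elsewhere, so on the off-support blocks $|\Zhat_{ij} - \Zstar_{ij}| = |\Zhat_{ij}|$, while on $\Istar^c\times\Istar^c$ we have $|\Zhat_{ij} - 1| \le 1 - \Zhat_{ij}$ since $0 \le \Zhat_{ij}\le 1$ (the latter following from $\Zhat \succeq 0$, $\Zhat_{ii}\le 1$, $\Zhat_{ij}\ge 0$, together with $Z_{ij}^2 \le Z_{ii}Z_{jj}$).

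First I would handle the diagonal-block term $\sum_{i,j\in\Istar^c}(1 - \Zhat_{ij})$. The two linear SDP constraints $\langle \mI,\mZhat\rangle = K$ and $\langle \mJ,\mZhat\rangle = K^2$ are the key leverage here: writing $\mathbf{1}$ for the all-ones vector on $\Istar^c$ and using the structure, one gets $\sum_{i,j\in\Istar^c}\Zhat_{ij} = K^2 - 2\sum_{i\in\Istar}\sum_{j\in\Istar^c}\Zhat_{ij} - \sum_{i,j\in\Istar}\Zhat_{ij} - (\text{a negligible diagonal correction from } \langle\mI,\mZhat\rangle)$, so that $\sum_{i,j\in\Istar^c}(1-\Zhat_{ij}) \le 2\sum_{i\in\Istar}\sum_{j\in\Istar^c}\Zhat_{ij} + \sum_{i,j\in\Istar}\Zhat_{ij}$ up to the $K$ versus $|\Istar^c|^2$ bookkeeping (which is where the $n > 3m$ hypothesis enters, ensuring $K = n - m$ is comparable to $n$ and the counting goes through with a clean constant). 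Next I would bound the within-support term $\sum_{i,j\in\Istar}\Zhat_{ij}$: by positive semidefiniteness, $\Zhat$ restricted to $\Istar\times\Istar$ is PSD with diagonal at most $1$, and a Cauchy–Schwarz/trace argument (or directly $\Zhat_{ij}\le\sqrt{\Zhat_{ii}\Zhat_{jj}}\le 1$ combined with the $\langle\mI,\mZhat\rangle$ and $\langle\mJ,\mZhat\rangle$ constraints bounding $\tr(\Zhat)$ and hence $\sum_{i,j\in\Istar}\Zhat_{ij}$) shows this is also dominated by a constant multiple of $\sum_{i\in\Istar}\sum_{j\in\Istar^c}\Zhat_{ij}$, again using $n > 3m$. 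Then the symmetric off-diagonal blocks together contribute exactly $2\sum_{i\in\Istar}\sum_{j\in\Istar^c}\Zhat_{ij}$, since $\mZhat$ is symmetric. Summing the four pieces gives $\|\mZhat-\mZstar\|_1 < 5\sum_{i\in\Istar}\sum_{j\in\Istar^c}\Zhat_{ij}$ once the constants are tallied, with the strict inequality absorbing any slack.

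The main obstacle I anticipate is the careful constant bookkeeping: one needs the total coefficient across all blocks to come out strictly below $5$, which requires exploiting the two equality constraints tightly and using $n > 3m$ to bound the "parasitic" contributions of $\sum_{i,j\in\Istar}\Zhat_{ij}$ and the diagonal correction $\tr(\Zhat) - \text{(mass already counted)}$. In particular one must verify that $\langle\mI,\mZhat\rangle = K$ together with $\Zhat_{ii}\le 1$ forces the diagonal mass on $\Istar$ and the slack on $\Istar^c$ to be small relative to $K$, and that the trace constraint keeps $\sum_{i\in\Istar}\Zhat_{ii}$ (hence $\sum_{i,j\in\Istar}\Zhat_{ij}$, since each summand is $\le 1$ and there are only $m^2 \le mK/2$ of them) from eating into the budget. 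I would set up these inequalities explicitly, track the exact multiplicative constants ($2$ from symmetry of the off-support blocks, then at most $2 + \epsilon$ from the diagonal block reduction, with $\epsilon$ controlled by $m/K$), and conclude.
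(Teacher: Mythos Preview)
Your approach is essentially the paper's. Both proofs partition $\|\mZhat-\mZstar\|_1$ by blocks, use $\langle\mJ,\mZhat\rangle=K^2$ to rewrite $\sum_{i,j\in\Istar^c}(1-\Zhat_{ij})$ as $A+2B$ (where $A=\sum_{i,j\in\Istar}\Zhat_{ij}$ and $B=\sum_{i\in\Istar,\,j\in\Istar^c}\Zhat_{ij}$), arrive at $\|\mZhat-\mZstar\|_1=2A+4B$ exactly (there is no ``diagonal correction''), and then argue that $A$ is controlled by $B$. The paper's mechanism for this last step, which you leave vague, is to apply the PSD inequality $2\Zhat_{ij}\le\Zhat_{ii}+\Zhat_{jj}$ blockwise: writing $\alpha=\sum_{i\in\Istar}\Zhat_{ii}$ one gets $A\le m\alpha$ and $\sum_{i,j\in\Istar^c}\Zhat_{ij}\le K(K-\alpha)$, and combining with $\langle\mJ,\mZhat\rangle=K^2$ and $n>3m$ forces $B>A/2$. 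This is exactly the ``constant bookkeeping'' you flagged; your Cauchy--Schwarz/trace idea and the paper's PSD inequality are the same ingredient.
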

\begin{proof}
Let 
\begin{equation} \label{eq:zhat-blocks}
    \mZhat_1 = \left(\Zhat_{ij}\right)_{i,j \in \Istar} \quad \text{and} \quad \mZhat_2 = \left(\Zhat_{ij}\right)_{i,j \in I_\star^c}.
\end{equation}
Without loss of generality, we assume that $\Istar = [m]$ as we can always relabel the indices.
Let $\alpha := \tr \mZhat_1$. 
By the constrain $\Zhat_{ii} \leq 1$ for all $i \in [n]$ in Equation~\eqref{eq:prime:sdp}, we have 
\begin{equation} \label{eq:zhat-block-trace}
    \alpha = \tr \mZhat_1 = \sum_{i=1}^m \Zhat_{ii} \leq m \quad \text{and} \quad \tr \mZhat_2 = \sum_{i=m+1}^n \Zhat_{ii} = K - \alpha. 
\end{equation}
We note that if 
\begin{equation} \label{eq:zhat-block-large}
\sum_{i=1}^m \sum_{j=m+1}^n \Zhat_{ij}
\leq \frac{1}{2}\sum_{i=1}^m \sum_{j=1}^m \Zhat_{ij},
\end{equation}
then expanding the inner product between $\mZhat$ and $\mJ$ yields
\begin{equation*}
\begin{aligned}
    \left\langle \mZhat, \mJ\right\rangle &= \sum_{i=1}^m \sum_{j=1}^m \Zhat_{ij} + 2\sum_{i=1}^m \sum_{j=m+1}^n \Zhat_{ij} + \sum_{i=m+1}^n \sum_{j=m+1}^n \Zhat_{ij} \\
    &\leq 2 \sum_{i=1}^m \sum_{j=1}^m \frac{\Zhat_{ii} + \Zhat_{jj}}{2} + \sum_{i=m+1}^n \sum_{j=m+1}^n \frac{\Zhat_{ii} + \Zhat_{jj}}{2} \\
    &\stackrel{(i)}{=} 2 m \operatorname{tr}(\mZhat_1) + K \operatorname{tr}(\mZhat_2)\\
    &\stackrel{(ii)}{=} 2 m\alpha + K(K-\alpha) = K^2 - \alpha(n - 3m),
\end{aligned}
\end{equation*}
where the inequality follows from the semidefiniteness of $\mZhat$ that $2\Zhat_{ij} \leq \Zhat_{ii}+\Zhat_{jj}$ for all $i,j \in [n]$ and Equation~\eqref{eq:zhat-block-large}, equality $(i)$ follows from the definition of $\mZhat_1$ and $\mZhat_2$ in Equation~\eqref{eq:zhat-blocks} and equality $(ii)$ follows from Equation~\eqref{eq:zhat-block-trace}.
Hence, under the constraint that $\langle \mZhat, \mJ \rangle = K^2$ and the assumption that $3m < n$, we must have
\begin{equation} \label{eq:sdp-important-observe}
\sum_{i=1}^m \sum_{j=m+1}^n \Zhat_{ij} > \frac{1}{2}\left(\sum_{i=1}^m \sum_{j=1}^m \Zhat_{ij}\right).
\end{equation}
It follows that 
\begin{equation*}
\begin{aligned}
\|\mZhat - \mZstar\|_1 &= \sum_{i=1}^m  \sum_{j=1}^m \Zhat_{ij} +  2\sum_{i=1}^m  \sum_{j=m+1}^n \Zhat_{ij} + \sum_{i=m+1}^n \sum_{j=m+1}^n (1 - \Zhat_{ij}) \\
    &= \sum_{i=1}^m  \sum_{j=1}^m \Zhat_{ij} +  2\sum_{i=1}^m  \sum_{j=m+1}^n \Zhat_{ij} + K^2 - \left(K^2 - \sum_{i=1}^m \sum_{j=1}^m \Zhat_{ij} -  2\sum_{i=1}^m  \sum_{j=m+1}^n \Zhat_{ij}\right) \\
    &= 2 \sum_{i=1}^m  \sum_{j=1}^m \Zhat_{ij} + 4\sum_{i=1}^m  \sum_{j=m+1}^n \Zhat_{ij} < 5\sum_{i=1}^m  \sum_{j=m+1}^n \Zhat_{ij},
\end{aligned}
\end{equation*}
where the inequality follows from Equation~\eqref{eq:sdp-important-observe}. 
\end{proof}

\begin{lemma}\label{lem:S2-aux}
Under the same assumptions as in Lemma~\ref{lem:S2:bound},
\begin{equation*}
\tr \left(\calP_{T^\perp}(\mZhat)\right) 
\leq \frac{1}{K} \|\mZhat - \mZstar\|_1.
\end{equation*}
\end{lemma}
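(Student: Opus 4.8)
\textbf{Proof plan for Lemma~\ref{lem:S2-aux}.}
The goal is to bound $\tr(\calP_{T^\perp}(\mZhat))$ by $K^{-1}\|\mZhat - \mZstar\|_1$. The plan is to first compute $\tr(\calP_{T^\perp}(\mZhat))$ explicitly using the definition of $\calP_{T^\perp}$ in Equation~\eqref{eq:calP:define} and the rank-one structure of $\mZstar = \vnu_\star \vnu_\star^\top$ with $\|\vnu_\star\|_2^2 = K$. Writing $\calP_{T^\perp}(\mZhat) = \mZhat - \calP_T(\mZhat)$ and using $\calP_T(\mZhat) = K^{-1}\mZstar\mZhat + K^{-1}\mZhat\mZstar - K^{-2}\mZstar\mZhat\mZstar$, I would take the trace of each term. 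Since $\tr(\mZstar\mZhat) = \vnu_\star^\top \mZhat \vnu_\star$ and $\tr(\mZstar\mZhat\mZstar) = K \vnu_\star^\top \mZhat \vnu_\star$, the three terms combine to give $\tr(\calP_T(\mZhat)) = K^{-1}\vnu_\star^\top \mZhat \vnu_\star$, and hence
\begin{equation*}
\tr(\calP_{T^\perp}(\mZhat)) = \tr(\mZhat) - \frac{1}{K}\vnu_\star^\top \mZhat \vnu_\star.
\end{equation*}

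Next I would use the SDP constraint $\langle \mI, \mZhat\rangle = K$, so $\tr(\mZhat) = K$, and observe that $\tr(\mZstar) = K$ as well. The key step is then to relate $K - K^{-1}\vnu_\star^\top\mZhat\vnu_\star$ to $\|\mZhat - \mZstar\|_1$. Since $\mZstar$ is the all-ones matrix on $I_\star^c \times I_\star^c$ and zero elsewhere, we have $\vnu_\star^\top \mZstar \vnu_\star = K^2$, so $K - K^{-1}\vnu_\star^\top\mZhat\vnu_\star = K^{-1}(\vnu_\star^\top(\mZstar - \mZhat)\vnu_\star) = K^{-1}\sum_{i,j \in I_\star^c}(\Zstar_{ij} - \Zhat_{ij})$. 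Because $\Zhat \succeq 0$ with $\Zhat_{ii}\le 1$ forces $Z_{ij} \le 1$, and because $\mZstar$ is exactly $1$ on $I_\star^c\times I_\star^c$, each summand $\Zstar_{ij} - \Zhat_{ij} = 1 - \Zhat_{ij} \geq 0$ there; hence $\sum_{i,j\in I_\star^c}(\Zstar_{ij} - \Zhat_{ij}) = \sum_{i,j\in I_\star^c}|\Zstar_{ij} - \Zhat_{ij}| \le \|\mZhat - \mZstar\|_1$. Combining these gives $\tr(\calP_{T^\perp}(\mZhat)) \le K^{-1}\|\mZhat-\mZstar\|_1$, as claimed.

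The main subtlety — though it is minor — is verifying that $\Zhat_{ij} \le 1$ for $i,j \in I_\star^c$, which follows from positive semidefiniteness ($2\Zhat_{ij} \le \Zhat_{ii} + \Zhat_{jj} \le 2$) together with the diagonal constraint $\Zhat_{ii}\le 1$; this ensures all the cross terms in the $\ell_1$ norm restricted to $I_\star^c \times I_\star^c$ are nonnegative and bounded by the corresponding entries of $\mZstar - \mZhat$. I do not anticipate any real obstacle here; the argument is essentially a direct computation combined with the feasibility constraints of the SDP in Equation~\eqref{eq:prime:sdp}. The only care needed is keeping track of the identity $\vnu_\star^\top \mZstar \vnu_\star = K^2$ versus $\tr(\mZstar) = K$, and making sure the trace identities for the projection are applied correctly given that $\mZstar$ is rank one rather than a normalized projector.
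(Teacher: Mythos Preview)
Your proposal is correct and essentially identical to the paper's proof: both reduce $\tr(\calP_{T^\perp}(\mZhat))$ to $K^{-1}\sum_{i,j\in I_\star^c}(\Zstar_{ij}-\Zhat_{ij})$ using the trace constraint $\tr(\mZhat)=K$ and the rank-one structure of $\mZstar$, then bound this sum by the $\ell_1$ norm. The paper phrases the computation via $\tr(\mZstar(\mZstar-\mZhat))$ rather than $\vnu_\star^\top(\mZstar-\mZhat)\vnu_\star$, and it does not bother verifying $\Zhat_{ij}\le 1$ since $\sum_{i,j\in I_\star^c}(\Zstar_{ij}-\Zhat_{ij})\le \|\mZhat-\mZstar\|_1$ already follows from $a\le|a|$; your extra nonnegativity check is harmless but unnecessary.
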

\begin{proof}
Noting that $\mZhat \succeq 0$, we have 
\begin{equation*}
\calP_{T^\perp} (\mZhat)
= \left(\mI-\frac{1}{K}\mZstar\right)\mZhat\left(\mI-\frac{1}{K}\mZstar\right) 
\succeq 0.
\end{equation*}
Since $\calP_{T^\perp} (\mZstar) = 0$ by definition of $\calP_{T^\perp}$ in Equation~\eqref{eq:calP:define}, we have 
\begin{equation*}
\tr\left(\calP_{T^\perp}(\mZhat)\right)
= \tr\left(\calP_{T^\perp}(\mZhat-\mZstar)\right)
=
\tr \left(\mI-\frac{1}{K}\mZstar\right)(\mZhat - \mZstar)
		\left(\mI-\frac{1}{K}\mZstar\right) .
\end{equation*}
By cyclicity of the trace and the fact that $\mI-\frac{1}{K}\mZstar$ is a projection matrix, it follows that
\begin{equation*} \begin{aligned}
\tr \calP_{T^\perp}(\mZhat)
&= \tr \left[\left(\mI-\frac{1}{K}\mZstar\right)(\mZhat - \mZstar)\right] \\
&= \frac{1}{K}\tr \left[\mZstar (\mZstar-\mZhat)\right] .
\end{aligned}
\end{equation*}
where the last equality holds since $\tr(\mZstar) = \tr(\mZhat) = K$. 
Since $\mZstar_{ik} = 1$ for $i, k \in I_\star^c$ and $\mZstar_{ik} = 0$ otherwise, we have
\begin{equation*}
\begin{aligned}
\tr \calP_{T^\perp}(\mZhat)
&=
\frac{1}{K}\sum_{i=1}^n\sum_{k=1}^n \Zstar_{ik} (\Zstar_{ik} - \Zhat_{ik})\\
    &= \frac{1}{K} \sum_{i=m+1}^n \sum_{k=m+1}^n (\Zstar_{ik} - \Zhat_{ik})
    \leq \frac{1}{K} \|\mZhat - \mZstar\|_1,
\end{aligned}
\end{equation*}
as we set out to show.
\end{proof}

\begin{lemma} \label{lem:reduce-to-order}
For any matrix $\mA$ such that $\|\mA\|_{\infty} \leq 1$ and $\|\mA\|_1 \leq \varepsilon_0$ and any matrix $\mXi$, we have 
\begin{equation*}
\frac{1}{K} \left|\left \langle \mA, \mZstar \mXi \right \rangle\right| 
\leq \sum_{j=1}^{\lceil \varepsilon_0 / K\rceil} X_{(j)}
\quad \text{and} \quad
\frac{1}{K} \left|\left \langle \mA \frac{ \mZstar }{ K }, 
	\mZstar \mXi \right \rangle\right|
\leq \sum_{j=1}^{\lceil \varepsilon_0 / K\rceil} X_{(j)},
\end{equation*}
where $\mZstar$ is given in Equation~\eqref{eq:def:mZstar}, $K = n - m$, $X_j := \left|\sum_{k \in I^c_\star} \Xi_{kj}\right|$ for $j \in [n]$ and $X_{(j)}$ denotes the $j$-th order statistic of the random variables $X_1, \ldots, X_n$.
\end{lemma}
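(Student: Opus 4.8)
<br>

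The plan is to prove the bound on $\frac{1}{K}|\langle \mA, \mZstar \mXi\rangle|$ first, since the second bound will follow by a nearly identical argument once we observe that $\mA \mZstar / K$ inherits the same structural constraints as $\mA$. First I would unpack the matrix inner product. Recalling from Equation~\eqref{eq:def:mZstar} that $\mZstar = \vnu_\star \vnu_\star^\top$ with $\vnu_\star$ the indicator of $I_\star^c$, the product $\mZstar \mXi$ has $(i,j)$ entry equal to $\nu_{\star,i} \sum_{k} \nu_{\star,k} \Xi_{kj} = \nu_{\star,i} \sum_{k \in I_\star^c} \Xi_{kj}$. Writing $c_j := \sum_{k \in I_\star^c} \Xi_{kj}$, so that $X_j = |c_j|$, we get $\langle \mA, \mZstar \mXi\rangle = \sum_{i \in I_\star^c} \sum_{j=1}^n A_{ij} c_j = \sum_{j=1}^n c_j \big( \sum_{i \in I_\star^c} A_{ij} \big)$. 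The key is then to bound this linear functional of the $c_j$'s using the two constraints on $\mA$.

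The main step is a worst-case (linear programming) argument over the admissible column sums of $\mA$. For each $j$, let $s_j := \sum_{i \in I_\star^c} A_{ij}$; since $|A_{ij}| \le 1$ and $|I_\star^c| = K$, we have $|s_j| \le K$, and moreover $\sum_j |s_j| \le \sum_{i,j} |A_{ij}| = \|\mA\|_1 \le \varepsilon_0$. Thus $\frac{1}{K}|\langle \mA, \mZstar \mXi\rangle| = \frac{1}{K}\big| \sum_j c_j s_j \big| \le \frac{1}{K}\sum_j |c_j|\,|s_j| = \sum_j X_j \cdot \frac{|s_j|}{K}$. Setting $w_j := |s_j|/K \in [0,1]$, the weights satisfy $\sum_j w_j \le \varepsilon_0/K$. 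The quantity $\sum_j X_j w_j$ is maximized, subject to $w_j \in [0,1]$ and $\sum_j w_j \le \varepsilon_0/K$, by placing full weight $w_j = 1$ on the $\lceil \varepsilon_0/K\rceil$ largest values of $X_j$ (and possibly fractional weight on one more), which is bounded by $\sum_{j=1}^{\lceil \varepsilon_0/K\rceil} X_{(j)}$. This gives the first inequality.

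For the second bound, I would observe that $\mA' := \mA \mZstar / K$ still satisfies $\|\mA'\|_\infty \le 1$ and $\|\mA'\|_1 \le \varepsilon_0$: indeed $(\mA \mZstar)_{ij} = \nu_{\star,j} \sum_{k \in I_\star^c} A_{ik}$, so each entry of $\mA'$ is of the form $\frac{1}{K}\sum_{k \in I_\star^c} A_{ik}$ (or zero), which has absolute value at most $1$ by the row-sum bound $|\sum_{k \in I_\star^c} A_{ik}| \le K$; and $\|\mA \mZstar/K\|_1 = \frac{1}{K}\sum_{i}\sum_{j \in I_\star^c} |\sum_{k \in I_\star^c} A_{ik}| = \frac{1}{K} \cdot K \sum_i |\sum_{k \in I_\star^c} A_{ik}| \le \sum_{i,k}|A_{ik}| \le \varepsilon_0$. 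Hence applying the already-established first inequality with $\mA'$ in place of $\mA$ finishes the proof.

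I do not anticipate a genuine obstacle here; the argument is essentially bookkeeping with H\"older's inequality and the standard "greedy weighting" fact for linear objectives over a box intersected with an $\ell_1$-ball. The one point requiring mild care is verifying that $\mA \mZstar/K$ really does inherit both norm constraints—specifically the $\ell_1$ bound, where the factor $K$ from the number of nonzero columns of $\mZstar$ must cancel against the $1/K$ normalization—so I would state that computation explicitly rather than assert it. A secondary subtlety is the ceiling: since $\varepsilon_0/K$ need not be an integer, the LP optimum uses $\lfloor \varepsilon_0/K\rfloor$ full weights plus a fractional remainder, which is dominated by $\sum_{j=1}^{\lceil \varepsilon_0/K\rceil} X_{(j)}$; I would note this but not belabor it.
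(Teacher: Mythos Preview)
Your proposal is correct and follows essentially the same route as the paper: both compute the inner product as $\sum_j X_j w_j$ with $w_j := \frac{1}{K}\bigl|\sum_{i\in I_\star^c} A_{ij}\bigr| \in [0,1]$ and $\sum_j w_j \le \varepsilon_0/K$, then invoke the greedy/order-statistics bound, and both derive the second inequality by checking that the replacement matrix inherits the two norm constraints. Your verification for the second part uses $\mA\mZstar/K$ (which is exactly what is needed to plug into the first inequality), whereas the paper verifies the analogous bounds for $\mZstar\mA/K$; either works by the same bookkeeping, and your version is arguably the more direct choice.
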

\begin{proof}
Without loss of generality, we assume that $\Istar = [m]$. 
By definition of $\mZstar$, we have $\Zstar_{ik} = 0$ if either $i \in \Istar = [m]$ or $k \in [m]$. 
It follows that
\begin{equation*}
\begin{aligned}
\frac{1}{K}\langle \mA, \mZstar \mXi \rangle 
&= \frac{1}{K} \sum_{1\leq i,j \leq n} A_{ij} \sum_{k=1}^n \Zstar_{ik} \Xi_{kj}
= \sum_{j=1}^n \left(\sum_{i=m+1}^n \frac{ A_{ij} }{ K }\right) 
	\sum_{k=m+1}^n \Xi_{kj}.
\end{aligned} \end{equation*}
Let $a_j := \left|\sum_{i=m+1}^n A_{ij}\right| / K$ for $j \in [n]$. 
By the assumption that $\|\mA\|_{\infty} \leq 1$, we have $a_j \leq 1$ for all $j \in [n]$.
It follows from the above display and the definition of $X_j$ that 
\begin{equation}\label{eq:reduce-to-order}
\begin{aligned}
\frac{1}{K}\langle \mA, \mZstar \mXi \rangle 
&\leq \sum_{j=1}^n a_j X_j 
\leq \sum_{j=1}^{\lceil \sum_{i=1}^n a_i \rceil} X_{(j)} 
\leq \sum_{j=1}^{\lceil \varepsilon_0 / K\rceil} X_{(j)}, 
\end{aligned} \end{equation}
where the second inequality follows from $a_j \leq 1$ for all $j \in [n]$ and properties of order statistics, while the last inequality follows from the assumption that $\|\mA\|_1 \leq \varepsilon_0$.
We also have that 
\begin{equation*}
\left(\frac{1}{K} \mZstar \mA\right)_{ij} 
= \frac{1}{K} \sum_{k=1}^n \Zstar_{ik} A_{kj} \leq \|\mA\|_{\infty} \leq 1,
\end{equation*}
by assumption on $\|\mA\|_{\infty}$, and
\begin{equation*} \begin{aligned}
\sum_{i=1}^n \sum_{j=1}^n \left(\frac{1}{K} \mZstar \mA\right)_{ij} 
&= \frac{1}{K}\sum_{i=1}^n \sum_{j=1}^n \sum_{k=1}^n Z^\star_{ik} A_{kj} 
= \sum_{j=1}^n \sum_{k=1}^n A_{kj} 
	\left(\sum_{i=1}^n \frac{ \Zstar_{ik} }{ K } \right)\\
&\leq \|\mA\|_1 \leq \varepsilon_0,
\end{aligned} \end{equation*}
from which, respectively, $\|\mZstar\mA/K\|_{\infty} \leq 1$ and $\|\mZstar\mA/K\|_{1} \leq \varepsilon_0$.
Hence, the matrix $\mZstar \mA / K$ satisfies the same assumptions as $\mA$.
Replacing $\mA$ by $\mZstar \mA / K$ in Equation~\eqref{eq:reduce-to-order} yields that
\begin{equation*}
\frac{1}{K} \left|\left \langle \mA \frac{ \mZstar }{ K }, 
		\mZstar \mXi \right \rangle\right| 
\leq \sum_{j=1}^{\lceil \varepsilon_0 / K\rceil} X_{(j)} ,
\end{equation*} 
as we set out to show.
\end{proof}

\begin{lemma}\label{lem:stoc-dom}
Under Assumption~\ref{assump:mono} and assuming that the distribution of $W_{ij}$ is symmetric about $0$ for all $i,j\in[n]$, for any $a, \delta > 0$, for any $1\leq i, j \leq n$, both $(W_{ij}+a+\delta)^2$ and $(W_{ij}-a-\delta)^2$ have first-order stochastic dominance over $(W_{ij}-a)^2$ and $(W_{ij}+a)^2$, respectively. 
\end{lemma}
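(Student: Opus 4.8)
The plan is to reduce the claim to a one-variable stochastic-dominance statement and then exploit the symmetry of the distribution of $W_{ij}$ together with the monotonicity in Assumption~\ref{assump:mono}. Write $W = W_{ij}$, with density $f$ symmetric about $0$ and $f(x)\ge f(y)$ for $0\le x\le y$. Since $(W+a)^2 \eqdist (W-a)^2$ by symmetry of $W$, and likewise $(W+a+\delta)^2 \eqdist (W-a-\delta)^2$, it suffices to show that $(W+a+\delta)^2$ stochastically dominates $(W+a)^2$ for every $a,\delta > 0$; the other three comparisons follow by relabelling. Equivalently, writing $g_c(t) := \Pr[(W+c)^2 \le t]$ for $c\ge 0$ and $t\ge 0$, I want to show $g_{a+\delta}(t) \le g_a(t)$ for all $t$, i.e.\ $c\mapsto g_c(t)$ is nonincreasing on $[0,\infty)$.

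\textbf{Key steps.} First I would write $g_c(t) = \Pr[-\sqrt t - c \le W \le \sqrt t - c] = F(\sqrt t - c) - F(-\sqrt t - c)$, where $F$ is the CDF of $W$. Using $F(-\sqrt t - c) = 1 - F(\sqrt t + c)$ by symmetry, this becomes $g_c(t) = F(\sqrt t - c) + F(\sqrt t + c) - 1$. Second, I would differentiate (or take a discrete difference if one prefers to avoid smoothness assumptions, using that $F$ is absolutely continuous here since $W_{ij}$ is continuous under Assumption~\ref{assump:mono}): $\frac{\partial}{\partial c} g_c(t) = -f(\sqrt t - c) + f(\sqrt t + c)$. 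Third, I would invoke the monotonicity of $f$: for $c\ge 0$ we have $|\sqrt t + c| \ge |\sqrt t - c|$, and since $f$ is symmetric and nonincreasing in the absolute value of its argument (the stated condition $f(x)\ge f(y)$ for $0\le x\le y$ combined with $f(-x)=f(x)$ gives exactly this), it follows that $f(\sqrt t + c) \le f(\sqrt t - c)$, hence $\frac{\partial}{\partial c} g_c(t) \le 0$. This gives $g_{a+\delta}(t) \le g_a(t)$, which is precisely first-order stochastic dominance of $(W+a+\delta)^2$ over $(W+a)^2$. Finally, I would note explicitly that $(W-a)^2 \eqdist (W+a)^2$ and $(W-a-\delta)^2 \eqdist (W+a+\delta)^2$, so all four dominance relations claimed in the lemma reduce to this single inequality.

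\textbf{Main obstacle.} The only subtlety is handling the monotonicity cleanly when $f$ is only assumed monotone on $[0,\infty)$ rather than globally: one must be careful that $f(\sqrt t - c)$ may involve a negative argument (when $c > \sqrt t$), so the correct statement to use is $f(x) = f(|x|)$ and $f$ nonincreasing in $|x|$, and then the inequality $|\sqrt t + c|\ge|\sqrt t - c|$ for $c\ge 0$ does all the work. A secondary technical point is whether to present the argument via differentiation in $c$ (requiring $f$ to be the a.e.\ derivative of $F$, which holds since $W_{ij}$ is continuous) or via a direct difference $g_{a+\delta}(t) - g_a(t) = \big[F(\sqrt t - a-\delta) - F(\sqrt t - a)\big] + \big[F(\sqrt t + a) - F(\sqrt t + a + \delta)\big]$ and bounding each bracket using the monotonicity of $f$ on the relevant interval; I would likely use the difference form to avoid any measure-theoretic fuss, bounding $F(\sqrt t + a) - F(\sqrt t + a+\delta) = -\int_{\sqrt t + a}^{\sqrt t + a+\delta} f \le -\int_{\sqrt t - a - \delta}^{\sqrt t - a} f = F(\sqrt t - a - \delta) - F(\sqrt t - a) \cdot(-1)$ after a change of variables exploiting that the second interval is no closer to the origin than the first. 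Neither point is deep; the lemma is essentially a short computation once the symmetry reduction is in place.
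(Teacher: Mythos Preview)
Your proposal is correct and follows essentially the same route as the paper: both reduce to showing $F(\sqrt{t}-c)+F(\sqrt{t}+c)$ is nonincreasing in $c\ge 0$, and both use that $f$ is symmetric and nonincreasing in $|x|$ so that $f(\sqrt{t}+c)\le f(\sqrt{t}-c)$. Your derivative-in-$c$ argument (noting $|\sqrt{t}+c|\ge|\sqrt{t}-c|$) is a slightly cleaner packaging than the paper's explicit case split on $x\ge a$ versus $x<a$, but the content is the same.
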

\begin{proof}
Denote the cumulative distribution function of $W_{ij}$ by $F(x) = \Pr(W_{ij} \leq x)$ for a given pair of $i,j \in [n]$.
We first show that 
\begin{equation}\label{eq:dom:eq1}
    F(x+a+\delta) + F(x-a-\delta) \leq F(x+a) + F(x-a) .
\end{equation}
Let $(v)_{+} := \max\{0, v\}$.
If $x \geq a$, then using the symmetry about zero assumption of $W_{ij}$, we have
\begin{equation*} \begin{aligned}
    F(x+a+\delta) - F(x+a) &= \int_{x+a}^{x+a+\delta} f(u) du \\
    &\leq \int_{(x-a-\delta)_{+}}^{x-a} f(u) du + \int_{0}^{(a+\delta-x)_{+}} f(u) du\\
    &= \int_{x-a-\delta}^{x-a} f(u) du = F(x-a) - F(x-a-\delta). 
\end{aligned} \end{equation*} 
Rearranging the above equation yields that Equation~\eqref{eq:dom:eq1} holds when $x \geq a$.
Similarly, if $x<a$, we have
\begin{equation*}
\begin{aligned}
F(x-a) - F(x-a-\delta) &= \Pr(x-a-\delta \leq W_{ij} \leq x-a) \\
&= \Pr(a-x \leq W_{ij} \leq a+\delta-x) = \int_{a-x}^{a+\delta-x} f(u) du\\
&\geq \int_{a+x}^{a+x+\delta} f(u) du = F(x+a+\delta) - F(x+a),
\end{aligned} \end{equation*}
which establishes that Equation~\eqref{eq:dom:eq1} also holds for $x < a$.
    
Noting that 
\begin{equation*} \begin{aligned}
\Pr\left[\left(W_{ij}+a\right)^2 \geq x^2\right] 
&= \Pr\left(W_{ij} \geq x-a\right) + \Pr\left(W_{ij} \leq -a-x\right)\\
    &= \Pr\left(W_{ij} \geq x-a\right) + \Pr\left(W_{ij} \geq x+a\right) = 2 - (F(x-a) + F(x+a))
\end{aligned} \end{equation*}
and
\begin{equation*}
\Pr\left[\left(W_{ij}+a+\delta\right)^2 \geq x^2\right]
= 2 - (F(x-a-\delta) + F(x+a+\delta))
\end{equation*}
It follows from Equation~\eqref{eq:dom:eq1} that
\begin{equation*}
\Pr\left[ \left(W_{ij}+a\right)^2 \geq x^2\right]
\leq \Pr\left[ \left(W_{ij}+a+\delta\right)^2 \geq x^2\right] .
\end{equation*}
Since 
\begin{equation*}
 \Pr\left[ \left(W_{ij}-a\right)^2 \geq x^2\right]
= \Pr\left[ \left(W_{ij}+a\right)^2\geq x^2\right]
\end{equation*}
and 
\begin{equation*}
\Pr\left[ \left(W_{ij}-a-\delta\right)^2 \geq x^2\right]
 = \Pr\left[ \left(W_{ij}+a+\delta\right)^2 \geq x^2\right], 
\end{equation*}
the proof is now complete. 
\end{proof}

\section{The Solution Path of Group Lasso} \label{sec:glasso}

Let $\mY \in \Sym(n)$ denote an observed symmetric matrix.
In this section, our goal is to analyze the theoretical properties of applying the overlapping group lasso estimator to $\mY$, with a particular focus on its solution path. 
We defer the analysis of support recovery properties to Section~\ref{sec:glasso-support}.
Specifically, we analyze the solution path of the overlapping group lasso optimization problem
\begin{equation*} \begin{aligned}
\min_{\mB \in \Sym(n)} & \frac{1}{2} \sum_{1\leq i \leq j\leq n} 
	(y_{ij} - B_{ij})^2 + \lambda \Omega(\mB)\\
&= \min_{\mV \in \R^{n\times n}}\frac{1}{2} \sum_{1\leq i \leq j\leq n}
	(y_{ij} - v_{ij} - v_{ji})^2 
	+ \lambda \sum_{i=1}^n \sqrt{\sum_{j=1}^n v_{ij}^2},
\end{aligned} \end{equation*} 
where $\mY = [y_{ij}]_{1\leq i,j \leq n}$ denotes the observed matrix, 
and the penalty term $\Omega(\mB)$ enforces node-level group sparsity via the overlapping group Lasso, where $\mB = \mV + \mV^\top$: 
\begin{equation*}
    \Omega(\mB) := \sum_{i=1}^n \sqrt{\sum_{j=1}^n v_{ij}^2}
\end{equation*}
To efficiently solve this optimization problem, we employ the Alternating Direction Method of Multipliers \citep[ADMM; see][for an overview]{boyd2011distributed}. 
The corresponding iterative procedure is detailed in Algorithm~\ref{alg:admm-glasso}.
The operator $\scrT_2$ denotes the row-wise soft-thresholding operator, defined for each row $i \in [n]$ by
\begin{equation*} 
\scrT_2(\mA, q)_{i,\cdot} =  
\left(1 - \frac{q}{\left\|\mA_{i,\cdot}\right\|_2}\right)_{+} 
\left(\mA_{i,\cdot}\right)
\end{equation*} 
where $( x )_+ = \min\{ x, 0 \}$ and $q$ is a positive scalar parameter. 

\begin{algorithm}[ht]
\caption{ADMM for Group Lasso}\label{alg:admm-glasso}
\begin{algorithmic}[1]
    \Require: $\mY \in \R^{n\times n}$, $\lambda > 0$, $\rho > 0$, $T > 0$; 
	Initialize $\mV^{(0)}$, $\mZ^{(0)}$, $\mU^{(0)}$ randomly. 
    \Ensure: $\mV^{(T)}$.
    \For{$t=0, \cdots, T-1$}
        \State $\mV$\textbf{-step}: $\mV^{(t+1)} \gets \frac{1}{\rho+2} \left(\mY - (\mZ^{(t)} - \mU^{(t)})^\top\right) 
        + \frac{\rho+1}{\rho+2} (\mZ^{(t)} - \mU^{(t)})$
    
        \State $\mZ$\textbf{-step}: $\mZ^{(t+1)} \gets \scrT_2\left(\mV^{(t+1)} + \mU^{(t)}, \frac{\lambda}{\rho}\right)$, 
    
        \State $\mU$\textbf{-step}: $\mU^{(t+1)} \gets \mU^{(t)} + \mV^{(t+1)} - \mZ^{(t+1)}$
    \EndFor
\end{algorithmic}
\end{algorithm}

\subsection{KKT Condition and Reformulation}

Let $\vv_i = (v_{i1}, v_{i2}, \ldots, v_{in})^\top$ denote the $i$-th row of $\mV$. 
Suppose that, for a given index set $I \subset [n]$, we obtain a solution with the property that $\vv_i \neq \vo$ for every $i \in I$, while $\vv_{j} = \vo$ for all $j \notin I$. 
For such a solution to be optimal for the overlapping group lasso problem, 
it must satisfy the Karush-Kuhn-Tucker (KKT) conditions. 
Specifically, these conditions can be stated as follows:
\begin{equation} \label{eq:node-glasso-KKT}
    \begin{aligned}
        v_{ij} + v_{ji} + \lambda \frac{v_{ij}}{\|\vv_i\|_2} &=  y_{ij},~~~~~~\text{ if } i,j \in I, i\neq j;\\
        v_{ij} + \lambda \frac{v_{ij}}{\|\vv_i\|_2} &= y_{ij},~~~~~~\text{ if } i \in I, j \in I^c \text{ or } i = j;\\
        \sum_{i\in I} (y_{\ell i} - v_{i\ell})^2 + \sum_{i\in I^c} y_{\ell i}^2 &\leq \lambda^2,~~~~~~~\text{if } j \in I^c. 
    \end{aligned}
\end{equation}
We remind the reader that $\lambda$ is a nonnegative regularization parameter.
The KKT reformulation offers valuable structural insight into the solution of the overlapping group lasso problem. 
Let us denote 
\begin{equation}\label{eq:alpha_i:define}
    \alpha_i = \|\vv_i\|_2 \quad \text{ for } i \in [n],
\end{equation}
and by the property of $\vv_i$ and $I$, we have $\alpha_i > 0$ for $i \in I$ and $\alpha_i = 0$ for $i \in I^c$. 
Let $\valpha = (\alpha_1, \alpha_2, \dots, \alpha_n)^\top$.
    
From the first equality in Equation~\eqref{eq:node-glasso-KKT} and the symmetry of the observation matrix $\mY$, we have 
\begin{equation*} 
\frac{v_{ij}}{\alpha_i} = \frac{v_{ji}}{\alpha_j} \quad \text{for } i, j \in I. 
\end{equation*} 
Using this identity, we can express the KKT conditions more explicitly in terms of $\valpha$ and the observed data:
\begin{equation} \label{eq:node-glasso-kkt2-eq}
\begin{aligned}
    v_{ij} &=  \frac{\alpha_i}{\alpha_i + \alpha_j + \lambda}~y_{ij} &\text{ if } i,j \in I, i\neq j;\\
    v_{ij} &= \frac{\alpha_i}{\alpha_i + \lambda} ~y_{ij} & \text{ if } i \in I, j \in I^c, \text{ or } i = j;\\
\end{aligned}
\end{equation}
and
\begin{equation} \label{eq:node-glasso-kkt2-ineq}
    \sum_{k \in I} \left(\frac{\lambda}{\alpha_k + \lambda}\right)^2 y_{kj}^2 + \sum_{k \in I^c} y_{kj}^2 \leq \lambda^2,~~~~~~~~~~~~~~~~~\text{ if } j \in I^c. 
\end{equation}
Combining the equalities in Equation~\eqref{eq:node-glasso-kkt2-eq} with the definition $\alpha_i = \|\vv_i\|_2$ yields that
\begin{equation} \label{eq:node-glasso-eql}
    \frac{y_{ii}^2 + \sum_{j \in I^c} y_{ij}^2}{(\alpha_i + \lambda)^2} + \sum_{j \in I, j\neq i} \frac{y_{ij}^2}{(\alpha_i + \alpha_j + \lambda)^2} = 1, ~~~~~~~\text{ for } i\in I. 
\end{equation}
This non-linear system characterizes the $\valpha$ values as implicit functions of $\lambda$ and the data $\mY$.
Importantly, the set of equations~\eqref{eq:node-glasso-eql} and inequalities~\eqref{eq:node-glasso-kkt2-ineq} always has at least one solution, 
due to the continuity and coercivity of the objective function in Equation~\eqref{eq:node-glasso}.
This ensures the existence of a valid minimizer for the group lasso problem. 

\subsection{Local Analysis of the Solution Path}

To further understand how the solution evolves with $\lambda$, 
for any fixed $\lambda \geq 0$, we may partition the index set $[n]$ into three disjoint subsets $E_1(\lambda), E_2(\lambda), J(\lambda)$ such that:
\begin{itemize}
    \item $\alpha_i > 0$ for $i \in E_1(\lambda)$;
    \item $\alpha_i = 0$ but the KKT equality holds for $i \in E_2(\lambda)$;
    \item the KKT inequality is strict for $i \in J(\lambda)$.
\end{itemize}
More precisely, this partition gives rise to a set of equations and inequalities:
\begin{equation} \label{eq:node-glasso-partition}
\begin{aligned}
\sum_{j \in E_1(\lambda), i\neq j} \frac{y_{ij}^2}{(\alpha_i + \alpha_j + \lambda)^2} + \frac{y_{ii}^2 + \sum_{j\in E_1(\lambda)^c} y_{ij}^2}{(\alpha_i + \lambda)^2}&= 1, ~~~~~~~\text{ if } i\in E_1(\lambda);\\ 
\sum_{i \in E_1(\lambda)} \frac{y_{ij}^2}{(\alpha_i + \lambda)^2} + \frac{ \sum_{i\in E_1(\lambda)^c} y_{ij}^2}{\lambda^2} &= 1,~~~~~~~~\text{ if } i \in E_2(\lambda);\\
\sum_{j \in E_1(\lambda)} \frac{y_{\ell j}^2}{(\alpha_\ell + \lambda)^2}  + \frac{\sum_{j \in E_1(\lambda)^c} y_{\ell j}^2}{\lambda^2} &< 1,~~~~~~~~\text{ if } \ell \in J(\lambda).
\end{aligned}
\end{equation}
For indices $i \in E(\lambda) = E_1(\lambda) \cup E_2(\lambda)$, we can compactly express the conditions as:
\begin{equation} \label{eq:partition-eq}
    \sum_{j \in E(\lambda), j\neq i} \frac{y_{ij}^2}{(\alpha_i + \alpha_j + \lambda)^2} + \frac{y_{ii}^2 + \sum_{j\in E(\lambda)^c} y_{ij}^2}{(\alpha_j + \lambda)^2} = 1, ~~~~~~~\text{ if } i\in E(\lambda).
\end{equation}
We next show that under mild conditions, each $\alpha_i(\lambda)$ evolves smoothly and monotonically in $\lambda$. 

\begin{lemma}\label{lem:decreasing}
Suppose that for some $\lambda_0 > 0$, Equation~\eqref{eq:partition-eq} has a solution $\valpha_0$ with $m_0 := |E(\lambda_0)| < n$. 
Suppose that the observed matrix $y_{ij} > 0$ holds for all $1 \leq i \neq j \leq n$. 
Then there exists a neighborhood $(\lambda_0 - \delta, \lambda_0 + \delta)$ for some $\delta > 0$, such that:
\begin{enumerate}
    \item A unique solution $\valpha(\lambda)$ exists in this neighborhood and continuously extends $\valpha_0$;
    \item For every $i \in E(\lambda_0)$, the function $\alpha_i(\lambda)$ is continuous and strictly decreasing in $\lambda$.
\end{enumerate}
\end{lemma}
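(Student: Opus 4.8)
The plan is to apply the implicit function theorem to the system~\eqref{eq:partition-eq}, restricted to indices in $E(\lambda_0)$, near $(\valpha_0, \lambda_0)$. First I would set $m_0 = |E(\lambda_0)|$ and, for notational convenience, relabel so that $E(\lambda_0) = [m_0]$. Define $F : \R^{m_0} \times \R_{>0} \to \R^{m_0}$ by letting the $i$-th coordinate be
\begin{equation*}
F_i(\valpha, \lambda) = \sum_{j \in [m_0], j \neq i} \frac{y_{ij}^2}{(\alpha_i + \alpha_j + \lambda)^2} + \frac{y_{ii}^2 + \sum_{j \in [m_0]^c} y_{ij}^2}{(\alpha_i + \lambda)^2} - 1,
\end{equation*}
so that Equation~\eqref{eq:partition-eq} reads $F(\valpha, \lambda) = \vo$ and $F(\valpha_0, \lambda_0) = \vo$ by hypothesis. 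Each $F_i$ is $C^\infty$ on the open set where all denominators are positive, which holds in a neighborhood of $(\valpha_0, \lambda_0)$ since $\alpha_{0,i} \geq 0$ and $\lambda_0 > 0$.

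The key step is to show that the Jacobian $\mathrm{D}_{\valpha} F(\valpha_0, \lambda_0)$ is invertible, which lets the implicit function theorem produce the unique continuous (indeed $C^1$) extension $\valpha(\lambda)$ in a neighborhood $(\lambda_0 - \delta, \lambda_0 + \delta)$ and establishes claim~(1). I would compute the entries: writing $s_i = \alpha_i + \lambda$ and $s_{ij} = \alpha_i + \alpha_j + \lambda$, the diagonal entry is
\begin{equation*}
\frac{\partial F_i}{\partial \alpha_i} = -2 \sum_{j \neq i} \frac{y_{ij}^2}{s_{ij}^3} - \frac{2(y_{ii}^2 + \sum_{j \in [m_0]^c} y_{ij}^2)}{s_i^3} < 0,
\end{equation*}
and the off-diagonal entry is $\partial F_i / \partial \alpha_j = -2 y_{ij}^2 / s_{ij}^3 \leq 0$ for $j \neq i$ (strictly negative under the assumption $y_{ij} > 0$). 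Thus $-\mathrm{D}_{\valpha} F$ is a matrix with positive diagonal and nonpositive off-diagonal entries, and it is strictly diagonally dominant: the sum of the off-diagonal magnitudes in row $i$ is $2\sum_{j\neq i} y_{ij}^2/s_{ij}^3$, which is strictly less than the diagonal magnitude because of the extra nonnegative term $2(y_{ii}^2 + \sum_{j \in [m_0]^c} y_{ij}^2)/s_i^3$, together with the fact that Equation~\eqref{eq:partition-eq} forces $y_{ii}^2 + \sum_{j \in [m_0]^c} y_{ij}^2 > 0$ (otherwise the $i$-th equation could not sum to $1$ unless $[m_0]^c$ were empty, which is excluded by $m_0 < n$; if $[m_0]^c$ is nonempty then since $y_{ij} > 0$ for $i \neq j$ the term is strictly positive). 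A strictly diagonally dominant matrix is invertible, so $\mathrm{D}_{\valpha} F(\valpha_0, \lambda_0)$ is invertible and the implicit function theorem applies, giving claim~(1).

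For claim~(2), I would differentiate $F(\valpha(\lambda), \lambda) = \vo$ to get $\mathrm{D}_{\valpha} F \cdot \valpha'(\lambda) = -\partial_\lambda F$. A direct computation gives $\partial_\lambda F_i = -2\sum_{j\neq i} y_{ij}^2/s_{ij}^3 - 2(y_{ii}^2 + \sum_{j\in[m_0]^c}y_{ij}^2)/s_i^3 = \partial F_i/\partial\alpha_i < 0$, so $\partial_\lambda F$ is a strictly negative vector. Hence $\valpha'(\lambda) = -(\mathrm{D}_{\valpha}F)^{-1} \partial_\lambda F = (-\mathrm{D}_{\valpha}F)^{-1}(-\partial_\lambda F)$, which is the product of the inverse of an M-matrix (the matrix $-\mathrm{D}_{\valpha}F$, being strictly diagonally dominant with positive diagonal and nonpositive off-diagonals, is a nonsingular M-matrix, hence has entrywise nonnegative inverse) with a strictly negative vector; I would argue this product is entrywise strictly negative — nonpositivity is immediate, and strict negativity follows because $(-\mathrm{D}_{\valpha}F)^{-1}$ has strictly positive diagonal entries (an M-matrix inverse has positive diagonal). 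Therefore $\alpha_i'(\lambda) < 0$ for each $i \in E(\lambda_0)$ throughout the neighborhood, so each $\alpha_i(\lambda)$ is continuous and strictly decreasing, shrinking $\delta$ if necessary to stay within the domain where all denominators remain positive. I expect the main obstacle to be the bookkeeping around which indices stay in $E_1$ versus $E_2$ as $\lambda$ varies — strictly speaking the IFT only controls the system~\eqref{eq:partition-eq} as an equality on $E(\lambda_0)$, and one must check that the continuity of $\valpha(\lambda)$ together with the strict inequalities for $J(\lambda_0)$ in~\eqref{eq:node-glasso-partition} are preserved on a small enough neighborhood, so that the computed $\valpha(\lambda)$ genuinely corresponds to the group-lasso solution path rather than merely to a solution of an abstract system; this is a routine perturbation argument but needs to be stated carefully.
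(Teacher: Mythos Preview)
Your argument for claim (1) via the implicit function theorem and strict diagonal dominance of the Jacobian is correct and matches the paper exactly.

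For claim (2) there is a genuine error. You assert that $-\mathrm{D}_{\valpha}F$ has ``nonpositive off-diagonal entries'' and is therefore a nonsingular M-matrix, but you yourself computed $\partial F_i/\partial\alpha_j = -2y_{ij}^2/s_{ij}^3 \le 0$ for $j\neq i$, so the off-diagonal entries of $-\mathrm{D}_{\valpha}F$ are \emph{nonnegative}, not nonpositive. Hence $-\mathrm{D}_{\valpha}F$ is not an M-matrix; its inverse is not entrywise nonnegative (already a $2\times 2$ strictly positive, strictly diagonally dominant matrix has an inverse with negative off-diagonals), and your sign argument collapses. There is also a sign slip: $(-\mathrm{D}_{\valpha}F)^{-1}(-\partial_\lambda F) = (\mathrm{D}_{\valpha}F)^{-1}\partial_\lambda F$, not its negative. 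The paper instead factors $-\mathrm{D}_{\valpha}F = 2\mD(\mI_{m_0}+\mS)$ where $\mD$ is the diagonal part and $\mS$ is nonnegative with zero diagonal and row sums strictly less than one; combined with $\partial_\lambda F_i = (\mathrm{D}_{\valpha}F)_{ii}$ this gives $\valpha'(\lambda) = -(\mI_{m_0}+\mS)^{-1}\mathbf{1}_{m_0}$, and the Neumann expansion $(\mI_{m_0}+\mS)^{-1}\mathbf{1}_{m_0} = \sum_{k\ge 0}\mS^{2k}(\mI_{m_0}-\mS)\mathbf{1}_{m_0}$ is entrywise strictly positive because $(\mI_{m_0}-\mS)\mathbf{1}_{m_0}>0$. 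If you want an M-matrix route, it works on $\mI_{m_0}-\mS^2$ via the factorization $(\mI_{m_0}+\mS)^{-1} = (\mI_{m_0}-\mS^2)^{-1}(\mI_{m_0}-\mS)$, but not directly on $-\mathrm{D}_{\valpha}F$.
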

\begin{proof}
Without loss of generality, assume $E(\lambda_0) = [m_0]$.
We denote by $\valpha_E = (\alpha_1, \alpha_2, \dots, \alpha_{m_0})^\top$ the vector of active group norms, but we will drop the subscript $E$ for simplicity in the remainder of the proof.
For each $i \in [m_0]$, define the function
\begin{equation}\label{eq:Fi}
F_i(\valpha, \lambda) = \sum_{j=1, j\neq i}^{m_0} \frac{y_{ij}^2}{(\alpha_i + \alpha_j + \lambda)^2} + \frac{y_{ii}^2 + \sum_{j=m_0+1}^n y_{ij}^2}{(\alpha_i + \lambda)^2} - 1.
\end{equation}
Let $F(\valpha, \lambda) = (F_1, F_2, \dots, F_{m_0})^\top$. 
We compute the Jacobian matrix $\partial F / \partial \valpha$ according to
\begin{equation*}
\frac{\partial F_i}{\partial \alpha_j} (\valpha, \lambda) = 
\begin{cases}
    -\frac{2y_{ij}^2}{(\alpha_i+\alpha_j+\lambda)^3} = -2 a_{ij} & \text{ if } j\neq i; \\
    -\sum_{\ell=1, \ell \neq i}^{m_0}\frac{2y_{i\ell}^2}{(\alpha_i+\alpha_\ell+\lambda)^3} - 
    \frac{2y_{ii}^2 + 2\sum_{\ell=m_0+1}^{n} y_{i\ell}^2}{(\alpha_i + \lambda)^3} = -2 \sum_{\ell=1, \ell\neq i}^{m_0} a_{i\ell} - 2 a_{ii}, & \text{ if } j = i,
\end{cases}
\end{equation*}
where we have set, for $i \neq j$,
\begin{equation*}
a_{ij} = \frac{y_{ij}^2}{(\alpha_i+\alpha_j+\lambda)^3}
~\text{ and }~
a_{ii} = \frac{y_{ii}^2 + \sum_{\ell=m_0+1}^{n} y_{i\ell}^2}{(\alpha_i + \lambda)^3}.
\end{equation*}
Therefore, the Jacobian matrix has the form
\begin{equation} \label{eq:partF-partA}
    \frac{\partial F}{\partial \boldsymbol{\alpha}}\left(\boldsymbol{\alpha}, \lambda\right)=-2\left(
    \begin{array}{cccc}
    \sum_{j=1}^{m_0} a_{1 j} & a_{12} & \ldots & a_{1 m_0} \\
    a_{21} & \sum_{j=1}^{m_0} a_{2 j} & \ldots & a_{2 m_0} \\
    & \ldots & & \\
    a_{m_0 1} & a_{m_0 2} & \ldots & \sum_{j=1}^{m_0} a_{2 j}
    \end{array}
    \right).
\end{equation} 
Since the observed matrix satisfies $y_{ij} > 0$ for all $1 \leq i \neq j \leq n$ and $m_0 < n$, we have $a_{ii} > 0$ holds almost surely for all $i \in [m_0]$.
It follows that the Jacobian matrix is strictly diagonally dominant, and thus $\partial F/\partial\valpha$ is almost surely invertible at all $(\valpha, \lambda)$ such that $\lambda > 0$ and $\alpha_j \geq 0$ for $j \in [m_0]$.
By the Implicit Function Theorem, since $F(\valpha_0, \lambda_0) = 0$ and the Jacobian is nonsingular at $(\valpha_0, \lambda_0)$, there exists $\delta > 0$ and a unique continuously differentiable function $\valpha(\lambda)$ on the interval $(\lambda_0 - \delta, \lambda_0 + \delta)$ such that $F(\valpha(\lambda), \lambda) = 0$ and $\valpha(\lambda_0) = \valpha_0$.
    


To establish the monotonicity of $\valpha(\lambda)$, compute
\begin{equation} \label{eq:partalpha-partlambda}
\frac{d\valpha}{d\lambda} = -\left(\frac{\partial F}{\partial \boldsymbol{\alpha}}\left(\boldsymbol{\alpha}(\lambda), \lambda\right)\right)^{-1} \frac{\partial F}{\partial \lambda} \left( \valpha(\lambda), \lambda \right).
\end{equation}
Recalling the definition of $F_i$ from Equation~\eqref{eq:Fi}, taking the partial derivative with respect to $\lambda$, we have
\begin{equation} \label{eq:partF-partlambda}
\frac{\partial F_i}{\partial \lambda} 
= \frac{\partial F_i}{\partial \alpha_i} 
= -2 \sum_{\ell=1}^{m_0} a_{i\ell} ,\quad \text{for } i \in [m_0]. 
\end{equation}
Let $\mD = \diag\left(\sum_{j=1}^{m_0} a_{1 j}, \sum_{j=1}^{m_0} a_{2 j}, \ldots, \sum_{j=1}^{m_0} a_{2 j}\right)$ and define the matrix $\mS$ with entries 
\begin{equation*}
S_{ij} = \begin{cases}
        \frac{a_{ij}}{\sum_{k=1}^{m_0} a_{ik}} & \text{ if } i\neq j;\\
        0 & \text{ if } i = j.
    \end{cases}
\end{equation*}
Then, by Equation~\eqref{eq:partF-partA} we have 
\begin{equation*}
    \frac{\partial F}{\partial \boldsymbol{\alpha}} = -2 \mD \left(\mI_{m_0} + \mS\right).
\end{equation*}
Combining this with Equations~\eqref{eq:partalpha-partlambda} and~\eqref{eq:partF-partlambda}, we have
\begin{equation*}
    \frac{d \valpha}{d \lambda} =\frac{1}{2}(\mI_{m_0}+\mS)^{-1} \mD^{-1} \frac{\partial F}{\partial \lambda}(\valpha(\lambda), \lambda)
    = - (\mI_{m_0} + \mS)^{-1} \mathbf{1}_{m_0}
\end{equation*}

Because $\mS \geq 0$ elementwise and has row sums less than $1$, its spectral radius is strictly less than $1$ by Gershgorin circle theorem.
Thus, the matrix $\mI + \mS$ is invertible and
\begin{equation} \label{eq:dalpha-dlambda}
    \begin{aligned}
    \frac{d \valpha}{d \lambda} 
    =-\sum_{k=0}^{\infty} \mathbf{S}^{2k}(\mI_{m_0}-\mathbf{S}) \mathbf{1}_{m_0} < 0 ,
    \end{aligned}
\end{equation}
where the last inequality holds elementwise.
This shows that each $\alpha_i(\lambda)$ is strictly decreasing in $\lambda$ for $i \in E(\lambda_0)$, completing the proof.
\end{proof}
    
The preceding lemma establishes that the group norm $\alpha_i(\lambda)$ evolves continuously and strictly decreases as the regularization strength $\lambda$ increases. 
The following lemma quantifies the rate at which $\alpha_i(\lambda)$ decreases with $\lambda$, providing both upper and lower bounds on the derivative $d\alpha_i/d\lambda$.

\begin{lemma} \label{lem:grad-bound}
Let $\valpha(\lambda)$ be a solution to Equation~\eqref{eq:partition-eq} and let $m_0 = |E(\lambda)|$ as defined in Equation~\eqref{eq:partition-eq}.
Under the same assumptions as in Lemma~\ref{lem:decreasing}, for all $i \in E(\lambda)$, the derivative of $\alpha_i$ with respect to $\lambda$ satisfies
\begin{equation}\label{eq:grad-bound}
     \frac{\sum_{\ell \in E(\lambda)^c} y_{i \ell}^2 }{\sum_{\ell=1}^{n}  y_{i \ell}^2} \leq \left| \frac{d\alpha_i}{d\lambda} \right| \leq 1.
\end{equation}
\end{lemma}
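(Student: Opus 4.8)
The plan is to work directly from the closed form for the derivative of the solution path established in the proof of Lemma~\ref{lem:decreasing}. After relabelling so that $E(\lambda) = [m_0]$, that proof shows that $\valpha$ is continuously differentiable near $\lambda$ with
\begin{equation*}
\frac{d\valpha}{d\lambda} = -(\mI_{m_0} + \mS)^{-1}\mathbf{1}_{m_0},
\end{equation*}
where $\mS$ has zero diagonal and nonnegative off-diagonal entries $S_{ij} = a_{ij}/\sum_{k=1}^{m_0} a_{ik}$, with $a_{ij} = y_{ij}^2/(\alpha_i+\alpha_j+\lambda)^3$ for $j\neq i$ and $a_{ii} = (y_{ii}^2 + \sum_{\ell\in E(\lambda)^c} y_{i\ell}^2)/(\alpha_i+\lambda)^3$. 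Moreover, by the Neumann expansion in Equation~\eqref{eq:dalpha-dlambda}, $d\valpha/d\lambda < 0$ entrywise, so setting $\vx := -d\valpha/d\lambda > 0$ the bound $|d\alpha_i/d\lambda| = x_i$ is exactly what we must estimate, and $\vx$ solves the fixed-point system $(\mI_{m_0}+\mS)\vx = \mathbf{1}_{m_0}$, i.e. $x_i + \sum_{j=1}^{m_0} S_{ij} x_j = 1$ for every $i \in [m_0]$.

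The upper bound is immediate: since $S_{ij}\ge 0$ and $x_j \ge 0$, the sum $\sum_j S_{ij}x_j$ is nonnegative, hence $x_i \le 1$. For the lower bound I would feed this back into the same identity: using $x_j \le 1$,
\begin{equation*}
x_i = 1 - \sum_{j=1}^{m_0} S_{ij} x_j \;\ge\; 1 - \sum_{j=1}^{m_0} S_{ij} \;=\; \frac{a_{ii}}{\sum_{k=1}^{m_0} a_{ik}},
\end{equation*}
the last equality because $\sum_{j\neq i} S_{ij} = (\sum_k a_{ik} - a_{ii})/\sum_k a_{ik}$. It then remains only to estimate this ratio. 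Since $\alpha_j \ge 0$ we have $(\alpha_i+\alpha_j+\lambda)^3 \ge (\alpha_i+\lambda)^3$, so $a_{ij} \le y_{ij}^2/(\alpha_i+\lambda)^3$ for $j\neq i$, whence
\begin{equation*}
\sum_{k=1}^{m_0} a_{ik} \le \frac{1}{(\alpha_i+\lambda)^3}\Bigl(y_{ii}^2 + \sum_{\ell\in E(\lambda)^c} y_{i\ell}^2 + \sum_{j\in[m_0],\,j\neq i} y_{ij}^2\Bigr) = \frac{\sum_{\ell=1}^n y_{i\ell}^2}{(\alpha_i+\lambda)^3}.
\end{equation*}
Dividing, $a_{ii}/\sum_k a_{ik} \ge (y_{ii}^2 + \sum_{\ell\in E(\lambda)^c} y_{i\ell}^2)/\sum_{\ell=1}^n y_{i\ell}^2 \ge (\sum_{\ell\in E(\lambda)^c} y_{i\ell}^2)/\sum_{\ell=1}^n y_{i\ell}^2$, which gives the claimed lower bound on $|d\alpha_i/d\lambda|$.

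I do not anticipate a serious obstacle here: the argument is essentially a two-line consequence of the fixed-point identity $(\mI_{m_0}+\mS)\vx = \mathbf{1}_{m_0}$ together with the elementary monotonicity of $\alpha_j \mapsto (\alpha_i+\alpha_j+\lambda)^{-3}$. The only points needing a little care are the index bookkeeping between $E(\lambda)$ and its complement — in particular tracking the diagonal term $y_{ii}^2$ correctly — and recording that the positivity of $\vx$ and the invertibility of $\mI_{m_0}+\mS$, which I use freely above, are precisely the facts already proved in Lemma~\ref{lem:decreasing} under the standing hypothesis $y_{ij} > 0$ for all $i\neq j$.
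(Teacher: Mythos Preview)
Your proof is correct and follows essentially the same route as the paper: both arguments reduce to the inequality $|d\alpha_i/d\lambda| \ge a_{ii}/\sum_{k} a_{ik}$ and then bound that ratio via $(\alpha_i+\alpha_j+\lambda)^3 \ge (\alpha_i+\lambda)^3$. The only cosmetic difference is that the paper manipulates the Neumann series $\sum_k \mS^{2k}(\mI-\mS)\mathbf{1}$ to extract the bounds, whereas you work directly with the fixed-point identity $x_i + \sum_j S_{ij}x_j = 1$ and bootstrap $x_j\le 1$ back into it --- your version is slightly cleaner but otherwise equivalent.
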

\begin{proof}
Without loss of generality, assume that $E(\lambda) = [m_0]$ and denote $\valpha_E = (\alpha_i)_{i \in [m_0]}$. 
For notational convenience, we omit the subscript and simply write $\valpha = \valpha_E$ throughout this proof. 
From the expression derived in Equation~\eqref{eq:dalpha-dlambda}, we have
\begin{equation*}
\begin{aligned}
    \left|\frac{d\valpha}{d\lambda}\right| &= \sum_{k=0}^{\infty} \mS^{2k}(\mI_{m_0}-\mS) \mathbf{1}_{m_0} 
    = \mathbf{1}_{m_0} - \sum_{k=0}^{\infty} \mS^{2k+1} (\mI_{m_0} - \mS) \mathbf{1}_{m_0}.
\end{aligned}
\end{equation*}
Since $\mS \geq 0$ elementwise and has row sums strictly less than $1$, 
the term subtracted on the right-hand side is nonnegative. 
This immediately yields the upper bound:
\begin{equation*}
    \left|\frac{d\alpha_i}{d\lambda}\right| \leq 1 \quad \text{for all } i \in [m_0].
\end{equation*}
To obtain the lower bound, observe that 
\begin{equation*}
\begin{aligned}
    \left|\frac{d\valpha}{d\lambda}\right| &= \mathbf{1}_{m_0} - \mS \mathbf{1}_{m_0} + \sum_{k=0}^{\infty} \mS^{2k+2} (\mI_{m_0} - \mS) \mathbf{1}_{m_0}
    \geq \mathbf{1}_{m_0} - \mS \mathbf{1}_{m_0},
\end{aligned}
\end{equation*}
again holding elementwise. 
For each $i \in [m_0]$,
\begin{equation*} \begin{aligned}
(\mS \mathbf{1}_{m_0})_i 
&= \sum_{\ell=1, \ell \neq i}^{m_0} \frac{a_{i\ell}}{\sum_{j=1}^{m_0} a_{i j}} =  \frac{\sum_{\ell=1, \ell \neq i}^{m_0} a_{i\ell}}{a_{ii} + \sum_{j=1, j \neq i}^{m_0} a_{i j}} 
= 1 - \frac{a_{ii}}{\sum_{j=1}^{m_0} a_{i j}} ,
\end{aligned} \end{equation*}
which implies 
\begin{equation} \label{eq:dalpha-dlambda:ratio}
\left|\frac{d \alpha_i}{d \lambda}\right| 
\geq \frac{a_{i i}}{\sum_{j=1}^{m_0} a_{i j}}. 
\end{equation}
Plugging in the definition of $a$ into the right-hand of Equation~\eqref{eq:dalpha-dlambda:ratio}, where 
\begin{equation*}
a_{i i}=\frac{y_{i i}^2+\sum_{\ell=m_0+1}^n y_{i \ell}^2}{\left(\alpha_i+\lambda\right)^3}, 
\quad a_{i \ell}=\frac{y_{i \ell}^2}{\left(\alpha_i+\alpha_{\ell}+\lambda\right)^3},
\end{equation*}
we obtain
    \begin{equation*}
    \begin{aligned}
        \left|\frac{d\alpha_i}{d\lambda}\right| &\geq 
        \frac{\left(y_{ii}^2 + \sum_{\ell=m_0+1}^n y_{i \ell}^2\right) / \left(\alpha_i+\lambda\right)^3}{\sum_{\ell=1, \ell\neq i}^{m_0} y_{i \ell}^2/\left(\alpha_i+\alpha_{\ell}+\lambda\right)^3 + (y_{ii}^2+\sum_{\ell=m_0+1}^n y_{i \ell}^2)/\left(\alpha_i+\lambda\right)^3} \\
        &=  \frac{y_{ii}^2 + \sum_{\ell=m_0+1}^n y_{i \ell}^2 }{\sum_{\ell=1, \ell\neq i}^{m_0}  \frac{\left(\alpha_i+\lambda\right)^3}{\left(\alpha_i+\alpha_{\ell}+\lambda\right)^3} y_{i \ell}^2 + y_{ii}^2 + \sum_{\ell=m_0+1}^n y_{ii \ell}^2} .
\end{aligned} \end{equation*}
Observing that all coefficients multiplying $y_{i\ell}^2$ in the denominator are bounded by $1$, it follows that
\begin{equation*} \begin{aligned}
    \left|\frac{d\alpha_i}{d\lambda}\right| &\geq 
    &\geq \frac{y_{ii}^2 + \sum_{\ell=m_0+1}^n y_{i \ell}^2 }{\sum_{\ell=1}^{n}  y_{i \ell}^2},
\end{aligned} \end{equation*}
which yields our desired lower bound.
\end{proof}

With Lemmas~\ref{lem:decreasing} and~\ref{lem:grad-bound}, we have shown that when the active set $E(\lambda)$ contains fewer than $n$ indices, the group norm vector $\valpha(\lambda)$ evolves smoothly ad strictly decreases with respect to the regularization parameter $\lambda$.
A natural next step is to examine what happens when the active set saturates the entire index set, i.e., when $E(\lambda) = [n]$.
The following lemma establishes that the monotonicity of $\valpha(\lambda)$ persists in this regime as well, and even admits a closed-form derivative when the diagonal entries of $\mY$ vanish. 

\begin{lemma}\label{lem:keep-decreasing}
Assume that $y_{ij} \neq 0$ for $i, j \in [n], i\neq j$. 
Let $\valpha(\lambda_J)$ be a solution to Equation~\eqref{eq:node-glasso-partition} such that $J(\lambda_J) = \emptyset$.
Then there exists a unique solution path $\valpha(\lambda) \in \R^n$, continuous on the interval $[0, \lambda_J]$, such that 
\begin{equation*}
    \frac{d\alpha_i(\lambda)}{d\lambda} < 0, \quad \text{ for all } i \in [n]. 
\end{equation*}
Moreover, if $y_{ii} = 0$ for all $i \in [n]$, then this derivative is constant and satisfies
\begin{equation*}
    \frac{d\alpha_i(\lambda)}{d\lambda} = -\frac{1}{2}
	\quad \text{ for all } i \in [n]. 
\end{equation*}
\end{lemma}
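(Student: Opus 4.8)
The plan is to follow the strategy of Lemma~\ref{lem:decreasing}, observing that the hypothesis $J(\lambda_J)=\emptyset$ forces the active set to be all of $[n]$, so the governing system is Equation~\eqref{eq:node-glasso-eql} with $I=[n]$, namely
\begin{equation*}
G_i(\valpha,\lambda):=\frac{y_{ii}^2}{(\alpha_i+\lambda)^2}+\sum_{j\neq i}\frac{y_{ij}^2}{(\alpha_i+\alpha_j+\lambda)^2}-1=0,\qquad i\in[n].
\end{equation*}
First I would record an a priori bound: since in $G_i=0$ every summand is at most $y_{ij}^2/\alpha_i^2$, any nonnegative solution satisfies $\alpha_i\le\bigl(\sum_{j}y_{ij}^2\bigr)^{1/2}$, uniformly in $\lambda$. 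Combined with the monotonicity proved below (the $\alpha_i$ increase as $\lambda$ decreases from $\lambda_J$), this confines the path to a fixed compact box in the nonnegative orthant; moreover, since the active set is already maximal, no new equality or inequality in the partition~\eqref{eq:node-glasso-partition} can activate as $\lambda$ decreases. Hence it suffices to study this single smooth system on $[0,\lambda_J]$.

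Next I would apply the Implicit Function Theorem. Exactly as in Lemma~\ref{lem:decreasing}, writing $a_{ij}=y_{ij}^2/(\alpha_i+\alpha_j+\lambda)^3$ for $j\neq i$ and $a_{ii}=y_{ii}^2/(\alpha_i+\lambda)^3$, one obtains $\partial G/\partial\valpha=-2\mD(\mI_n+\mS)$ and $\partial G/\partial\lambda=-2\mD\mathbf{1}_n$ with $\mD=\diag\bigl(\sum_j a_{ij}\bigr)$ and $S_{ij}=a_{ij}/\sum_k a_{ik}$ for $i\neq j$, $S_{ii}=0$; hence $d\valpha/d\lambda=-(\mI_n+\mS)^{-1}\mathbf{1}_n$ whenever $\mI_n+\mS$ is invertible. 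The new difficulty relative to Lemma~\ref{lem:decreasing} is that strict diagonal dominance of the Jacobian is lost once $E(\lambda)=[n]$ and some $y_{ii}$ vanish, so invertibility must be argued differently. Since $y_{ij}\neq 0$ for $i\neq j$, the matrix $(a_{ij})$ is symmetric with strictly positive off-diagonal entries, hence irreducible; if some $y_{ii}>0$ then $\mS$ has a strictly substochastic row and, by irreducibility, spectral radius strictly below one, so $\mI_n+\mS$ is invertible; if $y_{ii}=0$ for all $i$, then $\mS$ is the normalized adjacency matrix of the complete graph $K_n$, which contains triangles for $n\ge3$ and is therefore non-bipartite, so $-1$ is not an eigenvalue of $\mS$ and $\mI_n+\mS$ is again invertible. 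The IFT then produces a local $C^1$ branch through $(\valpha(\lambda_J),\lambda_J)$, and the uniform a priori bound plus a standard continuation argument extend it to a unique continuous path on $[0,\lambda_J]$.

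For strict monotonicity I would use the Neumann expansion $(\mI_n+\mS)^{-1}\mathbf{1}_n=\sum_{k\ge0}\mS^{2k}(\mI_n-\mS)\mathbf{1}_n$, valid whenever the spectral radius of $\mS$ is below one, i.e.\ whenever the $y_{ii}$ are not all zero. Here $\mS\ge0$ entrywise and the $i$-th entry of $(\mI_n-\mS)\mathbf{1}_n$ equals $a_{ii}/\sum_k a_{ik}\ge0$, so the sum is nonnegative, and irreducibility of $\mS$ propagates any single positive entry to make every entry of $(\mI_n+\mS)^{-1}\mathbf{1}_n$ strictly positive, whence $d\alpha_i/d\lambda<0$ for all $i$. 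In the degenerate case $y_{ii}=0$ for all $i$, the cleanest route is a direct verification: setting $\alpha_i(\lambda):=\alpha_i(\lambda_J)+\tfrac12(\lambda_J-\lambda)$ keeps $\alpha_i(\lambda)+\alpha_j(\lambda)+\lambda=\alpha_i(\lambda_J)+\alpha_j(\lambda_J)+\lambda_J$ constant, so this vector solves the system on all of $[0,\lambda_J]$ and remains nonnegative, giving $d\alpha_i/d\lambda=-\tfrac12$; the same value is read off from $\mS\mathbf{1}_n=\mathbf{1}_n$, which yields $(\mI_n+\mS)(\tfrac12\mathbf{1}_n)=\mathbf{1}_n$. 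Uniqueness of the path follows either from the IFT branch or from strict convexity of the data-fit term in Equation~\eqref{eq:node-glasso}.

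The step I expect to be the main obstacle is invertibility of $\mI_n+\mS$ in this boundary regime: the strict diagonal dominance exploited in Lemma~\ref{lem:decreasing} no longer holds, so one must instead lean on the completeness of the underlying weight graph (irreducibility, and non-bipartiteness when $n\ge3$), handling small-$n$ configurations by hand. A secondary bookkeeping point is making the global continuation rigorous — gluing the local IFT branches into one path on the whole interval and ruling out combinatorial changes in the active/inactive partition — for which the a priori bound $\alpha_i\le(\sum_j y_{ij}^2)^{1/2}$ and the maximality of the active set are the essential ingredients.
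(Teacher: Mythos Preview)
Your approach is essentially the same as the paper's: both compute the Jacobian as $-2\mD(\mI_n+\mS)$, establish invertibility of $\mI_n+\mS$ via a spectral-radius/Perron--Frobenius argument (the paper packages the $y_{ii}\equiv 0$ case into two short auxiliary lemmas showing $\mS^2>0\Rightarrow-1\notin\operatorname{spec}(\mS)$ and $(\mI_n+\mS)^{-1}\mathbf{1}_n=\tfrac12\mathbf{1}_n$), and extend the local IFT branch to $[0,\lambda_J]$ by a bounded-derivative continuation argument. Your direct verification that $\alpha_i(\lambda)=\alpha_i(\lambda_J)+\tfrac12(\lambda_J-\lambda)$ solves the system when all $y_{ii}=0$ is a clean elementary alternative the paper does not spell out; one minor quibble is that the strict-convexity appeal for uniqueness does not quite work, since the data-fit term in Equation~\eqref{eq:node-glasso} is not strictly convex in $\mV$---rely on the IFT branch instead.
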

\begin{proof}
We follow the setup of Lemmas~\ref{lem:decreasing} and~\ref{lem:grad-bound}. 
Define the matrix $\mS \in \R^{n \times n}$ by
\begin{equation*}
S_{i j}
= \begin{cases}\frac{a_{i j}}{\sum_{k=1}^n a_{i k}}, &\mbox{ if } i \neq j, \\
		0, &\mbox{ if } i=j\end{cases}\\
\end{equation*}
where 
\begin{equation*}
a_{i j}=\frac{y_{i j}^2}{\left(\alpha_i+\alpha_j+\lambda_J\right)^3}, \quad \text { for } i \neq j, \quad a_{i i}=\frac{y_{i i}^2}{\left(\alpha_i+\lambda_J\right)^3}
\end{equation*}
Under the assumption that $y_{ij} \neq 0$ for all $i \neq j$, we have $S_{ij} > 0$ for all off-diagonal entries.
If not all $y_{ii} = 0$ for $i \in [n]$, then there exists at least one row (say, the first) such that 
\begin{equation*}
    \sum_{j=1}^n S_{1j} < 1.
\end{equation*}
It follows that for any $k \in [n]$,
\begin{equation*}
    (\mS^2 \mathbf{1}_n)_k = \sum_{j=1}^n \sum_{i=1}^n S_{kj} S_{ji} = S_{k1}\sum_{i=1}^n S_{1i} + \sum_{j\neq 1} S_{kj} \sum_{i=1}^n S_{ji} < S_{k1} + \sum_{j\neq 1} S_{kj} \leq 1,
\end{equation*}
which indicates that the row sums of $\mS^2$ are all strictly less than 1. 
Hence, by Gershgorin circle theorem, all eigenvalues of $\mS^2$ lie strictly within the unit circle.
Consequently, $\lim _{k \rightarrow \infty} \mS^k=0$, and the Neumann series expansion of $(\mI + \mS)^{-1}$ is valid.
By the same argument as in Lemma~\ref{lem:decreasing}, this implies that for some $\delta > 0$, the solution path $\valpha(\lambda)$ is well-defined and differentiable on the interval $(\lambda_J - \delta, \lambda_J]$, and satisfies
\begin{equation*}
    -1 \leq \frac{d\alpha_i(\lambda)}{d\lambda} < 0, \quad \text{ for all } i \in [n]. 
\end{equation*}
Because $\valpha(\lambda)$ is continuous and its derivative is uniformly bounded, we may take the right-handed limit 
\begin{equation*}
\valpha(\lambda_J - \delta)
= \lim_{\lambda \to (\lambda_J-\delta)^+} \valpha(\lambda),
\end{equation*} 
which exists and satisfies Equation~\eqref{eq:partition-eq} by continuity.
Therefore, $\valpha(\lambda_J - \delta)$ is also a solution to Equation~\eqref{eq:partition-eq} with $\lambda = \lambda_J - \delta$.
Moreover, $\valpha(\lambda_J - \delta)$ is the unique extension of $\valpha(\lambda_J)$ to the interval $[\lambda_J - \delta, \lambda_J]$ by the implicit function theorem.
By repeating this argument, we can uniquely extend $\valpha(\lambda)$ to the entire interval $[0, \lambda_J]$.

For the special case where $y_{ii} = 0$ for all $i$, the diagonal entries $a_{ii} = 0$, and $\mS$ becomes row-stochastic with strictly positive off-diagonal entries. 
Therefore, $\mS^2 > 0$ and $\mI_n + \mS$ is invertible by Lemma~\ref{lem:IS-invertible}. 
Applying Lemma~\ref{lem:eigenvalue-IS} yields
\begin{equation*}
\frac{d \valpha}{d\lambda} = -(\mI_n + \mS)^{-1} \mathbf{1}_n 
= -\frac{1}{2} \mathbf{1}_n,
\end{equation*}
completing the proof.
\end{proof}

\subsection{Global Extension of the Solution Path}

Building on the local continuity and monotonicity results established in Lemmas~\ref{lem:decreasing} through~\ref{lem:keep-decreasing}, we now turn to the global behavior of the group lasso solution path.
The preceding lemmas guarantee that for any set of active indices $E(\lambda_0) \subseteq [n]$, the corresponding solution vector $\valpha_{E(\lambda_0)}(\lambda)$ evolves continuously and strictly decreases as $\lambda$ decreases, at least within a neighborhood of any given $\lambda_0$.
Moreover, Lemma~\ref{lem:keep-decreasing} ensures that this solution path remains well-defined even when all indices are active.

We now show that starting from any valid solution $\valpha(\lambda_0)$, this local behavior can be extended inductively to yield a continuous and elementwise decreasing solution path $\valpha(\lambda)$ for all $\lambda \geq 0$.
This result provides a global structural characterization of the overlapping group lasso estimator and underlies its interpretation as a greedy algorithm.
\begin{lemma} \label{lem:unique}
Assume that $y_{ij} \neq 0$ for $i, j \in [n], i\neq j$. 
Given a solution $\valpha(\lambda_0)$ to the group lasso at $\lambda = \lambda_0 > 0$, 
there exists a continuous, 
elementwise decreasing function $\valpha(\lambda)$ defined for all $\lambda \geq 0$, 
such that $\valpha(\lambda)$ solves the group lasso problem for each $\lambda$.
\end{lemma}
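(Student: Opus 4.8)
The plan is to build the global solution path by a continuation argument, stitching together the local paths produced by Lemmas~\ref{lem:decreasing}--\ref{lem:keep-decreasing}. First I would fix the starting data $\valpha(\lambda_0)$ and let $E(\lambda_0)$ be its active set as in Equation~\eqref{eq:node-glasso-partition}. If $|E(\lambda_0)| < n$, Lemma~\ref{lem:decreasing} gives a unique $C^1$ extension $\valpha(\lambda)$ on a two-sided neighborhood $(\lambda_0-\delta,\lambda_0+\delta)$ that is strictly decreasing in each active coordinate; if $E(\lambda_0)=[n]$, Lemma~\ref{lem:keep-decreasing} supplies the same conclusion (and a constant derivative when $\mY$ has zero diagonal). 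The strategy is then to decrease $\lambda$ from $\lambda_0$ toward $0$, following this local path as long as the partition $(E_1,E_2,J)$ stays fixed, and to handle the finitely many $\lambda$ at which the partition changes.

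The core of the argument is a maximality/continuation step. Define $\lambda_* = \inf\{\lambda \in [0,\lambda_0] : \text{a continuous elementwise-decreasing solution path exists on } [\lambda,\lambda_0]\}$. I would show $\lambda_* = 0$ and that the infimum is attained. Suppose $\lambda_* > 0$. Since along the path each $\alpha_i(\lambda)$ is monotone in $\lambda$ and bounded (the KKT equations~\eqref{eq:node-glasso-eql} force $\alpha_i + \lambda \le \sqrt{y_{ii}^2 + \sum_j y_{ij}^2}$), the left-hand limit $\valpha(\lambda_*^+)$ exists; by continuity of the defining equations and inequalities in~\eqref{eq:node-glasso-partition}, this limit solves the KKT system at $\lambda = \lambda_*$, so the path extends continuously to $\lambda_*$. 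Now re-read off the active set $E(\lambda_*)$ of this limiting solution (which may differ from the active set just above $\lambda_*$: coordinates can enter when a strict KKT inequality in $J$ becomes an equality, or leave when an $\alpha_i$ hits $0$). Apply Lemma~\ref{lem:decreasing} or~\ref{lem:keep-decreasing} \emph{again} at $\lambda_*$ with this new active set to obtain a local extension to $(\lambda_*-\delta',\lambda_*]$, contradicting minimality of $\lambda_*$. The same reasoning lets us reach $\lambda = 0$ exactly. Monotonicity of the full path across the crossover points follows because on each maximal interval of constant partition the derivative is negative by the cited lemmas, and at a crossover the newly active coordinate starts at $0$ (so is nondecreasing as $\lambda$ decreases below that point) while previously active coordinates continue to decrease; concatenating monotone pieces with matching endpoint values gives a globally monotone $\valpha(\cdot)$.

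One technical point to be careful about: the hypotheses of Lemmas~\ref{lem:decreasing}--\ref{lem:keep-decreasing} require $y_{ij}>0$ for $i\neq j$, whereas the statement of Lemma~\ref{lem:unique} only assumes $y_{ij}\neq 0$. I would note that the implicit-function and diagonal-dominance arguments in those lemmas only use $y_{ij}^2>0$ --- the sign of $y_{ij}$ is irrelevant because the KKT equations~\eqref{eq:node-glasso-eql}, the Jacobian~\eqref{eq:partF-partA}, and the matrix $\mS$ all depend on $\mY$ only through $y_{ij}^2$. So the cited lemmas apply verbatim under the weaker assumption $y_{ij}\neq 0$, and I would record this remark explicitly rather than re-proving them. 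A second point is that finitely many crossover values of $\lambda$ occur: since each crossover strictly changes the active set $E(\lambda)\subseteq[n]$ and, by the strict monotonicity on constant-partition intervals, once a coordinate becomes active it cannot re-deactivate (its $\alpha_i$ strictly increases as $\lambda$ decreases), the active set is monotone along the path, so there are at most $n$ such events --- ensuring the continuation terminates at $\lambda=0$ in finitely many steps.

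The main obstacle I anticipate is verifying that the limiting vector $\valpha(\lambda_*^+)$ genuinely satisfies the \emph{full} KKT system~\eqref{eq:node-glasso-KKT} at $\lambda_*$, including the inequality constraints~\eqref{eq:node-glasso-kkt2-ineq} for the inactive indices, and that the re-identified active set at $\lambda_*$ is consistent (no index is simultaneously forced in and out). This requires a careful closedness argument: the solution set of the group lasso is the set of global minimizers of a strictly convex-in-$\mB$ objective, so it is a single point for each $\lambda$ and varies continuously in $\lambda$ by standard sensitivity analysis of parametric convex programs; invoking this gives the limiting consistency for free, at the cost of reconciling the $\valpha$-parametrization (which is only piecewise smooth) with the underlying $\mB(\lambda)$ (which is globally continuous). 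I would lean on the convexity/uniqueness of $\mB(\lambda)$ to anchor the continuity, then translate back to $\valpha$ via $\alpha_i = \|\vv_i\|_2$ and the identities in Equation~\eqref{eq:node-glasso-kkt2-eq}.
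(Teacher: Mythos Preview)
Your proposal is correct and follows essentially the same continuation strategy as the paper: extend the local path from Lemmas~\ref{lem:decreasing}--\ref{lem:keep-decreasing} piecewise as $\lambda$ decreases, use the derivative bounds (Lemma~\ref{lem:grad-bound}) to pass to limits at transition points, and exploit monotonicity of the active set to guarantee only finitely many transitions before reaching $\lambda=0$. The paper phrases this as an explicit iteration over ``breaking points'' $\lambda_1<\lambda_0$ rather than your infimum argument, and it does not spell out your observation that the lemmas only use $y_{ij}^2$ (so $y_{ij}\neq 0$ suffices) or your convexity-based backup for KKT consistency at crossover points, but the underlying mechanism is the same.
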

\begin{proof}
First, observe that for any $j \in [n]$, 
the left-hand side of the KKT inequality condition in Equation~\eqref{eq:node-glasso-kkt2-ineq} is upper bounded by $\|\vy_j\|_2^2$.
Thus, for $\lambda > \lambda_{\max} := \max_{j \in [n]} \|\vy_j\|_2$, the inequality cannot be satisfied with $\alpha_j > 0$, 
and hence the only solution is the all-zero vector $\valpha(\lambda) = 0$. 
Therefore, we only need to construct a valid solution path over the compact interval $\lambda \in [0, \lambda_{\max}]$.

Fix $\lambda_0 \in (0, \lambda_{\max}]$ and assume we are given a solution $\valpha(\lambda_0)$. 
We aim to extend this solution path continuously and monotonically to the left over the interval $[0, \lambda_0]$.
Recall that we partition $[n]$ into $E_1(\lambda)$, $E_2(\lambda)$ and $J(\lambda)$ according to Equation~\eqref{eq:node-glasso-partition}. 
We call the indices $E(\lambda_0) = E_1(\lambda_0) \cup E_2(\lambda_0)$ the set of \emph{active groups} at $\lambda_0$. 
We let $J(\lambda_0)$ be the complement of $E(\lambda_0)$.
We distinguish two cases based on whether $J(\lambda_0)$ is empty.
    

If $J(\lambda_0) = \emptyset$, then all groups are already active, and the entire index set participates in the solution. 
In this case, we invoke Lemma~\ref{lem:keep-decreasing} to construct the full solution path on $[0, \lambda_0]$. 
Otherwise, as we decrease $\lambda$ from $\lambda_0$, eventually some inequality constraints in Equation~\eqref{eq:node-glasso-kkt2-ineq} become active, and new group of coordinates may enter the support.
Let $\lambda_1 < \lambda_0$ be the first such value where this occurs.
We refer to $\lambda_1$ as a \emph{breaking point}. 

We now proceed to extend the solution from $\lambda_0$ to $\lambda_1$ so that the solution path is continuous over the interval $[\lambda_1,\lambda_0]$.
 We consider the solution $\valpha_{E(\lambda_0)}(\lambda)$ defined on the active set $E(\lambda_0)$.
By Lemma~\ref{lem:decreasing}, there exists a neighborhood of $\lambda_0$ over which $\valpha_{E(\lambda_0)}(\lambda)$ is unique, continuous, and strictly decreasing. 
Moreover, Lemma~\ref{lem:grad-bound} ensures that this solution path has bounded derivatives, so the limit
\begin{equation*}
    \valpha_{E(\lambda_0)}(\lambda_0-\delta) := \lim_{\lambda \to (\lambda_0-\delta)^+} \valpha_{E(\lambda_0)}(\lambda)
\end{equation*}
exists and satisfies the partition KKT conditions at $\lambda = \lambda_0 - \delta$.
Thus, the solution can be further extended.
By repeating this argument, we can continue to extend $\valpha_{E(\lambda_0)}(\lambda)$ leftward until reaching $\lambda_1$, the first point at which a new group of coordinates is activated. 
By setting $\valpha_{E(\lambda_0)^c}(\lambda) = 0$ for $\lambda \in [\lambda_1, \lambda_0]$, we obtain a complete solution $\valpha(\lambda)$ over $[\lambda_1, \lambda_0]$ that is continuous and decreasing in each coordinate.

Now, we aim to iterate this process to continue the solution path to values smaller than $\lambda_1$.
At $\lambda = \lambda_1$, new group become active, corresponding to $E_2(\lambda_1) \neq \emptyset$.
We now recursively apply the argument just given above with $\lambda_1$ in place of $\lambda_0$.
Each such application of this argument adds new group into the active set, and we continue until eventually $J(\lambda) = \emptyset$.
    
Once $J(\lambda) = \emptyset$, the entire index set participates in the solution.
In this regime, Lemma~\ref{lem:keep-decreasing} guarantees the existence of a globally defined, strictly decreasing solution path $\valpha(\lambda)$ over $[0, \lambda]$. 
This completes the extension over the entire interval $[0, \lambda_0]$.
\end{proof}
 

To summarize, we have shown that for the overlapping group lasso estimator, the solution path $\valpha(\lambda)$ is continuous and strictly decreasing in $\lambda$, starting from the maximal regularization level $\lambda_{\max} := \max_{i \in [n]} \|\vy_i\|_2$. 
At this level, the unique solution is $\valpha(\lambda_{\max}) = \mathbf{0}$, and Lemma~\ref{lem:unique} guarantees that this solution can be extended to all $\lambda \geq 0$ as a continuous and elementwise decreasing function.
    
For theoretical purposes, we focus on this canonical solution path initialized at $\lambda_{\max}$ and interpret group lasso as a selection procedure: once a group becomes active at some $\lambda = \bar{\lambda}$, it remains active for all smaller values of $\lambda$. 
This viewpoint allows us to formalize support recovery guarantees based on the structure of $\valpha(\lambda)$.

\subsection{Empirical Results of the Solution Path}

To validate the theoretical results developed in Section~\ref{sec:glasso}, we conduct a simple simulation study illustrating the behavior of the group lasso solution path as a function of the regularization parameter $\lambda$.
We fix $n=500$ and generate a symmetric matrix $\mY \in \Sym(n)$ according to the additive model $\mY = \mBstar + \mW$, where $\mW$ is a symmetric noise matrix with i.i.d.~entries from $\calN(0,1)$ on and above the diagonal.
The signal matrix $\mBstar$ is constructed as $\mB_0 + \mB_0^\top$, where $\mB_0$ contains $m=5$ randomly selected nonzero rows, each with entries independently drawn from $\calN(0, \sigma_B^2)$ and $\sigma_B = 1.9 \, n^{-1/4} \log^{1/4} n$. 
Let $\Istar$ denote the indices of the nonzero rows of $\mB_0$. 
By construction, $\mBstar$ is a node-sparse symmetric matrix, and with high probability, the row norms corresponding to indices in $\Istar$ are of order $n^{1/4} \log^{1/4} n$, matching the minimax signal strength threshold (see Remark~\ref{rem:mle-snr}). 

We compute the group lasso estimator over a grid of $T = 60$ regularization parameters $\lambda$ spanning the interval
\begin{equation*}
    \lambda_{\min} := 0.85 \cdot \min_{i \in [n]} \|\vy_{i}\|_2, \quad \text{and} \quad \lambda_{\max} := \max_{i \in [n]} \|\vy_{i}\|_2,
\end{equation*}
where $\vy_i$ denotes the $i$-th row of $\mY$.
For each value of $\lambda$, we solve the optimization problem in Equation~\eqref{eq:node-glasso} using the ADMM algorithm described in Algorithm~\ref{alg:admm-glasso}. 
We then compute $\alpha_i(\lambda)$ for each $i \in [n]$, where $\alpha_i$ is defined as the $\ell_2$ norm of the $i$-th row of the estimated matrix $\mV(\lambda)$ (see Equation~\eqref{eq:alpha_i:define}). 
This yields the full solution path $\valpha(\lambda)$, which characterizes how the magnitude and support of the estimated rows evolve with the penalty parameter $\lambda$.

Figure~\ref{fig:glasso-path} displays the resulting solution paths. 
Each curve represents $\alpha_i(\lambda)$ for a fixed row $i \in [n]$. 
Orange curves correspond to signal nodes ($i \in I^\star$), while blue curves correspond to non-signal nodes ($i \notin I^\star$). 
For visual reference, we include two guide lines: a dashed red line with slope $-1$ and a dotted red line with slope $-\frac{1}{2}$. 
The $x$-axis represents the regularization parameter $\lambda$, while the $y$-axis represents the $\ell_2$ norm $\alpha_i(\lambda)$ of the estimated rows $\vv_i$.
The empirical findings are in strong agreement with our theoretical analysis:
\begin{enumerate}
    \item The solution paths are continuous and strictly decreasing in $\lambda$, as evidenced by the monotonic decay of all curves. 
    This behavior reflects the selection mechanism of the group lasso estimator: once a group becomes active at some threshold $\lambda = \bar{\lambda}$, it remains active for all smaller values of $\lambda$ along a solution path.
    This observation is consistent with the global monotonicity result established in Lemma~\ref{lem:unique}.
    \item For small values of $\lambda$, the solution paths tend to follow a slope of $-\frac{1}{2}$, closely tracking the dotted reference line in Figure~\ref{fig:glasso-path}. 
    This matches the theoretical prediction in Lemma~\ref{lem:keep-decreasing}, which shows that when all groups are active and the diagonal entries vanish, the group norms decay linearly with slope $-\frac{1}{2}$.
    For large values of $\lambda$, the paths approximately follow a slope of $-1$, as indicated by the dashed line, which agrees with Lemma~\ref{lem:grad-bound}. 
    This is expected since, with high probability, the left-hand side of Equation~\eqref{eq:grad-bound} concentrates near one.
    \item Finally, we observe that the curves corresponding to rows in the true support set $\Istar$ are the first to become active as $\lambda$ decreases.
    This supports the conclusion that, when the signal strength is sufficiently large, the group lasso estimator can correctly recover the ground truth node support $\Istar$.
    In particular, the empirical behavior suggests that recovery is achievable at the minimax signal strength threshold, in line with the sufficient conditions established in Theorem~\ref{thm:glaso-sufficient-crude}.
    We provide a detailed analysis of support recovery guarantees in Section~\ref{sec:glasso-support}.
\end{enumerate}

In summary, these findings provide compelling empirical support for the theoretical results derived in Section~\ref{sec:glasso}, and set the stage for our subsequent investigation into the theoretical guarantees for support recovery in Section~\ref{sec:glasso-support}.

\begin{figure}[H]
    \centering
    \includegraphics[width=0.9\textwidth]{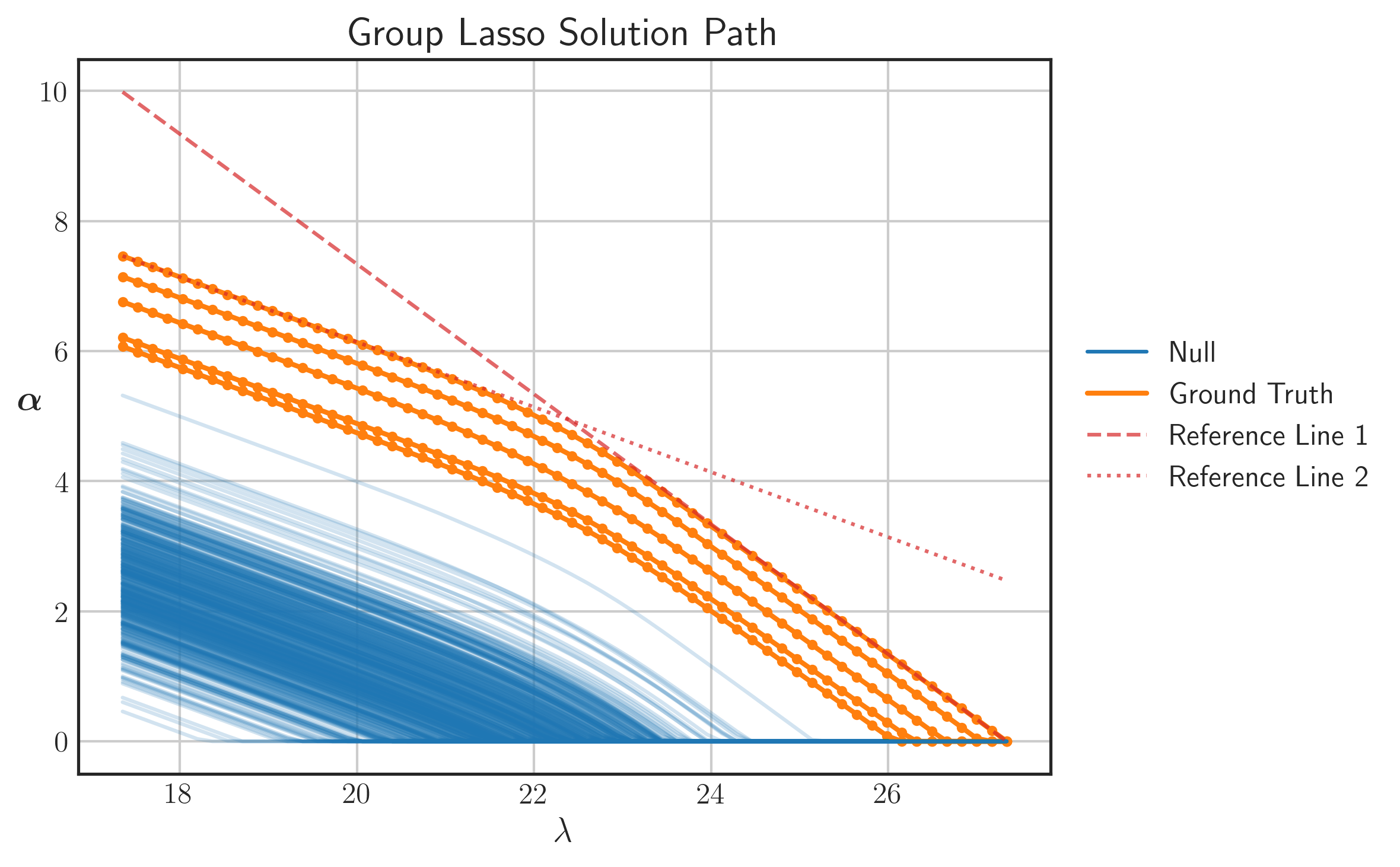}
    \caption{Solution path of the overlapping group lasso estimator. Each curve corresponds to a group of coordinates. 
    The $y$-axis shows the value of each $\alpha_i$ while the $x$-axis shows the value of the regularization parameter $\lambda$.
     Orange curves correspond to signal nodes ($i \in I^\star$), while blue curves correspond to non-signal nodes ($i \notin I^\star$). 
     The dashed red line has a slope of $-1$ and the dotted red line has a slope of $-\frac{1}{2}$. }
    \label{fig:glasso-path}
\end{figure}


\section{Group Lasso Support Recovery} \label{sec:glasso-support}

With the solution path results established in Section~\ref{sec:glasso}, we now turn to establishing conditions under which the group Lasso correctly recovers the support of the underlying signal.
Throughout this section, we retain the notation introduced earlier in Section~\ref{sec:glasso} and, 
without loss of generality, assume the true support is given by $\Istar = [m]$.
We consider the case of applying the group Lasso estimator to a single observed matrix
\begin{equation} \label{eq:single-observed}
    \mYtil = \mB^\star + \mDeltatil + \mWtilo,
\end{equation}
where $\mYtil$ is the output from Step~\ref{alg:general:step:select} in Algorithm~\ref{alg:general}. 
We will write $\mYtil = (y_{ij})_{1\leq i, j \leq n}$, where $y_{ij}$ is the $(i,j)$-th entry of $\mYtil$, to keep the notation simple and consistent with Section~\ref{sec:glasso}.
As before, we assume without loss of generality that the selected index set is $\calU = [n]$ as discussed at the beginning of Section~\ref{sec:one-step-recovery} for a discussion on this. 

Our goal is to prove the existence of a regularization parameter $\lambda \geq 0$ such that the corresponding group Lasso solution $\valpha(\lambda)$ satisfies the following:
\begin{itemize}
    \item {\bf Exact support recovery.}  $\alpha_i > 0$ for all $i\in [m]$
    \item {\bf KKT equalities for active groups.} For all $i \in [m]$,
    \begin{equation}\label{eq:KKT-equal-active-m}
        \sum_{k \in [m], k\neq i}\left(\frac{\lambda}{\alpha_k+\alpha_i+\lambda}\right)^2 y_{i k }^2+\left(\frac{\lambda}{\alpha_i+\lambda}\right)^2 \left(y_{ii}^2 + \sum_{k \in [m]^c} y_{ik }^2\right) = \lambda^2 .
    \end{equation}
    \item {\bf KKT strict inequalities for inactive groups.}
	For all $\ell \in [m]^c$,
    \begin{equation}\label{eq:KKT-ineq-inactive}
        \sum_{k \in [m]}\left(\frac{\lambda}{\alpha_k+\lambda}\right)^2 y_{\ell k}^2+\sum_{k \in [m]^c} y_{\ell k}^2 < \lambda^2 .
    \end{equation}
\end{itemize}
Establishing these conditions will allow us to guarantee that the support recovered by the group Lasso estimator exactly matches the true support of the underlying signal. 

We start with a useful upper bound on $\alpha_i(\lambda)$. 
By Lemma \ref{lem:grad-bound}, the group norm satisfies the derivative bound
\begin{equation*}
    \frac{ d\alpha_i }{ d\lambda } \geq -1,
\end{equation*}
For $i\in[n]$, let $\lambdabar_i$ denote the largest value of $\lambda$ such that the $i$-th group is active, i.e., 
\begin{equation*}
\alpha_i(\lambda) > 0 \; \text{ for all } \lambda < \lambdabar_i,
\quad \text{and} \quad
\alpha_i(\lambdabar_i) = 0, \; \text{ for all } \lambda \geq \lambdabar_i.
\end{equation*}
Then, for any $0 \leq \lambda < \lambdabar_i$, we have
\begin{equation} \label{eq:alpha_j-upper}
    \alpha_i(\lambda) = -\int_{\lambda}^{\lambdabar_{i}} \frac{d\alpha_i(u)}{d\lambda} du \leq \lambdabar_{i} - \lambda .
\end{equation}

To establish the existence of $(\alpha_i)_{i\in [n]}$ satisfying the KKT conditions for support recovery, we analyze a simplified setting.
Instead of directly solving the group Lasso problem on the full matrix $\mYtil$ as given in Equation~\eqref{eq:single-observed}, we consider an auxiliary problem with group Lasso applied to $\mYtil_{\Istar} \in \R^{n \times n}$, where
\begin{equation*}
\Ytil_{\Istar, ij} = \begin{cases}
    \Ytil_{ij}, & \text{ if } i \in \Istar, j \in [n] \text{ or } i \in [n], j \in \Istar, \\
    0, & \text{ otherwise.}
\end{cases}
\end{equation*}
The following lemma provides a sufficient condition under which the group Lasso solution applied to $\mYtil_{\Istar}$ selects all and only selects the first $m$ groups.
The proof is provided in Section~\ref{sec:proof-lem-alter-lasso}.
\begin{lemma}\label{lem:alter-lasso}
Suppose that $n > 3m$ and $\kappa^2 \sqrt{\mut r} = O(n^{1/4} \log^{1/4} n)$ and assume without loss of generality that $\Istar = [m]$. 
If 
\begin{equation} \label{eq:easy-glasso-recovery-cond}
\min_{i \in [m]} \sum_{k=m+1}^n B_{ik}^{\star 2} + \frac{1}{6} n\sigma^2 
\geq \frac{n^{1/4}+1}{n^{1/4} - 1}
	\max_{\ell \in [m]} \sum_{k=1}^m B_{\ell k}^{\star 2},
\end{equation}
then for sufficiently large $n$, 
there exists $\lambda > 0$ such that the group Lasso applied to $\mYtil_{\Istar}$ selects exactly the first $m$ groups with probability at least $1 - O(n^{-6})$.
That is, there exists a solution $(\alpha_1(\lambda), \alpha_2(\lambda), \ldots, \alpha_n(\lambda))$ such that satisfies the KKT equalities in Equation~\eqref{eq:KKT-equal-active-m} for all $i \in [m]$ and the KKT inequalities in Equation~\eqref{eq:KKT-ineq-inactive} for all $\ell \in [m]^c$ when applied to $\mYtil_{\Istar}$.
\end{lemma}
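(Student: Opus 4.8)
\textbf{Proof proposal for Lemma~\ref{lem:alter-lasso}.}

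The plan is to exhibit an explicit regularization level $\lambda$ and then verify that the KKT system~\eqref{eq:KKT-equal-active-m}--\eqref{eq:KKT-ineq-inactive} has a solution $\valpha(\lambda)$ with active set exactly $[m]$, using the solution-path structure developed in Section~\ref{sec:glasso}. First I would record the key reduction: because we apply group Lasso to $\mYtil_{\Istar}$, every row indexed by $\ell \in [m]^c$ has all of its entries equal to $\Ytil_{\Istar, \ell k} = \Ytil_{\ell k}\indic\{k \in [m]\}$, so the KKT inequality~\eqref{eq:KKT-ineq-inactive} for such $\ell$ simplifies to $\sum_{k\in[m]} (\lambda/(\alpha_k+\lambda))^2 \Ytil_{\ell k}^2 < \lambda^2$. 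Running the canonical solution path of Lemma~\ref{lem:unique} starting from $\lambda_{\max}$, as $\lambda$ decreases the first groups to become active are those with the largest row norms; our job is to show that at the moment the $m$-th active group saturates, the path has not yet activated any group in $[m]^c$. Concretely, I would pick $\lambda$ to be a value slightly below the $m$-th breaking point but above the $(m{+}1)$-st, and argue that at this $\lambda$ the active set is exactly $[m]$; this is where the hypothesis~\eqref{eq:easy-glasso-recovery-cond} enters.

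The second step is to estimate the row norms of $\mYtil_{\Istar}$ under the signal-plus-noise model. For $i \in [m]$, the $i$-th row of $\mYtil_{\Istar}$ is the full row $\Ytil_{i,\cdot} = \mBstar_{i,\cdot} + \mDeltatil_{i,\cdot} + \mWtilo_{i,\cdot}$, so $\|\Ytil_{\Istar, i, \cdot}\|_2^2 = \|\mBstar_{i,\cdot}\|_2^2 + n\sigma^2(1+o_{\Pr}(1)) + (\text{cross terms})$; applying Theorem~\ref{thm:delta_row} gives $\|\mDeltatil_{i,\cdot}\|_2 \lesssim \sigma\sqrt{n}\|\mUstar_{i,\cdot}\|_2 + \sigma\sqrt{r\log n}$, and the condition $\kappa^2\sqrt{\mut r} = O(n^{1/4}\log^{1/4}n)$ together with the incoherence bound $\|\mUstar_{i,\cdot}\|_2 \lesssim n^{-1/4}$ (from Step~\ref{alg:general:step:select}) ensures this is $o(\sigma\sqrt{n})$ and in fact $o$ of the signal scale; $\chi^2$-concentration (the standard Bernstein bound, Lemma~\ref{lem:bern-ineq}) controls the Gaussian part uniformly over $i$. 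For $\ell \in [m]^c$, the $\ell$-th row of $\mYtil_{\Istar}$ retains only the $m$ columns in $[m]$, so $\|\Ytil_{\Istar, \ell, \cdot}\|_2^2 = \sum_{k\in[m]} \Ytil_{\ell k}^2 \le \sum_{k\in[m]} B_{\ell k}^{\star 2} + m\sigma^2(1+o_{\Pr}(1)) + (\text{lower order})$, again uniformly in $\ell$ by a union bound over the $n$ rows at the $n^{-6}$ probability level. The separation needed is precisely that $\min_{i\in[m]}\|\Ytil_{\Istar,i,\cdot}\|_2$ exceeds $\max_{\ell\in[m]^c}\|\Ytil_{\Istar,\ell,\cdot}\|_2$ by a comfortable factor; the factor $(n^{1/4}+1)/(n^{1/4}-1)$ in~\eqref{eq:easy-glasso-recovery-cond} is exactly the slack needed to absorb the upper bound $\alpha_i(\lambda) \le \lambdabar_i - \lambda$ from~\eqref{eq:alpha_j-upper} when translating the "large row norm" statement into the KKT inequality for inactive groups.

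For the third step I would fix $\lambda$ in the window between the $m$-th and $(m{+}1)$-st breaking points, apply Lemma~\ref{lem:alter-lasso}'s internal consistency check: by Lemma~\ref{lem:unique} there is a continuous strictly decreasing path on which groups $1,\dots,m$ are active and satisfy the KKT equalities~\eqref{eq:KKT-equal-active-m}, with $\alpha_i(\lambda) \in (0, \lambdabar_i - \lambda]$. For each $\ell \in [m]^c$, I then bound the left side of~\eqref{eq:KKT-ineq-inactive}: since $\alpha_k \ge 0$, $\sum_{k\in[m]} (\lambda/(\alpha_k+\lambda))^2 \Ytil_{\ell k}^2 \le \sum_{k\in[m]} \Ytil_{\ell k}^2 = \|\Ytil_{\Istar,\ell,\cdot}\|_2^2$, and I need this $< \lambda^2$. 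Meanwhile, evaluating~\eqref{eq:KKT-equal-active-m} at the smallest active $\alpha_i$ and using $\alpha_k \le \lambdabar_i - \lambda \le C\lambda$-type control shows $\lambda^2 \gtrsim \|\Ytil_{\Istar,i,\cdot}\|_2^2 \cdot (\lambda/(\alpha_i+\lambda+\max_k\alpha_k))^2$, i.e., $\lambda$ is of order the minimum active row norm up to the $n^{1/4}$-type factors; combined with the separation from~\eqref{eq:easy-glasso-recovery-cond} this gives the strict inequality. \textbf{The main obstacle} I anticipate is the second step's uniform control of the $\alpha_k(\lambda)$'s appearing inside the denominators of~\eqref{eq:KKT-equal-active-m}: the KKT equalities couple all $m$ active coordinates nonlinearly, so one cannot simply read off $\lambda$ from a single row, and making the window $(\lambdabar_{m+1}, \lambdabar_m)$ nonempty and wide enough to house a valid $\lambda$ requires pinning down how close $\lambdabar_m$ (a breaking point determined implicitly by when the $m$-th group activates) sits to $\min_{i\in[m]}\|\Ytil_{\Istar,i,\cdot}\|_2$ versus how close $\lambdabar_{m+1}$ sits to $\max_{\ell\in[m]^c}\|\Ytil_{\Istar,\ell,\cdot}\|_2$; this is exactly the book-keeping that the $(n^{1/4}+1)/(n^{1/4}-1)$ slack is designed to make go through, and it is what forces the somewhat unusual form of hypothesis~\eqref{eq:easy-glasso-recovery-cond}.
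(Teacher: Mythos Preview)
Your proposal outlines a plausible strategy, but you miss the key simplification that makes the paper's proof direct and sidesteps your ``main obstacle'' entirely. The paper does not work with breaking points or full row norms of $\mYtil_{\Istar}$; instead it compares two more targeted quantities: $\min_{i\in[m]}\sum_{k=m+1}^n y_{ik}^2$ (the portion of row $i$ outside $[m]$) versus $\max_{\ell\in[m]^c}\sum_{k=1}^m y_{\ell k}^2$ (which is the entire row norm for $\ell \in [m]^c$). Whenever the first exceeds the second, one picks any $\lambdatil^2$ strictly between them. At this $\lambdatil$, the inactive-group KKT inequality for every $\ell \in [m]^c$ follows immediately from $\sum_{k\in[m]}(\lambdatil/(\alpha_k+\lambdatil))^2 y_{\ell k}^2 \le \sum_{k\in[m]} y_{\ell k}^2 < \lambdatil^2$, \emph{regardless of the values of the $\alpha_k$}. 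Conversely, if some $i \in [m]$ were inactive, then since all of $[m]^c$ is already inactive the KKT inequality for $i$ would force $\lambdatil^2 \ge \sum_{k \in E_1^c} y_{ik}^2 \ge \sum_{k\in[m]^c} y_{ik}^2$, contradicting the choice of $\lambdatil$. No control of the $\alpha_k$'s, no breaking-point ordering, and no use of~\eqref{eq:alpha_j-upper} is needed.

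Your route via breaking points and the bound $\alpha_k \le \lambdabar_k - \lambda$ is not incorrect but is needlessly indirect, and your diagnosis of the factor $(n^{1/4}+1)/(n^{1/4}-1)$ is wrong: it does not come from denominator control in the KKT system. It arises purely in the concentration step (Lemma~\ref{lem:portmanteau}), where the cross term $C\sigma n^{1/4}\log^{1/4} n \cdot(\sum B^{\star 2})^{1/2}$ is absorbed via the inequality $2ab \le n^{-1/4}a^2 + n^{1/4} b^2$, producing $(1 \pm n^{-1/4})$ multipliers on the signal sums whose ratio is exactly $(n^{1/4}+1)/(n^{1/4}-1)$. The $\frac{1}{6}n\sigma^2$ term in~\eqref{eq:easy-glasso-recovery-cond} likewise absorbs the $(n-m)\sigma^2$ versus $m\sigma^2$ noise-floor gap and the $C\sigma^2 n^{3/4}\log^{1/2} n$ concentration remainder. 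Once you identify the right interval in which to place $\lambdatil^2$, the argument is two lines of KKT bookkeeping followed by an appeal to Lemma~\ref{lem:portmanteau}.
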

Let $\lambdabar_{\pi(m)}$ denote the largest regularization parameter for which all of the first $m$ groups are selected by the group Lasso applied to $\mYtil_{\Istar}$. 
In other words, $\lambdabar_{\pi(m)}$ is the largest value of $\lambda$ such that equation~\eqref{eq:partition-eq} holds and $[m] \subseteq E(\lambda)$ when applied to $\mYtil_{\Istar}$. 
More generally, for each $k \in [m]$, we define $\lambdabar_{\pi(k)}$ as the largest value of $\lambda$ such that at least $k$ groups from the true support $[m]$ are selected, and the newly selected group is $\pi(k)$.
Here, the index function $\pi: [m] \to [m]$ records the order in which the true support groups enter the active set along the solution path: 
$\pi(k)$ identifies the group that is selected when $\lambda$ decreases to $\lambdabar_{\pi(k)}$.
To better understand this entry sequence, we next establish that the largest and smallest regularization levels at which groups are selected, $\lambdabar_{\pi(1)}$ and $\lambdabar_{\pi(m)}$, are close to one another. 

\begin{lemma}\label{lem:alpha-lower}
Suppose $\kappa^2 \sqrt{\mut r}  = o(n^{1/4} \log ^{-1/4}n)$ and $m = \Omega(\log n)$.
Further, assume that
\begin{equation} \label{eq:growth-condition-on-Bstar}
\min_{i\in [m]}\|\mB^\star_{i, \cdot}\|^2_2
\asymp \max_{i \in [m]} \|\mB^\star_{i, \cdot}\|^2_2 
= \Theta(\sigma^2 \sqrt{n \log n}).
\end{equation}
Then with probability at least $1 - O(n^{-6})$, we have
\begin{equation*}
    \left(\frac{\lambdabar_{\pi(m)}}{\lambdabar_{\pi(1)}}\right)^2 \geq 1 - C\sqrt{\frac{\log n}{n}}. 
\end{equation*}
\end{lemma}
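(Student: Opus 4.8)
The plan is to track, along the group Lasso solution path applied to $\mYtil_{\Istar}$, the order in which the $m$ true-support groups are activated, and to show that the activation window $[\lambdabar_{\pi(m)},\lambdabar_{\pi(1)}]$ is multiplicatively short. I would first assemble the concentration facts that hold with probability $1-O(n^{-6})$: for $i\in[m]$, $\|\mYtil_{i,\cdot}\|_2^2 = n\sigma^2 + \|\mBstar_{i,\cdot}\|_2^2 + O(\sigma^2\sqrt{n\log n})$, which under~\eqref{eq:growth-condition-on-Bstar} equals $n\sigma^2\bigl(1+O(\sqrt{\log n/n})\bigr)$; the $[m]^c$-row norms of $\mYtil_{\Istar}$ are $O\bigl(\sigma\sqrt{m}+\sigma(n\log n)^{1/4}\bigr)=o(\sigma\sqrt{n})$; $S_{\max}:=\max_{i\in[m]}\sum_{j\in[m]}\Ytil_{ij}^2 = o(n\sigma^2)$; and $\sum_{j\in[m]^c}\Ytil_{ij}^2 \ge n\sigma^2/8$ for each $i\in[m]$. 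These follow from $\chi^2$/Bernstein concentration and a union bound over the at most $n$ rows, with Theorem~\ref{thm:delta_row} controlling the $\mDeltatil$ contributions; here $\kappa^2\sqrt{\mut r}=o(n^{1/4}\log^{-1/4}n)$, $m=\Omega(\log n)$ and $m=o(n)$ make all error terms subdominant. From the solution-path analysis of Section~\ref{sec:glasso} I would also use that the first activated group is the one of largest row norm, so $\lambdabar_{\pi(1)}=\max_{i\in[m]}\|\mYtil_{i,\cdot}\|_2$; that an active group obeys $\alpha_j(\lambda)\le\lambdabar_j-\lambda$ (Equation~\eqref{eq:alpha_j-upper}); and that an $[m]^c$-group activates only below its row norm, so $\lambdabar_\ell\le C\sigma\sqrt m$ and hence $E(\lambda)\subseteq[m]$ whenever $\lambda>C\sigma\sqrt m$.

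The first substantive step is a crude bound $\lambdabar_{\pi(m)}^2 \ge n\sigma^2/8$. If some $\ell\in[m]$ were inactive at a level $\lambda\in(C\sigma\sqrt m,\sqrt{n\sigma^2/8})$, then $E(\lambda)\subseteq[m]$, and discarding the nonnegative tangential terms in the KKT inactivity inequality \eqref{eq:node-glasso-kkt2-ineq} for $\ell$ gives $\lambda^2 \ge \sum_{j\notin E(\lambda)}\Ytil_{\ell j}^2 \ge \sum_{j\in[m]^c}\Ytil_{\ell j}^2 \ge n\sigma^2/8$, a contradiction; hence all of $[m]$ is active throughout that interval, and letting $\lambda\uparrow\sqrt{n\sigma^2/8}$ yields the bound. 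Consequently $\lambdabar_{\pi(k)}\ge\lambdabar_{\pi(m)}\gg C\sigma\sqrt m$ for every $k\le m$, so $E(\lambda)\subseteq[m]$ for all $\lambda\ge\lambdabar_{\pi(m)}$.

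The heart of the proof is the refined bound. At $\lambda=\lambdabar_{\pi(k)}$ the group $\pi(k)$ is on its activation boundary, so $\alpha_{\pi(k)}(\lambdabar_{\pi(k)})=0$ by continuity of the path, and the KKT equality \eqref{eq:node-glasso-eql} for $\pi(k)$ can be written, with $E'\subseteq[m]\setminus\{\pi(k)\}$ the set of the $k-1$ already-active groups and $S:=\sum_{j\in E'}\Ytil_{\pi(k),j}^2\le S_{\max}$, as
\begin{equation*}
\|\mYtil_{\pi(k),\cdot}\|_2^2 - S + \sum_{j\in E'}\frac{\lambdabar_{\pi(k)}^2}{(\alpha_j+\lambdabar_{\pi(k)})^2}\,\Ytil_{\pi(k),j}^2 \;=\; \lambdabar_{\pi(k)}^2 .
\end{equation*}
Bounding $\alpha_j\le\lambdabar_j-\lambdabar_{\pi(k)}\le\lambdabar_{\pi(1)}-\lambdabar_{\pi(k)}$ for $j\in E'\subseteq[m]$ and rearranging gives
\begin{equation*}
\frac{\lambdabar_{\pi(k)}^2}{\lambdabar_{\pi(1)}^2} \;\ge\; \frac{\|\mYtil_{\pi(k),\cdot}\|_2^2 - S}{\lambdabar_{\pi(1)}^2 - S}.
\end{equation*}
Since $x\mapsto(a-x)/(b-x)$ is nonincreasing on $[0,a)$ when $a\le b$, $S\le S_{\max}=o(n\sigma^2)$, and both $\|\mYtil_{\pi(k),\cdot}\|_2^2$ and $\lambdabar_{\pi(1)}^2=\max_{i\in[m]}\|\mYtil_{i,\cdot}\|_2^2$ lie in $n\sigma^2\bigl(1\pm c_1\sqrt{\log n/n}\bigr)$, the right-hand side is $\ge 1-C\sqrt{\log n/n}$. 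Taking $k=m$ gives $(\lambdabar_{\pi(m)}/\lambdabar_{\pi(1)})^2\ge 1-C\sqrt{\log n/n}$, as claimed.

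The main obstacle I expect is organizing these steps to avoid circularity: the crude bound $\lambdabar_{\pi(m)}^2\ge n\sigma^2/8$ must be established before one may assert $E(\lambdabar_{\pi(k)})\subseteq[m]$, which is exactly what renders $S$ negligible in the refined inequality. A secondary nuisance is the uniform-over-$i\in[m]$ bookkeeping of the Gaussian cross-terms in the row-norm expansion (between $\mWtilo_{i,\cdot}$ and $\mDeltatil_{i,\cdot}+\mBstar_{i,\cdot}$), together with transcribing the solution-path lemmas of Section~\ref{sec:glasso}, which assume all off-diagonal entries are nonzero, to the matrix $\mYtil_{\Istar}$, whose entries vanish outside the rows and columns indexed by $[m]$.
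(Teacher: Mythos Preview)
Your proposal is correct and takes essentially the same approach as the paper: both write the KKT equality for the last-activated group at $\lambda=\lambdabar_{\pi(m)}$, bound $\alpha_j\le\lambdabar_{\pi(1)}-\lambdabar_{\pi(m)}$ via Equation~\eqref{eq:alpha_j-upper}, rearrange to obtain $(\lambdabar_{\pi(m)}/\lambdabar_{\pi(1)})^2 \ge (\|\vy_{\pi(m)}\|_2^2 - S)/(\lambdabar_{\pi(1)}^2 - S)$, and finish with the row-norm concentration of Lemma~\ref{lem:portmanteau}. The only difference is that the paper invokes Lemma~\ref{lem:alter-lasso} directly to ensure $E(\lambdabar_{\pi(m)})\subseteq[m]$, whereas you derive this via your crude lower bound $\lambdabar_{\pi(m)}^2\ge n\sigma^2/8$ together with an upper bound on the $[m]^c$-row norms of $\mYtil_{\Istar}$; both routes ultimately rely on the same control of $\max_{\ell\in[m]^c}\sum_{j\in[m]}B^{\star 2}_{\ell j}$, which is supplied in context by Theorem~\ref{thm:glaso-sufficient-crude}.
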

The proof is provided in Section~\ref{sec:proof-lem-alpha-lower}.

\subsection{Proof of Theorem \ref{thm:glaso-sufficient-crude}}
\label{sec:proof-thm-glaso-sufficient-crude}

By Lemma~\ref{lem:alter-lasso}, we have constructed a candidate solution $(\alpha_1(\lambdabar_{\pi(m)}), \alpha_2(\lambdabar_{\pi(m)}),\dots, \alpha_m(\lambdabar_{\pi(m)}))$ that satisfies the KKT equalities in Equation~\eqref{eq:KKT-equal-active-m} for the group Lasso applied to the restricted matrix $\mYtil_{\Istar}$, we now seek to verify that this solution also satisfies the KKT inequality conditions in Equation~\eqref{eq:KKT-ineq-inactive} when extended to the full matrix $\mYtil$.
Our approach mirrors the primal-dual witness (PDW) framework, a widely used technique for establishing exact support recovery in structured sparsity problems.
For example, see Chapter 7 of \cite{wainwright2019high} for details of the PDW framework in the context of sparse linear regression.
In short, we show that if the KKT inequalities are satisfied, then the group Lasso solution on $\mYtil$ correctly recovers the support $\Istar$.
We formalize this result in Theorem~\ref{thm:glaso-sufficient-crude}, whose proof follows below.

\begin{proof}[Proof of Theorem~\ref{thm:glaso-sufficient-crude}]
Without loss of generality, assume that the true support is $\Istar = [m]$.
Under the conditions of Theorem~\ref{thm:glaso-sufficient-crude}, one can verify that the sufficient condition in Equation~\eqref{eq:easy-glasso-recovery-cond} holds. 
Let $\lambdabar_{\pi(m)}$ denote the largest regularization parameter such that the group Lasso applied to the restricted matrix $\mYtil_{\Istar}$ selects all $m$ groups as defined above. 
By Lemma~\ref{lem:alter-lasso}, with probability at least $1 - O(n^{-6})$, the solution at $\lambda = \lambdabar_{\pi(m)}$ selects only the groups in $[m]$ when applied to $\mYtil_{\Istar}$.
Define the event
\begin{equation*}
\calE_1 := \left\{ \text{Group Lasso with } \lambda = \lambdabar_{\pi(m)} \text{ selects exactly the first } m \text{ groups} \right\}.
\end{equation*}
Since $\lambdabar_{\pi(m)}$ is the largest value of $\lambda$ for which all $m$ groups are selected, the KKT condition in Equation~\eqref{eq:KKT-equal-active-m} implies that for all $i \in [m]$,
\begin{equation*}
\sum_{k \in[m]}
\left( \frac{\lambdabar_{\pi(m)}}
	{\alpha_k(\lambdabar_{\pi(m)})+\lambdabar_{\pi(m)}}\right)^2 y_{i k}^2
+\sum_{k \in[m]^c} y_{i k}^2 
\geq \lambdabar_{\pi(m)}^2.
\end{equation*}
In particular, by definition of $\lambdabar_{\pi(m)}$, equality holds for $i = \pi(m) \in [m]$.  
Thus, to ensure exact support recovery for $\mYtil$, it suffices to show that for all $\ell \in [m]^c$ and $i \in [m]$,
\begin{equation} \label{eq:pdw-success}
\begin{aligned} 
\sum_{k \in [m]^c} (y_{ik}^2 - y_{\ell k}^2) 
&> \sum_{k \in [m]}
\left(\frac{\lambdabar_{\pi(m)}}
	{\alpha_k(\lambdabar_{\pi(m)}) + \lambdabar_{\pi(m)}}\right)^2 
	\left( y^2_{\ell k} - y^2_{ik} \right),
\end{aligned} \end{equation}
so that the KKT conditions in Equations~\eqref{eq:KKT-equal-active-m} and~\eqref{eq:KKT-ineq-inactive} still hold for the group Lasso applied to the full matrix $\mYtil$.

By Equation~\eqref{eq:alpha_j-upper}, we know for all $k \in [m]$,
\begin{equation*}
\left(\frac{\lambdabar_{\pi(m)}}{\alpha_k(\lambdabar_{\pi(m)}) 
	+ \lambdabar_{\pi(m)}}\right)^2 
\geq \left(\frac{\lambdabar_{\pi(m)}}{\lambdabar_{\pi(1)}}\right)^2.
\end{equation*}
Lemma~\ref{lem:alpha-lower} ensures that with high probability,
\begin{equation*}
    \left(\frac{\lambdabar_{\pi(m)}}{\lambdabar_{\pi(1)}}\right)^2 \geq 1 - Cn^{-1/2} \log^{1/2} n.
\end{equation*}
Define the event
\begin{equation*}
\calE_2
:=\left\{ \left(\frac{\lambdabar_{\pi(m)}}{\lambdabar_{\pi(1)}}\right)^2 
\geq 1 - Cn^{-1/2} \log^{1/2} n \right\} .
\end{equation*}
Fix $a = Cn^{-1/2} \log^{1/2} n$ and define $a_k \in [1 - 2a, 1]$ for $k \in [m]$.
To upper bound the right-hand side of Equation~\eqref{eq:pdw-success}, we consider the term
\begin{equation} \label{eq:to-bound}
\begin{aligned}
    &\sup_{a_k \in [1-2a, 1], k \in [m]} \sum_{k=1}^{m} a_k (y^2_{\ell k} - y^2_{ik}) \\
\end{aligned}
\end{equation}
and upper bound it by decomposition into contributions from noise, signal-noise interaction, and signal as
\begin{equation} \label{eq:bound-to-bound}
\begin{aligned} 
&\sup_{a_k \in [1-2a, 1], k \in [m]} \sum_{k=1}^{m} a_k (\left(\Wtilo_{\ell k}\right)^2 - \left(\Wtilo_{i k}\right)^2), \\
+ &\sup_{a_k \in [1-2a, 1], k \in [m]} 2\sum_{k=1}^{m} a_k \left[ (B^{\star }_{\ell k}+\Deltatil_{\ell k}) \Wtilo_{\ell k} - (B^{\star }_{i k}+\Deltatil_{i k}) \Wtilo_{i k}\right]\\
+ &\sup_{a_k \in [1-2a, 1], k \in [m]} \sum_{k=1}^{m-1} a_k \left[(B^{\star }_{\ell k}+\Deltatil_{\ell k})^2 - (B^{\star}_{i k} + \Deltatil_{i k})^2\right] =: \gamma_1 + \gamma_2 + \gamma_3.
\end{aligned} 
\end{equation}
By Lemma~\ref{lem:ak-Xk-bound}, each of these three terms can be controlled with high probability.
Specifically, with probability at least $1 - O(n^{-6})$, for all $i \in [m]$ and $\ell \in [m]^c$, we have
\begin{equation} \label{eq:g1-bound}
\begin{aligned}
    \gamma_1 =\sup_{a_k \in [1-2a, 1], k \in [m]} & \sum_{k=1}^{m} a_k \left(\left(\Wtilo_{\ell k}\right)^2 - \left(\Wtilo_{i k}\right)^2\right) \\
    &\leq C \sigma^2 m n^{-1/2} \log^{1/2} n  + C\sigma^2 \sqrt{m \log n} 
    \leq C\sigma^2 \sqrt{m \log n}.
\end{aligned}
\end{equation}
and
\begin{equation} \label{eq:g2-bound}
\begin{aligned}
    \gamma_2 := \sup_{a_k \in [1-2a, 1], k \in [m]} &\sum_{k=1}^{m} a_k \left[ (\Bstar_{\ell k}+\Deltatil_{\ell k}) \Wtilo_{\ell k} - (\Bstar_{i k}+\Deltatil_{i k}) \Wtilo_{i k}\right] \\
    &\leq C \sigma \sqrt{\log n} \left[\left(\sum_{k=1}^{m} B^{\star 2}_{\ell k}\right)^{1/2}  + 
    \left(\sum_{k=1}^{m}B^{\star 2}_{i k}\right)^{1/2} + \|\mDeltatil\|_{2, \infty}\right].
\end{aligned}
\end{equation}
For $\gamma_3$, expanding the squares yields that 
\begin{equation} \label{eq:g3-bound}\begin{aligned}
\gamma_3 = \sum_{k=1}^{m} & a_k \left[\left(\Bstar_{\ell k}+\Deltatil_{\ell k}\right)^2-\left(\Bstar_{i k}+\Deltatil_{i k}\right)^2\right] \\
&\leq \sum_{k=1}^{m} B_{\ell k}^{\star 2} - (1-2a)\sum_{k=1}^{m} B_{i k}^{\star 2} + \sum_{k=1}^{m} \Deltatil_{\ell k}^2 + 
    2\|\mDeltatil\|_{2, \infty} \left[\left(\sum_{k=1}^{m} B^{\star 2}_{\ell k}\right)^{1/2}  + \left(\sum_{k=1}^{m}B^{\star 2}_{i k}\right)^{1/2}\right].
\end{aligned} \end{equation}
Applying Theorem~\ref{thm:delta_row} to Equation~\eqref{eq:g3-bound} and combining the upper bounds in Equations~\eqref{eq:g1-bound} \eqref{eq:g2-bound} and~\eqref{eq:g3-bound} with Equation~\eqref{eq:bound-to-bound}, we conclude that the total expression in Equation~\eqref{eq:to-bound} is upper bounded by 
\begin{equation} \label{eq:bound-label-i} \begin{aligned}
    &\sup_{a_k \in [1-2a, 1], k \in [m]} \sum_{k=1}^{m} a_k (y^2_{\ell k} - y^2_{ik}) \leq \\
    & ~~C \sigma  n^{1/4} \log^{1/4} n \left[\left(\sum_{k=1}^{m} B^{\star 2}_{\ell k}\right)^{1/2}  + \left(\sum_{k=1}^{m}B^{\star 2}_{i k}\right)^{1/2}\right]
   + \sum_{k=1}^{m} \left(B_{\ell k}^{\star 2} - (1-2a) B_{i k}^{\star 2}\right) + C \sigma^2 \sqrt{n \log n}
\end{aligned} \end{equation}
with probability at least $1 - O(n^{-6})$.

Meanwhile, for the left-hand side of Equation~\eqref{eq:pdw-success}, Lemma~\ref{lem:portmanteau} ensures that with probability at least $1 - O(n^{-6})$, it holds for all $\ell \in [m]^c$ and all $i \in [m]$ that
\begin{equation} \label{eq:bound-label-ii} \begin{aligned}
    &\sum_{k \in [m]^c} (y_{i k}^2-y_{\ell k}^2)
    \geq  \sum_{k \in [m]^c} B_{i k}^{\star 2} - C \sigma n^{1/4} \log^{1/4} n \left(\sum_{k \in [m]^c} B_{i k}^{\star 2}\right)^{1/2} - C\sigma^2 \sqrt{n \log n} .
\end{aligned} \end{equation}
    
Define the event
\begin{equation*}
    \calE_{3} := \left\{\forall i \in [m], \forall \ell \in [m]^c : \sum_{k \in [m]^c} (y_{i k}^2- y_{\ell k}^2) \geq \sup _{a_k \in[1-2 a, 1], k \in[m]} \sum_{k=1}^{m} a_k\left(y_{\ell k}^2-y_{i k}^2\right)\right\}.
\end{equation*}
When there exists a sufficiently large $C$ such that
\begin{equation} \label{eq:sufficient-glass-condition}
\begin{aligned}
    &\min_{i \in [m]} \left(\sum_{k=1}^n B_{i k}^{\star 2} - C \sigma n^{1/4} \log^{1/4} n \left(\sum_{k=1}^n B_{i k}^{\star 2}\right)^{1/2}\right) \\
    &\geq \max_{\ell \in [m]^c} \left(\sum_{k=1}^{m} B_{\ell k}^{\star 2} + C \sigma n^{1/4} \log^{1/4} n \left(\sum_{k=1}^{m} B^{\star 2}_{\ell k}\right)^{1/2}\right)
    + C \sigma^2 \sqrt{n \log n},
\end{aligned}
\end{equation}
we have that 
\begin{equation*}
    \Pr(\calE_3) \geq 1 - O(n^{-6})
\end{equation*}
by Equations~\eqref{eq:bound-label-i} and~\eqref{eq:bound-label-ii}.
Under the assumptions of Theorem~\ref{thm:glaso-sufficient-crude} in Equation~\eqref{eq:growth-condition-Bstar}, one can verify that condition~\eqref{eq:sufficient-glass-condition} is satisfied for large enough $C$. 
    Therefore, combining with events $\calE_1$ and $\calE_2$, we conclude that 
    \begin{equation*}
        \Pr\left(\calE_1^c \cup \calE_2^c \cup \calE_3^c\right) \leq 
        \Pr(\calE_1^c) + \Pr(\calE_2^c) + \Pr(\calE_3^c) \leq O(n^{-6}),
    \end{equation*}
    which establishes that with high probability, the group Lasso estimator recovers the correct support, completing the proof. 
\end{proof}


\subsection{Technical Lemmas for Group Lasso Support Recovery}

Recall the following definition of the sub-exponential norm (see Definition 2.8.4 in \cite{vershynin2018HDP}). 
\begin{definition}[Sub-exponential Orlicz norm]
For a real-valued random variable $X$, the Orlicz $\psi_1$ norm is defined as 
\begin{equation*}
    \|X\|_{\psi_1}:=\inf \left\{t>0: \mathbb{E}\left[\exp \left(\frac{|X|}{t}\right)\right] \leq 2\right\}
\end{equation*}
\end{definition}
This norm quantifies the tail decay of $X$. 
A finite $\psi_1$-norm implies that $X$ is sub-exponential, i.e., it satisfies the tail bound:
\begin{equation*}
    \Pr\left[ |X|>t \right]
\leq 2 \exp \left\{ -c \frac{t}{\|X\|_{\psi_1}}\right\}
\quad \text { for all } t>0
\end{equation*}
where $c>0$ is an absolute constant.
We will use this definition in the following lemma, which provides a bound on the sum of independent sub-exponential random variables.

\begin{lemma}\label{lem:ak-Xk-bound}
    Let $\left\{X_k\right\}_{k=1}^{m}$ be independent mean-zero sub-exponential random variables, 
    and suppose $a_k \in [1-2a, 1]$ for some $a \in (0,1/2)$ and all $k \in [m]$. 
    Assume $\|X_k\|_{\psi_1}=C_k$ for $k \in [m]$. 
    Then for some constant $c>0$ we have
    \begin{equation*}
        \sup_{a_l \in [1-2a, 1], l \in [m]} \sum_{k=1}^{m} a_k X_k \leq c a \left(\sqrt{\log n} +  
        m^{1 / 2}\right)\left(\sum_{k=1}^{m} C_k^2\right)^{1 / 2} + c \sqrt{\log n}\left(\sum_{k=1}^{m} C_k^2\right)^{1 / 2}
    \end{equation*}
    with probability at least $1 - O(n^{-7})$.
\end{lemma}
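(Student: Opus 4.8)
The plan is to prove a high-probability bound on the random quantity $\sup_{a_k \in [1-2a,1]} \sum_{k=1}^m a_k X_k$ by splitting it into a ``central'' part, with all weights fixed at $1$, and a ``fluctuation'' part coming from the freedom to vary each $a_k$ within the interval $[1-2a,1]$. Concretely, write $a_k = 1 - 2a b_k$ with $b_k \in [0,1]$, so that
\begin{equation*}
\sum_{k=1}^m a_k X_k = \sum_{k=1}^m X_k - 2a \sum_{k=1}^m b_k X_k.
\end{equation*}
Taking the supremum over $b_k \in [0,1]$ decouples coordinatewise: $\sup_{b_k \in [0,1]} \bigl(-2a \sum_k b_k X_k\bigr) = 2a \sum_k (X_k)_- \le 2a \sum_k |X_k|$. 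Thus it suffices to bound $\sum_k X_k$ and $\sum_k |X_k|$ separately, each with probability at least $1 - O(n^{-7})$.

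For the first term, $X_1,\dots,X_m$ are independent mean-zero sub-exponential variables with $\|X_k\|_{\psi_1} = C_k$, so Bernstein's inequality for sub-exponential sums (e.g.\ Theorem~2.8.1 in \cite{vershynin2018HDP}) gives, for $t > 0$,
\begin{equation*}
\Pr\Bigl[\Bigl|\sum_{k=1}^m X_k\Bigr| \ge t\Bigr] \le 2\exp\Bigl(-c\min\Bigl\{\frac{t^2}{\sum_k C_k^2},\ \frac{t}{\max_k C_k}\Bigr\}\Bigr).
\end{equation*}
Choosing $t = C'\bigl(\sqrt{\log n}\,(\sum_k C_k^2)^{1/2} + \log n\,\max_k C_k\bigr)$ yields failure probability $O(n^{-7})$; and since $\max_k C_k \le (\sum_k C_k^2)^{1/2}$, under the mild sparsity regime $\log n = O(m)$ (which holds in the applications of this lemma, where $m = \Omega(\log n)$) the subexponential ``linear'' term is absorbed into the subgaussian one, giving $|\sum_k X_k| \lesssim \sqrt{\log n}\,(\sum_k C_k^2)^{1/2}$. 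For the second term, I would use the deviation bound for sums of the centered absolute values $|X_k| - \E|X_k|$: these are again independent mean-zero sub-exponential with $\psi_1$-norm $O(C_k)$, so the same Bernstein argument controls $\sum_k (|X_k| - \E|X_k|)$ by $C'\sqrt{\log n}\,(\sum_k C_k^2)^{1/2}$ with probability $1 - O(n^{-7})$, while $\sum_k \E|X_k| \le \sum_k C_k \le m^{1/2}(\sum_k C_k^2)^{1/2}$ by Cauchy--Schwarz. Combining, $\sum_k |X_k| \lesssim (m^{1/2} + \sqrt{\log n})(\sum_k C_k^2)^{1/2}$, and multiplying by $2a$ and adding the first-term bound gives exactly the claimed form
\begin{equation*}
\sup_{a_k \in [1-2a,1]} \sum_{k=1}^m a_k X_k \lesssim a\bigl(\sqrt{\log n} + m^{1/2}\bigr)\Bigl(\sum_{k=1}^m C_k^2\Bigr)^{1/2} + \sqrt{\log n}\Bigl(\sum_{k=1}^m C_k^2\Bigr)^{1/2}.
\end{equation*}

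The only real subtlety — and the step I would be most careful about — is the interchange of the supremum with the high-probability event: because the supremum over the continuous box $[1-2a,1]^m$ is achieved at a vertex (the objective is linear in $(a_k)$), it equals a maximum over the $2^m$ sign patterns $\{1-2a,1\}^m$, which one might worry requires a union bound costing a factor $2^m$. The resolution is the coordinatewise decoupling above: after the substitution $a_k = 1 - 2ab_k$, the supremum is $\sum_k X_k + 2a\sum_k (X_k)_-$, a \emph{single} fixed random variable, so no union bound over sign patterns is needed and the $2^m$ blow-up never appears. Everything else is a routine application of sub-exponential Bernstein inequalities and Cauchy--Schwarz; I would just need to track constants so that the final failure probability is $O(n^{-7})$ as stated.
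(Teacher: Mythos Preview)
Your proposal is correct and arrives at the same bound, but by a somewhat different route than the paper. Both arguments begin with the same decomposition idea: write the supremum as a fixed linear sum plus a term of size $a$ times $\sum_k |X_k|$ (the paper centers the weights at $1-a$, you center at $1$, which is cosmetic). The genuine difference is in how $\sum_k |X_k|$ is controlled. The paper writes $\sum_k |X_k| = \max_{\vu \in \{\pm 1\}^m} \sum_k u_k X_k$ and pays a union bound over all $2^m$ sign patterns before applying Bernstein; the $m^{1/2}$ factor then arises from absorbing the $2^m$ cost into the deviation level. You instead center: $\sum_k |X_k| = \sum_k \E|X_k| + \sum_k(|X_k| - \E|X_k|)$, bound the deterministic part by Cauchy--Schwarz ($\sum_k \E|X_k| \le m^{1/2}(\sum_k C_k^2)^{1/2}$), and apply a \emph{single} Bernstein inequality to the centered sum. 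Your approach is a bit more direct in that it avoids the exponential union bound entirely --- exactly the point you flag in your final paragraph --- and the $m^{1/2}$ emerges from an elementary mean bound rather than a combinatorial cost. Both approaches share the same minor looseness about the linear tail in Bernstein (the $\log n \cdot \max_k C_k$ term), which in either case is absorbed under the regime $m \gtrsim \log n$ used in the applications.
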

\begin{proof}
    Decompose the sum as
    \begin{equation} \label{eq:aXsum:decomp}
    \begin{aligned}
        \sum_{k=1}^{m} a_k X_k = \underbrace{\sum_{k=1}^{m} (a_k - 1 + a) X_k}_{A_1} + \underbrace{(1-a) \sum_{k=1}^{m} X_k}_{A_2}. 
    \end{aligned}  
    \end{equation}
    To bound $A_1$, we note that $a_k + a - 1 \in [-a, a]$. 
    Hence,
    \begin{equation*}
    \begin{aligned}
        A_1 &\leq \sup_{a_k \in [1-2a, 1]} \sum_{k=1}^{m} (a_k - 1 + a) X_k \leq a \sum_{k=1}^{m} |X_k| = \max_{\vu \in \{\pm 1\}^{m}} a \sum_{k=1}^{m} X_k u_k.
    \end{aligned}
    \end{equation*}
Applying a union bound over the $2^m$ sign vectors and using Bernstein's inequality for sub-exponential variables \citep[see Theorem 2.9.1 in][]{vershynin2018HDP}), we have that for any $t > 0$,
\begin{equation*}
\begin{aligned}
    \Pr\left[ A_1 > a t \right]
&\leq 2^{m}  \exp \left( -c \min \left\{ \frac{t^2}{\sum_{k=1}^{m} C_k^2}, \frac{t}{\max_{k\in[m]} C_k}\right\} \right)\\
    &\leq \exp \left(-c \min \left\{\frac{t^2}{\sum_{k=1}^{m} C_k^2}, \frac{t}{\max_{k\in[m]} C_k}\right\} + m\right).
\end{aligned}
\end{equation*}
Setting $t = 7c^{-1}\log^{1/2} n \sqrt{\sum_{k=1}^{m} C_k^2} + c^{-1} \sqrt{m\sum_{k=1}^{m} C_k^2}$ yields that 
\begin{equation} \label{eq:A1:bound}
    A_1 \leq ac^{-1} (7\sqrt{\log n} + c^{-1} m^{1/2})\left(\sum_{k=1}^{m} C_k^2\right)^{1/2}
\end{equation}
with probability $1 - O(n^{-7})$.
    
    The term $A_2$ can be bounded via applying Bernstein's inequality to $\sum_{k=1}^m X_k$, and it follows that
    \begin{equation} \label{eq:A2:bound}
        A_2 \leq 7 c^{-1} \sqrt{\log n}\left(\sum_{k=1}^{m} C_k^2\right)^{1/2}
    \end{equation}
    with probability at least $1 - O(n^{-7})$. 

Applying Equations~\eqref{eq:A1:bound} and~\eqref{eq:A2:bound} to Equation~\eqref{eq:aXsum:decomp} completes the proof.
\end{proof}

\begin{lemma}\label{lem:portmanteau}
Suppose that the assumptions of Theorem~\ref{thm:glaso-sufficient-crude} holds.
Let $\mYtil$ be the matrix defined in Equation~\eqref{eq:single-observed} and for each $i \in [n]$, let $\vy_i$ be the $i$-th row of $\mYtil$.
Then, with probability at least $1 - O(n^{-6})$, for all $i \in [n]$ and $\ell \in [m]^c$, it holds that
With probability at least $1 - O(n^{-6})$, for some sufficiently large constant $C > 0$, it holds for all suitably large $n$ that for all $i \in [n]$,
\begin{equation} \label{eq:all-y-lower}
    \|\vy_{i}\|_2^2 \geq
    \left\|\mBstar_{i, \cdot}\right\|_2^2+n\sigma^2 -C\sigma^2 \sqrt{n\log n} - C\sigma n^{1/4} \log^{1/4} n \|\mBstar_{i,\cdot}\|_{2},
\end{equation}
and
\begin{equation} \label{eq:all-y-upper}
    \|\vy_{i}\|_2^2 \leq
    \left\|\mBstar_{i, \cdot}\right\|_2^2+n\sigma^2 +C\sigma^2 \sqrt{n\log n} + C\sigma n^{1/4} \log^{1/4} n \|\mBstar_{i,\cdot}\|_{2}.
\end{equation}
Further, if $m = \Omega(\log n)$, it holds with probability at least $1-O(n^{-6})$ that for all $i \in [n]$,
\begin{equation} \label{eq:all-y-lower-m}
    \sum_{j=1}^m y_{ij}^2 \geq
    \sum_{j=1}^m B^{\star 2}_{ij} + m\sigma^2 -C\sigma^2 \sqrt{n\log n} - C\sigma n^{1/4} \log^{1/4} n \left(\sum_{j=1}^m B^{\star 2}_{ij}\right)^{1/2},
\end{equation}
and
\begin{equation} \label{eq:all-y-upper-m}
    \sum_{j=1}^m y_{ij}^2 \leq
    \sum_{j=1}^m B^{\star 2}_{ij} + m\sigma^2 + C\sigma^2 \sqrt{n\log n} + C\sigma n^{1/4} \log^{1/4} n \left(\sum_{j=1}^m B^{\star 2}_{ij}\right)^{1/2} .
\end{equation}
\end{lemma}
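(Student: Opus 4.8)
## Proof Plan for Lemma~\ref{lem:portmanteau}

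The plan is to prove the four bounds~\eqref{eq:all-y-lower}--\eqref{eq:all-y-upper-m} by expanding the squared row norms, isolating a deterministic signal term plus two stochastic terms (a pure-noise chi-square-type term and a cross term between signal and noise), and then applying concentration inequalities uniformly over all rows $i \in [n]$. Recall from Equation~\eqref{eq:single-observed} that $\mYtil = \mBstar + \mDeltatil + \mWtilo$, so for each row $i$,
\begin{equation*}
\|\vy_i\|_2^2 = \sum_{j=1}^n \left(B^\star_{ij} + \Deltatil_{ij} + \Wtilo_{ij}\right)^2.
\end{equation*}
First I would expand this into $\|\mBstar_{i,\cdot}\|_2^2 + \|\mWtilo_{i,\cdot}\|_2^2$ plus the cross terms $2\langle \mBstar_{i,\cdot}, \mWtilo_{i,\cdot}\rangle$, $2\langle \mBstar_{i,\cdot}, \mDeltatil_{i,\cdot}\rangle$, $2\langle \mWtilo_{i,\cdot}, \mDeltatil_{i,\cdot}\rangle$, and $\|\mDeltatil_{i,\cdot}\|_2^2$. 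The $\mDeltatil$ contributions are controlled deterministically on the high-probability event of Theorem~\ref{thm:delta_row}: there $\|\mDeltatil\|_{2,\infty} \lesssim \sigma\sqrt{n}\|\mUstar_{i,\cdot}\|_2 + \sigma\sqrt{r\log n}$, and since $\calU = [n]$ by assumption (so $\|\mUstar_{i,\cdot}\|_2 \lesssim n^{-1/4}$), one gets $\|\mDeltatil_{i,\cdot}\|_2 \lesssim \sigma n^{1/4} + \sigma\sqrt{r\log n} = o(\sigma\sqrt{n})$, hence $\|\mDeltatil_{i,\cdot}\|_2^2$ and its cross terms with $\mBstar_{i,\cdot}$ and $\mWtilo_{i,\cdot}$ are all absorbed into the stated $O(\sigma^2\sqrt{n\log n})$ and $O(\sigma n^{1/4}\log^{1/4}n\,\|\mBstar_{i,\cdot}\|_2)$ slack terms (using Cauchy--Schwarz and $\|\mWtilo_{i,\cdot}\|_2 \lesssim \sigma\sqrt{n}$ with high probability).

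Next I would handle the two genuinely stochastic terms. For the pure-noise term, $\|\mWtilo_{i,\cdot}\|_2^2 = \sum_{j} (\Wtilo_{ij})^2$ has mean $\sum_j \sigma^2_{ij} = n\sigma^2 + O(\sigma^2\sqrt{n})$ (absorbable; under homoscedastic Gaussian noise as stated in the hypotheses it is exactly $n\sigma^2$), and by Bernstein's inequality for sub-exponential variables (Lemma~\ref{lem:bern-ineq}, using $|(\Wtilo_{ij})^2| \le L^2$ and $\sum_j \E(\Wtilo_{ij})^4 \le L^2\sigma^2 n$) plus a union bound over $i \in [n]$, the deviation is $\lesssim \max\{\sigma L \sqrt{n\log n}, L^2\log n\} \lesssim \sigma^2\sqrt{n\log n}$ with probability $1 - O(n^{-6})$, where the last step uses Equation~\eqref{eq:L-sigma}. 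For the cross term $2\langle \mBstar_{i,\cdot}, \mWtilo_{i,\cdot}\rangle = 2\sum_j B^\star_{ij}\Wtilo_{ij}$, Bernstein gives a deviation bound $\lesssim \max\{\sigma \|\mBstar_{i,\cdot}\|_2\sqrt{\log n},\; L\|\mBstar_{i,\cdot}\|_\infty \log n\}$; using the assumption $\|\mBstar\|_\infty = o(\sigma\sqrt{n/\log n})$ (cf. Equation~\eqref{eq:max-bstar}, which holds under the growth conditions~\eqref{eq:growth-condition-Bstar}), the second branch is $o(\sigma^{3/2}\|\mBstar_{i,\cdot}\|_2^{1/2} \cdots)$ and is dominated; more simply $\|\mBstar_{i,\cdot}\|_2 \lesssim \sigma n^{1/4}\log^{1/4}n$ by~\eqref{eq:growth-condition-Bstar}, so the cross term deviation is $\lesssim \sigma n^{1/4}\log^{1/4}n\,\|\mBstar_{i,\cdot}\|_2$ as claimed. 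Union bounding over $i \in [n]$ again costs only a $\log n$ factor already accounted for. Combining, both~\eqref{eq:all-y-lower} and~\eqref{eq:all-y-upper} follow.

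For the truncated bounds~\eqref{eq:all-y-lower-m} and~\eqref{eq:all-y-upper-m}, the argument is identical after restricting each sum to $j \in [m]$: now $\E \sum_{j=1}^m (\Wtilo_{ij})^2 = m\sigma^2$ (up to the $O(\sigma^2\sqrt{m}) = o(\sigma^2\sqrt{n\log n})$ heteroscedasticity slack), and since $m = \Omega(\log n)$, Bernstein's inequality with union bound over $i$ yields deviation $\lesssim \max\{\sigma L\sqrt{m\log n}, L^2\log n\} \lesssim \sigma^2\sqrt{n\log n}$ (the $m \le n$ bound is crude but sufficient), while the cross term $2\sum_{j=1}^m B^\star_{ij}\Wtilo_{ij}$ deviates by $\lesssim \sigma n^{1/4}\log^{1/4}n\,(\sum_{j=1}^m B^{\star 2}_{ij})^{1/2}$ by the same computation. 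The $\mDeltatil$ terms restricted to $[m]$ are bounded as before and absorbed. The main obstacle is bookkeeping rather than any conceptual difficulty: one must carefully verify that every cross term and lower-order deterministic term ($\|\mDeltatil_{i,\cdot}\|_2^2$, the $O(\sigma^2\sqrt{n})$ variance-sum error, the subexponential tail constants) is genuinely subsumed by the two stated slack terms $C\sigma^2\sqrt{n\log n}$ and $C\sigma n^{1/4}\log^{1/4}n\,\|\mBstar_{i,\cdot}\|_2$ under the hypotheses of Theorem~\ref{thm:glaso-sufficient-crude}, and that the $\max$-branches in each Bernstein bound land on the correct (Gaussian) side given the relation $\sigma \le L \le C\sigma\sqrt{n/(\mu\log n)}$ from Equation~\eqref{eq:L-sigma}; since the noise here is stated to be i.i.d.\ Gaussian, $L$ may effectively be taken of order $\sigma\sqrt{\log n}$ (a high-probability bound on $\max|\Wtilo_{ij}|$), which makes all the $L$-branches negligible and streamlines the verification.
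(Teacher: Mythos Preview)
Your overall strategy matches the paper's: expand $\|\vy_i\|_2^2$ into the six terms arising from $\mYtil = \mBstar + \mDeltatil + \mWtilo$, control the pure-noise sum $\|\mWtilo_{i,\cdot}\|_2^2$ by chi-square/Bernstein concentration, control the signal--noise cross term $\langle \mBstar_{i,\cdot}, \mWtilo_{i,\cdot}\rangle$ by Gaussian/Bernstein tails, and bound the $\mDeltatil$-terms on the high-probability event of Theorem~\ref{thm:delta_row}. The paper does exactly this.

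However, one step in your bookkeeping fails as written. You propose to control the cross term $\langle \mWtilo_{i,\cdot}, \mDeltatil_{i,\cdot}\rangle$ via Cauchy--Schwarz and the crude bound $\|\mWtilo_{i,\cdot}\|_2 \lesssim \sigma\sqrt{n}$. That gives
\[
|\langle \mWtilo_{i,\cdot}, \mDeltatil_{i,\cdot}\rangle| \;\lesssim\; \sigma\sqrt{n}\cdot \sigma n^{1/4} \;=\; \sigma^2 n^{3/4},
\]
which is \emph{not} absorbed by the slack term $C\sigma^2\sqrt{n\log n}$. The paper avoids this by exploiting the independence of $\mDeltatil$ (which depends only on $\mYz$) and $\mWtilo$: conditioning on $\mDeltatil$, the sum $\sum_j \Deltatil_{ij}\Wtilo_{ij}$ is mean-zero Gaussian with variance $\sigma^2\|\mDeltatil_{i,\cdot}\|_2^2$, so a Gaussian tail bound yields
\[
|\langle \mWtilo_{i,\cdot}, \mDeltatil_{i,\cdot}\rangle| \;\lesssim\; \sigma\,\|\mDeltatil\|_{2,\infty}\sqrt{\log n} \;\lesssim\; \sigma^2 n^{1/4}\log^{3/4} n,
\]
which \emph{is} absorbed. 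Once you replace the Cauchy--Schwarz step for this one cross term by conditional concentration, the remainder of your argument goes through and coincides with the paper's proof.
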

\begin{proof}
We only present the proof for Equation~\eqref{eq:all-y-lower}, as the other bouds follow by similar arguments.
Straightforward calculation yields that for $i \in [n]$, 
\begin{equation} \label{eq:vyi:expandLB} \begin{aligned}
    \|\vy_{i}\|_2^2 &\geq
    \left\|\mBstar_{i, \cdot}\right\|_2^2+\left\|\mWtilo_{i, \cdot}\right\|_2^2-2\left|\mDeltatil_{i, \cdot}^\top \mBstar_{i, \cdot}\right|-2\left|\mWtil_{i,\cdot}^{(1)\top} \mDeltatil_{i, \cdot}\right|-2\left|\mWtil_{i,\cdot}^{(1)\top} \mBstar_{i, \cdot}\right|\\
    &\geq \left\|\mBstar_{i, \cdot}\right\|_2^2+\left\|\mWtilo_{i, \cdot}\right\|_2^2-2\|\mDeltatil_{i, \cdot}\|_{2}\| \mBstar_{i, \cdot}\|_2 - 2\left|\mWtil_{i,\cdot}^{(1)\top} \mDeltatil_{i, \cdot}\right|-2\left|\mWtil_{i,\cdot}^{(1)\top} \mBstar_{i, \cdot}\right|,
\end{aligned} \end{equation}
where the inequality follows from the Cauchy-Schwarz inequality.
Conditioned on $\mDelta$, we apply Gaussian tail bounds to control $\mWtil_{i,\cdot}^{(1)\top} \mDeltatil_{i, \cdot}$, from which
\begin{equation*}
\Pr\left[\left| \mWtil_{i,\cdot}^{(1)\top} \mDeltatil_{i, \cdot}\right| \geq t \right]
\leq 2\exp\left\{-\frac{t^2}{2\sigma^2 \|\mDeltatil_{i, \cdot}\|_2^2}\right\}.
\end{equation*}
Taking $t = C\sigma\|\mDeltatil\|_{2,\infty}\sqrt{\log n}$ for $C$ sufficiently large, it holds with probability at least $1-O(n^{-7})$ that
\begin{equation} \label{eq:WtilDeltil:UB}
\left| \mWtil_{i,\cdot}^{(1)\top} \mDeltatil_{i, \cdot}\right| 
\leq  C\sigma\|\mDeltatil\|_{2,\infty}\sqrt{\log n} .
\end{equation}
Similarly, also with probability at least $1-O(n^{-7})$,
\begin{equation} \label{eq:WtilBstar:UB}
\left| \mWtil_{i,\cdot}^{(1)\top} \mBstar_{i, \cdot}\right| \leq  C\sigma\|\mBstar_{i,\cdot}\|_{2}\sqrt{\log n} .
\end{equation} 
Applying Equations~\eqref{eq:WtilDeltil:UB} and~\eqref{eq:WtilBstar:UB} to Equation~\eqref{eq:vyi:expandLB} along with Lemma~\ref{lem:chi-sq-concen} to control the squared $\ell_2$ norm of rows of $\mWtilo$, 
it follows that
\begin{equation} \label{eq:all-y-mid-lower}
\begin{aligned}
    \|\vy_{i}\|_2^2 \geq
    \left\|\mBstar_{i, \cdot}\right\|_2^2+n\sigma^2 -C\sigma^2 \sqrt{n\log n}-C\sigma\|\mDeltatil\|_{2,\infty}\sqrt{\log n}- (C\sigma\sqrt{\log n} + \|\mDeltatil\|_{2,\infty})\|\mBstar_{i,\cdot}\|_{2}.
\end{aligned}
\end{equation}
Applying a union bound over $i \in [n]$ yields that Equation~\eqref{eq:all-y-mid-lower} holds with probability at least $1 - O(n^{-6})$ for all $i \in [n]$, conditioned on $\mDelta$. 
Conditioned on the event $\{\|\mDelta\|_{2,\infty} \leq C\sigma n^{1/4} \log^{1/4} n\}$, we have that 
\begin{equation*}
    \|\vy_{i}\|_2^2 \geq
    \left\|\mBstar_{i, \cdot}\right\|_2^2+n\sigma^2 -C\sigma^2 \sqrt{n\log n} - C\sigma n^{1/4} \log^{1/4} n \|\mBstar_{i,\cdot}\|_{2}
\end{equation*} 
from Equation~\eqref{eq:all-y-mid-lower}. 
By Theorem \ref{thm:delta_row} and the assumption that $\kappa^2 \sqrt{\mut r} = o(n^{1/4} \log^{-1/4} n)$, we have that 
$\{\|\mDelta\|_{2,\infty} \leq C\sigma n^{1/4} \log^{1/4} n\}$ holds with probability at least $1 - O(n^{-6})$ for some $C$ sufficiently large. 
We then conclude that for all $i \in [n]$, Equation~\eqref{eq:all-y-lower}
holds with probability at least $1 - O(n^{-6})$. 
\end{proof}

\subsubsection{Proof of Lemma \ref{lem:alter-lasso}}
\label{sec:proof-lem-alter-lasso}
\begin{proof}
Write $\mYtil_{\Istar} = (y_{ij})_{1\leq i \leq j \leq n}$, where $\mYtil$ is given in Equation~\eqref{eq:single-observed} and consider the condition
\begin{equation}\label{eq:easy-glasso-condition}
    \min_{i \in [m]} \sum_{k=m+1}^n y_{ik}^2 > \max_{\ell \in [m]^c} \sum_{k=1}^m y_{\ell k}^2.
\end{equation}
If this condition holds, then we can choose $\lambdatil$ such that
\begin{equation*}
\lambdatil^2 \in \left(\max_{\ell \in [m]^c} \sum_{k=1}^m y_{\ell k}^2, \min_{i \in [m]} \sum_{k=m+1}^n y_{ik}^2\right).
\end{equation*}
With such a choice, the KKT conditions in Equation~\eqref{eq:KKT-ineq-inactive} imply
\begin{equation*}
    \sum_{k=1}^m \left(\frac{\lambdatil}{\alpha_k+\lambdatil}\right)^2 y_{i k}^2 
    < \lambdatil^2, \quad \text { if } i \in[m]^c
\end{equation*}
and 
\begin{equation*}
    \sum_{k=1}^m \left(\frac{\lambdatil}{\alpha_k+\lambdatil}\right)^2 y_{i k}^2 + \sum_{k=m+1}^n  y_{i k}^2 
    > \lambdatil^2, \quad \text { if } i \in [m]. 
\end{equation*}
Hence, with regularization parameter $\lambda = \lambdatil$, group Lasso will recover exactly the first $m$ groups provided that Equation~\eqref{eq:easy-glasso-condition} holds. 

To ensure that Equation~\eqref{eq:easy-glasso-condition} holds with high probability, we apply the bounds from Lemma~\ref{lem:portmanteau}. 
Specifically, using Equation~\eqref{eq:all-y-lower} and~\eqref{eq:all-y-upper-m}, the left hand side of Equation~\eqref{eq:easy-glasso-condition} is lower bounded with probability at least $1 - O(n^{-6})$ by
\begin{equation*}
\begin{aligned}
    &\min_{i \in [m]} \left(\sum_{k=m+1}^n B_{ik}^{\star 2} - C \sigma n^{1/4} \log^{1/4} n \left(\sum_{k=m+1}^n B_{ik}^{\star 2}\right)^{1/2} \right) + (n-m)\sigma^2 - C \sigma^2 \sqrt{n \log n} \\
    &~~\geq (1 - n^{-1/4}) \min_{i \in [m]} \sum_{k=m+1}^n B_{ik}^{\star 2} + (n-m)\sigma^2 - C \sigma^2 n^{3/4} \log^{1/2} n ,
\end{aligned}
\end{equation*}
where the inequality holds from the fact that $2ab < n^{-1/4}a^2 + n^{1/4}b^2$. 

Using Equations~\eqref{eq:all-y-upper} and~\eqref{eq:all-y-lower-m}, the right hand side of Equation~\eqref{eq:easy-glasso-condition} is upper bounded by
\begin{equation*}
\begin{aligned}
    \max_{\ell \in [m]} & \left(\sum_{k=1}^m B_{\ell k}^{\star 2} + C \sigma n^{1/4} \log^{1/4} n \left(\sum_{k=1}^m B_{\ell k}^{\star 2}\right)^{1/2} \right) + m\sigma^2 + C \sigma^2 \sqrt{n \log n}\\
    &~~\leq (1+n^{-1/4}) \max_{\ell \in [m]} \sum_{k=1}^m B_{\ell k}^{\star 2} + m\sigma^2 + C \sigma^2 n^{3/4} \log^{1/2} n
\end{aligned}
\end{equation*}
with probability at least $1 - O(n^{-6})$, where the last inequality follows similar from the fact that $2ab < n^{-1/4}a^2 + n^{1/4}b^2$.

Therefore, under condition~\eqref{eq:easy-glasso-recovery-cond} and for sufficiently large $n$, Equation~\eqref{eq:easy-glasso-condition} holds with probability at least $1 - O(n^{-6})$, completing the proof.
\end{proof}

\subsubsection{Proof of Lemma \ref{lem:alpha-lower}}
\label{sec:proof-lem-alpha-lower}

\begin{proof}
Under the given assumptions, the assumptions in Lemma \ref{lem:alter-lasso} hold.
Thus, with high probability, applying group Lasso to $\mYtil_{\Istar}$ with $\lambdabar_{\pi(m)}$ only selects the first $m$ groups. 
By the KKT equality condition in Equation~\eqref{eq:KKT-equal-active-m}, this implies that for $\pi(m) \in [m]$, 
\begin{equation*}
    \sum_{k \in[m]}\left(\frac{\lambdabar_{\pi(m)}}{\alpha_k(\lambdabar_{\pi(m)})+\lambdabar_{\pi(m)}}\right)^2 y_{\pi(m) k}^2+\sum_{k \in[m]^c} y_{\pi(m) k}^2 = \lambdabar_{\pi(m)}^2.
\end{equation*}
In other words, $\pi(m)$ corresponds to the last group in $[m]$ to be selected as $\lambda$ decreases. 
Without loss of generality, we relabel so that $\pi(m) = m$ and $\pi(1) = 1$.

We now use the upper bound on $\alpha_k(\lambda)$ from Equation~\eqref{eq:alpha_j-upper} and note that since $\lambdabar_k$ is the largest regularization parameter at which the $k$-th group is selected, we have $\lambdabar_k \leq \lambdabar_1$ for all $k \in [m]$. 
Plugging this into the KKT inequality condition~\eqref{eq:KKT-ineq-inactive}, we obtain
\begin{equation*}
\begin{aligned}
    &\sum_{k=1}^{m-1}\left(\frac{\lambdabar_m}{\lambdabar_k}\right)^2 y_{m k}^2+\sum_{k = m}^n y_{m k}^2 \leq \lambdabar_m^2
\end{aligned}
\end{equation*}
Using the bound $\lambdabar_k \leq \lambdabar_1$, this implies 
\begin{equation*} 
    \sum_{k=m}^n y_{m k}^2 \leq \left(1 - \sum_{k =1}^{m-1}\frac{y_{m k}^2}{\lambdabar_1^2}\right) \lambdabar_m^2,
\end{equation*}
which futther implies that 
\begin{equation}\label{eq:equation-needs-to-be-referenced}
    \left(\frac{\lambdabar_m}{\lambdabar_1}\right)^2 \geq \frac{\sum_{k=m}^n y_{m k}^2}{\lambdabar^2_1 - \sum_{k=1}^{m-1} y_{k j}^2} = \left(\frac{\lambdabar_m}{\lambdabar_1}\right)^2 \geq \frac{\sum_{k=m}^n y_{k j}^2}{\sum_{k=1}^n y_{1 k}^2 - \sum_{k=1}^{m-1} y_{m k}^2}
\end{equation}
where in the last equality, we use the assumption that the first selected group is $\pi(1) = 1$ so that $\lambdabar_1 = \sum_{k=1}^n y_{1k}^2$.  
To further bound this ratio from below, we invoke Equations~\eqref{eq:all-y-lower}, \eqref{eq:all-y-upper}, and \eqref{eq:all-y-lower-m} from Lemma~\ref{lem:portmanteau} to ensure that with probability at least $1 - O(n^{-6})$, we have
\begin{equation} \label{eq:beta_j-lower}
\begin{aligned}
    \frac{\sum_{k=m}^n y_{k j}^2}{\sum_{k=1}^n y_{1 k}^2 - \sum_{k=1}^{m-1} y_{m k}^2} &\geq \frac{(n-m) \sigma^2-C \sigma^2 \sqrt{n \log n} + \|\mB^\star_{m, \cdot}\|_2^2 - \sum_{k=1}^{m-1} B_{m k}^{\star 2} - \delta_m}{(n-m) \sigma^2 + C\sigma^2 \sqrt{n\log n} + \|\mB^\star_{1, \cdot}\|_2^2 - \sum_{k=1}^{m-1} B_{m k}^{\star 2} + \delta_1}\\
    &= 1 - \frac{C \sigma^2 \sqrt{n \log n}}{(n-m) \sigma^2 + C\sigma^2 \sqrt{n\log n}}\\
\end{aligned}
\end{equation}
where we define the deviation terms 
\begin{equation*}
    \delta_1 =  C\sigma n^{1/4} \log^{1/4} n \left(\|\mB^\star_{1, \cdot}\|_2 + \left(\sum_{k=1}^{m-1} B_{m k}^{\star 2}\right)^{1/2} \right) \asymp C\sigma^2 \sqrt{n \log n}
\end{equation*}
and
\begin{equation*}
    \delta_m = C\sigma n^{1/4} \log^{1/4} n \left(\|\mB^\star_{m, \cdot}\|_2 + \left(\sum_{k=1}^{m-1} B_{m k}^{\star 2}\right)^{1/2} \right) \asymp C\sigma^2 \sqrt{n \log n}.
\end{equation*}
The rates of $\delta_1$ and $\delta_m$ follow from Equation~\eqref{eq:growth-condition-on-Bstar}
Substituting the lower bound in Equation~\eqref{eq:beta_j-lower} into the right hand side of Equation~\eqref{eq:equation-needs-to-be-referenced} yields
\begin{equation*}
\begin{aligned}
    \left(\frac{\lambdabar_m}{\lambdabar_1}\right)^2 &\geq 1 - \frac{C \sigma^2 \sqrt{n \log n}}{(n-m) \sigma^2 + C\sigma^2 \sqrt{n\log n}} .
\end{aligned}
\end{equation*}
Applying the fact that $m = o(n)$ and $\sigma^2 > 0$ is fixed, 
\begin{equation*}
    \left(\frac{\lambdabar_m}{\lambdabar_1}\right)^2 \geq 1 - C\sqrt{\frac{\log n}{n}},
\end{equation*}
completing the proof.
\end{proof}

\subsection{Proof of Theorem \ref{thm:hard-thresh-suffice}}
\label{sec:hard-thresh-suffice}
\begin{proof}
Let $\calH_m$ denote the hard-thresholding operator that selects the $m$ rows (i.e., groups) of a symmetric matrix with the largest $\ell_2$ norms.
Formally, for any matrix $\mA \in \Sym(n)$, let $S \subseteq [n]$ be the index set of the $m$ largest values of $\|\mA_{i, \cdot}\|_2$ for $i \in [n]$. 
Then, 
\begin{equation}\label{eq:hard-thresh:define}
    [\calH_m(\mA)]_{ij} := \begin{cases}
        A_{i j}, & \text { if } i \in S \text{ or } j \in S\\ 
        0, & \text { otherwise } .
    \end{cases}
\end{equation}
The hard-thresholding estimator arises from applying the hard-thresholding operator, defined in Equation~\eqref{eq:hard-thresh:define}, to the entries of the matrix 
\begin{equation*}
    \mYtil = \mB^\star + \mDeltatil + \mWtilo, 
\end{equation*}
where $\mYtil$ is the output from Step~\ref{alg:general:step:select} in Algorithm~\ref{alg:general}. 
As before, we assume without loss of generality that the selected index set is $\calU = [n]$ and $\Istar = [m]$. 
Let $\mYtil = (y_{ij})_{1\leq i, j \leq n}$. 

The hard-thresholding estimator selects the $m$ rows of $\mYtil$ with the largest row-wise $\ell_2$ norms. 
Therefore, to show it recovers the node support $\Istar$, it suffices to show that with high probability,
\begin{equation*}
\min _{i \in[m]} \sum_{j=1}^n y_{i j}^2>\max _{\ell \in[m]^c} \sum_{j=1}^n y_{\ell j}^2.
\end{equation*}

By Equation~\eqref{eq:all-y-lower} in Lemma~\ref{lem:portmanteau}, we have that with probability at least $1 - O(n^{-6})$,
\begin{equation*} \begin{aligned}
    \sum_{j=1}^n y_{ij}^2
    &\geq \left\|\mBstar_{i, \cdot}\right\|_2^2+n\sigma^2 -C\sigma^2 \sqrt{n\log n} - C\sigma \|\mBstar_{i,\cdot}\|_{2} \cdot n^{1/4} \log^{1/4} n ,
\end{aligned} \end{equation*}
holds for all $i \in [m]$.
Similarly, by Equation~\eqref{eq:all-y-upper} in Lemma~\ref{lem:portmanteau}, it also holds with probability at least $1 - O(n^{-6})$ that 
\begin{equation*} \begin{aligned}
    \sum_{j=1}^n y_{\ell j}^2 &\leq \left\|\mBstar_{\ell, \cdot}\right\|_2^2+n\sigma^2 +C\sigma^2 \sqrt{n\log n} + C\sigma \|\mBstar_{\ell,\cdot}\|_{2} \cdot n^{1/4} \log^{1/4} n ,
\end{aligned} \end{equation*}
for all $\ell \in [m]^c$.
Under the assumptions of Theorem~\ref{thm:glaso-sufficient-crude}, 
particularly the signal strength and separation conditions, we have
\begin{equation*}
    \min_{i \in [m]} \left\|\mBstar_{i,\cdot}\right\|_2^2 \geq \max_{\ell \in [m]^c} \left\|\mBstar_{\ell, \cdot}\right\|_2^2 + C\sigma^2 \sqrt{n \log n},
\end{equation*}
for some sufficiently large constant $C > 0$.
It then follows that 
\begin{equation}  \label{eq:hard-thresh:success}
    \min_{i \in [m]} \sum_{j=1}^n y_{ij}^2 > \max_{\ell \in [m]^c} \sum_{j=1}^n y_{\ell j}^2,
\end{equation}
with probability at least $1 - O(n^{-6})$.
Equation~\eqref{eq:hard-thresh:success} implies that the top-$m$ hard-thresholding rule will select the first $m$ indices with high probability, and thus, correctly recovers the true support $\Istar$. 
\end{proof}

\subsection{Proof of Theorem \ref{thm:failure}}
\label{sec:proof:failure}

\begin{proof}
Suppose that without loss of generality, $\Istar = [m]$. 
Since both group Lasso and hard thresholding are selection-based procedures, it suffices to show that with high probability, the $(m+1)$-th group, a non-signal group, has the largest row norm, and thus gets selected before any group in $\Istar$. 
We first bound the squared row norms of the signal and non-signal groups using Lemma~\ref{lem:portmanteau}.
Applying Equation~\eqref{eq:all-y-lower} shows that for the $(m+1)$th row,
\begin{equation*}
    \|\vy_{m+1}\|_2^2 \geq m\beta_1^2 + n\sigma^2 - C\sigma^2 \sqrt{n \log n} - C\beta_1 \sigma \sqrt{m \log n},
\end{equation*}
with high probability.
Applying Equation~\eqref{eq:all-y-upper}, for any $\ell \in [m+1]^c$ (i.e., a non-signal row beyond $m+1$):
\begin{equation*}
    \max_{\ell \in [m+1]^c}\|\vy_{\ell}\|_2^2 \leq m\beta_2^2 + n\sigma^2 + C\sigma^2 \sqrt{n \log n} + C\beta_2 \sigma \sqrt{m \log n},
\end{equation*}
and for any $i \in [m]$ (true signal row):
\begin{equation*}
    \max_{i \in [m]}\|\vy_{i}\|_2^2 \leq n\beta_2^2 + n\sigma^2 + C\sigma^2 \sqrt{n \log n} + C\beta_2 \sigma \sqrt{n \log n} + \beta_1^2 + C\sigma \beta_1 \sqrt{\log n}
\end{equation*}
both with high probability.

Substituting $\beta_2 = C \sigma n^{-1/4} \log^{1/4} n$,
the signal contribution to $\vy_i$ is $O(n^{1/2} \log^{1/2} n)$,
while the contribution from $\beta_1$ in $\vy_{m+1}$ is $m \beta_1^2$.
If $\beta_1 \geq C \sigma (n \log n)^{1/4} / \sqrt{m}$ for large enough $C$, we have
\begin{equation*}
    m \beta_1^2 \gg \sigma^2 \sqrt{n \log n},
\end{equation*}
and thus $\|\vy_{m+1}\|_2^2 > \|\vy_i\|_2^2$ and $\|\vy_{m+1}\|_2^2 > \|\vy_\ell\|_2^2$ for all $i \in [m]$ and $\ell \in [m+1]^c$ with high probability.
Therefore, both group Lasso and hard thresholding will incorrectly prioritize the $(m+1)$-th group, leading to failure in exact support recovery.
\end{proof}

\section{Proofs of Main Results on Recovering the Common Structure}

In this section, we prove the upper bounds on eigenspace and entrywise estimation of the low rank component $\mMstar$ obtained in Section~\ref{sec:common-structure} for Theorems~\ref{thm:eigenvec:l2}, \ref{thm:improved:linear-form-bound}, \ref{thm:eigenspace:l2infty}, \ref{thm:improved:entrywise} and Corollaries~\ref{cor:single:eigenspace} and~\ref{cor:single:entrywise}. 

\subsection{Proof of Theorem~\ref{thm:eigenvec:l2}}
\label{sec:proof:thm:eigenvec:l2}

\begin{proof}
The result largely follows from adapting results in \cite{cheng2021tackling} and using new concentration results introduced in \cite{yan2025improved}. 
We will establish the three main results of Theorem~\ref{thm:eigenvec:l2}, namely $\ell_2$-perturbation, perturbation of linear forms and $\ell_\infty$-perturbation, in sequence.
We will only provide some key steps in the proof, and refer the reader to \cite{cheng2021tackling} for more details.
As a reminder, we remind the reader that $\mH$ is an asymetric noise matrix given in Equation~\eqref{eq:M:asym}, $\lambda_l$ and $\vu_l$ are the $l$-th eigenvalue and eigenvector of $\mM$, respectively, as given in Equation~\eqref{eq:left-right-eigenvc}. 

\paragraph{$\ell_2$-perturbation.} 
By Equations (83) and (84) in \cite{cheng2021tackling}, we have 
\begin{equation} \label{eq:l2:I}
\sqrt{1 - \sum_{k=1}^r \left|\vustart_k \vu_l\right|^2} 
\leq \left\|\vu_l - \sum_{k=1}^r \frac{\lambdastar_k \vustart_k \vu_l}{\lambda_l} \vustar_k\right\|_2 \leq \frac{8\kappa}{3} \frac{\|\mH\|}{\lambdastar_{\min}}.
\end{equation} 
The orthonormality of $\{\vustar_i\}_{i\in[r]}$ implies that 
\begin{equation*} 
\vustart_k \vu_l 
= \left(1 - \lambdastar_k/\lambda_l\right)^{-1} \vustart_k \left(\vu_l - \sum_{i=1}^r \frac{\vustart_i \vu_l}{\lambda_l / \lambdastar_i} \vustar_i\right).
\end{equation*}
Taking $\va = \vustar_k$ in Theorem~\ref{thm:linear-form-rank-r}, with probability at least $1-O(n^{-17})$, we have for all $k \neq l$, 
\begin{equation} \label{eq:l2:III}
\begin{aligned}
    \left|\vustart_k \vu_l\right| &\lesssim \left|1 - \lambdastar_k/\lambda_l\right|^{-1} \sqrt{\frac{\kappa^2 r \log^4 n}{n}} \max\left\{\frac{\mub^{1/2} L \log^3 n}{\lambdastar_{\min}} \|\vustar_k\|_{\infty}, \left(\frac{\sigma \sqrt{n \log^3 n}}{\lambdastar_{\min}}\right)^{k_0}\right\}\\
    &\lesssim \left|1 - \lambdastar_k/\lambda_l\right|^{-1} \frac{\sigma \sqrt{\kappa^2 r \log^7 n}}{\lambdastar_{\min}} \lesssim \frac{\sigma \sqrt{\kappa^4 r \log^7 n}}{\delta^\star_l},
\end{aligned}
\end{equation} 
which yields the bound in \ref{eq:l2:perturb:inner:diff}.
Putting the above in Equations~\eqref{eq:l2:I} and~\eqref{eq:l2:III} together (which are improved results of Equation (84) and (89) in \cite{cheng2021tackling}), we obtain the bound in Equation~\eqref{eq:l2:perturb:norm} (see details in Equation (90) in \cite{cheng2021tackling}). 
The bound in Equation~\eqref{eq:l2:perturb:inner:same} then follows from the identity $\left|\vustart_l \vu_l\right| =1-\frac{1}{2} \min \left\{\left\|\vu_l \pm \vustar_l \right\|_2^2\right\}$. 

\paragraph{Perturbation of linear forms.}
By the triangle inequality,
\begin{equation} \label{eq:IV}
\left|\boldsymbol{a}^{\top}\left(\vu_l
	-\frac{\vustart_l \vu_l}{\lambda_l / \lambdastar_l} \vustar_l \right)
\right|
\leq
\left|\boldsymbol{a}^{\top}\left(\vu_l-\sum_{k=1}^r \frac{\vustart_k \vu_l}{\lambda_l / \lambdastar_k} \vustar_k \right)\right|
+\sum_{k \neq l, k=1}^r\left|\frac{\vustart_k \vu_l}{\lambda_l / \lambdastar_k}
\left(\boldsymbol{a}^{\top} \vustar_k \right)\right| .
\end{equation}
Applying Theorem~\ref{thm:linear-form-rank-r} to the first term in the above display, with probability at least $1-O(n^{-17})$, we have for all $k \neq l$,
\begin{equation} \label{eq:first-term}
    \left|\boldsymbol{a}^{\top}\left(\boldsymbol{u}_l-\sum_{k=1}^r \frac{\boldsymbol{u}_k^{\star \top} \boldsymbol{u}_l}{\lambda_l / \lambda_k^{\star}} \boldsymbol{u}_k^{\star}\right)\right| \lesssim \frac{\sigma\sqrt{\kappa^2 r \log^7 n}}{\lambdastar_{\min}}.
\end{equation}
By Equation~\eqref{eq:l2:perturb:inner:diff},
\begin{equation} \label{eq:inner:diff:over:ev}
\left|\frac{\vustart_k \vu_l}{\lambda_l / \lambdastar_k}\right|
\lesssim \frac{\sigma}{\left|\lambdastar_l-\lambdastar_k\right|}
		\sqrt{\kappa^4 r \log^7 n},
\end{equation}
and it follows that the second term in Equation~\eqref{eq:IV} is bounded as
\begin{equation*}
    \sum_{k\neq l, k=1}^r \left|\frac{\boldsymbol{u}_k^{\star \top} \boldsymbol{u}_l}{\lambda_l / \lambda_k^{\star}}(\va^\top \vustar_k)\right| \lesssim \sigma r \sqrt{\kappa^4 r \log^7 n} \left\{ \max_{k: k\neq l} \frac{\va^\top \vustar_k}{\left|\lambda_l^{\star}-\lambda_k^{\star}\right|} \right\}
\end{equation*}
Combining the bound in the above display and Equation~\eqref{eq:first-term} with Equation~\eqref{eq:IV}, it follows that 
\begin{equation} \label{eq:first:order:bound}
\left|\boldsymbol{a}^{\top}
\left(\vu_l-\frac{\vustart_l \vu_l}{\lambda_l / \lambdastar_l}\vustar_l \right)
\right| 
\lesssim \sigma \sqrt{\kappa^2 r \log^7 n}
\left\{\frac{1}{\lambdastar_{\min }}
	+\kappa r \max_{k: k \neq l} 
	\frac{\left|\boldsymbol{a}^{\top} \vustar_k\right|}
	{\left|\lambdastar_l-\lambdastar_k\right|}\right\} .
\end{equation}
Since
\begin{equation*}
\min \left|1 \pm \frac{\vustart_l \vu_l}{\lambda_l / \lambdastar_l}\right|
\lesssim \frac{\kappa r^2 \sigma \sqrt{ \log n}}{\lambdastar_{\min}}
+\frac{\sigma^2 \kappa^4 r^2 \log ^7 n}{\left(\delta_l^{\star}\right)^2}
+\frac{\kappa^2 \sigma^2 n \log n}{\left(\lambdastar_{\min }\right)^2},
\end{equation*}
we have
\begin{equation*} \begin{aligned}
    \min \left|\boldsymbol{a}^{\top}\left(\boldsymbol{u}_l \pm \boldsymbol{u}_l^{\star}\right)\right| &\leq \left|\boldsymbol{a}^{\top}\left(\boldsymbol{u}_l-\frac{\boldsymbol{u}_l^{\star \top} \boldsymbol{u}_l}{\lambda_l / \lambda_l^{\star}} \boldsymbol{u}_l^{\star}\right)\right|+\min \left|1 \pm \frac{\boldsymbol{u}_l^{\star \top} \boldsymbol{u}_l}{\lambda_l / \lambda_l^{\star}}\right| \cdot\left|\boldsymbol{a}^{\top} \boldsymbol{u}_l^{\star}\right| \\
    &\lesssim \sigma \sqrt{\kappa^2 r \log^7 n}\left\{\frac{1}{\lambda_{\min }^{\star}}+\kappa r \max _{k: k \neq l} \frac{\left|\boldsymbol{a}^{\top} \boldsymbol{u}_k^{\star}\right|}{\left|\lambda_l^{\star}-\lambda_k^{\star}\right|}\right\} \\
    &+\left(\frac{\sigma^2 \kappa^4 r^2 \log ^7 n}{\left(\delta_l^{\star}\right)^2}+\frac{\kappa^2 \sigma^2 n \log n}{\left(\lambda_{\min }^{\star}\right)^2}\right) \left|\va^\top \vustar_l\right|
\end{aligned} \end{equation*}
which completes the proof of Equation~\eqref{eq:naive:linear-form}. 

\paragraph{$\ell_\infty$-perturbation.} 
Setting $\va$ to the standard bases yields the bound in \eqref{eq:naive:entrywise}, which completes the proof.
\end{proof}

\subsection{Proof of Theorem~\ref{thm:improved:linear-form-bound}}
\label{sec:proof:thm:improved:linear-form-bound}

\begin{proof}
We remind the reader that $\vw_l$ is the $l$-th left eigenvector of $\mM$ given in Equation~\eqref{eq:left-right-eigenvc}, and they satisfy the same results as stated in Theorem~\ref{thm:eigenvec:l2} since one can repeat the proof on $\mM^\top$ to obtain the same bounds.
By Equation (101) in \cite{cheng2021tackling} (and also viewing from Equation~\eqref{eq:first-order-approx}), the square of $\uhat_{\va, l}$ defined in Equation~\eqref{eq:linear-form:est} is given by 
\begin{equation}\label{eq:square:linear-form}
\left|\frac{\left(\boldsymbol{a}^{\top} \vustar_l \cdot 
	\frac{\vustart_l \vu_l}{\lambda_l / \lambdastar_l} +\gamma_1\right)
	\left(\boldsymbol{a}^{\top} \vustar_l \cdot 
	\frac{\vustart_l \boldsymbol{w}_l}{\lambda_l / \lambdastar_l}
	+\gamma_2\right)}
	{\frac{\vustart_l \boldsymbol{w}_l}{\lambda_l / \lambdastar_l} \cdot
	\frac{\vustart_l \vu_l}{\lambda_l / \lambdastar_l}+\gamma_3}\right|,
\end{equation}
where 
\begin{equation} \label{eq:delta123}
\gamma_1 := \va^\top \left( \vu_l - \frac{\vustart_l \vu_l}{\lambda_l / \lambdastar_l} \vustar_l\right), \quad 
\gamma_2 := \va^\top \left( \vw_l - \frac{\vustart_l \vw_l}{\lambda_l / \lambdastar_l} \vustar_l\right), \quad
\gamma_3 := \calU_1 + \calU_2
\end{equation}
and the quantities $\calU_1$ and $\calU_2$ are given by
\begin{equation*} \begin{aligned}
    \calU_1 &:= \sum_{k: k\neq l} \frac{\vustart_k \vw_l}{\lambda_l / \lambdastar_k} \cdot \frac{\vustart_k \vu_l}{\lambda_l / \lambdastar_k} ~\text{ and } \\
    \calU_2 &:= \sum_{k_1=1}^r \sum_{k_2 = 1}^r \sum_{s_1, s_2: s_1 + s_2 \geq 1} \frac{\vustart_{k_1} \vw_l}{\lambda_l / \lambdastar_{k_1}} \cdot \frac{\vustart_{k_2} \vu_l}{\lambda_l / \lambdastar_{k_2}} \cdot \frac{\vustart_{k_1} \mH^{s_1 + s_2} \vustar_{k_2}}{\lambda_l^{s_1 + s_2}}. 
\end{aligned} \end{equation*}

We will bound $\gamma_1$, $\gamma_2$ and $\gamma_3$ separately, and then combine the results to obtain the desired bound in Equation~\eqref{eq:improved:linear-form-bound}.

For $\gamma_1$ and $\gamma_2$, it follows from Equation~\eqref{eq:first:order:bound} that with probability at least $1 - O(n^{-17})$,
\begin{equation} \label{eq:delta1:delta2}
\max\left\{ \left|\gamma_1\right|, \left|\gamma_2\right| \right\}
\lesssim \sigma \sqrt{\kappa^2 r \log ^7 n}
\left\{\frac{1}{\lambda_{\min }^{\star}}+\kappa r \max _{k: k \neq l} \frac{\left|\boldsymbol{a}^{\top} \vustar_k\right|}{\left|\lambdastar_l-\lambdastar_k\right|}\right\} .
\end{equation}

To bound $\gamma_3$, we bound $\calU_1$ and $\calU_2$ separately.
For $|\calU_1|$, it follows from Equation~\eqref{eq:inner:diff:over:ev} that with probability at least $1 - O(n^{-17})$,
\begin{equation} \label{eq:calU1:bound} 
|\calU_1| \lesssim r \left(\frac{\sigma}{\delta^\star_l} \sqrt{\kappa^4 r \log^7 n}\right)^2. 
\end{equation}
For $|\calU_2|$, using Theorem~\ref{thm:eigenvalue:rank-r} and the naive bound that for all $k_1,k_2,l \in [r]$ the inner products $|\vustart_{k_1} \vw_l|$ and $|\vustart_{k_1} \vu_l|$ are upper bounded by $1$, with probability at least $1 - O(n^{-17})$, 
we have
\begin{equation*} \begin{aligned}
\left|\calU_2\right| 
&\lesssim \sum_{k_1=1}^r \sum_{k_2=1}^r \sum_{s_1,s_2:s_1+s_2\geq 1} \frac{1}{\left|\lambdastar_l/\lambdastar_{k_1}\cdot \lambdastar_l / \lambdastar_{k_2}\right|} \left|\frac{\vustart_{k_1} \boldsymbol{H}^{s_1+s_2} \vustar_{k_2}}{\left(\lambda_l^{\star} / 2\right)^{s_1+s_2}}\right| .
\end{aligned} \end{equation*}
Recalling the definition of $\kappa = \lambdastar_{\max} / \lambdastar_{\min}$, it follows that
\begin{equation*} \begin{aligned}
\left|\calU_2\right| 
    &\leq \kappa^2 r^2 \sum_{s_1, s_2: s_1+s_2 \geq 1}\left(\frac{2}{\lambda_{\min }^{\star}}\right)^{s_1+s_2}\left|\boldsymbol{u}_{k_1}^{\star \top} \boldsymbol{H}^{s_1+s_2} \vustar_{k_2} \right| \\
    &= \kappa^2 r^2 \sum_{s=1}^\infty \sum_{s_1, s_2: s_1+s_2 = s}\left(\frac{2}{\lambda_{\min }^{\star}}\right)^{s}\left|\boldsymbol{u}_{k_1}^{\star \top} \boldsymbol{H}^{s} \boldsymbol{u}_{k_2}^{\star}\right| .
\end{aligned} \end{equation*}
Applying the na\"{i}ve bound $s \le 2^s$,
\begin{equation*}
\left|\calU_2\right| 
\leq
\kappa^2 r^2 \sum_{s=1}^\infty \left(\frac{4}{\lambda_{\min }^{\star}}\right)^{s}\left|\boldsymbol{u}_{k_1}^{\star \top} \boldsymbol{H}^{s} \boldsymbol{u}_{k_2}^{\star}\right| .
\end{equation*}
Applying Lemma~\ref{lem:inf:series:bound},
\begin{equation*}
\left|\calU_2\right| 
\lesssim \frac{\sigma\kappa^2 r^2\sqrt{\log^7 n}}{\lambdastar_{\min}}.
\end{equation*}
Applying this and Equation~\eqref{eq:calU1:bound} to bound
\begin{equation*}
\left| \gamma_3 \right| \le \left| \calU_1 \right| + \left| \calU_2 \right|,
\end{equation*}
we obtain that with probability at least $1 - O(n^{-17})$,
\begin{equation} \label{eq:delta3}
\left|\gamma_3\right| 
\lesssim
\frac{\sigma^2 \kappa^4 r^2 \log^7 n}{\left(\delta^\star_{l}\right)^2} 
+ \frac{\sigma\kappa^2 r^2\sqrt{\log^7 n}}{\lambdastar_{\min}}. 
\end{equation}

Before applying the bounds on $\gamma_1$, $\gamma_2$ and $\gamma_3$ to Equation~\eqref{eq:square:linear-form}, we simplify the proof by following the same strategy as in \cite{cheng2021tackling} to analyze two cases separately, based on the magnitude of $\va^\top \vustar_l$.
\begin{itemize}
    \item[(a)] When $\left|\va^\top \vustar_l\right| \leq \max\{|\gamma_1|, |\gamma_2|\}$, it follows from Equation (108) in \cite{cheng2021tackling} that
    \begin{equation*}
        \left(\widehat{u}_{\boldsymbol{a}, l}\right)^2 \lesssim \left(\max \left\{\left|\gamma_1\right|,\left|\gamma_2\right|\right\}\right)^2 
    \end{equation*}
    and we have 
    \begin{equation} \label{eq:case-a-linear-form}
        \min \left|\widehat{u}_{\boldsymbol{a}, l} \pm \boldsymbol{a}^{\top} \vustar_l\right| \leq\left|\widehat{u}_{\boldsymbol{a}, l}\right|+\left|\boldsymbol{a}^{\top} \vustar_l\right|
\lesssim \max \left\{\left|\gamma_1\right|,\left|\gamma_2\right|\right\}
    \end{equation}
    which can then be combined with Equation~\eqref{eq:delta1:delta2}. 
\item[(b)] Otherwise, repeating Equation (111) in \cite{cheng2021tackling} and replacing their upper bounds on $|\gamma_1|, |\gamma_2|$ and $|\gamma_3|$ with the refined bounds given in Equation~\eqref{eq:delta1:delta2} and Equation~\eqref{eq:delta3}, we have 
\begin{equation*}
\begin{aligned}
    \left| \frac{(\va^\top \vu_l) (\va^\top \vw_l)}{\vu^\top_l \vw_l} - \left(\va^\top\vustar_l\right)^2\right| &\lesssim \frac{\left|\boldsymbol{a}^{\top} \boldsymbol{u}_l^{\star}\right| \max \left\{\left|\gamma_1\right|,\left|\gamma_2\right|\right\}+\left(\max \left\{\left|\gamma_1\right|,\left|\gamma_2\right|\right\}\right)^2+\left(\boldsymbol{a}^{\top} \boldsymbol{u}_l^{\star}\right)^2\left|\gamma_3\right|}{\left(\frac{1}{2}\right)^2-\frac{1}{8}} \\
    &\lesssim \left|\boldsymbol{a}^{\top} \boldsymbol{u}_l^{\star}\right| \sigma \sqrt{\kappa^2 r \log ^7 n}\left\{\frac{1}{\lambda_{\min }^{\star}}+\kappa r \max _{k: k \neq l} \frac{\left|\boldsymbol{a}^{\top} \boldsymbol{u}_k^{\star}\right|}{\left|\lambda_l^{\star}-\lambda_k^{\star}\right|}\right\} \\
    &+ \left(\boldsymbol{a}^{\top} \boldsymbol{u}_l^{\star}\right)^2 \left(\frac{\sigma^2 \kappa^4 r^2 \log^7 n}{\left(\delta^\star_{l}\right)^2} + \frac{\sigma\kappa^2 r^2\sqrt{\log^7 n}}{\lambdastar_{\min}}\right).
\end{aligned}
\end{equation*} 
Taking the above display into Equation (113) in \cite{cheng2021tackling} yields that 
\begin{equation*}
\begin{aligned}
    \min \left|\widehat{u}_{\boldsymbol{a}, l} \pm \boldsymbol{a}^{\top} \boldsymbol{u}_l^{\star}\right| &\leq \frac{\left|\frac{\left(\boldsymbol{a}^{\top} \boldsymbol{u}_l\right)\left(\boldsymbol{a}^{\top} \boldsymbol{w}_l\right)}{\boldsymbol{u}_l^{\top} \boldsymbol{w}_l}-\left(\boldsymbol{a}^{\top} \boldsymbol{u}_l^{\star}\right)^2\right|}{\left|\boldsymbol{a}^{\top} \boldsymbol{u}_l^{\star}\right|}\\
    &\lesssim  \sigma \sqrt{\kappa^2 r \log ^7 n}\left\{\frac{1}{\lambda_{\min }^{\star}}+\kappa r \max _{k: k \neq l} \frac{\left|\boldsymbol{a}^{\top} \boldsymbol{u}_k^{\star}\right|}{\left|\lambda_l^{\star}-\lambda_k^{\star}\right|}\right\} \\
    &~~~~~~+ \left(\boldsymbol{a}^{\top} \boldsymbol{u}_l^{\star}\right) \left(\frac{\sigma^2 \kappa^4 r^2 \log^7 n}{\left(\delta^\star_{l}\right)^2} + \frac{\sigma\kappa^2 r^2\sqrt{\log^7 n}}{\lambdastar_{\min}}\right). 
\end{aligned}
\end{equation*}
For ease of presentation, 
Under the assumption $\kappa r = O(1)$, it follows that 
\begin{equation} \label{eq:above-bound}
    \min \left|\widehat{u}_{\boldsymbol{a}, l} \pm \boldsymbol{a}^{\top} \boldsymbol{u}_l^{\star}\right| \lesssim \sigma \sqrt{\log^7 n} \left\{\frac{1}{\lambda_{\min }^{\star}}+ \max _{k: k \neq l} \frac{\left|\boldsymbol{a}^{\top} \boldsymbol{u}_k^{\star}\right|}{\left|\lambda_l^{\star}-\lambda_k^{\star}\right|}\right\} + \left|\boldsymbol{a}^{\top} \boldsymbol{u}_l^{\star}\right| \frac{\sigma^2 \log^7 n}{\left(\delta^\star_{l}\right)^2}.
\end{equation}
\end{itemize}
Taking the maximum over Equation~\eqref{eq:above-bound} and Equation~\eqref{eq:case-a-linear-form} to cover the two cases, we obtain the bound in Equation~\eqref{eq:improved:linear-form-bound}.
\end{proof}

\begin{lemma}\label{lem:inf:series:bound}
For any unit vector $\va$, under the same assumptions as Theorem~\ref{thm:improved:linear-form-bound}, with probability at least $1 - O(n^{-17})$, for the asymmetric noise matrix $\mH$ given in Equation~\eqref{eq:M:asym}, we have
\begin{equation}\label{eq:the-sum}
\max_{j \in [r]} \sum_{s=1}^\infty \frac{\left|\va^\top \mH^s \vustar_j\right|}{|\lambda_l|^s} 
\lesssim \frac{\sigma \sqrt{\log^7 n}}{\lambdastar_{\min}}. 
\end{equation}
\end{lemma}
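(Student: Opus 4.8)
The plan is to bound the infinite series in Equation~\eqref{eq:the-sum} by splitting it into a main term coming from small powers $s$ (where the bulk of the mass sits) and a tail term coming from large $s$ (which is negligible because $\|\mH\|/|\lambda_l| \ll 1$). First I would record the basic deterministic facts I will use: by Assumption~\ref{assump:noise} and Equation~\eqref{eq:L-sigma:weaker} we have $\|\mH\| \lesssim \sigma\sqrt n$ with high probability (this is standard for asymmetric noise matrices with independent bounded entries, e.g.\ via Theorem~\ref{thm:bernstein}), and by Assumption~\ref{assump:ev:lower} together with Theorem~\ref{thm:eigenvalue:rank-r} we have $|\lambda_l| \asymp |\lambdastar_l| \geq |\lambdastar_{\min}| \geq C\sigma\sqrt{n\log^3 n}$, so that $r_0 := \|\mH\|/|\lambda_l| \leq c/\sqrt{\log^3 n}$ for a small constant $c$. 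This ratio being genuinely $o(1)$ is what makes the tail geometric series summable.

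The key estimate is for the ``linear form of a power of $\mH$'' quantity $|\va^\top\mH^s\vustar_j|$. For $s=1$ this is a single linear form $\va^\top\mH\vustar_j$, which is sub-Gaussian with variance proxy at most $\sigma^2$ (since $\va,\vustar_j$ are unit vectors and the entries of $\mH$ have variance at most $\sigma^2$); a Bernstein/Gaussian-type tail bound gives $|\va^\top\mH\vustar_j| \lesssim \sigma\sqrt{\log n}$ with probability $1-O(n^{-20})$, uniformly over $j \in [r]$ after a union bound (and one would want to extend this to a net over $\va$ if $\va$ is not fixed, though here $\va$ is fixed). For general $s$, I would write $\va^\top\mH^s\vustar_j = (\va^\top\mH)(\mH^{s-2})(\mH\vustar_j)$ when $s\geq 2$ and crudely bound it by $\|\va^\top\mH\| \cdot \|\mH\|^{s-2} \cdot \|\mH\vustar_j\|$; but a sharper route, which is the one the referenced works (\cite{yan2025improved}, \cite{cheng2021tackling}) use, is to peel off one factor of $\mH$ on each side and bound the middle by the operator norm, giving $|\va^\top\mH^s\vustar_j| \lesssim \sigma\sqrt{\log n} \cdot \|\mH\|^{s-2} \cdot \sigma\sqrt n$ for $s \geq 2$ — here one uses that $\mH\vustar_j$ has $\ell_2$ norm controlled by $\|\mH\|\lesssim\sigma\sqrt n$, and $\va^\top\mH$ has $\ell_2$ norm $\lesssim\sigma\sqrt n$ as well, but we can do better on one of the two endpoint contractions by keeping it as a scalar linear form, earning the $\sqrt{\log n}$ instead of $\sqrt n$. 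Actually the cleanest bookkeeping: for $s \geq 1$, $|\va^\top \mH^s \vustar_j| \le \|\mH\|^{s-1}\,|\va^\top \mH \vustar_j'|$ is not quite right since the intermediate vector changes; so I would instead use $|\va^\top \mH^s \vustar_j| \le \|\mH\|^{s-1} \|\mH\| = \|\mH\|^s$ as the trivial bound for $s\ge 2$ and the refined sub-Gaussian bound only for $s = 1$, then check that even the trivial bound for $s \ge 2$ contributes negligibly to the sum because of the extra factor $r_0^{s-1}$.

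Putting it together: the $s=1$ term of Equation~\eqref{eq:the-sum} is $\lesssim \sigma\sqrt{\log n}/|\lambda_l| \lesssim \sigma\sqrt{\log n}/\lambdastar_{\min}$, which is already within the claimed bound $\sigma\sqrt{\log^7 n}/\lambdastar_{\min}$ (with room to spare). For $s \geq 2$, I would bound the tail by
\begin{equation*}
\sum_{s=2}^\infty \frac{\|\mH\|^s}{|\lambda_l|^s} = \frac{r_0^2}{1-r_0} \lesssim r_0^2 \lesssim \frac{\sigma^2 n}{(\lambdastar_{\min})^2} \lesssim \frac{\sigma\sqrt{\log n}}{\lambdastar_{\min}},
\end{equation*}
where the last inequality uses $\sigma\sqrt n \lesssim \lambdastar_{\min}/\sqrt{\log^3 n}$ from Assumption~\ref{assump:ev:lower}, hence $\sigma^2 n/(\lambdastar_{\min})^2 \lesssim 1/\log^3 n \cdot (\sigma\sqrt n/\lambdastar_{\min}) \lesssim \sigma\sqrt{\log n}/\lambdastar_{\min}$ after absorbing constants. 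Taking a union bound over $j \in [r]$ (and noting $r = O(1)$ under the hypotheses, or more generally absorbing a factor $r$) gives the claimed high-probability bound. The main obstacle — and the place where I expect to have to be careful rather than just invoking a named lemma — is obtaining the sharp $\sqrt{\log n}$ (rather than $\sqrt n$) dependence on the $s=1$ linear form \emph{uniformly} in the relevant directions, and making sure the treatment of the intermediate powers $\mH^s$ does not secretly reintroduce a $\mub$ or extra polynomial-in-$n$ factor; this is exactly the improvement over \cite{cheng2021tackling} that the paper advertises, so I would lean on the concentration machinery of \cite{yan2025improved} (cf.\ Theorem~\ref{thm:yan:main} and Theorem~\ref{thm:linear-form-rank-r}) to control $|\va^\top\mH^s\vustar_j|$ by $\sigma\sqrt{\log^{\Theta(1)} n}\,\|\mH\|^{s-1}$-type quantities, and only use crude operator-norm bounds once the geometric decay has been secured.
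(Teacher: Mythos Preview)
Your argument has a genuine gap in the tail bound. You claim that for $s \geq 2$ the crude operator-norm bound suffices, writing
\[
\sum_{s=2}^\infty \frac{\|\mH\|^s}{|\lambda_l|^s} \lesssim r_0^2 \lesssim \frac{\sigma^2 n}{(\lambdastar_{\min})^2} \lesssim \frac{\sigma\sqrt{\log n}}{\lambdastar_{\min}}.
\]
The last inequality is false under the stated assumptions. Assumption~\ref{assump:ev:lower} only guarantees $\lambdastar_{\min} \gtrsim \sigma\sqrt{n\log^3 n}$; taking $\lambdastar_{\min}$ at this scale gives $\sigma^2 n/(\lambdastar_{\min})^2 \asymp 1/\log^3 n$, while the target bound is $\sigma\sqrt{\log^7 n}/\lambdastar_{\min} \asymp \log^2 n/\sqrt n$, so your tail is larger than the claimed bound by a polynomial factor $\sqrt n/\log^5 n$. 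Already the single term $s=2$ is too big: $|\va^\top\mH^2\vustar_j|/|\lambda_l|^2 \le \|\mH\|^2/|\lambda_l|^2$ is of order $1/\log^3 n$, not $\sigma\,\mathrm{polylog}(n)/\lambdastar_{\min}$.

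The missing idea is that you must invoke the refined concentration of Theorem~\ref{thm:yan:main} not only for $s=1$ but for \emph{all} $s$ up to order $\log n$. That theorem gives $|\va^\top\mH^s\vustar_j| \lesssim (\sigma^2 n\log^3 n)^{s/2}/\sqrt n$ (up to the $L$-term and polylog factors), which carries an extra $1/\sqrt n$ compared to the trivial $\|\mH\|^s \asymp (\sigma\sqrt n)^s$; dividing by $|\lambda_l|^s$ then gives a geometrically decaying sequence whose first term is $\sigma\sqrt{\log^7 n}/\lambdastar_{\min}$, which is exactly the claimed rate. The paper's proof accordingly splits at $s = 20\log n$: for $s \le 20\log n$ it applies Theorem~\ref{thm:yan:main} to retain the $1/\sqrt n$ savings, and for $s > 20\log n$ the crude bound $(\|\mH\|/|\lambda_l|)^s \le 3^{-s}$ is small enough (of order $n^{-10}$) to absorb even a polynomial prefactor. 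Your closing sentence gestures toward this machinery, but the displayed computation does not use it and the claimed chain of inequalities does not hold.
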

\begin{proof}
We break the sum on the left hand side of Equation~\eqref{eq:the-sum} into two parts. 
For $s \geq 20\log n$, we have 
\begin{equation*} 
\sum_{s\geq 20\log n} \frac{\left|\va^\top \mH^s \vustar_j\right|}{|\lambda_l|^s} \leq \sum_{s\geq 20\log n} \frac{\|\mH\|^s}{|\lambda_l|^s}.
\end{equation*}
By Theorem~\ref{thm:eigenvalue:rank-r}, we have 
\begin{equation*}
    \|\mH\| \leq \frac{|\lambda_l|}{3} \quad \text{and} \quad \frac{1}{2} \leq\left|\frac{\lambda_l}{\lambda_l^{\star}}\right| \leq 2,
\end{equation*}
and it follows that 
\begin{equation} \label{eq:part-I}
\begin{aligned}
    \sum_{s\geq 20\log n} \frac{\left|\va^\top \mH^s \vustar_j\right|}{|\lambda_l|^s}  &\leq \sum_{s\geq 20\log n} \frac{\|\mH\|^s}{|\lambda_l|^s} \\
    &\lesssim \frac{\|\mH\|}{\lambdastar_{\min}} \sum_{s\geq 20\log n - 1} \left(\frac{1}{3}\right)^s \lesssim \frac{1}{\lambdastar_{\min}} \max\left\{L \log n, \sigma \sqrt{n \log n}\right\} \cdot n^{-10}. 
\end{aligned}
\end{equation}

On the other hand, for $s \leq 20\log n$, Theorem~\ref{thm:yan:main} implies
\begin{equation} \label{eq:part-II}
\begin{aligned}
\sum_{s=1}^{20\log n} \frac{\left|\va^\top \mH^s \vustar_j\right|}{|\lambda_l|^s}
&\lesssim \max\left\{ \frac{L \log^5 n}{|\lambda_l|} \sqrt{\frac{\mub}{n}} \|\va\|_{\infty}, \frac{\sigma \sqrt{\log^7 n}}{|\lambda_l|} \right\}\\
&\lesssim \max\left\{ \frac{L \log^5 n}{|\lambdastar_{\min}|} \sqrt{\frac{\mub}{n}} \|\va\|_{\infty}, \frac{\sigma \sqrt{\log^7 n}}{|\lambdastar_{\min}|} \right\},
\end{aligned}
\end{equation}
Adding the two parts in equations~\eqref{eq:part-I} and~\eqref{eq:part-II} together and using the assumption on the growth rate of $L$ and $\sigma$ in Equation~\eqref{eq:L-sigma} completes the proof.  
\end{proof}

\subsection{Proof of Corollary~\ref{cor:single:eigenspace}}
\label{sec:proof:cor:single:eigenspace}

\begin{proof}
The bound in Equation~\eqref{eq:improved:tti:eigenspace:bound} follows directly from Equation~\eqref{eq:improved:linear-form-bound}.
To estabish Equation~\eqref{eq:improved:infty:eigenvec:bound}, suppose that Equations~\eqref{eq:naive:linear-form} and~\eqref{eq:improved:linear-form-bound} hold for all $\va = \ve_1, \ve_2, \cdots, \ve_n$, which is the case with probability at least $1 - O(n^{-16})$ by Theorems~\ref{thm:eigenvec:l2} and~\ref{thm:improved:linear-form-bound}.
Then, for each $l \in [r]$, we take $\va = \ve_i$ in Equation~\eqref{eq:naive:linear-form}, there is a binary sign $s \in \{\pm 1\}$ such that 
\begin{equation*} \begin{aligned}
\left|u_{l,i} - s u^\star_{l,i}\right| 
&\leq C \sigma \sqrt{\log^7 n} \left( \frac{1}{\lambdastar_{\min}} 
	+ \max_{k: k\neq l} \frac{\left|u^\star_{k,i}\right|}{|\lambdastar_l - \lambdastar_k|}\right) + \frac{1}{2} \left|u^\star_{l,i}\right|. 
\end{aligned} \end{equation*}
Define the index set
\begin{equation*}
    \calA = \left\{i \in [n] : |u^\star_{l,i}| \leq C \sigma \sqrt{\log^7 n} \left\{\frac{1}{\lambdastar_{\min}} + \max_{k: k\neq l} \frac{\left|u^\star_{k,i}\right|}{|\lambdastar_l - \lambdastar_k|}\right\}\right\}  
\end{equation*}
Recalling the definition of $\uhat_{l,i}$ given in Equation~\eqref{eq:vuhat:def} and applying the triangle inequality, we have 
\begin{equation*} \begin{aligned}
    \max_{i \in \calA} \left|\uhat_{l,i} - s u^\star_{l,i}\right| 
    &= \max_{i \in \calA} \left|\sgn{u_{l,i}} \left|\uhat_{\ve_i,l}\right| - s u^\star_{l,i}\right| \\
    &\leq \max_{i \in \calA} \left|\sgn{u_{l,i}} \left|\uhat_{\ve_i,l}\right| - \sgn{u_{l,i}} \left|u^\star_{l,i}\right|\right| + 2\max_{i \in \calA} |u^\star_{i,l}| .
\end{aligned} \end{equation*}
Applying the definition of $\calA$ and Equation~\eqref{eq:improved:linear-form-bound}, it follows that
\begin{equation} \label{eq:bound:I}
\begin{aligned}
    \max_{i \in \calA} \left|\uhat_{l,i} - s u^\star_{l,i}\right| 
    &\leq C \sigma \sqrt{\log^7 n} \left\{\frac{1}{\lambdastar_{\min}} + \max_{k: k\neq l} \frac{\left|u^\star_{k,i}\right|}{|\lambdastar_l - \lambdastar_k|}\right\}.
\end{aligned}
\end{equation}

For $i \in \calA^c$, we have 
\begin{equation*}
    \left|u_{l,i} - s u^\star_{l,i}\right| \leq \frac{3}{4} |u^\star_{l,i}|,
\end{equation*}
which implies that 
\begin{equation*}
    \sgn{u_{l,i}} = s\sgn{u^\star_{l,i}}, \quad \text{for all } i \in \calA^c.
\end{equation*}
It follows that 
\begin{equation*}
\begin{aligned}
    \max_{i \in \calA^c} \left|\uhat_{l,i} - s u^\star_{l,i}\right| &= \max_{i \in \calA^c} \left|s\sgn{u^\star_{l,i}} \left|\uhat_{\ve_i,l}\right| - s u^\star_{l,i}\right|\\
    &= \max_{i \in \calA^c} \left|\left|\uhat_{\ve_i,l}\right| - \left|u^\star_{l,i}\right|\right|\\
    &\leq C \sigma \sqrt{\log^7 n} \left\{\frac{1}{\lambdastar_{\min}} + \max_{k: k\neq l} \frac{\left|u^\star_{k,i}\right|}{|\lambdastar_l - \lambdastar_k|}\right\} + \frac{C|u^\star_{l,i}| \sigma^2 \log^7 n}{\left(\deltastar_{l}\right)^2},
\end{aligned} 
\end{equation*}
where the inequality follows from taking $\va = \ve_i$ for $i \in \calA^c$ in Equation~\eqref{eq:improved:linear-form-bound}. 
Combining Equation~\eqref{eq:bound:I} with the above display yields the desired bound in Equation~\eqref{eq:improved:infty:eigenvec:bound}.
\end{proof}

\subsection{Proof of Theorem~\ref{thm:eigenspace:l2infty}}
\label{sec:proof:thm:eigenspace:l2infty}

\begin{proof}
By the first order approximation in Equation~\eqref{eq:first-order-approx}, we have 
\begin{equation*}
\begin{aligned}
     \mU &= \mUstar \mLambdastar \mUstar^\top \mU \mLambda^{-1} + \mE 
     = \mUstar \mR \mSig \mGamma^\top + \mE,
\end{aligned}
\end{equation*}
where $\mE \in \R^{n\times r}$ is given by 
\begin{equation} \label{eq:E:define} 
E_{i\ell} 
= \sum_{j=1}^r \frac{\lambda_j^{\star}}{\lambda_\ell}\left(\boldsymbol{u}_j^{\star \top} \boldsymbol{u}_\ell\right)
  \sum_{s=1}^{\infty} \frac{\ve_{i}^\top \boldsymbol{H}^s \boldsymbol{u}_j^{\star}}{\lambda_\ell^s}
\end{equation}
and $\mR \mSig \mGamma^\top$ is the singular value decomposition of 
\begin{equation} \label{eq:Q:define}
    \mQ := \mLambdastar \mUstart \mU \mLambda^{-1}. 
\end{equation}

Our proof proceeds in four steps.
In Step 1, we show that $\mG^{-1}$ is close to $\mGamma \mSig^2 \mGamma^\top$ under the operator norm. 
Denote the singular value of $\mG^{-1}$ as $\mSighat^{-2}$. 
To show that $\mSighat^{-1}$ is close to $\mSig$ under the operator norm, 
we provide an lower bound on $\mSig$ in Step 2.
In Step 3, we then symmetrize $\mG$ to get $\mG_{\symm}$ and show that $\mG_{\symm}$ can produce a bias-corrected matrix $\mPsi$ in Step 3.
In Step 4, we obtain the bound in Equation~\eqref{eq:eigenspace:l2infty}.
Some technical details are left to the end of the proof, after these four steps.

\paragraph{Step 1: Bounding $\|\mG^{-1} - \mGamma \mSig^2 \mGamma^\top\|$.}

By the definition of $\mG$ in Equation~\eqref{eq:G:defin}, we have
\begin{equation*} \begin{aligned}
\mG^{-1}
&= \mLambda^{-1} \mU^\top \mM^{(1)} \mM^{(2)\top} \mU \mLambda^{-1} \\
&= \mLambda^{-1} \mU^\top \left[\mUstar \left(\mLambdastar\right)^2 \mUstart 
	+ \mMstar \mW^{(2)} + \mW^{(1)} \mMstar 
	+ \mW^{(1)} \mW^{(2)}\right] \mU \mLambda^{-1}\\
&= \mQ^\top \mQ + \mLambda^{-1} \mU^\top \left(\mW^{(1)} \mMstar + \mMstar \mW^{(2)}\right) \mU \mLambda^{-1} + \mLambda^{-1} \mU^\top \mW^{(1)} \mW^{(2)} \mU \mLambda^{-1}, 
\end{aligned} \end{equation*}
where $\mQ$ is defined in Equation~\eqref{eq:Q:define}.
Defining
\begin{equation*}
\mE_1 = \mLambda^{-1} \mU^\top \left(\mW^{(1)} \mMstar + \mMstar \mW^{(2)}\right) \mU \mLambda^{-1}
~\text{ and }~
\mE_2 = \mLambda^{-1} \mU^\top \mW^{(1)} \mW^{(2)} \mU \mLambda^{-1},  
\end{equation*} 
and rearranging, we have
\begin{equation*}
\mG^{-1} - \mGamma \mSig^2 \mGamma^{\top} = \mE_1 + \mE_2,
\end{equation*}
where $\mQ^\top \mQ = \mGamma \mSig^2 \mGamma^{\top}$ follows from the SVD of $\mQ$ given by $\mQ = \mR \mSig \mGamma^\top$.  

Thus, to bound $\| \mG^{-1} - \mGamma \mSig^2 \mGamma^{\top} \|$, it will suffice to control the operator norms of $\mE_1$ and $\mE_2$. 
By the triangle inequality, 
\begin{equation*}
\|\mE_1\| \leq \left\|\mLambda^{-1} \mU^\top \mW^{(1)} \mMstar \mU \mLambda^{-1}\right\| + \left\|\mLambda^{-1} \mU^\top  \mMstar \mW^{(2)} \mU \mLambda^{-1}\right\|.
\end{equation*}
It suffices to control the first term in the above display, as we can handle the other term in exactly the same manner. 
Taking the spectral decomposition of $\mMstar$, we have
\begin{equation} \label{eq:LUHULUUL}
\begin{aligned}
     \left\|\mLambda^{-1} \mU^\top \mW^{(1)} \mMstar \mU \mLambda^{-1}\right\| &=
    \left\|\mLambda^{-1}\mU^\top \mW^{(1)} \mUstar \mLambdastar \mUstart \mU\mLambda^{-1}\right\| \\
    &\leq \left\|\mLambda^{-1}\right\|^2 \left\|\mU^\top \mW^{(1)} \mUstar\right\| \left\|\mLambdastar\right\| \left\|\mUstart \mU\right\|.
\end{aligned}
\end{equation}

To bound $\| \mE_2 \|$, we note that by submultiplicativity of the norm,
\begin{equation} \label{eq:LUH1H2UL}
\| \mE_2 \|
\leq 
\left\|\mLambda^{-1}\right\|^2 
\left\|\mU^\top \mW^{(1)} \mW^{(2)} \mU \right\|.
\end{equation}
We show below that with probability at least $1 - O(n^{-17})$,
\begin{equation}\label{eq:UH1Ustar}
\left\|\mU^\top \mW^{(1)\top} \mUstar\right\| \lesssim \sigma \sqrt{r \log n}
\end{equation}
and 
\begin{equation}\label{eq:UWWU}
\left\|\mU^\top \mW^{(1)} \mW^{(2)\top} \mU\right\| \lesssim \sigma^2 \sqrt{r n \log n} .
\end{equation}
Therefore, applying Equation~\eqref{eq:UH1Ustar} to Equation~\eqref{eq:LUHULUUL} yields
\begin{equation*}
\left\|\mLambda^{-1}\mU^\top \mW^{(1)\top} \mUstar \mLambdastar \mUstart \mU\mLambda^{-1}\right\| 
\lesssim \frac{\sigma \kappa \sqrt{r \log n}}{\lambdastar_{\min}} 
\end{equation*}
and applying Equation~\eqref{eq:UWWU} in Equation~\eqref{eq:LUH1H2UL} yields
\begin{equation*}
\left\|\mLambda^{-1} \mU^\top \mW^{(1)} \mW^{(2) \top} \mU \mLambda^{-1}\right\|
\lesssim \frac{\sigma^2 \sqrt{r n \log n}}{\left(\lambdastar_{\min}\right)^2}. 
\end{equation*}
It follows from the above two displays that 
\begin{equation}\label{eq:Ginv:op}
    \left\|\mG^{-1} - \mGamma \mSig^2 \mGamma^\top\right\| \leq \|\mE_1\| + \|\mE_2\| \lesssim \frac{\sigma \kappa \sqrt{r \log n}}{\lambdastar_{\min}}.
\end{equation}
Additionally, denote the singular values of $\mG^{-1}$ as $\mSighat^2$, we have by Weyl's inequality, 
\begin{equation}\label{eq:Sighat2:op}
    \left\|\mSighat^2 - \mSig^2\right\| \lesssim \frac{\sigma \kappa \sqrt{r \log n}}{\lambdastar_{\min}}.
\end{equation}

\paragraph{Step 2: Upper and lower bounds on singular values of $\mQ$.}

To obtain an upper bound on $\|\mSighat - \mSig\|$, we need to provide a lower bound on the singular values of $\mQ$.
Recall that by Theorem~\ref{thm:eigenvec:l2},  
\begin{equation*}
    |Q_{kk}| = \left|\frac{\lambdastar_k}{\lambda_k} \vustart_k \vu_k\right| \geq 1 - O\left(\frac{\sigma^2 \kappa^4 r^2 \log^7 n}{\left(\delta^\star_k\right)^2} + \frac{\kappa^2 \sigma^2 n \log n}{\left(\lambdastar_{\min}\right)^2}\right)
\end{equation*}
and for any $k \neq \ell$, we have
\begin{equation*}
    \sum_{\ell \neq k} \left| Q_{k\ell} \right| = \sum_{\ell\neq k} \left|\frac{\lambdastar_k}{\lambda_\ell} \vustart_k \vu_\ell\right| \lesssim \sum_{\ell \neq k} \frac{\sigma \kappa^3 \sqrt{r \log^7 n}}{\delta^\star_{\ell}}.
\end{equation*}
Thus, for $\delta^\star_{\ell} \gg \sigma \kappa^3 r\sqrt{r \log^7 n}$ and $n$ sufficiently large, we have
\begin{equation*}
\min_{k \in [r]} ~ \min \left\{\left|Q_{kk}\right| - \sum_{\ell \neq k} \left| Q_{\ell k} \right|, ~ \left|Q_{kk}\right| - \sum_{\ell \neq k} \left| Q_{\ell k} \right| \right\} \geq \frac{1}{2},
\end{equation*}
and thus by Theorem~\ref{thm:dd:singular:lower}, the smallest singular value of $\mQ$ is lower-bounded as
\begin{equation}\label{eq:lower:srQ}
    \sigma_r(\mQ) \geq \frac{1}{2}
\end{equation}
when $n$ is sufficiently large. 
Thus, we have 
\begin{equation} \label{eq:sig-diff} 
\begin{aligned}
    \left\|\mSighat - \mSig\right\| &= \left\|\mSighat + \mSig\right\|^{-1} \left\|\mSighat^2 - \mSig^2\right\|\\
    &\leq \left\|\mSig\right\|^{-1} \left\|\mSighat^2 - \mSig^2\right\| \lesssim \frac{\sigma \kappa \sqrt{r \log n}}{\lambdastar_{\min}},
\end{aligned}
\end{equation} 
where the first inequality follows from the fact that both $\mSighat$ and $\mSig$ are nonnegative diagonal matrices and the last inequality combines Equations~\eqref{eq:Sighat2:op} and~\eqref{eq:lower:srQ}.
Additionally, we can also obtain an upper bound on $\|\mSig\|$. 
By Lemma~\ref{lem:dd:singular:upper}, we have
\begin{equation*} \begin{aligned}
\|\mQ\| &\leq 2\max_{k\in[r]}\{|Q_{kk}|\} = 2\max_{k\in[r]}\left\{\left|\frac{\lambdastar_k}{\lambda_k} \vustart_k \vu_k \right|\right\} \\
    &\leq 2 \max_{k \in [r]} \left\{\left|\frac{\lambdastar_k}{\lambda_k} \|\vustar_k\|_2 \|\vu_k\|_2 \right|\right\} = 2 \max_{k \in [r]} \left\{\left|\frac{\lambdastar_k}{\lambda_k}\right|\right\}. 
\end{aligned} \end{equation*}
By Theorem~\ref{thm:eigenvalue:rank-r}, when $n$ is sufficiently large, we have
\begin{equation*}
    \frac{1}{2} \leq \max_{k \in [r]} \left\{\left|\frac{\lambdastar_k}{\lambda_k}\right|\right\} \leq 2.
\end{equation*}
Thus, it follows that for $n$ sufficiently large,
\begin{equation}\label{eq:upper:srQ}
    \|\mSig\| = \|\mQ\| \leq 4. 
\end{equation}
Combining the fact that $\|\mA^{-1}-\mB^{-1}\| \leq \|\mA^{-1}\|\|\mA-\mB\|\|\mB^{-1}\|$ with Equations~\eqref{eq:lower:srQ},~\eqref{eq:Ginv:op} and~\eqref{eq:Sighat2:op}, we have
\begin{equation}\label{eq:G:op}
\begin{aligned}
    \left\|\mG - \mGamma \mSig^{-2} \mGamma^\top\right\| &\lesssim \left\|\mSighat^{-2}\right\| \left\|\mG^{-1} - \mGamma \mSig^2 \mGamma^\top \right\|\left\|\mSig^{-2}\right\| 
    \lesssim \frac{\sigma \kappa \sqrt{r \log n}}{\lambdastar_{\min}}. 
\end{aligned}
\end{equation}

\paragraph{Step 3: Bounds on $\mG_{\symm}$ and $\mPsihat$.}

Recall the definition of $\mG_{\symm}$ in Equation~\eqref{eq:Gsymm:def}.
By the triangle inequality, it follows from Equation~\eqref{eq:G:op} that
\begin{equation} \label{eq:Gsymm:op}
    \left\|\mG_{\symm} - \mGamma \mSig^{-2} \mGamma^\top\right\| = \left\|\frac{1}{2}(\mG+\mG^\top) - \mGamma \mSig^{-2} \mGamma^\top\right\| \lesssim \frac{\sigma \kappa \sqrt{r \log n}}{\lambdastar_{\min}}.
\end{equation}
Combining the above display with Weyl's inequality, we have 
\begin{equation*}
\left\|\mSighat^{-2}_{\symm} - \mSig^{-2}\right\| 
\leq \left\|\mG_{\symm} - \mGamma \mSig^{-2} \mGamma^\top\right\| \lesssim \frac{\sigma \kappa \sqrt{r \log n}}{\lambdastar_{\min}}.
\end{equation*}
It is straightforward to see that $\mG_{\symm}$ is a positive semidefinite matrix as its eigenvalues are nonnegative combining the above display with the upper bound in Equation~\eqref{eq:upper:srQ}.
Taking the matrix square root of $\mG_{\symm}$ given by
\begin{equation*}
    \mPsihat = \mGammahat_{\symm} \mSighat^{-1}_{\symm} \mGammahat_{\symm}^\top,
\end{equation*}
it follows from Lemma~\ref{lem:square-root-lipschitz} and Equation~\eqref{eq:Gsymm:op} that
\begin{equation}\label{eq:Gsymm:sqrt:op}
    \left\|\mPsihat - \mGamma \mSig^{-1} \mGamma^\top\right\| \lesssim \left\|\mG_{\symm} - \mGamma \mSig^{-2} \mGamma^\top\right\| \lesssim \frac{\sigma \kappa \sqrt{r \log n}}{\lambdastar_{\min}}.
\end{equation}

\paragraph{Step 4: Bounding the $\ell_{2,\infty}$ norm.}
Now consider the bias-corrected estimator $\mU \mPsihat$, we have 
\begin{equation} \label{eq:UhatPsihat:bound} \begin{aligned} 
    \left\|\mU \mPsihat - \mUstar \mR \mGamma^\top\right\|_{2,\infty} 
    &\leq \left\|\mUstar \mR \mSig \mGamma^\top \mPsihat - \mUstar \mR \mGamma^\top\right\|_{2,\infty}  + \|\mE \mPsihat\|_{2,\infty}\\
    &= \left\|\mUstar \mR \mSig \mGamma^\top \left(\mPsihat - \mGamma \mSig^{-1} \mGamma^\top\right)\right\|_{2,\infty} +  \|\mE \mPsihat\|_{2,\infty}
\end{aligned} \end{equation} 
For the first right-hand term, it follows from Equation~\eqref{eq:Gsymm:sqrt:op} that 
\begin{equation} \label{eq:firsttermbound}\begin{aligned}
    \left\|\mUstar \mR \mSig \mGamma^\top \left(\mPsihat - \mGamma \mSig^{-1} \mGamma^\top\right)\right\|_{2,\infty} &\leq \left\|\mUstar \right\|_{2,\infty} \left\|\mR \mSig \mGamma^\top\right\| \left\|\mPsihat - \mGamma \mSig^{-1} \mGamma^\top\right\|\\
    &\leq \sqrt{\frac{\mub r}{n}} \left\|\mSig\right\| \frac{\sigma \kappa \sqrt{r \log n}}{\lambdastar_{\min}} \lesssim \frac{\sigma \kappa r \sqrt{\log n}}{\lambdastar_{\min}} \sqrt{\frac{\mub}{n}},
\end{aligned} \end{equation}
where the last inequality follows from Equation~\eqref{eq:upper:srQ}. 

For the second right-hand term in Equation~\eqref{eq:UhatPsihat:bound}, recall the definition of $\mE$ in Equation~\eqref{eq:E:define}, we have for all $i \in [n]$ and $l \in [r]$,
\begin{equation} \label{eq:Eil:entry:bound}
\begin{aligned}
    |E_{il}| &\leq \left(\sum_{j=1}^r \frac{\left|\lambda_j^{\star}\right|}{\left|\lambda_l\right|}\left|\boldsymbol{u}_j^{\star \top} \boldsymbol{u}_l\right|\right)\left\{\max _{j \in [r]} \sum_{s=1}^{\infty} \frac{1}{\left|\lambda_l\right|^s}\left|\ve_i^{\top} \boldsymbol{H}^s \boldsymbol{u}_j^{\star}\right|\right\}\\
    &\stackrel{(i)}{\lesssim} \kappa \sqrt{r} \left\{\max _{j \in [r]} \sum_{s=1}^{\infty} \frac{1}{\left|\lambda_l\right|^s}\left|\ve_i^{\top} \boldsymbol{H}^s \boldsymbol{u}_j^{\star}\right|\right\} \stackrel{(ii)}{\lesssim} \frac{\sigma \kappa \sqrt{r \log^7 n}}{\lambdastar_{\min}}, 
\end{aligned}
\end{equation}
where inequality $(i)$ follows from Equation (50) in \cite{chen2021asymmetry} and inequality $(ii)$ follows from Lemma~\ref{lem:inf:series:bound}. 
We obtain that with probability at least $1 - O(n^{-17})$, 
\begin{equation*}
\begin{aligned}
\|\mE \mPsihat\|_{2,\infty} &\leq \left\|\mE\right\|_{2,\infty} \left\|\mPsihat\right\| \lesssim \frac{\sigma \kappa r\sqrt{\log^7 n}}{\lambdastar_{\min}},
\end{aligned}
\end{equation*}
where the second inequality follows from Equation~\eqref{eq:Eil:entry:bound}. 
Noting that $\mR \mGamma^\top \in \bbO_{r}$, we conclude that 
\begin{equation*}
    \min_{\mO \in \bbO_r} \left\|\mU \mPsihat - \mUstar \mO\right\|_{2,\infty} \leq \left\|\mU \mPsihat - \mUstar \mR \mGamma^\top\right\|_{2,\infty} \lesssim \frac{\sigma \kappa r\sqrt{ \log^7 n}}{\lambdastar_{\min}},
\end{equation*}
obtaining the desired bound.

\paragraph{Supporting technical bounds.}
It remains for us to return the bounds in Equations~\eqref{eq:UH1Ustar}  and~\eqref{eq:UWWU}. 

To bound the term $\|\mU^\top \mW^{(1)\top} \mUstar\|$, we instead consider bounding $\|\mU^\top \mHtilde \mUstar\|$ for any $\mHtilde$ that satisfies the same assumptions as $\mH$. 
The reason is that for any symmetric random matrix $\mW$, we can decompose $\mW = \mH_1 + \mH_2$, where $\mH_1$ contains the upper-triangular entries of $\mW$, and $\mH_2$ contains the strict lower-triangular entries of $\mW$. Both $\mH_1$ and $\mH_2$ satisfy the same set of assumptions as $\mH$ and one can use triangle inequality to control bounds involving $\mW$. 
Thus, it suffices to bound $\mHtilde$. 

To bound $\|\mU^\top \mHtilde \mUstar\|$, the idea here is to express $\mU^\top \mHtilde \mUstar$ as a sum of independent matrices and apply the matrix Bernstein inequality stated in Theorem~\ref{thm:matrix:bernstein}. 
Noting that 
\begin{equation*}
    \mHtilde = \sum_{i=1}^n \sum_{j=1}^n \Htilde_{ij} \ve_i \ve_j^\top,
\end{equation*}
we have
\begin{equation*}
\begin{aligned}
    \mU^\top \mHtilde \mUstar &= \sum_{i=1}^n \sum_{j=1}^n \Htilde_{ij} \mU^\top_{i,\cdot} \mU^{\star}_{j,\cdot} =: \sum_{i=1}^n \sum_{j=1}^n \mZ_{ij}. 
\end{aligned}
\end{equation*}
Since
\begin{equation*}
    \E \left[\mZ_{ij} \mZ^\top_{ij} \right] = \sigma^2_{ij} \mU^\top_{i,\cdot} \mU_{i,\cdot} \left\|\mU^{\star}_{j,\cdot}\right\|_2^2, \quad \E \left[\mZ^\top_{ij} \mZ_{ij} \right] = \sigma^2_{ij} \mU^{\star\top}_{j,\cdot} \mU^\star_{j,\cdot} \left\|\mU_{i,\cdot}\right\|_2^2 ,
\end{equation*}
we have
\begin{equation} \label{eq:ZZtop}
\begin{aligned}
    \sum_{i=1}^n \sum_{j=1}^n \E \left[\mZ_{ij} \mZ^\top_{ij} \right] &\preceq \sum_{i=1}^n \sum_{j=1}^n \sigma^2 \mU^\top_{i,\cdot} \mU_{i,\cdot} \left\|\mU^{\star}_{j,\cdot}\right\|_2^2 \\
    &= \sigma^2 \sum_{i=1}^n \mU_{i,\cdot}^\top \mU_{i,\cdot} \sum_{j=1}^n \left\|\mU^{\star}_{j,\cdot}\right\|_2^2 = r \sigma^2 \mU^\top \mU 
\end{aligned}
\end{equation}
and
\begin{equation}\label{eq:ZtopZ} 
\begin{aligned}
    \sum_{i=1}^n \sum_{j=1}^n \E \left[\mZ_{ij}^\top \mZ_{ij} \right] &\preceq \sum_{i=1}^n \sum_{j=1}^n \sigma^2 \mU^{\star\top}_{j,\cdot} \mU^{\star}_{j,\cdot} \left\|\mU_{i,\cdot}\right\|_2^2 \\
    &= \sigma^2 \mUstart \mUstar \left\|\mU\right\|^2_{\F} = r \sigma^2 \mI,
\end{aligned}
\end{equation}
where the last equality follows from the fact that $\mU$ has unit norm columns as $\vu_1, \vu_2, \cdots, \vu_r$ are unit eigenvectors of $\mM$ as given in Equation~\eqref{eq:left-right-eigenvc}. 
Thus, combining Equations~\eqref{eq:ZZtop} and~\eqref{eq:ZtopZ},
\begin{equation*}
    v:= \max \left\{\left\|\sum_{i=1}^n \sum_{j=1}^n \E \left[\mZ_{ij} \mZ^\top_{ij} \right]\right\|, \left\|\sum_{i=1}^n \sum_{j=1}^n \E \left[\mZ_{ij}^\top \mZ_{ij} \right]\right\|\right\} \lesssim \sigma^2 r.
\end{equation*}
Additionally, we have
\begin{equation*}
    \max_{i,j} \left\|\mZ_{ij}\right\| \lesssim \frac{L \mub r}{n} =: L_0. 
\end{equation*}
By matrix Bernstein inequality given in Theorem~\ref{thm:matrix:bernstein}, it follows that 
\begin{equation*}
    \left\|\mU^\top \mHtilde \mUstar\right\| \lesssim \sigma \sqrt{r \log n} + L_0 \log n \lesssim \sigma \sqrt{r \log n},
\end{equation*}
holds with probability at least $1 - O(n^{-17})$ when choosing sufficiently large constants in the bound, completing the proof of Equation~\eqref{eq:UH1Ustar}. 

To control the other term $\|\mU^\top \mW^{(1)} \mW^{(2)} \mU\|$ in Equation~\eqref{eq:UWWU}, let us first fix 
\begin{equation} \label{eq:W2U}
    \mA := \mW^{(2)} \mU \in \R^{n \times r}.
\end{equation}
Similar to the previous case, we decompose $\mW^{(1)}$ into two copies with independent entries. Without loss of generality, we denote one copy to be $\mH^{(1)}$ and only bound $\|\mU^\top \mH^{(1)} \mA\|$.
Set 
\begin{equation*}
    \mZ_{ij} = H^{(1)}_{ij} \mU_{i,\cdot}^\top \mA_{j,\cdot}, \text{ for all } i,j \in [n]. 
\end{equation*}
It follows that 
\begin{equation*}
    \sum_{i=1}^n \sum_{j=1}^n \E \left[\mZ_{ij} \mZ^\top_{ij} \right] \preceq \sum_{i=1}^n \sum_{j=1}^n \sigma^2 \mU^\top_{i,\cdot} \mU_{i,\cdot} \left\|\mA_{j,\cdot}\right\|_2^2 = \sigma^2 \|\mA\|_{\F}^2 \mU^\top \mU 
\end{equation*}
and
\begin{equation*}
    \sum_{i=1}^n \sum_{j=1}^n \E \left[\mZ_{ij}^\top \mZ_{ij} \right] \preceq \sum_{i=1}^n \sum_{j=1}^n \sigma^2 \mA^{\top}_{j,\cdot} \mA_{j,\cdot} \left\|\mU_{i,\cdot}\right\|_2^2 = \sigma^2 \left\|\mU\right\|^2_{\F} \mA^\top \mA.
\end{equation*}
Therefore, we have
\begin{equation*}
    v:= \max \left\{\left\|\sum_{i=1}^n \sum_{j=1}^n \E \left[\mZ_{ij} \mZ^\top_{ij} \right]\right\|, \left\|\sum_{i=1}^n \sum_{j=1}^n \E \left[\mZ_{ij}^\top \mZ_{ij} \right]\right\|\right\} \leq \sigma^2 \max\left\{\|\mA\|_{\F}^2 \|\mU\|^2, \|\mA\|^2 \|\mU\|_{\F}^2\right\}
\end{equation*}
and 
\begin{equation*}
    \max_{i,j} \|\mZ_{ij}\| \leq L \sqrt{\frac{\mub r}{n}} \left\|\mA\right\|_{2,\infty}
\end{equation*}
Thus, conditioned on $\mW^{(2)}$, it follows from the matrix Bernstein inequality given in Theorem~\ref{thm:matrix:bernstein} that 
\begin{equation*}
    \left\|\mU^\top \mW^{(1)} \mW^{(2) \top} \mU\right\| \lesssim \sigma \sqrt{\log n} \max\left\{\|\mA\|_{\F} \|\mU\|, \|\mA\| \|\mU\|_{\F}\right\} + L \sqrt{\frac{\mub r}{n}} \left\|\mA\right\|_{2,\infty} \log n .
\end{equation*}
holds with probability at least $1 - O(n^{-17})$. 
Now turning our attention to controlling $\mA$ as given in Equation~\eqref{eq:W2U}, we have
\begin{equation*}
    \left\|\mA\right\| \leq \left\|\mW^{(2)}\right\|\|\mU\| \lesssim \sigma \sqrt{n}, \quad \|\mA\|_{\F} \leq \sqrt{r} \|\mA\| \lesssim \sigma \sqrt{r n}.
\end{equation*}
For $\|\mA\|_{2,\infty}$, 
\begin{equation*}
    \|\mA\|_{2,\infty} = \max_{\ell \in [n]} \left\|\mW^{(2)}_{\ell, \cdot} \mU \right\|_2 \leq \sqrt{r} \max_{\ell \in [n]} \max_{k \in [r]} \left|\mW^{(2)}_{\ell, \cdot} \vu_{k}\right|.
\end{equation*}
Since
\begin{equation*}
    \sum_{i=1}^n \E \left(W_{\ell i} U_{i k}\right)^2 \leq \sum_{i=1}^n U_{i \ell}^2 \sigma^2 = \|\vu_{\ell}\|^2 \sigma^2 = \sigma^2
 \quad \text{and} \quad
 \left|W_{\ell i} U_{i k}\right| \leq |U_{i k}| L \lesssim L\sqrt{\frac{\mub r}{n}}, 
\end{equation*}
applying Bernstein's inequality as in Lemma~\ref{lem:bern-ineq} yields for all $\ell \in [n]$
\begin{equation*}
    \left\|\mW^{(2)}_{\ell, \cdot} \mU \right\|_2 \lesssim \sigma \sqrt{r \log n} + L r\sqrt{\frac{\mub}{n}} \lesssim \sigma r \sqrt{r \log n},
\end{equation*}
holds with probability at least $1 - O(n^{-17})$ and it follows that 
\begin{equation*}
    \|\mA\|_{2,\infty} \lesssim \sigma r \sqrt{r \log n}.
\end{equation*}
Hence, we obtain
\begin{equation*}
    \left\|\mU^\top \mW^{(1)} \mW^{(2) \top} \mU\right\| \lesssim \sigma^2 \sqrt{r n \log n},
\end{equation*}
holds with probability at least $1 - O(n^{-17})$,
which establishes Equation~\eqref{eq:UWWU} and completes the proof.
\end{proof}

\subsection{Proof of Corollary~\ref{cor:single:entrywise}}
\label{sec:proof:cor:single:entrywise}

\begin{proof} 
Recall that $\mUhat$ is defined in Equation~\eqref{eq:mUhat} with its entries defined in Equation~\eqref{eq:vuhat:def}. 
Let $\mS = \diag\left(s_1, s_2, \cdots, s_r\right)$ be a diagonal matrix, where for all $l \in [r]$, 
\begin{equation*}
    \|s_l \vuhat_{l} - \vustar_l\|_{\infty} = \min \left\{\left\|\vuhat_{l} \pm \vustar_l\right\|_{\infty}\right\}, \; s_{l} \in \{1, -1\}.
\end{equation*} 
By the triangle inequality and basic properties of the norm,
\begin{equation*}
\begin{aligned}
    \left\|\mUhat \mLambda \mUhat^\top - \mUstar \mLambdastar \mUstart\right\|_{\infty} &\leq \left\|\mUhat \mS \left(\mLambda - \mLambdastar\right) \mS^\top \mUhat^\top\right\|_{\infty} + \left\|\mUhat \mS \mLambdastar \mS^\top \mUhat^\top - \mUstar \mLambdastar \mUstart\right\|_{\infty}\\
    &\leq \left\|\mUhat \mS \left(\mLambda - \mLambdastar\right) \mS^\top \mUhat^\top\right\|_{\infty} + \left\|(\mUhat \mS - \mUstar)\mLambdastar \mUstart\right\|_{\infty} \\
    &~~~+ \left\| \mUhat \mS \mLambdastar \left(\mUhat \mS - \mUstar\right)^\top\right\|_{\infty}\\
    &\leq \left\|\mUhat\right\|_{2,\infty}^2 \left\|\mLambda - \mLambdastar\right\| + 2 \left\|\mUhat \mS - \mUstar\right\|_{2,\infty} \left\|\mLambdastar\right\| \left\|\mUstar\right\|_{2, \infty}. 
\end{aligned}
\end{equation*}
Applying Corollary~\ref{cor:single:eigenspace} and Theorem~\ref{thm:eigenvalue:rank-r} to the above diplay, it holds with probability at least $1 - O(n^{-16})$ that
\begin{equation*}
\begin{aligned}
\left\|\mUhat \mLambda \mUhat^\top - \mUstar \mLambdastar \mUstart\right\|_{\infty} &\lesssim \sigma \sqrt{\frac{\mub \log^{7} n}{n}} +  \frac{\sigma \lambdastar_{\max} \sqrt{\log^7 n}} {\delta^\star_{\min}} \frac{\mub}{n} ,
\end{aligned}
\end{equation*}
as desired.
\end{proof}

\subsection{Proof of Theorem~\ref{thm:improved:entrywise}}
\label{sec:proof:thm:improved:entrywise}
 
\begin{proof}
    We remind the reader that $\mPsihat$ is defined in Equation~\eqref{eq:Psihat:def}, $\mQ$ is defined in Equation~\eqref{eq:Q:define}, and $\mR \mSig \mGamma^\top$ is the singular value decomposition of $\mQ$.
By the triangle inequality and basic properties of the norm,
\begin{equation}\label{eq:UpsiM:infty}
\begin{aligned}
\left\|\mU\mPsihat \mLambda \mPsihat^\top \!\mU^\top \!-\! \mMstar\right\|_{\infty} 
\!&\leq \left\| \mU \mPsihat \!\left(\mLambda \!-\! \mLambdastar\right) \!\mPsihat^\top \mU^\top \right\|_{\infty} 
    + \left\|\mU \mPsihat \mLambdastar \mPsihat^\top \!\mU^\top \!-\! \mUstar \mR \mGamma^\top \mLambdastar \mPsihat^\top \mU^\top\right\|_{\infty}\\
    &~~~+\left\|\mUstar \mR \mGamma^\top \mLambdastar \mPsihat^\top \mU^\top - \mUstar \mR \mGamma^\top \mLambdastar \mGamma \mR^\top \mUstart \right\|_{\infty} \\
    &~~~+ \left\|\mUstar \left(\mR \mGamma^\top \mLambdastar \mGamma \mR^\top - \mLambdastar\right) \mUstart \right\|_{\infty}\\
    &\leq \left\|\mU\mPsihat\right\|_{2, \infty}^2 \left\|\mLambda - \mLambdastar\right\| 
	+ \left\|\mUstar \left(\mR \mGamma^\top \mLambdastar \mGamma \mR^\top - \mLambdastar\right) \mUstart\right\|_{\infty} \\
&~~~+ \left\|\mU \mPsihat - \mUstar \mR \mGamma^\top\right\|_{2, \infty} \left\|\mLambdastar\right\| \left(\left\|\mU \mPsihat\right\|_{2, \infty}+ \left\|\mUstar \mR \mGamma^\top\right\|_{2,\infty}\right) .
\end{aligned}
\end{equation}

Combining results in Theorems~\ref{thm:eigenspace:l2infty} and~\ref{thm:eigenvalue:rank-r}, it is straightforward to show that with probability at least $1 - O(n^{-16})$,
\begin{equation}\label{eq:eta:bound:infty} 
\begin{aligned}
\left\|\mU\mPsihat\right\|_{2, \infty}^2 \left\|\mLambda - \mLambdastar\right\|
&+  \left\|\mU \mPsihat - \mUstar \mR \mGamma^\top\right\|_{2, \infty} \left\|\mLambdastar\right\| \left(\left\|\mU \mPsihat\right\|_{2, \infty}+ \left\|\mUstar \mR \mGamma^\top\right\|_{2,\infty}\right) \\
&\lesssim \frac{\mub r}{n} \frac{\sigma r^2 \sqrt{\log^7 n}}{\lambdastar_{\min}} + \sigma \kappa^2 r \sqrt{\frac{\mub \log ^7 n}{n}} 
\lesssim \sigma \kappa^2 r^2 \sqrt{\frac{\mub \log ^7 n}{n}} 
\end{aligned}
\end{equation}
where the second inequality holds since $\lambdastar_{\min}$ satisfies Assumption~\ref{assump:ev:lower}. 

By basic properties of the norm,
\begin{equation}\label{eq:last:infty:polar}
    \left\|\mUstar \left(\mR \mGamma^\top \mLambdastar \mGamma \mR^\top - \mLambdastar\right) \mUstart \right\|_{\infty} 
    \leq \left\|\mUstar\right\|_{2,\infty}^2 \left\|\mR \mGamma^\top \mLambdastar \mGamma \mR^\top - \mLambdastar\right\|.
\end{equation}
For ease of notation,
Denote $\mO := \mR\mGamma^\top$ 
\begin{equation} \label{eq:relate:OQ-to-orig}
\mR \mGamma^\top \mLambdastar \mGamma \mR^\top - \mLambdastar
= \mO \mLambdastar \mO^\top - \mLambdastar.
\end{equation}
Define $\mQ := \mLambdastar \mUstart \mU \mLambda^{-1}$ and observe that by construction, $\mO$ is the unitary polar factor of $\mQ$.

Plugging in the definition of $\mQ$ and applying the triangle inequality,
\begin{equation} \label{eq:Q:triangle}
    \left\|\mQ - \diag\left(\mUstart \mU\right)\right\| \leq \left\|\mLambdastar \mUstart \mU \left(\mLambda^{-1} - 
    \left(\mLambdastar\right)^{-1}\right) \right\| + \left\|\mLambdastar \mUstart \mU \left(\mLambdastar\right)^{-1} - \diag\left(\mUstart \mU\right)\right\|,
\end{equation} 
where for a matrix $A$, we use $\diag A$ to denote the matrix obtained by setting all elements of $A$ to zero aside from the diagonal.
Using Theorem~\ref{thm:eigenvalue:rank-r} with Equation~\eqref{eq:L-sigma:weaker}, it is straightforward to show that with probability at least $1 - O(n^{-17})$,
\begin{equation*}
    \left\|\mLambdastar \mUstart \mU \left(\mLambda^{-1} - \left(\mLambdastar\right)^{-1}\right) \right\| \lesssim \frac{\kappa\sigma r^2 \sqrt{\log^7 n}}{\lambdastar_{\min}} .
\end{equation*}
By Equation~\eqref{eq:l2:perturb:inner:diff} in Theorem~\ref{thm:eigenvec:l2}, since we assume the same assumptions as Theorem~\ref{thm:eigenvec:l2}, 
it holds with probability at least $1 - O(n^{-17})$ that
\begin{equation*} \begin{aligned}
\left\|\mLambdastar \mUstart \mU \left(\mLambdastar\right)^{-1} - \diag\left(\mUstart \mU\right)\right\|
&\leq \kappa \left\|\mUstart \mU - \diag\left(\mUstart \mU\right)\right\|_{\F} \\
&\lesssim \frac{\sigma \kappa^3 r\sqrt{r \log ^7 n}}{\delta_{\min}^{\star}} .
\end{aligned}
\end{equation*}
Applying the above two displays to Equation~\eqref{eq:Q:triangle} and using the fact that $\lambdastar_{\min} \geq \deltastar_{\min}$,
\begin{equation} \label{eq:Q:finalbound}
\left\|\mQ - \diag\left(\mUstart \mU\right)\right\|
\lesssim \frac{\sigma \kappa^3 r\sqrt{r \log ^7 n}}{\deltastar_{\min}} .
\end{equation}

Let $\mS$ be a diagonal matrix whose diagonal entries are given by the signs of $\diag\left(\mUstart \mU\right)$.
Recall that for any real matrix $\mA$, its unitary polar factor is given by
\begin{equation*}
    \mA \left(\mA^\top \mA\right)^{-1/2}.
\end{equation*}
Thus, the polar factor of $\diag\left(\mUstart \mU\right)$ is given by 
\begin{equation*}
    \mS \left|\diag\left(\mUstart \mU\right)\right| \left(\diag\left(\mUstart \mU\right) \diag\left(\mUstart \mU\right)\right)^{-1/2} = \mS. 
\end{equation*}
As observed above, $\mO$ is the unitary polar factor of $\mQ$.
Thus, by perturbation bounds of unitary polar factors \citep[see Theorem VII.5.1 in][]{bhatia2013matrix},
\begin{equation}\label{eq:polar:perturb}
\begin{aligned}
    \left\|\mO - \mS\right\| &\leq \frac{2}{\left\|\mQ^{-1}\right\|^{-1} + \left\|\diag\left(\mUstart \mU\right)^{-1}\right\|^{-1}} \left\|\mQ - \diag\left(\mUstart \mU\right)\right\| \\
    &\lesssim \left\|\mQ - \diag\left(\mUstart \mU\right)\right\|
    \lesssim \frac{\sigma \kappa^3 r\sqrt{r \log ^7 n}}{\delta_{\min}^{\star}},
\end{aligned}
\end{equation}
where the second inequality holds since $\|\mQ^{-1}\|^{-1}$ is lower bounded by $1/2$ as shown in Equation~\eqref{eq:lower:srQ} and $\|\diag(\mUstart \mU)^{-1}\|^{-1}$ is lower bounded by some constant by Equation~\eqref{eq:l2:perturb:inner:same} in Theorem~\ref{thm:eigenvec:l2} under the growth rate assumption on $\deltastar_l$ and $\lambdastar_{\min}$, the last inequality follows from Equation~\eqref{eq:Q:finalbound}.

Noting that for all $i,j \in [r]$,
\begin{equation*}
    \left(\mO \mLambdastar - \mLambdastar \mO\right)_{ij} = O_{ij} \left(\lambdastar_i - \lambdastar_j\right) 
\end{equation*}
and thus by basic properties of the norm,
\begin{equation} \label{eq:OL:spectral}
\left\| \mO \mLambdastar - \mLambdastar \mO \right\| 
\le \deltastar_{\max} \left( \sum_{1\leq i \neq j \leq r} O_{ij}^2 \right)^{1/2}
\le \sqrt{r} \deltastar_{\max} \left\| \mO - \mS \right\|
\end{equation}
where the first inequality holds since $\left(\mO \mLambdastar - \mLambdastar \mO\right)_{ii} = 0$ for all $i \in [r]$. 
Applying Equation~\eqref{eq:polar:perturb},
\begin{equation*}
\left\| \mO \mLambdastar - \mLambdastar \mO \right\|
\lesssim \frac{ \deltastar_{\max} }{ \deltastar_{\min} }
	\sigma \kappa^3 r^2 \sqrt{\log ^7 n} .
\end{equation*}
Since the operator norm is unitarily invariant, we have
\begin{equation*}
\left\|\mO \mLambdastar \mO^\top - \mLambdastar\right\|
= \left\|\mO \mLambdastar - \mLambdastar \mO\right\|
\lesssim \frac{ \deltastar_{\max} }{ \deltastar_{\min} }
	\sigma \kappa^3 r^2 \sqrt{\log ^7 n} .
\end{equation*}
Recalling the equality in Equation~\eqref{eq:relate:OQ-to-orig}, we further bound Equation~\eqref{eq:last:infty:polar} as
\begin{equation*}
 \left\|\mUstar \left(\mR \mGamma^\top \mLambdastar \mGamma \mR^\top - \mLambdastar\right) \mUstart \right\|_{\infty}
\lesssim \frac{ \deltastar_{\max} }{ \deltastar_{\min} }
        \frac{ \mub }{ n } \sigma \kappa^3 r^2 \sqrt{\log ^7 n} .
\end{equation*}
Applying this and Equation~\eqref{eq:eta:bound:infty} to Equation~\eqref{eq:UpsiM:infty} completes the proof.
\end{proof}

\section{Proofs of Minimax Results on Recovering the Common Structure}

In this section, we prove the minimax lower bounds related to the low rank component $\mMstar$, establishing Theorems~\ref{thm:Mstar:2-infty:minimax}, \ref{thm:rank1:linfty:minimax} and Corollary~\ref{cor:more:linfty:minimax}.

\subsection{Proof of Theorem~\ref{thm:Mstar:2-infty:minimax}}
\label{sec:proof:thm:Mstar:2-infty:minimax}

Before stating the proof of Theorem~\ref{thm:Mstar:2-infty:minimax}, we establish a few technical lemmas.

\begin{lemma}[Generalized Fano method]\label{lem:gFano} 
Let $N\geq 1$ be an integer and $\theta_1,\theta_2, \dots, \theta_N \subset \Theta$ index a collection of probability measures $\Pr_{\theta_i}$ on a measurable space $(\calX, \calF)$.
Let $D$ be a pseudometric on $\Theta$ and suppose that for $\alpha_N, \beta_N > 0$ all $i \neq j$,
\begin{equation*}
    D(\theta_i, \theta_j) \geq \alpha_N
\end{equation*}
and 
\begin{equation*}
    \KLD{\Pr_{\theta_i}}{\Pr_{\theta_j}} \leq \beta_N.
\end{equation*}
Then every $\calF$-measurable estimator $\thetahat$ satisfies 
\begin{equation*}
\max_{i \in [N]} \E_{\theta_i} d(\thetahat, \theta_i)
\geq \frac{\alpha_N}{2}\left(1 - \frac{\beta_N + \log 2}{\log N}\right)
\end{equation*} 
\end{lemma}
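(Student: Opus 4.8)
The statement to prove is the generalized Fano method (Lemma~\ref{lem:gFano}), a standard tool in minimax lower bound theory. The plan is to reduce the estimation problem to a multiple-hypothesis testing problem and then apply Fano's inequality from information theory.

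First I would set up the reduction. Suppose toward a lower bound argument that $\Theta$ contains the finite subset $\{\theta_1, \dots, \theta_N\}$ satisfying the two displayed hypotheses. Given an arbitrary estimator $\thetahat$, I would construct a test $\psihat$ that selects the index $i \in [N]$ minimizing $D(\thetahat, \theta_i)$ (breaking ties arbitrarily). The key geometric observation is that if $\psihat \ne i$ when the true parameter is $\theta_i$, then $D(\thetahat, \theta_i) \ge \alpha_N/2$: indeed, writing $j = \psihat$, we have $D(\thetahat, \theta_j) \le D(\thetahat, \theta_i)$ by the minimizing property, so $\alpha_N \le D(\theta_i, \theta_j) \le D(\thetahat, \theta_i) + D(\thetahat, \theta_j) \le 2 D(\thetahat, \theta_i)$ by the triangle inequality for the pseudometric $D$. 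Hence by Markov's inequality, $\E_{\theta_i} D(\thetahat, \theta_i) \ge (\alpha_N/2) \Pr_{\theta_i}(\psihat \ne i)$, and taking the maximum over $i$,
\begin{equation*}
\max_{i \in [N]} \E_{\theta_i} D(\thetahat, \theta_i) \ge \frac{\alpha_N}{2} \max_{i \in [N]} \Pr_{\theta_i}(\psihat \ne i) \ge \frac{\alpha_N}{2} \cdot \frac{1}{N} \sum_{i=1}^N \Pr_{\theta_i}(\psihat \ne i).
\end{equation*}

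Next I would invoke Fano's inequality to lower bound the average testing error $\frac{1}{N}\sum_i \Pr_{\theta_i}(\psihat \ne i)$. Placing the uniform prior on $\{\theta_1, \dots, \theta_N\}$ and letting $J$ be the random index, $X$ the observation, the Fano bound (see, e.g., standard references such as Tsybakov or Cover--Thomas) gives
\begin{equation*}
\frac{1}{N}\sum_{i=1}^N \Pr_{\theta_i}(\psihat \ne i) \ge 1 - \frac{I(J; X) + \log 2}{\log N},
\end{equation*}
where $I(J;X)$ is the mutual information. The remaining step is to bound $I(J;X)$ by the pairwise KL hypothesis: using the convexity bound $I(J;X) = \frac{1}{N}\sum_i \KLD{\Pr_{\theta_i}}{\bar{\Pr}} \le \frac{1}{N^2}\sum_{i,j}\KLD{\Pr_{\theta_i}}{\Pr_{\theta_j}} \le \beta_N$, where $\bar{\Pr} = \frac{1}{N}\sum_j \Pr_{\theta_j}$ is the mixture. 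Combining the three displays yields exactly
\begin{equation*}
\max_{i \in [N]} \E_{\theta_i} D(\thetahat, \theta_i) \ge \frac{\alpha_N}{2}\left(1 - \frac{\beta_N + \log 2}{\log N}\right).
\end{equation*}

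I do not anticipate a serious obstacle here, as this is a classical result; the only mild subtleties are (i) ensuring the measurability of the constructed test $\psihat$ as a function of the $\calF$-measurable estimator $\thetahat$ — this is routine since $i \mapsto D(\thetahat, \theta_i)$ is a finite collection of measurable maps and the argmin can be chosen measurably — and (ii) stating carefully which form of Fano's inequality and which mutual-information bound one uses, since several equivalent formulations exist in the literature. For the write-up I would cite a standard reference for Fano's inequality and the mutual-information-to-pairwise-KL convexity bound rather than reproving them, and present the two-line argmin/triangle-inequality reduction and the final chaining of inequalities in full.
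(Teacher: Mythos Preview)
Your proposal is correct and follows the standard argument; the paper itself does not give a proof but simply cites Lemma~3 of Yu (1997), and the reduction-to-testing-plus-Fano argument you wrote out is precisely the proof one finds in that reference.
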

See Lemma 3 in \cite{yu1997assouad} for a proof of the above lemma.
    
\begin{lemma}[Varshamov-Gilbert Bound] \label{lem:varshamov-gilbert} 
Let $d$ be an integer satisfying $1 \leq d \leq p/4$. 
There exists a set $\Omega^{(p)}_{d} \subseteq \{0,1\}^{p}$ that satisfies the following properties:
\begin{enumerate}
    \item $\|\vomega\|_{0} = d$ for all $\vomega \in \Omega^{(p)}_{d}$;
    \item $\|\vomega - \vomega'\|_{0} \geq d/2$ for all distinct pairs $\vomega, \vomega' \in \Omega^{(p)}_{d}$;
    \item $\log \left|\Omega^{(p)}_{d}\right| \geq cd\log(p/d)$, where $c \geq 0.233$. 
\end{enumerate}
\end{lemma}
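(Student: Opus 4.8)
The plan is to obtain $\Omega^{(p)}_d$ as a \emph{maximal packing} of the set of weight-$d$ binary vectors and then to lower bound its cardinality by a volume (covering) argument. Let $\mathcal{W}_d = \{\vomega \in \{0,1\}^p : \|\vomega\|_0 = d\}$, so that $|\mathcal{W}_d| = \binom{p}{d}$. Construct $\Omega^{(p)}_d$ greedily: starting from the empty set, repeatedly adjoin any $\vomega \in \mathcal{W}_d$ whose Hamming distance to every already-selected vector is at least $d/2$, and stop when no such $\vomega$ remains. Properties (1) and (2) then hold by construction. For property (3), maximality means that every element of $\mathcal{W}_d$ lies at Hamming distance strictly less than $d/2$ from some element of $\Omega^{(p)}_d$; equivalently, the Hamming balls of radius $< d/2$ centered at the points of $\Omega^{(p)}_d$, intersected with $\mathcal{W}_d$, cover $\mathcal{W}_d$. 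Hence $|\Omega^{(p)}_d| \ge \binom{p}{d}/B$, where $B$ is the number of weight-$d$ vectors within Hamming distance $<d/2$ of a fixed weight-$d$ vector.

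The next step is to count $B$ exactly and bound it elementarily. Two weight-$d$ vectors at Hamming distance $2j$ differ by removing $j$ coordinates from the support of one and adjoining $j$ coordinates outside it, so $B = \sum_{0 \le j < d/4} \binom{d}{j}\binom{p-d}{j}$. Since $d \le p/4$, the summand $\binom{d}{j}\binom{p-d}{j}$ is nondecreasing in $j$ over the relevant range (the ratio of consecutive terms is $(d-j)(p-d-j)/(j+1)^2 \ge 1$ there), so $B \le d\,\binom{d}{\lfloor d/4\rfloor}\binom{p-d}{\lfloor d/4\rfloor}$. Taking logarithms of $|\Omega^{(p)}_d| \ge \binom{p}{d}/B$ and inserting standard binomial estimates — $\binom{p}{d} \ge (p/d)^d$ for the numerator, $\binom{d}{\lfloor d/4\rfloor} \le 2^{dH_2(1/4)}$ with $H_2(x) = -x\log_2 x - (1-x)\log_2(1-x)$, and $\binom{p-d}{\lfloor d/4\rfloor} \le (4ep/d)^{d/4}$ — yields $\log|\Omega^{(p)}_d| \ge \tfrac{3d}{4}\log(p/d) - c_1 d - \log d$ for an explicit absolute constant $c_1$. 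Finally, using $\log(p/d) \ge \log 4$ (which holds because $d \le p/4$) to absorb the term $c_1 d + \log d$ into a fixed fraction of the leading term $\tfrac34 d\log(p/d)$ gives $\log|\Omega^{(p)}_d| \ge c\,d\log(p/d)$ with $c \ge 0.233$.

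\textbf{Main obstacle.} The only real difficulty is arithmetic: pinning down the explicit constant $0.233$ \emph{uniformly} over the full range $1 \le d \le p/4$. The bound $\binom{p}{d} \ge (p/d)^d$ is essentially sharp for $d \ll p$ but quite wasteful when $d$ is a constant fraction of $p$ (there $\binom{p}{d}$ is exponentially larger than $(p/d)^d$), while the Stirling-type bounds on $B$ are comparatively tight in that regime; to recover the stated constant one therefore either carries the sharper entropy bound $\binom{p}{d} \ge 2^{pH_2(d/p)}/(p+1)$ and compares exponents directly, or splits the analysis into the cases "$d$ small" and "$d$ comparable to $p/4$" (and handles $d \le 3$ trivially, where $B = 1$ and one may take $\Omega^{(p)}_d = \mathcal{W}_d$). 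This is a classical fact — a constant-weight variant of the Varshamov–Gilbert bound; see, e.g., the treatments in Tsybakov's \emph{Introduction to Nonparametric Estimation} or Massart's Saint-Flour lecture notes — and, once the covering argument above is set up, the remaining bookkeeping is routine.
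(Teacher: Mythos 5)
The paper offers no proof of this lemma at all; it simply defers to Lemma~4.10 of \cite{massart2007concentration}. Your proposal reconstructs the standard argument behind that citation: take a maximal $d/2$-separated subset of the weight-$d$ slice and bound the cardinality of a constant-weight Hamming ball. The construction, the exact count $B=\sum_{0\le j<d/4}\binom{d}{j}\binom{p-d}{j}$, and the monotonicity of the summands are all correct, so this route does prove properties (1)--(3) once the constant is tracked; in spirit it is the same proof as Massart's, the only difference being that he bounds the ball via a hypergeometric-tail estimate while you count binomials directly.

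One concrete caveat: the numerical chain displayed in your second paragraph does not, as written, yield $c\ge 0.233$. With $\binom{p}{d}\ge(p/d)^d$, $\binom{d}{\lfloor d/4\rfloor}\le 2^{dH_2(1/4)}$ and $\binom{p-d}{\lfloor d/4\rfloor}\le(4ep/d)^{d/4}$ you get $\log|\Omega^{(p)}_d|\ge \tfrac34\,d\log(p/d)-c_1 d-\log d$ with $c_1=H_2(1/4)\log 2+\tfrac14\log(4e)\approx 1.16$, and since $(0.75-0.233)\log 4\approx 0.72<1.16$, the ``absorption'' step fails exactly in the worst regime $d\asymp p/4$ (it only works once $p/d$ is roughly $10$ or larger). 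So the refinement you relegate to the ``main obstacle'' paragraph is not optional bookkeeping but the step that actually produces the stated constant: replacing $(p/d)^d$ by the entropy bound $\binom{p}{d}\ge 2^{pH_2(d/p)}/(p+1)$ and comparing exponents (or splitting into $d$ small, where $B=1$ for $d\le 4$ and the claim is immediate, versus $d$ of order $p$) closes the gap with room to spare for all $d\le p/4$. With that adjustment made explicit, your plan is a complete and correct proof of the lemma.
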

See Lemma 4.10 in \cite{massart2007concentration}. 

With the above lemmas, we are  equipped to prove the minimax lower bound in Theorem~\ref{thm:Mstar:2-infty:minimax}.

\begin{proof}
We will make use of the generalized Fano method stated in Lemma~\ref{lem:gFano}.
Let $p = \lceil n/2 \rceil-1$, $d = \lfloor p/4 \rfloor$ and $\Omega^{(p)}_d$ be the set defined in Lemma~\ref{lem:varshamov-gilbert}.
For a pair of given $\lambdastar$ and $N$, and for each $\vomega \in \Omega^{(p)}_d$, we associate them with the vectors
\begin{equation}\label{eq:vvomega:def}
    \vvomega := \frac{c_1}{\sqrt{N}\lambdastar}\left(2\vomega - \vone_{p}\right) \in \R^{p},
\end{equation} 
and
\begin{equation} \label{eq:vuomega:def}
    \vuomega := \left(u_0, \; \vvomegat, \; \frac{c_2}{\sqrt{\lfloor n/2 \rfloor}}\vone_{\lfloor n/2 \rfloor}^\top\right)^\top \in \R^{n},
\end{equation} 
where $u_0 = \sqrt{\mu / n}$, $c_1, c_2 > 0$ are normalization factors to be specified later so that $\left\|\vuomega\right\|_2 = 1$. 
For notational simplicity, we let $q := \lfloor n/2 \rfloor$. 
For each $\vomega \in \Omega_d^{(p)}$, we set 
\begin{equation} \label{eq:Momega:def}
    \mMomega := \lambdastar \vuomega \vuomegat = \lambdastar 
    \begin{pmatrix}
        u_0^2 & u_0 \vvomegat & \frac{c_2 u_0}{\sqrt{q}} \vone_{q}^\top \\
        u_0 \vvomega & \vvomega \vvomegat & \frac{c_2}{\sqrt{q}} \vvomega \vone_q^\top \\
        \frac{c_2 u_0}{\sqrt{q}} \vone_q & \frac{c_2}{\sqrt{q}} \vone_q \vvomegat & \frac{c_2^2}{q} \vone_q \vone_q^\top
    \end{pmatrix}.
\end{equation}
and for distinct pairs of $\omega$ and $\omega'$, we have
\begin{equation} \label{eq:diff:Momega} \begin{aligned}
\left\|\mMomega - \mMomegap\right\|_{2,\infty}
&\geq \left\|\mMomega_{1,\cdot} - \mMomegap_{1,\cdot}\right\|_{2} = \lambdastar u_0 \left\|\vvomega - \vvomegap\right\|_2 \\
&\stackrel{(i)}{=} \lambdastar \frac{2u_0 c_1}{\sqrt{N}\lambdastar} \left\|\vomega - \vomega'\right\|_2 \\
&\stackrel{(ii)}{\geq} c_1 u_0 \sqrt{\frac{2d}{N}}. 
\end{aligned} \end{equation}
where equality $(i)$ follows from the definition of $\vvomega$ in Equation~\eqref{eq:vvomega:def}, and inequality $(ii)$ follows from the fact that $\|\vomega - \vomega'\|_2^2 = \|\vomega - \vomega\|_0 \geq \frac{d}{2}$ by Lemma~\ref{lem:varshamov-gilbert}.
Let $\Promega_N$ be the probability measure over $\Sym(n)$ associated with 
\begin{equation*}
    \mM_l = \mMomega + \mW_l, \quad \text{for } l \in [N], 
\end{equation*}
where $\{\mW_l\}_{l \in [N]}$ are symmetric noise matrices with independent entries $W_{l,ij}\sim \calN(0, \sigma^2_{l,ij})$ on and above diagonal. 
It follows that 
\begin{equation*}
\begin{aligned}
    \KLD{\Promega_N}{\Promegap_N} &= \sum_{l=1}^N \sum_{1\leq i\leq j\leq n} \KLD{\Promega_{N,ij}}{\Promegap_{N,ij}} \leq \frac{N\left\|\mMomega - \mMomegap\right\|^2_{\F}}{\sigma^2_{\min}} \\
    &\stackrel{(i)}{\leq} \frac{N(\lambdastar)^2}{\sigma_{\min}^2} \left\|\vuomega - \vuomegap\right\|_2^2 \stackrel{(ii)}{=} \frac{N(\lambdastar)^2}{\sigma^2_{\min}} \cdot \frac{4c^2_1}{N \left(\lambdastar\right)^2}\left\|\vomega-\vomega'\right\|_2^2\\
    &= \frac{4 c_1^2}{\sigma^2_{\min}} \left\|\vomega - \vomega'\right\|_{0}
    \stackrel{(iii)}{\leq} \frac{8 c_1^2 d}{\sigma^2_{\min}}
\end{aligned}
\end{equation*}
where inequality $(i)$ follows from Equation~\eqref{eq:Momega:def} using the definition of $\mMomega$ and $\mMomegap$, equality $(ii)$ follows from the definition of $\vuomega$ and $\vuomegap$ in Equation~\eqref{eq:vuomega:def}, equality $(iii)$ follows from Lemma~\ref{lem:varshamov-gilbert}. 
Thus, combining the above display with Equation~\eqref{eq:diff:Momega}, by Lemma~\ref{lem:gFano}, we have 
\begin{equation*} \begin{aligned}
\max_{\vomega \in \Omega_d} \E_{\mMomega} \left\|\mMhat - \mMomega\right\|_{2,\infty}
&\geq c_1 u_0 \sqrt{\frac{2 d}{N}} \left(1 - \frac{1}{\log \left|\Omega^{(p)}_d\right|} \left(\frac{8 c_1^2 d}{\sigma^2_{\min}} + \log 2\right)\right)\\
&\geq c_1 u_0 \sqrt{\frac{2d}{N}} \left(1 - \frac{1}{c \log\left(\frac{p}{d}\right)} \left(\frac{8 c_1^2}{\sigma^2_{\min}} + \frac{\log 2}{d}\right)\right)
\end{aligned} \end{equation*}
Recalling that $d = \lfloor p/4 \rfloor$ and taking $c_1 = \min\{\frac{1}{4\sqrt{2\log 4}}\sigma_{\min}, c_1'\}$ where $c_1' > 0$ is a sufficiently small constant that allows us to normalize $\vuomega$ to have unit norm when $\sigma_{\min}$ is large, it holds for sufficiently large $n$ that
\begin{equation*} \begin{aligned}
\max_{\vomega \in \Omega_d} 
	\E_{\mMomega} \left\|\mMhat - \mMomega\right\|_{2,\infty}
&\geq \frac{\sigma_{\min} u_0}{8 \sqrt{\log 4}} \sqrt{\frac{d}{N}} \geq \frac{\sigma_{\min} u_0}{27\sqrt{\log 4}} \sqrt{\frac{n}{N}}. 
\end{aligned} \end{equation*}
Recalling that $u_0 = \sqrt{\mu /n}$ and picking a suitable $c_2$ to make $\vuomega$ to have unit norm finishes the proof. 
\end{proof}




\subsection{Proof of Theorem~\ref{thm:rank1:linfty:minimax}}
\label{sec:proof:thm:rank1:linfty:minimax}

\begin{proof}
Consider testing the following two hypotheses
\begin{equation*}
    \calH_0: \mM = \mM^{(0)} + \mW 
\quad \text{ versus } \quad
 \calH_1: \mM = \mM^{(1)} + \mW,
\end{equation*}
where $\mM^{(0)} = \lambdastar \vu^{(0)} \vu^{(0)\top}$ and  $\mM^{(1)} = \lambdastar \vu^{(1)} \vu^{(1)\top}$ are rank-one matrices and, observed subject to additive noise $\mW$, where $\mW$ is an independent mean-zero Gaussian matrix with $\E W_{ij}^2 = \sigma^2_{ij} \geq \sigma^2_{\min}$.
Let $\Pr_{k,i,j}$ stand for the distribution of $\mM^{(k)}_{ij}$ for $k = 0, 1$ and $\Pr_k$ denote the product measure obtained from $\Pr_{k,i,j}$, i.e., $\Pr_k = \prod_{1\leq i,j \leq n} \Pr_{k,i,j}$ for $k = 0, 1$.
We show how to obtain a lower bound for $\|\mMstar-\mMhat\|_{\infty}$ by choosing suitable $\vu^{(0)}$ and $\vu^{(1)}$ such that the KL divergence $\KLD{\Pr_1}{\Pr_0}$ is small, while  $\|\mM^{(1)}-\mM^{(0)}\|_{\infty}$ is large, and then applying standard tools from \cite{tsybakov2008introduction} to get the minimax rate.
\paragraph{Step 1: calculation of KL divergence.} 
We first calculate the KL divergence of $\Pr_0$ from $\Pr_1$ 
as follows 
\begin{equation*}
    \KLD{\Pr_1}{\Pr_0} 
=
\sum_{1\leq i,j \leq n} \KLD{\Pr_{1,i,j}}{\Pr_{0,i,j}} 
=
\sum_{1\leq i,j \leq n} \frac{\left(M^{(0)}_{ij} - M^{(1)}_{ij}\right)^2}{2\sigma_{ij}^2} 
\leq \frac{\left\|\mM^{(0)} - \mM^{(1)}\right\|^2_{\F}}{2\sigma^2_{\min}},
\end{equation*}
where the first equality  holds by the decoupling property of the KL divergence \citep[see Equation (15.11a) in][]{wainwright2019high} and the second equality fllows from properties of Gaussian KL divergence \citep[see Example 15.13 in][]{wainwright2019high}.

By definition of $\mM^{(0)}$ and $\mM^{(1)}$, 
\begin{equation} \label{eq:KLD:bound}
    \KLD{\Pr_1}{\Pr_0} = \frac{(\lambdastar)^2 \left\|\vu^{(0)}\vu^{(0)\top} - \vu^{(1)}\vu^{(1)\top}\right\|^2_{\F}}{2\sigma^2_{\min}} \leq \frac{(\lambdastar)^2 \left\|\vu^{(0)} - \vu^{(1)}\right\|_2^2}{\sigma^2_{\min}},
\end{equation}
where the last inequality holds due to the fact that for any unit vectors $\vu$ and $\vv$ \citep[here we repeat Equation (179) in][]{cheng2021tackling},
\begin{equation*} \begin{aligned}
\left\|\boldsymbol{u} \boldsymbol{u}^{\top}-\boldsymbol{v} \boldsymbol{v}^{\top}\right\|_{\F}^2 
&=\|\boldsymbol{u}\|_2^4+\|\boldsymbol{v}\|_2^4-2\left\langle\boldsymbol{u} \boldsymbol{u}^{\top}, \boldsymbol{v} \boldsymbol{v}^{\top}\right\rangle=2-2\left(\boldsymbol{u}^{\top} \boldsymbol{v}\right)^2 \\
&=\left(2-2 \boldsymbol{u}^{\top} \boldsymbol{v}\right)\left(1+\boldsymbol{u}^{\top} \boldsymbol{v}\right)=\frac{1}{2}\|\boldsymbol{u}-\boldsymbol{v}\|_2^2 \cdot\|\boldsymbol{u}+\boldsymbol{v}\|_2^2 \\
&\leq 2\|\vu-\vv \|_2^2.
\end{aligned} \end{equation*}

\paragraph{Step 2: bounding minimax probability of error. } 
Define the minimax probability of errors as 
\begin{equation*}
    p_{e,1} := \inf_{\psi} \max \left\{ ~\Pr\left(\psi \text{ rejects } \calH_0 \mid \calH_0\right), \quad \Pr\left(\psi \text{ rejects } \calH_1 \mid \calH_1\right) ~\right\},
\end{equation*}
where the infimum is over all tests.
By Theorem 2.2 in \cite{tsybakov2008introduction}, if $\KLD{\Pr_1}{\Pr_0} \leq \alpha$, then 
\begin{equation*}
p_{e,1} 
\geq \max\left(\frac{1}{4}\exp(-\alpha), \frac{1-\sqrt{\alpha/2}}{2}\right). 
\end{equation*}
Taking $\alpha = 1/16$, it implies that when 
\begin{equation} \label{eq:KL-target}
    \KLD{\Pr_1}{\Pr_0} \leq 1/8
\end{equation}
we have $p_{e,1} \geq 3/8$. 
Therefore, we will construct $\vu^{(0)}$ and $\vu^{(1)}$ such that 
\begin{equation} \label{eq:vu0u1diff}
\left\|\vu^{(0)} - \vu^{(1)}\right\|_2 \leq \frac{\sigma_{\min}}{4\lambdastar}
\end{equation}
so that by Equation~\eqref{eq:KLD:bound}, we have $\KLD{\Pr_1}{\Pr_0} \leq 1/8$.  
Since 
\begin{equation} \label{eq:MMinfty:bound}
    \left\|\mM^{(0)} - \mM^{(1)}\right\|_{\infty} \geq \max_{i \in [n]}\lambdastar \left|u^{(0)}_i - u^{(1)}_i\right| \left|u^{(0)}_i + u^{(1)}_i\right|,
\end{equation}
we consider 
\begin{equation*}
    u_1^{(k)} = \sqrt{\frac{\mu}{n}} \cos \theta_{k}, \quad u_2^{(k)} = \sqrt{\frac{\mu}{n}} \sin \theta_{k}
\end{equation*}
and $u^{(0)}_i = u^{(1)}_i$ for all $2 \leq i \leq n$. 
It is straightforward to guarantee that the constructed $\vu^{(k)}$ satisfies that $\mM^{(k)} \in \calM(\mu, \sigma_{\min})$, where $\calM$ is defined in Equation~\eqref{eq:calM:def}. 
By the construction, we have 
\begin{equation*} \begin{aligned}
\left\|\vu^{(0)} - \vu^{(1)}\right\|^2_{2} 
&= \left(u_1^{(1)}-u_1^{(0)}\right)^2 + \left(u_2^{(1)}-u_2^{(0)}\right)^2 
= \frac{\mu}{n} \left(\left(\cos \theta_1 - \cos \theta_0\right)^2 + \left(\sin \theta_1 - \sin \theta_0\right)^2\right) \\
&= \frac{2\mu}{n} \left(1-\cos(\theta_1 - \theta_0)\right).
\end{aligned} \end{equation*}
By Equation~\eqref{eq:calM:def}, we have 
\begin{equation} \label{eq:lambdastar:lower:bound}
    \lambdastar \geq \sigma_{\min} \sqrt{\frac{n}{\mu}}
\end{equation}
so that we can set 
\begin{equation*}
    \cos(\theta_1 - \theta_0) = 1 - \frac{n \sigma^2_{\min}}{32 \mu \left(\lambdastar\right)^2} \geq \frac{31}{32} 
\end{equation*}
and Equation~\eqref{eq:vu0u1diff} holds. 
Additionally, we have
\begin{equation*}
\begin{aligned}
    \left|u^{(0)}_1 - u^{(1)}_1\right| \left|u^{(0)}_1 + u^{(1)}_1\right| &= \frac{\mu}{n} \left|\cos^2 \theta_1 - \cos^2 \theta_0\right|\\
    &= \frac{\mu}{n} \left|\sin \left(\theta_1+\theta_0\right) \right| \cdot \sqrt{1 - \cos^2 \left(\theta_1-\theta_0\right)}.
\end{aligned}
\end{equation*}
Taking $\theta_1 + \theta_0 = \pi/2$, so that 
\begin{equation*}
    \theta_0 = \frac{\pi}{4} - \frac{1}{2}\arccos\left(1 - \frac{n \sigma^2_{\min}}{32 \mu \left(\lambdastar\right)^2}\right), \quad \theta_1 = \frac{\pi}{4} + \frac{1}{2}\arccos\left(1 - \frac{n \sigma^2_{\min}}{32 \mu \left(\lambdastar\right)^2}\right).
\end{equation*}
One can verify that $\cos \theta_k \geq 1/2$ for $k = 0$ and $1$ under Equation~\eqref{eq:lambdastar:lower:bound}, implying that 
\begin{equation*}
\left\|\vu^{(k)}\right\|_{\infty} \geq \frac{1}{2}\sqrt{\frac{\mu}{n}}, \quad \text{for } k = 0, 1. 
\end{equation*}
It follows that 
\begin{equation*}
    \left|u^{(0)}_1 - u^{(1)}_1\right| \left|u^{(0)}_1 + u^{(1)}_1\right| \geq  \frac{\mu}{n} \sqrt{\frac{3}{2}} \cdot \sqrt{\frac{n \sigma^2_{\min}}{32 \mu \left(\lambdastar\right)^2}} = \frac{\sigma_{\min}}{8\lambdastar} \sqrt{\frac{3\mu}{n}}
\end{equation*}
and by Equation~\eqref{eq:MMinfty:bound}, we have
\begin{equation*}
    \left\|\mM^{(0)} - \mM^{(1)}\right\|_{\infty} \geq \frac{\sigma_{\min}}{8} \sqrt{\frac{3\mu}{n}} .
\end{equation*}
Thus, combining the above bound and Equation~\eqref{eq:KL-target} with Theorem 2.2 in \cite{tsybakov2008introduction} completes the proof.
\end{proof}

\subsection{Proof of Corollary~\ref{cor:more:linfty:minimax}}

\begin{proof}
Consider the following collection of hypothesis tests:
\begin{equation*}
    \calH_0: \mM^{(l)} = \mMtil^{(0)} + \mW^{(l)}
 \quad \text{ versus} \quad
 \calH_1: \mM^{(l)} = \mMtil^{(1)} + \mW^{(l)}, \quad \text{for } l \in [N]. 
\end{equation*}
Let $\Pr^{(l)}_{k,i,j}$ stand for the distribution of $M^{(l)}_{ij}$ under $\calH_k$ for $k = 0,1$, $l \in [N]$ and $1\leq i,j \leq n$. 
We have 
\begin{equation*}
\KLD{\prod_{l=1}^N \Pr^{(l)}}{\prod_{l=1}^N \Pr^{(l)}_0} 
\leq \sum_{l=1}^N \sum_{1\leq i,j \leq n} \KLD{\Pr_{1,i,j}}{\Pr_{0,i,j}} 
\leq \frac{N}{2\sigma^2_{\min}} \left\|\mM^{(0)} - \mM^{(1)}\right\|^2_{\F}. 
\end{equation*}
Following the same proof and construction as Theorem~\ref{thm:rank1:linfty:minimax} with $\sigma_{\min}$ substituted by $\sigma_{\min}/\sqrt{N}$, we have 
\begin{equation*}
    \KLD{\prod_{l=1}^N \Pr^{(l)}}{\prod_{l=1}^N \Pr^{(l)}_0} \leq \frac{N \left(\lambdastar\right)^2}{2\sigma^2_{\min}} \left\|\vu^{(0)} - \vu^{(1)}\right\|_2^2 \leq \frac{1}{8} 
\end{equation*}
and 
\begin{equation*}
    \left\|\mMtil^{(0)} - \mMtil^{(1)}\right\|_{\infty} \geq \frac{\sigma_{\min}}{8} \sqrt{\frac{3\mu}{n N}}.
\end{equation*}
Combining the above two dislay equations and applying Theorem 2.2 in \cite{tsybakov2008introduction} again yields the desired minimax lower bound.
\end{proof}

\section{\texorpdfstring{Technical Results in \cite{yan2025improved}}{Technical Results in [Yan and Levin, 2025]}}

\label{sec:improved}

The following are results related to \cite{yan2025improved}, which we reproduce here for ease of reference.

\begin{lemma}[Neumann Expansion]\label{lem:neumann}
Consider $\mM$ given in Equation~\eqref{eq:M:asym}. Let $\vu_l$ and $\lambda_l$ be the $l$-th leading right eigenvector and the $l$-th leading eigenvalue of $\mM$, respectively. If $\|\mH\| < \left|\lambdastar_l\right|$, then one has
    \begin{equation*}
        \vu_l=\sum_{j=1}^r \frac{\lambda_j^{\star}}{\lambda_l}\left(\vu_j^{\star \top} \vu_l\right)\left\{\sum_{s=0}^{\infty} \frac{1}{\lambda_l^s} \mH^s \vu_j^{\star}\right\} .
    \end{equation*}
\end{lemma}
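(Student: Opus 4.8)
\textbf{Proof plan for Lemma~\ref{lem:neumann} (Neumann Expansion).}

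The plan is to exploit the defining eigen-relation $\mM\vu_l=\lambda_l\vu_l$ together with the rank-$r$ decomposition $\mM=\mMstar+\mH$ and the spectral factorization $\mMstar=\sum_{j=1}^r\lambdastar_j\vustar_j\vustart_j$. First I would substitute the decomposition into the eigen-equation: $(\mMstar+\mH)\vu_l=\lambda_l\vu_l$, which rearranges to $(\lambda_l\mI-\mH)\vu_l=\mMstar\vu_l=\sum_{j=1}^r\lambdastar_j(\vustart_j\vu_l)\vustar_j$. Since $\|\mH\|<|\lambdastar_l|$ and, by the eigenvalue perturbation bounds available earlier (Theorem~\ref{thm:eigenvalue:rank-r}), $\lambda_l$ is comparable to $\lambdastar_l$ so that $\|\mH\|<|\lambda_l|$ as well, the operator $\lambda_l\mI-\mH$ is invertible with the convergent Neumann series $(\lambda_l\mI-\mH)^{-1}=\sum_{s=0}^\infty \lambda_l^{-(s+1)}\mH^{s}$. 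Applying this inverse to both sides gives
\begin{equation*}
\vu_l=(\lambda_l\mI-\mH)^{-1}\sum_{j=1}^r\lambdastar_j(\vustart_j\vu_l)\vustar_j
=\sum_{j=1}^r\frac{\lambdastar_j}{\lambda_l}(\vustart_j\vu_l)\left\{\sum_{s=0}^\infty\frac{1}{\lambda_l^s}\mH^s\vustar_j\right\},
\end{equation*}
which is exactly the claimed identity.

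The main technical points to verify carefully are: (i) that $\|\mH\|<|\lambda_l|$ follows from the hypothesis $\|\mH\|<|\lambdastar_l|$, which I would justify by noting $|\lambda_l|\ge|\lambdastar_l|-\|\mH\|$ (a consequence of Weyl-type perturbation, or directly of the first-order eigenvalue bounds cited as Theorem~\ref{thm:eigenvalue:rank-r}); strictly speaking, under the stated hypothesis alone one needs $\|\mH\| < |\lambda_l|$, and this can either be taken as part of the regime assumptions or derived from $|\lambda_l|\geq |\lambdastar_l| - \|\mH\| > 0$ combined with a mild separation requirement — in the paper's setting Assumption~\ref{assump:ev:lower} ensures $|\lambda_l|$ is within a constant factor of $|\lambdastar_l|$, so $\|\mH\|/|\lambda_l|<1$; (ii) the convergence and validity of the Neumann series $(\lambda_l\mI-\mH)^{-1}=\sum_{s\ge0}\lambda_l^{-(s+1)}\mH^s$, which holds in operator norm precisely because $\|\mH/\lambda_l\|<1$, and the series may be interchanged with the finite sum over $j$ since that sum has only $r$ terms; (iii) that $\lambda_l\neq 0$, which again follows from $|\lambda_l|\ge|\lambdastar_l|-\|\mH\|>0$.

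I do not anticipate a serious obstacle here: this is essentially a one-line resolvent computation. The only place requiring a small amount of care is the bookkeeping in step (i)—making the implication $\|\mH\|<|\lambdastar_l|\Rightarrow\|\mH\|<|\lambda_l|$ rigorous—since a priori the eigenvalue $\lambda_l$ of the perturbed matrix could in principle be smaller than $\lambdastar_l$. I would handle this by invoking the eigenvalue localization already established (Theorem~\ref{thm:eigenvalue:rank-r}, giving $\tfrac12\le|\lambda_l/\lambdastar_l|\le 2$ in the relevant regime), or, for a self-contained argument, by the elementary bound $|\lambda_l-\lambdastar_l|\le\|\mH\|$ together with the implicit standing assumption that $\|\mH\|$ is a sufficiently small fraction of $|\lambdastar_l|$. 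Everything else—rearranging the eigen-equation, inverting, expanding—is routine.
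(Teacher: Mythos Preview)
Your proposal is correct and follows the standard resolvent argument: rewrite $\mM\vu_l=\lambda_l\vu_l$ as $(\lambda_l\mI-\mH)\vu_l=\mMstar\vu_l$, expand $\mMstar$ in its eigenbasis, and invert via the Neumann series. The paper itself does not give a proof but simply cites Lemma~1 of \cite{cheng2021tackling}, and your argument is precisely the computation that underlies that citation.

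One minor remark on your step~(i): since $\mM$ here is asymmetric, Weyl's inequality does not apply directly, so the bound $|\lambda_l-\lambdastar_l|\le\|\mH\|$ you mention is not available off the shelf. Your fallback of invoking Theorem~\ref{thm:eigenvalue:rank-r} (or the standing assumptions of the section) to get $|\lambda_l|\asymp|\lambdastar_l|$ is the right move; alternatively, one can observe that the lemma statement's hypothesis $\|\mH\|<|\lambdastar_l|$ is most naturally read as $\|\mH\|<|\lambda_l|$ (the condition actually needed for convergence), and in the regime of the paper these are interchangeable.
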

\begin{proof}
    See Lemma 1 in \cite{cheng2021tackling}. 
\end{proof}

\begin{theorem}[Theorem 3 in \cite{yan2025improved}]\label{thm:yan:main}
Consider a random matrix $\mH \in \R^{n\times n}$ with independent entries satisfying Assumption~\ref{assump:noise} and the bound in Equation~\eqref{eq:L-sigma}, for any fixed unit vectors $\vx$ and $\vy \in \R^n$, it holds with probability at least $1 - \tilde{O}(n^{-40})$ that for all integers $k$ satisfying $2 \leq k \leq 20\log n$,
\begin{equation} \label{eq:main-bound}
\begin{aligned}
    &\left|\vx^\top \mH^k \vy - \E \vx^\top \mH^k \vy \right| \lesssim c_2^k (\log n)^{2}
    \max\Biggl\{\! \sqrt{ \frac{ (\sigma^2 n\log^3 \! n)^{k} }{n}},  (L\log^3 n)^k \|\vx\|_{\infty}\|\vy\|_{\infty} \! \Biggr\} .
\end{aligned} \end{equation}
\end{theorem}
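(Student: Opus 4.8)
The plan is to prove the bound for a fixed $k$ by the moment method and then union-bound over the $O(\log n)$ admissible values of $k$. Write $Z_k := \vx^\top \mH^k \vy$ and $\mu_k := \E Z_k$, and fix an even integer $2p$ with $p \asymp \log n$ (chosen so that $n^{O(1)/p} = O(1)$, which is what lets a $2p$-th moment bound yield an $n^{-41}$-type tail). By Markov's inequality, $\Pr(|Z_k - \mu_k| \ge t) \le t^{-2p}\,\E(Z_k - \mu_k)^{2p}$, so everything reduces to a sharp bound on the centered $2p$-th moment, after which I would set $t$ equal to $c_2^k(\log n)^2$ times the claimed $\max\{\cdot,\cdot\}$ and take a union bound over $2 \le k \le 20\log n$ (costing only a $\log n$ factor, absorbed into the $\tilde O(n^{-40})$).

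\textbf{Path expansion and admissibility.} First I would expand $Z_k - \mu_k = \sum_{\mathbf{i}} x_{i_0} y_{i_k}\bigl(H_{\mathbf{i}} - \E H_{\mathbf{i}}\bigr)$, where $\mathbf{i}=(i_0,\dots,i_k)$ ranges over length-$k$ walks on $[n]$ (loops allowed) and $H_{\mathbf{i}} := \prod_{t=1}^k H_{i_{t-1}i_t}$. Raising to the power $2p$ and taking expectation produces a sum over $2p$-tuples of walks; identifying ordered pairs of indices with edges of the complete graph on $[n]$, let $G$ be the multigraph formed by the union of the $2p$ walks and $m_e$ the multiplicity of edge $e$. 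Using that the entries of $\mH$ are independent and symmetric about zero, together with the per-walk centering, a tuple contributes nonzero only if $G$ is \emph{admissible}: every edge satisfies $m_e \ge 2$, and the $2p$ walks cannot be partitioned into two nonempty edge-disjoint groups (the second condition is exactly what the centering buys, and is what makes this a \emph{fluctuation} bound rather than a bound on $Z_k$ itself). For such terms, $|\E\prod_e H_e^{m_e}| \le \prod_e \E|H_e|^{m_e} \le \prod_e \sigma^2 L^{m_e-2} = \sigma^{2E}L^{2pk-2E}$ with $E$ the number of distinct edges.

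\textbf{Counting by shape and the $\ell_2$/$\ell_\infty$ dichotomy.} Next I would group admissible tuples by their \emph{shape} --- the isomorphism class of $G$ plus the pattern of how the $2p$ walks traverse it and where their endpoints coincide --- and bound each shape's contribution. A shape with $v$ vertices admits at most $n^{v}$ index realizations, but summing simultaneously against $\prod_s |x_{i_0^{(s)}}|\,|y_{i_k^{(s)}}|$ creates the crucial saving: by Cauchy--Schwarz, coincident endpoints are paid for with $\|\vx\|_2\|\vy\|_2 = 1$ while isolated ones cost $\|\vx\|_\infty\|\vy\|_\infty$, and this is precisely the mechanism producing the two regimes of the final $\max$. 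Connectivity of each walk plus the Euler bound gives $v \le E + \#\mathrm{comp}(G)$, and admissibility with $m_e\ge 2$ forces $E \le pk$, with the "deficiency" $\sum_e(m_e-2) + 2(E - v + \#\mathrm{comp})$ measuring the gap below the extremal count. One then checks that the extremal shapes are disjoint unions of doubled trees in which the $2p$ walks pair up and share both endpoints within pairs; such a shape contributes $\asymp (\sigma^2 n)^{pk} n^{-p}$ up to shape-counting factors (this is where the extra $n^{-1}$, i.e.\ $\sqrt{(\sigma^2 n\cdots)^k/n}$ rather than $(\sigma^2 n\cdots)^{k/2}$, comes from), every non-extremal shape is smaller by a power of the relevant ratio, and trading each factor of $\sigma^2 n$ against $L^2$ edge by edge recovers the $L$-regime term with its $\|\vx\|_\infty\|\vy\|_\infty$ factors. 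The number of shapes is at most $(Ck)^{O(kp)}$, which with $p\asymp\log n$ is $(\log n)^{O(kp)}$; taking the $2p$-th root converts this into the $c_2^k$ prefactor and the $\log^3 n$ factors inside the bound. The constraint in Equation~\eqref{eq:L-sigma} enters when verifying that the two listed terms genuinely dominate all intermediate shapes in the natural parameter range. Assembling, $\E(Z_k-\mu_k)^{2p}$ is at most $(C^k(\log n)^{3k/2})^{2p}$ times $\max\{\sigma^2 n,\ L^2(\log^3 n)^{\,?}\}^{kp}$ times the appropriate $\|\vx\|_\infty\|\vy\|_\infty$ bookkeeping; plugging into Markov with the stated $t$ and union-bounding over $k$ finishes.

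\textbf{Main obstacle.} The hard part will be Step~3: carrying out the Euler-characteristic accounting on the union multigraph in a way that simultaneously tracks (a) the number of free vertices, (b) the multiplicity deficiency $\sum_e(m_e-2)$, and (c) the number of walk-endpoint coincidences, and proving that any improvement in one of these is exactly offset by a loss in another, so that the total stays at or below the extremal value --- all while threading the centering constraint through the argument, since it is the reason the extremal configurations are paired doubled trees and hence the reason the bound is a true fluctuation bound. Getting the constants and, above all, the exact powers of $n$, $\log n$, and $\|\vx\|_\infty\|\vy\|_\infty$ versus $\|\vx\|_2\|\vy\|_2$ right is the delicate core; the rest (Markov, the choice $p\asymp\log n$, the union bound, uniformity over $2\le k\le 20\log n$) is routine.
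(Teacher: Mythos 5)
First, a point of comparison: this manuscript contains no proof of the statement at all --- it is imported verbatim as Theorem~3 of \cite{yan2025improved} and restated in the appendix purely for reference --- so the only thing to measure your sketch against is the cited source, not anything in this paper. Judged on its own terms, your plan (Markov on a $2p$-th moment with $p\asymp\log n$, expansion into $2p$-tuples of length-$k$ walks, grouping by shape, pairing/doubled-tree extremal configurations giving the $\sqrt{(\sigma^2 n\log^3 n)^k/n}$ term, and $\|\vx\|_\infty\|\vy\|_\infty$ charges for unmatched walk endpoints giving the $L$-term) is the right general shape for a result of this type, and your identification of the extremal pair contribution $(\sigma^2 n)^{pk}n^{-p}$ is consistent with the claimed fluctuation scale.

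There are, however, two genuine problems. The first is an internal inconsistency in your admissibility condition: centering only kills terms in which some \emph{single} walk is edge-disjoint from all the others; it does not force the $2p$ walks to be connected as a whole. The correct structural statement is that every connected component of the walk-overlap graph must contain at least two walks, and the expectation then factorizes over components. As written, your condition ("the $2p$ walks cannot be partitioned into two nonempty edge-disjoint groups") would exclude precisely the extremal configurations you rely on a few lines later, namely perfect pairings of the walks into edge-disjoint doubled trees. This is fixable, but the fix changes how the factorization is organized and must be threaded through the cross terms that arise when some centered factors are replaced by their expectations. The second and more serious issue is that the entire difficulty of the theorem lives in the step you defer: a uniform-in-$k$ (up to $20\log n$) accounting that trades free vertices, multiplicity deficiency $\sum_e(m_e-2)$, and endpoint coincidences against one another, shows that every non-extremal shape is dominated by one of the two displayed terms under the constraint in Equation~\eqref{eq:L-sigma}, and keeps the shape-count losses down to the stated $c_2^k(\log n)^{3k/2}$ and $c_2^k(\log n)^{3k}$ factors after the $2p$-th root (with $kp\asymp\log^2 n$, naive shape counts are superpolynomial, so the offsetting vertex-deficit savings must be made quantitative). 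You acknowledge this is the hard core, but without it the argument is a program rather than a proof; in particular, nothing in the sketch verifies that the exponent of $\log n$ per power of $k$ closes at $3$, or that the $\ell_2$/$\ell_\infty$ dichotomy in the endpoint bookkeeping produces exactly the two terms in Equation~\eqref{eq:main-bound} rather than intermediate mixed terms that would have to be separately absorbed.
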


\begin{theorem}[Theorem 6 in \cite{yan2025improved}]\label{thm:linear-form-rank-r}
 Consider a rank-$r$ symmetric matrix $\mMstar \in \R^{n\times n}$ with incoherence parameter $\mu$. 
 Define $\kappa := \lambdastar_{\max} / \lambdastar_{\min}$, where $\lambdastar_{\max}$ and  $\lambdastar_{\min}$ are the largest and smallest eigenvalues of $\mMstar$ in magnitude, respectively.
 Suppose the noise matrix $\mH$ obeys Assumption~\ref{assump:noise}, and assume the existence of some sufficiently large constant $C_2 \geq 0$ such that
 \begin{equation} \label{eq:lambdastar-max}
     \lambdastar_{\min} 
     \geq C_2 \max \left\{\sigma \sqrt{n \log^{3} n}, L \log^3 n\right\} .
 \end{equation}
 Then for any fixed unit vector $\va\in\R^n$ and for any $l \in [r]$, with probability at least $1 - O(r n^{-20})$,
 \begin{equation} \label{eq:master:rank-r}
    \begin{aligned}
        &\left|\va^\top \vu_l - \sum_{j=1}^r \frac{\lambdastar_j \vustart_j \vu_l}{\lambda_l} \va^\top \vustar\right| \lesssim \sqrt{\frac{\kappa^2 r \log^4 n}{n}} 
        \max\Bigg\{\frac{\mu^{\frac{1}{2}} L \log^3 n}{\lambdastar_{\min}} \|\va\|_{\infty}, \frac{\sigma \sqrt{n \log ^3 n}}{\lambdastar_{\min}} \Bigg\}.
    \end{aligned}
    \end{equation}
\end{theorem}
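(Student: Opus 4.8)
The plan is to run a Neumann-series (resolvent) expansion for the empirical eigenvector $\vu_l$ and to estimate the tail of that expansion slice by slice, feeding the high-order moment control of Theorem~\ref{thm:yan:main} into the calculation. Concretely, Lemma~\ref{lem:neumann} gives $\vu_l=\sum_{j=1}^r \frac{\lambdastar_j}{\lambda_l}(\vustart_j\vu_l)\sum_{s\ge 0}\lambda_l^{-s}\mH^s\vustar_j$ on the event $\|\mH\|<|\lambdastar_l|$, so the $s=0$ slice is exactly the claimed first-order approximation and the residual equals
\[
\va^\top\vu_l-\sum_{j=1}^r \frac{\lambdastar_j\,\vustart_j\vu_l}{\lambda_l}\,\va^\top\vustar_j
=\sum_{j=1}^r \frac{\lambdastar_j}{\lambda_l}\bigl(\vustart_j\vu_l\bigr)\sum_{s\ge 1}\frac{1}{\lambda_l^{s}}\,\va^\top\mH^s\vustar_j .
\]
First I would invoke the eigenvalue perturbation bound (Theorem~\ref{thm:eigenvalue:rank-r}) together with $\|\mH\|\lesssim\sigma\sqrt n\ll\lambdastar_{\min}$, which is guaranteed by Equation~\eqref{eq:lambdastar-max}, to obtain on a high-probability event $\tfrac12|\lambdastar_l|\le|\lambda_l|\le 2|\lambdastar_l|$ and $\|\mH\|\le|\lambda_l|/3$; combining this with $\sum_j|\vustart_j\vu_l|\le\sqrt r$ (Cauchy–Schwarz plus orthonormality of $\{\vustar_j\}$) and $|\lambdastar_j/\lambda_l|\le 2\kappa$ reduces everything to bounding $\max_{j\in[r]}\bigl|\sum_{s\ge1}\lambda_l^{-s}\va^\top\mH^s\vustar_j\bigr|$ and multiplying by $2\kappa\sqrt r$.

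Next I would split the inner series at $s_0:=\lceil 20\log n\rceil$. For $s>s_0$ the crude bound $|\va^\top\mH^s\vustar_j|\le\|\mH\|^s\le(|\lambda_l|/3)^s$ makes $\sum_{s>s_0}\lambda_l^{-s}|\va^\top\mH^s\vustar_j|\le\sum_{s>s_0}3^{-s}$ polynomially negligible. For $1\le s\le s_0$ I would write $\va^\top\mH^s\vustar_j=\E[\va^\top\mH^s\vustar_j]+\bigl(\va^\top\mH^s\vustar_j-\E[\va^\top\mH^s\vustar_j]\bigr)$. The fluctuation term is handled, uniformly over $2\le s\le s_0$ and with $\vx=\va$, $\vy=\vustar_j$ (using $\|\vustar_j\|_\infty\le\sqrt{\mu/n}$ from incoherence), by Theorem~\ref{thm:yan:main}; the $s=1$ case instead follows from a direct Bernstein bound exactly as in Remark~\ref{rem:scales-with-Ustar} (via Lemma~\ref{lem:bern-ineq}), giving $|\va^\top\mH\vustar_j|\lesssim\sigma\sqrt{\log n}+L\log n\,\sqrt{\mu/n}\,\|\va\|_\infty$. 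The expectation term vanishes for odd $s$ (entries of $\mH$ are mean zero and symmetric about zero, so any walk with an edge of odd multiplicity contributes $0$), and for even $s$ a short walk-counting argument shows $\E[\mH^s]$ is, up to combinatorial factors, diagonal with operator norm $\lesssim\sigma^{2}(\sigma^2 n)^{s/2-1}$; in particular $\|\E[\mH^2]\|\le\sigma^2$.

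Finally I would assemble the pieces: dividing each contribution by $|\lambda_l|^s$ and summing over $s$, the lower bound $|\lambda_l|\gtrsim C_2\max\{\sigma\sqrt{n\log^3 n},\,L\log^3 n\}$ from Equation~\eqref{eq:lambdastar-max} forces every geometric ratio to be at most $1/2$ for $C_2$ large (the $c_2^s$ and $(\log n)^2$ factors in Theorem~\ref{thm:yan:main} are harmless, since only $\sigma$, not a logarithm, is raised to the $s$-th power and $c_2$ is absorbed into $C_2$), so the entire sum is dominated by the $s=1$ and $s=2$ slices. After multiplying by $2\kappa\sqrt r$ and using the elementary inequalities $\lambdastar_{\min}/|\lambda_l|\le 2$ and $\lambdastar_{\min}/|\lambda_l|^{2}\le 4/\lambdastar_{\min}$, these two slices reproduce exactly the two branches of the claimed $\max$, namely $\sqrt{\kappa^2 r\log^4 n/n}\,\max\{\mu^{1/2}L\log^3 n\,\|\va\|_\infty/\lambdastar_{\min},\ \sigma\sqrt{n\log^3 n}/\lambdastar_{\min}\}$; a union bound over $j\in[r]$ and $2\le s\le s_0$, together with the $s=1$ Bernstein and the perturbation events, yields the stated failure probability $O(rn^{-20})$. \textbf{The main obstacle} is not the high-order moment estimate itself — that is precisely Theorem~\ref{thm:yan:main}, which we may assume — but the bookkeeping around the expectation terms: one must verify the parity vanishing for odd $s$ and, crucially, the near-diagonal structure of $\E[\mH^{s}]$ for even $s$, since only the improved bound $\|\E[\mH^{s}]\|\lesssim\sigma^{2}(\sigma^2 n)^{s/2-1}$ (rather than the naive $(\sigma^2 n)^{s/2}$) makes the bias contribution $|\lambda_l|^{-s}\E[\va^\top\mH^s\vustar_j]$ fall below the claimed rate for every $s\ge 2$.
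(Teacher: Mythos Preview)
The paper does not actually prove this statement: Theorem~\ref{thm:linear-form-rank-r} is reproduced from \cite{yan2025improved} in Appendix~\ref{sec:improved} purely for reference, alongside Lemma~\ref{lem:neumann}, Theorem~\ref{thm:yan:main}, and Theorem~\ref{thm:eigenvalue:rank-r}, and no proof is supplied. So there is no ``paper's own proof'' to compare against.

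That said, your proposal is the natural argument and is precisely how the paper \emph{uses} the result downstream (see the proof of Lemma~\ref{lem:inf:series:bound} and Step~1 of the proof of Theorem~\ref{thm:eigenvec:l2}): Neumann expansion via Lemma~\ref{lem:neumann}, peel off the $s=0$ slice as the first-order term, split the tail at $s_0\asymp\log n$, kill $s>s_0$ by the crude operator-norm bound $\|\mH\|\le|\lambda_l|/3$, and control $1\le s\le s_0$ via Theorem~\ref{thm:yan:main} for the fluctuation plus a parity/walk-counting argument for the mean. The prefactor $2\kappa\sqrt r$ from $|\lambdastar_j/\lambda_l|\le 2\kappa$ and $\sum_j|\vustart_j\vu_l|\le\sqrt r$ is exactly what produces the $\sqrt{\kappa^2 r}$ in the final bound, and the geometric decay forced by Equation~\eqref{eq:lambdastar-max} makes the low-order slices dominant. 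Your identification of the expectation bookkeeping as the main obstacle is accurate; this is also where the cited paper does the real work.

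One small caveat: you appeal to Theorem~\ref{thm:eigenvalue:rank-r} for $\tfrac12|\lambdastar_l|\le|\lambda_l|\le2|\lambdastar_l|$, but that theorem carries an eigengap hypothesis (Assumption~\ref{assump:eigengap:lower}) that is \emph{not} part of the statement of Theorem~\ref{thm:linear-form-rank-r}. What you actually need is only the coarser fact that all of $|\lambda_1|,\dots,|\lambda_r|$ lie in $[\lambdastar_{\min}/2,\,2\lambdastar_{\max}]$, which follows without any eigengap from $\|\mH\|\ll\lambdastar_{\min}$ together with the standard asymmetric perturbation argument in \cite{chen2021asymmetry} (their Theorem~2 or the Bauer--Fike-type bound for diagonalizable perturbations). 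With that substitution your sketch goes through.
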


\begin{theorem}\label{thm:eigenvalue:rank-r}
    Suppose that the noise parameters defined in Assumption~\ref{assump:noise} satisfy
    \begin{equation*}
    \begin{aligned}
        &\delta^\star_l \geq
        c_3 r^2 \! \max\!\left\{\! \frac{\kappa \mu L\! \log ^5\! n}{n}, \kappa \sigma \log^{7/2} \right\}
    \end{aligned}
    \end{equation*}
    and $\lambdastar_{\min}$ satisfies condition~\eqref{eq:lambdastar-max} for some sufficiently large constants $c_3 > 0$ and $C_2 > 0$. 
    Then given any $l \in [r]$, with probability $1 - O(r^2 n^{-20})$, the eigenvalue $\lambda_l$ is real-valued, and
    \begin{equation}\label{eq:eigenvalue-rank-r}
    \begin{aligned}
        |\lambda_l - \lambdastar_l| \leq
        c_3 r^2\max\Bigg\{\frac{\kappa \mu L \log ^5 n}{n}, \kappa \sigma \log^{7/2} n\Bigg\}.
    \end{aligned}
    \end{equation}
\end{theorem}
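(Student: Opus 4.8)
The plan is to characterize $\lambda_l$ as a root of a scalar equation produced by the rank-$r$ structure of $\mMstar$ and then run a rootfinding / fixed-point argument against the deterministic profile $\lambda\mapsto\lambdastar_l/\lambda$. First I would record the deterministic preliminaries: by the matrix Bernstein bound used elsewhere in the paper, $\|\mH\|\lesssim \sigma\sqrt n+L\log n$ with probability $1-O(n^{-20})$, which under Assumption~\ref{assump:noise} and condition~\eqref{eq:lambdastar-max} is at most $\lambdastar_{\min}/3$. Hence for every $\lambda$ with $|\lambda|\ge\lambdastar_{\min}/2$ the matrix $\lambda\mI_n-\mH$ is invertible and the Neumann expansion $(\lambda\mI_n-\mH)^{-1}=\lambda^{-1}\sum_{s\ge0}\lambda^{-s}\mH^s$ converges geometrically. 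Writing $\mMstar=\mUstar\mLambdastar\mUstart$, the eigenrelation $\mM\vu_l=\lambda_l\vu_l$ forces $\vu_l=(\lambda_l\mI_n-\mH)^{-1}\mUstar\mLambdastar(\mUstart\vu_l)$, so $\mUstart\vu_l$ is a unit-eigenvalue eigenvector of the $r\times r$ matrix
\begin{equation*}
\mF(\lambda):=\mUstart(\lambda\mI_n-\mH)^{-1}\mUstar\mLambdastar=\frac{1}{\lambda}\mLambdastar+\sum_{s\ge1}\frac{1}{\lambda^{s+1}}\,\mUstart\mH^s\mUstar\,\mLambdastar ,
\end{equation*}
i.e. $\lambda_l$ solves $\det(\mI_r-\mF(\lambda))=0$; the converse (each such root is an eigenvalue of $\mM$) follows from the same identity, so it suffices to locate the roots.

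Second, I would control the random correction $\mathbf E(\lambda):=\mF(\lambda)-\lambda^{-1}\mLambdastar=\sum_{s\ge1}\lambda^{-s-1}\mUstart\mH^s\mUstar\mLambdastar$ uniformly over $|\lambda|\in[\lambdastar_{\min}/2,\,2\lambdastar_{\max}]$, splitting the sum into $s=1$, $2\le s\le20\log n$, and $s>20\log n$. The $s=1$ term is handled by a matrix Bernstein bound on $\mUstart\mH\mUstar$, of order $\sigma\sqrt{r\log n}+L(\mu r/n)\log n$. For $2\le s\le 20\log n$, I would apply Theorem~\ref{thm:yan:main} entrywise with $\vx=\vustar_i$, $\vy=\vustar_j$, using $\|\vustar_i\|_\infty\le\sqrt{\mu/n}$, together with separate estimates for the expectations $\E[\vustart_i\mH^s\vustar_j]$; after dividing by $\lambda^{s}$ and using $\lambdastar_{\min}\ge C_2\sigma\sqrt{n\log^3 n}$ and $\lambdastar_{\min}\ge C_2 L\log^3 n$, the per-$s$ contributions are geometrically summable and together with the factors $\|\mLambdastar\|=\lambdastar_{\max}$ and $\lambda^{-1}$ produce a bound of order, up to the powers of $\kappa$ displayed in~\eqref{eq:eigenvalue-rank-r}, $\lambdastar_{\max}^{-1}r^2\max\{\kappa\mu L\log^5 n/n,\ \kappa\sigma\log^{7/2}n\}$. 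The tail $s>20\log n$ is dominated by $\sum_{s>20\log n}(\|\mH\|/|\lambda|)^s\lesssim 3^{-20\log n}$, which is negligible. A union bound over $i,j\in[r]$ plus a covering-number argument over $\lambda$ (using Lipschitz continuity of $\mF$ in $\lambda$) upgrades these to hold uniformly with probability $1-O(r^2n^{-20})$, giving $\sup_\lambda\|\mathbf E(\lambda)\|\lesssim \lambdastar_{\max}^{-1}\epsilon$ where $\epsilon$ is the right-hand side of~\eqref{eq:eigenvalue-rank-r}.

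Third, I would run the rootfinding. Define $h_l(\lambda)$ to be the $l$-th eigenvalue of $\mF(\lambda)$, ordered and signed consistently with $\mLambdastar$. Since $\mF(\lambda)=\lambda^{-1}\mLambdastar+\mathbf E(\lambda)$ with $\|\mathbf E(\lambda)\|$ much smaller than $\delta^\star_l/\lambdastar_{\max}$ by the eigengap hypothesis, Weyl's inequality and a Bauer--Fike/Gershgorin-type argument show that for $\lambda$ in a neighbourhood of $\lambdastar_l$ the $l$-th eigenvalue of $\mF(\lambda)$ is real, simple, well separated from the other branches, and equals $\lambdastar_l/\lambda+O(\|\mathbf E(\lambda)\|)$; hence $h_l$ is well defined, $C^1$, and strictly decreasing there with derivative $\approx-1/\lambdastar_l$. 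Because $h_l(\lambdastar_l)=1+O(\|\mathbf E(\lambdastar_l)\|)$, the intermediate value theorem produces a root $\lambda_l$ of $h_l(\lambda)=1$ with $|\lambda_l-\lambdastar_l|\lesssim\lambdastar_l\,\sup_\lambda\|\mathbf E(\lambda)\|\lesssim\epsilon$, and the separation of the $l$-th branch identifies this root as the $l$-th eigenvalue of $\mM$ and shows it is real-valued; absorbing constants yields~\eqref{eq:eigenvalue-rank-r}.

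The main obstacle is the second step: turning the entrywise high-moment bounds of Theorem~\ref{thm:yan:main} into a clean, uniform-in-$\lambda$ operator-norm estimate for $\mathbf E(\lambda)$, and in particular pinning down the expectation terms $\E[\vustart_i\mH^s\vustar_j]$ (nonzero for even $s$ because of paired forward/backward edges in the asymmetric walk expansion — this is the source of the $\mu L\log^5 n/n$ contribution). A secondary subtlety is that $\mF$ depends on $\lambda$, so the eigenvalue-perturbation analysis must be carried along the whole branch $h_l(\cdot)$ rather than at a single point, which is exactly where the eigengap condition $\delta^\star_l\ge c_3r^2\max\{\kappa\mu L\log^5 n/n,\ \kappa\sigma\log^{7/2}n\}$ is consumed: it keeps the $l$-th branch simple, real, and isolated throughout the relevant $\lambda$-window.
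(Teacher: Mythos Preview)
The paper does not contain a proof of Theorem~\ref{thm:eigenvalue:rank-r}. The theorem appears in Appendix~\ref{sec:improved}, which is explicitly titled ``Technical Results in \cite{yan2025improved}'' and opens with ``The following are results related to \cite{yan2025improved}, which we reproduce here for ease of reference.'' Like the neighbouring Theorems~\ref{thm:yan:main} and~\ref{thm:linear-form-rank-r}, this result is quoted from \cite{yan2025improved} without proof; there is therefore nothing in the present paper to compare your argument against.

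That said, your outline is the standard route for this class of eigenvalue perturbation statements and matches the style of argument in \cite{chen2021asymmetry} and \cite{cheng2021tackling} on which \cite{yan2025improved} builds: reduce to the secular equation $\det(\mI_r-\mF(\lambda))=0$ via the resolvent identity, control $\mF(\lambda)-\lambda^{-1}\mLambdastar$ using the polynomial bounds of Theorem~\ref{thm:yan:main}, and then run a branch-by-branch rootfinding argument using the eigengap condition to keep branches separated. You have also correctly identified the two delicate points---handling the nonzero expectations $\E[\vustart_i\mH^s\vustar_j]$ for even $s$, and carrying the eigenvalue perturbation along the whole $\lambda$-window rather than at a single point. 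If you want to verify your argument in detail, the place to look is \cite{yan2025improved} rather than the present paper.
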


\section{Technical Bounds and Auxiliary Lemmas}
\label{sec:tech}

Here we collect auxiliary technical results related to concentration inequalities and linear algebra, which are used in proving our main results.

\subsection{Probability Inequalities}

\begin{lemma} \label{lem:A-circ-W-op}
    For any symmetric matrix $\mA \in \R^{n\times n}$ and $\mW$ satisfying Assumption~\ref{assump:noise}, it holds with probability at least $1 - O(n^{-40})$ that
    \begin{equation*}
    \left\|\mA \circ \mW\right\| \lesssim \sigma\left\|\mA\right\|_{2, \infty}+ L \left\|\mA\right\|_{\infty} \sqrt{\log n} .
    \end{equation*}
\end{lemma}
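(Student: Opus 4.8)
The plan is to regard $\mA \circ \mW$ as a symmetric random matrix with independent, mean-zero entries on and above the diagonal (independence is inherited from Assumption~\ref{assump:noise}, and $\E[A_{ij}W_{ij}] = 0$ from the symmetry-about-zero condition), and to bound its operator norm by a \emph{variance-sensitive} spectral-norm inequality rather than by a crude matrix Bernstein bound. Concretely, I would write
\begin{equation*}
\mA \circ \mW = \sum_{1 \le i < j \le n} A_{ij} W_{ij}\left(\ve_i\ve_j^\top + \ve_j\ve_i^\top\right) + \sum_{i=1}^n A_{ii} W_{ii}\,\ve_i\ve_i^\top,
\end{equation*}
an independent sum of symmetric matrices of rank at most two. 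A short computation (using $(\ve_i\ve_j^\top + \ve_j\ve_i^\top)^2 = \ve_i\ve_i^\top + \ve_j\ve_j^\top$) shows $\bigl\|\sum_{i \le j} \E(A_{ij}W_{ij})^2(\ve_i\ve_i^\top + \ve_j\ve_j^\top)\bigr\| = \max_k \sum_j A_{kj}^2 \sigma_{kj}^2 \le \sigma^2\|\mA\|_{2,\infty}^2$, so the relevant ``row-variance'' parameter is $\sigma_\star := \sigma\|\mA\|_{2,\infty}$, while the almost-sure entrywise bound is $b_\star := \max_{i,j}|A_{ij}||W_{ij}| \le L\|\mA\|_\infty$ by Assumption~\ref{assump:noise}.

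The key estimate is the expectation bound $\E\|\mA\circ\mW\| \lesssim \sigma_\star + b_\star\sqrt{\log n}$, which is the Bandeira--van Handel-type non-asymptotic spectral-norm inequality for matrices with independent entries. I would obtain it either by the moment (trace) method---controlling $\E\tr\bigl((\mA\circ\mW)^{2p}\bigr)$ for $p \asymp \log n$ via a combinatorial accounting of closed walks that separates the contribution of the variance profile from that of the uniform entry bound---or by invoking such a bound as a black box (see, e.g., the spectral-norm estimates in \cite{chen2021spectral,vershynin2018HDP}). Substituting the expressions for $\sigma_\star$ and $b_\star$ yields $\E\|\mA\circ\mW\| \lesssim \sigma\|\mA\|_{2,\infty} + L\|\mA\|_\infty\sqrt{\log n}$.

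To upgrade this to a bound holding with probability at least $1 - O(n^{-40})$, I would apply Talagrand's concentration inequality for convex Lipschitz functions of independent bounded random variables. The map $(W_{ij})_{i\le j} \mapsto \|\mA\circ\mW\|$ is convex, being a supremum of linear functionals $\vu \mapsto \langle \mA\circ\mW, \vu\vu^\top\rangle$, and it is Lipschitz with constant a constant multiple of $\|\mA\|_\infty$ with respect to the Euclidean metric on the coordinates, since $|\,\|\mA\circ\mW\| - \|\mA\circ\mW'\|\,| \le \|\mA\circ(\mW-\mW')\| \le \|\mA\circ(\mW-\mW')\|_{\F} \le \|\mA\|_\infty\|\mW-\mW'\|_{\F}$; moreover each coordinate lies in $[-L, L]$ by Assumption~\ref{assump:noise}. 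Talagrand's inequality then gives $\Pr\bigl[\,|\,\|\mA\circ\mW\| - \E\|\mA\circ\mW\|\,| \ge t\,\bigr] \le C\exp\bigl(-c\,t^2/(\|\mA\|_\infty^2 L^2)\bigr)$, and taking $t \asymp \|\mA\|_\infty L\sqrt{\log n}$ with a sufficiently large constant makes the failure probability $O(n^{-40})$ while adding a term of exactly the same order as the second term in the claimed bound. Combining the expectation bound with this deviation bound finishes the argument.

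The main obstacle is the expectation estimate with the correct, variance-optimal constant: a direct application of the matrix Bernstein inequality (Theorem~\ref{thm:matrix:bernstein}) would only deliver $\sigma\|\mA\|_{2,\infty}\sqrt{\log n} + L\|\mA\|_\infty\log n$, which is a factor $\sqrt{\log n}$ worse than the stated bound in the leading (variance) term; hence the sharper inequality---via the trace-method combinatorics or a careful citation---is the crux. The parameter identifications and the Talagrand concentration upgrade are routine.
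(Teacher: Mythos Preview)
Your proposal is correct and is essentially the same approach as the paper's, just unfolded into its constituent steps. The paper applies Theorem~\ref{thm:bernstein} (Theorem~3.4 of \cite{chen2021spectral}) directly as a one-shot bound: that theorem already packages the Bandeira--van Handel expectation estimate $\E\|\mX\| \lesssim \sqrt{\nu}$ together with a sub-Gaussian tail $\Pr\{\|\mX\| \ge 4\sqrt{\nu} + t\} \le n\exp(-t^2/(cB^2))$, so a single invocation with $\nu \le \sigma^2\|\mA\|_{2,\infty}^2$, $B = L\|\mA\|_\infty$, and $t \asymp L\|\mA\|_\infty\sqrt{\log n}$ yields the lemma in two lines. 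Note that you correctly identified Theorem~\ref{thm:matrix:bernstein} as too weak (it would lose a $\sqrt{\log n}$ on the variance term), but you overlooked Theorem~\ref{thm:bernstein}, which is precisely the sharp variance-sensitive bound you were proposing to establish or cite; your two-step plan (trace-method expectation bound plus Talagrand concentration) is exactly how that theorem is proved.
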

\begin{proof}

We use the matrix Bernstein inequality stated in Theorem~\ref{thm:bernstein}. 
Since
\begin{equation*}
    | A_{ij} W_{ij} | \leq \|\mA\|_{\infty} L \quad \text{and} \quad \max_{i} \sum_{j} \E (A_{ij} W_{ij})^2 \leq \sigma^2 \|\mA\|_{2,\infty}^2, 
\end{equation*}
we have
\begin{equation*}
    \Pr\left\{ \left\|\mA \circ \mW \right\| \geq 4 \sigma \|\mA\|_{2,\infty} + t \right\} \lesssim n \exp \left(-\frac{c t^2}{L^2 \|\mA\|_{\infty}^2}\right).
\end{equation*}
Taking $t = C L\|\mA\|_{2,\infty} \sqrt{\log n}$ for some sufficiently large constant $C$ completes the proof. 
\end{proof}

\begin{lemma} [Example 2.11 in \cite{wainwright2019high}]
\label{lem:chi-sq-concen}

For $Z_1, Z_2, \dots, Z_k \iid \calN(0,1)$ and any $t \in (0,1)$,
\begin{equation*}
    \Pr\left(\left|\frac{1}{k} \sum_{j=1}^k Z_j^2 - 1\right| \geq t\right) \leq 2 e^{-kt^2/8}.
\end{equation*}
For $n$ and $k_n$ such that $k_n^{-1} \log n = o(1)$, taking $t = 7\sqrt{k_n/\log n }$ yields that for sufficiently large $n$, with probability at least $1 - O(n^{-6})$,
\begin{equation*}
    \left|\sum_{j=1}^{k_n} Z_{k_n}^2 - k_n\right|
    \leq 7\sqrt{k_n \log n} .
\end{equation*}
\end{lemma}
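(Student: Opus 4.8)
<br>

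The final statement is Lemma~\ref{lem:chi-sq-concen}, a standard chi-squared concentration bound together with a polynomial-probability corollary obtained by a specific choice of $t$. The plan is to cite the sub-exponential tail bound for chi-squared random variables and then instantiate it.

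First, I would establish the basic two-sided deviation inequality $\Pr\bigl(|k^{-1}\sum_{j=1}^k Z_j^2 - 1| \geq t\bigr) \leq 2e^{-kt^2/8}$ for $t \in (0,1)$. This is a textbook fact (Example 2.11 in \cite{wainwright2019high}): each $Z_j^2 - 1$ is a centered sub-exponential random variable, and the sum concentrates via a Bernstein-type argument; in the small-deviation regime $t < 1$ the sub-exponential tail reduces to the sub-Gaussian form $2\exp(-kt^2/8)$. Since the lemma explicitly attributes this to Example 2.11 of \cite{wainwright2019high}, I would simply invoke that reference rather than reprove it.

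Second, for the corollary, set $k = k_n$ and $t = 7\sqrt{(\log n)/k_n}$. Under the assumption $k_n^{-1}\log n = o(1)$, we have $t = o(1)$, so for $n$ large enough $t \in (0,1)$ and the basic inequality applies. Plugging in, $kt^2/8 = k_n \cdot 49 (\log n)/k_n / 8 = (49/8)\log n \geq 6\log n$, hence the failure probability is at most $2e^{-6\log n} = 2n^{-6} = O(n^{-6})$. Multiplying the event $|k_n^{-1}\sum Z_j^2 - 1| \leq t$ through by $k_n$ gives $|\sum_{j=1}^{k_n} Z_j^2 - k_n| \leq k_n t = 7\sqrt{k_n \log n}$, which is exactly the claimed bound.

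There is no genuine obstacle here — the only mild care needed is verifying that $t \in (0,1)$ for all sufficiently large $n$, which follows immediately from the hypothesis $k_n^{-1}\log n = o(1)$, and checking the arithmetic $49/8 > 6$ so that the probability is indeed $O(n^{-6})$. Everything else is a direct substitution into a cited result.
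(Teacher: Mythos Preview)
Your proposal is correct and matches the paper's treatment: the paper gives no proof of this lemma, simply attributing the first inequality to Example~2.11 of \cite{wainwright2019high} and stating the corollary without further argument. You have correctly supplied the missing substitution, including catching that the stated choice $t = 7\sqrt{k_n/\log n}$ in the lemma must be a typo for $t = 7\sqrt{(\log n)/k_n}$ (otherwise $t \to \infty$ under the hypothesis $k_n^{-1}\log n = o(1)$), and your arithmetic $kt^2/8 = 49(\log n)/8 > 6\log n$ is exactly what is needed.
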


\begin{lemma}[Bernstein's inequality]
\label{lem:bern-ineq}
    Let $X_1, X_2, \dots, X_k$ be independent zero-mean random variables such that $|X_i| \leq L$ and $\E X_i^2 \leq \sigma^2$ for all $i \in [k]$.
Then
    \begin{equation*}
        \Pr\left\{\left|\sum_{i=1}^k X_i\right| \geq t\right\} \leq 2 \exp\left(-\frac{t^2/2}{k\sigma^2 + L t / 3}\right) \leq 2 \exp \left(-c \min \left\{ \frac{t^2}{k\sigma^2}, \frac{t}{L}\right\}\right). 
    \end{equation*}
    In other words, by choosing a suitable and universal constant $C > 0$, it holds with probability at least $1 - O(n^{-40})$, that
    \begin{equation*}
        \left|\sum_{i=1}^k \left(X_i - \E X_i\right)\right| \lesssim \max \left\{\sigma\sqrt{k\log n}, L\log n\right\} .
    \end{equation*}
\end{lemma}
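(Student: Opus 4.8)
The plan is to prove the tail bound by the standard Chernoff–Cram\'er argument, so the whole statement reduces to controlling the moment generating function of a single bounded, centered random variable. First I would record the elementary MGF estimate: if $X$ is a random variable with $\E X = 0$, $|X| \le L$ almost surely, and $\E X^2 \le \sigma^2$, then for every $\lambda$ with $0 \le \lambda < 3/L$ one has
\begin{equation*}
\E e^{\lambda X} \le \exp\!\left( \frac{\lambda^2 \sigma^2 / 2}{1 - \lambda L / 3} \right).
\end{equation*}
This follows by writing $e^{\lambda X} = 1 + \lambda X + \sum_{j \ge 2} \lambda^j X^j / j!$, taking expectations (the linear term vanishes), using $\E |X|^j \le \E X^2 \cdot L^{j-2} \le \sigma^2 L^{j-2}$ for $j \ge 2$, summing the resulting geometric-type series $\sum_{j \ge 2} (\lambda L)^{j-2}/j! \le \tfrac12 \sum_{j \ge 0} (\lambda L / 3)^j = \tfrac12 (1 - \lambda L/3)^{-1}$, and finally invoking $1 + x \le e^x$. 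This is the one genuinely computational ingredient, but it is entirely routine.

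Next I would combine this across the independent summands. By independence, $\E \exp(\lambda \sum_{i=1}^k X_i) = \prod_i \E e^{\lambda X_i} \le \exp\!\big( \tfrac{\lambda^2 k \sigma^2 / 2}{1 - \lambda L/3}\big)$ for $0 \le \lambda < 3/L$. Markov's inequality then gives, for any $t > 0$,
\begin{equation*}
\Pr\!\left[ \sum_{i=1}^k X_i \ge t \right] \le \exp\!\left( \frac{\lambda^2 k \sigma^2 / 2}{1 - \lambda L / 3} - \lambda t \right).
\end{equation*}
Choosing the (near-)optimal value $\lambda = t / (k\sigma^2 + L t / 3)$, which indeed lies in $[0, 3/L)$, makes the exponent equal to $-\tfrac{t^2/2}{k\sigma^2 + L t/3}$ after simplification. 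Applying the same bound to $-X_1, \dots, -X_k$ (which satisfy the identical hypotheses) and taking a union bound yields the two-sided estimate with the extra factor of $2$.

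The remaining two assertions are immediate bookkeeping. The ``min'' form follows since $k\sigma^2 + L t/3 \le 2 \max\{k\sigma^2, L t/3\}$, so $\tfrac{t^2/2}{k\sigma^2 + Lt/3} \ge \tfrac14 \min\{ t^2/(k\sigma^2),\, 3t/(2L) \} \ge c \min\{ t^2/(k\sigma^2),\, t/L\}$ for a universal $c > 0$ (after renaming constants). For the high-probability consequence, take $t = C \max\{\sigma \sqrt{k \log n},\, L \log n\}$ with $C$ large enough that $c \min\{t^2/(k\sigma^2), t/L\} \ge 41 \log n$; then $\Pr[|\sum_i X_i| \ge t] \le 2 \exp(-41 \log n) = O(n^{-40})$, and since $\E X_i = 0$ we may write $\sum_i X_i = \sum_i (X_i - \E X_i)$. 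I do not anticipate any real obstacle here; the only place requiring care is keeping track of the constant $3$ in the MGF bound so that the denominator $k\sigma^2 + Lt/3$ comes out exactly as stated.
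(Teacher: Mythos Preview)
Your proof is correct and complete; it is precisely the standard Chernoff--Cram\'er argument for Bernstein's inequality. The paper itself does not supply a proof at all---it simply cites Theorem~2.8.4 in Vershynin's \emph{High-Dimensional Probability}---so your write-up is strictly more informative than what appears in the paper, and the argument you give is essentially the one found in that reference.
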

\begin{proof}
    See Theorem 2.8.4 in \cite{vershynin2018HDP}. 
\end{proof}

\begin{theorem}[Theorem 3.4 in \cite{chen2021spectral}] \label{thm:bernstein}
Consider a symmetric random matrix $\mX=\left[X_{i, j}\right]_{1 \leq i, j \leq n}$ in $\R^{n \times n}$, whose entries are independently generated and obey 
\begin{equation*}
    \E \left[X_{i, j}\right]=0, 
\quad \text { and } \quad
\left|X_{i, j}\right| \leq B, \quad 1 \leq i, j \leq n.
\end{equation*}
Define
    $$
    \nu:=\max _i \sum_j \E \left[X_{i, j}^2\right] .
    $$
    Then there exists some universal constant $c>0$ such that for any $t \geq 0$,
    \begin{equation*}
        \Pr\{\|\mX\| \geq 4 \sqrt{\nu}+t\} \leq n \exp \left(-\frac{t^2}{c B^2}\right).
    \end{equation*}
\end{theorem}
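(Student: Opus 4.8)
The plan is to reduce Theorem~\ref{thm:bernstein} to the matrix Bernstein inequality for sums of independent, bounded, Hermitian matrices (the matrix Laplace-transform method together with Lieb's concavity theorem), and then specialize the resulting generic tail bound to the stated form. There are three stages: a decomposition of $\mX$ into independent rank-$\le 2$ pieces, the computation of the associated variance proxy, and the simplification of constants.

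\textbf{Decomposition.} First I would write $\mX = \sum_{1 \le i \le j \le n} \mX^{(ij)}$, where $\mX^{(ij)} := X_{ij}\left(\ve_i\ve_j^\top + \ve_j\ve_i^\top\right)$ for $i < j$ and $\mX^{(ii)} := X_{ii}\ve_i\ve_i^\top$. Because the entries $(X_{ij})_{1\le i\le j\le n}$ on and above the diagonal are independent, the summands $\{\mX^{(ij)}\}_{i \le j}$ are independent, Hermitian, and mean-zero. Moreover $\|\mX^{(ij)}\| \le B$ almost surely, since $|X_{ij}| \le B$ and the matrix $\ve_i\ve_j^\top + \ve_j\ve_i^\top$ has eigenvalues in $\{-1,0,1\}$. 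Next, using the identity $(\ve_i\ve_j^\top + \ve_j\ve_i^\top)^2 = \ve_i\ve_i^\top + \ve_j\ve_j^\top$ together with the symmetry $X_{ij} = X_{ji}$, one computes $\sum_{i \le j}\E\!\left[(\mX^{(ij)})^2\right] = \diag\left(\sum_{j}\E X_{1j}^2,\ \ldots,\ \sum_{j}\E X_{nj}^2\right)$, a diagonal matrix whose operator norm is precisely $\nu = \max_i\sum_j \E X_{ij}^2$. Thus the matrix variance statistic of the decomposition equals $\nu$.

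\textbf{Matrix Bernstein and simplification.} I would then invoke the matrix Bernstein inequality for this decomposition: for every $s \ge 0$, $\Pr\{\|\mX\| \ge s\} \le 2n\exp\left(-\frac{s^2/2}{\nu + Bs/3}\right)$. This bound itself follows from the matrix Chernoff argument $\Pr\{\lambda_{\max}(\mX) \ge s\} \le \inf_{\theta > 0} e^{-\theta s}\,\E\tr\, e^{\theta\mX}$, Lieb's concavity theorem to split the matrix moment generating function across the independent summands, a scalar bound on each matrix cumulant generating function, and a union bound over $\pm\mX$ to pass from $\lambda_{\max}$ to $\|\cdot\|$. Finally I would substitute $s = 4\sqrt{\nu} + t$: since $s^2 \ge 16\nu + t^2$, the offset $4\sqrt{\nu}$ absorbs the order-one contribution coming from the $\nu$ term in the denominator, and, treating the sub-Gaussian regime $\nu \ge Bs/3$ and the sub-exponential regime $\nu < Bs/3$ separately, one is left with a tail of the form $n\exp\left(-t^2/(cB^2)\right)$ for a suitable universal constant $c > 0$.

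The substantive ingredient is entirely contained in the matrix Bernstein inequality, which I would treat as a standard black box; the parts that are specific to this statement — the rank-$\le 2$ decomposition, the variance computation, and the constant bookkeeping — are routine. The only place demanding genuine care is the final simplification after the substitution $s = 4\sqrt{\nu}+t$: one must check that the Bernstein exponent can be lower-bounded by $t^2/(cB^2)$, and this is exactly where the precise choice of the offset $4\sqrt{\nu}$ (rather than an arbitrary multiple of $\sqrt{\nu}$) does the work of cancelling the residual constants.
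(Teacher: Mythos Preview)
Your decomposition and variance computation are correct, and the matrix Bernstein inequality does yield
\[
\Pr\{\|\mX\| \ge s\} \le 2n\exp\!\left(-\tfrac{s^2/2}{\nu + Bs/3}\right).
\]
The gap is in the last paragraph: after substituting $s = 4\sqrt{\nu}+t$, the exponent \emph{cannot} be lower-bounded by $t^2/(cB^2)$ for a universal $c$. In the sub-Gaussian regime $\nu \ge Bs/3$ the exponent is of order $s^2/\nu \asymp 1 + t^2/\nu$, which is far smaller than $t^2/B^2$ whenever $\nu \gg B^2$ (the generic situation, since $\nu$ can be as large as $nB^2$); for instance, at $t=\sqrt{\nu}$ with $\nu=nB^2$ the Bernstein exponent is $O(1)$ while the claimed bound requires an exponent of order $n$. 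In the sub-exponential regime the exponent is of order $t/B$, again much smaller than $t^2/B^2$ once $t \gg B$. So neither branch of your case split delivers the stated sub-Gaussian tail, and the offset $4\sqrt{\nu}$ does not repair this: it only contributes an additive $O(1)$ to the exponent.

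The statement is a cited result, and the argument in the reference does not go through matrix Bernstein. It combines two ingredients: (i) a bound $\E\|\mX\| \le C\sqrt{\nu}$, from the moment method or a Bandeira--van Handel-type estimate, and (ii) Talagrand's convex concentration inequality, using that $\|\mX\|$ is a convex function of the independent upper-triangular entries which is $O(1)$-Lipschitz in $\ell_2$ and each entry lies in $[-B,B]$. Talagrand then gives $\Pr\{\|\mX\| \ge \E\|\mX\| + t\} \le C'\exp(-t^2/(c'B^2))$, which is exactly the dimension-free sub-Gaussian fluctuation that matrix Bernstein cannot produce. The Bernstein route would give only the weaker mixed tail $n\exp\bigl(-c\min\{t^2/\nu,\,t/B\}\bigr)$.
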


\begin{theorem}[Corollary 3.3 in \cite{chen2021spectral}]\label{thm:matrix:bernstein}
    Let $\{\mX_i\}_{1\leq i \leq m}$ be a set of independent real random matrices of dimension $n_1 \times n_2$. Suppose that
\begin{equation*}
\E\left[\mX_i\right]=\vo, \quad \text { and } \quad\left\|\mX_i\right\| \leq L_0, \quad \text { for all } i
\end{equation*}
and set $n:=\max \left\{n_1, n_2\right\}$.
Define the matrix variance statistic $v$ as 
\begin{equation*} \begin{aligned}
& v:=\max \left\{\left\|\sum_{i=1}^m \E\left[\left(\mX_i-\E\left[\mX_i\right]\right)\left(\boldsymbol{X}_i-\mathbb{E}\left[\boldsymbol{X}_i\right]\right)^{\top}\right]\right\|,\right. \\
&\qquad\qquad~~~~\left.\left\|\sum_{i=1}^m \mathbb{E}\left[\left(\boldsymbol{X}_i-\mathbb{E}\left[\boldsymbol{X}_i\right]\right)^{\top}\left(\boldsymbol{X}_i-\mathbb{E}\left[\boldsymbol{X}_i\right]\right)\right]\right\|\right\}.
\end{aligned} \end{equation*}
For any $a \geq 2$, with probability exceeding $1-2 n^{-a+1}$ one has
\begin{equation*}
\left\|\sum_{i=1}^m \mX_i\right\| 
\leq \sqrt{2 a v \log n}+\frac{2 a}{3} L_0 \log n .
\end{equation*}
\end{theorem}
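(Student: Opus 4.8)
The statement is the (rectangular) matrix Bernstein inequality, which is classical; the plan is to reproduce the standard matrix Laplace-transform derivation, and indeed in the paper itself one would simply cite Corollary 3.3 of \cite{chen2021spectral}. First I would reduce to the Hermitian case via the self-adjoint dilation: set $\widetilde{\mX}_i = \bigl(\begin{smallmatrix} \mathbf{0} & \mX_i \\ \mX_i^\top & \mathbf{0} \end{smallmatrix}\bigr) \in \R^{N \times N}$ with $N := n_1 + n_2 \le 2n$. The $\widetilde{\mX}_i$ are independent, zero-mean, Hermitian, and satisfy $\|\widetilde{\mX}_i\| = \|\mX_i\| \le L_0$; moreover $\widetilde{\mX}_i^2 = \bigl(\begin{smallmatrix} \mX_i \mX_i^\top & \mathbf{0} \\ \mathbf{0} & \mX_i^\top \mX_i \end{smallmatrix}\bigr)$, so $\bigl\|\sum_i \E \widetilde{\mX}_i^2\bigr\|$ is exactly the two-sided variance statistic $v$ in the hypothesis. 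Since the block structure forces the eigenvalues of $\sum_i \widetilde{\mX}_i$ to be $\pm \sigma_k(\sum_i \mX_i)$, we have $\lambda_{\max}(\sum_i \widetilde{\mX}_i) = \bigl\|\sum_i \mX_i\bigr\|$, so it suffices to bound the largest eigenvalue of a sum of $N$ independent bounded zero-mean Hermitian matrices.

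Next I would invoke the matrix moment-generating-function machinery. The key ingredient is the scalar-to-matrix transfer bound: for a zero-mean Hermitian $\mY$ with $\|\mY\| \le L_0$ and $0 < \theta < 3/L_0$, one has $\log \E\, e^{\theta \mY} \preceq \frac{\theta^2/2}{1 - L_0\theta/3}\, \E \mY^2$, which comes from the scalar inequality $e^x - x - 1 \le \frac{x^2/2}{1 - |x|/3}$ for $|x| < 3$ together with operator monotonicity applied to $\mY$. Chaining the independent summands through Lieb's concavity theorem gives $\E\, \tr \exp\!\bigl(\theta \sum_i \widetilde{\mX}_i\bigr) \le N \exp\!\bigl(\frac{\theta^2 v/2}{1 - L_0\theta/3}\bigr)$. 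A Markov bound on $\lambda_{\max}$ then yields $\Pr\bigl(\|\sum_i \mX_i\| \ge t\bigr) \le N \exp\!\bigl(-\theta t + \frac{\theta^2 v/2}{1 - L_0\theta/3}\bigr)$, and optimizing over $\theta \in (0, 3/L_0)$ produces the familiar closed form $\Pr\bigl(\|\sum_i \mX_i\| \ge t\bigr) \le N \exp\!\bigl(-\frac{t^2/2}{v + L_0 t/3}\bigr)$.

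Finally I would invert this tail bound at the confidence level $2n^{-a+1}$. Using $N \le 2n$, it suffices to choose $t$ so that $\frac{t^2/2}{v + L_0 t/3} \ge a\log n$, i.e. $t^2 - \frac{2a}{3} L_0 \log n\, t - 2a v \log n \ge 0$. The positive root of the associated quadratic is at most $\sqrt{2av\log n} + \frac{2a}{3} L_0 \log n$ by $\sqrt{u+w} \le \sqrt{u} + \sqrt{w}$, so this choice of $t$ works and $\Pr(\cdot) \le 2n \cdot n^{-a} = 2n^{-a+1}$, which is the claim.

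The main obstacle is not any of the scalar bookkeeping but the two genuinely non-elementary operator-theoretic steps: the transfer inequality $\log \E\, e^{\theta \mY} \preceq g(\theta)\, \E \mY^2$ (requiring operator convexity/monotonicity) and the use of Lieb's concavity theorem to aggregate the log-mgf traces across independent summands; the dilation and the quadratic inversion are straightforward by comparison. Because all of this is standard (Ahlswede--Winter, Oliveira, Tropp), I would keep the in-paper treatment to a citation of \cite{chen2021spectral} and regard the above as the route to take only if a self-contained proof were demanded.
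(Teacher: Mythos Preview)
Your proposal is correct, and in fact the paper does not prove this statement at all: it is merely quoted as Corollary~3.3 of \cite{chen2021spectral} in the auxiliary-lemmas appendix, with no proof supplied. You anticipated this, and your sketch of the standard Tropp-style argument (Hermitian dilation, sub-Bernstein log-mgf bound, aggregation via Lieb's concavity, quadratic inversion of the tail) is the canonical route and is carried out accurately; the only slip is writing ``a sum of $N$ independent \ldots\ matrices'' where you mean $m$ matrices of dimension $N\times N$, which does not affect the argument.
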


\begin{lemma}\label{lem:stoc:order} 
For two random variables $A$ and $B$, we say that $A$ is stochastically dominated by $B$ ($A \preceq B$) if for all $t \in \R$, 
$$
\Pr\{B \geq t\} \geq \Pr\{A \geq t\}.
$$
If $u: \R^n \rightarrow \R$ is a function increasing in each variable and $A_i$ and $B_i$ are independent sets of random variables with $A_i \preceq B_i$ for each $i$, 
then $u\left(A_1, \ldots, A_n\right) \preceq u\left(B_1, \ldots, B_n\right)$ and in particular $\sum_{i=1}^n A_i \preceq \sum_{i=1}^n B_i$.
\end{lemma}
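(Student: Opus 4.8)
The plan is to prove the lemma by a quantile coupling, which turns the multivariate stochastic-dominance statement into an almost-sure pointwise inequality after passing to an auxiliary probability space. The first step is to record the one-dimensional characterization: for each $i$, writing $F_X^{-1}(u) := \inf\{t \in \R : \Pr(X \le t) \ge u\}$ for the quantile function, the relation $A_i \preceq B_i$ is equivalent to $F_{A_i}^{-1}(u) \le F_{B_i}^{-1}(u)$ for every $u \in (0,1)$. Indeed, $A_i \preceq B_i$ says $\Pr(B_i \ge t) \ge \Pr(A_i \ge t)$ for all $t$, i.e.\ $F_{B_i}(t) \le F_{A_i}(t)$ pointwise, and since $\{t : F_{B_i}(t) \ge u\} \subseteq \{t : F_{A_i}(t) \ge u\}$, taking infima reverses the inequality to give $F_{A_i}^{-1}(u) \le F_{B_i}^{-1}(u)$.

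The second step is to construct the coupling. Let $U_1, \dots, U_n$ be independent $\mathrm{Uniform}(0,1)$ random variables on a common probability space, and set $A_i' := F_{A_i}^{-1}(U_i)$ and $B_i' := F_{B_i}^{-1}(U_i)$ for $i \in [n]$. By the quantile transform, $A_i' \eqdist A_i$ and $B_i' \eqdist B_i$; because the $U_i$ are independent, the coordinates of $(A_1', \dots, A_n')$ are mutually independent, so $(A_1', \dots, A_n') \eqdist (A_1, \dots, A_n)$, and likewise $(B_1', \dots, B_n') \eqdist (B_1, \dots, B_n)$. Moreover, by the comparison of quantile functions from the first step, $A_i' = F_{A_i}^{-1}(U_i) \le F_{B_i}^{-1}(U_i) = B_i'$ almost surely, simultaneously over all $i \in [n]$.

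The third step applies the coordinatewise monotonicity of $u$ on the almost-sure event $\{A_i' \le B_i' \text{ for all } i \in [n]\}$: on this event $u(A_1', \dots, A_n') \le u(B_1', \dots, B_n')$, hence for every $t \in \R$,
\[
\Pr\{u(B_1, \dots, B_n) \ge t\} = \Pr\{u(B_1', \dots, B_n') \ge t\} \ge \Pr\{u(A_1', \dots, A_n') \ge t\} = \Pr\{u(A_1, \dots, A_n) \ge t\},
\]
which is precisely $u(A_1, \dots, A_n) \preceq u(B_1, \dots, B_n)$. The claim about sums is the special case $u(x_1, \dots, x_n) = \sum_{i=1}^n x_i$, which is increasing in each coordinate. (The displayed probabilities are well-defined because $u$ is measurable, which is automatic in our applications, where $u$ is continuous.)

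There is no real obstacle here — the lemma is a classical consequence of Strassen-type coupling, and the quantile transform driven by one shared uniform per coordinate delivers both the within-family independence and the coordinatewise domination at once. The only points meriting care are stating the equivalence $A_i \preceq B_i \iff F_{A_i}^{-1} \le F_{B_i}^{-1}$ with the correct conventions at the endpoints of $(0,1)$, and keeping track of measurability of $u$; one could equivalently invoke Strassen's theorem (as already used in the proof of Corollary~\ref{cor:sdp-more-general}) to produce the coordinatewise-monotone coupling directly.
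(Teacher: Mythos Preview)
Your proof is correct: the quantile-coupling argument is the standard self-contained route to this result, and each of your three steps is sound. The paper itself does not give an argument but simply cites Theorem~3.3.11 in \cite{muller2002comparison}, so your proposal supplies strictly more detail than the paper. One very minor quibble: in Step~1 you pass from $\Pr(B_i \ge t) \ge \Pr(A_i \ge t)$ directly to $F_{B_i}(t) \le F_{A_i}(t)$, which strictly speaking involves left limits of the CDFs; the equivalence does hold for all $t$ by right-continuity, but a one-line remark there would make the argument airtight.
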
 
\begin{proof}
    See Theorem 3.3.11 in \cite{muller2002comparison}. 
\end{proof} 

\subsection{Linear Algebra Lemmas}

\begin{lemma}\label{lem:IS-invertible}
If $\mS \in \R^{n\times n}$ is a row stochastic matrix and $\mS^2 > 0$ holds entrywise, then $\mI_n + \mS$ is invertible. 
\end{lemma}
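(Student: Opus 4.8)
The plan is to prove the equivalent statement that $-1$ is not an eigenvalue of $\mS$, which immediately gives invertibility of $\mI_n + \mS$. The key device is to pass to $\mS^2$. Observe that $\mS^2$ inherits the property of being row stochastic: nonnegativity of entries is preserved under matrix multiplication, and $\mS^2 \vone_n = \mS(\mS \vone_n) = \mS \vone_n = \vone_n$. By the hypothesis of the lemma, $\mS^2$ is moreover strictly positive entrywise.

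Next I would record the spectral consequences of these two facts for $\mS^2$. Being a nonnegative matrix with all row sums equal to $1$, its operator norm $\|\cdot\|_\infty$ is $1$, so its spectral radius is at most $1$; since $\mS^2 \vone_n = \vone_n$, the spectral radius equals $1$. Being in addition entrywise positive, the Perron--Frobenius theorem applies to $\mS^2$: the eigenvalue $1$ is simple, its eigenspace is one-dimensional and spanned by $\vone_n$, and every other eigenvalue has modulus strictly less than $1$. If one prefers to avoid quoting Perron--Frobenius, the same conclusion can be obtained directly: if $\mS^2 \vv = \vv$ with $\vv \neq \vo$ (complex entries allowed), pick a coordinate $i^\star$ attaining $\|\vv\|_\infty$; from $v_{i^\star} = \sum_j (\mS^2)_{i^\star j} v_j$, with strictly positive weights summing to $1$ and $|v_j| \le \|\vv\|_\infty$ for all $j$, the triangle inequality forces $|v_j| = \|\vv\|_\infty$ for every $j$, and equality in the triangle inequality forces all $v_j$ to be equal, whence $\vv$ is a scalar multiple of $\vone_n$.

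With this in hand the argument concludes in one line. Suppose, for contradiction, that $\mS \vv = -\vv$ for some nonzero vector $\vv$. Then $\mS^2 \vv = \vv$, so $\vv$ lies in the eigenspace of $\mS^2$ for the eigenvalue $1$, hence $\vv = c\,\vone_n$ for some scalar $c \neq 0$. But then $\mS \vv = c\,\mS \vone_n = c\,\vone_n = \vv$, contradicting $\mS \vv = -\vv$. Therefore $-1$ is not an eigenvalue of $\mS$, and $\mI_n + \mS$ is invertible.

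The proof is essentially immediate once one considers $\mS^2$ rather than $\mS$; the only point requiring a little care is the treatment of possibly complex eigenvectors, and this is precisely where strict positivity of $\mS^2$ is needed (for $\mS$ alone the eigenvalue $-1$ is not excluded, e.g.\ a permutation matrix).
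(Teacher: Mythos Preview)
Your proof is correct and follows essentially the same approach as the paper: both pass to $\mS^2$, invoke Perron--Frobenius to establish that the eigenvalue $1$ of $\mS^2$ is simple, and conclude that $-1$ cannot be an eigenvalue of $\mS$. The paper phrases the last step via algebraic multiplicity (if both $1$ and $-1$ were eigenvalues of $\mS$, then $1$ would have multiplicity at least two in $\mS^2$), while you argue directly with eigenvectors; your self-contained alternative to Perron--Frobenius is a nice touch but not a substantive departure.
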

\begin{proof}
By the Gershgorin circle theorem, all of $\mS$'s eigenvalues are bounded between $-1$ and $1$. 
Thus, all eigenvalues of $\mS^2$ are bounded between $0$ and $1$. 
    Noting that $\mS \mathbf{1}_n = \mathbf{1}_n$, we see that $1$ is a eigenvalue of $\mS$. 
    Since $\mS^2 > 0$, $\mS$ is irreducible and by the Perron–Frobenius theorem, we have that $1$ is an algebraically simple eigenvalue of $\mS^2$. 
    Thus, all eigenvalues of $\mS$ must be strictly larger than $-1$,
and it follows that $\mI_n + \mS$ is invertible.  
\end{proof}

\begin{lemma}\label{lem:eigenvalue-IS}
Suppose that $\mS \in \R^{n\times n}$ is a row stochastic matrix and $\mI_n + \mS$ is invertible.
Then $\mathbf{1}_n$ is an eigenvector of $(\mI_n + \mS)^{-1}$ with eigenvalue $1/2$.
\end{lemma}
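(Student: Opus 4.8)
The statement is an immediate consequence of row-stochasticity, so the plan is short. First I would record the defining property of a row-stochastic matrix: every row of $\mS$ sums to one, which is exactly the identity $\mS \mathbf{1}_n = \mathbf{1}_n$. Applying $\mI_n + \mS$ to the all-ones vector then gives
\begin{equation*}
    (\mI_n + \mS)\mathbf{1}_n = \mathbf{1}_n + \mS\mathbf{1}_n = \mathbf{1}_n + \mathbf{1}_n = 2\,\mathbf{1}_n .
\end{equation*}

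Next I would invoke the hypothesis that $\mI_n + \mS$ is invertible (which, in the settings where this lemma is applied, follows from Lemma~\ref{lem:IS-invertible}). Left-multiplying the displayed identity by $(\mI_n + \mS)^{-1}$ yields $\mathbf{1}_n = 2\,(\mI_n + \mS)^{-1}\mathbf{1}_n$, and dividing by $2$ gives $(\mI_n + \mS)^{-1}\mathbf{1}_n = \tfrac12 \mathbf{1}_n$. Since $\mathbf{1}_n \neq \vo$, this exhibits $\mathbf{1}_n$ as an eigenvector of $(\mI_n + \mS)^{-1}$ with eigenvalue $1/2$, completing the argument.

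There is essentially no obstacle here; the only point that requires any care is that one genuinely needs the invertibility assumption in order to pass from the forward relation $(\mI_n+\mS)\mathbf{1}_n = 2\mathbf{1}_n$ to the statement about $(\mI_n+\mS)^{-1}$, and invertibility also guarantees that $2$ is a nonzero eigenvalue of $\mI_n+\mS$ so that its reciprocal $1/2$ is a well-defined eigenvalue of the inverse. This lemma is then exactly what is used in the proof of Lemma~\ref{lem:keep-decreasing} to conclude $\tfrac{d\valpha}{d\lambda} = -(\mI_n+\mS)^{-1}\mathbf{1}_n = -\tfrac12\mathbf{1}_n$ when the diagonal entries of $\mY$ vanish.
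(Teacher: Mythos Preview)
Your proposal is correct and matches the paper's own proof essentially line for line: use $\mS\mathbf{1}_n=\mathbf{1}_n$ to get $(\mI_n+\mS)\mathbf{1}_n=2\mathbf{1}_n$, then apply the inverse.
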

\begin{proof}
Since $\mS$ is row stochastic, we have
\begin{equation*}
    \mS \mathbf{1}_n = \mathbf{1}_n \quad \text{ and } \quad (\mI_n + \mS) \mathbf{1}_n = 2\mathbf{1}_n.
\end{equation*}
Since $\mI_n + \mS$ is invertible, it follows that
\begin{equation*}
    (\mI_n + \mS)^{-1} \mathbf{1}_n =  \frac{1}{2} \mathbf{1}_n.
\end{equation*}
\end{proof}

\begin{theorem}\label{thm:dd:singular:lower}
For a square matrix $\mA \in \R^{r\times r}$, set $\alpha = \min_{k\in[r]} \left\{|A_{kk}| - \sum_{j\neq k} |A_{kj}|\right\}$ and $\beta = \min_{k\in[r]} \{|A_{kk}| - \sum_{j\neq k} |A_{jk}|\}$.
If $\mA$ is diagonally dominant both by rows and by columns, then 
    \begin{equation*}
        \sigma_r(\mA) \geq \sqrt{\alpha \beta},
    \end{equation*}
    where $\sigma_r(\mA)$ is the smallest singular value of $\mA$. 
\end{theorem}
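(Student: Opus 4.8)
The plan is to bound $\sigma_r(\mA)$ from below by controlling $\|\mA^{-1}\|$ from above, exploiting the row‑ and column‑diagonal‑dominance hypotheses \emph{separately} through two different induced matrix norms and then combining them by interpolation. If $\alpha \le 0$ or $\beta \le 0$ the inequality is vacuous because $\sigma_r(\mA) \ge 0$ always, so I may assume $\alpha,\beta > 0$. In that case $\mA$ is strictly diagonally dominant by rows, hence invertible by the Levy--Desplanques theorem, and $\sigma_r(\mA) = 1/\|\mA^{-1}\|$, where $\|\cdot\|$ denotes the spectral norm.

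The first step is a Varah‑type estimate: I claim $\|\mA^{-1}\|_{\infty\to\infty} \le 1/\alpha$, where $\|\cdot\|_{\infty\to\infty}$ is the operator norm induced by the $\ell_\infty$ vector norm (equivalently, the maximum absolute row sum). To prove it, fix $\vx$ with $\|\vx\|_\infty = 1$, set $\vy = \mA^{-1}\vx$, and pick an index $k$ with $|y_k| = \|\vy\|_\infty$. From $x_k = A_{kk}y_k + \sum_{j\neq k} A_{kj} y_j$ one gets $|A_{kk}|\,|y_k| \le 1 + |y_k|\sum_{j\neq k}|A_{kj}|$, so $|y_k|\bigl(|A_{kk}| - \sum_{j\neq k}|A_{kj}|\bigr) \le 1$, whence $\|\vy\|_\infty \le 1/\alpha$; taking the supremum over $\vx$ gives the claim. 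Applying the same argument to $\mA^\top$, whose row‑dominance slack equals the column‑dominance slack $\beta$ of $\mA$ (since $(\mA^\top)_{kk} = A_{kk}$ and $\sum_{j\neq k}|(\mA^\top)_{kj}| = \sum_{j\neq k}|A_{jk}|$), yields $\|(\mA^\top)^{-1}\|_{\infty\to\infty} \le 1/\beta$, i.e.\ $\|\mA^{-1}\|_{1\to1} \le 1/\beta$, using that the $\ell_1\to\ell_1$ norm of a matrix is the $\ell_\infty\to\ell_\infty$ norm of its transpose.

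The last step combines the two bounds through the elementary interpolation inequality $\|\mB\| \le \sqrt{\|\mB\|_{1\to1}\,\|\mB\|_{\infty\to\infty}}$, which follows from $\|\mB\|^2 = \rho(\mB^\top\mB) \le \|\mB^\top\mB\|_{\infty\to\infty} \le \|\mB^\top\|_{\infty\to\infty}\|\mB\|_{\infty\to\infty} = \|\mB\|_{1\to1}\|\mB\|_{\infty\to\infty}$, where $\rho(\cdot)$ is the spectral radius. Taking $\mB = \mA^{-1}$ gives $\|\mA^{-1}\| \le 1/\sqrt{\alpha\beta}$, and inverting yields $\sigma_r(\mA) \ge \sqrt{\alpha\beta}$. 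I do not anticipate a real obstacle: the argument is short and uses only Levy--Desplanques invertibility, the one‑line Varah estimate, and the standard norm interpolation bound; the only points needing care are the vacuous edge case $\alpha\beta = 0$ and keeping track of which induced norm is controlled by which dominance condition.
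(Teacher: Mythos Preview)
Your proof is correct. The paper does not give its own argument here but simply cites Varah (1975), and your proposal is precisely a self-contained reconstruction of Varah's method: the row-dominance bound $\|\mA^{-1}\|_{\infty\to\infty}\le 1/\alpha$ is Varah's Theorem~1, the column version via $\mA^\top$ is the analogous Corollary, and the combination through $\|\mB\|^2 \le \|\mB\|_{1\to1}\|\mB\|_{\infty\to\infty}$ is how the singular-value bound is derived there.
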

\begin{proof}
See Theorem 1, Corollary 1 and Corollary 2 in \cite{Varah1975lower}. 
\end{proof}

\begin{lemma}\label{lem:dd:singular:upper}
For a square matrix $\mA \in \R^{r\times r}$, if $\mA$ is diagonally dominant both by rows and by columns,then 
\begin{equation*}
    \|\mA\| \leq 2 \max_{i\in[r]} \left\{|A_{ii}|\right\}.
\end{equation*}
\end{lemma}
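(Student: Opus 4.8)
The plan is to bound the spectral norm $\|\mA\|$ by the geometric mean of the maximum absolute row sum and the maximum absolute column sum, and then use the two diagonal dominance hypotheses to control each of these by $2\max_{i}|A_{ii}|$. Recall the standard interpolation inequality $\|\mA\| \le \sqrt{\|\mA\|_1 \, \|\mA\|_\infty}$, where $\|\mA\|_1 = \max_{j}\sum_{i}|A_{ij}|$ is the maximum absolute column sum and $\|\mA\|_\infty = \max_{i}\sum_{j}|A_{ij}|$ is the maximum absolute row sum (this follows, e.g., from Riesz--Thorin interpolation between the $\ell_1 \to \ell_1$ and $\ell_\infty \to \ell_\infty$ operator norms, or directly from $\|\mA\|^2 = \|\mA^\top \mA\| \le \|\mA^\top\mA\|_\infty \le \|\mA^\top\|_\infty \|\mA\|_\infty = \|\mA\|_1 \|\mA\|_\infty$).

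First I would bound $\|\mA\|_\infty$ using row diagonal dominance. For each $i \in [r]$, the hypothesis gives $\sum_{j \neq i}|A_{ij}| \le |A_{ii}|$, hence
\begin{equation*}
\sum_{j=1}^{r}|A_{ij}| = |A_{ii}| + \sum_{j\neq i}|A_{ij}| \le 2|A_{ii}| \le 2\max_{k\in[r]}|A_{kk}|,
\end{equation*}
and taking the maximum over $i$ yields $\|\mA\|_\infty \le 2\max_{k}|A_{kk}|$. Symmetrically, column diagonal dominance gives $\sum_{i \neq j}|A_{ij}| \le |A_{jj}|$ for each $j$, so $\sum_{i}|A_{ij}| \le 2|A_{jj}| \le 2\max_k|A_{kk}|$, whence $\|\mA\|_1 \le 2\max_{k}|A_{kk}|$.

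Finally I would combine the three displays: $\|\mA\| \le \sqrt{\|\mA\|_1\,\|\mA\|_\infty} \le \sqrt{(2\max_k|A_{kk}|)(2\max_k|A_{kk}|)} = 2\max_{k\in[r]}|A_{kk}|$, which is the claim. There is essentially no real obstacle here; the only non-elementary ingredient is the interpolation bound $\|\mA\|\le\sqrt{\|\mA\|_1\|\mA\|_\infty}$, which I would either cite or justify in one line via $\|\mA\|^2 = \|\mA^\top \mA\| \le \|\mA^\top\mA\|_1$ together with submultiplicativity of the induced $\ell_1$ norm. The rest is just unpacking the definition of diagonal dominance.
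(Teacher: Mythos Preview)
Your proposal is correct and follows essentially the same approach as the paper: the paper applies the interpolation bound $\|\mA\| \le \sqrt{\|\mA\|_1\,\|\mA\|_\infty}$ (citing Varah 1975 rather than proving it) and then uses row and column diagonal dominance to bound each factor by $2\max_i |A_{ii}|$, exactly as you do. Your inline justification of the interpolation inequality via $\|\mA\|^2 = \|\mA^\top\mA\| \le \|\mA^\top\mA\|_\infty \le \|\mA\|_1\|\mA\|_\infty$ is a nice addition that makes the argument self-contained.
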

\begin{proof}
We have
\begin{equation*}
\|\mA\| \leq \sqrt{\max_{1\leq j \leq r} \sum_{i=1}^r |A_{ij}|} \sqrt{\max_{1\leq i \leq r} \sum_{j=1}^r |A_{ij}|} 
\leq \max_{j \in [r]}\sqrt{2|A_{jj}|} \max_{i \in [r]}\sqrt{2|A_{ii}|} 
= 2 \max_{i\in[r]} \left\{|A_{ii}|\right\},
\end{equation*}
where the first inequality follows from Corollary 2 in \cite{Varah1975lower} and the second inequality follows from the fact that $\mA$ is diagonally dominant both by rows and by columns. 
\end{proof}

\begin{lemma}\label{lem:square-root-lipschitz}
For symmetric positive definite matrices $\mA, \mB \in \R^{n\times n}$, suppose that the eigenvalues of $\mA$ and $\mB$ are all greater than $\lambda_{\min} > 0$, then 
\begin{equation*}
\left\|\mA^{1/2} - \mB^{1/2}\right\| \leq \frac{1}{2\sqrt{\lambda_{\min}}} \left\|\mA - \mB\right\|.
\end{equation*}
\end{lemma}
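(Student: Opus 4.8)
# Proof Proposal for Lemma~\ref{lem:square-root-lipschitz}

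The plan is to reduce the matrix square-root difference to a Lyapunov-type identity and then bound the relevant operator via the spectral lower bound $\lambda_{\min}$. First I would set $\mX := \mA^{1/2}$ and $\mY := \mB^{1/2}$, which are themselves symmetric positive definite with eigenvalues bounded below by $\sqrt{\lambda_{\min}}$, since the square-root function is monotone on positive definite matrices. Then $\mA - \mB = \mX^2 - \mY^2 = \mX(\mX - \mY) + (\mX - \mY)\mY$, so writing $\mE := \mX - \mY$ we have the Sylvester equation $\mX \mE + \mE \mY = \mA - \mB$.

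The key step is to invert this Sylvester relation. Since $\mX$ and $\mY$ are symmetric positive definite, the linear operator $\mE \mapsto \mX\mE + \mE\mY$ acting on matrices has eigenvalues $\lambda_i(\mX) + \lambda_j(\mY)$, each of which is at least $2\sqrt{\lambda_{\min}}$. Concretely, I would diagonalize $\mX = \mU \mLambda_X \mU^\top$ and $\mY = \mV \mLambda_Y \mV^\top$, and look at the entries of $\mU^\top \mE \mV$ versus $\mU^\top (\mA - \mB) \mV$: entrywise, $(\mU^\top \mE \mV)_{ij} = (\mU^\top(\mA-\mB)\mV)_{ij} / (\lambda_i(\mX) + \lambda_j(\mY))$. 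From this one obtains $\|\mE\|_{\F} \le (2\sqrt{\lambda_{\min}})^{-1} \|\mA - \mB\|_{\F}$; to get the operator norm version as stated, I would instead use the integral representation $\mE = \int_0^\infty e^{-t\mX}(\mA - \mB) e^{-t\mY}\, dt$, which solves the Sylvester equation, and bound $\|\mE\| \le \int_0^\infty \|e^{-t\mX}\| \, \|\mA - \mB\| \, \|e^{-t\mY}\| \, dt \le \|\mA - \mB\| \int_0^\infty e^{-2t\sqrt{\lambda_{\min}}}\, dt = (2\sqrt{\lambda_{\min}})^{-1}\|\mA - \mB\|$, using $\|e^{-t\mX}\| \le e^{-t\sqrt{\lambda_{\min}}}$ and likewise for $\mY$.

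I expect the main (minor) obstacle to be merely bookkeeping: verifying that the integral $\int_0^\infty e^{-t\mX}(\mA-\mB)e^{-t\mY}\,dt$ indeed solves $\mX\mE + \mE\mY = \mA - \mB$, which follows from integration by parts or from differentiating $e^{-t\mX}(\mA-\mB)e^{-t\mY}$ in $t$ and integrating over $[0,\infty)$, together with the decay estimate that makes the boundary term at $\infty$ vanish. Everything else is a direct chain of operator-norm submultiplicativity bounds, so no genuinely hard estimate is involved.
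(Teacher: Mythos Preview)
Your proof is correct, but it takes a different route from the paper. Both arguments start from the same Sylvester identity $\mA - \mB = \mX\mE + \mE\mY$ with $\mX = \mA^{1/2}$, $\mY = \mB^{1/2}$, $\mE = \mX - \mY$. You then invoke the integral representation $\mE = \int_0^\infty e^{-t\mX}(\mA-\mB)e^{-t\mY}\,dt$ and bound each factor in operator norm. The paper instead exploits that $\mE$ is \emph{symmetric}: for any unit eigenvector $\vx$ of $\mE$ with eigenvalue $\lambda$, sandwiching the Sylvester identity gives $\vx^\top(\mA - \mB)\vx = \lambda\,\vx^\top(\mX + \mY)\vx$, hence $|\lambda| \le \|\mA - \mB\| / (2\sqrt{\lambda_{\min}})$, and maximizing over eigenvalues finishes the proof. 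The paper's argument is more elementary (no integral, no semigroup bounds), while yours is more robust in that it would extend to non-symmetric $\mX, \mY$ with spectra in the right half-plane; for the lemma as stated, both are equally short once set up.
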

\begin{proof}
For any unit eigenvector $\vx$ of $\mA^{1/2}-\mB^{1/2}$ with eigenvalue $\lambda$, we have
\begin{equation*} \begin{aligned}
\vx^\top \left(\mA - \mB\right) \vx 
&= \vx^\top \mA^{1/2} \left(\mA^{1/2} - \mB^{1/2}\right) \vx + \vx^\top \mB^{1/2} \left(\mA^{1/2} - \mB^{1/2}\right) \vx \\
&= \lambda \vx^\top \left(\mA^{1/2} + \mB^{1/2}\right) \vx.
\end{aligned} \end{equation*}
Thus, it follows that 
\begin{equation*}
\left|\lambda\right| 
\leq \frac{\left\|\mA - \mB\right\|}{\min_{\vx: \|\vx\|_2=1} \vx^\top \mA^{1/2} \vx + \min_{\vy: \|\vy\|_2=1} \vy^\top \mB^{1/2} \vy} 
\leq \frac{1}{2\sqrt{\lambda_{\min}}} \left\|\mA - \mB\right\|.
\end{equation*}
Taking the maximum on the left hand side over all eigenvalues of $\mA^{1/2} - \mB^{1/2}$ yields the desired bound. 
\end{proof}

\end{document}